\newtheorem{remark}{Remark}
\newtheorem{proposition}{Proposition}[section]
\newtheorem{theorem}{Theorem}[section]
\newtheorem{lemma}{Lemma}[section]
\numberwithin{equation}{section}
\newenvironment{proof}{\smallskip\noindent\emph{Proof.}\hspace{1pt}}%
{\hspace{-5pt}{\nobreak\quad\nobreak\hfill\nobreak$\square$\vspace{8pt}%
		\par}\smallskip\goodbreak}
\newcommand{\Lbar}{\underline{L}}
\newcommand{\Hbar}{\underline{H}}
\newcommand{\aln}{(\al \nabla)^i}
\newcommand{\tbeta}{\tilde{\beta}}
\newcommand{\tbetabar}{\tilde{\betabar}}
\newcommand{\omegabar}{\underline{\omega}}
\newcommand{\dubarprime}{\hspace{.5mm} \text{d} \ubar^{\prime}}
\newcommand{\intu}{\int_{u_{\infty}}^u}
\newcommand{\intubar}{\int_0^{\ubar}}
\newcommand{\chihat}{\hat{\chi}}
\newcommand{\upr}{\lvert u^\prime \rvert}
\newcommand{\al}{a^{\frac{1}{2}}}
\newcommand{\chibar}{\underline{\chi}}
\newcommand{\chibarhat}{\underline{\hat{\chi}}}
\newcommand{\ubar}{\underline{u}}
\newcommand{\e}{\mathrm{e}}
\newcommand{\be}{\begin{equation}}
\newcommand{\ee}{\end{equation}}
\newcommand{\bm}{\begin{align*}}
\newcommand{\enm}{\end{align*}}
\newcommand{\scaleinfinitySu}[1]{\lVert{#1} \rVert_{\mathcal{L}^\infty_{(sc)}(S_{u,\ubar})}}
\newcommand{\scaleinfinitySuprime}[1]{\lVert{#1} \rVert_{\mathcal{L}^\infty_{(sc)}(S_{u^{\prime},\ubar})}}
\newcommand{\ScaleinfinitySuprime}[1]{\big\lVert{#1} \big\rVert_{\mathcal{L}^\infty_{(sc)}(S_{u^{\prime},\ubar})}}
\newcommand{\scaleinfinitySuprimeubarprime}[1]{\lVert{#1} \rVert_{\mathcal{L}^\infty_{(sc)}(S_{u^{\prime},\ubar^\prime})}}
\newcommand{\tildetr}{\widetilde{\tr \chibar}}
\newcommand{\Y}{\mathrm{\Upsilon}}
\newcommand{\twoSu}[1]{\lVert{#1} \rVert_{L^2(S_{u,\ubar})}}
\newcommand{\pSu}[1]{\lVert{#1} \rVert_{L^p(S_{u,\ubar})}}
\newcommand{\inftySu}[1]{\lVert{#1} \rVert_{L^{\infty}(S_{u,\ubar})}}
\newcommand{\bespeq}{\begin{equation}\begin{split}}
\newcommand{\espeq}{\end{split}\end{equation}}
\newcommand{\scaletwoSu}[1]{\lVert{#1} \rVert_{\mathcal{L}^2_{(sc)}(S_{u,\ubar})}}
\newcommand{\scaleoneSu}[1]{\lVert{#1} \rVert_{\mathcal{L}^1_{(sc)}(S_{u,\ubar})}}
\newcommand{\ScaletwoSu}[1]{\big\lVert{#1} \big\rVert_{\mathcal{L}^2_{(sc)}(S_{u,\ubar})}}
\newcommand{\scaletwoSuprime}[1]{\lVert{#1} \rVert_{\mathcal{L}^2_{(sc)}(S_{u^\prime,\ubar})}}
\newcommand{\ScaletwoSuprime}[1]{\big\lVert{#1} \big\rVert_{\mathcal{L}^2_{(sc)}(S_{u^\prime,\ubar})}}
\newcommand{\scaleoneSuprimeubarprime}[1]{\lVert{#1} \rVert_{\mathcal{L}^1_{(sc)}(S_{u^\prime,\ubar^\prime})}}
\newcommand{\scaletwoSuprimeubarprime}[1]{\lVert{#1} \rVert_{\mathcal{L}^2_{(sc)}(S_{u^\prime,\ubar^\prime})}}
\newcommand{\ScaletwoSuprimeubarprime}[1]{\big\lVert{#1} \big\rVert_{\mathcal{L}^2_{(sc)}(S_{u^\prime,\ubar^\prime})}}
\newcommand{\scaletwoSuubarprime}[1]{\lVert{#1} \rVert_{\mathcal{L}^2_{(sc)}(S_{u,\ubar^\prime})}}
\newcommand{\ScaletwoSuubarprime}[1]{\Big \lVert{#1} \Big \rVert_{\mathcal{L}^2_{(sc)}(S_{u,\ubar^\prime})}}
\newcommand{\alphabar}{\underline{\alpha}}
\newcommand{\betabar}{\underline{\beta}}
\newcommand{\etabar}{\underline{\eta}}
\newcommand{\scaletwoHzero}[1]{\lVert{#1} \rVert_{\mathcal{L}^2_{(sc)}(H_{u_{\infty}}^{(0,\underline{u})})}}
\newcommand{\scaletwoHu}[1]{\lVert{#1} \rVert_{\mathcal{L}^2_{(sc)}(H_u^{(0,\underline{u})})}}
\newcommand{\scaletwoHbarzero}[1]{\lVert{#1} \rVert_{\mathcal{L}^2_{(sc)}(\underline{H}_{0}^{(u_{\infty},u)})}}
\newcommand{\scaletwoHbaru}[1]{\lVert{#1} \rVert_{\mathcal{L}^2_{(sc)}(\underline{H}_{\underline{u}}^{(u_{\infty},u)})}}
\newcommand{\omegad}{\omega^{\dagger}}
\newcommand{\omegabard}{\omegabar^{\dagger}}
\newcommand{\duprime}{\hspace{.5mm} \text{d}u^{\prime}}
\newcommand{\mubar}{\underline{\mu}}
\newcommand{\kappabar}{\underline{\kappa}}
\newcommand{\Hodge}[1]{\prescript{*}{}{#1}}
\newcommand{\intscaletwoSuubarprime}[1]{\int_0^{\ubar} \scaletwoSuubarprime{#1} \hspace{.5mm} \text{d} \ubar^\prime}
\newcommand{\intscaleTwoSuubarprime}[1]{\int_0^{\ubar} \ScaletwoSuubarprime{#1} \hspace{.5mm} \text{d} \ubar^\prime}
\newcommand{\ub}{\underline{u}}
\newcommand{\Lb}{\underline{L}}
\newcommand{\omb}{\underline{\omega}}
\newcommand{\chib}{\underline{\chi}}
\newcommand{\chih}{\hat{\chi}}
\newcommand{\chibh}{\hat{\chib}}
\newcommand{\beb}{\underline{\beta}}
\newcommand{\etb}{\underline{\eta}}
\def\a {\alpha}
\def\b {\beta}
\def\ab {\alphab}
\def\bb {\betab}
\def\nab {\nabla}
\def\ub {\underline{u}}
\def\th {\theta}
\def\Lb {\underline{L}}
\def\Hb {\underline{H}}
\def\chib {\underline{\chi}}
\def\chih {\hat{\chi}}
\def\chibh {\hat{\underline{\chi}}}
\def\omegab {\underline{\omega}}
\def\etab {\underline{\eta}}
\def\betab {\underline{\beta}}
\def\alphab {\underline{\alpha}}
\def\t {\tilde}
\def\d {\delta}
\def\f {\frac}
\def\i {\infty}
\def\l {\bigg(}
\def\r {\bigg)}
\def\S {S_{u,\underline{u}}}
\def\o{\omega}
\def\O{\Omega}
\def\M{\mathcal}
\def\p{\psi}
\def\Hu{H_u^{(0,\underline{u})}}
\def\at{a^{\f12}}
\def\p{\psi}
\def\ls{\leq}
\def\ls{\lesssim}
\renewcommand{\div}{\mbox{div }}
\newcommand{\curl}{\mbox{curl }}
\newcommand{\trchb}{\mbox{tr} \chib}
\def\trch{\mbox{tr}\chi}
\newcommand{\Ls}{{\mathcal L} \mkern-10mu /\,}
\newcommand{\tr}{\mbox{tr}}
\newcommand{\ui}{u_{\infty}}
\newcommand{\Rb}{\underline{\mathcal{R}}}
\newcommand{\tc}{\widetilde{\tr\chib}}
\newcommand\restri[2]{{
		\left.\kern-\nulldelimiterspace 
		#1 
		\right|_{#2} 
}}
\definecolor{ffqqqq}{rgb}{1.,0.,0.}
\definecolor{uuuuuu}{rgb}{0.26666666666666666,0.26666666666666666,0.26666666666666666}
\begin{document}

	\title{A scale-critical trapped surface formation criterion for the Einstein-Maxwell system}
	
	\author[a]{Xinliang An\thanks{matax@nus.edu.sg}}
	\author[b]{Nikolaos Athanasiou\thanks{Nikolaos.Athanasiou@maths.ox.ac.uk}}
	
	\affil[a]{Department of Mathematics, National University of Singapore, Singapore.}           
	\affil[b]{Mathematical Institute, University of Oxford, United Kingdom.}
	
	\renewcommand\Authands{ and }

		\maketitle

	\thispagestyle{plain}
	
	\vspace{-0.6cm}
	\begin{abstract} 
	In this study we show that, from arbitrarily dispersed initial data,  both the concentration of electromagnetic fields and the focusing of gravitational waves could lead to the formation of trapped surfaces. We establish a scale-critical semi-global existence result from the past null infinity for the Einstein-Maxwell system by assigning the signature for decay rates to geometric quantities and Maxwell components. This result generalizes an approach of the first author on studying Einstein vacuum equations by employing additional elliptic estimates and geometric renormalizations, and it also extends a result of Yu to the scale-critical regime.

	\vspace{5pt}
		
		\par \noindent 	
	\end{abstract}
	\vspace{0.2cm}
	
	\setlength{\voffset}{-0in} \setlength{\textheight}{0.9\textheight}
	
	\setcounter{page}{1} \setcounter{equation}{0}
	
	\section{Introduction}

	\subsection{Background} 
In this paper, we study the evolution of the Einstein--Maxwell system for a $(3+1)$--dimensional Lorentzian manifold $(\mathcal{M}, g)$ and an electromagnetic $2$--tensor $F_{\alpha \beta}$: 
\be\label{1.1}
	R_{\mu\nu}-\f12Rg_{\mu\nu}=T_{\mu\nu},
	\ee
	where
	\be \label{1.2} T_{\mu\nu}=F_{\mu\lambda}{F_{\nu}}^{\lambda}-\frac14 g_{\mu\nu}F_{\lambda\tau}F^{\lambda \tau}.\ee

\noindent Although a direct extension of Birkhoff's theorem implies that the $2$--parameter family of Reissner--N\"ordstrom electrovacuum spacetimes exhausts all possible spherically symmetric solutions to the Einstein-Maxwell system, in the absence of symmetry assumptions, the global dynamics of \eqref{1.1}-\eqref{1.2} are quite hard to study. \vspace{3mm}

\par \noindent For the Einstein vacuum equations (where $T_{\mu\nu}=0$), the study of \eqref{1.1}-\eqref{1.2} in the small data regime has been very successful. In a monumental work of Christodoulou and Klainerman \cite{Chr-Kl}, it is shown that the Minkowski spacetime is stable under small perturbations. Christodoulou and Klainerman showed that, for small perturbations of the trivial data, no singularity will form and all geodesics are complete.  Later, Zipser \cite{B-Z} extended this result to the Einstein--Maxwell system. 
\vspace{3mm}
\par \noindent One of the most fascinating aspects of the classical theory of general relativity is that it predicts the existence of a black hole. Historically, some notion of a black hole accompanies the theory of general relativity almost since its inception by Einstein in 1915. It was first encountered in an explicit solution to the Einstein vacuum equations and in particular the Schwarzschild solution $(\mathcal{M},g)_{Schw}$, communicated by Schwarzschild to Einstein in a letter about one month after the latter presented his field equations of general relativity at the Prussian academy of sciences. However, it was neither Schwarzschild nor Einstein who understood that what would come to be known as a black hole region featured prominently in the Schwarzschild solution. It was Lemaitre \cite{Lemaitre} who first observed, in 1932, that $(\mathcal{M},g)_{Schw}$ contains a  region $\mathcal{B}$ with the property that observers lying inside $\mathcal{B}$ cannot send signals to observers situated at an ideal conformal boundary \textit{at infinity} $\mathcal{I}^+$ (this being defined in a rigorous and appropriate way). In the case of Schwarzschild solution, the existence of a non-empty $\mathcal{B}$ is accompanied by another surprising, yet salient, feature:

\vspace{3mm}

\begin{center}
	\textit{Every observer in $\mathcal{B}$ lives for finite proper time (future geodesic incompleteness).}
\end{center}When physicists and mathematicians first realized these two properties, they were hoping to be able to associate to them the characterization of an accident; a non-generic pathology, present only due to the high degree of symmetry imposed a priori on the solutions and that, in general solutions to the equations, such phenomena would not arise. Much to the surprise of the community,  Penrose \cite{Penrose} in 1965 proved these hopes were ill--based through the following incompleteness theorem:

\begin{theorem}
For a spacetime $(\mathcal{M}, g)$ containing a non--compact Cauchy hypersurface and $g_{\mu\nu}, F_{\mu\nu}$ satisfying \eqref{1.1}--\eqref{1.2},  if $\mathcal{M}$ contains a compact trapped surface, then it is future causally geodesically incomplete. 
\end{theorem}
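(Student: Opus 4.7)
The plan is to argue by contradiction, assuming $(\mathcal{M}, g)$ is future causally geodesically complete and deriving a topological contradiction with the existence of a non-compact Cauchy hypersurface $\Sigma$. The starting point is that the Maxwell stress--energy tensor \eqref{1.2} satisfies the null energy condition: for any null $k^\mu$, one computes
\be
T_{\mu\nu}k^\mu k^\nu = (F_{\mu\lambda}k^\mu)({F_{\nu}}^{\lambda} k^\nu) = A_\lambda A^\lambda \geq 0,
\ee
where $A_\lambda := F_{\mu\lambda} k^\mu$ is orthogonal to $k$ by antisymmetry of $F$ and hence spacelike or null. Contracting \eqref{1.1} with $k^\mu k^\nu$ (so the $g_{\mu\nu}$ term drops) then gives $R_{\mu\nu} k^\mu k^\nu \geq 0$.

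Next I would exploit the trapped-surface hypothesis via the null Raychaudhuri equation. Let $S$ be the compact trapped surface and $L, \underline{L}$ its two hypersurface-orthogonal null normal congruences, with expansions $\tr\chi, \tr\chib$ and shears $\chihat, \chibh$. For either congruence one has, along an affinely parametrised null generator,
\be
\frac{d\theta}{ds} = -\frac{1}{2}\theta^{2} - |\chihat|^{2} - R_{\mu\nu}k^\mu k^\nu \;\leq\; -\frac{1}{2}\theta^{2}.
\ee
The trapped condition supplies $\theta|_S < 0$ for both null normals; by compactness of $S$ there is a uniform $\theta_0 < 0$ dominating them. Integrating the Riccati-type inequality forces a focal point within affine parameter $s_* \leq 2/|\theta_0|$ along every orthogonal null geodesic leaving $S$.

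I would then analyse $\partial J^+(S)$. A standard fact from causal theory is that every point of $\partial J^+(S)$ lies on a future-directed null geodesic emanating orthogonally from $S$ that has not yet passed a focal point, for otherwise the generator would be deflected into the interior of $J^+(S)$. Combining the uniform affine bound $s_*$, the assumed geodesic completeness, compactness of $S$, and continuity of the exponential map on the normal bundle $NS$, I conclude that $\partial J^+(S)$ is contained in the image under $\exp$ of a compact subset of $NS$. Since $\partial J^+(S)$ is closed in $\mathcal{M}$, it is itself compact; moreover it is an achronal topological $3$-manifold without boundary.

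Finally I would close the argument topologically. Global hyperbolicity of the development of $\Sigma$ furnishes a smooth timelike vector field $T$ whose integral curves cross $\Sigma$ exactly once. Pushing $\partial J^+(S)$ along the flow of $T$ defines a continuous map $\Phi : \partial J^+(S) \to \Sigma$, which is injective by achronality of $\partial J^+(S)$ and open by invariance of domain (both are topological $3$-manifolds). Hence $\Phi(\partial J^+(S))$ is a non-empty, compact, and open subset of the connected non-compact manifold $\Sigma$, which is impossible. The main obstacle in making this rigorous is the control of the null generators of $\partial J^+(S)$ in a neighbourhood of focal points and the verification that geodesic completeness plus the uniform affine bound really delivers compactness of $\partial J^+(S)$ in $\mathcal{M}$; everything else is a routine application of invariance of domain and connectedness of $\Sigma$.
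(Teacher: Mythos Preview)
The paper does not prove this theorem; it is stated in the introduction as Penrose's 1965 incompleteness theorem with a citation to \cite{Penrose}, and serves only as motivation for why establishing trapped-surface formation is physically meaningful. There is no proof in the paper for you to be compared against.

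That said, your proposal is the standard Penrose argument and is correct. Your verification of the null energy condition for the Maxwell stress--energy is clean, the Raychaudhuri focusing step is right, and the topological contradiction via invariance of domain is the usual endgame. The ``main obstacle'' you flag at the end---compactness of $\partial J^+(S)$ from the uniform affine bound plus completeness---is in fact a routine consequence of the fact that each null generator of $\partial J^+(S)$ leaves $\partial J^+(S)$ no later than its first focal point, so $\partial J^+(S)$ is the continuous image of the compact set $\{(p,v,s): p\in S,\ v\in N_pS,\ |v|=1,\ 0\le s\le s_*\}$ under the exponential map and is closed, hence compact; you need not hedge on this.
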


\par \noindent A trapped surface is a $2$--dimensional geometric object. Assume we are given a $(3+1)$--dimensional, time--oriented Lorentzian manifold $(\mathcal{M},g)$ and within it a closed, spacelike $2-$surface $S$. Since $S$ has co--dimension 2, the tangent space at a point $p$ on S, $T_p S$ has a 2--dimensional orthogonal complement in $T_p \mathcal{M}$. Let $l, \underline{l}$ denote a null basis\footnote{That means $g(l,l)=0, \,\, g(\underline{l}, \underline{l})=0, \,\, g(l, \underline{l})=-2$.} of this complement and extend $l,\underline{l}$ as vector fields. We define the following two fundamental forms $\chi, \underline{\chi}$ associated with the surface $S$:

\[  \chi(X,Y) := g(\nabla_X l,Y),  \quad \underline{\chi}(X,Y):= g(\nabla_X \underline{l},Y)       \]where $X$ and $Y$ are vector fields tangent to $S$. We look at the expansions $\tr\chi, \tr\underline{\chi}$. If both are pointwise negative on $S$, then the surface is called trapped. A trapped surface is, therefore, a surface for which the area decreases for arbitrary infinitesimal displacements along the null generators of both null geodesic congruences normal to $S$.  Penrose's theorem implies that the study of singularity formation for Einstein's equations can, in some generality, be reduced to the problem of trapped surface formation. This problem had, again, remained open for a long time.

\subsubsection{The Einstein vacuum case}
In a breakthrough work in 2008, Christodoulou solved this long--standing open problem (trapped surface formation for Einstein vacuum equations) with a 587--page monumental work \cite{Chr:book}. He designed an open set of large initial data, which encode a special structure, the \textit{short pulse}  ansatz.  In particular, this ansatz allows one to consider a hierarchy of large and small quantities, parametrized by a small parameter $\d$. For such initial data, these quantities behave differently, being of various sizes in term of $\d$. Moreover, their sizes form a hierarchy. But for each quantity, surprisingly, its size is almost preserved by the nonlinear evolution. 
Therefore, once this hierarchy is satisfied at the level of initial data, it persists for later time. With this philosophy, despite it being a large data problem, a long--time global existence theorem can be established. Moreover, these initial conditions indeed lead to trapped-surface formation in finite time.
\vspace{3mm}

\par \noindent Effort was consequently put towards simplifying Christodoulou's proof.  In \cite{KR:Trapped}, an ingenious systematical approach was introduced by Klainerman--Rodnianski \cite{KR:Trapped}. This approach was later extended by the first author in \cite{An:Trapped}. The Einstein vacuum equations are a nonlinear hyperbolic system, containing many unknowns. Christodoulou controlled all of them on a term-by-term basis. In \cite{KR:Trapped}, Klainerman and Rodnianski introduced a novel index $s_1$ which they termed \textit{signature for short pulse}. With this index, Klainerman and Rodnianski systematically tracked the $\d$-weights used in the estimates and gave a shorter, simplified proof of the almost--preservation of the $\d$--hierarchy in a finite region. In \cite{Chr:book}, besides $\d$--weights, Christodoulou also employed weights related to decay and proved his main theorem that \textit{a trapped surface could form dynamically with initial data prescribed arbitrary dispersed at past null infinity}.  In \cite{An:Trapped}, the first author introduced a new index $s_2$ called \textit{signature for decay rates}. With the help of this new index, the first author extended Klainerman and Rodnianski's result \cite{KR:Trapped} from a finite region to an infinite region and re-proved Christodoulou's main theorem in \cite{Chr:book} with around 120 pages.
\vspace{3mm}
\par \noindent Another important progress was made by the first author and Luk in \cite{A-L}. By designing and employing a different hierarchy, in \cite{A-L} they improved \cite{Chr:book} and proved the first scale--critical result for the Einstein vacuum equations. With the same small parameter $\d$, with relatively larger initial data, Christodoulou formed a trapped surface of radius 1; while with much smaller initial data, the first author and Luk formed a  trapped surface of radius $\d a$, where $a$ is a universal large constant like $1000$.\footnote{Letting $a=\d^{-1}$, in a finite region they recover Christodoulou's main result of \cite{Chr:book}. } In \cite{A-L} the first author and Luk want to form a tiny trapped surface with radius $\d a$, hence they have to deal with the region very close to the center. In this region all the geometric quantities have growth rates. To bound these growth rates, they employ weighted estimates as well as several crucial geometric {\color{black}\underline{renormalizations}}. 
\vspace{3mm}

\par \noindent Since \cite{A-L} is scale critical, one can keep $a$ as a universal constant and let $\d\rightarrow 0$. Hence a series of trapped surfaces (with radius shrinking to $0$) are obtained. In \cite{An:AH}, the first author further explored this idea. Together with an elliptic approach to identify the boundary, the first author showed that a whole black hole region could emerge dynamically from just a ``point'' $O$ in the spacetime. For an open set of initial data and appropriate control on all the derivatives of $\chihat_0$, this boundary (apparent horizon) is proved to be smooth except at $O$. 
\vspace{3mm}

\par \noindent In early 2019, the first author \cite{An:2019} gave a different, 55--page proof of a trapped surface formation theorem that sharpens the previous results both of Christodoulou \cite{Chr:book} and estimates in An--Luk \cite{A-L}. The argument in 
\cite{An:2019} is based on a systematic extension of the scale--critical arguments in \cite{A-L}, connecting Christodoulou's short--pulse method and Klainerman--Rodnianski's signature counting argument to the peeling properties previously used in small--data results such as Klainerman--Nicolo. This in particular allows the author to avoid elliptic estimates and geometric renormalizations, and gives new technical simplifications. 

\subsubsection{The Einstein--Maxwell case}
For the Einstein-Maxwell system \eqref{1.1}-\eqref{1.2}, important progress was made by Yu \cite{Yu1, Yu2}. In a finite region, Yu extended the result of Klainerman-Rodnianski \cite{KR:Trapped} and obtained trapped surface formation for the Einstein-Maxwell system by using signature for short-pulse. In the current paper, combining the new ingredients in \cite{An:2019}, we will extend Yu's results to obtain a scale-critical trapped surface formation criterion from past null infinity.

\subsubsection{The Einstein--scalar field case}
In a recent paper \cite{L-L},  Li and Liu studied the Einstein-scalar field system: 
\begin{equation*}
\begin{split}
&\mbox{Ric}_{\mu\nu}-\f12Rg_{\mu\nu}=2T_{\mu\nu},\\
&T_{\mu\nu}=\partial_{\mu}\phi \partial_{\nu}\phi-\f12g_{\mu\nu}\partial^{\sigma}\phi \partial_{\sigma}\phi,
\end{split}
\end{equation*}
and an almost scale-critical trapped surface formation criterion was obtained with singular initial data prescribed along the incoming null hypersurface. And renormalizations for scalar fields were used.

\subsection{Main Results}

We will introduce coordinates $u$ and $\ub$ in $(\mathcal{M}, g)$ through a \textit{double null foliation}\footnote{The detailed construction of double null foliation will be explained in Section \ref{secdnf}.}, where $H_u$ and $\Hb_{\ub}$ are incoming and outgoing characteristic cones, respectively. With coordinates $u, \ub$, characteristic initial data will be prescribed along incoming null hypersurface $\Hb_0$, where $\ub=0$, and outgoing null hypersurface $H_{u_{\infty}}$, where $u=u_{\infty}$.

\begin{center}
\begin{minipage}[!t]{0.5\textwidth}
\begin{tikzpicture}[scale=0.7]
\draw [white](3,-1)-- node[midway, sloped, below,black]{$H_{u_{\infty}}(u=u_{\infty})$}(4,0);
\draw [white](1,1)-- node[midway,sloped,above,black]{$H_{u}$}(2,2);
\draw [white](2,2)--node [midway,sloped,above,black] {$\Hb_{1}(\ub=1)$}(4,0);
\draw [white](1,1)--node [midway,sloped, below,black] {$\Hb_{0}(\ub=0)$}(3,-1);
\draw [dashed] (0, 4)--(0, -4);
\draw [dashed] (0, -4)--(4,0)--(0,4);
\draw [dashed] (0,0)--(2,2);
\draw [dashed] (0,-4)--(4,0);
\draw [dashed] (0,2)--(3,-1);
\draw [very thick] (1,1)--(3,-1)--(4,0)--(2,2)--(1,1);
\fill[black!35!white] (1,1)--(3,-1)--(4,0)--(2,2)--(1,1);
\draw [->] (3.3,-0.6)-- node[midway, sloped, above,black]{$e_4$}(3.6,-0.3);
\draw [->] (1.4,1.3)-- node[midway, sloped, below,black]{$e_4$}(1.7,1.6);
\draw [->] (3.3,0.6)-- node[midway, sloped, below,black]{$e_3$}(2.7,1.2);
\draw [->] (2.4,-0.3)-- node[midway, sloped, above,black]{$e_3$}(1.7,0.4);
\end{tikzpicture}
\end{minipage}   
\end{center}
Our main results can be summarized in three Theorems. The first one is a global existence result.
	
	\begin{theorem}
		\label{main1} Given $\mathcal{I}^{(0)}$, there exists a sufficiently large $a_0=a_0(\mathcal{I}^{(0)})$ such that the following holds. For any $0< a_0<a$ and with initial data $(\chihat,\alpha_F)$ satisfying
		\begin{itemize}
			\item $\sum_{ i \leq 15, k\leq 3}a^{-\frac{1}{2}} \lVert \nabla_4^k \left(\lvert u_\infty\rvert \nabla\right)^i (\chihat,\alpha_F)\rVert_{L^\infty(S_{u_\infty,\ubar})}\leq \mathcal{I}^{(0)}$ along $u=u_\infty$,
			\item Minkowskian initial data along $\ubar=0$,
		\end{itemize}then the Einstein-Maxwell system admit a unique smooth solution in the region \[ u_\infty \leq u \leq -a/4, \hspace{2mm} 0\leq \ubar \leq 1.     \]
	\end{theorem}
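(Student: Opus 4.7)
The plan is to construct the solution by a bootstrap argument in the characteristic initial value problem, following the signature-counting framework of \cite{An:2019} (itself built on \cite{KR:Trapped}) and adapting it to the Einstein--Maxwell system in the spirit of \cite{Yu1,Yu2}. First I set up a double null foliation on the target region $\{u_\infty\le u\le -a/4,\ 0\le\ub\le 1\}$, producing a null frame $(e_3,e_4,e_A)$, the Ricci coefficients $\{\hat\chi,\tr\chi,\hat{\underline\chi},\tr\underline\chi,\eta,\underline\eta,\omega,\underline\omega\}$, the null Weyl components $\{\alpha,\beta,\rho,\sigma,\underline\beta,\underline\alpha\}$, and the null Maxwell components $\{\alpha_F,\beta_F,\rho_F,\sigma_F,\underline\beta_F,\underline\alpha_F\}$. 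To each quantity I attach two integer signatures: the short-pulse signature $s_1$ tracking its $\ub$-weight, and the decay signature $s_2$ tracking its anticipated power of $|u|^{-1}$, with the same conservation laws as in \cite{An:2019} and with the Maxwell indices chosen so that $\alpha_F \otimes \alpha_F$ shares the signature of $\alpha$, in accordance with \eqref{1.2}.

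Second, I define scale-invariant norms $\|\cdot\|_{\mathcal{L}^p_{(sc)}}$ on the spheres $S_{u,\ub}$ and on the null hypersurfaces $H_u,\underline H_{\ub}$, calibrated so that the data hypothesis of Theorem~\ref{main1} translates into a uniform bound of size $\mathcal{I}^{(0)}$ on $15$ angular and $3$ transversal derivatives of $\hat\chi$ and $\alpha_F$, and so that every other Ricci coefficient, curvature component, and Maxwell component is expected to have scale-invariant norm $O(1)$ (with an anomalous extra factor $a^{1/2}$ for the three genuinely anomalous components $\hat\chi,\alpha,\alpha_F$). The bootstrap then postulates that master scale-invariant norms $\mathcal{O},\mathcal{R},\mathcal{F}$ encoding Ricci, curvature and Maxwell components respectively are bounded by some large universal constant on a maximal subregion, and the goal is to improve each bound to depend only on $\mathcal{I}^{(0)}$.

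Third, I close the bootstrap in the standard order. Ricci coefficients are estimated by integrating their null transport equations along $e_4$ from $H_{u_\infty}$ or along $e_3$ from the Minkowskian cone $\underline H_0$; scale invariance is preserved because the weights carried by $\mathrm{d}\ub$ and $\mathrm{d}u$ fit the $s_1,s_2$ budget, and a handful of renormalized quantities (for example $\widetilde{\tr\underline\chi}$ and the mass aspect $\underline\mu$, together with an analogous Maxwell renormalization) are introduced so that their source terms cease to be borderline; top-order angular regularity is then recovered through Hodge-type elliptic estimates on $S_{u,\ub}$. Curvature and Maxwell components are estimated together by energy identities on adjacent pairs of null hypersurfaces, derived from the null-decomposed Bianchi identities and from $\mathrm{d}F = 0,\ \mathrm{d}({*F}) = 0$. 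The genuinely new feature relative to the vacuum analysis is that the right-hand side of \eqref{1.1} is the stress--energy tensor \eqref{1.2}, which injects quadratic Maxwell source terms (schematically $\alpha_F \otimes \alpha_F$ into the $\alpha$-equation, $\alpha_F \cdot \rho_F$ into the $\beta$-equation, and their conjugates) into the Bianchi system; the signature bookkeeping confirms that each such source is at the level of the corresponding vacuum term, so the hierarchy of \cite{An:2019} persists.

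The main obstacle, as in any scale-critical problem, is the simultaneous control of borderline terms that exactly saturate the weight budget. Three families require care: the vacuum borderlines already present in \cite{An:2019}, chiefly the $\hat\chi \cdot \alpha$ nonlinearity in the $\alpha$-Bianchi pair and the $(\tr\chi)^2$ term in the Raychaudhuri equation; the new Maxwell--Einstein coupling terms such as $\hat\chi \cdot (\alpha_F, \alpha_F)$ entering the equation for $\alpha$ and their conjugate analogues; and the top-order Maxwell estimates themselves, in which the equation for $\alpha_F$ is structurally parallel to the $\alpha$-equation and hence inherits the same borderline character. Following \cite{A-L,An:2019}, these are controlled by combining signature bookkeeping (ensuring that at most one anomalous factor appears in each borderline product), weighted Gr\"onwall arguments in the $\ub$-direction, the geometric renormalizations mentioned above, and the largeness of $a$, which supplies a small parameter $a^{-1}$ in every nonlinear error that is not borderline. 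Once all estimates close with $a \ge a_0(\mathcal{I}^{(0)})$ for $a_0$ sufficiently large, a standard continuity argument propagates the solution to the entire region $\{u_\infty \le u \le -a/4,\ 0 \le \ub \le 1\}$ and yields Theorem~\ref{main1}.
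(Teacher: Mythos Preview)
Your overall architecture is right, but there is a concrete obstruction in the Bianchi system that your proposal glosses over, and it is exactly the point where the paper's argument diverges from a straightforward adaptation of \cite{An:2019}. The Bianchi equation for $\nabla_4\beta$ contains the inhomogeneity $D_4 R_{4A}$ coming from the non-trivial Ricci tensor. Expanding this produces terms involving $\nabla_4\alpha_F$, and the null Maxwell system supplies no transport equation for $\nabla_4\alpha_F$ (only for $\nabla_3\alpha_F$). The same happens with $\nabla_3\underline\alpha_F$ in the $\nabla_3\underline\beta$ equation. Your claim that ``each such source is at the level of the corresponding vacuum term, so the hierarchy of \cite{An:2019} persists'' fails precisely here: these sources are not directly controllable at all. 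The paper's fix is to renormalize the Weyl components themselves, setting $\tilde\beta_A=\beta_A-\tfrac12 R_{4A}$ and $\tilde{\underline\beta}_A=\underline\beta_A+\tfrac12 R_{3A}$, and then rewriting the entire Bianchi system in terms of $(\alpha,\tilde\beta,\rho,\sigma,\tilde{\underline\beta},\underline\alpha)$. After this substitution the uncontrollable $\nabla_4\alpha_F$, $\nabla_3\underline\alpha_F$ terms disappear and every source can be estimated via the null structure and Maxwell equations. Your phrase ``an analogous Maxwell renormalization'' does not capture this: the renormalization is on the \emph{curvature} side, not the Maxwell side.

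This renormalization has a second consequence that your outline does not anticipate. Because $\tilde\beta$ carries an extra factor of $R_{4A}\sim\alpha_F\cdot(\rho_F,\sigma_F)$, closing the energy estimates for $10$ derivatives of Weyl forces $11$ derivatives of the Maxwell components, which in turn forces control of $11$ angular derivatives of the Ricci coefficients. This is one order beyond what the transport estimates alone provide and is why the paper must reinstate the Hodge elliptic estimates that \cite{An:2019} deliberately avoided in vacuum. So the elliptic estimates are not optional polish but a forced consequence of the $\tilde\beta$ renormalization. Two minor points: the null decomposition of a $2$-form has only $\alpha_F,\underline\alpha_F,\rho_F,\sigma_F$ (there are no $\beta_F,\underline\beta_F$), and the paper works with the single signature $s_2$ rather than both $s_1$ and $s_2$.
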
The second one steps directly on the first one and is a formation of trapped surfaces statement.  \begin{theorem}\label{main2} Given $\mathcal{I}^{(0)}$, there exists a sufficiently large $a_0=a_0(\mathcal{I}^{(0)})$ such that the following holds. For any $0< a_0<a$, the unique smooth solution $(\mathcal{M}, g)$ of the Einstein--Maxwell system from Theorem \ref{main1} with initial data satisfying
		\begin{itemize}
			\item $\sum_{ i \leq 15, k\leq 3}a^{-\frac{1}{2}} \lVert \nabla_4^k \left(\lvert u_\infty\rvert \nabla\right)^i (\chihat,\alpha_F)\rVert_{L^\infty(S_{u_\infty,\ubar})}\leq \mathcal{I}^{(0)}$ along $u=u_\infty$,
			\item Minkowskian initial data along $\ubar=0$,
			\item $\int_0^1 \lvert u_\infty \rvert^2 \left( \lvert \chihat_0 \rvert^2 + \lvert \alpha_{F0}\rvert^2 \right) (u_\infty, \ubar^\prime)\hspace{.5mm} \text{d}\ubar^\prime \geq a$ uniformly for every direction along $u=u_\infty$
		\end{itemize}has a trapped surface at $S_{-\alpha/4,1}$.
		
	\end{theorem}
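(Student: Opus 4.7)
The plan is to verify both $\tr\chi|_{S_{-a/4,1}}<0$ and $\tr\chib|_{S_{-a/4,1}}<0$ pointwise on the $2$--sphere, so as to satisfy the definition of a trapped surface. Since $\Hb_0$ carries Minkowskian data, one has $\tr\chib = -2/|u|$ on $S_{u,0}$, and the scale-critical estimates of Theorem \ref{main1} control $\tr\chib + 2/|u|$ in the relevant norm. Consequently $\tr\chib|_{S_{-a/4,1}} \approx -8/a$, which is automatically negative for $a$ large. The main task is thus to force the outgoing expansion $\tr\chi$ to become negative by the time one reaches $\ubar = 1$.

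For this I would integrate the Raychaudhuri equation along $e_4$ starting from $S_{-a/4,0}$, on which $\tr\chi = 2/|u| = 8/a$ thanks again to the Minkowskian data on $\Hb_0$. Schematically,
\[ \nabla_4 \tr\chi + \tfrac{1}{2}(\tr\chi)^2 + 2\omega\,\tr\chi = -|\chihat|^2 - |\alpha_F|^2, \]
where the electromagnetic term arises from the $R_{44}$ contribution of the energy-momentum tensor \eqref{1.2}. Integrating from $\ubar = 0$ to $\ubar = 1$ yields
\[ \tr\chi|_{S_{-a/4,1}} = \frac{2}{|u|} - \int_0^1 \bigl(|\chihat|^2 + |\alpha_F|^2\bigr)(-a/4,\ubar')\,d\ubar' + (\text{lower-order errors}), \]
so negativity reduces to showing that the integrated flux on the right dominates $8/a$; a bound of the form $\gtrsim 16/a$ would suffice.

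To obtain this lower bound, I would propagate the short-pulse flux from $H_{u_\infty}$ to $H_{-a/4}$ along $e_3$. Using the null structure equation for $\nabla_3 \chihat$ together with the analogous evolution equation for $\nabla_3 \alpha_F$, and the fact that in the signature-for-decay assignment $\chihat$ and $\alpha_F$ share the same hierarchy level, the quantity $|u|^2\bigl(|\chihat|^2+|\alpha_F|^2\bigr)$ is approximately transported along $e_3$, with multiplicative error of relative size $O(a^{-1/2})$ coming from the nonlinear terms controlled via Theorem \ref{main1}. Integrating in $\ubar$ and invoking the hypothesis
\[ \int_0^1 |u_\infty|^2\bigl(|\chihat_0|^2 + |\alpha_{F0}|^2\bigr)(u_\infty,\ubar')\,d\ubar' \geq a \]
then gives $\int_0^1 |u|^2\bigl(|\chihat|^2+|\alpha_F|^2\bigr)(-a/4,\ubar')\,d\ubar' \geq a\bigl(1 - O(a^{-1/2})\bigr)$. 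Dividing by $|u|^2 = a^2/16$ supplies the required flux bound, whence $\tr\chi|_{S_{-a/4,1}} \leq -8/a + o(1/a) < 0$.

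The principal technical obstacle lies in establishing the approximate $e_3$--conservation of $|u|^2(|\chihat|^2+|\alpha_F|^2)$ with sharp enough constants: many of the error terms in the $\nabla_3$ evolutions of $\chihat$ and $\alpha_F$ are individually borderline at the scale-critical level, and one must systematically exploit the refined estimates of Theorem \ref{main1} (the $L^\infty_{\ubar}L^2_u$-type flux bounds together with the elliptic and renormalization machinery inherited from the An--Luk framework) in order to confirm that each such error term contributes only an $O(a^{-1/2})$ relative loss, rather than one comparable to the main term. Once this propagation step is in hand, the trapped-surface conclusion on $S_{-a/4,1}$ follows by combining it with the Raychaudhuri integration above and the trivial control of $\tr\chib$.
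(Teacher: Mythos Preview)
Your proposal is correct and follows essentially the same three-step approach as the paper: trivial negativity of $\tr\chib$ from the estimate $|\tr\chib+2/|u||\lesssim 1/|u|^2$, integration of the Raychaudhuri equation for $\tr\chi$ along $e_4$, and propagation of the flux $|u|^2(|\chihat|^2+|\alpha_F|^2)$ along $e_3$ from $H_{u_\infty}$ to $H_{-a/4}$. The paper makes two implementation details explicit that you leave as ``lower-order errors'': it absorbs the $\omegab$ term in the $\nabla_3$--evolutions by working with $\Omega^2|\chihat|^2$ and $\Omega^2|\alpha_F|^2$ (so that the conserved quantity is $|u|^2\Omega^2(|\chihat|^2+|\alpha_F|^2)$), and it absorbs the $\omega$ term in Raychaudhuri by rewriting it as $\partial_{\ub}(\Omega^{-1}\tr\chi)=-\tfrac12(\tr\chi)^2-|\chihat|^2-|\alpha_F|^2$; after that only $L^\infty$ pointwise bounds from Theorem~\ref{main1} are needed (no direct appeal to elliptic or renormalization estimates), and the errors are indeed of relative size $O(a^{-1/4})$ or better, yielding $\int_0^1(|\chihat|^2+|\alpha_F|^2)(-a/4,\ub')\,d\ub'\geq 12/a>8/a$.
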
The third one is a rescaling of Theorem \ref{main2} and constitutes a criterion for trapped surface formaton of the Einstein--Maxwell system in the region close to the center.
	
	\begin{theorem}\label{main3} Given $\mathcal{I}^{(0)}$ and a fixed $\delta >0$, there exists a sufficiently large $a_0=a_0(\mathcal{I}^{(0)},\delta)$ such that the following holds. For any $0< a_0<a$, the unique smooth solution $(\mathcal{M}, g)$ of the Einstein-Maxwell equations from Theorem \ref{main1} with initial data satisfying
		\begin{itemize}
			\item $\sum_{ i \leq 15, k\leq 3}a^{-\frac{1}{2}} \lVert (\delta \nabla_4)^k \left(\lvert u_\infty\rvert \nabla\right)^i (\chihat,\alpha_F)\rVert_{L^\infty(S_{u_\infty,\ubar})}\leq \mathcal{I}^{(0)}$ along $u=u_\infty$,
			\item Minkowskian initial data along $\ubar=0$,
			\item $\int_0^{\delta} \lvert u_\infty \rvert^2 \left( \lvert \chihat_0 \rvert^2 + \lvert \alpha_{F0}\rvert^2 \right) (u_\infty, \ubar^\prime)\hspace{.5mm} \text{d}\ubar^\prime \geq a$ uniformly for every direction along $u=u_\infty$
		\end{itemize}has a trapped surface at $S_{-\delta \alpha/4,\delta}$.
	\end{theorem}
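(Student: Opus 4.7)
The plan is to deduce Theorem \ref{main3} as a direct corollary of Theorem \ref{main2} by exploiting the scale invariance of the Einstein--Maxwell system \eqref{1.1}--\eqref{1.2}. Concretely, given initial data satisfying the hypotheses of Theorem \ref{main3} in the double--null coordinates $(u, \ub, \theta^A)$ on the region $\{u_\infty \leq u \leq -\delta a/4,\, 0 \leq \ub \leq \delta\}$, I would pass to the rescaled coordinates
\[
(\tilde u,\, \tilde \ub) \;:=\; (u/\delta,\, \ub/\delta),
\]
with the accompanying rescaling of the null pair $(\tilde e_3, \tilde e_4) := (\delta e_3, \delta e_4)$ and of the metric and Maxwell tensor as dictated by the scaling symmetry of \eqref{1.1}--\eqref{1.2}. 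The rescaled pair $(\tilde g, \tilde F)$ is then a smooth solution of the Einstein--Maxwell system on $\{u_\infty/\delta \leq \tilde u \leq -a/4,\, 0 \leq \tilde \ub \leq 1\}$, i.e.\ precisely the setting of Theorem \ref{main2}.

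Next I would verify that each hypothesis of Theorem \ref{main3} for $(g, F)$ transforms into the corresponding hypothesis of Theorem \ref{main2} for $(\tilde g, \tilde F)$. Since $\tilde\nabla_4 = \delta \nabla_4$, the operator $(\delta \nabla_4)^k$ appearing in the Theorem \ref{main3} assumption is precisely $\tilde\nabla_4^k$ expressed in the original frame; combining this with the rescalings of $\chihat$ and $\alpha_F$ and the replacement $|u_\infty| = \delta |\tilde u_\infty|$ dictated by the signature--for--decay--rates bookkeeping developed earlier in the paper, the full weighted $L^\infty$ bound on the data reduces to the Theorem \ref{main2} bound with the same $\mathcal{I}^{(0)}$. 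The Minkowskian data along $\ub = 0$ is plainly invariant, while a change of variables converts the lower bound $\int_0^\delta |u_\infty|^2(|\chihat_0|^2 + |\alpha_{F0}|^2)\,d\ub' \geq a$ into its Theorem \ref{main2} analogue for the rescaled quantities.

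Applying Theorem \ref{main2} to $(\tilde g, \tilde F)$ then produces a trapped surface at $\tilde S_{-a/4,\,1}$, which under the inverse dilation corresponds precisely to $S_{-\delta a/4,\,\delta}$ in the original coordinates. The hard part will be carefully bookkeeping the signatures of all the relevant geometric and electromagnetic quantities so that the rescaling cleanly absorbs every power of $\delta$, and then choosing $a_0 = a_0(\mathcal{I}^{(0)}, \delta)$ large enough that the threshold supplied by Theorem \ref{main2} for the rescaled problem is still met uniformly; once these compatibilities are in place, no further analysis of the Einstein--Maxwell system is required for Theorem \ref{main3}.
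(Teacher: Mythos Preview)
Your proposal is correct and is essentially the same scaling argument the paper carries out in Section~\ref{sec rescale}: one exploits the dilation invariance of the Einstein--Maxwell system to pass between the $\ub\in[0,\delta]$ and $\ub\in[0,1]$ settings, checks that the weighted initial--data norms (in particular the $(\delta\nabla_4)^k$ versus $\nabla_4^k$ factors) and the lower bound on $\int(|\chihat_0|^2+|\alpha_{F0}|^2)$ transform correctly, and then reads off the trapped surface at $S_{-\delta a/4,\delta}$. The only point on which the paper is more explicit than your outline is that the angular coordinates are simultaneously rescaled, $\theta^{A'}=\delta\theta^A$ (via stereographic projection), so that the full metric rescales as $g'=\delta^2 g$; the paper then records the induced transformation laws $\Gamma'=\delta^{-1}\Gamma$, $\Psi'=\delta^{-2}\Psi$, $\Gamma_F'=\delta^{-1}\Gamma_F$ for Ricci coefficients, Weyl components, and Maxwell components (Propositions~\ref{Prop rescale 1}--\ref{Prop rescale 3}) before repeating the trapped--surface argument of Section~\ref{TSF}, whereas you invoke Theorem~\ref{main2} directly as a black box---but the content is the same.
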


\subsection{New Ingredients}	Compared to the corresponding problem for the Einstein vacuum equations, the Einstein-Maxwell system gives rise to additional technical difficulties not encountered in the vacuum case. Here we list some important ones and outline the solutions.
\begin{enumerate}

\item We extend the systematical way of assigning signatures $s_2$ for the geometric quantities to the Maxwell field. To achieve this, we resort to the Bianchi equations, keeping the same $s_2$ values for the Ricci coefficients as those in the vacuum case. Crucially, the Maxwell equations expressed in a null frame have, in a sense, the same structure as the vacuum Bianchi equations. This means that such an assignment of signatures as in \cite{An:Trapped}, \cite{An:2019} can be carried through in a cohesive and coherent way to the Einstein--Maxwell system. 

\item The fact that the Ricci tensor is non--trivial for the Einstein--Maxwell system implies that the energy estimates will be best carried out using the Weyl tensor components instead of the Riemann tensor components. We thus work with the Weyl components $\alpha_W, \alphabar_W, \beta_W, \betabar_W, \rho_W, \sigma_W$ and re-express all equations with respect to them. For simplicity, we still use $\alpha, \alphabar, \beta, \betabar, \rho, \sigma$ to mean $\alpha_W, \alphabar_W, \beta_W, \betabar_W, \rho_W, \sigma_W$.
 
\item We introduce and employ crucial renormalizations for $\beta_a$ and $\beb_a$. The reason for this stems from the Bianchi equations. For example, in the Bianchi equation involving $\nabla_4 \beta$, the right--hand side contains the term $D_{4}R_{4A}$. This, in turn, contains the term $\nabla_4 \alpha_F$. When one attempts to estimate $\nabla_4 \alpha_F$, there is no available equation for it in the null Maxwell equations, thus making it difficult to estimate by itself. It is for this reason that we introduce the quantities

\[  \tbeta_A:= \beta_A - \frac{1}{2}R_{4A}, \quad \tbetabar_A:= \beta_A - \frac{1}{2}R_{3A}.  \] We then rewrite the entire Bianchi equations in terms of those renormalized quantities and use them to do energy estimates. In this way, all the terms can be estimated directly through the null Maxwell equations and null Bianchi equations. In particular, terms like $\nabla_4 \alpha_F$ and $\nabla_3 \alphabar_F$ no longer appear this time.

\item The above renormalizations force us to introduce and use elliptic estimates in the scale--invariant framework. This is achieved in Section 6. Its purpose is to allow us to close the energy estimates for up to $11$ derivatives of the Maxwell field components and up to $10$ derivatives of the Weyl curvature components. In the process, a control on $11$ derivatives of the Ricci coefficients is required. We find a non--trivial way (via energy estimates) to incorporate the elliptic estimates into the systematical approach via the signature for decay rates $s_2$.

\end{enumerate}

	\section{Setting, equations and notations}
	
	\subsection{Double Null Foliation}\label{secdnf}
	We construct a double null foliation in a neighbourhood of $S_{u_{\infty},0}$ as follows: 
	
	\begin{minipage}[!t]{0.35\textwidth}
		\begin{tikzpicture}
		\draw [white](3,-1)-- node[midway, sloped, below,black]{$H_{u_{\infty}}(u=u_{\infty})$}(4,0);
		
		\draw [white](2,2)--node [midway,sloped,above,black] {$\Hb_1(\ub=1)$}(4,0);
		\draw [white](1,1)--node [midway,sloped, below,black] {$\Hb_{0}(\ub=0)$}(3,-1);
		\draw [dashed] (0, 4)--(0, -4);
		\draw [dashed] (0, -4)--(4,0)--(0,4);
		\draw [dashed] (0,0)--(2,2);
		\draw [dashed] (0,-4)--(2,-2);
		\draw [dashed] (0,2)--(3,-1);
		\draw [very thick] (1,1)--(3,-1)--(4,0)--(2,2)--(1,1);
		\fill [black!35!white]  (1,1)--(3,-1)--(4,0)--(2,2)--(1,1);
		\draw [white](1,1)-- node[midway,sloped,above,black]{$H_{u}$}(2,2);
		\draw [->] (3.3,-0.6)-- node[midway, sloped, above,black]{$L'$}(3.6,-0.3);
		\draw [->] (1.4,1.3)-- node[midway, sloped, below,black]{$L'$}(1.7,1.6);
		\draw [->] (3.3,0.6)-- node[midway, sloped, below,black]{$\Lb'$}(2.7,1.2);
		\draw [->] (2.4,-0.3)-- node[midway, sloped, above,black]{$\Lb'$}(1.7,0.4);
		\end{tikzpicture}
	\end{minipage}
	\hspace{0.02\textwidth} 
	\begin{minipage}[!t]{0.55\textwidth}
		\vspace{3mm}
		
		\par \noindent Starting from a point $p$ on $2$-sphere $S_{u_{\infty}, 0}$, in $2$-dimensional $T_p^{\perp}S_{u_{\infty}, 0}$, we could find two future-directed vectors $L'_p, \underline{L}'_p$ such that 
		$$g(L'_p, L'_p)=0, \, \, g(\underline{L}'_p, \underline{L}'_p)=0, \,  \, g(L'_p, \Lb'_p)=-2.$$ Note that $\{L'_p, \Lb'_p\}$ are uniquely determined up to a scaling factor $\lambda>0$: \quad $\{L'_p, \Lb'_p\}\rightarrow \{\lambda L'_p, \lambda^{-1}\Lb'_p\}$.  Emanating from  $p$ and initially tangent to $L'_p$, a unique geodesic $l_p$ is sent out.  We extend $L'$ along $l_p$ such that 
		$D_{L'}L'=0$. We then have that $l_p$ is null. This is because $g(L'_p, L'_p)=0$ and
		$$L'(g(L', L'))=2g(D_{L'}L', L')=0.$$
		We hence have $g(L',L')=0$ along $l_p$.
		Gathering all the $\{l_p\}$ together, we then have an outgoing null hypersurface called $H_{u_{\infty}}$. Similarly, we obtain the incoming null hypersurface $\Hb_0$ emanating from $S_{u_{\infty},0}$.\\
		
		Note that, by above construction, for each point $q$ on $H_{u_{\infty}}$ or $\Hb_0$, in $T_q H_{u_{\infty}}$ or $T_q \Hb_0$, there is a preferred null vector $L'_q$ or $\Lb'_q$ associated with $q$. \\
	\end{minipage}

\par \noindent	We proceed to define the function $\O$ to be 1 on $S_{u_{\infty},0}$ and extend $\O$ as a continuous function along $H_{u_{\infty}}$ and $\Hb_0$. \footnote{For a general double null foliation, we have the gauge freedom of choosing how to extend $\O$ along $H_{u_{\infty}}$ and $\Hb_0$. In this paper, we extend $\O\equiv 1$ on both $H_{u_{\infty}}$ and $\Hb_0$.} We consider vector fields 
	$$L=\O^2 L' \, \,\, \mbox{along}\, \, \, H_{u_{\infty}}, \,\, \mbox{and} \, \, \Lb=\O^2 \Lb' \, \, \,\mbox{along}\, \, \, \Hb_0$$
	and define functions
	$$\ub \,\, \mbox{on} \,\, H_{u_{\infty}}\,\,\, \mbox{satisfying}\, \, \, L\ub=1 \,\, \mbox{and} \,\, \ub=0\, \,\mbox{on}  \,\, S_{u_{\infty}, 0},$$
	$$u \, \, \mbox{on} \, \, \Hb_0 \, \, \, \mbox{satisfying} \, \, \,\Lb u=1 \, \, \mbox{and} \, \, u=u_{\infty}\, \,\mbox{on} \, \, S_{u_{\infty}, 0}.$$
	Let $S_{u_{\infty}, \ub'}$ be the embedded $2$-surface on $H_{u_{\infty}}$, such that $\ub=\ub'$. 
	Similarly, define $S_{u', 0}$ to be the embedded $2$-surface on $\Hb_0$, such that $u=u'$. 	At each point $q$ on $2$-surface $S_{u_{\infty}, \ub'}$, we already have the preferred outgoing null vector $L'_q$ tangent to $H_{u_{\infty}}$. Hence, at $q$, we can also fix a unique incoming null vector $\Lb_q'$ via requiring
	$$g(\Lb'_q, \Lb'_q)=0 \quad \mbox{and} \quad g(\Lb'_q, L'_q)=-2\O^{-2}|_q.$$
	There exists a unique geodesic $\underline{l}_q$ emitting from $q$ with direction $\Lb'$. We then extend $\Lb'$ along $\underline{l}_q$ by imposing $D_{\Lb'}\Lb'=0$.  Gathering all the $\{\underline{l}_q\}$ for $q\in S_{u_{\infty}, \ub'}$, we construct the incoming null hypersurface $\Hb_{\ub'}$ emanating from $S_{u_{\infty}, \ub'}$. Similarly, from $S_{u',0}$ we also construct the outgoing null hypersurface $H_{u'}$. We further define the $2$-spheres $S_{u', \ub'}:=H_{u'}\cap \Hb_{\ub'}$. 
	\vspace{3mm}
	
	\par \noindent At each point $p$ of $S_{u',\ub'}$, we define the positive-valued function $\O$ via 
	\begin{equation}\label{define Omega}
	g(L'_p, \Lb'_p)=:-2\O^{-2}| _p.
	\end{equation}
	Note that $L'_p$ is well-defined on $H_{u'}$, along an outgoing null geodesic $l$ passing through $p$;  $\Lb'_p$ is also well-defined on $\Hb_{\ub'}$, along an incoming null geodesic $\underline{l}$ crossing $p$. \\
	
\vspace{3mm} \par \noindent	These $3$-dimensional incoming null hypersurfaces $\{\Hb_{\ub'}\}_{0\leq \ub' \leq 1},$ along with the outgoing null hypersurfaces $\{H_{u'}\}_{u_{\infty}\leq u' \leq -a/4}$ and their pairwise intersections $S_{u', \ub'}=H_{u'}\cap \Hb_{\ub'}$ give us the so-called \textit{double null foliation}. \\
	
\vspace{3mm}	
\par \noindent	On $\S$, by (\ref{define Omega}) we have $g(L', \Lb')=-2\O^{-2}$. Thus, $g(\O L', \O \Lb')=-2$. Throughout this paper we will work with the normalized null pair $(e_3, e_4)$, namely
	$$e_3:=\Omega\Lb', \quad e_4:=\Omega L', \,\, \mbox{and} \quad g(e_3, e_4)=-2.$$
	
\vspace{3mm} \par \noindent 	Moreover, for the imposition of our characteristic initial data we choose the following gauge:
	$$\Omega\equiv 1 \quad\mbox{on $H_{u_{\infty}}$ and $\Hb_0$}.$$
	\begin{remark}
		The functions $u$ and $\ub$ defined above also satisfy the eikonal equations
		$$g^{\mu\nu}\partial_\mu u\partial_\nu u=0,\quad g^{\mu\nu}\partial_\mu\ub\partial_\nu \ub=0.$$
		And it is straight forward to check
		$$L'^\mu=-2g^{\mu\nu}\partial_\nu u,\quad \Lb'^\mu=-2g^{\mu\nu}\partial_\nu \ub, \quad  L\ub=1, \quad \Lb u=1.  $$
		Here $\Lb:=\Omega^2\Lb', \quad L:=\Omega^2 L'$ are also called equivariant vector fields. 
	\end{remark}

	\subsection{The Coordinate System}\label{coordinates}
	We shall use a coordinate system $(u,\ub, \theta^1, \theta^2)$. Here $u$ and $\ub$ are defined as above. To get $(\theta^1, \theta^2)$ for each point on $\S$, we follow the approach in Chapter 1 of \cite{Chr:book}. We first define a coordinate system $(\theta^1, \theta^2)$ on $S_{u_{\infty},0}$. Since $S_{u_{\infty},0}$ is the standard $2$-sphere in Minkowski spacetime, here we use the coordinates of stereographic projection. Then we extend this coordinate system to $\Hb_0$ by requiring
	$$\Ls_{\Lb} \theta^A=0\mbox{ on $\Hb_0$}. \footnote{On $\Hb_0$, we have $\O=1$ and $\Ls_{\Lb} \theta^A=\f{\partial}{\partial u}\theta^A$.}$$
	Here $\Ls_L$ is the restriction of the Lie derivative to $TS_{u,\ub}$. In other words,  given a point $p$ on $S_{u_{\infty}, 0}$, assuming $l_p$ is the incoming null geodesic on $\Hb_0$ emanating from $p$, then all the points along $l_p$ are assigned the same angular coordinate $(\theta^1, \theta^2)$.
	We further extend this coordinate system from $\Hb_0$ to the whole spacetime under the requirement
	$$\Ls_L \th^A=0,$$ 
	i.e. that all the points along the same outgoing null geodesics (along $L$) on $H_u$ have the same angular coordinate. We have thus established a coordinate system in a neighborhood of $S_{u_{\infty}, 0}$. With this coordinate system, we can rewrite $e_3$ and $e_4$ as 
	$$e_3=\Omega^{-1}\left(\frac{\partial}{\partial u}+b^A\frac{\partial}{\partial \th^A}\right), \hspace{.5mm} e_4=\Omega^{-1}\frac{\partial}{\partial \ub}.$$
	The Lorentzian metric $g$ takes the form
	\begin{equation}\label{equation g}
	g=-2\O^2(du\otimes d\ub+d\ub\otimes du)+\gamma_{AB}(d\theta^A-d^A du)\otimes(d\theta^B-d^B du).
	\end{equation}
	We require $b^A$ to satisfy $b^A=0$ on $\Hb_0$.
	
	\subsection{The equations}
	
	In this paper, we study the Einstein-Maxwell equations for a $4$-dimensional Lorentzian manifold $(\mathcal{M}, g)$ with signature $\{-,+,+,+\}$
	\be\label{em}
	R_{\mu\nu}-\f12Rg_{\mu\nu}=T_{\mu\nu},
	\ee
	where
	$$T_{\mu\nu}=F_{\mu\lambda}{F_{\nu}}^{\lambda}-\frac14 g_{\mu\nu}F_{\lambda\tau}F^{\lambda \tau}.$$ Here $F_{\alpha \beta}$ is an anti-symmetric $2-$tensor representing the electromagnetic field.
	We introduce null tetrads $\{e_a, e_b, e_3, e_4\}$ where $a,b=1,2$ and require 
	$$g(e_a, e_b)=\d_{ab}, \quad g(e_3, e_4)=-2, \quad g(e_a, e_3)=0, \quad g(e_a, e_4)=0.$$
	For the Weyl curvature $W_{\mu\nu\lambda\tau}$, we define the null Weyl curvature components:
	\begin{equation}
	\begin{split}
	\a_{ab}&=W(e_a, e_4, e_b, e_4),\quad \, \,\,   \ab_{ab}=W(e_a, e_3, e_b, e_3),\\
	\b_a&= \frac 1 2 W(e_a,  e_4, e_3, e_4) ,\quad \bb_a =\frac 1 2 W(e_a  e_3,  e_3, e_4),\\
	\rho&=\frac 1 4 W(e_4,e_3, e_4,  e_3),\quad \sigma=\frac 1 4  \,^*W(e_4,e_3, e_4,  e_3).
	\end{split}
	\end{equation}
	Here $\, ^*W$ is the Hodge dual of $W$. 
	\vspace{3mm}
\par	\noindent	For the Riemann curvature tensor $R_{\mu\nu\lambda\tau}$, we define the null Riemann curvature components:
	\begin{equation}
	\begin{split}
	{(\a_{\M R})}_{ab}&=R(e_a, e_4, e_b, e_4),\quad \, \,\,   (\ab_{\M R})_{ab}=R(e_a, e_3, e_b, e_3),\\
	(\b_{\M R})_a&= \frac 1 2 R(e_a,  e_4, e_3, e_4) ,\quad (\bb_{\M R})_a =\frac 1 2 R(e_a,  e_3,  e_3, e_4),\\
	\rho_{\M R}&=\frac 1 4 R(e_4,e_3, e_4,  e_3),\quad \sigma_{\M R}=\frac 1 4  \,^*R(e_4,e_3, e_4,  e_3).
	\end{split}
	\end{equation}
	Here $\, ^*R$ is the Hodge dual of $R$. 
	
\vspace{3mm}	\noindent Denote $D_A:=D_{e_{A}}$. We define the Ricci coefficients:
	
	\begin{equation}
	\begin{split}
	&\chi_{AB}=g(D_A e_4,e_B),\, \,\, \quad \chib_{AB}=g(D_A e_3,e_B),\\
	&\eta_A=-\frac 12 g(D_3 e_A,e_4),\quad \etab_A=-\frac 12 g(D_4 e_A,e_3),\\
	&\omega=-\frac 14 g(D_4 e_3,e_4),\quad\,\,\, \omegab=-\frac 14 g(D_3 e_4,e_3),\\
	&\zeta_A=\frac 1 2 g(D_A e_4,e_3).
	\end{split}
	\end{equation}
	\vspace{3mm} \par \noindent We decompose $\chi$ and $\chib$ into its trace and traceless parts. Denote by $\chih_{AB}$ and $\chibh_{AB}$ the traceless part of $\chi_{AB}$ and $\chib_{AB}$ respectively. \\

	\noindent		We further define
	$$(\a_F)_a=F_{a4}, \quad (\ab_F)_a=F_{a3}, \quad \rho_F=\f12F_{34}, \quad \sigma_F=F_{12}.$$
	
	\noindent Note that (\ref{em}) implies
	$$R_{\mu\nu}=T_{\mu\nu}  \quad \mbox{and} \quad R=0.$$

	\noindent		Expressed in this double null frame, we have
	\begin{gather}
	\nabla_4 \hspace{.5mm} \tr\chi + \frac{1}{2}\hspace{.5mm}  (\tr\chi)^2 = -\lvert \chihat \rvert^2- 2 \omega \hspace{.5mm} \tr\chi - \alpha_F^2,\\
	\nabla_4\hspace{.5mm}  \chihat + \tr\chi \hspace{.5mm} \chihat = -2\hspace{.5mm} \omega \hspace{.5mm} \chihat -\alpha,\\ \nabla_3\hspace{.5mm}  \tr\chibar + \frac{1}{2}\hspace{.5mm}  (\tr\chibar)^2 = -\lvert \chibarhat \rvert^2 -2\hspace{.5mm}  \omegabar \hspace{.5mm} \tr\chibar - \alphabar_F^2, \\ 
	\nabla_3 \hspace{.5mm}  \chibarhat + \tr\chibar \hspace{.5mm} \chibarhat = -2\omegabar \hspace{.5mm} \chibarhat- \alphabar, \\ 
	\nabla_4 \hspace{.5mm} \tr\chibar + \frac{1}{2}\hspace{.5mm} \tr\chi \hspace{.5mm} \tr\chibar = 2\hspace{.5mm} \omega \hspace{.5mm} \tr\chibar + 2\hspace{.5mm} \rho - \chihat\cdot \chibarhat +2 \hspace{.5mm} \text{div}\hspace{.5mm} \etabar +2 \lvert \etabar \rvert^2,\\ 
	(\nabla_4 \chibarhat)_{ab} + \frac{1}{2}\hspace{.5mm}  \tr\chi\hspace{.5mm}  \chibarhat_{ab} = (\nabla \widehat{\otimes} \etabar)_{ab} + 2\hspace{.5mm} \omega \hspace{.5mm} \chibarhat_{ab} -\frac{1}{2}\hspace{.5mm} \tr\chibar \hspace{.5mm} \chihat_{ab} +(\etabar \widehat{\otimes} \etabar)_{ab}-\frac12(\alpha_F\widehat{\otimes}\underline{\alpha}_F)_{ab},
	\\ \nabla_3 \tr\chi + \frac{1}{2}\hspace{.5mm} \tr\chi \hspace{.5mm} \tr\chibar = 2\hspace{.5mm} \omegabar \hspace{.5mm} \tr\chi +2 \hspace{.5mm} 		
	\rho- \chihat\cdot \chibarhat + 2 \hspace{.5mm} \text{div}\hspace{.5mm}  \eta + 2\hspace{.5mm} \lvert \eta \rvert^2,
	\\ 		
	(\nabla_3 \chihat)_{ab} + \frac{1}{2} \tr\chibar \hspace{.5mm}\chihat_{ab} = (\nabla \widehat{\otimes} \eta)_{ab} + 2\hspace{.5mm} \omegabar\hspace{.5mm} \chihat_{ab} - \frac{1}{2}\hspace{.5mm} \tr\chi \hspace{.5mm} \underline{\chihat}_{ab} + (\eta \widehat{\otimes} \eta)_{ab}-\frac12(\alpha_F\widehat{\otimes}\underline{\alpha}_F)_{ab}.
	\end{gather}

	\noindent Note 	that
	\[\beta_a-\frac12 R_{a4}=(\beta_{\M R})_a,\]
	\[\underline{\beta}_a+\frac12 R_{a3}=(\underline{\beta}_{\M R})_a,\]
	\[\rho-\frac12 R_{43}=\rho_{\M R}.\]
	Moreover,
	$$R_{11}=\f12\sigma_F^2+\f12\rho_F^2-(\a_F)_1(\ab_F)_1+\f12\a_F\cdot \ab_F,$$
	$$R_{22}=\f12\sigma_F^2+\f12\rho_F^2-(\a_F)_2(\ab_F)_2+\f12\a_F\cdot \ab_F,$$
	$$R_{4a}=R_{a4}=\rho_F (\alpha_F)_a-\sigma_F \epsilon_{ab}(\alpha_F)_b,$$
	$$R_{3a}=R_{a3}=-\rho_F (\ab_F)_a-\sigma_F \epsilon_{ab}(\ab_F)_b,$$
	$$R_{43}=\rho_F^2+\sigma_F^2, \quad R_{44}=\a_F\cdot \a_F, \quad R_{33}=\ab_F\cdot \ab_F.$$
	
	\vspace{3mm} \par \noindent The other components satisfy the following transport equations:

	\begin{gather}
	\nabla_4 \eta_a = - \chi_{ab} \cdot (\eta - \etabar)_b-\beta_a-\frac12 R_{a4}, \label{etastructureequation} \\ \label{etabarstructureequation}
	\nabla_3 \etabar_a = -\chibar_{ab} \cdot (\etabar - \eta)_b +\underline{\beta}_a-\frac12 R_{a3}, \\ \nabla_4 \omegabar = 2\omega \omegabar -\eta\cdot \etabar + \frac{1}{2} \lvert \eta \rvert^2 +\frac{1}{2} \rho+\frac14 R_{34}, \\ 
	\nabla_3 \omega = 2\omega \omegabar -\eta\cdot \etabar + \frac{1}{2} \lvert \etabar \rvert^2 +\frac{1}{2} \rho+\frac14 R_{34},
	\end{gather}		
	as well as the constraint equations
	\begin{gather} 
	\text{div}\hspace{.5mm}\chihat =  \frac{1}{2} \nabla \tr\chi - \frac{1}{2}(\eta - \etabar) \cdot (\chihat - \frac{1}{2} \tr\chi) -\beta_\mathcal{R}, \\
	\text{div}\hspace{.5mm}\chibarhat =  \frac{1}{2} \nabla \tr\chibar - \frac{1}{2}(\eta - \etabar) \cdot (\chibarhat - \frac{1}{2} \tr\chibar) -\betabar_\mathcal{R},\\ 
	\text{curl} \hspace{.5mm} \eta = - \text{curl}\hspace{.5mm} \etabar = \sigma + \frac{1}{2} \chibarhat \wedge \chihat, \\
	K = -\rho_\mathcal{R} - \frac{1}{4} \tr\chi\hspace{.5mm} \tr\chibar + \frac{1}{2} \hspace{.5mm}\chihat \cdot \chibarhat.
	\end{gather}
	
	\par \noindent	Here $K$ is the Gauss curvature of the spheres $S_{u,\ubar}$. The null curvature components satisfy the null Bianchi equations
	
	\begin{gather}
	\nabla_4 \beta + 2\tr\chi \beta = \text{div} \alpha - 2\omega \beta +\eta\cdot \alpha -\frac{1}{2}(D_A R_{44}-D_4 R_{4A}), \label{Bianchi beta}\\
	\nabla_3 \beta + \tr\chibar \beta = \nabla \rho + \prescript{*}{}{\nabla \sigma} +2\chihat\cdot \beta+2\omegabar \beta +3(\eta \rho + \prescript{*}{}{\eta}\sigma)  +\frac{1}{2}(D_4 R_{A3} -D_3R_{A4}),\\ 
	\nabla_4 \betabar +\tr\chi \betabar = -\nabla \rho + \prescript{*}{}{\nabla \sigma} +2\chibarhat\cdot \beta + 2\omega \betabar - 3(\etabar \rho - \Hodge{\etabar}\sigma)- \frac{1}{2}(D_4 R_{A3}-D_3R_{A4}),\\
	\nabla_3 \betabar +2\tr\chibar \hspace{.5mm}\betabar= -\text{div}\alphabar  -2 \omegabar \hspace{.5mm}\betabar +\etabar \alphabar + \frac{1}{2}(D_A R_{33}-D_{3}R_{3A}),\\ 
	\nabla_4 \alphabar +\frac{1}{2}\tr\chi \alphabar = -\nabla\widehat{\otimes} \betabar +4\omega \alphabar -3(\chibarhat\rho - \Hodge{\chibarhat}\sigma)+(\zeta - 4\etabar)\widehat{\otimes}\betabar +\frac{1}{4}(D_4 R_{33}-D_3 R_{34})g_{ab}, \\
	\nabla_3 \alpha + \frac{1}{2}\tr\chibar \alpha = \nabla \widehat{\otimes}\beta+ 4 \omegabar \alpha -3(\chihat \rho +\Hodge{\chihat}\sigma)+(\zeta + 4 \eta) \widehat{\otimes} \beta +\frac{1}{4}(D_3 R_{44}- D_4 R_{43}) g_{ab},\\ 
	\nabla_4 \rho + \frac{3}{2} \tr\chi \rho = \text{div} \beta - \frac{1}{2}\chibarhat \cdot \alpha +\zeta\cdot  \beta + 2\etabar \cdot \beta - \frac{1}{4}(D_3 R_{44} - D_4 R_{43}),\\ 
	\nabla_3 \rho + \frac{3}{2}\tr\chibar \rho = -\text{div} \betabar - \frac{1}{2}\chihat\cdot\alphabar+ \zeta\cdot \betabar - 2\eta\cdot \betabar + \frac{1}{4}(D_3 R_{34} - D_4 R_{33}),\\ 
	\nabla_4 \sigma +\frac{3}{2}\tr\chi \sigma = -\text{div}\Hodge{\beta} + \frac{1}{2}\hspace{.5mm}\hat{\chibar}\cdot \Hodge{\alpha}- \zeta \cdot \Hodge{\beta} -2 \etabar \cdot \Hodge{\beta} - \frac{1}{4}(D_\mu R_{4\nu} - D_\nu R_{4\mu}){\epsilon^{\mu \nu}}_{34}, \\ 
	\nabla_3 \sigma + \frac{3}{2} \tr\chibar \sigma  = -\text{div} \Hodge{\betabar} + \frac{1}{2} \chihat \cdot \Hodge{\alphabar} -\zeta \cdot \Hodge{\betabar} - 2\eta \cdot \Hodge{\betabar} +\frac{1}{4}(D_\mu R_{3\nu} - D_\nu R_{3 \mu}) {\epsilon^{\mu \nu}}_{34} \label{Bianchi rho}.\end{gather}Here, the Schouten tensor $S_{\mu \nu}$ is equal to the Ricci tensor $R_{\mu \nu}$ because of the special form of the electromagnetic field tensor, namely the fact that the Ricci scalar $R$ vanishes. Finally, the Maxwell equations are equivalent to the null Maxwell equations
	
	\begin{gather}
	\nabla_4 \alphabar_F+ \frac{1}{2}\tr\chi \alphabar_F = - \nabla \rho_F + \Hodge{\nabla} \sigma_F -2 \Hodge{\etabar}\cdot \sigma_F -2 \Hodge{\eta}\cdot \rho_F+2\omega \hspace{.5mm} \alphabar_F- \chibarhat \cdot \alpha_F, \\ 
	\nabla_3 \alpha_F+ \frac{1}{2} \tr\chibar \alpha_F =  \nabla \rho_F+ \Hodge{\nabla}\sigma_F -2\Hodge{\etabar} \cdot \sigma_F + 2 \etabar \cdot \rho_F +2\omegabar \alpha_F -\chihat \cdot \alphabar_F,\\ 
	\nabla_4 \rho_F = \text{div}\hspace{.5mm} \alpha_F - \tr\chi \rho_F - (\eta-\etabar) \cdot \alpha_F, \\
	\nabla_4 \sigma_F= - \text{curl}\hspace{.5mm} \alpha_F  -\tr\chi \sigma_F +(\eta-\etabar) \cdot \Hodge{\alpha_F}, \\
	\nabla_3 \rho_F+ \tr\chibar \rho_F =-\text{div} \alphabar_F +(\eta-\etabar)\cdot \alphabar_F, \\
	\nabla_3 \sigma_F +\tr\chibar \sigma_F = -\text{curl}\hspace{.5mm} \alphabar_F  + (\eta-\etabar) \cdot \Hodge{\alphabar_F}.
	\end{gather}

	\subsection{Integration}\label{24Integration} Let $U$ be a coordinate patch on $\S$ and let $p_U$ be the corresponding partition of unity. For a function $\phi$, we define its integral on $\S$ and along $H_u, \Hbar_{\ubar}$ by
	
	\[ \int_{\S} \phi := \sum_{U} \int_{-\infty}^{\infty} \int_{-\infty}^{\infty} \phi \cdot p_{{\color{black}U}} \cdot \sqrt{\det \gamma} \hspace{.5mm} \text{d} \theta^1 \hspace{.5mm} \text{d} \theta^2,    \] 	\[ \int_{\Hu} \phi := \sum_{U}\int_0^{\ubar} \int_{-\infty}^{\infty} \int_{-\infty}^{\infty} \phi \cdot 2\hspace{.5mm} p_{{\color{black}U}} \cdot \O \cdot \sqrt{\det \gamma} \hspace{.5mm} \text{d} \theta^1 \hspace{.5mm} \text{d} \theta^2 \hspace{.5mm} \dubarprime,    \] \[ \int_{\Hbar_{\ubar}^{(u_{\infty}, u)}  } \phi := \sum_{U}\int_{u_{\infty}}^{u} \int_{-\infty}^{\infty} \int_{-\infty}^{\infty} \phi \cdot 2\hspace{.5mm} p_{{\color{black}U}} \cdot \O \cdot \sqrt{\det \gamma} \hspace{.5mm} \text{d} \theta^1 \hspace{.5mm} \text{d} \theta^2 \hspace{.5mm} \duprime.    \]Let $\mathcal{D}_{u,\ubar}$ be the region $u_{\infty} \leq u^{\prime} \leq u$, $0 \leq \ubar^{\prime} \leq \ubar$. We define the integral of $\phi$ in the region $\mathcal{D}_{u,\ubar}$ by \[ \int_{\mathcal{D}_{u,\ubar}} \phi := \sum_U \int_{u_{\infty}}^u \int_0^{\ubar}   \int_{-\infty}^{\infty} \int_{-\infty}^{\infty} \phi \cdot p_U \cdot \O^2 \cdot \sqrt{-\det {\color{black}g}} \hspace{.5mm} \text{d}\theta^1 \hspace{.5mm} \text{d}\theta^2 \dubarprime \duprime.       \]We proceed to define, for $1\leq p < \infty$, the $L^p$-norms for an arbitrary tensorfield $\phi$:
	
	\[ \pSu{\phi}^p:= \int_{\S} \langle \phi,\phi \rangle_{\gamma}^{p/2},     \] \[ \lVert \phi \rVert_{L^p(H_u)}^p := \int_{H_u}    \langle \phi, \phi \rangle_{\gamma}^{p/2},    \]\[ \lVert \phi \rVert_{L^p(\Hbar_{\ubar})}^p := \int_{\Hbar_{\ubar}}    \langle \phi, \phi \rangle_{\gamma}^{p/2}.    \] When $p=\infty,$ we define the $L^{\infty}$ norm by \[  \inftySu{\phi} := \sup_{\theta \in \S} \langle \phi, \phi \rangle_{\gamma}^{\frac{1}{2}}(\theta).    \]
	\subsection{Definition of signatures}\label{definitionsignaturessection}
	
	We give the following table for signatures throughout this work:
	
	\begin{center}
		\begin{tabular}{|c|c|c|c|c|c|c|c|c|c|c|c|c|c|c|c|c|c|c|}
			\hline 
			& $\alpha$ & $\beta$ & $\rho$ & $\sigma$ & $\betabar$ & $\alphabar$ & $\chi$ & $\omega$ & $\zeta$ & $\eta$ & $\etabar$ & $\tr\chibar$ & $\chibarhat$ & $\omegabar$ & $\alpha_F$ & $\rho_F$ & $\sigma_F$ & $\alphabar_F$ \\ 
			\hline 
			$s_2$ & 0 & 0.5 & 1 & 1 & 1.5  & 2 & 0 & 0 & 0.5 & 0.5 & 0.5 & 1 &1  & 1 & 0 & 0.5 & 0.5 & 1 \\ 
			\hline 
		\end{tabular} 	
	\end{center}This comes from wanting to have $s_2(\alpha_F) = s_2(\chihat), s_2(\alphabar_F)=s_2(\chibarhat)$ and  $s_2(\rho_F,\sigma_F)$ such that the null Maxwell equations conserve signature.
	
	\subsection{Scale-invariant norms}
	For any horizontal tensor-field $\phi$ with signature $s_2(\phi)$, we define the following scale-invariant norms on $S_{u,\ubar}$:
	
	\[\scaleinfinitySu{\phi} :=  a^{-s_2(\phi)}  \lvert u \rvert^{2s_2(\phi)+1} \lVert \phi \rVert_{L^{\infty}(S_{u,\ubar})},    \]\[ \scaletwoSu{\phi} := a^{-s_2(\phi)}\lvert u \rvert^{2s_2(\phi)} \lVert \phi \rVert_{L^2(S_{u,\ubar})},   \]\[ \lVert \phi \rVert_{\mathcal{L}^{1}_{(sc)}(S_{u,\ubar})}  := a^{-s_2(\phi)}\lvert u \rvert^{2s_2(\phi)-1} \lVert \phi \rVert_{L^1(S_{u,\ubar})}.   \]Along $H_u^{(0,\ubar)}$ and $\Hbar_{\ubar}^{(u_\infty,u)}$ we also define scale-invariant norms along null hypersurfaces \[   \scaletwoHu{\phi}^2 := \int_{0}^{\ubar} \scaletwoSuubarprime{\phi}^2\hspace{.5mm} \text{d}\ubar^{\prime},     \] \[\scaletwoHbaru{\phi}^2   := \int_{u_{\infty}}^{u} \frac{a}{\lvert u^\prime \rvert^2} \scaletwoSuprime{\phi}^2\hspace{.5mm}\text{d}u^\prime. \]

	\subsection{Conservation of signatures}
	Notice that under the table of signatures in Section \ref{definitionsignaturessection} and the fact that the induced metric on a $2-$sphere $\gamma_{\alpha\beta}$ has $s_2(\gamma_{\alpha \beta})=0$, the following remarkable property follows for tensorfields $\phi_1$ and $\phi_2$:
	\[  s_2(\phi_1 \cdot \phi_2) = s_2(\phi_1)  +s_2(\phi_2).  \]This ensures \textit{signature conservation} for all null structure, Bianchi, constraint and null Maxwell equations. When working with scale-invariant norms, this key property
	enables us to treat all the terms on the right hand side as one term. For example, look at the null Maxwell equation for $\nabla_3 \alpha_F$:
	
	\[ 	\nabla_3 \alpha_F+ \frac{1}{2} \tr\chibar \alpha_F = - \nabla \rho_F+ \Hodge{\nabla}\sigma_F -2\Hodge{\etabar} \cdot \sigma_F + 2 \etabar \cdot \rho_F +2\omegabar \alpha_F -\chihat \cdot \alphabar_F. \]There holds \begin{itemize}
		\item $s_2(\nabla_3 \alpha_F) = s_2 \alpha_F +1 =1,$
		\item $s_2(\tr\chibar \hspace{.5mm} \alpha_F) = s_2(\tr\chibar )+s_2(\alpha_F) = 1$,
		
		\item $s_2(\nabla \rho_F, \Hodge{\nabla}\sigma_F) = \frac{1}{2} + s_2(\rho_F,\sigma_F)= \frac{1}{2}+ \frac{1}{2}=1$,
		\item  $s_2(\etabar \cdot \rho_F, \Hodge{\etabar}\cdot \sigma_F) = \frac{1}{2}+ \frac{1}{2}=1$,
		\item $s_2(\omegabar \hspace{.5mm} \alpha_F) =1+0=1,$
		\item $s_2(\chihat \cdot \alphabar_F) = 0+1=1$. 
	\end{itemize}Thus, throughout the equation, there is a balance of signature.
	\subsection{H\"older's inequality in scale-invariant norms}Any two tensorfields satisfy the following scale-invariant H\"older inequalities:
	
	\be \lVert \phi_1 \cdot \phi_2 \rVert_{\mathcal{L}^{1}_{(sc)}(S_{u,\ubar})} \lesssim \frac{1}{\lvert u \rvert}\scaletwoSu{\phi_1}\scaletwoSu{\phi_2},   \ee \be \lVert \phi_1 \cdot \phi_2 \rVert_{\mathcal{L}^{1}_{(sc)}(S_{u,\ubar})} \leq \frac{1}{\lvert u \rvert}\scaleoneSu{\phi_1}\scaleinfinitySu{\phi_2},   \ee \be  \scaletwoSu{\phi_1\cdot \phi_2} \leq \frac{1}{\lvert u \rvert} \scaletwoSu{\phi_1}\scaleinfinitySu{\phi_2},      \ee Also, the following inequality holds \be \scaleinfinitySu{\phi_1\cdot \phi_2} \leq \frac{1}{\lvert u \rvert} \scaleinfinitySu{\phi_1}\scaleinfinitySu{\phi_2}.  \ee Crucially, in the region of study $\frac{1}{\lvert u \rvert}\ll 1$. This means, when measuring the size of a product of terms in scale-invariant norms, this size is very small compared to the scale-invariant norms of the individual terms. Essentially, it is this crucial fact that allows us to close all bootstrap arguments throughout this paper. 
	
	\subsection{Norms}
	
	Let $\psi \in \begin{Bmatrix} \omega, \tr\chi, \eta, \etabar, \omegabar     \end{Bmatrix}, \hspace{.5mm} \Psi \in \begin{Bmatrix}
	\beta, \rho, \sigma, \betabar,\alphabar \end{Bmatrix}$ and $\Psi^\prime \in \begin{Bmatrix} \rho, \sigma, \betabar, \alphabar \end{Bmatrix}$. Denote $\widetilde{\tr \chibar} = \tr \chibar + \frac{2}{\lvert u\rvert}$. Also, let $\Y \in \begin{Bmatrix}
	\rho_F, \sigma_F, \alphabar_F
	\end{Bmatrix}$. For $0\leq i \leq 6$, we define 
	
	\begin{equation}
\begin{split} \label{Ricinfty} \mathcal{O}_{i,\infty}(u, \ubar) := \frac{1}{a^{\frac{1}{2}}} \scaleinfinitySu{ ( a^{\frac{1}{2}}\nabla )^i \chihat}     +\scaleinfinitySu{ ( a^{\frac{1}{2}}\nabla )^i \psi}  + \frac{a^{\frac{1}{2}}}{\lvert u \rvert } \scaleinfinitySu{( a^{\frac{1}{2}}\nabla )^i \chibarhat} \\ 
	+ \frac{a}{\lvert u \rvert^2} \scaleinfinitySu{ (a^{\frac{1}{2}}\nabla )^i \tr\chibar}  + \frac{a}{\lvert u \rvert} \scaleinfinitySu{ (a^{\frac{1}{2}}\nabla )^i      \widetilde{\tr\chibar}},\end{split}
\end{equation} \begin{gather*}
	\mathcal{R}_{i,\infty}(u,\ubar) = \frac{1}{a^{\frac{1}{2}}} \scaleinfinitySu{(a^{\frac{1}{2}}\nabla)^i \alpha} + \scaleinfinitySu{(a^{\frac{1}{2}}\nabla)^i \Psi}, \\
	\mathcal{F}_{i,\infty}(u,\ubar) =  \frac{1}{a^{\frac{1}{2}}} \scaleinfinitySu{(a^{\frac{1}{2}}\nabla)^i \alpha_F} + \scaleinfinitySu{(a^{\frac{1}{2}}\nabla)^i \Y}. \end{gather*} For $ 0 \leq i \leq 9$ and $0\leq j \leq 10$, we define \begin{align*}
	\mathcal{O}_{j,2}(u,\ubar) = \frac{1}{a^{\frac{1}{2}}} \scaletwoSu{ ( a^{\frac{1}{2}}\nabla )^j \chihat}     +\scaletwoSu{ ( a^{\frac{1}{2}}\nabla )^j \psi}  + \frac{a^{\frac{1}{2}}}{\lvert u \rvert } \scaletwoSu{( a^{\frac{1}{2}}\nabla )^j \chibarhat} \\ 
	+ \frac{a}{\lvert u \rvert^2} \scaletwoSu{ (a^{\frac{1}{2}}\nabla )^j \tr\chibar}  + \frac{a}{\lvert u \rvert} \scaletwoSu{ (a^{\frac{1}{2}}\nabla )^j      \widetilde{\tr\chibar}}, \end{align*} \begin{gather*} 
	\mathcal{R}_{i,2}(u,\ubar) = \frac{1}{a^{\frac{1}{2}}}
	\scaletwoSu{(a^{\frac{1}{2}}\nabla)^i \alpha} + \scaletwoSu{(a^{\frac{1}{2}}\nabla)^i \Psi}, \\ 
		\mathcal{F}_{j,2}(u,\ubar) =  
		\scaletwoSu{(a^{\frac{1}{2}})^{j-1}\nabla^j \alpha_F} + \scaletwoSu{(a^{\frac{1}{2}}\nabla)^j (\rho_F,\sigma_F,\alphabar_F)}.
	\end{gather*}For $0\leq i \leq 10$ and $0\leq j \leq 11$ we define \begin{gather*}
	\mathcal{R}_i(u,\ubar) =  \frac{1}{a^{\frac{1}{2}}}\scaletwoHu{(a^{\frac{1}{2}}\nabla)^i \alpha} + \scaletwoHu{(a^{\frac{1}{2}}\nabla)^i \Psi}, \\ 
	\underline{\mathcal{R}}_i(u,\ubar) = \frac{1}{a^{\frac{1}{2}}}\scaletwoHbaru{(a^{\frac{1}{2}}\nabla)^i \beta} + \scaletwoHbaru{(a^{\frac{1}{2}}\nabla)^i \Psi^\prime}, \\
	\mathcal{F}_j(u,\ubar) =  \frac{1}{a^{\frac{1}{2}}}\scaletwoHu{(a^{\frac{1}{2}})^{j-1}\nabla^j \alpha_F} + \scaletwoHu{(a^{\frac{1}{2}})^{j-1}\nabla^j (\rho_F,\sigma_F)},	 \\  
		\underline{\mathcal{F}}_j(u,\ubar) = \frac{1}{a^{\frac{1}{2}}}\scaletwoHbaru{(a^{\frac{1}{2}})^{j-1}\nabla^j (\rho_F, \sigma_F)} + \scaletwoHbaru{(a^{\frac{1}{2}})^{j-1}\nabla^j \alphabar_F}. 
	\end{gather*}We now set $\mathcal{O}_{i,\infty}, \mathcal{O}_{i,2}, \mathcal{R}_{i,\infty}, \mathcal{R}_{i,2}, \mathcal{F}_{i,\infty},\mathcal{F}_{i,2}$ to be the supremum over $u,\ubar$ in the spacetime region of the norms $\mathcal{O}_{i,\infty}(u,\ubar), \mathcal{O}_{i,2}(u,\ubar), \mathcal{R}_{i,\infty}(u,\ubar), \mathcal{R}_{i,2}(u,\ubar), \mathcal{F}_{i,\infty}(u,\ubar)$ and $\mathcal{F}_{i,2}(u,\ubar)$ respectively. Finally, define $\mathcal{O}, \mathcal{R}$ and $\mathcal{F}$:
	
	\[ \mathcal{O} = \sum_{i\leq 6}\left( \mathcal{O}_{i,\infty} + \mathcal{R}_{i,\infty} + \mathcal{F}_{i,\infty} \right)   +\sum_{0\leq i \leq 9} \mathcal{R}_{i,2} + \sum_{0 \leq j \leq 10} \left( \mathcal{O}_{j,2} + \mathcal{F}_{j,2}\right) ,       \]\[ \mathcal{R} = \sum_{0\leq i \leq 10}\left(  \mathcal{R}_i + \underline{\mathcal{R}}_i   \right),  \hspace{1mm} \mathcal{F}= \sum_{0\leq j \leq 11}\left( \mathcal{F}_j + \underline{\mathcal{F}}_j    \right).          \]
	\subsection{An explicit form of the Bianchi equations}
	
Recall  the Bianchi equations \eqref{Bianchi beta}-\eqref{Bianchi rho}. We will work on a term-by-term basis to give the explicit forms of the right hand sides (RHS) of these equations. We will use the property 	
	\[   (D_\alpha R)_{\beta \gamma} = D_\alpha(R_{\beta  \gamma}) - R(D_\alpha \e_\beta, \e_{\gamma}) - R(\e_{\beta}, D_\alpha \e_{\gamma}),       \] as well as the following identities:
	
	\begin{gather}
	D_A \e_{B} = \nabla_A \e_{B} +\frac12 \chi_{AB} \e_3 +\frac12\chibar_{AB} \e_4, \\ 
	D_3 \e_A = \nabla_3 \e_A +\eta_A \e_3, \hspace{2mm} D_4 \e_A = \nabla_4 \e_A +  \etabar_A \e_4, \\  
	D_A \e_3= {\chibar_A}^{\sharp B}\e_B +  \zeta_A \e_3, \hspace{2mm} D_A \e_4 = {\chi_A}^{\sharp B} \e_B - \zeta_A \e_4, \\   
	D_3 \e_4 = 2\eta^{\sharp A} \e_A +2\hspace{.5mm} \omegabar \hspace{.5mm}\e_4, \hspace{2mm} D_4 \e_3 = 2\etabar^{\sharp A} \e_A +2\hspace{.5mm}\omega \hspace{.5mm} \e_3, \\ 
	D_3 \e_3 = -2\omegabar \e_3, \hspace{2mm} D_4 \e_4 = -2 \omega \e_4.
	\end{gather}
	
\par \noindent 	We therefore compute 	
	\begin{align*}(D_A R)_{44} =& 2\alpha_F \cdot \nabla \alpha_F - 2R(D_A \e_4, \e_4)= 2\alpha_F \cdot \nabla \alpha_F -( \psi, \chihat)\cdot R(\e_A, \e_4) + \psi \cdot \alpha_F \cdot \alpha_F \\ =& 2\alpha_F \cdot \nabla \alpha_F + (\psi,\chihat)\cdot \Y \cdot \alpha_F + \psi \cdot \alpha_F^2 ,  \end{align*}
		
	\begin{align*}
	(D_4 R)_{4A} =& D_4(R_{4A}) - R(D_4 \e_4, \e_A) - R(e_4, D_4 \e_A) = D_4(R_{4(\cdot)})(e_A) +(\psi \cdot \Y \cdot \alpha_F) \quad  \\ =& D_4(R_{4(\cdot)})(e_A) + \psi \cdot \Y \cdot \alpha_F,  \quad   \end{align*}
	
	\begin{align*}
	(D_A R)_{43} = D_A (R_{43})- R(D_A \e_4, \e_3) - R(D_A \e_3, \e_4) = \Y \cdot \nabla \Y + (\psi,\chibarhat,\chihat,\tr\chibar)\cdot(\Y,\alpha_F) \cdot \Y,
	\end{align*}
	
	\begin{align*}
	(D_4 R)_{A3} =& D_4(R_{A3}) - R(D_4 \e_A , \e_3) - R(\e_A, D_4 \e_3) = \Y \cdot \nabla_4 \Y + (\psi \cdot \Y \cdot (\Y, \a_F))\\ =& \Y \cdot \nabla (\Y, \alpha_F) +  (\psi,\chibarhat)\cdot \Y \cdot(\Y,\alpha_F)    ,
	\end{align*}since
	
	\[ \nabla_4 \Y = \nabla(\Y,\alpha_F) + (\psi,\chibarhat)\cdot(\Y,\alpha_F).     \]

	\par \noindent	Continuing, we have
	
	\begin{align*}
	(D_3 R)_{A4} =& D_3 (R_{A4}) - R(D_3 \e_A, \e_4) - R(\e_A, D_3 \e_4) = \alpha_F \cdot \nabla_3 \Y + \Y \cdot \nabla_3 \alpha_F + \psi \cdot \Y \cdot (\Y, \alpha_F)\\ =& (\alpha_F, \Y) \cdot \nabla \Y +(\psi,\tr\chibar)\cdot \alpha_F \cdot \Y +  (\psi,\chibarhat, \chih)\cdot \Y \cdot \Y, 
	\end{align*}
	
\par \noindent since we have
	
	\[ \nabla_3 \Y = \nabla \Y + (\tr\chibar, \psi) \cdot \Y,      \]\[  \nabla_3 \alpha_F = \nabla \Y + (\psi,\tr\chibar) \alpha_F +(\psi,\chihat) \cdot \Y.               \]Continuing, we have
	
	\begin{equation}
	(D_A R)_{33} = D_A(R_{33}) -2 R(D_A \e_3, \e_3)= \Y \cdot \nabla \Y + (\psi,\chibarhat, \tr\chib)\cdot \Y \cdot \Y,
	\end{equation}
	
	\begin{equation}
	(D_3 R)_{3A} = D_3 (R_{3A}) - R(D_3 \e_3, \e_A) - R(\e_3, D_3 \e_A) = \nabla_3 (R_{3A}) + \psi \cdot \Y \cdot \Y ,
	\end{equation}
	
	\begin{align*}
	(D_4 R)_{33} =&2\alphabar_F \cdot \nabla_4 \alphabar_F - 2R(D_{4}e_3, e_3) =2\alphabar_F \cdot \nabla_4 \alphabar_F + \psi \cdot \Y \cdot \Y = \Y \cdot \left(\nabla \Y + (\psi,\chibarhat) \cdot (\Y, \alpha_F)\right) + \psi \cdot \Y \cdot \Y\\ =& \Y \cdot \nabla \Y + (\psi,\chibarhat) \cdot \Y \cdot (\Y, \alpha_F),
	\end{align*}
	
	\begin{equation}
	(D_3 R)_{34} = D_3(R_{34})-R({D_3 e_3, e_4})-R(e_3, D_3 e_4)=\Y \cdot \nabla_3 \Y + \psi \cdot \Y \cdot \Y = \Y \cdot \nabla \Y + (\tr\chibar, \psi) \cdot \Y \cdot \Y ,
	\end{equation}

	\begin{align*}
	(D_3 R)_{44} =&2\alpha_F \cdot \nabla_3 \alpha_F-2R(D_3 e_4, e_4)=2\alpha_F \cdot \nabla_3 \alpha_F + \psi \cdot \alpha_F\cdot (\alpha_F, \Y) \\=& 2\alpha_F\cdot \nabla \Y + (\psi,\tr\chibar, \chibh) \alpha_F^2 + (\psi,\chih)\cdot \alpha_F \cdot \Y + \psi \cdot \alpha_F \cdot (\alpha_F, \Y),  
	\end{align*}
	
	\begin{align*}
	(D_4 R)_{43} =&D_4(R_{43})-R(D_4 e_4, e_3)-R(e_4, D_4 e_3)= \Y \cdot \nabla_4 \Y + \psi \cdot \Y \cdot \Y+\psi \cdot \Y \cdot \a_F \\ =& \Y \cdot \nabla(\Y,\alpha_F) + \Y \cdot (\psi,\chibarhat) \cdot (\Y, \alpha_F) .
	\end{align*}

	\par \noindent  The expressions for $(D_A R)_{4B}$ and $(D_A R)_{3B}$ are as follows:		
	
	\begin{align*}
	(D_A R)_{4B} =& D_A(R_{4B})- R(D_A e_4,e_B) - R(D_A e_B, e_4) = \left( \alpha_F \cdot \nabla \Y + \Y \cdot \nabla \alpha_F \right)\\ &+ \left( (\psi,\chihat) \cdot \Y \cdot \Y + \psi \cdot \alpha_F \cdot \Y \right) +  \left((\psi,\chihat) \cdot \Y \cdot \Y +  (\psi,\chibarhat,\tr\chibar)\cdot \alpha_F \cdot \alpha_F \right)\\ =& \alpha_F \cdot \nabla \Y + \Y \cdot \nabla \alpha_F + (\psi,\chibarhat,\chihat,\tr\chibar)\cdot (\alpha_F,\Y)\cdot (\alpha_F,\Y),
	\end{align*}
	
	\begin{equation}
	(D_A R)_{3B} = D_A(R_{3B})- R(D_A e_3,e_B) - R(D_A e_B, e_3) = \Y \cdot \nabla \Y + (\psi,\chihat,\chibarhat,\tr\chibar) \cdot \Y \cdot \Y.
	\end{equation}

	\subsubsection{An important renormalization}

A novel ingredient in our analysis is the introduction of renormalized quantities for $\beta$ and $\betabar$. The motivation behind the introduction of these two quantities stems from the Bianchi equations.	Take for example the identity for $\beta$:
	
	\[\nabla_4 \beta + 2tr\chi \beta = \text{div} \alpha - 2\omega \beta +\eta\cdot \alpha -\frac{1}{2}(D_A R_{44}-D_4 R_{4A}).\]Strictly speaking, this is an equality of 1-forms. In particular, if we evaluate on a vector $\e_A$, we get
	
	\begin{align*} (\nabla_4 \beta)(\e_A) +  2 \tr\chi \beta_A = (\text{div}\alpha)(\e_A) - 2 \omega \beta_A +(\eta\cdot\a)_A- \frac{1}{2}(D_A R)_{44} + \frac{1}{2}(D_4 R)_{4 A} \Rightarrow \\
	(\nabla_4 \beta)(\e_A) - \frac{1}{2}\nabla_4 (R_{4(\cdot)})(e_A) = \nabla \alpha + \psi \cdot \Psi + \alpha_F \cdot \nabla \alpha_F + (\psi,\chihat)\cdot \Y \cdot \alpha_F+ \psi \cdot \alpha_F^2+\psi\cdot \a \Rightarrow\\
	\nabla_4( \beta - \frac{1}{2}R_{4(\cdot)}) =  \nabla \alpha + \psi \cdot (\Psi, \a) + \alpha_F \cdot \nabla \alpha_F + (\psi,\chihat)\cdot \Y \cdot \alpha_F+ \psi \cdot \alpha_F^2  .  \end{align*}

	\par \noindent	  This motivates us to define the normalized curvature component \be\label{tbetadef} \tilde{\beta} := \beta - \frac{1}{2} R_{4(\cdot)}. \ee
	Similarly, we need to define
	
	\be \label{tbetabardef}\tilde{\betabar} := \betabar + \frac{1}{2}R_{3(\cdot)}. \ee The gain from \eqref{tbetadef} and \eqref{tbetabardef} is that the Bianchi equations are now expressed in a way that the right-hand sides of the equations are controllable in terms of the Ricci coefficients and the curvature and Maxwell components.
	
	\subsubsection{The Bianchi equations in schematic form for the renormalized components}
	
	In this section we give, in schematic form, the Bianchi equations expressed in terms of $\begin{Bmatrix} \alpha, \alphabar, \tbeta, \tbetabar, \rho,\sigma \end{Bmatrix}$. We explain our way of obtaining these. Take for example the transport equation for $\beta$:
	
	\[  	\nabla_4 \beta + 2\tr\chi \beta = \text{div} \alpha - 2\omega \beta +\eta\cdot \alpha -\frac{1}{2}(D_A R_{44}-D_4 R_{4A}).    \]Then \[ \nabla_4 \tbeta + 2 \tr\chi \tbeta =  \left( \nabla_4 \beta + 2\tr\chi \beta  \right) -\frac{1}{2}\nabla_4 (R_{4(\cdot)})(e_A) - \tr\chi R_{4A}.\]
	
\par \noindent Working similarly, we obtain

	\begin{gather}
	\nabla_4 \tbeta + 2\tr \chi\tbeta = \nabla \alpha + \psi \cdot (\alpha, \Psi) + \alpha_F \cdot \nabla \alpha_F + \psi \cdot (\Y,\alpha_F)\cdot \alpha_F+(\psi, \chih)\cdot\Y\cdot\a_{F}, \\
	\nabla_3 \tbeta + \tr\chibar \tbeta = \nabla \Psi + (\psi,\chihat) \Psi + \Y \nabla (\Y,\alpha_F) +(\alpha_F,\Y)\nabla \Y + (\psi,\chibarhat,\tr\chibar, \chih)\cdot (\alpha_F,\Y)\cdot \Y, \\
	\nabla_4 \tbetabar + \tr\chi \tbetabar = \nabla \Psi + (\psi,\chibarhat)\Psi  + (\alpha_F, \Y) \nabla \Y + (\psi,\chibarhat,\tr\chibar, \chih)\cdot (\alpha_F,\Y)\cdot \Y, \\
	\nabla_3 \tbetabar + 2\tr\chibar\hspace{.5mm}\tbetabar = \nabla \Psi + \psi\cdot \Psi + \Y \nabla \Y + (\psi,\tr\chibar, \chibh)\cdot \Y \cdot \Y, \\
	\nabla_4 \alphabar + \frac{1}{2}\tr \chi \alphabar = \nabla \Psi + (\psi\chibarhat)\cdot \Psi + \Y \cdot \nabla \Y + (\psi,\tr\chibar)\cdot\Y\cdot\Y +(\psi, \chibh)\cdot\Y\cdot (\Y, \a_F),   \\ \label{renalphaeq}
	\nabla_3 \alpha + \frac{1}{2}\tr\chibar \alpha = \nabla \Psi + \alpha_F \cdot \nabla \Y + \Y \cdot (\nabla \alpha_F, \nab\Y) + (\psi,\chihat)\cdot \Psi + \psi\cdot \alpha  + (\psi,\chibarhat,\tr\chibar, \chih) \cdot (\alpha_F,\Y) \cdot (\alpha_F,\Y), \\
	\nabla_4 \rho + \frac{3}{2}\tr\chi \rho = \nabla \Psi + (\psi,\chibarhat)\cdot(\alpha,\Psi) + \alpha_F \cdot \nabla \Y + \Y \cdot (\nabla \alpha_F, \nab\Y) + (\psi,\chibarhat,\tr\chibar, \chih)\cdot (\alpha_F,\Y)\cdot (\alpha_F,\Y),  \\
	\nabla_3\rho + \frac{3}{2}\tr\chibar\rho = \nabla \Psi + \Y\cdot \nabla \Y + (\psi,\chihat)\cdot \Psi + (\psi,\tr\chibar)\cdot \Y \cdot \Y + (\psi,\chibarhat) \cdot (\Y,\alpha_F)\cdot \Y, \\
	\nabla_4 \sigma + \frac{3}{2}\tr\chi \sigma = \nabla \Psi + (\psi,\chibarhat)\cdot(\alpha,\Psi) + \alpha_F \cdot \nabla \Y + \Y \cdot \nabla \alpha_F + (\psi,\chibarhat,\tr\chibar)\cdot (\alpha_F,\Y)\cdot (\alpha_F,\Y),  \\
	\nabla_3\sigma + \frac{3}{2}\tr\chibar\sigma= \nabla \Psi + \Y\cdot \nabla \Y + (\psi,\chihat)\cdot \Psi + (\psi,\tr\chibar)\cdot \Y \cdot \Y + (\psi,\chibarhat) \cdot (\Y,\alpha_F)\cdot \Y. 
	\end{gather}
	\section{The preliminary estimates}
	\subsection{Setting up the bootstrap argument} 
	
	We shall employ a bootstrap argument to derive uniform upper bounds on $\mathcal{O}, \mathcal{R}, \underline{\mathcal{R}}, \mathcal{F}, \underline{\mathcal{F}}$ for the nonlinear Einstein-Maxwell equations. Along $H_{u_\infty}$ and $\Hbar_0$, by analysing the characteristic initial data, we can obtain the bounds
	
	\be\label{initialbounddata} \mathcal{O}^{(0)} + \mathcal{R}^{(0)} + \underline{\mathcal{R}}^{(0)} + \mathcal{F}^{(0)} + \underline{\mathcal{F}}^{(0)} \lesssim \mathcal{I}^{(0)}.        \ee Our goal is to show that in $\mathcal{D} = \begin{Bmatrix}
	(u,\ubar) \hspace{1mm} \mid \hspace{1mm} u_{\infty} \leq u \leq - a/4, 0\leq {\color{black}\ub} \leq 1 
	\end{Bmatrix}$ there holds
	
	\be \label{uniformestimate} \mathcal{O}(u,\ubar) + \mathcal{R}(u,\ubar) + \underline{\mathcal{R}}(u,\ubar) + \mathcal{F}(u,\ubar) + \underline{\mathcal{F}}(u,\ubar) \lesssim \left(\mathcal{I}^{(0)} \right)^4 + \left(\mathcal{I}^{(0)} \right)^2 +\mathcal{I}^{(0)} + 1.        \ee Once these uniform bounds are obtained, by a standard local existence result, the solutions can always
	be extended a bit towards the future direction of $u$. Hence, the uniform estimate \eqref{uniformestimate} for $u_{\infty} \leq u \leq - a/4$ would imply that a solution to the Einstein--Maxwell equations exists in the slab $\mathcal{D}$. To derive the uniform bound \eqref{uniformestimate},  we make the bootstrap assumptions
	
	\be  \label{bootstrapbounds}   \mathcal{O}(u,\ubar) \leq O, \hspace{2mm} \mathcal{R}(u,\ubar) + \underline{\mathcal{R}}(u,\ubar) \leq R, \hspace{2mm} \mathcal{F}(u,\ubar) + \underline{\mathcal{F}}(u,\ubar) \leq F,       \ee for large numbers $O, R$ and $F$ such that 
	\[   { \color{black}   \left(\mathcal{I}^{(0)} \right)^4 + \left(\mathcal{I}^{(0)} \right)^2 +\mathcal{I}^{(0)} + 1 \ll \min \begin{Bmatrix} O, R, F \end{Bmatrix}, }     \]but also such that \[ (O+R+F)^{20} \leq a^{\frac{1}{16}}.      \]Define the set $\mathcal{B} = \begin{Bmatrix}  u \hspace{1mm} \mid \hspace{1mm}  u_{\infty} \leq u \leq - a/4\hspace{1mm} \text{and} \hspace{1mm} \eqref{bootstrapbounds} \hspace{1mm} \text{holds for every}\hspace{1mm}  0\leq \ubar \leq 1        \end{Bmatrix}$. We are hoping to prove that $\mathcal{B}$ is in fact equal as a set to the entire interval $[u_{\infty}, -a/4]$. To do this, we take advantage of the topology of the unit interval. In particular, since it is connected, it suffices to show that the set $\mathcal{B}$ is both closed and open. 
	
	\vspace{3mm}
	
	\par \noindent By assumption, at $u= u_{\infty},$ we have \eqref{initialbounddata}. By continuity of solutions (via local existence), there exists a small $\epsilon > 0$ such that it holds for $u_{\infty} \leq u \leq u_{\infty}+\epsilon$ we have
	
	\[ \mathcal{O}^{(0)} \lesssim \mathcal{I}^{(0)} \ll O, \hspace{2mm} \mathcal{R}^{(0)} + \underline{\mathcal{R}}^{(0)} \lesssim \mathcal{I}^{(0)} \ll R , \hspace{2mm} \mathcal{F}^{(0)}+\underline{\mathcal{F}}^{(0)} \lesssim \mathcal{I}^{(0)}\ll F ,       \] \[  \mathcal{O}(u,\ubar)\lesssim 2 \mathcal{I}^{(0)}\ll O, \hspace{2mm} \mathcal{R}(u,\ubar) +\underline{\mathcal{R}}(u,\ubar)\lesssim 2\mathcal{I}^{(0)} \ll R, \hspace{2mm} \mathcal{F}(u,\ubar)+\underline{\mathcal{F}}(u,\ubar)\lesssim 2\mathcal{I}^{(0)}\ll F.         \]This implies in particular that $\mathcal{B}$ is not empty and in fact $[u_{\infty},u_{\infty} +\epsilon] \subseteq \mathcal{B}$. At the same time, naturally there holds $\mathcal{B}\subseteq[u_\infty,- \frac{a}{4}]$. If we are able to prove that $\mathcal{B}$ as a set is both open and closed, we can conclude that in fact $\mathcal{B} \equiv [u_\infty,- \frac{a}{4}]$. Indeed, in the remainder of the paper we show the following estimates
	
	\[ \mathcal{O}(u,\ubar) \lesssim \mathcal{I}^{(0)}+ \mathcal{R}(u,\ubar) + \underline{\mathcal{R}}(u,\ubar)+ \mathcal{F}(u,\ubar) + \underline{\mathcal{F}}(u,\ubar),      \]\[ \mathcal{F}(u,\ubar) +\underline{\mathcal{F}}(u,\ubar) \lesssim \mathcal{R}^2(u,\ubar) + \underline{\mathcal{R}}^2(u,\ubar) +\left(\mathcal{I}^{(0)} \right)^2 + \mathcal{I}^{(0)}+1,    \] \[ \mathcal{R}(u,\ubar)+\underline{\mathcal{R}}(u,\ubar)\lesssim \left(\mathcal{I}^{(0)}\right)^2 + \mathcal{I}^{(0)}+1.          \]These are improvements of the upper bounds in bootstrap assumptions. By the continuity of solutions and local existence arguments, $\mathcal{B}$ can be extended a bit towards larger $u$. This implies that $\mathcal{B}$ is open. Together with closedness or $\mathcal{B}$, we conclude that $\mathcal{B} \equiv [u_{\infty}, - a/4]$ and in $\mathcal{B}$ the desired bounds hold.
	\subsection{Estimates for the metric components}
	
	\begin{proposition}\label{Omega}
		Under the assumptions of Theorem \ref{main1} and the bootstrap assumptions \eqref{bootstrapbounds}, we have 
		
		\[ \lVert \Omega-1 \rVert_{L^\infty(S_{u,\ubar})} \lesssim \frac{O}{\lvert u \rvert} .   \]

	\end{proposition}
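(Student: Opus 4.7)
The plan is to integrate a first-order transport equation for $\Omega$ along the $e_4$-direction, starting from the initial hypersurface $\Hb_0$ on which the gauge $\Omega \equiv 1$ is prescribed. The key computation is the identity relating $\omega$ to the lapse $\Omega$. Starting from the definition $\omega = -\frac{1}{4}g(D_4 e_3, e_4)$ together with $e_4 = \Omega L'$, $e_3 = \Omega \Lb'$, $g(L', \Lb') = -2\Omega^{-2}$, and $D_{L'} L' = 0$, a short calculation yields
\be
\omega = -\tfrac{1}{2}\,\Omega^{-2}\,\partial_{\ub}\Omega,
\ee
which, after multiplying by $-2\Omega^{-2}\partial_{\ub}\Omega$ and rearranging, is equivalent to the remarkably simple ODE along curves of constant $(u,\theta^1,\theta^2)$:
\be
\partial_{\ub}(\Omega^{-1}) = 2\omega.
\ee

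The next step is to integrate this equation in $\ub$ from $\ub'=0$ up to $\ub$. By the gauge choice $\Omega \equiv 1$ on $\Hb_0$, the initial value is $\Omega^{-1}(u,0,\theta)=1$, so
\be
\Omega^{-1}(u,\ub,\theta) = 1 + 2\int_{0}^{\ub} \omega(u,\ub',\theta)\,\dubarprime.
\ee
The bootstrap assumption $\mathcal{O}(u,\ub)\leq O$ together with $s_2(\omega)=0$ gives the pointwise bound $\|\omega\|_{L^\infty(S_{u,\ub})} \leq O/|u|$. Since $\ub \leq 1$, this yields
\be
\bigl|\Omega^{-1}(u,\ub,\theta) - 1 \bigr| \;\leq\; 2\int_0^{\ub} \|\omega\|_{L^\infty(S_{u,\ub'})}\, \dubarprime \;\lesssim\; \frac{O}{|u|}.
\ee

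The final step is to convert this into the desired bound on $\Omega - 1$. The smallness condition $(O+R+F)^{20}\leq a^{1/16}$ combined with $|u|\geq a/4$ forces $O/|u|\ll 1$, so $\Omega^{-1}$ is uniformly bounded away from zero and from above, and one concludes
\be
|\Omega - 1| \;=\; \frac{|\Omega^{-1}-1|}{|\Omega^{-1}|} \;\lesssim\; \frac{O}{|u|},
\ee
as required. There is no real obstacle in this argument: the proof reduces to identifying the clean equation $\partial_{\ub}(\Omega^{-1}) = 2\omega$, after which a direct integration using the bootstrap $L^\infty$ control on $\omega$ suffices. The only mild point to keep track of is that one needs the smallness of $O/|u|$ (guaranteed by the hierarchy $O \ll a^{1/16} \ll |u|$) in order to pass from the estimate on $\Omega^{-1}-1$ to the estimate on $\Omega - 1$ without incurring a worse constant.
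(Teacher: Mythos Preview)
Your proof is correct and follows essentially the same route as the paper: derive $\partial_{\ub}(\Omega^{-1}) = 2\omega$, integrate from $\Hb_0$ using $\Omega^{-1}|_{\ub=0}=1$ and the bootstrap bound $\|\omega\|_{L^\infty}\leq O/|u|$ to control $\Omega^{-1}-1$, and then convert to a bound on $\Omega-1$ via the algebraic identity $\Omega-1 = -\Omega(\Omega^{-1}-1)$ together with the smallness $O/|u|\ll 1$. The only cosmetic difference is that the paper starts from $\omega = -\tfrac{1}{2}\nabla_4(\log\Omega)$ rather than computing from $D_{L'}L'=0$, and writes the final step as $|\Omega-1|\leq \|\Omega\|_{L^\infty}\|\Omega^{-1}-1\|_{L^\infty}$ instead of dividing by $|\Omega^{-1}|$.
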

	\begin{proof}
		Consider the equation \[ \omega = -\frac{1}{2}\nabla_4 (\log \hspace{.5mm} \Omega) = \frac{1}{2}\frac{\partial}{\partial\ubar}(\Omega^{-1}). \]We integrate with respect to $\text{d} \ubar$. Since on $\Hbar_0$ we have $\Omega^{-1} = 1$, we can obtain
		
		\be \label{Omega-1} \lVert \Omega^{-1} -1 \rVert_{L^\infty(S_{u,\ubar})} \lesssim \int_0^{\ubar} \lVert \omega \rVert_{L^\infty(S_{u,\ubar^\prime})}  \text{d}\ubar^\prime \lesssim \frac{O}{\lvert u \rvert}.    \ee Here we have used the bootstrap assumption. Finally, notice that

		\[   \lVert \Omega - 1 \rVert_{L^\infty (S_{u,\ubar})}    \leq \lVert \Omega \rVert_{L^{\infty}(S_{u,\ubar})}\hspace{.5mm} \lVert \Omega^{-1}-1 \rVert_{L^{\infty}(S_{u,\ubar})}   \lesssim \frac{\frac{O}{\lvert u \rvert}}{1+ \frac{O}{\lvert u \rvert}} \lesssim \frac{O}{\lvert u \rvert}  .      \]
	\end{proof}We now control the induced metric $\gamma$ on $S_{u,\ubar}$:
	
	\begin{proposition}
		Under the assumptions of Theorem \ref{main1} and the bootstrap assumption \eqref{bootstrapbounds}, we have for the metric $\gamma$ on $S_{u,\ubar}$:\[  c^\prime \leq \det \gamma \leq C^\prime,      \]where the two constants depend only on the initial data. Moreover, in $\mathcal{D}$, there holds \[  \lvert \gamma_{AB} \rvert + \lvert (\gamma^{-1})^{AB}\rvert \leq C^\prime.    \] \label{prop32}
	\end{proposition}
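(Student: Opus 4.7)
\emph{Proof plan.} The idea is to integrate a transport equation for the induced metric $\gamma_{AB}$ along the $\partial_{\ubar}$ direction, starting from the initial hypersurface $\Hb_0$ where the geometry is Minkowskian and hence explicit. The starting point is the first variation formula for the induced metric. Since $L=\partial_{\ubar}=\Omega e_4$, the coordinate vector fields $\partial_{\theta^A}$ commute with $L$ (this is built into the construction in Section \ref{coordinates}), and $g(e_4,\partial_{\theta^A})=0$, a direct computation with the metric compatibility of $D$ gives
\[\partial_{\ubar}\gamma_{AB}=2\Omega\,\chi_{AB}=2\Omega\,\chihat_{AB}+\Omega\,\tr\chi\cdot\gamma_{AB}.\]

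For the determinant, Jacobi's identity converts this into $\partial_{\ubar}\log\det\gamma=\gamma^{AB}\partial_{\ubar}\gamma_{AB}=2\Omega\,\tr\chi$. Since $s_2(\tr\chi)=0$, the bootstrap assumption $\mathcal{O}\leq O$ yields $\|\tr\chi\|_{L^\infty(S_{u,\ubar})}\leq O/|u|$, and Proposition \ref{Omega} controls $\Omega$ uniformly. Integrating over $\ubar\in[0,1]$ produces
\[\bigl|\log\det\gamma(u,\ubar,\theta)-\log\det\gamma(u,0,\theta)\bigr|\lesssim \frac{O}{|u|}\leq \frac{4O}{a}\ll 1,\]
where the final smallness uses $|u|\geq a/4$ together with the hierarchy $O^{20}\leq a^{1/16}$. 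Since $\gamma|_{\ubar=0}$ is the round metric on a sphere of radius $|u|$ in the chosen stereographic chart and $u$ ranges in a compact interval depending only on the initial data, the initial determinant is pinched between two positive constants $c',C'$, and the same follows for $\det\gamma$ in all of $\mathcal{D}$.

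Controlling the individual components $\gamma_{AB}$ proceeds similarly by integrating the transport equation above: the bootstrap bound $\|\chihat\|_{L^\infty(S_{u,\ubar})}\leq a^{1/2}O/|u|$ and $\|\tr\chi\|_{L^\infty(S_{u,\ubar})}\leq O/|u|$ give, after integration over $\ubar\in[0,1]$, that the perturbation from initial data is of size $O(a^{1/2}/|u|)+O(1/|u|)\lesssim a^{-1/2}$, which is negligible. A Gronwall argument absorbing the linear term $\Omega\tr\chi\cdot\gamma_{AB}$ then yields $|\gamma_{AB}|\leq C'$. Finally, since $\gamma^{-1}=(\det\gamma)^{-1}\,\mathrm{adj}(\gamma)$, the uniform lower bound on $\det\gamma$ combined with the upper bound on $|\gamma_{AB}|$ delivers $|(\gamma^{-1})^{AB}|\leq C'$. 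The only technical point to watch is the balance between the $a^{1/2}$ weights in the bootstrap norms and the factor $1/|u|\lesssim a^{-1}$ provided by the region $u\leq -a/4$; this produces the necessary smallness and makes the Gronwall step routine.
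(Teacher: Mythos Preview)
Your proof is correct and follows essentially the same route as the paper: both start from the first variation formula $\partial_{\ubar}\gamma_{AB}=2\Omega\chi_{AB}$, trace it to get $\partial_{\ubar}\log\det\gamma=2\Omega\tr\chi$ and integrate using the bootstrap bound on $\tr\chi$, then control the individual components by integrating the transport equation. The only cosmetic difference is that the paper packages the component estimate via the largest eigenvalue $\Lambda$ of $\gamma$ (writing $|\chi_{AB}|\leq\Lambda\|\chi\|_{L^\infty}$, which is the step that converts the invariant $L^\infty$ norm of $\chihat$ into a bound on its coordinate components) rather than an explicit Gr\"onwall; your Gr\"onwall formulation accomplishes the same thing once you observe that the $\chihat_{AB}$ term is also linear in $\sup|\gamma_{AB}|$ through this conversion.
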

	
	\begin{proof}
		We employ the first variation formula $\slashed{\mathcal{L}}_{L}\gamma = 2 \hspace{.5mm} \Omega \hspace{.5mm} \chi.$ In coordinates, this rewrites as \be   \frac{\partial}{\partial \ubar} \gamma_{AB}= 2 \hspace{.5mm} \Omega \hspace{.5mm} \chi_{AB}. \label{ub_gamma}   \ee This implies
		
		\[   \frac{\partial}{\partial \ubar} (\log(\det \gamma)) = 2\hspace{.5mm} \Omega \hspace{.5mm} \tr\chi.      \]Let $\gamma_0(u,\ubar, \theta^1,\theta^2) = \gamma(u,0,\theta^1,\theta^2)$. Then with $2\hspace{.5mm}\Omega \hspace{.5mm}\tr\chi \lesssim \frac{O}{\lvert u \rvert}$, we have
		\[  \frac{\det \gamma}{\det \gamma_0} = \e^{\int_0^{\ubar} 2 \Omega \tr \chi \text{d}\ubar^\prime } \lesssim    \e^{\frac{O}{a}}.        \]   Via Taylor expansion, this implies
		\be \label{detgaper} \lvert \det \gamma - \det \gamma_0 \rvert \lesssim \frac{O}{a}.        \ee This gives uniform upper and lower bounds for $\det \gamma$. Let $\Lambda$ be the larger eigenvalue of $\gamma$. We have \[ \Lambda \leq \sup \gamma_{AB}, \]\[ \sum_{A,B=1,2} \lvert \chi_{AB} \rvert \leq \Lambda\lVert \chi \rVert_{L^\infty(S_{u,\ubar})}, \]\[ \lvert \gamma_{AB} - (\gamma_{0})_{AB} \rvert \leq \int_{0}^{\ubar}\lvert \chi_{AB} \rvert \text{d}\ubar^\prime \leq \Lambda \frac{a^{\frac{1}{2}} }{\lvert u \rvert} O \lesssim \frac{O}{a^{\frac{1}{2}}}.   \]

	\end{proof}\par \noindent We will also need the following:
	\begin{proposition}\label{prop33}
		We continue to work under the assumptions of Theorem \ref{main1} and the bootstrap assumptions \eqref{bootstrapbounds}. Fix a point $(\ubar, \theta)$ on the initial hypersurface $\Hbar_0$. Along the outgoing null geodesics emanating from $(u,\theta)$, define $\Lambda(\ubar)$ and $\lambda(\ubar)$ to be the larger and smaller eigenvalue of $\gamma^{-1}(u,0,\theta)\hspace{.5mm} \gamma(u,\ubar,\theta).$ Then there holds \[\lvert \Lambda(\ubar)-1 \rvert +  \lvert \lambda(\ubar) -1 \rvert \lesssim \frac{1}{a^{\frac{1}{2}}}.  \] 
	\end{proposition}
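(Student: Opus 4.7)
The plan is to translate the first variation formula $\partial_\ubar\gamma_{AB} = 2\Omega\hspace{.5mm}\chi_{AB}$, already used in the proof of Proposition~\ref{prop32}, into a matrix ODE for $M(\ubar) := \gamma^{-1}(u,0,\theta)\hspace{.5mm}\gamma(u,\ubar,\theta)$, bound $M(\ubar) - I$ in sup norm using the bootstrap estimates on $\chi$, and then invoke a standard matrix-perturbation fact to pass from closeness of $M$ to the identity, to closeness of each eigenvalue $\Lambda(\ubar),\lambda(\ubar)$ to $1$. Note that $M(0) = I$, so $\Lambda(0) = \lambda(0) = 1$.

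Decomposing $\chi = \chihat + \tfrac{1}{2}\tr\chi\cdot\gamma$ in the variation formula and composing with $\gamma^{-1}(u,0,\theta)$ on the left gives
\[
\partial_\ubar M(\ubar) = 2\Omega\,\gamma^{-1}(u,0,\theta)\,\chihat(u,\ubar,\theta) + \Omega\,\tr\chi(u,\ubar,\theta)\,M(\ubar), \qquad M(0)=I.
\]
Unpacking the scale-invariant norms in the definition of $\mathcal{O}_{0,\infty}$, with $s_2(\chihat)=s_2(\tr\chi)=0$, the bootstrap bound \eqref{bootstrapbounds} yields the pointwise control
\[
\|\chihat\|_{L^\infty(S_{u,\ubar})} \lesssim \frac{a^{1/2}O}{|u|}, \qquad \|\tr\chi\|_{L^\infty(S_{u,\ubar})} \lesssim \frac{O}{|u|}.
\]
Combined with $\|\Omega\|_{L^\infty} \lesssim 1$ from Proposition~\ref{Omega} and $\|\gamma^{-1}(u,0,\theta)\| \leq C'$ from Proposition~\ref{prop32}, a short Gronwall argument on $\ubar \in [0,1]$ and the fact that $|u|\geq a/4$ in $\mathcal{D}$ produce
\[
\|M(\ubar) - I\| \lesssim \int_0^{\ubar}\!\frac{a^{1/2}O}{|u|}\,d\ubar' \lesssim \frac{a^{1/2}O}{|u|} \lesssim \frac{O}{a^{1/2}} \lesssim \frac{1}{a^{1/2}},
\]
where the bootstrap constant $O$ is absorbed in the large-$a$ regime via the smallness constraint on $O$.

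Finally, since $\gamma(u,0,\theta)$ and $\gamma(u,\ubar,\theta)$ are symmetric positive definite, $M(\ubar)$ is similar to the symmetric matrix $\tilde M := \gamma(u,0,\theta)^{-1/2}\,\gamma(u,\ubar,\theta)\,\gamma(u,0,\theta)^{-1/2}$. Its eigenvalues are therefore real and coincide with those of $\tilde M$, and one has $\tilde M - I = \gamma(u,0,\theta)^{-1/2}(\gamma(u,\ubar,\theta) - \gamma(u,0,\theta))\,\gamma(u,0,\theta)^{-1/2}$, so Proposition~\ref{prop32} gives $\|\tilde M - I\| \lesssim \|M(\ubar) - I\|$. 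For symmetric $2\times 2$ matrices eigenvalues depend Lipschitz-continuously on the entries, which yields
\[
|\Lambda(\ubar) - 1| + |\lambda(\ubar) - 1| \lesssim \|\tilde M - I\| \lesssim \frac{1}{a^{1/2}}.
\]
The only real obstacle is bookkeeping: extracting the correct pointwise $L^\infty$ bounds on $\chihat$ and $\tr\chi$ out of the scale-invariant norm definitions, and writing the matrix perturbation argument in a form that clearly uses only Propositions~\ref{Omega} and \ref{prop32} together with the bootstrap assumption.
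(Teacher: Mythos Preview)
Your argument is correct, but it follows a different route from the paper's. The paper does not bound the matrix $M(\ubar)-I$ directly. Instead it works with two scalar quantities: the ratio $\nu(\ubar):=\sqrt{\Lambda(\ubar)/\lambda(\ubar)}$ and the product $\Lambda(\ubar)\lambda(\ubar)=\det\gamma(u,\ubar,\theta)/\det\gamma(u,0,\theta)$. For the ratio, it invokes an inequality from Christodoulou's monograph (equation (5.93) there), namely $\nu(\ubar)\leq 1+\int_0^{\ubar}|\Omega\chihat|_\gamma\,\nu\,d\ubar'$, which via Gr\"onwall gives $|\nu-1|$ small using only the trace-free part $\chihat$. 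The product is already controlled by \eqref{detgaper} in Proposition~\ref{prop32}, which used only $\tr\chi$. From ratio and product close to $1$ one then recovers $|\Lambda-1|+|\lambda-1|$ small.

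Your approach is more self-contained: it avoids the appeal to the Christodoulou identity and the separate determinant estimate, replacing them with a single matrix Gr\"onwall and a standard eigenvalue-perturbation step. The paper's decomposition, on the other hand, cleanly isolates the contribution of the shear $\chihat$ (to the eigenvalue ratio) from that of the expansion $\tr\chi$ (to the determinant), which is conceptually nice and reuses work already done in Proposition~\ref{prop32}. Both arguments lead to the same bound $O/a^{1/2}$, and both absorb the bootstrap constant $O$ via the smallness $(O+R+F)^{20}\leq a^{1/16}$.
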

	\begin{proof}
	        Define $\nu(\ub):=\sqrt{\frac{\Lambda(\ub)}{\lambda(\ub)}}.$ Following the derivation of (5.93) in \cite{Chr:book}, by \eqref{ub_gamma}, we have
			$$\nu(\ub)\leq 1+\int_0^{\ub}|\O\chihat(\ub')|_{\gamma} \nu(\ubar') {\color{black}\, d\ub'}.$$
			Via Gr\"onwall's inequality, this implies
			\begin{equation}\label{nu}
			|\nu(\ub)|\ls 1 \quad \mbox{ and } \quad |\nu(\ub)-1|\leq \f{\at\cdot O}{|u|^2}\leq \f{O}{a^{\f32}}\leq \f{1}{a}.
			\end{equation}
			The desired estimate follows from (\ref{detgaper}) and (\ref{nu}).
		\end{proof}
	\par \noindent The above two propositions also imply
	
	\begin{proposition}\label{areaprop}
		Under the assumptions of Theorem \ref{main1} and the bootstrap assumption \eqref{bootstrapbounds}, in the slab of existence $\mathcal{D}$ we have
		\[   \sup_{\ubar} \lvert \text{Area}(S_{u,\ubar})-\text{Area}(S_{u,0}) \rvert \lesssim \frac{O^{\frac{1}{2}}}{a^{\frac{1}{2}}} \lvert u \rvert^2.         \]
	\end{proposition}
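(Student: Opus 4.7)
My plan is to leverage Proposition \ref{prop33}, which already gives pointwise control on the ratio $\sqrt{\det\gamma(u,\ubar,\theta)}/\sqrt{\det\gamma(u,0,\theta)}$, and then to integrate this pointwise bound over the angular coordinates. The key observation is that if $\Lambda(\ubar), \lambda(\ubar)$ denote the eigenvalues of the endomorphism $\gamma^{-1}(u,0,\theta)\gamma(u,\ubar,\theta)$, then their product is its determinant, so $\Lambda(\ubar)\lambda(\ubar)=\det\gamma(u,\ubar,\theta)/\det\gamma(u,0,\theta)$, and therefore
\[ \sqrt{\det\gamma(u,\ubar,\theta)} \;=\; \sqrt{\Lambda(\ubar)\lambda(\ubar)}\,\sqrt{\det\gamma(u,0,\theta)}. \]

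From Proposition \ref{prop33}, $\lvert\Lambda(\ubar)-1\rvert+\lvert\lambda(\ubar)-1\rvert\lesssim a^{-1/2}$. Expanding $\Lambda\lambda=1+(\Lambda-1)+(\lambda-1)+(\Lambda-1)(\lambda-1)$ gives $\lvert\Lambda\lambda-1\rvert\lesssim a^{-1/2}$, which then yields $\lvert\sqrt{\Lambda\lambda}-1\rvert=\lvert\Lambda\lambda-1\rvert/(\sqrt{\Lambda\lambda}+1)\lesssim a^{-1/2}$ (using that $\sqrt{\Lambda\lambda}$ is close to $1$). Substituting into the above identity gives the pointwise estimate
\[ \bigl\lvert \sqrt{\det\gamma(u,\ubar,\theta)}-\sqrt{\det\gamma(u,0,\theta)} \bigr\rvert \;\lesssim\; a^{-1/2}\,\sqrt{\det\gamma(u,0,\theta)}. \]

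The last step is to integrate in $(\theta^{1},\theta^{2})$ against the partition of unity from Section \ref{24Integration}. Because on $\Hbar_{0}$ the initial data is Minkowskian by the hypotheses of Theorem \ref{main1}, $S_{u,0}$ is the round $2$-sphere of radius $\lvert u\rvert$, so $\text{Area}(S_{u,0})=4\pi\lvert u\rvert^{2}$. Hence
\[ \bigl\lvert \text{Area}(S_{u,\ubar})-\text{Area}(S_{u,0}) \bigr\rvert \;\lesssim\; a^{-1/2}\,\text{Area}(S_{u,0}) \;\lesssim\; a^{-1/2}\,\lvert u\rvert^{2} \;\leq\; \frac{O^{1/2}}{a^{1/2}}\,\lvert u\rvert^{2}, \]
the last inequality being free since the bootstrap parameter satisfies $O\geq 1$ (indeed $O\gg\mathcal{I}^{(0)}$). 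The bound is uniform in $\ubar\in[0,1]$, so passing to the supremum is harmless. I do not anticipate any real obstacle here: the critical work --- controlling the eigenvalues of $\gamma_{0}^{-1}\gamma$ by integrating $\chihat$ along the outgoing generators --- has already been carried out in Proposition \ref{prop33}, and the stated bound is in fact a loose form (by a factor of $O^{1/2}$) of what this argument naturally produces.
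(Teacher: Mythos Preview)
Your proof is correct and follows essentially the same approach as the paper. The paper's proof is terser, invoking the determinant estimate \eqref{detgaper} from Proposition~\ref{prop32} directly rather than passing through the eigenvalue bounds of Proposition~\ref{prop33}; since Proposition~\ref{prop33} is itself deduced from \eqref{detgaper}, your route is a minor (and entirely valid) detour, and as you note it actually yields the sharper bound $a^{-1/2}\lvert u\rvert^{2}$ before relaxing to the stated $O^{1/2}a^{-1/2}\lvert u\rvert^{2}$.
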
\begin{proof}
	This follows from the definitions in Subsection \ref{24Integration} and the estimate \eqref{detgaper}.
\end{proof}
	
	\subsection{Estimates for transport equations} In the sections to follow, we will employ the following propositions for the transport equations:
	
	\begin{proposition}
		Under the assumptions of Theorem \ref{main1} and the bootstrap assumption \eqref{bootstrapbounds}, for an arbitrary $S-$tangent tensor $\phi$ of arbitrary rank, we have
		\begin{gather}
		\lVert \phi \rVert_{L^2(S_{u,\ubar})} \lesssim \lVert \phi \rVert_{L^2(S_{u,\ubar^\prime})} + \int_{\ubar^\prime}^{\ubar} \lVert \nabla_4 \phi \rVert_{L^2(S_{u,\ubar^{\prime\prime}})}\text{d}\ubar^{\prime \prime},\label{transport1} \\ 
		\lVert \phi \rVert_{L^2(S_{u,\ubar})} \lesssim \lVert \phi \rVert_{L^2(S_{u^\prime,\ubar})} + \int_{u^\prime}^{u} \lVert \nabla_3 \phi \rVert_{L^2(S_{u^{\prime\prime},\ubar})}\text{d}u^{\prime \prime}\label{transport3}.
		\end{gather}
	\end{proposition}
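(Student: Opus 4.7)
The plan is to differentiate $\lVert\phi\rVert_{L^2(S_{u,\ubar})}^2$ along the transport direction and close via Grönwall's inequality. For \eqref{transport1}, I differentiate under the integral using the first variation formulas $\partial_{\ubar}\gamma_{AB}=2\Omega\chi_{AB}$, $\partial_{\ubar}\gamma^{AB}=-2\Omega\chi^{AB}$, and $\partial_{\ubar}\sqrt{\det\gamma}=\Omega\,\tr\chi\,\sqrt{\det\gamma}$ (which underlie Proposition~\ref{prop32}) to derive the schematic identity
\[
\frac{d}{d\ubar}\int_{S_{u,\ubar}}|\phi|_\gamma^2\,d\mu_\gamma \;=\; \int_{S_{u,\ubar}}\Omega\,\bigl( 2\,\langle \nabla_4\phi,\phi\rangle_\gamma \,+\, \tr\chi\,|\phi|_\gamma^2 \,+\, \chi\cdot\phi\cdot\phi\bigr)\,d\mu_\gamma,
\]
where the last schematic term gathers the (finitely many, rank-dependent) contributions of $\partial_{\ubar}\gamma^{AB}=-2\Omega\chi^{AB}$ contracted against the tensor indices of $\phi$. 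Proposition~\ref{Omega} yields $\lVert\Omega\rVert_{L^\infty(S_{u,\ubar})}\lesssim 1$, and the bootstrap assumption on $\mathcal{O}_{0,\infty}$ together with the signature table bounds $\lVert\chi\rVert_{L^\infty(S_{u,\ubar})}+\lVert\tr\chi\rVert_{L^\infty(S_{u,\ubar})}$ by a quantity whose integral over $\ubar\in[0,1]$ is uniformly small in $\mathcal{D}$ (using $|u|\ge a/4$). Cauchy-Schwarz on the inner product and Grönwall, integrated between $\ubar^\prime$ and $\ubar$, then produce \eqref{transport1}.

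For \eqref{transport3} the argument is symmetric: with $e_3=\Omega^{-1}(\partial_u+b^A\partial_{\theta^A})$ and the first variation $\mathcal{L}_{e_3}\gamma_{AB}=2\,\chibar_{AB}$, the same differentiation yields the identity above with $\nabla_3,\chibar,\tr\chibar$ replacing $\nabla_4,\chi,\tr\chi$. The shift $b^A\partial_{\theta^A}$ is tangent to $S_{u,\ubar}$, so it generates only an angular diffeomorphism that preserves $L^2$ norms. Unlike the $\ubar$-direction, the $u$-interval $[u_\infty,-a/4]$ is long, so one has to verify the Grönwall multiplier
\[
\int_{u^\prime}^u \lVert\Omega\,\tr\chibar\rVert_{L^\infty(S_{u^{\prime\prime},\ubar})}\,du^{\prime\prime}
\]
is finite throughout $\mathcal{D}$. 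Splitting $\tr\chibar=-\tfrac{2}{|u|}+\widetilde{\tr\chibar}$, the Minkowskian piece integrates to at most $\log(|u^\prime|/|u|)$, and the bootstrap bound on $\mathcal{O}_{0,\infty}$ controls the remainder $\widetilde{\tr\chibar}$ with an additional $|u|^{-2}$ decay. Grönwall then delivers \eqref{transport3}.

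The main obstacle is precisely this uniform control of the Grönwall factor in the $u$-direction: it relies in an essential way on the near-Minkowskian structure $\tr\chibar\approx-2/|u|$ being built into the norm $\mathcal{O}_{0,\infty}$ through the renormalized quantity $\widetilde{\tr\chibar}$. The $\ubar$-direction is by contrast immediate, because the short interval $[0,1]$ combined with the large-$|u|$ suppression of $\chi$ and $\tr\chi$ makes the Grönwall factor trivially bounded, so \eqref{transport1} reduces to a one-line integration of the differentiated identity.
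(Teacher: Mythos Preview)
Your argument for \eqref{transport1} is essentially the paper's, modulo one cosmetic point: since $\nabla_4$ is compatible with the induced metric $\gamma$, one has $e_4(|\phi|_\gamma^2)=2\langle\nabla_4\phi,\phi\rangle_\gamma$ exactly, so the schematic term $\chi\cdot\phi\cdot\phi$ never appears. Including it is harmless here, but it is worth noting that the identity is cleaner than you wrote.

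For \eqref{transport3}, however, there is a genuine gap. Your Gr\"onwall multiplier
\[
\int_{u'}^{u}\lVert\Omega\,\tr\chibar\rVert_{L^\infty(S_{u'',\ubar})}\,du''
\]
is \emph{not} uniformly bounded on $\mathcal{D}$: the Minkowskian piece $-2/|u''|$ integrates to $2\log(|u'|/|u|)$, and with $u'=u_\infty$, $u=-a/4$ this is $2\log(4|u_\infty|/a)$, which is unbounded as $|u_\infty|\to\infty$. The resulting Gr\"onwall factor is $\sim(|u'|/|u|)^C$, so the implicit constant in \eqref{transport3} would depend on $u_\infty$ and the estimate would be useless for the semi-global argument (and in particular for Proposition~\ref{prop36}, which integrates from $u_\infty$).

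The paper's proof avoids this entirely by exploiting the \emph{sign}: with the correct identity
\[
\Lb\int_{S_{u,\ubar}}|\phi|_\gamma^2
=\int_{S_{u,\ubar}}\Omega\bigl(2\langle\nabla_3\phi,\phi\rangle_\gamma+\tr\chibar\,|\phi|_\gamma^2\bigr),
\]
the term $\Omega\,\tr\chibar\,|\phi|^2$ is $\le0$ (since $\Omega>0$ and $\tr\chibar<0$ throughout $\mathcal{D}$) and is simply dropped, yielding $\Lb\|\phi\|_{L^2}\lesssim\|\nabla_3\phi\|_{L^2}$ with no Gr\"onwall at all. This is precisely why the paper isolates $\tr\chibar$ with its precise coefficient in the more refined Proposition~\ref{prop37}: the sign structure in the $e_3$-direction is doing real work that an absolute-value bound cannot replace.
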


	\begin{proof}
		Here we first prove (\ref{transport1}). For a scalar function $f$, by variation of area formula, we have
		\[
		\frac{d}{d\ub}\int_{\S} f=\int_{\S} \left(\frac{df}{d\ub}+\Omega \trch f\right)=\int_{\S} \Omega\left(e_4(f)+ \trch f\right).
		\]
		Taking $f=|\phi|_{\gamma}^2$, using Cauchy-Schwarz inequality on the sphere and $L^\infty$ bounds for $\Omega$ and $\trch$, we obtain
		$$2\|\phi\|_{L^2(\S)}\cdot \f{d}{d\ub}\|\phi\|_{L^2(\S)}\ls \|\phi\|_{L^2(\S)}\cdot \|\nab_4\phi\|_{L^2(\S)}+\f{O}{|u|}\|\phi\|^2_{L^2(\S)}.$$
		This implies
		$$\f{d}{d\ub}\|\phi\|_{L^2(\S)}\ls \|\nab_4\phi\|_{L^2(\S)}+\f{O}{|u|}\|\phi\|_{L^2(\S)}.$$
		And (\ref{transport1}) can be concluded by applying Gr\"onwall's inequality for $\ub$ variable. 
		\vspace{3mm}
		\par \noindent Inequality \eqref{transport3} can be proved in a similar fashion.  For a scalar function $f$, we arrive at
			\[
			\Lb\int_{\S} f=\int_{\S} \left(\Lb f+\Omega \tr\chib f\right)=\int_{\S} \Omega\left(e_3(f)+ \tr\chib f\right).
			\]
			Taking $f=|\phi|_{\gamma}^2$, using Cauchy-Schwarz inequality on the sphere and the fact $\O>0, \tr\chib<0$, we obtain
			$$2\|\phi\|_{L^2(\S)}\cdot \Lb\|\phi\|_{L^2(\S)}\ls \|\phi\|_{L^2(\S)}\cdot \|\nab_3\phi\|_{L^2(\S)}.$$
			This implies $ \Lb\|\phi\|_{L^2(\S)}\ls \|\nab_3\phi\|_{L^2(\S)}$ and (\ref{transport3}) follows.
		
	\end{proof}

We then rewrite the above inequalities in scale invariant norms:
	
	\begin{proposition}\label{prop36}
		There holds 
		
		\[ \scaletwoSu{\phi} \lesssim \lVert \phi \rVert_{\mathcal{L}^2_{(sc)}(S_{u,0})}  + \int_{0}^{\ubar}\lVert \nabla_4 \phi \rVert_{\mathcal{L}^2_{(sc)}(S_{u,\ubar^\prime})}\hspace{.5mm} \text{d}\ubar^\prime, \]\[  \scaletwoSu{\phi} \lesssim \lVert \phi \rVert_{\mathcal{L}^2_{(sc)}(S_{u_{\infty},\ubar})}  + \int_{u_{\infty}}^{u} \frac{a}{\lvert u^\prime \rvert^2}\lVert \nabla_3 \phi \rVert_{\mathcal{L}^2_{(sc)}(S_{u^\prime,\ubar})}\hspace{.5mm} \text{d}u^\prime.     \]
	\end{proposition}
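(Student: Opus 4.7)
The plan is to derive each scale-invariant inequality directly from the corresponding plain $L^2$ inequality in the preceding proposition, by multiplying through by the appropriate power of $a$ and $\lvert u \rvert$ dictated by the signature $s_2(\phi)$. The only subtlety is keeping track of how the signature of $\phi$ compares with that of $\nabla_4\phi$ or $\nabla_3\phi$, and exploiting the monotonicity $\lvert u_\infty\rvert \geq \lvert u'\rvert \geq \lvert u\rvert \geq a/4$ in our region of study.

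For the first inequality, I would multiply \eqref{transport1} (with $\ubar' = 0$) by $a^{-s_2(\phi)}\lvert u\rvert^{2s_2(\phi)}$. Because $\nabla_4$ has signature $0$, we have $s_2(\nabla_4\phi) = s_2(\phi)$, and $\lvert u\rvert$ is constant along the $\ubar$-integration, so each factor on the right-hand side is immediately converted into its scale-invariant counterpart with no leftover weights. This yields the first claim with the constant equal to the implicit constant from \eqref{transport1}.

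For the second inequality, multiplying \eqref{transport3} by $a^{-s_2(\phi)}\lvert u\rvert^{2s_2(\phi)}$ requires more care. The boundary term at $u' = u_\infty$ acquires a factor $\left(\lvert u\rvert/\lvert u_\infty\rvert\right)^{2s_2(\phi)} \leq 1$, so it is controlled by $\lVert \phi\rVert_{\mathcal{L}^2_{(sc)}(S_{u_\infty,\ubar})}$. For the integrand, since $s_2(\nabla_3\phi) = s_2(\phi)+1$, converting $\lVert \nabla_3\phi\rVert_{L^2(S_{u',\ubar})}$ to its scale-invariant version brings in a factor $a^{s_2(\phi)+1}\lvert u'\rvert^{-2s_2(\phi)-2}$. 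Combining with the prefactor $a^{-s_2(\phi)}\lvert u\rvert^{2s_2(\phi)}$ produces $\frac{a}{\lvert u'\rvert^2}\cdot\bigl(\lvert u\rvert/\lvert u'\rvert\bigr)^{2s_2(\phi)}$, and since $\lvert u'\rvert \geq \lvert u\rvert$ in the integration range and $s_2(\phi) \geq 0$, the extra ratio is harmless and the second claim follows.

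There is no real obstacle: the proposition is essentially a bookkeeping step that reconciles the plain $L^2$ transport inequalities with the scale-invariant norm conventions introduced earlier. The only points to verify carefully are the non-negativity of all signatures appearing (which is transparent from the table in Section \ref{definitionsignaturessection}) and the sign of $u$, $u_\infty$ (both negative, with $\lvert u_\infty\rvert \geq \lvert u\rvert$), which together guarantee that the auxiliary factors $\left(\lvert u\rvert/\lvert u_\infty\rvert\right)^{2s_2(\phi)}$ and $\left(\lvert u\rvert/\lvert u'\rvert\right)^{2s_2(\phi)}$ are bounded by $1$.
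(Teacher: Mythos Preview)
Your proposal is correct and is exactly the approach the paper has in mind: the proposition is introduced with the sentence ``We then rewrite the above inequalities in scale invariant norms,'' and no further proof is given. Your argument spells out the bookkeeping (using $s_2(\nabla_4\phi)=s_2(\phi)$, $s_2(\nabla_3\phi)=s_2(\phi)+1$, and the monotonicity $\lvert u_\infty\rvert\geq\lvert u'\rvert\geq\lvert u\rvert$) that the paper leaves implicit.
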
For equations along the incoming direction, sometimes the borderline terms necessitate more precise estimates. Typically,
	a borderline term contains $\tr\chibar$. It turns out that the coefficients in front of $\tr\chibar$ play an important role.
	
	\begin{proposition}\label{prop37}
		We continue to work under the assumptions of Theorem \ref{main1} and the bootstrap assumptions \eqref{bootstrapbounds}.	Let $\upsilon$ and $\Upsilon$ be $S_{u,\ubar}-$tangent tensor fields of rank $k$ satisfying the transport equation
		
		\[   \nabla_3 \upsilon_{A_1 \dots A_k} + \lambda_0 \tr\chibar \hspace{.5mm} \upsilon_{A_1 \dots A_k} = \Upsilon_{A_1\dots A_k}.      \]If we define $\lambda_1 = 2\lambda_0 -1$, we have \[  \lvert u \rvert^{\lambda_1} \lVert \upsilon \rVert_{L^2(S_{u,\ubar})}  \lesssim \lvert u_{\infty} \rvert^{\lambda_1}\lVert \upsilon \rVert_{L^{2}(S_{u_{\infty},\hspace{.5mm}\ubar})} +  \int_{u_{\infty}}^u \lvert u^\prime \rvert^{\lambda_1} \lVert \Upsilon \rVert_{L^{2}(S_{u^\prime,\hspace{.5mm}\ubar})} \hspace{.5mm} \text{d}u^\prime   \]where the implicit constant is allowed to depend on $\lambda_0$.
	\end{proposition}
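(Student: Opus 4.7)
The plan is to combine the variation of area formula with a carefully chosen integrating factor $|u|^{\lambda_1}$, whose exponent is tuned precisely to cancel the principal contribution from $\trchb$. First, I would apply the variation of area identity with $f = |\upsilon|_\gamma^2$ and substitute the transport equation in the hypothesis to obtain
\[\Lb \|\upsilon\|^2_{L^2(\S)} = \int_{\S} \Omega \left(2\langle \Upsilon, \upsilon\rangle_\gamma + (1 - 2\lambda_0)\trchb \hspace{.5mm} |\upsilon|^2\right).\]

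Next, I would split $\trchb = -\frac{2}{|u|} + \tc$. Since $1 - 2\lambda_0 = -\lambda_1$, the leading piece contributes $\frac{2\lambda_1}{|u|}\|\upsilon\|^2_{L^2(\S)}$ after using the bound $\|\Omega - 1\|_{L^\infty(\S)} \lesssim O/|u|$ from Proposition \ref{Omega}. The remaining contribution from $\tc$ can be controlled via the bootstrap estimate on $\mathcal{O}_{0,\infty}$, which yields $\inftySu{\tc} \lesssim O/|u|^2$; the $(\Omega - 1)$ error is of comparable size and handled identically.

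After applying Cauchy–Schwarz to the $\langle \Upsilon, \upsilon\rangle$ term and dividing by $\|\upsilon\|_{L^2(\S)}$ (with a standard regularization to treat possible vanishing), I arrive at the differential inequality
\[\Lb \|\upsilon\|_{L^2(\S)} \lesssim \|\Upsilon\|_{L^2(\S)} + \frac{\lambda_1}{|u|} \|\upsilon\|_{L^2(\S)} + \frac{O}{|u|^2}\|\upsilon\|_{L^2(\S)}.\]
Using $\Lb |u| = -1$, this rewrites as
\[\Lb \left(|u|^{\lambda_1} \|\upsilon\|_{L^2(\S)} \right) \lesssim |u|^{\lambda_1} \|\Upsilon\|_{L^2(\S)} + \frac{O}{|u|^2} \cdot |u|^{\lambda_1}\|\upsilon\|_{L^2(\S)},\]
with the dependence on $\lambda_0$ absorbed into the implicit constant. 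Finally, integrating along the integral curves of $\Lb$ from $u_\infty$ to $u$ and applying a standard Grönwall argument yields the claim. The Grönwall exponential is $\exp\!\left(CO \int_{u_\infty}^u |u'|^{-2} du'\right) \lesssim 1$ because $O/a \ll 1$ in $\mathcal{D}$, so this factor can be absorbed into the implicit constant.

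The main technical point, and the expected obstacle, is achieving the exact cancellation: the precise exponent $\lambda_1 = 2\lambda_0 - 1$ is forced by the coefficient $1 - 2\lambda_0$ that appears after substituting the transport equation into the variation of area formula, combined with the leading behavior $\trchb \sim -2/|u|$. Retaining this exact coefficient rather than just an upper bound is what allows the principal term to cancel completely upon introducing the integrating factor $|u|^{\lambda_1}$, giving the sharp weighted estimate without losses in $|u|$; treating $\trchb$ as a generic quantity of size $O(1/|u|)$ would instead spoil the weight by a multiplicative factor that blows up with $u$.
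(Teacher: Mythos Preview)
Your proposal is correct and follows essentially the same argument as the paper: variation of area, exact cancellation via $\lambda_1 = 2\lambda_0 - 1$ against the leading $-2/|u|$ in $\trchb$, then Gr\"onwall on the $O/|u|^2$ remainder. The only cosmetic difference is that the paper builds the weight $|u|^{2\lambda_1}$ directly into $f$ before applying the area formula, whereas you derive the unweighted inequality first and then introduce $|u|^{\lambda_1}$ as an integrating factor; just be careful that the $\lambda_1/|u|$ term must appear with its \emph{exact} coefficient (not under a $\lesssim$) for the cancellation to go through, as you correctly emphasize in your final paragraph.
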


\begin{proof}
	We use variation of area formula for equivariant vector $\Lb$ \footnote{Recall $\Lb=\O e_3$.} and a scalar function $f$: 
	\[
	\Lb\int_{\S} f=\int_{\S} \left(\Lb f+\Omega \trchb f\right)=\int_{\S} \Omega\left(e_3(f)+ \trchb f\right).
	\]
	With this identity, we obtain
	\begin{equation}\label{evolution.id}
	\begin{split}
	&\Lb(\int_{\S}|u|^{2\lambda_1 }|\phi|^{2})\\
	=&\int_{\S}\Omega\l -2\lambda_1 |u|^{2\lambda_1-1}(e_3 u)|\phi|^{2}+2|u|^{2\lambda_1}<\phi,\nab_3\phi>+ \tr\underline{\chi}|u|^{2\lambda_1}|\phi|^{2}\r\\
	=&\int_{\S}\Omega\l 2|u|^{2\lambda_1}<\phi, \nab_3\phi+{\lambda_0}\trchb\phi>\r\\
	&+\int_{\S}\Omega |u|^{2\lambda_1}\l -\f{2\lambda_1 (e_3u)}{|u|}+(1-2\lambda_0)\trchb\r|\phi|^2.
	\end{split}
	\end{equation}
	Observe that we have
	\begin{equation}\label{trchib additional}
	\begin{split}
	&-\f{2\lambda_1 (e_3u)}{|u|}+(1-2\lambda_0)\trchb\\
	= &-\f{2\lambda_1 \Omega^{-1}}{|u|}+(1-2\lambda_0)\trchb\\
	= &-\f{2\lambda_1 (\Omega^{-1}-1)}{|u|}+(1-2\lambda_0)(\trchb+\f{2}{|u|})-\f{2\lambda_1+2-4\lambda_0}{|u|}\\
	\ls &\f{O}{|u|^2}.
	\end{split}
	\end{equation}
	For the last inequality, we employ \eqref{Omega-1}, the bootstrap assumption and the fact that $\|\tr\chib+\f{2}{|u|}\|_{L^{\infty}(\S)}\leq \f{O}{|u|^2}$ and $\lambda_1=2(\lambda_0-1/2)$.
	
\vspace{3mm}
	
\par \noindent	Using Cauchy-Schwarz for the first term and applying Gr\"onwall's inequality for the second term, we obtain
	\begin{equation*}
	\begin{split}
	&|u|^{\lambda_1}\|\phi\|_{L^2(\S)}\\
	\ls &e^{O\|u^{-2}\|_{L^1_u}}\l|u_{\infty}|^{\lambda_1}\|\phi\|_{L^2(S_{u_{\infty},\underline{u}})}+\int_{u_{\infty}}^u |u'|^{\lambda_1}\|F\|_{L^2(S_{u',\underline{u}})}du'\r\\
	\ls &|u_{\infty}|^{\lambda_1}\|\phi\|_{L^2(S_{u_{\infty},\underline{u}})}+\int_{u_{\infty}}^u |u'|^{\lambda_1}\|F\|_{L^2(S_{u',\underline{u}})}du'.
	\end{split}
	\end{equation*}
	In the last step, we use $O\|u^{-2}\|_{L^1_u}\ls O/a\leq 1$.
\end{proof}

	\subsection{Sobolev embedding}
	
	With the derived estimates for the metric $\gamma$, we can obtain a bound on the isoperimetric constant for a $2-$sphere $S$:
	
	\[   I(S)= \sup_{\substack{U\subset S\\\partial U \in C^1}}      \frac{\min \begin{Bmatrix}\text{Area}(U), \text{Area}(U^c) \end{Bmatrix}}{\left( \text{Perimeter}(\partial U) \right)^2 }. \] 
	
	\begin{proposition}\label{propisoperimetric}
		Under the assumptions of Theorem \ref{main1} and the bootstrap assumption \eqref{bootstrapbounds}, the isoperimetric constant obeys the bound \[ I(S_{u,\ubar}) \leq \frac{1}{\pi}  \]
		{\color{black}for $u_{\infty}\leq u \leq -a/4$ and $0\leq \ub \leq 1$.}
	\end{proposition}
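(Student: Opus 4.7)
\emph{Proof proposal.} The plan is to control $I(S_{u,\ubar})$ by comparison with the isoperimetric constant of the initial sphere $S_{u,0}$. Thanks to the Minkowskian data assumption along $\Hb_0$, the induced metric on $S_{u,0}$ is precisely the round metric of radius $|u|$, for which a direct computation (the optimum being attained on a hemisphere) yields $I(S_{u,0}) = \frac{1}{2\pi}$. Since the target constant $\frac{1}{\pi}$ is exactly double this value, it will suffice to show that the isoperimetric ratio on $S_{u,\ubar}$ exceeds the corresponding one on $S_{u,0}$ only by a factor that tends to $1$ as $a\to\infty$.

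The crucial input for the transfer is Proposition \ref{prop33}. Writing $\gamma_0$ for $\gamma(u,0,\theta)$, viewed as a metric on $S_{u,\ubar}$ via the angular coordinates $(\theta^1,\theta^2)$ transported along $L$, the proposition asserts that the two eigenvalues $\Lambda(\ubar),\lambda(\ubar)$ of $\gamma_0^{-1}\gamma$ both lie within $Ca^{-\frac{1}{2}}$ of $1$. Equivalently, as pointwise quadratic forms on $S_{u,\ubar}$,
\[ (1-Ca^{-\frac{1}{2}})\gamma_0 \le \gamma \le (1+Ca^{-\frac{1}{2}})\gamma_0. \]

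Armed with this sandwich, the transfer of the isoperimetric inequality follows directly. For any open $U\subset S_{u,\ubar}$ with $C^1$ boundary, taking determinants and square roots in the above comparison gives
\[ \text{Area}_{\gamma}(U) \le (1+Ca^{-\frac{1}{2}})\,\text{Area}_{\gamma_0}(U), \qquad \text{Perimeter}_{\gamma}(\partial U) \ge \sqrt{1-Ca^{-\frac{1}{2}}}\,\text{Perimeter}_{\gamma_0}(\partial U), \]
and analogously for $U^c$. Combining these estimates and taking the supremum over $U$ yields
\[ I(S_{u,\ubar}) \le \frac{1+Ca^{-\frac{1}{2}}}{1-Ca^{-\frac{1}{2}}}\cdot I(S_{u,0}) = \frac{1+Ca^{-\frac{1}{2}}}{2\pi(1-Ca^{-\frac{1}{2}})}, \]
which is strictly less than $\frac{1}{\pi}$ as soon as $a_0$ is chosen large enough (for instance so that $Ca_0^{-\frac{1}{2}}\le \frac{1}{3}$). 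The one mild subtlety worth flagging is confirming that $\gamma_0$ is genuinely round on each $S_{u,0}$; this is ensured by the Minkowskian prescription on $\Hb_0$ together with the transport rule $\Ls_{L}\theta^A = 0$ from Section \ref{coordinates}, which identifies $\gamma_0$ with the standard round metric of radius $|u|$ pulled back by the angular coordinates.
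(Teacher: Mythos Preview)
Your proof is correct and follows essentially the same approach as the paper: compare $S_{u,\ubar}$ with the round sphere $S_{u,0}$ (where $I=\frac{1}{2\pi}$) via the flow of $L$, then use the eigenvalue control from Proposition~\ref{prop33} to bound the area and perimeter ratios. The paper phrases the area bound via $\det\gamma/\det\gamma_0$ and invokes both Propositions~\ref{prop32} and~\ref{prop33}, whereas you extract the cleaner two-sided sandwich $(1-Ca^{-1/2})\gamma_0\le\gamma\le(1+Ca^{-1/2})\gamma_0$ directly from Proposition~\ref{prop33}; this is a minor cosmetic difference, not a different argument.
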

\begin{proof}
	Fix $u$. Given $U_{\ubar} \subset \S$, denote by $U_0 \subset S_{u,0}$ the pullback image of $U_{\ubar}$ under the diffeomorphism generated by the equivariant vector field $L$. Using Propositions \ref{prop32} and \ref{prop33} we can obtain that
	
	\[  \frac{\text{Perimeter}\hspace{.5mm}\left(\partial U_{\ubar}\right)}{\text{Perimeter}\left(\partial U_0\right)}  \geq \sqrt{\inf_{S_{u,0}} \lambda(\ubar)},  \]\[ \frac{\text{Area} \hspace{.5mm} U_{\ubar}}{\text{Area} \hspace{.5mm} U_{0}}  \leq \sup_{S_{u,0}} \frac{\det (\gamma_{\ubar})}{\det (\gamma_0)},\hspace{2mm}  \frac{\text{Area} \hspace{.5mm} U_{\ubar}^c}{\text{Area} \hspace{.5mm} U_{0}^{c}}  \leq \sup_{S_{u,0}} \frac{\det (\gamma_{\ubar})}{\det (\gamma_0)}  . \] Using the fact that $I(S_{u,0})= \frac{1}{2\pi}$, as it is the standard sphere in Minkowski spacetime, the bounds in Propositions \ref{prop32} and \ref{prop33} yield the conclusion.
\end{proof}We shall be employing an $L^2-L^{\infty}$ embedding statement in this paper quite often. To derive it, in addition to \ref{propisoperimetric}, we require the two propositions below, whose proof is found in  \cite{Chr:book}.

\begin{proposition}\label{ltwolp}
	Suppose $(S,\gamma)$ is a Riemannian $2-$manifold. There holds
	
	\begin{equation}
	\left( \text{Area}\hspace{.5mm}(S) \right)^{-\frac{1}{p}} \lVert \phi \rVert_{L^p(S)}\leq C_p  \sqrt{\max\begin{Bmatrix} I(S),1 \end{Bmatrix}}\left( \lVert \nabla \phi \rVert_{L^2(S)}+ \left(\text{Area}\hspace{.5mm}(S)\right)^{-\frac{1}{2}} \lVert \phi \rVert_{L^2(S)} \right),
	\end{equation}for any $2 < p < \infty$ and any tensor $\phi$.
\end{proposition}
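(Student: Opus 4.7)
The statement is a classical Sobolev embedding on a Riemannian $2$--manifold whose constant depends only on the isoperimetric constant. I would follow the standard argument (as in Chapter 5 of \cite{Chr:book}), proceeding in three steps.

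First, I would reduce to scalar functions. For a tensor $\phi$ of arbitrary rank, Kato's pointwise inequality $|\nabla |\phi|_\gamma| \leq |\nabla \phi|_\gamma$ together with $\||\phi|_\gamma\|_{L^q(S)} = \|\phi\|_{L^q(S)}$ shows it suffices to prove the bound when $\phi = f$ is a nonnegative scalar.

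Second, I would establish the endpoint $W^{1,1} \hookrightarrow L^2$ inequality
\[ \lVert g \rVert_{L^{2}(S)} \leq C \sqrt{\max\{I(S),1\}} \Bigl( \lVert \nabla g \rVert_{L^1(S)} + \text{Area}(S)^{-1/2}\lVert g \rVert_{L^1(S)} \Bigr) \]
for $g \geq 0$. The key tools are the layer-cake identity $\int_S g^2 = 2\int_0^\infty t \cdot \text{Area}(\{g > t\})\, \text{d}t$ and the coarea formula $\int_S |\nabla g| = \int_0^\infty \text{Perimeter}(\{g > t\})\, \text{d}t$. By the definition of $I(S)$, the perimeter is bounded below by $\sqrt{\min\{\text{Area}(\{g > t\}), \text{Area}(\{g \leq t\})\}/I(S)}$. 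Splitting the $t$--integral according to whether the super-level set is small or comparable to $\text{Area}(S)$ (this is where the renormalization by $\lVert g \rVert_{L^1}/\text{Area}(S)^{1/2}$ enters) and applying Cauchy--Schwarz yields the inequality.

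Third, I would apply Step 2 to $g = f^{p/2}$. Then $\lVert g \rVert_{L^2(S)}^2 = \lVert f \rVert_{L^p(S)}^p$ and $|\nabla g| \leq (p/2)\, f^{p/2-1}|\nabla f|$, so by H\"older
\[ \lVert \nabla g \rVert_{L^1(S)} \leq \tfrac{p}{2}\, \lVert f^{p/2-1} \rVert_{L^2(S)} \lVert \nabla f \rVert_{L^2(S)} \leq \tfrac{p}{2}\, \lVert f \rVert_{L^p(S)}^{p/2-1} \text{Area}(S)^{1/p} \lVert \nabla f \rVert_{L^2(S)}, \]
and similarly $\lVert g \rVert_{L^1(S)} \leq \lVert f \rVert_{L^p(S)}^{p/2-1}\, \text{Area}(S)^{1/p}\, \lVert f \rVert_{L^2(S)}$. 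Substituting into Step 2, dividing both sides by $\lVert f \rVert_{L^p(S)}^{p/2-1}$, and taking square roots produces the stated estimate with $C_p$ absorbing the prefactor of $p$.

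The principal obstacle is Step 2: the endpoint embedding is delicate because the surface has no boundary and one must balance the contributions from the small and the large super-level sets of $g$. Once this is in place, Steps 1 and 3 are essentially bookkeeping via Kato and H\"older.
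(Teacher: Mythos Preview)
The paper does not prove this proposition; it simply states that the proof ``is found in \cite{Chr:book}''. Your proposal correctly reconstructs the standard argument from that reference (Kato reduction, the endpoint $W^{1,1}\hookrightarrow L^2$ via coarea and the isoperimetric constant, then the substitution $g=f^{p/2}$ with H\"older), so it is consistent with what the paper intends. One small expository slip: after dividing by $\lVert f\rVert_{L^p}^{p/2-1}$ you already obtain $\lVert f\rVert_{L^p}$ on the left, so no square root is needed.
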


\begin{proposition}\label{lplinfinity}
	Suppose $(S,\gamma)$ is a Riemannian $2-$manifold. There holds
	
	\begin{equation}
	 \lVert \phi \rVert_{L^{\infty}(S)}\leq C_p  \sqrt{\max\begin{Bmatrix} I(S),1 \end{Bmatrix}}\left( \text{Area}\hspace{.5mm}(S) \right)^{\frac{1}{2}- \frac{1}{p}} \left( \lVert \nabla \phi \rVert_{L^p(S)}+ \left(\text{Area}\hspace{.5mm}(S)\right)^{-\frac{1}{2}} \lVert \phi \rVert_{L^p(S)} \right),
	\end{equation}for any $2 < p < \infty$ and any tensor $\phi$.
\end{proposition}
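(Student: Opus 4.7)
The plan is to establish this Sobolev--Morrey embedding $W^{1,p}\hookrightarrow L^\infty$ for $p>2$ by iterative application of Proposition \ref{ltwolp}, combined with H\"older's inequality. First I would reduce to the case of non-negative scalar functions: for a tensor $\phi$, Kato's inequality $|\nabla|\phi||\leq |\nabla \phi|$ almost everywhere shows it suffices to prove the bound for the scalar $f := |\phi| \geq 0$.

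Setting $A := \text{Area}(S)$, for any $\alpha \geq 1$ and $q \in (2,\infty)$, applying Proposition \ref{ltwolp} to the scalar $f^\alpha$ (with standard truncation/approximation to ensure sufficient regularity) yields
\[ A^{-1/q}\|f\|_{L^{\alpha q}(S)}^\alpha \leq C_q \sqrt{\max\{I(S),1\}} \Bigl( \alpha \,\|f^{\alpha-1}\nabla f\|_{L^2(S)} + A^{-1/2} \|f\|_{L^{2\alpha}(S)}^\alpha \Bigr). \]
Since $p > 2$, H\"older's inequality with conjugate exponents $p/2$ and $p/(p-2)$ gives
\[ \|f^{\alpha-1}\nabla f\|_{L^2(S)} \leq \|\nabla f\|_{L^p(S)} \, \|f\|_{L^{2p(\alpha-1)/(p-2)}(S)}^{\alpha-1}. \]
Fixing a sufficiently large $q > 2p/(p-2)$ and choosing an increasing sequence $\alpha_n$ so that the $L^{r_n}$ bound needed at step $n+1$ is exactly the $L^{\alpha_n q}$ bound produced at step $n$, the resulting Moser iteration produces controls on $\|f\|_{L^{r_n}(S)}$ with $r_n \nearrow \infty$ geometrically; after taking $r_n$-th roots, the product of the per-step constants converges because $\sum 1/\alpha_n < \infty$, and passing to the limit yields the desired $L^\infty$ bound.

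The main obstacle is the careful bookkeeping of exponents and area weights through the iteration so as to match the claimed form: the area factors must combine to precisely $A^{1/2-1/p}$, and the dependence on $I(S)$ must aggregate into $\sqrt{\max\{I(S),1\}}$ with only a $p$-dependent prefactor. An alternative, arguably more transparent route would exploit the isoperimetric inequality directly via the coarea formula: letting $\mu(t) := |\{f > t\}|$, the bound $\mu(t) \leq I(S) \cdot \mathcal{H}^1(\{f=t\})^2$ combined with the Cauchy--Schwarz estimate $\mathcal{H}^1(\{f=t\})^2 \leq (-\mu'(t))\int_{\{f=t\}}|\nabla f|\,d\mathcal{H}^1$ integrates, after a further H\"older step to insert $\|\nabla f\|_{L^p}$, to a pointwise bound on $f$. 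Either route is essentially a variant of the classical Morrey embedding, adapted to the intrinsic setting with $I(S)$ playing the role of the Euclidean Sobolev constant.
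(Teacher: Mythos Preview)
The paper does not supply its own proof of this proposition; it simply states that the proof ``is found in \cite{Chr:book}.'' So there is no in-paper argument to compare your proposal against.

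Your Moser iteration is a reasonable route to an $L^\infty$ bound, but the $I(S)$-bookkeeping you yourself flag as delicate is genuinely problematic for matching the stated form. Each application of Proposition~\ref{ltwolp} to $f^{\alpha_n}$ contributes a factor $(\max\{I(S),1\})^{1/2}$, and after taking $\alpha_n$-th roots and multiplying over all steps the aggregate exponent on $\max\{I(S),1\}$ becomes $\tfrac{1}{2}\sum_n \alpha_n^{-1}$, a convergent sum but generically strictly larger than $\tfrac{1}{2}$. The iteration therefore produces $(\max\{I(S),1\})^{c}$ with some $c>\tfrac{1}{2}$ depending on $p$, not the clean square root. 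This discrepancy is immaterial for the paper's downstream applications, since Proposition~\ref{propisoperimetric} gives $I(S_{u,\ub})\leq 1/\pi$ and hence $\max\{I(S),1\}=1$; but it does not reproduce the stated constant.

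Your alternative coarea-formula route is closer in spirit to how such estimates are proved in \cite{Chr:book}: there the isoperimetric constant enters once, at the level-set bound $\mu(t)\leq I(S)\,\mathcal{H}^1(\{f=t\})^2$, rather than being compounded through an iteration, and this is what delivers the sharp $\sqrt{\max\{I(S),1\}}$ dependence directly.
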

Given Proposition \ref{areaprop}, we know that $\text{Area}(S_{u,\ubar}) \approx \lvert u \rvert^2$. Substituting this into Propositions \ref{ltwolp} and \ref{lplinfinity} and taking into account Proposition \ref{propisoperimetric}, we have the following $L^2-L^\infty$ Sobolev embedding inequality:
	
	\begin{proposition}\label{Sobolevembedding}
		Under the assumptions of Theorem \ref{main1} and the bootstrap assumption \eqref{bootstrapbounds}, it holds
		\[  \lVert \phi \rVert_{L^\infty (S_{u,\ubar})} \lesssim      \sum_{0 \leq i\leq 2} \big\lVert \lvert u \rvert^{i-1}\nabla^i \phi \big\rVert_{L^2(S_{u,\ubar})}.  \]In scale invariant norms:
		
		\[   \scaleinfinitySu{\phi} \lesssim \sum_{0\leq i\leq 2} \scaletwoSu{(a^{\frac{1}{2}}\nabla)^i \phi}.      \]
	\end{proposition}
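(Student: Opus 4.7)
The plan is to chain Propositions \ref{ltwolp} and \ref{lplinfinity} with $p=4$, using Proposition \ref{propisoperimetric} to dispose of the isoperimetric constant and Proposition \ref{areaprop} (giving $\mathrm{Area}(S_{u,\ubar})\approx |u|^2$) to turn every area factor into a power of $|u|$. No novel input is required; the only thing to watch is that the $|u|$--weights track correctly so that the final sum reads $\sum_{i\le 2}\||u|^{i-1}\nabla^i\phi\|_{L^2(S_{u,\ubar})}$.

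First I would apply Proposition \ref{lplinfinity} with $p=4$. Since Propositions \ref{propisoperimetric} and \ref{areaprop} give $I(S_{u,\ubar})\le 1/\pi$ and $\mathrm{Area}(S_{u,\ubar})\lesssim |u|^2$, this yields
\[
\|\phi\|_{L^\infty(S_{u,\ubar})} \lesssim |u|^{1/2}\,\|\nabla\phi\|_{L^4(S_{u,\ubar})} + |u|^{-1/2}\,\|\phi\|_{L^4(S_{u,\ubar})}.
\]
Next I would apply Proposition \ref{ltwolp} with $p=4$ to each of $\phi$ and $\nabla\phi$, obtaining
\[
\|\phi\|_{L^4(S_{u,\ubar})} \lesssim |u|^{1/2}\|\nabla\phi\|_{L^2(S_{u,\ubar})} + |u|^{-1/2}\|\phi\|_{L^2(S_{u,\ubar})},
\]
\[
\|\nabla\phi\|_{L^4(S_{u,\ubar})} \lesssim |u|^{1/2}\|\nabla^2\phi\|_{L^2(S_{u,\ubar})} + |u|^{-1/2}\|\nabla\phi\|_{L^2(S_{u,\ubar})}.
\]
Substituting these two estimates into the previous display and collecting powers of $|u|$ gives exactly
\[
\|\phi\|_{L^\infty(S_{u,\ubar})} \lesssim |u|\,\|\nabla^2\phi\|_{L^2(S_{u,\ubar})} + \|\nabla\phi\|_{L^2(S_{u,\ubar})} + |u|^{-1}\|\phi\|_{L^2(S_{u,\ubar})} = \sum_{0\le i\le 2}\big\||u|^{i-1}\nabla^i\phi\big\|_{L^2(S_{u,\ubar})},
\]
which is the first stated inequality.

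For the scale--invariant reformulation, I would simply multiply both sides of the above inequality by $a^{-s_2(\phi)}|u|^{2s_2(\phi)+1}$. Since the angular derivative $\nabla$ raises signature by $1/2$, for each $0\le i\le 2$ one has
\[
\scaletwoSu{(a^{1/2}\nabla)^i\phi} = a^{-s_2(\phi)-i/2}|u|^{2s_2(\phi)+i}\|(a^{1/2}\nabla)^i\phi\|_{L^2(S_{u,\ubar})} = a^{-s_2(\phi)}|u|^{2s_2(\phi)+i}\|\nabla^i\phi\|_{L^2(S_{u,\ubar})},
\]
so the $a^{1/2}$ factors and the $a^{-s_2}$ factor match exactly, and the weight $|u|^{2s_2(\phi)+1}\cdot |u|^{i-1}=|u|^{2s_2(\phi)+i}$ agrees on both sides. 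This gives the second inequality directly from the first.

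There is essentially no obstacle here: the proposition is a standard $L^2\to L^\infty$ Sobolev embedding on a two--sphere, and all preparatory estimates (area, isoperimetric constant, the two generic Sobolev lemmas) have already been assembled. The only mild care required is the bookkeeping of the $|u|$--powers and, in the scale--invariant version, the compatibility of the $a$--weights with the $a^{1/2}\nabla$ normalization of derivatives.
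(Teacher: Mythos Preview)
Your proposal is correct and follows exactly the approach indicated by the paper: the paper simply remarks that substituting $\text{Area}(S_{u,\ubar})\approx|u|^2$ and the isoperimetric bound from Proposition~\ref{propisoperimetric} into Propositions~\ref{ltwolp} and~\ref{lplinfinity} yields the result, and your argument fills in precisely these details (with $p=4$) and correctly tracks the $|u|$-- and $a$--weights for the scale-invariant version.
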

	\subsection{Commutation formulae}
	We list some useful commutation formulae that shall be used to give a schematic representation of repeated commutations. 
	
	\begin{proposition}
		For a scalar function $f$, there holds 
		\[ [\nabla_4, \nabla]f = \frac{1}{2}(\eta+ \etabar) \nabla_4 f - \chi \cdot \nabla f, \]\[  [\nabla_3, \nabla] f = \frac{1}{2}(\eta+\etabar) \nabla_3 f - \chibar \cdot \nabla f.      \]
	\end{proposition}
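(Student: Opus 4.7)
The plan is to prove the two identities by direct computation, relying on the invariant definitions of $\nabla$ and $\nabla_{3}, \nabla_{4}$ together with the decompositions of $D_{A}e_{4}, D_{A}e_{3}, D_{4}e_{A}, D_{3}e_{A}$ already listed in the paper (Subsection on ``An explicit form of the Bianchi equations''). Recall that for a scalar function $f$ one has $\nabla_{4}f = e_{4}(f)$ and $(\nabla f)(e_{A}) = e_{A}(f)$, and that $\nabla f$ is a $1$-form tangent to $S_{u,\ubar}$. The operator $\nabla_{4}$ acting on an $S$-tangent $1$-form is, by definition, the projection onto $TS$ of $D_{4}$.

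First I would compute $(\nabla_{4}\nabla f)(e_{A})$. Using the identity $D_{4}e_{A} = \nabla_{4}e_{A} + \etabar_{A}e_{4}$ and the fact that $\nabla f$ annihilates $e_{4}$, I get
\[
(\nabla_{4}\nabla f)(e_{A}) \;=\; e_{4}\bigl((\nabla f)(e_{A})\bigr) - (\nabla f)(D_{4}e_{A}) \;=\; e_{4}(e_{A}f) - (\nabla_{4}e_{A})(f).
\]
Since $(\nabla \nabla_{4}f)(e_{A}) = e_{A}(e_{4}f)$, the commutator reduces to
\[
[\nabla_{4},\nabla]f\,(e_{A}) \;=\; [e_{4},e_{A}](f) - (\nabla_{4}e_{A})(f).
\]

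Next I expand $[e_{4},e_{A}] = D_{4}e_{A} - D_{A}e_{4}$ using the listed identities $D_{4}e_{A} = \nabla_{4}e_{A} + \etabar_{A}e_{4}$ and $D_{A}e_{4} = \chi_{A}{}^{\sharp B}e_{B} - \zeta_{A}e_{4}$, yielding
\[
[e_{4},e_{A}] \;=\; \nabla_{4}e_{A} - \chi_{A}{}^{\sharp B}e_{B} + (\etabar_{A} + \zeta_{A})\,e_{4}.
\]
The $\nabla_{4}e_{A}$ piece cancels the subtraction, and applying this to $f$ gives
\[
[\nabla_{4},\nabla]f\,(e_{A}) \;=\; (\etabar_{A} + \zeta_{A})\,\nabla_{4}f \;-\; (\chi\cdot \nabla f)_{A}.
\]
Using $\zeta = \tfrac{1}{2}(\eta - \etabar)$, which is immediate from the definitions of $\eta,\etabar,\zeta$, one has $\etabar + \zeta = \tfrac{1}{2}(\eta + \etabar)$, producing the stated formula.

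The second identity is proved by the symmetric argument, now using $D_{3}e_{A} = \nabla_{3}e_{A} + \eta_{A}e_{3}$ and $D_{A}e_{3} = \chibar_{A}{}^{\sharp B}e_{B} + \zeta_{A}e_{3}$. The analogous cancellation gives
\[
[\nabla_{3},\nabla]f\,(e_{A}) \;=\; (\eta_{A} - \zeta_{A})\,\nabla_{3}f \;-\; (\chibar\cdot \nabla f)_{A},
\]
and the identity $\eta - \zeta = \tfrac{1}{2}(\eta + \etabar)$ finishes the derivation. There is no genuine obstacle here: both statements follow from bookkeeping with the connection formulae already at our disposal; the only point to be careful about is keeping track of the projection onto $TS$ (so that terms proportional to $e_{4}$ or $e_{3}$ are killed by the tangent $1$-form $\nabla f$) and correctly using $\zeta = \tfrac{1}{2}(\eta - \etabar)$ to recast $\etabar + \zeta$ and $\eta - \zeta$ in the symmetric form $\tfrac{1}{2}(\eta + \etabar)$.
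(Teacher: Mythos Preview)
Your proof is correct. The paper states this proposition without proof, listing it among the standard commutation formulae; your direct computation from the connection identities $D_{4}e_{A}=\nabla_{4}e_{A}+\etabar_{A}e_{4}$, $D_{A}e_{4}=\chi_{A}{}^{\sharp B}e_{B}-\zeta_{A}e_{4}$ (and their $e_{3}$ counterparts) together with $\zeta=\tfrac{1}{2}(\eta-\etabar)$ is exactly the expected derivation.
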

	\begin{proposition}
		For an $S_{u,\ubar}-$tangent $1-$form $U_b$, there holds
		\[   [\nabla_4, \nabla_a] U_b = -\chi_{ac} \nabla_c U_b +\epsilon_{ac}\Hodge{\beta_{\mathcal{R}}}_b U_c + \frac{1}{2}(\eta_a + \etabar_a)\nabla_4 U_b -\chi_{ac}\etabar_b U_c + \chi_{ab} \etabar\cdot U,   \]\[  [\nabla_3, \nabla_a] U_b = -\chibar_{ac} \nabla_c U_b +\epsilon_{ac}\Hodge{\betabar_{\mathcal{R}}}_b U_c + \frac{1}{2}(\eta_a + \etabar_a)\nabla_3 U_b -\chi_{ac}\etabar_b U_c + \chibar_{ab} \eta\cdot U.     \]
	\end{proposition}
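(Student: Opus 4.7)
The plan is to reduce both commutators to computations using three ingredients: (i) the spacetime Levi--Civita connection $D$, (ii) the Ricci coefficient decompositions $D_\mu e_\nu$ listed in the preceding proposition, and (iii) the spacetime commutator identity $[D_\mu, D_\nu] U_\lambda = R_{\mu\nu\lambda}{}^\sigma U_\sigma$. Since $U$ is $S$-tangent, I would extend it to spacetime by $U(e_3) = U(e_4) = 0$ and view $\nabla U$ as the $S$-tangent projection of $DU$; in particular $\nabla_a U_b = (D_a U)(e_b)$, $\nabla_4 U_b = (D_4 U)(e_b)$, and $(\nabla U)$ annihilates $e_3$ and $e_4$ in either slot.

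First I would expand $\nabla_4(\nabla_a U_b) = (D_4(\nabla U))(e_a, e_b)$ using $D_4 e_a = \nabla_4 e_a + \etabar_a e_4$; the $\etabar_a e_4$ pieces are killed when contracted against $\nabla U$. Next I would expand $\nabla_a(\nabla_4 U_b) = (D_a(\nabla_4 U))(e_b)$ using $D_a e_4 = \chi_a{}^c e_c - \zeta_a e_4$ and $D_a e_b = \nabla_a e_b + \tfrac12 \chi_{ab} e_3 + \tfrac12 \chibar_{ab} e_4$. Subtracting the two expressions yields schematically
\[ [\nabla_4, \nabla_a] U_b = [D_4, D_a] U_b + (\text{Ricci-coefficient corrections}), \]
where the corrections should account precisely for $-\chi_{ac}\nabla_c U_b$ (arising from the tangential part of $D_a e_b$ feeding the chain rule), the mixed terms $-\chi_{ac}\etabar_b U_c$ and $\chi_{ab}(\etabar \cdot U)$ (arising from composing $D_a e_4$ with $D_4 e_b$ and from the $\tfrac12\chi_{ab}e_3$ projection, respectively), and $\tfrac12(\eta_a + \etabar_a)\nabla_4 U_b$ (emerging from combining the longitudinal part $-\zeta_a e_4$ of $D_a e_4$ with the null-frame identity $\zeta = \tfrac12(\eta - \etabar)$ and the $\etabar_a$ piece in $D_4 e_a$).

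The remaining spacetime piece $[D_4, D_a] U_b = R(e_4, e_a, e_b, e_\mu) U^\mu$ reduces to $R(e_4, e_a, e_b, e_c) U^c$, since $U^3 = U^4 = 0$. The null-frame decomposition of the Riemann tensor in this mixed-index slot, together with the first Bianchi identity, identifies $R_{4abc}$ with $\epsilon_{ac}\Hodge{\beta_{\mathcal{R}}}_b$, which is the term appearing in the stated formula. The second identity $[\nabla_3, \nabla_a] U_b$ then follows by running the same computation with $e_4 \leftrightarrow e_3$, $\chi \leftrightarrow \chibar$, $\etabar \leftrightarrow \eta$, and $\beta_{\mathcal{R}} \leftrightarrow \betabar_{\mathcal{R}}$, matching the asymmetric term pattern in the stated formula.

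The main obstacle will be the careful bookkeeping of the Ricci-coefficient corrections, and in particular separating the tensorially distinct structures $\chi_{ac}\etabar_b U_c$ and $\chi_{ab}(\etabar \cdot U)$, which look similar but originate from genuinely different sources --- the former from composing $D$-derivatives in mixed null directions, the latter from the $\tfrac12\chi_{ab} e_3$ normal-bundle correction in the decomposition of $D_a e_b$. Matching the combined coefficient of $\nabla_4 U_b$ to the stated $\tfrac12(\eta_a + \etabar_a)$ is the most delicate point, since it requires simultaneously invoking the identity $\zeta = \tfrac12(\eta - \etabar)$ and tracking the two distinct appearances of $\etabar$ in the decompositions of $D_4 e_a$ and $D_a e_4$.
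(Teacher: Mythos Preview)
The paper does not supply a proof of this proposition: it is listed in Section~3.5 among several commutation formulae that are simply stated as standard facts (``We list some useful commutation formulae\ldots''), with no derivation given. These identities are classical and appear, for instance, in Christodoulou--Klainerman and Klainerman--Nicol\`o.

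Your outlined derivation is the standard one and is correct in structure: extend $U$ trivially in the null directions, express $\nabla_4(\nabla_a U_b)$ and $\nabla_a(\nabla_4 U_b)$ via the spacetime connection $D$, use the frame decompositions $D_4 e_a$, $D_a e_4$, $D_a e_b$ to peel off the Ricci-coefficient corrections, and identify the residual $[D_4,D_a]U_b$ with the curvature component $R_{4abc}U^c$, which in 2D collapses (via the first Bianchi identity and the 2D identity for antisymmetric pairs) to the $\epsilon_{ac}\Hodge{\beta_{\mathcal R}}_b$ term. Your remarks about the delicate points---distinguishing $\chi_{ac}\etabar_b U_c$ from $\chi_{ab}(\etabar\cdot U)$, and assembling the $\tfrac12(\eta_a+\etabar_a)\nabla_4 U_b$ coefficient from $\zeta=\tfrac12(\eta-\etabar)$ together with the two $\etabar$ contributions---are on point. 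There is nothing to compare against in the paper itself.
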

	\begin{proposition}
		For an $S_{u,\ubar}-$tangent $2-$form $V_{bc}$, there holds
		\begin{align*}
		[\nabla_4, \nabla_a] V_{bc} =&   \frac{1}{2}(\eta_a + \etabar_a)\nabla_4 V_{bc}- \etabar_b V_{dc} \chi_{ad} -\etabar_c V_{bd}\chi_{ad} - \epsilon_{bd}\Hodge{\beta_{\mathcal{R}}}_a       V_{dc} -\epsilon_{cd} \Hodge{\beta_{\mathcal{R}}}_c V_{bd}\\ 
		&+\chi_{ac}V_{bd} \etabar_{d} + \chi_{ab} V_{dc} \etabar_{d} - \chi_{ad} \nabla_d V_{bc},
		\end{align*}\begin{align*}
		[\nabla_3, \nabla_a] V_{bc} =&   \frac{1}{2}(\eta_a + \etabar_a)\nabla_3 V_{bc}- \eta_b V_{dc} \chibar_{ad} -\eta_c V_{bd}\chibar_{ad} - \epsilon_{bd}\Hodge{\betabar_{\mathcal{R}}}_a       V_{dc} -\epsilon_{cd} \Hodge{\betabar_{\mathcal{R}}}_c V_{bd}\\ 
		&+\chibar_{ac}V_{bd} \eta_{d} + \chibar_{ab} V_{dc} \eta_{d} - \chibar_{ad} \nabla_d V_{bc}.
		\end{align*}
	\end{proposition}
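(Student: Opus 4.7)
The plan is to derive the two identities by a direct, slot-by-slot computation with the ambient spacetime connection $D$, exactly paralleling the derivation of the one-form commutator in the preceding proposition. For each tensorial index of the $2$-form $V_{bc}$ one picks up precisely the same collection of extra terms as in the one-form case, which explains why the stated formula contains two copies of each Riemann-type and each $\etabar\chi$-type contribution (one for the $b$-slot, one for the $c$-slot).

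First, I would express $(\nabla_a V)(e_b,e_c)$ and $(\nabla_4 V)(e_b,e_c)$ as projections of $D_a V$ and $D_4 V$ onto $TS_{u,\ub}$. Using the identities listed just before the statement,
$$D_A e_B = \nabla_A e_B + \tfrac12 \chi_{AB}\,e_3 + \tfrac12 \chibar_{AB}\,e_4, \qquad D_4 e_A = \nabla_4 e_A + \etabar_A\,e_4, \qquad D_a e_4 = {\chi_a}^{\sharp b} e_b - \zeta_a e_4,$$
one sees that the horizontal derivative differs from the ambient one by explicit terms involving $\chi,\chibar,\etabar,\zeta$. I would then expand the commutator through the standard identity
$$[\nabla_4,\nabla_a] V_{bc} = (R(e_4,e_a)V)(e_b,e_c) + D_{[e_4,e_a]}V(e_b,e_c) + (\text{projection corrections}),$$
where $[e_4,e_a]=D_4 e_a - D_a e_4$ is computed directly from the frame identities above. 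The frame-commutator piece produces the transport contribution $\tfrac12(\eta_a+\etabar_a)\nabla_4 V_{bc}$ together with the tangential $-\chi_{ad}\nabla_d V_{bc}$ term, exactly as in the scalar and one-form cases.

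Second, I would decompose the Riemann-tensor term using the null-frame decomposition of $R$. Applied to each slot of $V_{bc}$, $R(e_4,e_a)$ contributes $R_{4abd}V^{d}{}_{c}+R_{4acd}V_{b}{}^{d}$; because $V$ is $S$-tangent, only the component $R_{4abc}$ matters, and via the identity $R_{4abc}=\epsilon_{bc}\,\Hodge{\beta_{\mathcal{R}}}_a$ this collapses to the curvature terms $-\epsilon_{bd}\Hodge{\beta_{\mathcal{R}}}_a V_{dc}-\epsilon_{cd}\Hodge{\beta_{\mathcal{R}}}_a V_{bd}$ in the stated formula. Meanwhile, the non-tangential components $\tfrac12\chi_{ab}e_3+\tfrac12\chibar_{ab}e_4$ of $D_a e_b$ (and analogously for the $c$-slot), paired with $D_4 e_3 = 2\etabar^{\sharp A}e_A+2\omega e_3$ and $D_4 e_4 = -2\omega e_4$, furnish precisely the four remaining cross terms $-\etabar_b\chi_{ad}V_{dc}$, $-\etabar_c\chi_{ad}V_{bd}$, $\chi_{ac}V_{bd}\etabar_d$, and $\chi_{ab}V_{dc}\etabar_d$. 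Summing the three groups (frame commutator, Riemann, projection correction) yields the first stated identity. The $[\nabla_3,\nabla_a]$ formula follows by the obvious swap $(e_3,e_4,\chi,\chibar,\eta,\etabar,\beta_{\mathcal{R}},\betabar_{\mathcal{R}}) \leftrightarrow (e_4,e_3,\chibar,\chi,\etabar,\eta,\betabar_{\mathcal{R}},\beta_{\mathcal{R}})$.

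The main obstacle is purely one of bookkeeping. Each of the two slots $b,c$ contributes its own copy of both the $\epsilon\,\Hodge{\beta_{\mathcal{R}}}$ curvature term and the $\etabar\,\chi$ projection-correction terms, so it is easy to lose a sign or misroute an index when symmetrising over $b,c$. The cleanest way to avoid this is to apply the one-form commutator of the previous proposition separately to the two $1$-form valued maps $e_c\mapsto V(\cdot,e_c)$ and $e_b\mapsto V(e_b,\cdot)$, and then combine, using the Leibniz identity for the Lie-bracket piece; this makes the doubling up of terms structurally transparent and reduces the $2$-form result to the already-established $1$-form result.
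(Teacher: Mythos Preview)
The paper does not prove this proposition; it is simply listed among the standard commutation formulae in Section~3.5 without argument. Your direct computation via the ambient connection $D$, the frame identities, and the Riemann decomposition $R_{4abd}=\epsilon_{bd}\Hodge{\beta_{\mathcal{R}}}_a$ is exactly the standard derivation, and the suggestion to reduce to the one-form case slot by slot via Leibniz is a clean way to organize the bookkeeping.
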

	\begin{proposition}
		Assume $\nabla_4 \phi = F_0$. Let $\nabla_4 \nabla^i \phi = F_i$. Then \begin{align*} F_i=& \sum_{i_1+ i_2 + i_3 =i} \nabla^{i_1} (\eta+\etabar)^{i_2} \nabla^{i_3}F_0 + \sum_{i_1+ i_2 + i_3 +i_4 =i-1} \nabla^{i_1} (\eta+\etabar)^{i_2} \nabla^{i_3} \beta_{\mathcal{R}} \nabla^{i_4}\phi   \\  
		&+       \sum_{i_1+ i_2 + i_3 +i_4 =i} \nabla^{i_1} (\eta+\etabar)^{i_2} \nabla^{i_3} \chi \nabla^{i_4}\phi.                \end{align*}Assume now that $\nabla_3 \phi =G_0$. Let $\nabla_3 \nabla^i \phi= G_i$. Then \begin{align*}
		G_i + \frac{i}{2}\tr \chibar \nabla^i \phi = &\sum_{i_1+ i_2 + i_3 =i}\nabla^{i_1} (\eta+\etabar)^{i_2} \nabla^{i_3}G_0 
		+ \sum_{i_1+ i_2 + i_3 +i_4 =i-1} \nabla^{i_1}(\eta+\etabar)^{i_2} \nabla^{i_3} \betabar_{\mathcal{R}} \nabla^{i_4} \phi \\ 
		+ &\sum_{i_1+ i_2 + i_3 +i_4 =i-1} \nabla^{i_1}(\eta+\etabar)^{i_2} \nabla^{i_3} (\chibarhat, \widetilde{\tr\chibar})\nabla^{i_4} \phi 
		+ \sum_{i_1+ i_2 + i_3 +i_4 =i-1} \nabla^{i_1}(\eta+\etabar)^{i_2+1} \nabla^{i_3} \tr\chibar \nabla^{i_4} \phi.
		\end{align*}
	\end{proposition}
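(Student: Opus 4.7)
The plan is a parallel induction on $i$, handling the $\nabla_4$ and $\nabla_3$ identities simultaneously. The base case $i=0$ reduces to the definitions $F_0 = \nabla_4 \phi$ and $G_0 = \nabla_3 \phi$. For the inductive step from $i$ to $i+1$, I apply $\nabla$ to both sides of the equation $\nabla_4 \nabla^i \phi = F_i$ and exchange the order of derivatives via the commutation formulae of the preceding propositions, obtaining
\begin{equation*}
\nabla_4 \nabla^{i+1}\phi \;=\; \nabla F_i \;+\; [\nabla_4,\nabla]\nabla^i\phi.
\end{equation*}
The commutator contributes contractions of $\chi$, $(\eta+\etabar)$ and the curvature piece $\beta_{\mathcal{R}}$ with $\nabla^{i+1}\phi$, $F_i$ and $\nabla^i\phi$ respectively (with a few $\chi\cdot(\eta,\etabar)\cdot\nabla^i\phi$ lower-order tensorial contractions absorbed into the last sum). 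Substituting the inductive hypothesis for $F_i$ and distributing $\nabla$ via Leibniz across the schematic products gives the three sums in the stated form for $F_{i+1}$, with the multi-indices now summing to $i+1$ (respectively $i$ in the curvature and Ricci sums).

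For the $\nabla_3$ identity I proceed analogously, with one crucial modification. The commutator $[\nabla_3, \nabla_a]$ applied to a tensor contains the term $-\chibar_{ac}\nabla_c(\cdot)$. I decompose $\chibar = \chibarhat + \tfrac{1}{2}\tr\chibar\,\sg$; when the trace part is contracted it produces a borderline principal contribution of the schematic form $-\tfrac{1}{2}\tr\chibar\,\nabla^{i+1}\phi$, while $\chibarhat$ (and, using $\tr\chibar = \widetilde{\tr\chibar}-\tfrac{2}{|u|}$ together with Proposition \ref{Omega}, the renormalized piece $\widetilde{\tr\chibar}$) feed into the third sum on the right. Adding this $-\tfrac{1}{2}\tr\chibar\,\nabla^{i+1}\phi$ to the contribution $\tfrac{i}{2}\tr\chibar\,\nabla^{i+1}\phi$ that arises from differentiating $\tfrac{i}{2}\tr\chibar\,\nabla^i\phi$ in the inductive hypothesis (modulo a lower-order $\nabla\tr\chibar\cdot\nabla^i\phi$ piece which is absorbed into the last schematic sum with $i_2\geq 1$), the left-hand side acquires precisely $\tfrac{i+1}{2}\tr\chibar\,\nabla^{i+1}\phi$, closing the induction.

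The main obstacle is the careful bookkeeping of the traced borderline term, so that the coefficient $\tfrac{i}{2}$ accumulates exactly to $\tfrac{i+1}{2}$ at each step, and the consistent separation of $\chibar$ into its pure-trace and $\{\chibarhat,\,\widetilde{\tr\chibar}\}$ parts so that the nonprincipal pieces end up in the third sum without polluting the leading-order balance. A secondary subtlety is that $\phi$ has arbitrary tensor rank: strictly speaking one must invoke the commutator identities for scalars, $1$-forms and $2$-forms separately, and, for tensors of higher rank, derive the analogue by iterating the $1$-form formula on each free index. The uniformity of the resulting conclusion follows because only the schematic structure (not the precise index contractions) enters the statement, and because the additional terms produced by higher-rank commutators all have the same schematic shape $\chi\cdot\nabla^{i+1}\phi$, $(\eta+\etabar)\cdot F_i$, $\beta_{\mathcal{R}}\cdot\nabla^i\phi$ (and analogously with $\chibar,\betabar_{\mathcal{R}}$ in the $\nabla_3$ direction), which is exactly what is absorbed into the sums.
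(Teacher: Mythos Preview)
Your inductive approach is correct and is the standard way these schematic commutation identities are established; the paper itself states the proposition without proof, relying on the preceding single-commutator formulae exactly as you do. Two small corrections to your bookkeeping: the lower-order piece $\nabla\tr\chibar\cdot\nabla^i\phi$ cannot go into the fourth sum (which carries at least one undifferentiated $(\eta+\etabar)$ factor) --- instead one uses $\nabla\tr\chibar=\nabla\widetilde{\tr\chibar}$ to place it in the third sum; and the reference to Proposition~\ref{Omega} is extraneous, since only the algebraic identity $\tr\chibar=\widetilde{\tr\chibar}-\tfrac{2}{|u|}$ (with $\tfrac{2}{|u|}$ angularly constant) is needed here.
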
Finally, we can replace $\beta_{\mathcal{R}}, \betabar_{\mathcal{R}}$ by expressions involving Ricci coefficients, under the Codazzi equations:
	
	\begin{align*}   \beta_{\mathcal{R}} = - \text{div} \chihat + \frac{1}{2}\nabla \tr\chi - \frac{1}{2}(\eta-\etabar)\cdot (\chihat  - \frac{1}{2}\tr \chi), \\
	\betabar_{\mathcal{R}}=  \text{div} \chibarhat - \frac{1}{2}\nabla \tr \chi - \frac{1}{2}(\eta-\etabar)\cdot(\chibarhat - \frac{1}{2}\tr\chibar) .        \end{align*}That way, we arrive at the following:
	
	\begin{proposition}\label{commutationformulaeprop}
		Suppose $\nabla_4 \phi = F_0$. Let $\nabla_4 \nabla^i \phi = F_i$. Then \[ F_i = \sum_{i_1+ i_2 + i_3 =i}   \nabla^{i_1} \psi^{i_2} \nabla^{i_3}F_0 +   \sum_{i_1+ i_2 + i_3 +i_4 =i} \nabla^{i_1}\psi^{i_2} \nabla^{i_3}(\psi, \chihat) \nabla^{i_4}\phi.       \]Similarly, suppose $\nabla_3 \phi = G_0$. Let $\nabla_3\nabla^i \phi = G_i$. Then 
		
		\begin{align*}
		G_i + \frac{i}{2}\tr\chibar \nabla^i \phi =& \sum_{i_1+ i_2 + i_3 =i} \nabla^{i_1} \psi^{i_2} \nabla^{i_3} G_0 + \sum_{i_1+ i_2 + i_3 +i_4 =i} \nabla^{i_1} \psi^{i_2}\nabla^{i_3}(\psi, \chibarhat, \widetilde{\tr \chibar}) \nabla^{i_4}\phi \\
		&+ \sum_{i_1+ i_2 + i_3 +i_4 =i-1} \nabla^{i_1} \psi^{i_2+1} \nabla^{i_3} \tr \chibar \nabla^{i_4}\phi.
		\end{align*}
	\end{proposition}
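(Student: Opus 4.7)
My plan is to derive Proposition \ref{commutationformulaeprop} directly from the previous proposition (the one giving $F_i$ and $G_i$ in terms of $\eta+\etabar$, $\beta_{\mathcal{R}}$, $\betabar_{\mathcal{R}}$, $\chi$, $\chibarhat$, $\widetilde{\tr\chibar}$, $\tr\chibar$) by substituting the Codazzi equations to trade the curvature quantities $\beta_{\mathcal{R}}, \betabar_{\mathcal{R}}$ for derivatives of Ricci coefficients, and then absorbing the result into schematic notation. Since $\psi \in \{\omega, \tr\chi, \eta, \etabar, \omegabar\}$, note that $\eta + \etabar$ is itself a $\psi$, and $\chi = \chihat + \frac{1}{2}\tr\chi \, \sg$ decomposes into $(\chihat, \psi)$. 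Thus every factor $\nabla^{i_1}(\eta+\etabar)^{i_2}$ is already of the form $\nabla^{i_1}\psi^{i_2}$, and any $\nabla^{i_3}\chi$ factor becomes $\nabla^{i_3}(\psi, \chihat)$, matching the right-hand side of the target formula.

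For the $\nabla_4$ identity, the only remaining piece is the term containing $\nabla^{i_3}\beta_{\mathcal{R}}$. By Codazzi,
\[
\beta_{\mathcal{R}} = -\div \chihat + \tfrac{1}{2}\nabla\tr\chi - \tfrac{1}{2}(\eta-\etabar)\cdot\bigl(\chihat - \tfrac{1}{2}\tr\chi\bigr),
\]
so $\nabla^{i_3}\beta_{\mathcal{R}}$ expands into a sum of terms of the form $\nabla^{i_3+1}\chihat$, $\nabla^{i_3+1}\psi$, and $\nabla^{a}\psi \cdot \nabla^{b}(\psi,\chihat)$ with $a+b = i_3$. Plugging into $\sum_{i_1+i_2+i_3+i_4 = i-1}\nabla^{i_1}(\eta+\etabar)^{i_2}\nabla^{i_3}\beta_{\mathcal{R}}\nabla^{i_4}\phi$ and relabeling indices (so that the extra derivative landing on $\chihat$ or $\psi$ is absorbed into the $\nabla^{i_3}(\psi,\chihat)$ slot), each resulting term falls into the schematic class $\nabla^{j_1}\psi^{j_2}\nabla^{j_3}(\psi,\chihat)\nabla^{j_4}\phi$ with $j_1+j_2+j_3+j_4 = i$. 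This can be collected with the term from $\sum_{i_1+i_2+i_3+i_4=i}\nabla^{i_1}(\eta+\etabar)^{i_2}\nabla^{i_3}\chi\nabla^{i_4}\phi$, yielding the stated formula for $F_i$.

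The $\nabla_3$ case proceeds by the same substitution with the second Codazzi identity
\[
\betabar_{\mathcal{R}} = \div\chibarhat - \tfrac{1}{2}\nabla\tr\chibar - \tfrac{1}{2}(\eta-\etabar)\cdot\bigl(\chibarhat - \tfrac{1}{2}\tr\chibar\bigr),
\]
with a single subtlety: one must be careful to keep undifferentiated $\tr\chibar$ separate, because $\tr\chibar$ is not one of the $\psi$'s (only $\widetilde{\tr\chibar}$ and $\chibarhat$ play the role of $\chihat$ in the schematic formula on that side). Writing $\tr\chibar = \widetilde{\tr\chibar} - \frac{2}{|u|}$, every \emph{differentiated} occurrence $\nabla^{k}\tr\chibar$ with $k\geq 1$ is simply $\nabla^{k}\widetilde{\tr\chibar}$ and is absorbed into the $\nabla^{i_3}(\psi,\chibarhat,\widetilde{\tr\chibar})$ slot; the \emph{undifferentiated} $\tr\chibar$ factors (arising from the $(\eta-\etabar)\cdot \tr\chibar$ piece of Codazzi, or from the $\chibar = \chibarhat + \frac{1}{2}\tr\chibar\,\sg$ decomposition appearing in the commutator structure) produce precisely the extra term
\[
\sum_{i_1+i_2+i_3+i_4 = i-1}\nabla^{i_1}\psi^{i_2+1}\nabla^{i_3}\tr\chibar\,\nabla^{i_4}\phi
\]
already present in the previous proposition. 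The only technical step is the bookkeeping of indices under relabeling, which is straightforward but tedious; the proof is otherwise immediate given the previous proposition and the Codazzi equations.
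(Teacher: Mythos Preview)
Your proposal is correct and follows exactly the approach the paper takes: it simply states the Codazzi equations for $\beta_{\mathcal{R}}$ and $\betabar_{\mathcal{R}}$ immediately before the proposition and declares the result, with no further detail. Your write-up actually supplies more of the bookkeeping (in particular the careful distinction between differentiated $\tr\chibar$, which becomes $\widetilde{\tr\chibar}$, and undifferentiated $\tr\chibar$, which feeds the extra $\psi^{i_2+1}\tr\chibar$ term) than the paper does.
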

	\section{$L^2(S_{u,\ubar})$-estimates for Ricci coefficients and Maxwell components}
	
	We start with some useful estimates. Let  
	
	\[ \psi \in \begin{Bmatrix} \frac{\chihat}{a^{\frac{1}{2}}}, \tr\chi, \omega, \eta, \etabar, \zeta, \omegabar, \frac{a}{\lvert u \rvert}\widetilde{\tr \chibar}, \frac{a^{\frac{1}{2}}}{\lvert u \rvert} \chibarhat, \frac{a}{\lvert u \rvert^2}\tr\chibar        \end{Bmatrix}, \Psi \in \begin{Bmatrix} \frac{\alpha}{a^{\frac{1}{2}}}, \beta, \rho, \sigma, \betabar, \alphabar       \end{Bmatrix}     \hspace{2mm}\text{and}\hspace{2mm} \Y \in \begin{Bmatrix}    \frac{\alpha_F}{a^{\frac{1}{2}}}, \rho_F, \sigma_F, \alphabar_F   \end{Bmatrix}.     \]
	\begin{proposition} \label{usefulstatements}
		Under the assumptions of Theorem \ref{main1} and the bootstrap assumption \eqref{bootstrapbounds}, we have
		
		\begin{gather*}
		\sum_{i_1 + i_2 \leq 10} \lVert (a^{\frac{1}{2}})^{i_1+i_2}\nabla^{i_1} \psi^{i_2} \rVert_{\mathcal{L}^2_{(sc)}(S_{u,\ubar})} \leq \lvert u \rvert, \\
		\sum_{i_1 + i_2 \leq 10} \lVert (a^{\frac{1}{2}})^{i_1+i_2}\nabla^{i_1} \psi^{i_2+1} \rVert_{\mathcal{L}^2_{(sc)}(S_{u,\ubar})} \leq O, \\ 
		\sum_{i_1 + i_2 \leq 10} \lVert (a^{\frac{1}{2}})^{i_1+i_2}\nabla^{i_1} \psi^{i_2+2} \rVert_{\mathcal{L}^2_{(sc)}(S_{u,\ubar})} \leq \frac{O^2}{\lvert u \rvert}, \\
		\sum_{i_1 + i_2 \leq 10} \lVert (a^{\frac{1}{2}})^{i_1+i_2}\nabla^{i_1} \psi^{i_2+3} \rVert_{\mathcal{L}^2_{(sc)}(S_{u,\ubar})} \leq \frac{O^3}{\lvert u \rvert^2}, \\ 
		\sum_{i_1 + i_2 +i_3 \leq 9} \lVert (a^{\frac{1}{2}})^{i_1+i_2+i_3 }\nabla^{i_1} \psi^{i_2} \nabla^{i_3} \Psi \rVert_{\mathcal{L}^2_{(sc)}(S_{u,\ubar})} \leq O, \\ 
		\sum_{i_1 + i_2 +i_3  \leq 9} \lVert (a^{\frac{1}{2}})^{i_1+i_2+i_3+1 }\nabla^{i_1} \psi^{i_2+1} \nabla^{i_3} \Psi \rVert_{\mathcal{L}^2_{(sc)}(S_{u,\ubar})} \leq \frac{a^{\frac{1}{2}}}{\lvert u \rvert }O^2, \\ 
		\sum_{i_1 + i_2 +i_3 \leq 9} \lVert (a^{\frac{1}{2}})^{i_1+i_2+i_3+2 }\nabla^{i_1} \psi^{i_2+2 } \nabla^{i_3} \Psi \rVert_{\mathcal{L}^2_{(sc)}(S_{u,\ubar})} \leq \frac{a}{\lvert u \rvert^2}O^3, \\ 
		\sum_{i_1 + i_2 \leq 10} \lVert (a^{\frac{1}{2}})^{i_1+i_2}\nabla^{i_1} \Y^{i_2} \rVert_{\mathcal{L}^2_{(sc)}(S_{u,\ubar})} \leq \lvert u \rvert,\\ 
		\sum_{i_1 + i_2 \leq 10} \lVert (a^{\frac{1}{2}})^{i_1+i_2}\nabla^{i_1} \Y^{i_2+1} \rVert_{\mathcal{L}^2_{(sc)}(S_{u,\ubar})} \leq O, \\ 
		\sum_{i_1 + i_2 \leq 10} \lVert (a^{\frac{1}{2}})^{i_1+i_2}\nabla^{i_1} \Y^{i_2+2} \rVert_{\mathcal{L}^2_{(sc)}(S_{u,\ubar})} \leq \frac{O^2}{\lvert u \rvert}, \\  \sum_{i_1+i_2+i_3\leq 10} \scaletwoSu{(\al)^{i_1+i_2+i_3} \nabla^{i_1} \psi^{i_2+1}     \nabla^{i_3}\Y    }\leq \frac{O^2}{\lvert u \rvert},  \\
		\sum_{i_1 + i_2 +i_3 + i_4 \leq 10} \lVert (a^{\frac{1}{2}})^{i_1+i_2+i_3 +i_4}\nabla^{i_1} \psi^{i_2} \nabla^{i_3}\Y \nabla^{i_4} \Y \rVert_{{\color{black}\mathcal{L}}^2_{(sc)}(S_{u,\ubar})} \leq \frac{O^2}{\lvert u \rvert}. \\ 
		\end{gather*}
	\end{proposition}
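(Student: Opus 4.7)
The plan is to prove every one of the listed estimates by the same three-step template: (i) expand, via Leibniz, each product $\nabla^{i_1}\psi^{i_2}$ (or the analogous expression involving $\Psi$ or $\Y$) into a sum of terms $\prod_k \nabla^{j_k}\phi_k$ with $\sum_k j_k = i_1$; (ii) single out the factor carrying the most derivatives and keep it in $\mathcal{L}^2_{(sc)}(S_{u,\ubar})$, while placing every remaining factor into $\mathcal{L}^\infty_{(sc)}(S_{u,\ubar})$ via the Sobolev embedding of Proposition \ref{Sobolevembedding}; (iii) apply the scale-invariant H\"older inequality repeatedly, using that each multiplication produces a factor of $|u|^{-1}$.

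The base case with no $\psi$-factor reduces to $\scaletwoSu{1}\lesssim|u|$, which holds because the scale-invariant $L^2$ norm of a scalar of signature zero is just $\|1\|_{L^2(S_{u,\ubar})}$ and $\mathrm{Area}(S_{u,\ubar})\approx|u|^2$ by Proposition \ref{areaprop}. For each additional factor of $\psi$, the combination of Sobolev and H\"older contributes a multiplicative $\scaleinfinitySu{\nabla^{j_k}\psi}/|u|\lesssim O/|u|$, where the $L^\infty$ bound comes from Proposition \ref{Sobolevembedding} applied to $(a^{1/2}\nabla)^{j_k}\psi$ together with the bootstrap assumption $\mathcal{O}_{j_k+2,2}\leq O$. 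Iterating this observation yields precisely the bounds $O$, $O^2/|u|$, $O^3/|u|^2$ for products with one, two, three $\psi$-factors respectively. The same argument works verbatim for products containing a single $\Psi$- or $\Y$-factor, with the $L^2$ factor now controlled by $\mathcal{R}_{i,2}\leq R$ or $\mathcal{F}_{i,2}\leq F$; since $R,F\leq O$ by the bootstrap hierarchy, the stated powers of $O$ and $|u|^{-1}$ follow directly.

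For the final two lines, involving products with two $\Y$-factors (or one $\psi^{i_2+1}$ and one $\Y$), we keep one $\nabla^{i_3}\Y$ in $L^2$, bounded by $\mathcal{F}_{i_3,2}\leq O$, and place $\nabla^{i_4}\Y$ in $L^\infty$ via Sobolev (bounded by two more derivatives worth of $\mathcal{F}_{\cdot,2}\leq O$); every additional $\psi$ in the $L^\infty$-layer again costs a multiplicative $O/|u|$. Signature conservation, verified term-by-term against the table of Section \ref{definitionsignaturessection}, guarantees that the weights $(a^{1/2})^{i_1+i_2+\cdots}$ appearing on the left combine exactly with the $a^{\pm s_2}$ factors hidden in each $\mathcal{L}^2_{(sc)}$ and $\mathcal{L}^\infty_{(sc)}$ norm, so the inequalities close with the stated powers of $a$ and $|u|$.

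The main technical point --- really the only obstacle --- is bookkeeping. One must verify that after every invocation of Sobolev the derivative count $j_k+2$ remains within the range where the bootstrap norms $\mathcal{O}_{\cdot,2}$, $\mathcal{R}_{\cdot,2}$, $\mathcal{F}_{\cdot,2}$ are available (i.e.\ at most $10$, $9$, and $10$ respectively). Since at most one factor in any Leibniz summand can carry a large share of the $\leq 10$ total derivatives (that one is the factor kept in $L^2$), every other factor has at most a handful of derivatives on it, so the Sobolev step always lands well inside the known bounds. Once this bookkeeping is carried out, the signature balance is checked, and H\"older is applied once per extra factor, each line of the proposition follows.
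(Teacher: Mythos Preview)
Your approach is essentially the same as the paper's: Leibniz expansion, put the factor with the most derivatives in $\mathcal{L}^2_{(sc)}$, the rest in $\mathcal{L}^\infty_{(sc)}$, and collect the $|u|^{-1}$ factors from scale-invariant H\"older. The paper carries this out only for the last five estimates (involving $\Y$), citing \cite{An:2019} for the $\psi$- and $\Psi$-estimates, and in the final two lines it makes the case split $i_3\leq 6$ versus $7\leq i_3\leq 10$ explicit rather than appealing to a general pigeonhole as you do---but the content is the same.

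One small correction to your bookkeeping: you write that $\mathcal{R}_{i,2}\leq R$ and $\mathcal{F}_{i,2}\leq F$ and then invoke ``$R,F\leq O$ by the bootstrap hierarchy.'' There is no such inequality in the bootstrap. Rather, $\mathcal{R}_{i,2}$ (for $i\leq 9$) and $\mathcal{F}_{j,2}$ (for $j\leq 10$) are themselves summands in the definition of $\mathcal{O}$, so $\mathcal{R}_{i,2},\mathcal{F}_{j,2}\leq O$ holds directly. Likewise the $\mathcal{L}^\infty_{(sc)}$ bounds for $\leq 6$ derivatives of $\psi,\Psi,\Y$ are already part of $\mathcal{O}$ and need not be recovered via Sobolev. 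This does not affect the validity of your argument, only the justification.
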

	\begin{proof}
		We focus on the last five statements. The ones before are similar and their proof can be found in Section 4 of \cite{An:2019}. 
		\begin{itemize}
			\item For the first one we distinguish two cases: If $i_2 =0$, then the result holds trivially as $\scaletwoSu{1} = \lvert u \rvert.$ For $i_2 \geq 1$, we can rewrite $\nabla^{i_1} \Y^{i_2}$ as a product of $i_2$ terms
			\[  \nabla^{i_1} \Y^{i_2} = \nabla^{j_1} \Y \dots \nabla^{j_{i_2}}\Y, \hspace{2mm} \text{with} \hspace{2mm} j_1 + \dots + j_{i_2} = i_1.        \]Assume that $j_{i_2}$ is the largest number. Then we rewrite \[ (a^{\frac{1}{2}})^{i_1+i_2} \nabla^{i_1} \Y^{i_2} = (a^{\frac{1}{2}})^{i_2} \cdot  (a^{\frac{1}{2}}\nabla)^{j_{i_2}}\Y  \cdot \prod_{k=1}^{i_2-1} (a^{\frac{1}{2}}\nabla)^{j_k}\Y.    \]Now we bound $(a^{\frac{1}{2}}\nabla)^{j_{i_2}}\Y $ in $\mathcal{L}^2_{(sc)}(S_{u,\ubar})$ and the rest of the terms in $\mathcal{L}^{\infty}_{(sc)}(S_{u,\ubar})$. We then have
			
			\begin{align*}  &\frac{1}{\lvert u \rvert} \sum_{i_1+i_2 \leq 10}  \lVert (a^{\frac{1}{2}})^{i_1+i_2}\nabla^{i_1} \Y^{i_2} \rVert_{\mathcal{L}^2_{(sc)}(S_{u,\ubar})}\\ \leq&  
			\frac{1}{\lvert u \rvert} \sum_{i_1 + i_2 \leq 10} \frac{(a^{\frac{1}{2}})^{i_2}}{\lvert u \rvert^{i_2 -1} }\scaletwoSu{(a^{\frac{1}{2}}\nabla)^{j_{i_2}}\Y}  \prod_{k=1}^{i_2-1} \scaleinfinitySu{(a^{\frac{1}{2}}\nabla)^{j_k}\Y}  \leq \frac{(a^{\frac{1}{2}})^{i_2}O^{i_2}}{\lvert u \rvert^{i_2}}\leq 1. \end{align*}
			
			\item For the second one, if $i_2 =0$, then the statement is true because of the definition of $O$. If $i_2 \geq 1$, then assume $i_1 = j_1+\dots+j_{i_2 +1}$. Assume $j_{i_2+1}$ is the largest.  Then, as above, 	\begin{align*}  &\sum_{i_1+i_2 \leq 10}  \lVert (a^{\frac{1}{2}})^{i_1+i_2}\nabla^{i_1} \Y^{i_2+1} \rVert_{\mathcal{L}^2_{(sc)}(S_{u,\ubar})}\\ \leq&  
			\sum_{i_1 + i_2 \leq 10} \frac{(a^{\frac{1}{2}})^{i_2}}{\lvert u \rvert^{i_2 } }\scaletwoSu{(a^{\frac{1}{2}}\nabla)^{j_{i_2+1}}\Y}  \prod_{k=1}^{i_2} \scaleinfinitySu{(a^{\frac{1}{2}}\nabla)^{j_k}\Y}  \leq \frac{(a^{\frac{1}{2}})^{i_2}O^{i_2+1}}{\lvert u \rvert^{i_2}}\leq O. \end{align*}
			
			\item We have \begin{align*} &\lvert u \rvert \sum_{i_1+i_2 \leq 10}  \lVert (a^{\frac{1}{2}})^{i_1+i_2}\nabla^{i_1} \Y^{i_2+2} \rVert_{\mathcal{L}^2_{(sc)}(S_{u,\ubar})}\\ \leq& 
			\lvert u \rvert \cdot \frac{1}{\lvert u \rvert}\cdot\sum_{i_1+i_2 \leq 10} \scaleinfinitySu{(a^{\frac{1}{2}}\nabla)^{i_3} \Y}\scaletwoSu{(a^{\frac{1}{2}})^{i_2+i_4} \nabla^{i_4} \Y^{i_2+1}}, \hspace{1mm}\text{where} \hspace{1mm} i_3+i_4 = i_1\hspace{1mm} \text{and} \hspace{1mm} i_3 \leq i_4 \\ 
			\leq& O\cdot O=O^2. \end{align*}
			\item  The proof of this is the same as the item above. If $i_3\leq 6$ we bound $(\al \nabla)^{i_3} \Y$ in $\mathcal{L}^{\infty}_{(sc)}$, otherwise we bound it in $\mathcal{L}^{2}_{(sc)}$ and the rest of the terms in $\mathcal{L}^{\infty}_{(sc)}$. 
			\item { \color{black}We have \begin{align*}
			&\lvert u \rvert \sum_{i_1+i_2+i_3+i_4 \leq 10}  \lVert (a^{\frac{1}{2}})^{i_1+i_2+i_3+i_4}\nabla^{i_1} \psi^{i_2} \nabla^{i_3} \Y \nabla^{i_4}\Y \rVert_{\mathcal{L}^2_{(sc)}(S_{u,\ubar})}\\  &\leq \lvert u \rvert \cdot \frac{1}{\lvert u \rvert }\cdot O \cdot \sum_{i_1+i_2+i_3\leq 10}  \lVert (a^{\frac{1}{2}})^{i_1+i_2+i_3}\nabla^{i_1} \psi^{i_2} \nabla^{i_3} \Y \rVert_{\mathcal{L}^2_{(sc)}(S_{u,\ubar})}
			\end{align*}since one of $i_3, i_4$, without loss of generality say $i_4$, has to be at most $6$ so that we can bound the term $\nabla^{i_4} \Y $ in $\mathcal{L}^{\infty}_{(sc)}$. We have
			
			\begin{equation*}
			    \begin{split}
			       & \sum_{i_1+i_2+i_3\leq 10}  \lVert (a^{\frac{1}{2}})^{i_1+i_2+i_3}\nabla^{i_1} \psi^{i_2} \nabla^{i_3} \Y \rVert_{\mathcal{L}^2_{(sc)}(S_{u,\ubar})} \\&\leq \frac{O}{\lvert u \rvert} \sum_{i_1 +i_2 \leq 10} \scaletwoSu{ (\al)^{i_1+i_2} \nabla^{i_1}\psi^{i_2}} \\ &+ \frac{O}{\lvert u \rvert} \sum_{i_1+i_2 \leq 3} \scaleinfinitySu{ (\al)^{i_1+i_2} \nabla^{i_1}\psi^{i_2}} \\ &\leq \frac{O}{\lvert u \rvert} \sum_{i_1 +i_2 \leq 10} \scaletwoSu{ (\al)^{i_1+i_2} \nabla^{i_1}\psi^{i_2}} \\ &+ \frac{O}{\lvert u \rvert} \sum_{i_1+i_2 \leq 5} \scaletwoSu{ (\al)^{i_1+i_2} \nabla^{i_1}\psi^{i_2}} \leq \frac{O}{\lvert u \rvert} \cdot \lvert u \rvert = O.
			    \end{split}
			\end{equation*}Here we have distinguished between the cases where $i_3$ is at most $6$, in which case we bound it in $\mathcal{L}^{\infty}_{(sc)}$ and the case where $7 \leq i_3 \leq 10$, in which case we bound the term $\nabla^{i_3} \Y$ in $\mathcal{L}^2_{(sc)}$ and use the Sobolev embedding theorem to bound $(\al)^{i_1+i_2}\nabla^{i_1}\psi^{i_2}$ in $\mathcal{L}^{\infty}_{(sc)}$. Putting everything together, we arrive at 
			
			\begin{equation*}
			    \sum_{i_1+i_2+i_3+i_4 \leq 10}  \lVert (a^{\frac{1}{2}})^{i_1+i_2+i_3+i_4}\nabla^{i_1} \psi^{i_2} \nabla^{i_3} \Y \nabla^{i_4}\Y \rVert_{\mathcal{L}^2_{(sc)}(S_{u,\ubar})} \leq \frac{O^2}{\lvert u \rvert}.
			\end{equation*}
			}
			
					\end{itemize}
	\end{proof}	\subsection{$L^2(S_{u,\ubar})$-estimates for the Ricci coefficients } 
	\begin{proposition}\label{omegaprop}
		Under the assumptions of Theorem \ref{main1} and the bootstrap assumptions \eqref{bootstrapbounds}, we have
		\[    \sum_{i\leq 10}   \scaletwoSu{(a^{\frac{1}{2}}\nabla)^i \omega} \lesssim \frac{a^{\frac{1}{2}}}{\lvert u \rvert^{\frac{1}{2}}}(\underline{\mathcal{R}}[\rho]+1).     \]
	\end{proposition}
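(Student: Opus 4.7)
The proof proceeds by induction on $i$. The starting point is the schematic transport equation
$$\nabla_3\omega = \tfrac{1}{2}\rho + \psi\cdot\psi + \Y\cdot\Y,$$
which collects the $2\omega\omegab$, $\eta\cdot\etab$, $|\etab|^2$, $\rho_F^2$ and $\sigma_F^2$ contributions from the transport equations of Section~2.3. After commuting with $\nabla^i$ via Proposition~3.17 and multiplying through by $a^{i/2}$, I apply the scale-invariant transport estimate of Proposition~3.6 along the $e_3$ direction. The initial data term vanishes because the gauge choice $\Omega\equiv 1$ on $H_{u_\infty}$ gives $\omega = -\tfrac{1}{2}\nabla_4\log\Omega\equiv 0$ there, so $\nabla^i\omega\equiv 0$ on $H_{u_\infty}$ as well; one is left with
$$\scaletwoSu{(\al\nabla)^i\omega} \lesssim \int_{u_\infty}^{u}\frac{a}{|u'|^2}\scaletwoSuprime{\nabla_3(\al\nabla)^i\omega}\,du'.$$

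Writing $\nabla_3(\al\nabla)^i\omega = -\tfrac{i}{2}\tr\chib\,(\al\nabla)^i\omega + a^{i/2}\Upsilon_i$, I would bound each piece separately. The top-order term $\tr\chib\,(\al\nabla)^i\omega$ and the commutator terms of the form $(\psi,\chibh,\widetilde{\tr\chib})\cdot\nabla^i\omega$ produce a kernel $\lesssim a/|u'|^3$ (using $\|\tr\chib\|_{L^\infty}\lesssim |u'|^{-1}$), whose integral over $[u_\infty,u]$ is bounded by $a/|u|^2\lesssim 1$, allowing absorption by Gronwall's inequality. The commutator terms involving $\nabla^{i_4}\omega$ with $i_4<i$ are controlled by the inductive hypothesis. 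For the main linear term $\nabla^i\rho$, the signature $s_2(\rho)=1$ gives $\|a^{i/2}\nabla^i\rho\|_{L^2(S')} = \tfrac{a}{|u'|^2}\scaletwoSuprime{(\al\nabla)^i\rho}$, and Cauchy--Schwarz in $u'$ against the weight built into $\scaletwoHbaru{\,\cdot\,}$ yields
$$\int_{u_\infty}^{u}\frac{a^2}{|u'|^4}\scaletwoSuprime{(\al\nabla)^i\rho}\,du' \lesssim \left(\int_{u_\infty}^u \frac{a^3}{|u'|^6}\,du'\right)^{1/2}\scaletwoHbaru{(\al\nabla)^i\rho} \lesssim \frac{a^{3/2}}{|u|^{5/2}}\,\underline{\mathcal{R}}[\rho],$$
which fits comfortably inside $\frac{a^{1/2}}{|u|^{1/2}}\underline{\mathcal{R}}[\rho]$ since $a\lesssim |u|$. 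The quadratic contributions $\nabla^i(\psi\cdot\psi)$ and $\nabla^i(\Y\cdot\Y)$ are estimated by Proposition~4.1 and the bootstrap assumption $(O+F)^{20}\leq a^{1/16}$, producing an absolute contribution bounded by $a^{1/2}/|u|^{1/2}$ that accounts for the $+1$ in the statement.

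The main obstacle is preserving uniformity as $|u_\infty|\to\infty$ when handling the top-order commutator contributions. Using Proposition~3.7 with the natural choice $\lambda_0=i/2$ would produce an initial-data weight $|u_\infty|^{i-1}$ and, for the $\rho$ contribution, a Cauchy--Schwarz kernel $\int_{u_\infty}^u a^{1-i}|u'|^{2i-4}\,du'$ that is ill-behaved for $i\geq 2$. The scale-invariant Proposition~3.6 combined with Gronwall absorption of the $\tr\chib$ term circumvents this issue, because the resulting Gronwall factor $\exp\bigl(C\int_{u_\infty}^u a/|u'|^3\,du'\bigr)\leq e^{C}$ stays bounded throughout the slab of existence, and the induction on $i$ closes without any loss in $|u_\infty|$.
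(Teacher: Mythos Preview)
Your Gr\"onwall argument for the borderline term $\tr\chib\,(\al\nabla)^i\omega$ does not close. You claim this term produces a kernel $\lesssim a/|u'|^3$, but $\tr\chib$ is anomalous in the scale-invariant hierarchy: from the norm definition $\frac{a}{|u|^2}\scaleinfinitySu{\tr\chib}\lesssim O$ one has $\scaleinfinitySu{\tr\chib}\lesssim |u|^2/a$, not $\lesssim 1$. Consequently, when you apply Proposition~3.6 and isolate this term via the scale-invariant H\"older inequality, the integrand is
\[
\frac{a}{|u'|^2}\,\scaletwoSuprime{(\al)^i\,\tr\chib\,\nabla^i\omega}
\;\lesssim\;\frac{a}{|u'|^2}\cdot\frac{1}{|u'|}\cdot\frac{|u'|^2}{a}\,\scaletwoSuprime{(\al\nabla)^i\omega}
\;=\;\frac{1}{|u'|}\,\scaletwoSuprime{(\al\nabla)^i\omega}.
\]
The Gr\"onwall factor is then $\exp\bigl(C\int_{u_\infty}^u |u'|^{-1}\,du'\bigr)=\exp\bigl(C\log(|u_\infty|/|u|)\bigr)$, which blows up as $|u_\infty|\to\infty$. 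This is exactly the obstruction that forces the use of Proposition~3.7 rather than Proposition~3.6.

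Your last paragraph also misjudges Proposition~3.7. The paper applies it with $\lambda_0=i/2$, obtaining the weight $|u|^{i-1}$ on the left-hand side, and then multiplies the entire inequality by $|u|$, using $|u|\le|u'|$ and $|u|\le|u_\infty|$ on the right-hand side. This converts the estimate to one with weight $|u|^i$, which translates cleanly into scale-invariant norms and gives the integrand $\frac{a}{|u'|^2}\scaletwoSuprime{(\al)^i G}$. The $\rho$ contribution then yields, by Cauchy--Schwarz against the weight built into $\scaletwoHbaru{\cdot}$, precisely $\frac{a^{1/2}}{|u|^{1/2}}\,\underline{\mathcal{R}}[\rho]$. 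There is no ill-behaved kernel: the point of Proposition~3.7 is that the $\frac{i}{2}\tr\chib$ term is \emph{absorbed into the weight} via the smallness of $\tr\chib+2/|u|$, so it never appears as a Gr\"onwall source at all.
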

	
	\begin{proof}
		We use the following schematic null structure equation for $\omega$:
		\[      \nabla_3 \omega = \frac{1}{2}\rho + \psi\psi + \Y \Y.    \]Commuting  $i$ times with $\nabla$ using Proposition \ref{commutationformulaeprop} , we arrive at
		\begin{align*}  &\nabla_3 \nabla^i \omega + \frac{i}{2}\tr\chibar \nabla^i \omega\\ = &  
		\sum_{i_1 + i_2 +i_3 = i} \nabla^{i_1} \psi^{i_2} \nabla^{i_3}(\rho + \psi \psi + \Y \Y)+ \sum_{i_1+ i_2 + i_3 +i_4 =i} \nabla^{i_1} \psi^{i_2} \nabla^{i_3}(\psi, \chibarhat, \widetilde{\tr\chibar}) \nabla^{i_4} \omega \\
		&+\sum_{i_1+ i_2 + i_3 +i_4 =i-1} \nabla^{i_1} \psi^{i_2+1}\nabla^{i_3} \tr\chibar \nabla^{i_4} \omega\\
=&		\nabla^{i} \rho + \sum_{i_1 + i_2 +i_3 +1= i} \nabla^{i_1} \psi^{i_2+1} \nabla^{i_3} \rho +\sum_{i_1 + i_2 +i_3 = i} \nabla^{i_1} \psi^{i_2} \nabla^{i_3}(\psi \psi )\\
&+ \sum_{i_1 + i_2 +i_3 = i} \nabla^{i_1} \psi^{i_2} \nabla^{i_3}(\Y \Y )+ \sum_{i_1+ i_2 + i_3 +i_4 =i} \nabla^{i_1} \psi^{i_2} \nabla^{i_3}(\psi, \chibarhat, \widetilde{\tr\chibar}) \nabla^{i_4} \omega \\
		&+\sum_{i_1+ i_2 + i_3 +i_4 =i-1} \nabla^{i_1} \psi^{i_2+1}\nabla^{i_3} \tr\chibar \nabla^{i_4} \omega. \end{align*}
		Now notice that for any $j$ and $S-$tangent tensorfields $\phi_1, \phi_2$ we have the schematic equality $\nabla^j (\phi_1 \cdot \phi_2) = \sum_{j_1 +j_2=j} \nabla^{j_1}\phi_1 \nabla^{j_2} \phi_2$. We can thus write
		
		\begin{align*} &\nabla_3 \nabla^i \omega + \frac{i}{2}\tr\chibar \nabla^i \omega\\ =&
		\nabla^i \rho + \sum_{i_1 + i_2 +i_3 +1= i} \nabla^{i_1} \psi^{i_2+1} \nabla^{i_3} \rho+ \sum_{i_1+ i_2 + i_3 +i_4 =i} \nabla^{i_1} \psi^{i_2} \nabla^{i_3} \Y \nabla^{i_4} \Y \\& 
		+ \sum_{i_1+ i_2 + i_3 +i_4 =i}  \nabla^{i_1} \psi^{i_2} \nabla^{i_3}(\psi, \chibarhat, \widetilde{\tr\chibar}) \nabla^{i_4} \psi +\sum_{i_1+ i_2 + i_3 +i_4 =i-1} \nabla^{i_1} \psi^{i_2+1}\nabla^{i_3} \tr\chibar \nabla^{i_4} \psi.   \end{align*}Rewrite the above as \[\nabla_3 \nabla^i \omega  + \frac{i}{2} \tr\chibar \nabla^i \omega = G.      \]Applying Proposition \ref{prop37}, there holds \[ \lvert u \rvert^{i-1} \lVert \nabla^i \omega \rVert_{L^{2}(S_{u,\ubar})}\leq \lvert u_{\infty}\rvert^{i-1} \lVert \nabla^i \omega \rVert_{L^{2}(S_{u_{\infty},\ubar})} + \int_{u_{\infty}}^u \lvert u^\prime\rvert^{i-1} \lVert G \rVert_{L^{2}(S_{u^\prime,\hspace{.5mm}\ubar})}\text{d}u^\prime. \]Multiplying both sides by $\lvert u \rvert$ and using $\lvert u \rvert \leq \lvert u^\prime \rvert, \lvert u \rvert \leq \lvert u_{\infty} \rvert$ we get

		\be \label{omegafirstequation} \lvert u \rvert^{i} \lVert \nabla^i \omega \rVert_{L^{2}(S_{u,\ubar})}\leq \lvert u_{\infty}\rvert^{i} \lVert \nabla^i \omega \rVert_{L^{2}(S_{u_{\infty},\ubar})} + \int_{u_{\infty}}^u \lvert u^\prime\rvert^{i} \lVert G \rVert_{L^{2}(S_{u^\prime,\hspace{.5mm}\ubar})}\text{d}u^\prime. \ee From the signature table we get \[ s_2(G) = s_2(\nabla_3\nabla^i \omega) = 1+ s_2(\nabla^i \omega) = \frac{i}{2}+1. \]Using the definition of the scale-invariant norms $\mathcal{L}^2_{(sc)}(S_{u,\ubar})$ we have
		\[\scaletwoSu{\phi} = a^{-s_2(\phi)}\lvert u \rvert^{2s_2(\phi)} \lVert \phi \rVert_{L^2(S_{u,\ubar})}          \]and thus \[ \scaletwoSu{\nabla^i \omega} = a^{-\frac{i}{2}}\lvert u \rvert^i \lVert \nabla^i \omega \rVert_{L^{2}(S_{u^,\ubar})}, \hspace{2mm} \scaletwoSu{G} = a^{-\frac{i}{2}-1}\lvert u \rvert^{i+2} \lVert G \rVert_{L^{2}(S_{u^,\ubar})}.   \]Equivalently, \[  \lvert u\rvert^i \lVert \nabla^i \omega \rVert_{L^{2}(S_{u,\ubar})} = \scaletwoSu{(a^{\frac{1}{2}}\nabla)^i \omega}, \hspace{2mm} \lvert u \rvert^i \lVert G \rVert_{L^{2}(S_{u^,\ubar})} = \frac{a}{\lvert u \rvert^2} \scaletwoSu{{\color{black}(a^{\frac{1}{2}})^i} G}.      \] We can now write \eqref{omegafirstequation} in scale-invariant norms as 
		
		\begin{align*}
		\scaletwoSu{(a^\frac{1}{2}\nabla)^i \omega} \leq& \lVert (a^\frac{1}{2}\nabla)^i \omega \rVert_{\mathcal{L}^{2}_{(sc)}(S_{u_{\infty},\ubar})} + \int_{u_{\infty}}^u \frac{a}{\lvert u^\prime \rvert^2} \lVert (a^{\frac{1}{2}}\nabla)^i \rho\rVert_{\mathcal{L}^{2}_{(sc)}(S_{u^\prime,\ubar})}\text{d}u^\prime \\
		&+ \int_{u_{\infty}}^u \frac{a}{\lvert u^\prime \rvert^2} \lVert \sum_{i_1 + i_2 +i_3 +1= i} (a^{\frac{1}{2}})^i \nabla^{i_1} \psi^{i_2+1}\nabla^{i_3} \rho \rVert_{\mathcal{L}^{2}_{(sc)}(S_{u^\prime,\ubar})}\text{d}u^\prime\\
		&+ \int_{u_{\infty}}^u \frac{a}{\lvert u^\prime \rvert^2} \lVert \sum_{i_1+ i_2 + i_3 +i_4 =i} (a^{\frac{1}{2}})^i \nabla^{i_1} \psi^{i_2} \nabla^{i_3}\Y \nabla^{i_4} \Y \rVert_{\mathcal{L}^{2}_{(sc)}(S_{u^\prime,\ubar})}\text{d}u^\prime \\
		&+ \int_{u_{\infty}}^u \frac{a}{\lvert u^\prime \rvert^2} \lVert \sum_{i_1+ i_2 + i_3 +i_4 =i}(\al)^i \nabla^{i_1}\psi^{i_2}  \nabla^{i_3}(\psi,{\color{black}\chibh},\widetilde{\tr\chibar})\nabla^{i_4} \psi      \rVert_{\mathcal{L}^2_{(sc)}(S_{u^\prime,\ubar})} \text{d} u^\prime\\ 
		&+\int_{u_{\infty}}^u \frac{a}{\lvert u^\prime \rvert^2} \lVert \sum_{i_1+ i_2 + i_3 +i_4+1 =i} (\al)^i \nabla^{i_1}\psi^{i_2+1}  \nabla^{i_3}\tr\chibar\nabla^{i_4} \psi      \rVert_{\mathcal{L}^2_{(sc)}(S_{u^\prime,\ubar})} \text{d} u^\prime.
		\end{align*}\par \noindent We look at each term separately. For the first one, since $\Omega\mid_{u=u_\infty} =1$, we note that $\omega = - \frac{1}{2}\nabla_4 (\log\hspace{.5mm} \Omega)$, we have $\lVert (\al \nabla)^i \omega \rVert_{{\color{black}\mathcal{L}}^{2}_{(sc)}(S_{u_{\infty},\ubar})} =0$. For the second and third terms, we have
		
		\begin{align*}
		&\int_{u_{\infty}}^u \frac{a}{\lvert u^\prime \rvert^2} \lVert (\al \nabla)^i \rho \rVert_{\mathcal{L}^{2}_{(sc)}(S_{u^\prime,\ubar})}\text{d}u^\prime + \int_{u_{\infty}}^u \frac{a}{\lvert u^\prime \rvert^2} \lVert \sum_{i_1 + i_2 +i_3 +1= i} (a^{\frac{1}{2}})^i \psi^{i_2+1}\nabla^{i_3} \rho \rVert_{\mathcal{L}^{2}_{(sc)}(S_{u^\prime,\ubar})}\text{d}u^\prime \\
		\leq& \left( \int_{u_{\infty}}^u \frac{a}{\lvert u^\prime \rvert^2}  \lVert (\al \nabla)^i \rho \rVert^2_{\mathcal{L}^{2}_{(sc)}(S_{u^\prime,\ubar})}\text{d}u^\prime       \right)^{\frac{1}{2}} \left(\int_{u_{\infty}}^u \frac{a}{\lvert u^\prime \rvert^2}  \text{d}u^\prime    \right)^{\frac{1}{2}} + \int_{u_{\infty}}^u \frac{a}{\lvert u^\prime \rvert^2}\cdot \frac{\al}{\lvert u \rvert}\cdot O^2 \text{d}u^\prime \\ 
		=& \scaletwoHbaru{(\al \nabla)^i \rho}\cdot \frac{\al}{\lvert u \rvert^{\frac{1}{2}}} + \frac{a^{\frac{3}{2}}}{\lvert u \rvert^2}O^2  \leq \frac{\al}{\lvert u \rvert^{\frac{1}{2}}}\left(\underline{\mathcal{R}}[\rho]+1 \right).
		\end{align*}For the next term, we have 
		\begin{align*}
		&\int_{u_{\infty}}^u \frac{a}{\lvert u^\prime \rvert^2} \lVert \sum_{i_1+ i_2 + i_3 +i_4 =i} (a^{\frac{1}{2}})^i \nabla^{i_1} \psi^{i_2} \nabla^{i_3}\Y \nabla^{i_4} \Y \rVert_{\mathcal{L}^{2}_{(sc)}(S_{u^\prime,\ubar})}\text{d}u^\prime  \\
		\leq& \int_{u_{\infty}}^u \frac{a}{\lvert u^\prime\rvert ^2} \cdot \frac{O^2}{\lvert u^\prime \rvert}\text{d}u^\prime= \frac{a}{\lvert u \rvert^2}O^2\leq \frac{\al}{\lvert u \rvert}O^2 \leq \frac{\al}{\lvert u \rvert^{\frac{1}{2}}}.
		\end{align*}
		For the last two terms, we have \begin{align*}
		&\int_{u_{\infty}}^u \frac{a}{\lvert u^\prime \rvert^2} \lVert \sum_{i_1+ i_2 + i_3 +i_4 =i} (\al)^i\nabla^{i_1}\psi^{i_2}  \nabla^{i_3}(\psi,{\color{black}\chibh},\widetilde{\tr\chibar})\nabla^{i_4} \psi      \rVert_{\mathcal{L}^2_{(sc)}(S_{u^\prime,\ubar})} \text{d} u^\prime \\
		\leq&  \int_{u_{\infty}}^u \frac{\al}{\lvert u^\prime \rvert} \lVert \sum_{i_1+ i_2 + i_3 +i_4 =i} (\al)^i \nabla^{i_1}\psi^{i_2}  \nabla^{i_3}(\frac{\al}{\lvert u^\prime \rvert} \psi,\frac{\al}{\lvert u^\prime \rvert}{\color{black}\chibh},\frac{\al}{\lvert u^\prime \rvert}\widetilde{\tr\chibar})\nabla^{i_4} \psi      \rVert_{\mathcal{L}^2_{(sc)}(S_{u^\prime,\ubar})} \text{d} u^\prime\\
		\leq& \int_{u_{\infty}}^u \frac{\al}{\lvert u^\prime \rvert} \cdot \frac{O^2}{\lvert u^\prime \rvert}\text{d}u^\prime \leq \frac{\al}{\lvert u \rvert}O^2 \leq \frac{\al}{\lvert u \rvert^{\frac{1}{2}}}.
		\end{align*}Moreover, \begin{align*}
		&\int_{u_{\infty}}^u \frac{a}{\lvert u^\prime \rvert^2} \sum_{i_1+ i_2 + i_3 +i_4+1 =i} \lVert(\al)^i  \nabla^{i_1}\psi^{i_2+1}  \nabla^{i_3}\tr\chibar\nabla^{i_4} \psi      \rVert_{\mathcal{L}^2_{(sc)}(S_{u^\prime,\ubar})} \text{d} u^\prime \\
		\leq& \int_{u_{\infty}}^u \al\sum_{i_1+ i_2 + i_3 +i_4+1 =i} \lVert(\al)^{i{\color{black}-1}}  \nabla^{i_1}\psi^{i_2+1}  \nabla^{i_3}(\frac{a}{\lvert u^\prime\rvert^2}\tr\chibar)\nabla^{i_4} \psi      \rVert_{\mathcal{L}^2_{(sc)}(S_{u^\prime,\ubar})}\text{d}u^\prime\\ 
		\leq& \int_{u_{\infty}}^u \al \cdot \frac{O^3}{\lvert u^\prime \rvert^2}\text{d}u^\prime \leq \frac{\al}{\lvert u \rvert}O^3 \leq \frac{\al}{\lvert u \rvert^{\frac{1}{2}}}.
		\end{align*}Gathering all the estimates above and letting $a$ be sufficiently large, we obtain \[  \sum_{i \leq 10} \scaletwoSu{(\al\nabla)^i \omega} \lesssim \frac{\al}{\lvert u \rvert^{\frac{1}{2}}}(\underline{\mathcal{R}}[\rho]+1).     \]
	\end{proof}
	
	\begin{proposition} \label{chibarhatproposition} 
		Under the assumptions of Theorem \ref{main1} and the bootstrap assumption \eqref{bootstrapbounds}, we have \[ \sum_{i \leq 10} \frac{\al}{\lvert u \rvert } \scaletwoSu{(\al \nabla)^i \chibarhat} \lesssim 1.      \]

	\end{proposition}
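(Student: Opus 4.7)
The plan is to mimic the strategy used in Proposition \ref{omegaprop} (the estimate for $\omega$), starting from the null structure equation for $\chibarhat$:
\[ \nabla_3 \chibarhat + \tr\chibar \, \chibarhat = -2\omegabar\, \chibarhat - \alphabar, \]
commuting $i$ times with $\nabla$ using Proposition \ref{commutationformulaeprop}, and then appealing to the weighted transport estimate of Proposition \ref{prop37}. The crucial bookkeeping point is that, when we commute $\nabla^i$ through the equation, the top-order piece of $\nabla^i(\tr\chibar\, \chibarhat)$ produces a term $\tr\chibar\, \nabla^i \chibarhat$ which must be moved to the left-hand side. Combined with the $\tfrac{i}{2}\tr\chibar\, \nabla^i\chibarhat$ already appearing there from Proposition \ref{commutationformulaeprop}, this gives a transport equation of the form
\[ \nabla_3 \nabla^i \chibarhat + \Bigl(\tfrac{i}{2} + 1\Bigr) \tr\chibar\, \nabla^i \chibarhat = -\nabla^i \alphabar + \text{(nonlinear commutator and lower-order terms)}. \]
Hence the right parameters to feed into Proposition \ref{prop37} are $\lambda_0 = i/2+1$ and $\lambda_1 = i+1$.

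Next I would convert the resulting inequality to scale-invariant norms. A direct computation using $s_2(\chibarhat)=1$ gives $|u|^{i+1}\|\nabla^i\chibarhat\|_{L^2(S_{u,\ubar})} = \tfrac{a}{|u|}\scaletwoSu{(\al\nabla)^i\chibarhat}$, and similarly $|u|^{i+1}\|\nabla^i\alphabar\|_{L^2(S_{u,\ubar})} = \tfrac{a^2}{|u|^3}\scaletwoSu{(\al\nabla)^i\alphabar}$ using $s_2(\alphabar)=2$. The contribution of the main forcing term is then
\[ \int_{u_\infty}^u \frac{a^2}{|u'|^3} \scaletwoSuprime{(\al\nabla)^i\alphabar}\,du' \le \left(\int_{u_\infty}^u \frac{a^3}{|u'|^4}\,du'\right)^{\!\!1/2}\!\!\scaletwoHbaru{(\al\nabla)^i\alphabar} \lesssim \frac{a^{3/2}}{|u|^{3/2}}\,\underline{\mathcal R}[\alphabar], \]
by Cauchy--Schwarz in $u$. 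After dividing through by the $\tfrac{a}{|u|}$ on the left and multiplying by the prefactor $\tfrac{a^{1/2}}{|u|}$ present in the target quantity, this contribution is bounded by $\tfrac{a}{|u|^{3/2}}\underline{\mathcal R}[\alphabar]$. Since in our region $|u|\geq a/4$, we have $\tfrac{a}{|u|^{3/2}}\lesssim a^{-1/2}$, which combined with the bootstrap bound $\underline{\mathcal R}\le R$ and $R^{20}\le a^{1/16}$ gives a contribution $\ll 1$.

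The initial data piece $|u_\infty|^{i+1}\|\nabla^i\chibarhat\|_{L^2(S_{u_\infty,\ubar})} = \tfrac{a}{|u_\infty|}\mathcal{I}^{(0)}$, after the same normalization, is $\lesssim \tfrac{a^{1/2}}{|u_\infty|}\mathcal{I}^{(0)} \ll 1$, because $|u_\infty|$ is taken large (past null infinity). For the remaining nonlinear terms produced by commutation and by $\nabla^i$ of $\omegabar\,\chibarhat$, one reads them as schematic products $\nabla^{i_1}\psi^{i_2}\nabla^{i_3}(\psi,\chibarhat,\widetilde{\tr\chibar})\nabla^{i_4}\chibarhat$ and $\nabla^{i_1}\psi^{i_2+1}\nabla^{i_3}\tr\chibar\,\nabla^{i_4}\chibarhat$; after redistributing the appropriate $|u|$ and $a^{1/2}$ factors so that the weighted quantities $\tfrac{a^{1/2}}{|u|}\chibarhat$, $\tfrac{a^{1/2}}{|u|}\chibarhat$-like combinations, $\tfrac{a}{|u|}\widetilde{\tr\chibar}$, $\tfrac{a}{|u|^2}\tr\chibar$ appear, these fit into the $\psi$-class of Proposition \ref{usefulstatements} and are controlled by powers of $O/|u|$. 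Integration in $u$ then yields contributions bounded by powers of $O$ times $a^{1/2}/|u|^{1/2}$, which after multiplying by $\tfrac{a^{1/2}}{|u|}$ remain $\ll 1$ for $a$ sufficiently large.

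The main technical obstacle is precisely the borderline $\tr\chibar\,\nabla^i\chibarhat$ term on the right-hand side: identifying that it combines with the left-hand-side commutator term into the correct coefficient $\lambda_0 = i/2 + 1$ is essential, since this is what produces the weight $|u|^{i+1}$ in Proposition \ref{prop37} and hence the favorable factor $\tfrac{a}{|u|}$ on the left, which is what ultimately gives a bound with the extra $\tfrac{a^{1/2}}{|u|}$ pre-multiplier in the statement. A further subtlety is the Cauchy--Schwarz estimate for the $\alphabar$ forcing: one must make sure to exchange an $L^1_{u'}$ norm for the $L^2_{u'}$ appearing in $\scaletwoHbaru{\cdot}$, and this only closes thanks to the smallness of $a/|u|^{3/2}$ in our region $|u|\ge a/4$.
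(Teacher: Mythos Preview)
Your proposal is correct and follows essentially the same approach as the paper: start from $\nabla_3\chibarhat+\tr\chibar\,\chibarhat=-2\omegabar\chibarhat-\alphabar$, commute with $\nabla^i$, combine the $\tr\chibar$ pieces to get $\lambda_0=\tfrac{i+2}{2}$, apply Proposition~\ref{prop37}, convert to scale-invariant norms, and use Cauchy--Schwarz in $u$ on the $\alphabar$ forcing. One small correction: your initial data bound is slightly off --- after the $a^{-1/2}$ normalization, the term $\tfrac{a^{1/2}}{|u_\infty|}\lVert(\al\nabla)^i\chibarhat\rVert_{\mathcal{L}^2_{(sc)}(S_{u_\infty,\ubar})}$ is simply $\lesssim\mathcal{I}^{(0)}\lesssim 1$ by the definition of $\mathcal{O}^{(0)}$ (which already contains the weight $\tfrac{a^{1/2}}{|u|}$ in front of $\chibarhat$), not by $|u_\infty|$ being large; your extra factor $\tfrac{a^{1/2}}{|u_\infty|}$ is not actually present, but the conclusion is unaffected.
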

	
	\begin{proof}
		We look at the $\nabla_3-$equation for $\chibarhat$:
		\[  \nabla_3 \chibarhat + \tr\chibar \hspace{.5mm} \chibarhat = -2 \omegabar \chibarhat - \alphabar.       \]Commuting with $i$ angular derivatives and using Proposition \ref{prop36} we arrive at
		
		\begin{align*}
		&\nabla_3 \nabla^i \chibarhat + \frac{i+2}{2} \tr\chibar \nabla^i \chibarhat \\ 
		=& \nabla^i \alphabar + \sum_{i_1+i_2+i_3+1=i} \nabla^{i_1} \psi^{i_2+1} \nabla^{i_3}\alphabar + \sum_{i_1+i_2+i_3+i_4=i} \nabla^{i_1} \psi^{i_2} \nabla^{i_3}(\psi,\chibarhat, \widetilde{\tr\chibar})\nabla^{i_4}\chibarhat \\
		&+ \sum_{i_1 + i_2 +i_3 + i_4 +1 =i} \nabla^{i_1} \psi^{i_2+1} \nabla^{i_3} \tr\chibar \nabla^{i_4} \chibarhat. 
		\end{align*}
		Rewriting the above equation as
		
		\[ \nabla_3 \nabla^i  \chibarhat + \frac{i+2}{2}\tr \chibar \nabla^{i}\chibarhat = F,  \]an application of Proposition \ref{prop37} gives us
		
		\be \lvert u \rvert^{i+1} \twoSu{\nabla^i \chibarhat} \leq \lvert u_{\infty} \rvert^{i+1}  \lVert \nabla^i \chibarhat \rVert_{L^{2}(S_{u_{\infty},\ubar})} + \int_{u_\infty}^u \lvert u^\prime \rvert^{i+1} \lVert F \rVert_{L^2(S_{u^{\prime},\ubar})}  \hspace{.5mm} \text{d} u^\prime. \label{chibarhateq}    \ee Rewriting \eqref{chibarhateq} in scale-invariant norms, we arrive at
		
		\[ \frac{a}{\lvert u \rvert} \scaletwoSu{\aln \chibarhat} \leq \frac{a}{\lvert u_\infty \rvert} \lVert \aln \chibarhat \rVert_{\mathcal{L}^{2}_{(sc)}(S_{u_{\infty},\ubar})}  + \int_{u_{\infty}}^u \frac{a^2}{\lvert u^\prime \rvert^3} \scaletwoSuprime{(\al)^i F} \hspace{.5mm} \text{d} u^\prime.    \]Multiplying this equation by $a^{-\frac{1}{2}}$ we get
		
		\begin{align*} \frac{\al}{\lvert u \rvert} \scaletwoSu{\aln \chibarhat} \leq& \frac{\al}{\lvert u_\infty \rvert} \lVert \aln \chibarhat \rVert_{\mathcal{L}^{2}_{(sc)}(S_{u_{\infty},\ubar})}  + \int_{u_{\infty}}^u \frac{a^{\frac{3}{2}}}{\lvert u^\prime \rvert^3} \scaletwoSuprime{\aln\alphabar} \hspace{.5mm} \text{d} u^\prime \\
		&+ \int_{u_{\infty}}^u \frac{a^{\frac{3}{2}}}{\lvert u^\prime \rvert^3} \scaletwoSuprime{ \sum_{i_1 + i_2 + i_3+1 =i}    (\al)^i \nabla^{i_1} \psi^{i_2+1} \nabla^{i_3} \alphabar} \hspace{.5mm} \text{d} u^\prime   \\ 
		&+ \int_{u_{\infty}}^u \frac{a^{\frac{3}{2}}}{\lvert u^\prime \rvert^3} \scaletwoSuprime{ \sum_{i_1 + i_2 + i_3+i_4 =i}    (\al)^i \nabla^{i_1} \psi^{i_2} \nabla^{i_3} (\psi,\chibarhat, \widetilde{\tr\chibar})\nabla^{i_4}\chibarhat} \hspace{.5mm} \text{d} u^\prime \\ 
		&+ \int_{u_{\infty}}^u \frac{a^{\frac{3}{2}}}{\lvert u^\prime \rvert^3} \scaletwoSuprime{ \sum_{i_1 + i_2 + i_3+i_4+1 =i}    (\al)^i \nabla^{i_1} \psi^{i_2+1} \nabla^{i_3}\tr\chibar \nabla^{i_4} \chibarhat} \hspace{.5mm} \text{d} u^\prime. \end{align*}The initial data term is directly bounded by $\mathcal{I}^{(0)}(\ubar) \lesssim 1.$ For the terms containing $\alphabar$ , we have
		
		\begin{align*}
		&\int_{u_{\infty}}^u \frac{a^{\frac{3}{2}}}{\lvert u^\prime \rvert^3} \scaletwoSuprime{(\al \nabla)^i \alphabar} \hspace{.5mm} \text{d} u^\prime + \int_{u_{\infty}}^u \frac{a^{\frac{3}{2}}}{\lvert u^\prime \rvert^3} \scaletwoSuprime{ \sum_{i_1 + i_2 + i_3+1 =i}    (\al)^i \nabla^{i_1} \psi^{i_2+1} \nabla^{i_3} \alphabar} \hspace{.5mm} \text{d} u^\prime \\
		\leq& \scaletwoHbaru{\aln \alphabar} \cdot \frac{a}{\lvert u \rvert^{\frac{3}{2}}} + \frac{a^2 \cdot O^2}{\lvert u \rvert^3} \leq 1.
		\end{align*}The last two terms can be bounded as follows:
		
		\begin{align*}
		&\int_{u_{\infty}}^u \frac{a^{\frac{3}{2}}}{\lvert u^\prime \rvert^3} \scaletwoSuprime{ \sum_{i_1 + i_2 + i_3+i_4 =i}    (\al)^i \nabla^{i_1} \psi^{i_2} \nabla^{i_3} (\psi,\chibarhat, \widetilde{\tr\chibar})\nabla^{i_4}\chibarhat} \hspace{.5mm} \text{d} u^\prime \\
		&+ \int_{u_{\infty}}^u \frac{a^{\frac{3}{2}}}{\lvert u^\prime \rvert^3} \scaletwoSuprime{ \sum_{i_1 + i_2 + i_3+i_4+1 =i}    (\al)^i \nabla^{i_1} \psi^{i_2+1} \nabla^{i_3}\tr\chibar \nabla^{i_4} \chibarhat} \hspace{.5mm} \text{d} u^\prime 
		\leq \frac{O^2+O^3}{\al} \leq 1.
		\end{align*}
	\end{proof}

	\begin{proposition}\label{chihatproposition}
		Under the assumptions of Theorem \ref{main1} and the bootstrap assumptions \eqref{bootstrapbounds}, we have 
		\[  \sum_{i \leq 10} \frac{1}{\al} \scaletwoSu{(\al \nabla)^i \chihat}\lesssim \mathcal{R}[\alpha]+1.        \]
	\end{proposition}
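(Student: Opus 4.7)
The plan is to proceed in direct parallel with the proofs of Propositions \ref{omegaprop} and \ref{chibarhatproposition}, but to use the $\nabla_4$-direction (along which $\chihat$ propagates) together with the trivial initial data on $\Hbar_0$. Concretely, I would start from the null structure equation
\[ \nabla_4 \chihat + \tr\chi \hspace{.5mm} \chihat = -2\omega \hspace{.5mm} \chihat - \alpha, \]
commute it $i$ times with $\nabla$ using Proposition \ref{commutationformulaeprop}, and arrive schematically at
\[ \nabla_4 \nabla^i \chihat = \nabla^i \alpha + \sum_{\substack{i_1+i_2+i_3=i\\ i_1+i_3 < i}} \nabla^{i_1}\psi^{i_2}\nabla^{i_3}\alpha + \sum_{i_1+i_2+i_3+i_4=i} \nabla^{i_1}\psi^{i_2}\nabla^{i_3}(\psi,\chihat)\nabla^{i_4}\chihat. \]
Applying the $\nabla_4$ transport inequality of Proposition \ref{prop36} in scale-invariant form, and noting that the initial trace at $\ubar=0$ vanishes because the data on $\Hbar_0$ are Minkowskian, I reduce the estimate to bounding the $\ubar$-integral of the scale-invariant $L^2$-norms of the right-hand side terms above.

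The principal contribution is the curvature term $\int_0^{\ubar} \scaletwoSuubarprime{(\al\nabla)^i \alpha}\hspace{.5mm}\dubarprime$. Using Cauchy--Schwarz in $\ubar$ together with $\ubar \leq 1$ and the definition of $\mathcal{R}_i$, I would bound this by $\scaletwoHu{(\al\nabla)^i\alpha} \leq \al\hspace{.5mm}\mathcal{R}[\alpha]$. Dividing by $\al$ exactly produces the $\mathcal{R}[\alpha]$-summand claimed in the proposition. Each of the remaining nonlinear error terms is controlled by the product estimates in Proposition \ref{usefulstatements}; after the $\ubar$-integration and the additional factor $\al^{-1}$, each such contribution reduces to an expression of the form $O^k \lvert u\rvert^{-m}\al^{-\ell}$ which is $\ll 1$ by virtue of the bootstrap hierarchy $(O+R+F)^{20}\leq a^{1/16}$ and of the smallness of $\al^{-1}$ and $\lvert u\rvert^{-1}$.

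The main obstacle, compared to the $\nabla_3$-propagation used for $\omega$ and $\chibarhat$, is the absence of the helpful weight $a/\lvert u'\rvert^2$ in the $\nabla_4$-transport; this prevents the decay gain that was exploited in the previous two propositions. Instead, I would exploit the unit length of the $\ubar$-interval together with Cauchy--Schwarz to convert the pointwise $\mathcal{L}^2_{(sc)}(S)$-integral into the $\mathcal{L}^2_{(sc)}(H_u)$-flux norm encoded in $\mathcal{R}[\alpha]$. Tracking the $\al$-weights carefully so that precisely the factor $\al^{-1}$ emerges on the left-hand side is the chief bookkeeping issue; once this is in place, summing over $0\leq i \leq 10$ yields the desired bound.
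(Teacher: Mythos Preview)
Your proposal is correct and follows essentially the same approach as the paper: use the $\nabla_4$-transport equation for $\chihat$, commute with $i$ angular derivatives, apply Proposition \ref{prop36} with vanishing data on $\Hbar_0$, isolate the top-order $\alpha$-term via Cauchy--Schwarz in $\ubar$ to obtain $\al\hspace{.5mm}\mathcal{R}[\alpha]$, and absorb the nonlinear errors using Proposition \ref{usefulstatements}. The paper carries out exactly these steps, writing the equation schematically as $\nabla_4 \chihat = \psi\cdot\chihat + \alpha$ and bounding the quadratic error by $\al\cdot O^2/\lvert u\rvert \leq 4O^2/\al \leq 1$.
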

	
	\begin{proof}
		We look at the schematic equation \be \label{chihateq}  \nabla_4 \chihat = \psi \cdot \chihat + \alpha.\ee Commuting \eqref{chihateq} with $i$ angular derivatives we arrive at
		
		\be \nabla_4 \nabla^i \chihat = \nabla^{i} \alpha + \sum_{i_1+i_2+i_3+1=i} \nabla^{i_1} \psi^{i_2+1} \nabla^{i_3} \alpha + \sum_{i_1 + i_2 + i_3+i_4 =i} \nabla^{i_1} \psi^{i_2} \nabla^{i_3}(\psi,\chihat) \nabla^{i_4} \chihat. \ee
		
		\par \noindent	We thus have, passing to scale-invariant norms,
		
		\begin{align*}
		&\frac{1}{\al} \scaletwoSu{\aln \chihat} \\ 
		\leq& \frac{1}{\al} \int_{0}^{\ubar} \scaletwoSuubarprime{\aln \alpha} \hspace{.5mm} \text{d}\ubar^{\prime} +  \sum_{i_1 + i_2 + i_3+1 =i} \frac{1}{\al} \int_0^{\ubar} \scaletwoSuubarprime{(\al)^i \nabla^{i_1} \psi^{i_2+1} \nabla^{i_3} \alpha } \hspace{.5mm} \text{d}\ubar^{\prime} \\ 
		&+\sum_{i_1+i_2+i_3 +i_4=i}  \frac{1}{\al} \int_0^{\ubar} \scaletwoSuubarprime{(\al)^i \nabla^{i_1} \psi^{i_2} \nabla^{i_3} (\psi,\chihat) \nabla^{i_4} \chihat} \hspace{.5mm}\text{d} \ubar^{\prime} \\  \leq& \frac{1}{\al}\left( \int_0^{\ubar} \scaletwoSuubarprime{\aln \alpha}^2 \hspace{.5mm} \text{d}\ubar^\prime         \right)^{\frac{1}{2}} \left( \int_0^{\ubar} 1 \hspace{.5mm} \text{d}\ubar^{\prime} \right)^{\frac{1}{2}} \\ &+ \sum_{i_1 + i_2 + i_3+1 =i} \int_0^{\ubar} \ScaletwoSuubarprime{(\al)^i \nabla^{i_1} \psi^{i_2+1} \nabla^{i_3}\left(\frac{\alpha}{\al}\right)}   \hspace{.5mm} \text{d}\ubar^{\prime}           \\  &+ \al  \sum_{i_1+i_2+i_3 +i_4=i} \int_0^{\ubar} \ScaletwoSuubarprime{(\al)^i \nabla^{i_1} \psi^{i_2} \nabla^{i_3} \left(\frac{\psi}{\al},\frac{\chihat}{\al}\right)  \nabla^{i_4} \left(\frac{\chihat}{\al} \right) }    \hspace{.5mm}\text{d} \ubar^{\prime} \\
		\leq& \frac{1}{\al} \scaletwoHu{\aln \alpha} + \frac{\al \cdot O^2}{\lvert u \rvert} \leq \mathcal{R}[\alpha]+ \frac{4 O^2}{\al} \leq \mathcal{R}[\alpha]+1.
		\end{align*}
		
	\par \noindent 	The result follows.
	\end{proof}\par \noindent We proceed with estimates for $\omegabar$.

	\begin{proposition}\label{omegabarprop}
		Under the assumptions of Theorem \ref{main1} and the bootstrap assumptions \eqref{bootstrapbounds}, there holds 
		
		\[ \sum _{i\leq 10} \scaletwoSu{\aln \omegabar} \lesssim \mathcal{R}[\rho]+1.   \]

	\end{proposition}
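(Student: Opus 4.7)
The strategy mirrors Proposition \ref{omegaprop}, with the roles of the null directions reversed. The starting point is the schematic transport equation
\[ \nabla_4 \omegabar = \tfrac{1}{2}\rho + \psi\cdot\psi + \Y\cdot\Y, \]
obtained by grouping $2\omega\omegabar - \eta\cdot\etabar + \tfrac{1}{2}|\eta|^2$ into $\psi\cdot\psi$ and recognizing $\tfrac{1}{4}R_{34} = \tfrac{1}{4}(\rho_F^2 + \sigma_F^2) = \Y\cdot\Y$. Commuting with $i$ angular derivatives using the $\nabla_4$ version of Proposition \ref{commutationformulaeprop} and expanding $\nabla^{i_3}(\psi\cdot\psi)$ and $\nabla^{i_3}(\Y\cdot\Y)$ as products of derivatives, we arrive at
\begin{align*}
\nabla_4 \nabla^i \omegabar =\; & \nabla^i \rho + \sum_{i_1+i_2+i_3+1 = i} \nabla^{i_1}\psi^{i_2+1}\nabla^{i_3}\rho + \sum_{i_1+i_2+i_3+i_4 = i} \nabla^{i_1}\psi^{i_2}\nabla^{i_3}\Y \nabla^{i_4}\Y \\
& + \sum_{i_1+i_2+i_3+i_4 = i}\nabla^{i_1}\psi^{i_2}\nabla^{i_3}(\psi,\chihat)\nabla^{i_4}\psi.
\end{align*}

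Next, I would apply the first inequality of Proposition \ref{prop36} to transfer this into the scale-invariant norm along $\ubar$, integrating from $\ubar^{\prime}=0$. The initial data contribution $\lVert (\al\nabla)^i \omegabar\rVert_{\mathcal{L}^2_{(sc)}(S_{u,0})}$ vanishes: indeed, since $\omegabar = -\tfrac{1}{2}\nabla_3 \log\Omega$ and $\Omega \equiv 1$ along $\Hbar_0$, we have $\omegabar \equiv 0$ on $\Hbar_0$, and the same is true for its angular derivatives. This leaves us with
\begin{align*}
\scaletwoSu{(\al\nabla)^i \omegabar} \lesssim\; & \int_0^{\ubar} \scaletwoSuubarprime{(\al\nabla)^i \rho}\, \dubarprime + \int_0^{\ubar}\ScaletwoSuubarprime{\textstyle\sum_{i_1+i_2+i_3+1=i}(\al)^i\nabla^{i_1}\psi^{i_2+1}\nabla^{i_3}\rho}\, \dubarprime \\
& + \int_0^{\ubar}\ScaletwoSuubarprime{\textstyle\sum_{i_1+i_2+i_3+i_4=i}(\al)^i\nabla^{i_1}\psi^{i_2}\nabla^{i_3}\Y\nabla^{i_4}\Y}\, \dubarprime \\
& + \int_0^{\ubar}\ScaletwoSuubarprime{\textstyle\sum_{i_1+i_2+i_3+i_4=i}(\al)^i\nabla^{i_1}\psi^{i_2}\nabla^{i_3}(\psi,\chihat)\nabla^{i_4}\psi}\, \dubarprime.
\end{align*}

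The main, linear term is bounded via Cauchy--Schwarz in $\ubar$ by $\scaletwoHu{(\al\nabla)^i\rho}\cdot 1 \lesssim \mathcal{R}[\rho]$, using $\int_0^{\ubar}1\,\dubarprime \leq 1$; note that the absence of an $a/|u^{\prime}|^2$ weight (as compared with Proposition \ref{omegaprop}) removes the $\al/|u|^{1/2}$ factor and explains the cleaner right-hand side. The remaining contributions are nonlinear and handled by Proposition \ref{usefulstatements}: each integrand is uniformly bounded by quantities of the form $O^2/|u|$ or $O^3/|u|^2$, which, after integration in $\ubar$ over a unit interval and using $(O,R,F)^{20}\leq a^{1/16}$, yield a contribution $\lesssim 1$. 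Summing over $i\leq 10$ gives the desired estimate
\[ \sum_{i\leq 10} \scaletwoSu{(\al\nabla)^i \omegabar} \lesssim \mathcal{R}[\rho] + 1. \]
The only mildly subtle point is correctly identifying $\tfrac{1}{4}R_{34}$ as a quadratic Maxwell expression and verifying signature balance so that the scale-invariant accounting goes through uniformly in $i$; the remainder is bookkeeping following the template of Proposition \ref{omegaprop}.
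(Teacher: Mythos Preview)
Your proof is correct and follows essentially the same route as the paper: the schematic equation $\nabla_4\omegabar=\rho+\psi\psi+\Y\Y$, commutation via Proposition~\ref{commutationformulaeprop}, transport estimate via Proposition~\ref{prop36}, Cauchy--Schwarz in $\ubar$ to produce $\mathcal{R}[\rho]$, and Proposition~\ref{usefulstatements} for the nonlinear remainders. You are in fact slightly more careful than the paper in explicitly justifying why the initial data term on $\Hbar_0$ vanishes.
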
 \begin{proof}
		We have the schematic null structure equation 
		\[   \nabla_4 \omegabar    = \rho + \psi \cdot \psi + \Y \cdot \Y       \]Commuting this equation with $i$ angular derivatives, using Proposition \ref{commutationformulaeprop}, we obtain
		
		\begin{align*}   \nabla_4 \nabla^i \omegabar =& \nabla^i \rho + \sum_{i_1+i_2+i_3+1 =i}\nabla^{i_1} \psi^{i_2+1} \nabla^{i_3} \rho + \sum_{i_1+i_2+i_3+i_4=i} \nabla^{i_1} \psi^{i_2} \nabla^{i_3} \Y \nabla^{i_4} \Y  \\ 
		&+ \sum_{i_1+i_2+i_3+i_4=i} \nabla^{i_1} \psi^{i_2} \nabla^{i_3}(\psi,\chihat) \nabla^{i_4} \psi.    \end{align*}Multiplying by $(\al)^i$ and using Proposition \ref{prop36} we get

		\begin{align*}
		&\scaletwoSu{\aln \omegabar} \\
		\leq& \int_0^{\ubar} \scaletwoSuubarprime{\aln \rho} \hspace{.5mm} \text{d}\ubar^\prime + \sum_{i_1 + i_2 + i_3+1 =i} \int_0^{\ubar} \scaletwoSuubarprime{(\al)^i \nabla^{i_1} \psi^{i_2+1} \nabla^{i_3} \rho} \hspace{.5mm} \text{d}\ubar^\prime \\
		&+ \sum_{ i_1+i_2+i_3+i_4=i} \int_0^{\ubar} \scaletwoSuubarprime{(\al)^i \nabla^{i_1} \psi^{i_2} \nabla^{i_3} \Y \nabla^{i_4} \Y} \hspace{.5mm} \text{d}\ubar^\prime \\ 
		&+   \sum_{i_1+i_2+i_3+i_4=i}\int_0^{\ubar}  \scaletwoSuubarprime{(\al)^i \nabla^{i_1} \psi^{i_2} \nabla^{i_3} (\psi,\chihat)\nabla^{i_4} \psi} \hspace{.5mm} \text{d}\ubar^\prime \\  \leq& \left( \int_0^{\ubar} \scaletwoSuubarprime{\aln \rho}^2 \hspace{.5mm} \text{d}\ubar^\prime         \right)^{\frac{1}{2}} \left( \int_0^{\ubar} 1 \hspace{.5mm} \text{d}\ubar^{\prime} \right)^{\frac{1}{2}}         \\ &+ \sum_{i_1 + i_2 + i_3+1 =i} \int_0^{\ubar} \scaletwoSuubarprime{(\al)^i \nabla^{i_1} \psi^{i_2+1} \nabla^{i_3} \rho} \hspace{.5mm} \text{d}\ubar^\prime         \\ 	&+ \sum_{ i_1+i_2+i_3+i_4=i} \int_0^{\ubar} \scaletwoSuubarprime{(\al)^i \nabla^{i_1} \psi^{i_2} \nabla^{i_3} \Y \nabla^{i_4} \Y} \hspace{.5mm} \text{d}\ubar^\prime       \\    	&+ \sum_{i_1+i_2+i_3+i_4=i}  \int_0^{\ubar} {\color{black}\at} \ScaletwoSuubarprime{(\al)^i \nabla^{i_1} \psi^{i_2} \nabla^{i_3} \left(\frac{\psi}{\al},\frac{\chihat}{\al} \right)\nabla^{i_4} \psi} \hspace{.5mm} \text{d}\ubar^\prime                       \\
		\leq&  \scaletwoHu{\aln \rho} + \frac{\al \cdot O^2}{\lvert u \rvert} \leq \mathcal{R}[\rho]+1.
		\end{align*}Here and throughout we have made use of Proposition \ref{usefulstatements}. 
	\end{proof}
	
	\begin{proposition}
		Under the assumptions of Theorem \ref{main1} and the bootstrap assumptions \eqref{bootstrapbounds}, we have \[    \sum_{i \leq 10}  \scaletwoSu{(\al \nabla)^i \eta}\lesssim \mathcal{R}[\tbeta]+1.   \] 
	\end{proposition}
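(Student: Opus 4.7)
The plan is to mirror the template used for Propositions \ref{omegabarprop} and \ref{chihatproposition}, applied to the $\nabla_4$ transport equation for $\eta$ after using the renormalization. Starting from \eqref{etastructureequation}, namely $\nabla_4 \eta_A = -\chi_{AB}(\eta-\etabar)_B - \beta_A - \frac{1}{2}R_{A4}$, and using $\tbeta_A = \beta_A - \frac{1}{2}R_{4A}$ together with $R_{4A} = \rho_F(\alpha_F)_A - \sigma_F \epsilon_{AB}(\alpha_F)_B$, I would rewrite the equation schematically as
\[
\nabla_4 \eta = \tbeta + \psi \cdot \psi + \psi \cdot \chihat + \Y \cdot \alpha_F.
\]
The decisive feature is that $\beta + \frac{1}{2}R_{4A} = \tbeta + R_{4A}$, so the renormalization eliminates $\beta$ in favor of the quantity $\tbeta$ controlled by $\mathcal{R}[\tbeta]$, at the cost of the harmless bilinear term $\Y \cdot \alpha_F$.

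Next, I would commute with $\nabla^i$ using Proposition \ref{commutationformulaeprop} to obtain, schematically,
\[
\nabla_4 \nabla^i \eta = \nabla^i \tbeta + \!\!\sum_{i_1+i_2+i_3+1=i}\!\!\nabla^{i_1}\psi^{i_2+1}\nabla^{i_3}\tbeta + \!\!\sum_{i_1+i_2+i_3+i_4=i}\!\!\nabla^{i_1}\psi^{i_2}\nabla^{i_3}(\psi,\chihat)\nabla^{i_4}\psi + \!\!\sum_{i_1+i_2+i_3+i_4=i}\!\!\nabla^{i_1}\psi^{i_2}\nabla^{i_3}\Y\,\nabla^{i_4}\alpha_F,
\]
plus the usual commutator terms of the same type. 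Passing to scale-invariant norms and applying the $\nabla_4$ transport estimate in Proposition \ref{prop36}, and noting that the initial contribution at $\ubar = 0$ vanishes by the Minkowskian data on $\Hbar_0$, we are reduced to estimating each of the four families of integrals above.

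The linear term in $\tbeta$ is handled by Cauchy--Schwarz on $\ubar \in [0,1]$, exactly as in Proposition \ref{omegabarprop}:
\[
\int_0^{\ubar}\scaletwoSuubarprime{(\al\nabla)^i \tbeta}\,\dubarprime \leq \scaletwoHu{(\al\nabla)^i \tbeta}\cdot\ubar^{1/2} \lesssim \mathcal{R}[\tbeta].
\]
For the $\psi^{i_2+1}\nabla^{i_3}\tbeta$ term one inserts a factor of $\al/\lvert u \rvert$ so that $\frac{\al}{\lvert u\rvert}\psi$ appears together with Proposition \ref{usefulstatements}, producing a bound by $\frac{\al\cdot O}{\lvert u\rvert}\mathcal{R}[\tbeta] \lesssim \mathcal{R}[\tbeta]$. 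The pure Ricci product and the $\Y\cdot\alpha_F$ product are each controlled via the corresponding statements in Proposition \ref{usefulstatements} by quantities of order $\frac{\al}{\lvert u\rvert}\,O^2 \lesssim 1$, just as in the analogous step of Proposition \ref{omegabarprop}. Summing over $i \leq 10$ yields $\sum_{i\leq 10} \scaletwoSu{(\al\nabla)^i \eta} \lesssim \mathcal{R}[\tbeta] + 1$.

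The only genuine subtlety, and the one place where this argument departs from the vacuum template, is the use of the renormalization to produce $\tbeta$ rather than $\beta$ on the right-hand side. Since no equation or norm in this paper directly controls $\beta$, whereas $\tbeta$ is accounted for in $\mathcal{R}$, it is essential that this substitution be performed \emph{before} applying Proposition \ref{prop36}; once this is done, every remaining term falls under the Ricci-coefficient and Maxwell bilinear bounds already catalogued in Proposition \ref{usefulstatements}, and the estimate closes.
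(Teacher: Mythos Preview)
Your proposal is correct and follows essentially the same approach as the paper: rewrite the $\nabla_4\eta$ equation using $\tbeta$ and $R_{4A}=(\rho_F,\sigma_F)\cdot\alpha_F$, commute, apply Proposition \ref{prop36}, and bound the linear $\tbeta$ term by $\mathcal{R}[\tbeta]$ via Cauchy--Schwarz in $\ubar$ while the remaining bilinear terms give $\frac{\al\cdot O^2}{\lvert u\rvert}\lesssim 1$ through Proposition \ref{usefulstatements}. The only cosmetic difference is that the paper absorbs the lower-order $\psi^{i_2+1}\nabla^{i_3}\tbeta$ term (where $i_3\leq 9$) directly into the $\frac{\al\cdot O^2}{\lvert u\rvert}$ bound via Proposition \ref{usefulstatements}, rather than tying it back to $\mathcal{R}[\tbeta]$ as you do.
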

	
	\begin{proof}  We begin by recalling the structure equation \eqref{etastructureequation} for $\eta$: \[	\nabla_4 \eta_a = - \chi_{ab} \cdot (\eta - \etabar)_b-\beta_a-\frac12 R_{a4}. \]Also recall that $\tbeta = \beta - \frac{1}{2}R_{4(\cdot)}$.  We can therefore rewrite \eqref{etastructureequation} in terms of $\tbeta$ as follows: \[	\nabla_4 \eta_a = - \chi_{ab} \cdot (\eta - \etabar)_b-\tbeta_a- R_{a4}.  \]         
			This leads us to the following schematic null structure equation:  \[ \nabla_4 \eta = \tbeta  + \psi \cdot (\psi,\chihat) + (\rho_F,\sigma_F)\cdot \alpha_F.    \]Commuting with $i$ angular derivatives, using Proposition \ref{commutationformulaeprop}, we have
		\begin{align*} \nabla_4 \nabla^i \eta =& \nabla^i \tbeta   + \sum_{i_1 + i_2 +i_3 +1= i} \nabla^{i_1} \psi^{i_2+1} \nabla^{i_3}\tbeta    \\
		&+ \sum_{i_1+ i_2 + i_3 +i_4 =i} \nabla^{i_1} \psi^{i_2} \nabla^{i_3} (\psi,\chihat) \nabla^{i_4} \psi \\ 
		&+  \sum_{i_1+ i_2 + i_3 +i_4 =i} \nabla^{i_1} \psi^{i_2} \nabla^{i_3}(\rho_F, \sigma_F) \nabla^{i_4} \alpha_F.          \end{align*}Working in scale-invariant norms, we get
		
		\begin{align*}
		&\scaletwoSu{(\al \nabla)^i \eta}\\
		\leq& \int_0^{\ubar} \lVert (\al \nabla)^i \tbeta   \rVert_{\mathcal{L}^{2}_{(sc)}(S_{u,\ubar^\prime})} \text{d}\ubar^{\prime} + \sum_{i_1 + i_2 +i_3 +1= i} \lVert (\al)^i \nabla^{i_1} \psi^{i_2+1} \nabla^{i_3}\tbeta   \rVert_{\mathcal{L}^{2}_{(sc)}(S_{u,\ubar^\prime})}\text{d}\ubar^\prime \\ 
		&+ \sum_{i_1+i_2+i_3+i_4 =i} \al \big\lVert (\al)^i \nabla^{i_1} \psi^{i_2} \nabla^{i_3} \psi \nabla^{i_4}\left(\frac{\psi}{\al}, \frac{\chihat}{\al}\right) \big\rVert_{\mathcal{L}^{2}_{(sc)}(S_{u,\ubar^\prime})} \text{d}\ubar^\prime  \\
		&+ \sum_{i_1+i_2+i_3+i_4 =i} \al \big \lVert (\al)^i \nabla^{i_1} \psi^{i_2} \nabla^{i_3} \left(\frac{\alpha_F}{\al}\right) \nabla^{i_4}(\rho_F, \sigma_F) \big\rVert_{\mathcal{L}^{2}_{(sc)}(S_{u,\ubar^\prime})} \text{d}\ubar^\prime \\
		\leq& \mathcal{R}[\tbeta]+\frac{\al \cdot O^2}{\lvert u \rvert} \leq \mathcal{R}[\tbeta]+1.
		\end{align*}
	\end{proof}
		
	\begin{proposition}
		Under the assumptions of Theorem \ref{main1} and the bootstrap assumptions \eqref{bootstrapbounds}, we have \[  \sum_{i \leq 10} \scaletwoSu{(\al \nabla)^i \tr \chi} \lesssim (\mathcal{R}[\alpha]+\underline{\mathcal{F}}[\rho_F,\sigma_F]+1)^{{\color{black}2}}.      \]\label{trchiboundRicci}
	\end{proposition}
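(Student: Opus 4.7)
The plan is to work directly from the null structure equation for $\tr\chi$, which in schematic form reads $\nabla_4 \tr\chi = \psi \cdot \psi + \chihat \cdot \chihat + \alpha_F \cdot \alpha_F$, and propagate the estimate along the outgoing direction. I would commute $i$ times with $\nabla$ (for $i \leq 10$) using Proposition \ref{commutationformulaeprop} to obtain schematically
\[
\nabla_4 \nabla^i \tr\chi = \sum_{i_1+i_2+i_3+i_4 = i} \nabla^{i_1}\psi^{i_2}\nabla^{i_3}(\chihat,\alpha_F,\psi)\nabla^{i_4}(\chihat,\alpha_F,\psi) + \sum_{i_1+i_2+i_3+i_4=i}\nabla^{i_1}\psi^{i_2}\nabla^{i_3}(\psi,\chihat)\nabla^{i_4}\tr\chi,
\]
then integrate in $\ubar$ using Proposition \ref{prop36}. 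The initial data contribution $\lVert (\al\nabla)^i\tr\chi\rVert_{\mathcal{L}^2_{(sc)}(S_{u,0})}$ is controlled by $\mathcal{I}^{(0)}$ by hypothesis.

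The genuinely quadratic borderline contributions on the RHS are $\chihat\cdot\chihat$ and $\alpha_F\cdot\alpha_F$, and they are the sole source of the squared upper bound. For $\chihat$, Proposition \ref{chihatproposition} already yields $\scaletwoSu{(\al\nabla)^j\chihat}\lesssim \al(\mathcal{R}[\alpha]+1)$ for $j\leq 10$. For $\alpha_F$, since the null Maxwell system contains no $\nabla_4 \alpha_F$ equation, I would derive the analogous bound by applying Proposition \ref{prop37} (with $\lambda_0=(i+1)/2$) to the commuted incoming Maxwell equation $\nabla_3 \nabla^i \alpha_F + \frac{i+1}{2}\tr\chibar \, \nabla^i\alpha_F = \nabla^{i+1}(\rho_F,\sigma_F) + \text{lower order}$. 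Cauchy--Schwarz in $u'$, together with $a/|u|\lesssim 1$ in $\mathcal{D}$, converts the integral of the top-order source into $\al\,\underline{\mathcal{F}}[\rho_F,\sigma_F]$, producing $\scaletwoSu{(\al\nabla)^j\alpha_F}\lesssim \al(\underline{\mathcal{F}}[\rho_F,\sigma_F]+1)$ for $j\leq 10$. Plugging these two estimates into the scale-invariant Hölder inequality, and using Proposition \ref{Sobolevembedding} to pass the lower-derivative factor into $\mathcal{L}^\infty_{(sc)}$, each borderline term is dominated by
\[
\scaletwoSu{(\al)^{i}\nabla^{i_3}\chihat\,\nabla^{i_4}\chihat}\lesssim \frac{1}{|u|}\scaletwoSu{(\al\nabla)^{i_3}\chihat}\,\scaleinfinitySu{(\al\nabla)^{i_4}\chihat}\lesssim \frac{a}{|u|}(\mathcal{R}[\alpha]+1)^2,
\]
and analogously for $\alpha_F\cdot\alpha_F$. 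Integrating over $\ubar'\in[0,\ubar]\subseteq[0,1]$ at fixed $u$ and exploiting $a/|u|\leq 4$ in the region $|u|\geq a/4$ gives the desired $(\mathcal{R}[\alpha]+\underline{\mathcal{F}}[\rho_F,\sigma_F]+1)^2$ bound. The purely Ricci-type products $\psi\cdot\psi$ as well as the commutator terms $\nabla^{i_1}\psi^{i_2}\nabla^{i_3}(\psi,\chihat)\nabla^{i_4}\tr\chi$ are strictly lower order and will be absorbed via Proposition \ref{usefulstatements} and the bootstrap assumption.

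The principal obstacle is the $\alpha_F \cdot \alpha_F$ source. Unlike $\chihat$, which is transported in the outgoing direction alongside $\tr\chi$ and for which a clean $L^2(S_{u,\ubar})$ bound of size $\al(\mathcal{R}[\alpha]+1)$ is already available, $\alpha_F$ has to be controlled by transporting from the initial cone $H_{u_\infty}$ in the $e_3$-direction, and one must verify that the resulting bound carries the correct weights in $a$ and $|u|$ to close at level $\al\,\underline{\mathcal{F}}[\rho_F,\sigma_F]$ rather than something larger. This mechanism explains the appearance of $\underline{\mathcal{F}}[\rho_F,\sigma_F]$ in the final bound, as well as its squared nature: each quadratic source $\chihat^2$ and $\alpha_F^2$ contributes one power of the corresponding curvature/Maxwell norm.
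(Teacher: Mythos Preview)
Your proposal is correct and follows essentially the same approach as the paper: you integrate the commuted $\nabla_4\tr\chi$ equation via Proposition~\ref{prop36}, isolate the borderline quadratic sources $\chihat\cdot\chihat$ and $\alpha_F\cdot\alpha_F$, and control each factor using the improved $\mathcal{L}^2_{(sc)}(S_{u,\ubar})$ bounds on $\chihat$ and $\alpha_F$ (the latter obtained exactly as you describe, via Proposition~\ref{prop37} applied to the $\nabla_3\alpha_F$ equation---this is the content of Proposition~\ref{alphaFproposition}). The only minor remark is that the initial contribution on $\ubar=0$ comes from Minkowski data rather than from the $\mathcal{I}^{(0)}$ hypothesis, but this yields an $O(1)$ term that is harmlessly absorbed.
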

	
	\begin{proof}
		We again start by considering the schematic equation 
		\[ \nabla_4 \tr \chi =\chihat\cdot \chihat + \alpha_F \cdot \alpha_F + \psi \psi.       \]By commuting with $i$ angular derivatives, we arrive at
		
		\begin{align*} \nabla_4 \nabla^i \tr\chi =& \sum_{i_1+ i_2 + i_3 +i_4 =i} \nabla^{i_1}\psi^{i_2} \nabla^{i_3}\chihat \nabla^{i_4} \chihat + \sum_{i_1 + i_2 +i_3 + i_4 =i}  \nabla^{i_1}\psi^{i_2} \nabla^{i_3}\alpha_F \nabla^{i_4} \alpha_F  \\ 
		&+ \sum_{i_1 + i_2 +i_3 + i_4 =i} \nabla^{i_1} \psi^{i_2} \nabla^{i_3} (\psi, \chihat) \nabla^{i_4} \psi \\ =& \sum_{i_1+i_2 =i} \nabla^{i_1} \chihat \nabla^{i_2} \chihat + \sum_{i_1+i_2+i_3+i_4+1=i} \nabla^{i_1} \psi^{i_2+1} \nabla^{i_3}\chihat \nabla^{i_4}\chihat     \\  &+   \sum_{i_1+i_2 =i} \nabla^{i_1} \alpha_F \nabla^{i_2} \alpha_F + \sum_{i_1+i_2+i_3+i_4+1=i} \nabla^{i_1} \psi^{i_2+1} \nabla^{i_3}\alpha_F \nabla^{i_4}\alpha_F    \\  &+\sum_{i_1 + i_2 +i_3 + i_4 =i} \nabla^{i_1} \psi^{i_2} \nabla^{i_3} (\psi, \chihat) \nabla^{i_4} \psi.  \end{align*}Taking this into account\footnote{In the following, even though we do not encounter cross terms of the form $\nabla^{i_1}\chihat \nabla^{i_2} \alpha_F$, we do not lose any control on the inequality by grouping the terms together and controlling schematically terms of the form $\nabla^{i_1}(\chihat,\alpha_F)\nabla^{i_2}(\chihat,\alpha_F)$.}, we have
		
		\begin{equation}
\begin{split} \label{Kusefulequation}
		&\scaletwoSu{\aln \tr\chi}\\
		\leq& \sum_{i_1+i_2=i}  \int_{0}^{\ubar} a \big\lVert (\al)^i \nabla^{i_1}\left(\frac{\chihat}{\al}, \frac{\alpha_F}{\al}\right)\nabla^{i_2}\left(\frac{\chihat}{\al}, \frac{\alpha_F}{\al}\right)\big \rVert_{\mathcal{L}^2_{(sc)}(S_{u,\ubar^\prime})} \text{d}\ubar^\prime \\
		&+ \sum_{i_1 + i_2 +i_3 + i_4 +1 =i} \int_0^{\ubar} a \big\lVert  (\al)^i \nabla^{i_1} \psi^{i_2+1} \nabla^{i_3} \left(\frac{\chihat}{\al}, \frac{\alpha_F}{\al}\right)\nabla^{i_2}\left(\frac{\chihat}{\al}, \frac{\alpha_F}{\al}\right)\big \rVert_{\mathcal{L}^2_{(sc)}(S_{u,\ubar^\prime})} \text{d}\ubar^\prime \\
		&+  \sum_{i_1 + i_2 +i_3 + i_4 =i} \int_0^{\ubar}  \al \big\lVert  {\color{black}(\at)^i}\nabla^{i_1} \psi^{i_2} \nabla^{i_3} \left(\frac{\psi}{\al}, \frac{\chihat}{\al}\right) \nabla^{i_4} \psi\big\rVert_{\mathcal{L}^{2}_{(sc)}(S_{u,\ubar^\prime})}\text{d}\ubar^\prime \\
		\leq& \frac{a}{\lvert u \rvert}O_{2}[\chihat,\alpha_F]\cdot O_{\infty}[\chihat,\alpha_F] + \frac{a }{\lvert u \rvert^2}O^3+ \frac{\al}{\lvert u \rvert}O^2\\
		\leq& O_{2}[\chihat,\alpha_F]\cdot O_{\infty}[\chihat,\alpha_F] +1 \lesssim {\color{black}(\mathcal{R}[\alpha]+ \underline{\mathcal{F}}[\rho_F,\sigma_F]+1)^2},
		\end{split} 
\end{equation}
by using the estimates on $\chihat$ proved in Proposition \ref{chihatproposition} and the estimates on $\alpha_F$ that will be shown in Proposition \ref{alphaFproposition} in the following subsection.    
	\end{proof}\par \noindent We move on to estimates for $\tr\chibar$. 
	\begin{proposition} \label{trchibarboundRicci}
		Under the assumptions of Theorem \ref{main1} and the bootstrap assumptions \eqref{bootstrapbounds}, we have 
		
		\[   \sum_{i \leq 10} \frac{a}{\lvert u \rvert }  \big \lVert (\al \nabla)^i(\tr\chibar+ \frac{2}{\lvert u \rvert})\big \rVert_{\mathcal{L}^2_{(sc)}(S_{u,\ubar}) } \lesssim \mathcal{R}[\rho] + \underline{\mathcal{R}}[\rho]+1,\hspace{2mm}  \sum_{i \leq 10} \frac{a}{\lvert u \rvert^2}\scaletwoSu{(\al \nabla)^i \tr\chibar} \lesssim \f{\M R[\rho]+\underline{\M R}[\rho]}{|u|}+1.                \]\end{proposition}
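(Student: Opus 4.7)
The plan is to derive a $\nabla_3$–equation for the renormalized quantity $\widetilde{\tr\chibar}=\tr\chibar+2/|u|$, commute with angular derivatives, and then invoke the weighted Grönwall estimate of Proposition \ref{prop37} with appropriate $\lambda_0$, closing the RHS using the bounds established in Propositions \ref{omegaprop}, \ref{chibarhatproposition} and \ref{omegabarprop}. Starting from $\nabla_3 \tr\chibar+\tfrac12(\tr\chibar)^2=-|\chibarhat|^2-2\omegabar\tr\chibar-\alphabar_F^{\,2}$ and using $\nabla_3(2/|u|)=2\Omega^{-1}/|u|^2$ together with the algebraic identity $2\widetilde{\tr\chibar}/|u|=\widetilde{\tr\chibar}^{\,2}-\tr\chibar\,\widetilde{\tr\chibar}$, a direct manipulation gives
\[
\nabla_3\widetilde{\tr\chibar}+\tr\chibar\,\widetilde{\tr\chibar}=\tfrac12\widetilde{\tr\chibar}^{\,2}+\frac{2(\Omega^{-1}-1)}{|u|^2}-|\chibarhat|^2-2\omegabar\,\tr\chibar-\alphabar_F^{\,2}.
\]
For $i\ge 1$ we use $\nabla^i\widetilde{\tr\chibar}=\nabla^i\tr\chibar$ (since $2/|u|$ is angularly constant) and commute via Proposition \ref{commutationformulaeprop}; the top-order piece from differentiating $(\tr\chibar)^2$ contributes an extra $\tr\chibar\,\nabla^i\tr\chibar$ to the LHS, yielding an equation $\nabla_3\nabla^i\tr\chibar+\tfrac{i+2}{2}\tr\chibar\,\nabla^i\tr\chibar=G_i$, while $i=0$ is already in the form needed with $\lambda_0=1$. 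In both cases Proposition \ref{prop37} applies with $\lambda_1=i+1$ and produces, after translating to scale-invariant norms, the inequality
\[
\frac{a}{|u|}\big\lVert(\al\nabla)^i\widetilde{\tr\chibar}\big\rVert_{\mathcal{L}^2_{(sc)}(S_{u,\ubar})}\lesssim\mathcal{I}^{(0)}+\int_{u_\infty}^u\frac{a^2}{|u'|^3}\big\lVert(\al)^i G_i\big\rVert_{\mathcal{L}^2_{(sc)}(S_{u',\ubar})}\hspace{.5mm}du'.
\]

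The heart of the argument lies in bounding $G_i$ term-by-term. The quadratic $\widetilde{\tr\chibar}^{\,2}$ and the lower-order pieces from $\nabla^i(\tr\chibar)^2$ are absorbed by the smallness from the product estimates in Proposition \ref{usefulstatements} and the inequality $O^{20}\leq a^{1/16}$. The $|\chibarhat|^2$ and $\alphabar_F^{\,2}$ contributions are controlled through the already established Proposition \ref{chibarhatproposition} and the analogous pointwise $\alphabar_F$ estimate from the incoming null Maxwell equation, with $\frac{a^2}{|u'|^3}\cdot\|(\al)^i(\chibarhat)^2\|_{\mathcal{L}^2_{(sc)}}$-type integrals yielding $\mathcal{O}(a/|u|)=\mathcal{O}(1)$. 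The $\omegabar\,\tr\chibar$ term is the one supplying the $\mathcal{R}[\rho]$ piece of the bound, via the estimate $\sum_i \scaletwoSu{(\al\nabla)^i\omegabar}\lesssim\mathcal{R}[\rho]+1$ from Proposition \ref{omegabarprop}.

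The genuinely delicate term is $\frac{2(\Omega^{-1}-1)}{|u|^2}$, where a naive bound using $\|\Omega^{-1}-1\|_{L^\infty}\lesssim O/|u|$ yields a logarithmically divergent $u'$-integral as $|u_\infty|\to\infty$. To circumvent this, the plan is to revisit Proposition \ref{Omega} using the \emph{improved} bound on $\omega$ coming from Proposition \ref{omegaprop}: integrating $\omega=\tfrac12\partial_{\ubar}\Omega^{-1}$ in $\ubar\in[0,1]$ gives $\|\Omega^{-1}-1\|_{L^\infty(S_{u,\ubar})}\lesssim a^{1/2}(\underline{\mathcal{R}}[\rho]+1)/|u|^{3/2}$, which is precisely the source of the $\underline{\mathcal{R}}[\rho]$ contribution; the resulting integral $\int|u'|\cdot|u'|^{-7/2}du'$ now converges and produces a factor $\lesssim a^{1/2}/|u|^{1/2}\lesssim 1$ once $|u|\geq a/4$. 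This is the main obstacle; every other contribution is routine given Proposition \ref{usefulstatements} and the previously proved Ricci estimates.

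The second inequality of the proposition follows immediately from the first. For $i\geq 1$ one has $\nabla^i\tr\chibar=\nabla^i\widetilde{\tr\chibar}$, and the extra factor $1/|u|$ on the right-hand side comes simply from the exponent mismatch $\frac{a}{|u|^2}=\frac{1}{|u|}\cdot\frac{a}{|u|}$. For $i=0$, writing $\tr\chibar=\widetilde{\tr\chibar}-2/|u|$ and using $\|1\|_{L^2(S_{u,\ubar})}\lesssim|u|$ yields $\|\tr\chibar\|_{L^2}\lesssim\|\widetilde{\tr\chibar}\|_{L^2}+2$, which after rescaling produces the desired $\lesssim(\mathcal{R}[\rho]+\underline{\mathcal{R}}[\rho])/|u|+1$ bound.
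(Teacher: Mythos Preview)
Your proposal is correct and follows essentially the same route as the paper: derive the $\nabla_3$--equation for $\widetilde{\tr\chibar}$, commute with angular derivatives, apply Proposition~\ref{prop37} with $\lambda_1=i+1$, and handle the delicate $(\Omega^{-1}-1)/|u|^2$ term exactly as you describe, via the improved $\omega$ bound from Proposition~\ref{omegaprop} (the paper implements this by writing $\Omega^{-1}-1=\int_0^{\ubar}2\omega\,d\ubar'$ and pushing $\nabla^{i_3}$ through the integral, which also covers the higher angular derivatives you should make explicit). The $\omegabar\,\tr\chibar$ term supplies the $\mathcal{R}[\rho]$ contribution via Proposition~\ref{omegabarprop}, and the second inequality is deduced from the first just as you say.
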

	\begin{proof} 
For $\tildetr=\tr\chib+\f{2}{|u|}$, we have the schematic null structure equation     
		\[  \nabla_3 \widetilde{\tr\chibar} + \tr\chi \tildetr = \frac{2}{\lvert u \rvert^2}(\Omega^{-1}-1)+\tildetr \tildetr +\psi \tr\chibar - \lvert \chibarhat\rvert^2 - \lvert \alphabar_F \rvert^2.                   \]Commuting this equation with $i$ angular derivatives, we have 
		
		\begin{align*}  &\nabla_3 \nabla^i \tildetr + \frac{i+2}{2}\tr\chibar \tildetr\\ =& \sum_{i_1 + i_2 +i_3 = i}\nabla^{i_1} \psi^{i_2} \nabla^{i_3} \left(\frac{2}{\lvert u \rvert^2}(\Omega^{-1}-1)+\tildetr \tildetr +\psi \tr\chibar - \lvert \chibarhat \rvert^2 - \lvert \alphabar_F \rvert^2 \right)  \\ 
		&+ \sum_{i_1 + i_2 +i_3 + i_4 =i} \nabla^{i_1} \psi^{i_2} \nabla^{i_3}(\psi,\chibarhat, \tildetr)\nabla^{i_4} \tildetr       + \sum_{i_1 + i_2 +i_3 + i_4 +1 =i} \nabla^{i_1} \psi^{i_2+1} \nabla^{i_3} \tr\chibar \nabla^{i_4}\tildetr \\ :=& \tilde{F}.        \end{align*}Rewriting in terms of scale-invariant norms, \begin{align*} \frac{a}{\lvert u \rvert} \scaletwoSu{(\al \nabla)^i \tildetr } \leq& \frac{a}{\lvert u_\infty \rvert} \rvert \lVert (\al \nabla)^i \tildetr  \rVert_{\mathcal{L}^2_{(sc)}(S_{u_{\infty},{\color{black}\ub}})} +\int_{u_{\infty}}^{u}  \frac{a^2}{\lvert  u^\prime \rvert^3    } \lVert (\al)^i \tilde{F}\rVert_{\mathcal{L}^2_{(sc)}(S_{u^\prime, \ubar})}\text{d}u^\prime\\
		=&\frac{a}{\lvert u_\infty \rvert} \lVert (\al \nabla)^i \tildetr  \rVert_{\mathcal{L}^2_{(sc)}(S_{u_{\infty},{\color{black}\ub}})}  + \mathcal{I}_1+\mathcal{I}_2+\mathcal{I}_3+\mathcal{I}_4,        \end{align*}where \[  \frac{a}{\lvert u \rvert} \scaletwoSu{(\al \nabla)^i \tildetr } \leq \frac{a}{\lvert u_\infty \rvert} \rvert \lVert (\al \nabla)^i \tildetr  \rVert_{\mathcal{L}^2_{(sc)}(S_{u_{\infty},{\color{black}\ub}})} \lesssim 1,  \]\begin{align*}
		\mathcal{I}_1 =&\int_{u_{\infty}}^u  \frac{a^2}{\lvert u^\prime \rvert^3}\lVert (\al)^i \sum_{i_1 + i_2 +i_3 + i_4 =i} \nabla^{i_1} \psi^{i_2} \nabla^{i_3} (\psi , \tildetr, \chibarhat, \alphabar_F) \nabla^{i_4}(\psi, \tildetr, \chibarhat, \alphabar_F) \rVert_{\mathcal{L}^{2}_{(sc)}(S_{u^{\prime},\ubar})}\text{d}u^\prime\\ 
		=& \int_{u_{\infty}}^u  \frac{a}{\lvert u^\prime \rvert}\lVert (\al)^i \sum_{i_1 + i_2 +i_3 + i_4 =i} \nabla^{i_1} \psi^{i_2} \nabla^{i_3} (\frac{\al}{\lvert u^\prime \rvert}\psi , \frac{\al}{\lvert u^\prime \rvert}\tildetr,\frac{\al}{\lvert u^\prime \rvert} \chibarhat, \frac{\al}{\lvert u^\prime \rvert}\alphabar_F)
		\\ &\quad \quad \quad \quad \quad \quad \quad \quad \quad \quad \quad \quad \times \nabla^{i_4}(\frac{\al}{\lvert u^\prime \rvert}\psi, \frac{\al}{\lvert u^\prime \rvert}\tildetr, \frac{\al}{\lvert u^\prime \rvert}\chibarhat, \frac{\al}{\lvert u^\prime \rvert}\alphabar_F) \rVert_{\mathcal{L}^{2}_{(sc)}(S_{u^{\prime},\ubar})}\text{d}u^\prime
		\\ \lesssim& O^2[\chibarhat]+ \frac{a^2}{\lvert u \rvert^3}O^2[\alphabar_F]+1 \lesssim 1.
		\end{align*}There holds  \begin{equation*} 
		\begin{split}
		\mathcal{I}_2=&\int_{u_{\infty}}^u \f{a^2}{|u'|^3}\|(\at)^i \sum_{i_1+i_2+i_3+i_4=i}\nab^{i_1}\p^{i_2}\nab^{i_3}\omb\nab^{i_4}\tr\chib\|_{\mathcal{L}^2_{{\color{black}(sc)}}(S_{u',\ub})}du'\\
		=&\int_{u_{\infty}}^u \f{a}{|u'|^2}\|(\at)^{i} \sum_{i_1+i_2+i_3+i_4+1=i}\nab^{i_1}\p^{i_2}\nab^{i_3}\omb\nab^{i_4+1}(\f{a}{|u'|}\tc)\|_{\mathcal{L}^2_{{\color{black}(sc)}}(S_{u',\ub})}du'\\
		&+\int_{u_{\infty}}^u \f{a}{|u'|}\|(\at)^{i} \sum_{i_1+i_2+i_3=i}\nab^{i_1}\p^{i_2}\nab^{i_3}\omb\cdot(\f{a}{|u'|^2}\tr\chib)\|_{\mathcal{L}^2_{{\color{black}(sc)}}(S_{u',\ub})}du'\\
		\leq& \int_{u_{\infty}}^u \f{a}{|u'|}\cdot\f{1}{|u'|}\cdot \big (O[\omb]\cdot O_{\infty}[\tr\chibar]\big) \, du' \quad \\
		\ls& O[\omb]O_{\infty}[\tr\chibar]+1\ls \mathcal{R}[\rho]+1  \quad (\mbox{by Proposition \ref{omegabarprop} and the fact 
			 that $\frac{a}{\lvert u \rvert^2} \scaleinfinitySu{\tr\chibar} \lesssim 1$}).
		\end{split}
		\end{equation*}
	There also holds	\begin{equation*}
		\begin{split}
		\mathcal{I}_3=&\int_{u_{\infty}}^u \f{a^2}{|u'|^3}\|(\at)^i \sum_{i_1+i_2+i_3=i}\nab^{i_1}\p^{i_2}\nab^{i_3}(\f{\O^{-1}-1}{|u'|^2})\|_{\mathcal{L}^2_{{\color{black}(sc)}}{(S_{u',\ub})}}du'\\
		=&\int_{u_{\infty}}^u |u'|^{i+1} \|\sum_{i_1+i_2+i_3=i}\nab^{i_1}\p^{i_2}\nab^{i_3}(\f{\O^{-1}-1}{|u'|^2})\|_{L^2{(S_{u',\ub})}}du' \quad (\mbox{in standard norms})\\
		=&\int_{u_{\infty}}^u |u'|^{i+1} \|\sum_{i_1+i_2+i_3=i}\nab^{i_1}\p^{i_2}\nab^{i_3}(\f{\O^{-1}-1}{|u'|^2})\|_{L^2{(S_{u',\ub})}}du' \,\, (\mbox{Using} \,\, \f{\partial}{\partial \ub}\O^{-1}=2\o \Leftrightarrow \nab_4\O^{-1}=2\O^{-1}\o)\\
		=&\int_{u_{\infty}}^u |u'|^{i+1} \|\sum_{i_1+i_2+i_3=i}\nab^{i_1}\p^{i_2}\nab^{i_3}[\f{1}{|u'|^2}\cdot \int_0^{\ub}2\o(u',\ub',\theta^1, \theta^2)d\ub']\|_{L^2{(S_{u',\ub})}}du' \\
		=&\int_{u_{\infty}}^u |u'|^{i+1} \|\sum_{i_1+i_2+i_3=i}\nab^{i_1}\p^{i_2}[\f{1}{|u'|^2}\cdot \int_0^{\ub}2\nab^{i_3}\o(u',\ub',\theta^1, \theta^2)d\ub']\|_{L^2{(S_{u',\ub})}}du' \\
		\leq&  |u'|^{i+1} \|\sum_{i_1+i_2+i_3=i}\f{1}{|u'|^{i_1+i_2}}\cdot\f{1}{|u'|^2}\cdot\f{1}{|u'|^{i_3}}\cdot \f{\at}{|u'|^{\f12}}\cdot \big(\underline{\mathcal{R}}[\rho]+1\big)\, du'    \\
		\leq& \int_{u_{\infty}}^u \f{\at}{|u'|^{\f32}} \big(\underline{\mathcal{R}}[\rho]+1\big)\,du' \ls \underline{\mathcal{R}}[\rho]+1.
		\end{split}
		\end{equation*}\vspace{3mm}
		
		\par \noindent Finally, there holds \begin{equation*} 
		\begin{split}
		\mathcal{I}_4=&\int_{u_{\infty}}^u \f{a^2}{|u'|^3}\|(\at)^i \sum_{i_1+i_2+i_3=i-1}\nab^{i_1}\p^{i_2+1}\cdot\tr\chib\cdot\nab^{i_3}\tc\|_{\mathcal{L}^2_{sc}(S_{u',\ub})}du'\\
		=&\int_{u_{\infty}}^u \at \|(\at)^{i-1} \sum_{i_1+i_2+i_3=i-1}\nab^{i_1}\p^{i_2+1}\cdot\f{a}{|u'|^2}\tr\chib\cdot\nab^{i_3}(\f{a}{|u'|}\tc)\|_{\mathcal{L}^2_{sc}(S_{u',\ub})}du'\\
		\leq& \int_{u_{\infty}}^u \at\cdot \f{O^3}{|u'|^2} du'\leq 1 \quad  (\mbox{by Proposition \ref{omegafirstequation}}).
		\end{split}
		\end{equation*}
		In summary, we have obtained
		$$\sum_{i\leq 10} \f{a}{|u|}\|(\at\nab)^i (\tr\chib+\f{2}{|u|})\|_{{\color{black}\mathcal{L}^2_{(sc)}}(\S)}\ls \M R[\rho]+\underline{\M R}[\rho]+1,$$
		which implies
		 $$\sum_{i \leq 10} \frac{a}{\lvert u \rvert^2}\scaletwoSu{(\al \nabla)^i \tr\chibar} \lesssim \f{\M R[\rho]+\underline{\M R}[\rho]}{|u|}+1. $$

			\end{proof}
	
	\begin{proposition}
		Under the assumptions of Theorem \ref{main1} and the bootstrap assumptions \eqref{bootstrapbounds}, we have\[ \sum_{ i \leq 10}   \scaletwoSu{(\al \nabla)^i \etabar} \lesssim \underline{\mathcal{R}}[\tbetabar]+ \mathcal{R}[\tbeta]+1.  \] 
	\end{proposition}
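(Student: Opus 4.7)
\par \noindent \emph{Proof sketch.} The plan is to mirror the proof of the previous proposition (for $\eta$), but with the incoming direction replacing the outgoing one and leveraging the renormalization $\tbetabar = \betabar + \tfrac12 R_{3(\cdot)}$. Starting from the structure equation \eqref{etabarstructureequation} and substituting $\betabar = \tbetabar - \tfrac12 R_{3(\cdot)}$, then using the identity $R_{a3} = -\rho_F(\alphabar_F)_a - \sigma_F\epsilon_{ab}(\alphabar_F)_b$, whose right-hand side is schematically of the form $\Y\cdot \Y$, one arrives at the schematic equation
\[
\nabla_3 \etabar \;=\; \tbetabar \;+\; \psi \cdot (\psi, \chibarhat, \tr\chibar) \;+\; \Y \cdot \Y,
\]
which parallels the $\nabla_4 \eta$ equation with the transport now along $e_3$.

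Next I would commute with $\nabla^i$ via Proposition \ref{commutationformulaeprop}, obtaining $\nabla_3 \nabla^i \etabar + \tfrac{i}{2}\tr\chibar\,\nabla^i \etabar = F$, where $F$ collects the linear piece $\nabla^i \tbetabar$, Ricci nonlinearities of the schematic form $\nabla^{i_1}\psi^{i_2}\nabla^{i_3}(\psi,\chibarhat,\tildetr)\nabla^{i_4}\psi$ and $\nabla^{i_1}\psi^{i_2+1}\nabla^{i_3}\tr\chibar\nabla^{i_4}\psi$, and the Maxwell quadratic terms $\nabla^{i_1}\psi^{i_2}\nabla^{i_3}\Y\nabla^{i_4}\Y$. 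Applying Proposition \ref{prop37} with $\lambda_0 = i/2$ and translating to scale-invariant norms using $s_2(\etabar)=1/2$, the estimate reduces to a bound on an initial-data contribution plus $u'$-integrals of each of these pieces. The initial data term is controlled by $\mathcal{I}^{(0)}\lesssim 1$. For the linear term, a Cauchy--Schwarz along the incoming direction gives
\[
\int_{u_\infty}^u \tfrac{a}{|u'|^2}\,\lVert (\al\nabla)^i\tbetabar\rVert_{\mathcal{L}^2_{(sc)}(S_{u',\ubar})}\,du' \;\lesssim\; \underline{\mathcal{R}}[\tbetabar],
\]
since $\int_{u_\infty}^u a/|u'|^2\,du' \lesssim 1$ on the slab $|u|\geq a/4$. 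The Ricci nonlinearities are then handled exactly as in Propositions \ref{omegaprop}--\ref{trchibarboundRicci} by invoking Proposition \ref{usefulstatements} together with the already-proved $L^2_{(sc)}$ bounds for $\omega,\omegabar,\chihat,\chibarhat,\tr\chi,\tr\chibar$; the appearance of $\eta$ among the $\psi$'s is precisely what propagates the factor $\mathcal{R}[\tbeta]$ from the previous proposition to the right-hand side of the target estimate. The Maxwell quadratic terms are bounded by the last estimate of Proposition \ref{usefulstatements}, contributing a factor of size $\frac{a}{|u|^2}O^2 \lesssim 1$ which is absorbed into the constant $+1$.

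The main technical obstacle is the borderline contribution of $\tr\chibar\cdot\nabla^i\etabar$, which arises both from the schematic RHS and from the commutator formula. This is precisely the situation motivating Proposition \ref{prop37}: the algebraic relation $\lambda_1 = 2\lambda_0-1$, combined with the decomposition $\tr\chibar = \tildetr - 2/|u|$ and the pointwise control on $\Omega^{-1}-1$ from Proposition \ref{Omega}, produces an $O/|u|^2$ remainder (as in \eqref{trchib additional}) that is absorbed by Gr\"onwall's inequality without any logarithmic loss, yielding the claimed bound.
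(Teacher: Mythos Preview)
Your proposal has a genuine gap in the handling of the borderline term. The structure equation $\nabla_3\etabar_a = -\chibar_{ab}(\etabar-\eta)_b + \betabar_a - \tfrac12 R_{a3}$ contains, after splitting $\chibar = \chibarhat + \tfrac12\tr\chibar\,\gamma$, a term $-\tfrac12\tr\chibar\,\etabar$. If you leave this on the right-hand side (as your schematic $\psi\cdot\tr\chibar$ does) and commute with $\lambda_0 = i/2$, the source $F$ contains $\tr\chibar\cdot\nabla^i\etabar$. After multiplying by the weight $|u|^{\lambda_1}$ with $\lambda_1 = i-1$, this contributes
\[
\int_{u_\infty}^u |u'|^{i-1}\,\|\tr\chibar\,\nabla^i\etabar\|_{L^2(S_{u',\ubar})}\,du' \;\sim\; \int_{u_\infty}^u \frac{2}{|u'|}\cdot|u'|^{i-1}\|\nabla^i\etabar\|_{L^2(S_{u',\ubar})}\,du',
\]
and the Gr\"onwall factor is $\exp\bigl(\int_{u_\infty}^u 2/|u'|\,du'\bigr) = (|u_\infty|/|u|)^2$, which is unbounded as $|u_\infty|\to\infty$. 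The cancellation mechanism in Proposition~\ref{prop37} (the identity \eqref{trchib additional}) only neutralizes the $\lambda_0\tr\chibar\,\phi$ term that sits on the \emph{left}-hand side; it does not touch additional $\tr\chibar\,\phi$ contributions appearing in the source.

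The paper's remedy is to move $\tfrac12\tr\chibar\,\etabar$ to the left before commuting, writing
\[
\nabla_3\etabar + \tfrac12\tr\chibar\,\etabar \;=\; \tbetabar + \tr\chibar\,\eta + \chibarhat\cdot\psi + \Y\cdot\Y,
\]
so that after commutation the coefficient becomes $\tfrac{i+1}{2}$ and Proposition~\ref{prop37} is invoked with $\lambda_0 = (i+1)/2$. The only remaining borderline piece on the right is then $\tr\chibar\cdot\nabla^i\eta$ (not $\etabar$), which is genuinely integrable against the weight and is controlled by the already-proved bound $\scaletwoSu{(\al\nabla)^i\eta}\lesssim \mathcal{R}[\tbeta]+1$; this is the actual source of the $\mathcal{R}[\tbeta]$ in the statement, rather than a generic ``$\eta$ among the $\psi$'s'' contribution.
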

	
	\begin{proof}
		We use the following schematic null structure equation for $\etabar$: 
		\[\nabla_3 \etabar + \frac{1}{2}\tr\chibar\hspace{.5mm} \etabar =  \tbetabar + \tr\chibar \eta + \chibarhat\cdot \psi + \Y \cdot \Y. \]Commuting with $i$ angular derivatives, we have \begin{align*}  &\nabla_3 \nabla^i \etabar + \frac{i+1}{2}\tr\chibar {\color{black}\nab^i \etb} \\
		=&  \nabla^i \tbetabar + \sum_{i_1+i_2+i_3+1=i}\nabla^{i_1}\psi^{i_2+1} \nabla^{i_3} \tbetabar +\tr\chibar \nabla^i \eta + \sum_{i_1+i_2+1= i} \nabla^{i_1+1} \tr\chibar \nabla^{i_2}(\eta,\etabar) \\
		&+  \sum_{i_1 + i_2 +i_3 + i_4 +1 =i} \nabla^{i_1} \psi^{i_2+1} \nabla^{i_3} \psi \nabla^{i_4} \tr\chibar + \sum_{i_1 + i_2 +i_3 + i_4 =i} \nabla^{i_1} \psi^{i_2} \nabla^{i_3}\psi \nabla^{i_4} (\chibarhat, \tildetr)\\
		&+ \sum_{i_1 + i_2 +i_3 + i_4 =i} \nabla^{i_1} \psi^{i_2} \nabla^{i_3}\Y \nabla^{i_4} \Y.   \end{align*} 
		By passing to scale-invariant norms we have 
		
		\begin{align*}
		&\frac{1}{\lvert u \rvert} \scaletwoSu{\aln \etabar} \\ \leq& \frac{1}{\lvert u_{\infty} \rvert} \lVert \aln \etabar \rVert_{\mathcal{L}^2_{(sc)}(u_{\infty},\ubar)} + \int_{u_{\infty}}^u  \frac{a}{\lvert u^\prime \rvert^3} \scaletwoSuprime{\aln  \tbetabar} \hspace{.5mm} \text{d} u^{\prime} \\ 
		&+ \int_{u_{\infty}}^u \frac{a}{\lvert u^{\prime}\rvert^3} \scaletwoSuprime{\sum_{i_1 + i_2 + i_3+1 =i} \aln \nabla^{i_1}\psi^{i_2+1} \nabla^{i_3} \tbetabar} \hspace{.5mm} \text{d} u^\prime + \int_{u_{\infty}}^u  \frac{a}{\lvert u^\prime \rvert^3}  \scaletwoSuprime{ \tr\chibar \aln \eta } \hspace{.5mm} \text{d}u^\prime  \\ 
		&+ \int_{u_{\infty}}^u  \frac{a}{\lvert u^\prime \rvert^3}  \scaletwoSuprime{ \sum_{i_1 + i_2 + 1=i} (\al\nabla)^{i_1+1} \tr\chibar (\al \nabla)^{i_2} \eta } \hspace{.5mm} \text{d}u^\prime \\ 
		&+ \int_{u_{\infty}}^u \frac{a}{\lvert u^\prime \rvert^3} \scaletwoSuprime{ \sum_{i_1 + i_2 + i_3+i_4+1 =i} (\al)^i \nabla^{i_1} \psi^{i_2+1} \nabla^{i_3} \psi \nabla^{i_4} \tr\chibar}\hspace{.5mm} \text{d}u^\prime \\ 
		&+ \int_{u_{\infty}}^u \frac{a}{\lvert u^\prime \rvert^3} \scaletwoSuprime{ \sum_{i_1 + i_2 + i_3+i_4 =i} (\al)^i \nabla^{i_1} \psi^{i_2} \nabla^{i_3} \psi \nabla^{i_4} (\chibarhat, \widetilde{\tr\chibar})}\hspace{.5mm} \text{d}u^\prime	\\
		&+ \int_{u_{\infty}}^u \frac{a}{\lvert u^\prime \rvert^3} \scaletwoSuprime{ \sum_{i_1 + i_2 + i_3+i_4 =i} (\al)^i \nabla^{i_1} \psi^{i_2} \nabla^{i_3} \Y \nabla^{i_4} \Y}\hspace{.5mm} \text{d}u^\prime \\ 
		\leq& \frac{1}{\lvert u_\infty \rvert} + \frac{\underline{\mathcal{R}}[\t\beta] + \mathcal{R}[\t\beta]+1}{\lvert u \rvert} + \int_{u_{\infty}}^u \frac{a}{\lvert u^\prime \rvert^3} \cdot \frac{O^2}{\lvert u^\prime \rvert}\hspace{.5mm} \text{d}u^\prime \lesssim  \frac{\underline{\mathcal{R}}[\t\beta] + \mathcal{R}[\t\beta]+1}{\lvert u \rvert}. 
		\end{align*}

	\end{proof}
	\par \noindent  This concludes the $L^2$-estimates on Ricci coefficients.
	\subsection{$L^2(S_{u,\ubar})$-estimates for the Maxwell components}
	
	\begin{proposition}
		Under the assumptions of Theorem \ref{main1} and the bootstrap assumption \eqref{bootstrapbounds}, we have
		\[ \sum_{i\leq 10} \frac{1}{\al}\scaletwoSu{(\al \nabla)^i \alpha_F} \leq  \underline{\mathcal{F}} [\rho_F,\sigma_F]+ 1.   \] \label{alphaFproposition}
	\end{proposition}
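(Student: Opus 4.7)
My plan is to mirror the strategy used for $\omega$ in Proposition \ref{omegaprop} and for $\etabar$, adapted to the null Maxwell equations. The starting point will be the schematic form of the $\nabla_3$ equation for $\alpha_F$, which I obtain by grouping terms of equal signature in the equation
\[  \nabla_3 \alpha_F+ \tfrac{1}{2}\tr\chibar\, \alpha_F = -\nabla\rho_F+\Hodge{\nabla}\sigma_F -2\Hodge{\etabar}\sigma_F + 2\etabar\rho_F + 2\omegabar\,\alpha_F - \chihat\cdot\alphabar_F,\]
namely $\nabla_3 \alpha_F + \tfrac{1}{2}\tr\chibar\, \alpha_F = \nabla(\rho_F,\sigma_F) + \psi\cdot(\rho_F,\sigma_F) + \psi\cdot\alpha_F + \chihat\cdot\alphabar_F$. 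Commuting with $i$ angular derivatives via Proposition \ref{commutationformulaeprop} and keeping schematic track of the order-$i$ commutator terms, one arrives at a transport equation of the form
\[ \nabla_3 \nabla^i \alpha_F + \tfrac{i+1}{2}\tr\chibar\,\nabla^i\alpha_F = \tilde{F},\]
where $\tilde{F}$ is a finite linear combination of $\nabla^{i+1}(\rho_F,\sigma_F)$, products $\nabla^{i_1}\psi^{i_2}\nabla^{i_3}(\rho_F,\sigma_F)$ and $\nabla^{i_1}\psi^{i_2}\nabla^{i_3}\alpha_F$ with $i_1+i_2+i_3\le i$, together with the commutator-generated cubic terms $\nabla^{i_1}\psi^{i_2}\nabla^{i_3}(\psi,\chibarhat,\widetilde{\tr\chibar})\nabla^{i_4}\alpha_F$, $\nabla^{i_1}\psi^{i_2+1}\nabla^{i_3}\tr\chibar\,\nabla^{i_4}\alpha_F$ (from the $\nabla_3$-commutator) and the cross term $\nabla^{i_1}\psi^{i_2}\nabla^{i_3}\chihat\,\nabla^{i_4}\alphabar_F$.

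Next I apply Proposition \ref{prop37} with $\lambda_0=(i{+}1)/2$, so that $\lambda_1=i$, and rewrite the resulting inequality in scale-invariant norms. Dividing by $\al$ and using $s_2(\nabla_3\nabla^i\alpha_F)=i/2+1$ produces
\[ \tfrac{1}{\al}\scaletwoSu{(\al\nabla)^i \alpha_F} \lesssim \tfrac{1}{\al}\lVert (\al\nabla)^i \alpha_F\rVert_{\mathcal{L}^2_{(sc)}(S_{u_\infty,\ubar})} + \int_{u_\infty}^{u} \tfrac{a^{\frac{1}{2}}}{|u'|^2}\scaletwoSuprime{(\al)^i \tilde{F}}\, du'. \]
The initial data term is controlled by $\mathcal{I}^{(0)}\lesssim 1$. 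The principal term on the right is $\nabla^{i+1}(\rho_F,\sigma_F)$; rewriting it as $\tfrac{1}{\al}(\al\nabla)^{i+1}(\rho_F,\sigma_F)$ and applying Cauchy--Schwarz against the integration measure $a/|u'|^2$ gives
\[ \int_{u_\infty}^u \tfrac{1}{|u'|^2}\scaletwoSuprime{(\al\nabla)^{i+1}(\rho_F,\sigma_F)}\, du' \;\leq\; \Big(\int_{u_\infty}^u\tfrac{1}{|u'|^2}du'\Big)^{\!1/2}\,\underline{\mathcal{F}}[\rho_F,\sigma_F] \;\lesssim\; \underline{\mathcal{F}}[\rho_F,\sigma_F],\]
which is exactly the borderline contribution advertised in the statement.

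It remains to show that every other term in $\al^{-1}\int \frac{a^{1/2}}{|u'|^2}\scaletwoSuprime{(\al)^i \tilde F}\, du'$ is bounded by a universal constant after using the bootstrap \eqref{bootstrapbounds}. The products $\nabla^{i_1}\psi^{i_2}\nabla^{i_3}(\rho_F,\sigma_F,\alpha_F)$ are absorbed by invoking Proposition \ref{usefulstatements}, which furnishes the bound $O\cdot F/|u'|$ or better with an extra factor of $\al/|u'|$ from the renormalization $\frac{a}{|u'|^2}\tr\chibar, \frac{\al}{|u'|}\chibarhat$, etc.; the $u'$-integral then generates a factor of $a^{1/2}/|u|$ that makes every such contribution $\lesssim 1$ after using $(O+F)^{20}\le a^{1/16}$. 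The potentially awkward cross term $\chihat\cdot\alphabar_F$ is handled by placing $\chihat$ in $\mathcal{L}^\infty_{(sc)}$ (where it gains $\al$-smallness because the natural norm is $\frac{1}{\al}\scaleinfinitySu{\chihat}$) and $\alphabar_F$ in $\mathcal{L}^2_{(sc)}$ via the bootstrap on $\mathcal{F}_{i,2}$. The main technical obstacle I anticipate is precisely this bookkeeping of $a$-weights for the cubic and $\alphabar_F$-cross terms: one must carefully check that each contribution has at least one factor that is smaller than $\al$ so that, after integrating against $a^{1/2}/|u'|^2$, the whole right-hand side closes as $\underline{\mathcal{F}}[\rho_F,\sigma_F]+1$ rather than simply $\underline{\mathcal{F}}[\rho_F,\sigma_F]+F/\al^{1/2}$, and for this I expect to rely, as in Propositions \ref{chihatproposition} and \ref{chibarhatproposition}, on $\al\ll 1$ together with the rescaled quantities tabulated in Section \ref{definitionsignaturessection}.
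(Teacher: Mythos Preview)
Your approach is essentially identical to the paper's: start from the $\nabla_3$ transport equation for $\alpha_F$, commute $i$ times, apply Proposition~\ref{prop37} with $\lambda_0=(i+1)/2$, translate into scale-invariant norms, isolate the principal term $\nabla^{i+1}(\rho_F,\sigma_F)$ via Cauchy--Schwarz against the $\Hbar$-norm (yielding $\underline{\mathcal F}[\rho_F,\sigma_F]$), and absorb all remaining products by Proposition~\ref{usefulstatements}. The paper carries out exactly these steps, with the same schematic grouping of $(\psi,\chihat)\cdot\Upsilon$ for the cross term $\chihat\cdot\alphabar_F$.

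One correction to your final paragraph: the claim ``$\al\ll 1$'' is backwards. Throughout the paper $a$ is a \emph{large} parameter, so $\al=a^{1/2}\gg 1$. The smallness that closes the lower-order terms comes instead from $|u|\ge a/4$, which yields $\frac{\al}{|u|}\lesssim a^{-1/2}$ and $\frac{a}{|u|^2}\lesssim a^{-1}$; after integrating $\frac{\al}{|u'|^2}$ in $u'$ you pick up a factor $\frac{\al}{|u|}\lesssim a^{-1/2}$, and this is what beats the bootstrap constants $O,F$ (cf.\ the condition $(O+R+F)^{20}\le a^{1/16}$). Your outlined estimates already use this mechanism implicitly, so the argument goes through; only the verbal explanation of where the gain originates needs to be fixed.
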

	\begin{proof}
		We have the schematic equation
		\[ \nabla_3 \alpha_F+ \frac{1}{2}\tr\chibar \alpha_F = \nabla (\rho_F,\sigma_F) + \psi \cdot (\rho_F, \sigma_F) + \chihat \cdot \alphabar_F + \psi\cdot \alpha_F.       \]Commuting with $i$ angular derivatives, we get
		
		\begin{align*}
		&\nabla_3 \nabla^i \alpha_F + \frac{i+1}{2}\tr\chibar \alpha_F\\
		=& \nabla^{i+1}(\rho_F,\sigma_F) + \sum_{i_1+i_2+i_3+1= i} \nabla^{i_1}\psi^{i_2+1}\nabla^{i_3+1}(\rho_F,\sigma_F) \\ &+ \sum_{i_1+i_2+1=i} \nabla^{i_1+1}\tr\chibar \nabla^{i_2}\alpha_F + \sum_{i_1+i_2=i}\nabla^{i_1}\chibarhat \nabla^{i_2}\alpha_F  + \sum_{i_1 + i_2 +i_3 + i_4 =i} \nabla^{i_1}\psi^{i_2}\nabla^{i_3} \psi\nabla^{i_4}(\rho_F,\sigma_F) \\ &+ \sum_{i_1 + i_2 +i_3 + i_4 =i} \nabla^{i_1}\psi^{i_2}\nabla^{i_3} \chihat \nabla^{i_4}\alphabar_F + \sum_{i_1+i_2+i_3+i_4=i} \nabla^{i_1}\psi^{i_2}\nabla^{i_3}\psi \nabla^{i_4}\alpha_F \\
		&+ \sum_{i_1 + i_2 +i_3 + i_4 +1 =i} \nabla^{i_1}\psi^{i_2+1}\nabla^{i_3}\tr\chibar \nabla^{i_4}\alpha_F + \sum_{i_1 + i_2 +i_3 + i_4=i} \nabla^{i_1}\psi^{i_2}\nabla^{i_3}(\psi,\chibarhat,\tildetr)\nabla^{i_4}\alpha_F.
		\end{align*} Denote the right-hand side of the above as $G$. We then have 
		
		\[a^{-\frac{1}{2}} \lvert u \rvert^i \lVert \nabla^i \alpha_F \rVert_{L^{2}(S_{u,\ubar})} \leq  a^{-\frac{1}{2}} \lvert u_{\infty} \rvert^i \lVert  \nabla^i \alpha_F \rVert_{L^{2}(S_{u_{\infty},\ubar})}  + \int_{u_{\infty}}^u a^{-\frac{1}{2}} \lvert u^\prime \rvert^i \lVert G \rVert_{L^{2}(S_{u^\prime,\ubar})}\text{d}u^\prime.         \]From the signature table one can read off \[  s_2(\alpha_F)=0 \Rightarrow s_2(\nabla^i \alpha_F)= 0+ i\cdot \frac{1}{2} = \frac{i}{2}.     \]By conservation of signatures, \[ s_2(G) = s_2(\nabla_3 \nabla^i \alpha_F) = \frac{i+2}{2}.    \]Taking into account now that \[ \scaletwoSu{\phi} := a^{-s_2(\phi)}\lvert u \rvert^{2s_2(\phi)} \lVert \phi \rVert_{L^{2}(S_{u,\ubar})},       \]we can conclude that \[ a^{-\frac{1}{2}} \lvert u \rvert^i \lVert \nabla^i \alpha_F \rVert_{L^{2}(S_{u,\ubar})} = a^{-\frac{1}{2}}\scaletwoSu{(\al \nabla)^i \alpha_F} , \hspace{2mm} a^{-\frac{1}{2}} \lvert u \rvert^i \lVert G \rVert_{L^{2}(S_{u,\ubar})} = \frac{\al}{\lvert u \rvert^2}\scaletwoSu{(\al)^i G}.   \]Therefore, \begin{align*} &a^{-\frac{1}{2}} \lvert u \rvert^i \lVert \nabla^i \alpha_F \rVert_{L^{2}(S_{u,\ubar})}  \\ 
		\leq& a^{-\frac{1}{2}} \lvert u \rvert^i \lVert \nabla^i \alpha_F \rVert_{L^{2}(S_{u_{\infty},\ubar})} + \int_{u_{\infty}}^u \frac{\al}{\lvert u^\prime \rvert^2} \scaletwoSuprime{(\al)^i \nabla^{i+1}(\rho_F,\sigma_F)} \text{d}u^\prime \\
		&+ \int_{u_{\infty}}^u  \frac{\al}{\lvert u^\prime \rvert^2} \scaletwoSuprime{\sum_{i_1+i_2+i_3+1= i} ( \al)^i \nabla^{i_1} \psi^{i_2+1}\nabla^{i_3+1}(\rho_F,\sigma_F)} \text{d}u^\prime \\
		&+ \int_{u_{\infty}}^u  \frac{\al}{\lvert u^\prime \rvert^2} \scaletwoSuprime{  \sum_{i_1+i_2+1=i} (\al)^i  \nabla^{i_1+1}\tr\chibar \nabla^{i_2} \alpha_F} \text{d}u^\prime \\
		&+\int_{u_{\infty}}^u  \frac{\al}{\lvert u^\prime \rvert^2} \scaletwoSuprime{  \sum_{i_1+i_2=i} (\al)^i  \nabla^{i_1}\chibarhat \nabla^{i_2} \alpha_F} \text{d}u^\prime\\  &+ \int_{u_{\infty}}^u  \frac{\al}{\lvert u^\prime \rvert^2} \scaletwoSuprime{ (\al)^i \sum_{i_1+i_2+i_3+i_4 =i} \nabla^{i_1} \psi^{i_2} \nabla^{i_3}\psi \nabla^{i_4} (\rho_F,\sigma_F) } \text{d}u^\prime \\  &+ \int_{u_{\infty}}^u  \frac{\al}{\lvert u^\prime \rvert^2} \scaletwoSuprime{ (\al)^i \sum_{i_1+i_2+i_3+i_4 =i} \nabla^{i_1} \psi^{i_2} \nabla^{i_3}\chihat \nabla^{i_4} \alphabar_F } \text{d}u^\prime\\
		&+ \int_{u_{\infty}}^u  \frac{\al}{\lvert u^\prime \rvert^2} \scaletwoSuprime{(\al)^i \sum_{i_1 + i_2 +i_3 + i_4 =i} \nabla^{i_1} \psi^{i_2} \nabla^{i_3}\psi \nabla^{i_4}\alpha_F     }\text{d}u^\prime \\
		&+ \int_{u_{\infty}}^u  \frac{\al}{\lvert u^\prime \rvert^2} \scaletwoSuprime{(\al)^i \sum_{i_1 + i_2 +i_3 + i_4 +1 =i} \nabla^{i_1} \psi^{i_2+1} \nabla^{i_3}\tr\chibar \nabla^{i_4}\alpha_F      }\text{d}u^\prime + \\
		&+ \int_{u_{\infty}}^u  \frac{\al}{\lvert u^\prime \rvert^2} \scaletwoSuprime{(\al)^i \sum_{i_1 + i_2 +i_3 + i_4 =i} \nabla^{i_1}\psi^{i_2}\nabla^{i_3}(\psi,\chibarhat, \tildetr)\nabla^{i_4}\alpha_F} \text{d}u^\prime.          \end{align*}For the first term, we have \[  a^{-\frac{1}{2}} \lvert u \rvert^i \lVert \nabla^i \alpha_F \rVert_{L^{2}(S_{u_{\infty},\ubar})} \leq \mathcal{I}^{(0)}(\ubar) \lesssim 1.    \]
		
		\par \noindent 	For the two terms involving the highest number of derivatives,  we have 
		
		\begin{align*}
		&\int_{u_{\infty}}^u \frac{\al}{\lvert u^\prime \rvert^2} \scaletwoSuprime{(\al)^i \nabla^{i+1}(\rho_F,\sigma_F)} \text{d}u^\prime  + \int_{u_{\infty}}^u  \frac{\al}{\lvert u^\prime \rvert^2} \scaletwoSuprime{\sum_{i_1+i_2+i_3+1= i} ( \al)^i \nabla^{i_1} \psi^{i_2+1}\nabla^{i_3+1}\Y} \text{d}u^\prime \\
		\leq&   \left(  \int_{u_{\infty}}^u \frac{a}{\lvert u^\prime \rvert^2} \scaletwoSuprime{(\al)^i \nabla^{i+1}(\rho_F,\sigma_F)}^2 \text{d}u^\prime          \right)^{\frac{1}{2}} \left( \int_{u_{\infty}}^u \frac{1}{ \lvert  u^\prime \rvert^2}\text{d}u^\prime    \right)^{\frac{1}{2}} + \int_{u_{\infty}}^u \frac{1}{\lvert u^\prime \rvert^2} \cdot \frac{\al}{\lvert  u^{\prime}\rvert}  \cdot O^2\hspace{.5mm} \text{d}u^\prime\\
		=& \scaletwoHbaru{(\al)^i \nabla^{i+1}(\rho_F,\sigma_F)}\cdot \frac{1}{ \lvert u \rvert^{\frac{1}{2}}} +\frac{\al}{\lvert u \rvert^2}\cdot O^2 \\
		=& a^{-\frac{1}{2}} \scaletwoHbaru{(\al)^i \nabla^{i+1}(\rho_F,\sigma_F)}\cdot \frac{\al}{\lvert u \rvert^{\frac{1}{2}}} + \frac{\al}{\lvert u \rvert^2}\cdot O^2\\  \leq&  \underline{\mathcal{F}}[\rho_F,\sigma_F]\cdot \frac{\al}{\lvert u \rvert^{\frac{1}{2}}}+1 \lesssim \underline{\mathcal{F}}[\rho_F,\sigma_F]+1.
		\end{align*}For the next two terms, we have \begin{align*} &\int_{u_{\infty}}^u  \frac{\al}{\lvert u^\prime \rvert^2} \ScaletwoSuprime{  \sum_{i_1+i_2+1=i} (\al)^i  \nabla^{i_1+1}\tr\chibar \nabla^{i_2} \alpha_F} \text{d}u^\prime   \\
		&+   \int_{u_{\infty}}^u  \frac{\al}{\lvert u^\prime \rvert^2} \ScaletwoSuprime{\sum_{i_1+i_2= i} ( \al)^i \nabla^{i_1} \chibarhat \nabla^{i_2}\alpha_F} \text{d}u^\prime \\
		\leq& \int_{u_{\infty}}^u \frac{1}{\lvert u^\prime \rvert} \ScaletwoSuprime{ \sum_{i_1+i_2+1=i}(\al)^{i}\nabla^{i_1+1}\left(\frac{a}{\lvert u^\prime \rvert}\big(\tr\chibar+ \frac{2}{\lvert u^\prime \rvert}\big)\right)  \nabla^{i_2}\left( \frac{\alpha_F}{\al}\right)   }\hspace{.5mm} \text{d}u^\prime \\
		&+ \int_{u_{\infty}}^u  \frac{\al}{\lvert u^\prime \rvert} \ScaletwoSuprime{  \sum_{i_1+i_2=i} (\al)^{i}  \nabla^{i_1}\big(\frac{\al}{\lvert u^\prime \rvert}\chibarhat \big)\nabla^{i_2} \left( \frac{\alpha_F}{\al}\right)} \text{d}u^\prime\\
		\leq&  \int_{u_{\infty}}^u  \frac{1}{\lvert u^\prime \rvert}\cdot \frac{O^2}{\lvert u^\prime \rvert}    \hspace{.5mm}\text{d}u^\prime + \int_{u_{\infty}}^u \frac{\al}{\lvert u^\prime \rvert}  \cdot \frac{O^2}{\lvert u^\prime \rvert } \text{d}u^\prime \leq  1. \end{align*}
		
		\par \noindent For the sixth term, notice

		\begin{align*}
		&\int_{u_{\infty}}^u  \frac{\al}{\lvert u^\prime \rvert^2} \scaletwoSuprime{ (\al)^i \sum_{i_1+i_2+i_3+i_4 =i} \nabla^{i_1} \psi^{i_2} \nabla^{i_3}(\psi,\chihat) \nabla^{i_4} \Y } \text{d}u^\prime \\ \leq&
		\int_{u_{\infty}}^u  \frac{a}{\lvert u^\prime \rvert^2} \scaletwoSuprime{ (\al)^{i} \sum_{i_1+i_2+i_3+i_4 =i} \nabla^{i_1} \psi^{i_2} \nabla^{i_3}(\frac{\psi}{\al},\frac{\chihat}{\al}) \nabla^{i_4} \Y } \text{d}u^\prime\\  \leq&
		\int_{u_{\infty}}^u  \frac{a}{\lvert u^\prime \rvert^2}\cdot \frac{\al}{\lvert u^\prime \rvert}O^2\hspace{.5mm} \text{d}u^{\prime} \leq \frac{a^{\frac{3}{2}} O^2}{\lvert u \rvert^2}\leq 1.
		\end{align*}The seventh term can be absorbed schematically under the last term.  For the last two terms, we can write

		\begin{align*} 
		&\int_{u_{\infty}}^u  \frac{\al}{\lvert u^\prime \rvert^2} \scaletwoSuprime{(\al)^i \sum_{i_1 + i_2 +i_3 + i_4 +1 =i} \nabla^{i_1} \psi^{i_2+1} \nabla^{i_3}\tr\chibar \nabla^{i_4}\alpha_F      }\text{d}u^\prime \\
		&+ \int_{u_{\infty}}^u  \frac{\al}{\lvert u^\prime \rvert^2} \scaletwoSuprime{(\al)^i \sum_{i_1 + i_2 +i_3 + i_4 =i} \nabla^{i_1}\psi^{i_2}\nabla^{i_3}(\psi,\chibarhat, \tildetr)\nabla^{i_4}\alpha_F} \text{d}u^\prime \\  =& \int_{u_{\infty}}^u   \ScaletwoSuprime{(\al)^i \sum_{i_1 + i_2 +i_3 + i_4 +1 =i} \nabla^{i_1} \psi^{i_2+1} \nabla^{i_3}\left( \frac{a}{\lvert u^\prime \rvert^2}\tr\chibar \right) \nabla^{i_4}\left(\frac{\alpha_F}{\al}     \right) }\text{d}u^\prime
		\\      &+ \int_{u_{\infty}}^u  \frac{\al}{\lvert u^\prime \rvert} \ScaletwoSuprime{(\al)^i \sum_{i_1 + i_2 +i_3 + i_4 =i} \nabla^{i_1}\psi^{i_2}\nabla^{i_3}\left(\frac{\al}{\lvert u^\prime \rvert}\psi,\frac{\al}{\lvert u^\prime \rvert}\chibarhat,\frac{\al}{\lvert u^\prime \rvert} \tildetr\right))\nabla^{i_4}\left(\frac{\alpha_F}{\al}\right)} \text{d}u^\prime  \\ \leq& \int_{u_{\infty}}^{u} \frac{O^3}{\lvert u^{\prime} \rvert^2} \hspace{.5mm} \text{d}u^{\prime} + \int_{u_{\infty}}^{u} \frac{\al}{\lvert u^{\prime}\rvert}\cdot \frac{O^2}{\lvert u^{\prime} \rvert}\hspace{.5mm} \text{d} u^{\prime}         \leq \frac{O^3}{\lvert u \rvert} + \frac{\al\cdot O^2}{\lvert u \rvert}\leq 1.
		\end{align*}
		\par 
		
	\end{proof}
	
	\begin{proposition}
		Under the assumptions of Theorem \ref{main1} and the bootstrap assumptions \ref{bootstrapbounds}, there holds \[ \sum_{i \leq 10} \scaletwoSu{\aln (\rho_F,\sigma_F)}\lesssim \underline{\mathcal{F}}[\alphabar_F]+1.      \] \label{rhoFsphereestimates}
	\end{proposition}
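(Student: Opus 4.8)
The plan is to mimic the structure of the proof of Proposition \ref{alphaFproposition}, now working with the pair $(\rho_F, \sigma_F)$ in the $\nabla_3$-direction. First I would record the relevant null Maxwell equations in schematic form: from $\nabla_3 \rho_F + \tr\chibar\hspace{.5mm}\rho_F = -\text{div}\hspace{.5mm}\alphabar_F + (\eta-\etabar)\cdot \alphabar_F$ and $\nabla_3 \sigma_F + \tr\chibar\hspace{.5mm}\sigma_F = -\text{curl}\hspace{.5mm}\alphabar_F + (\eta-\etabar)\cdot \Hodge{\alphabar_F}$, we obtain the single schematic equation
\[ \nabla_3 (\rho_F,\sigma_F) + \tr\chibar \hspace{.5mm}(\rho_F,\sigma_F) = \nabla \alphabar_F + \psi \cdot \alphabar_F. \]
Then I would commute this with $i$ angular derivatives using Proposition \ref{commutationformulaeprop}. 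The commutator generates, as in the $\alpha_F$ case, the extra terms $\sum \nabla^{i_1}\psi^{i_2}\nabla^{i_3}(\psi,\chibarhat,\tildetr)\nabla^{i_4}(\rho_F,\sigma_F)$ and $\sum \nabla^{i_1}\psi^{i_2+1}\nabla^{i_3}\tr\chibar\nabla^{i_4}(\rho_F,\sigma_F)$, together with the differentiated right-hand side $\nabla^{i+1}\alphabar_F + \sum \nabla^{i_1}\psi^{i_2+1}\nabla^{i_3+1}\alphabar_F + \sum \nabla^{i_1}\psi^{i_2}\nabla^{i_3}\psi\nabla^{i_4}\alphabar_F$. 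Denoting the collected right-hand side by $G$, the transport estimate along the incoming direction (Proposition \ref{prop37}, applied with $\lambda_0 = \frac{1}{2}+\frac{i}{2}$, hence $\lambda_1 = i$) yields
\[ \lvert u\rvert^i \lVert \nabla^i (\rho_F,\sigma_F)\rVert_{L^2(S_{u,\ubar})} \lesssim \lvert u_\infty\rvert^i \lVert \nabla^i (\rho_F,\sigma_F)\rVert_{L^2(S_{u_\infty,\ubar})} + \int_{u_\infty}^u \lvert u'\rvert^i \lVert G\rVert_{L^2(S_{u',\ubar})}\hspace{.5mm}\text{d}u'. \]

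Next I would convert to scale-invariant norms. From the signature table $s_2(\rho_F) = s_2(\sigma_F) = \frac{1}{2}$ so $s_2(\nabla^i(\rho_F,\sigma_F)) = \frac{i+1}{2}$ and $s_2(G) = \frac{i+3}{2}$, giving the weight $\frac{a}{\lvert u'\rvert^2}$ in front of the $G$-integral exactly as in Proposition \ref{alphaFproposition}. The leading term $\int_{u_\infty}^u \frac{a}{\lvert u'\rvert^2}\scaletwoSuprime{(\al)^i \nabla^{i+1}\alphabar_F}\hspace{.5mm}\text{d}u'$ is handled by Cauchy–Schwarz in $u'$: it is bounded by $\scaletwoHbaru{(\al)^i\nabla^{i+1}\alphabar_F}\cdot\left(\int_{u_\infty}^u \frac{a}{\lvert u'\rvert^2}\text{d}u'\right)^{1/2} \lesssim \underline{\mathcal{F}}[\alphabar_F]\cdot \frac{\al}{\lvert u\rvert^{1/2}} \lesssim \underline{\mathcal{F}}[\alphabar_F] + 1$, using $\int_{u_\infty}^u \frac{a}{\lvert u'\rvert^2}\text{d}u' \lesssim \frac{a}{\lvert u\rvert}$ and $\frac{a}{\lvert u\rvert}\leq 1$ in the region of study. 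The initial-data term is bounded by $\mathcal{I}^{(0)}(\ubar)\lesssim 1$. Every remaining term on the right-hand side is a product of at least two factors, each carrying at least one copy of a Ricci coefficient $\psi$ (or $\chibarhat,\tildetr$) or of a Maxwell component; applying the scale-invariant Hölder inequalities and Proposition \ref{usefulstatements} (together with the already-established estimates in Propositions \ref{chibarhatproposition}, \ref{trchibarboundRicci}, \ref{alphaFproposition}), each such term gains a factor $\frac{1}{\lvert u\rvert}$ and contributes $\lesssim \int_{u_\infty}^u \frac{a}{\lvert u'\rvert^2}\cdot\frac{O^{\#}}{\lvert u'\rvert}\hspace{.5mm}\text{d}u' \lesssim \frac{O^{\#}}{\lvert u\rvert}\leq 1$ for $a$ sufficiently large. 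Summing over $i\leq 10$ then gives the claimed bound.

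The main obstacle, as always in these arguments, is bookkeeping rather than any genuine analytic difficulty: one must be careful that the term $\sum \nabla^{i_1}\psi^{i_2+1}\nabla^{i_3}\tr\chibar\nabla^{i_4}(\rho_F,\sigma_F)$ — the one with a bare $\tr\chibar$ rather than $\tildetr$ — does not create a borderline contribution. The resolution is the same as in Proposition \ref{alphaFproposition}: rewrite $\tr\chibar$ in terms of $\frac{a}{\lvert u'\rvert^2}\tr\chibar$, absorbing the $\frac{a}{\lvert u'\rvert^2}$ weight, so that the $L^\infty$ bound $\frac{a}{\lvert u'\rvert^2}\scaleinfinitySu{\tr\chibar}\lesssim 1$ applies; this, together with redistributing powers of $\frac{\al}{\lvert u'\rvert}$ among the other factors, keeps the $u'$-integral of the form $\int_{u_\infty}^u \al\cdot\frac{O^{\#}}{\lvert u'\rvert^2}\text{d}u'$, which is $\lesssim \frac{\al\cdot O^{\#}}{\lvert u\rvert}\leq 1$. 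No step requires anything beyond the transport estimate, the commutation formula, the scale-invariant Hölder inequalities, and Proposition \ref{usefulstatements}.
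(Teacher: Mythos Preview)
Your approach is exactly the one the paper uses, and the handling of the lower-order terms via Proposition~\ref{usefulstatements} and of the leading $\nabla^{i+1}\alphabar_F$ term via Cauchy--Schwarz in $u'$ is correct. There is, however, one slip that is not merely cosmetic. The null Maxwell equations give $\nabla_3(\rho_F,\sigma_F)+\tr\chibar\,(\rho_F,\sigma_F)=\ldots$ with coefficient $1$ (not $\tfrac12$), so after commuting $i$ times the equation reads
\[
\nabla_3\nabla^i(\rho_F,\sigma_F)+\tfrac{i+2}{2}\,\tr\chibar\,\nabla^i(\rho_F,\sigma_F)=G,
\]
and Proposition~\ref{prop37} must be invoked with $\lambda_0=\tfrac{i+2}{2}$, $\lambda_1=i+1$, as the paper does. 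If you force $\lambda_0=\tfrac{i+1}{2}$, the residual $\tfrac12\tr\chibar\,\nabla^i(\rho_F,\sigma_F)$ lands in $G$; in scale-invariant form this contributes $\int_{u_\infty}^u |u'|^{-1}\|(\al\nabla)^i(\rho_F,\sigma_F)\|_{\mathcal{L}^2_{(sc)}(S_{u',\ubar})}\,du'$, whose Gr\"onwall factor $\exp\bigl(\int_{u_\infty}^u |u'|^{-1}du'\bigr)=|u_\infty|/|u|$ is not uniformly bounded as $|u_\infty|\to\infty$. With the correct $\lambda_0$ the signature computation you give ($s_2(\nabla^i(\rho_F,\sigma_F))=\tfrac{i+1}{2}$, $s_2(G)=\tfrac{i+3}{2}$) still yields the weight $\frac{a}{|u'|^2}$, and everything else you wrote goes through verbatim. (Incidentally, the weight is not literally ``exactly as in Proposition~\ref{alphaFproposition}'': there the coefficient is $\tfrac12$, $\lambda_1=i$, and the weight is $\frac{\al}{|u'|^2}$ with an overall $a^{-1/2}$ on the left.)
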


	\begin{proof}
		We have the following equations along the incoming direction:
		\begin{gather}
		\nabla_3 \rho_F +\tr\chibar \hspace{.5mm} \rho_F = \text{div}\hspace{.5mm} \alphabar_F+ (\eta-\etabar)\cdot \alphabar_F, \\	\nabla_3 \sigma_F +\tr\chibar \hspace{.5mm} \sigma_F = -\text{curl}\hspace{.5mm} \alphabar_F+ (\eta-\etabar)\cdot \Hodge{\alphabar_F}.
		\end{gather}Schematically, we can rewrite the above as \[ \nabla_3(\rho_F,\sigma_F)+\tr\chibar(\rho_F,\sigma_F) = \nabla \alphabar_F + \psi \cdot \Y.    \]Commuting with $i$ angular derivatives, we arrive at \begin{align*}
		&\nabla_3 \nabla^i (\rho_F,\sigma_F) + \frac{i+2}{2}\tr\chibar \nabla^{i}(\rho_F,\sigma_F) \\ =& \nabla^{i+1}\alphabar_F + \sum_{i_1+i_2+i_3+1 =i}\nabla^{i_1}\psi^{i_2+1}\nabla^{i_3+1}\alphabar_F  + \sum_{i_1+i_2+i_3+i_4=i}\nabla^{i_1}\psi^{i_2}\nabla^{i_3}\psi \nabla^{i_4}\Y  + \sum_{i_1+i_2+1=i}\nabla^{i_1+1}\tr\chibar \nabla^{i_2}\alphabar_F \\ &+\sum_{i_1+i_2+i_3+i_4+1 =i} \nabla^{i_1}\psi^{i_2+1}\nabla^{i_3}\tr\chibar \nabla^{i_4}(\rho_F,\sigma_F)+ \sum_{i_1+i_2+i_3+i_4=i}\nabla^{i_1}\psi^{i_2}\nabla^{i_3}(\psi,\chihat,\tildetr)\nabla^{i_4}(\rho_F,\sigma_F)\\ :=& G.
		\end{align*}Applying Proposition \ref{prop37} with $\lambda_1 = i+1$, we have
		
		\[ \lvert u \rvert^{i+1} \twoSu{\nabla^i (\rho_F,\sigma_F)} \lesssim \lvert u_{\infty} \rvert^{i+1}\lVert        \nabla^i (\rho_F,\sigma_F) \rvert_{L^2(S_{u_{\infty},\ubar})}   + \intu \lvert u^\prime \rvert^{i+1} \lVert G \rVert_{L^2(S_{u^\prime,\ubar})}\duprime .     \]We have \[ s_2\left(\nabla^{i}(\rho_F,\sigma_F)\right) = \frac{i+1}{2}, \hspace{.5mm} s_2(G) = \frac{i+3}{2}.   \]Therefore, \[ \scaletwoSu{\aln (\rho_F,\sigma_F)} = (\al)^i \cdot a^{-\frac{i+1}{2}} \lvert u \rvert^{i+1} \twoSu{\nabla^i (\rho_F,\sigma_F)},       \]so that \[    \lvert u \rvert^{i+1} \twoSu{\nabla^i (\rho_F,\sigma_F)}   = \al  \scaletwoSu{\aln (\rho_F,\sigma_F)},   \]as well as \[ \scaletwoSu{(\al)^i G} = (\al)^i a^{-\frac{i+3}{2}} \lvert u \rvert^{i+3} \twoSu{G},     \]whence we get \[  \lvert u \rvert^{i+1} \twoSu{(\al)^i G} = \frac{a^{\frac{3}{2}}}{\lvert u \rvert^2} \scaletwoSu{(\al)^i G}.      \]\par \noindent Passing therefore to scale-invariant norms we have \begin{align*}
		&\scaletwoSu{\aln (\rho_F,\sigma_F)}\\ \lesssim& \lVert \aln (\rho_F,\sigma_F) \rVert_{\mathcal{L}^{2}_{(sc)}(S_{u_{\infty},\ubar})} + \intu \frac{a}{\lvert u^\prime \rvert^2}\scaletwoSuprime{(\al)^i G} \duprime \\ \lesssim& \lVert \aln (\rho_F,\sigma_F) \rVert_{\mathcal{L}^{2}_{(sc)}(S_{u_{\infty},\ubar})} + \intu \frac{a}{\lvert u^\prime \rvert^2} \scaletwoSuprime{(\al)^i \nabla^{i+1}\alphabar_F} \duprime \\ &+ \intu \frac{a}{\lvert u^\prime \rvert^2}\ScaletwoSuprime{ (\al)^i \sum_{i_1+i_2+i_3+1=i} \nabla^{i_1}\psi^{i_2+1}\nabla^{i_3+1}\alphabar_F} \duprime \\ &+ \intu \frac{a}{\lvert u^\prime \rvert^2}\ScaletwoSuprime{ (\al)^i \sum_{i_1+i_2+i_3+i_4=i} \nabla^{i_1}\psi^{i_2}\nabla^{i_3}\psi \nabla^{i_4}\Y } \duprime \\ &+ \intu \frac{a}{\lvert u^\prime \rvert^2}\ScaletwoSuprime{ (\al)^i \sum_{i_1+i_2+1=i} \nabla^{i_1+1}\tr\chibar \nabla^{i_2}\alphabar_F} \duprime \\  &+ \intu \frac{a}{\lvert u^\prime \rvert^2}\ScaletwoSuprime{ (\al)^i \sum_{i_1+i_2+i_3+i_4+1=i} \nabla^{i_1}\psi^{i_2+1} \nabla^{i_3}\tr\chibar \nabla^{i_4}(\rho_F,\sigma_F)} \duprime \\ &+ \intu \frac{a}{\lvert u^\prime \rvert^2}\ScaletwoSuprime{ (\al)^i \sum_{i_1+i_2+i_3+i_4+1=i} \nabla^{i_1}\psi^{i_2} \nabla^{i_3}(\psi,\chihat,\tildetr) \nabla^{i_4}(\rho_F,\sigma_F)} \duprime\\ :=& J_1 + \dots +J_7.
		\end{align*}We treat $J_1,\dots, J_7$ on a term-by-term basis.
		
		\begin{itemize}
			\item The initial data term $J_1$ is bounded by $\mathcal{I}^{(0)}(\ubar) \lesssim 1$.
			
			\item We have \[ J_2 \lesssim \frac{a}{\lvert u \rvert} \scaletwoHbaru{(\al)^i \nabla^{i+1}\alphabar_F} \lesssim \frac{a}{\lvert u \rvert}\underline{\mathcal{F}}[\alphabar_F] \lesssim  \underline{\mathcal{F}}\hspace{.5mm}[\alphabar_F] .   \]
			
			\item For $J_3$ we have  \[ J_3    \lesssim \frac{\al \cdot O^2}{\lvert u \rvert} \lesssim 1.             \] \item For $J_4$ we can similarly bound the term by $1$. \item For $J_5$, there holds \[ J_5 = \intu \frac{1}{\lvert u^\prime \rvert} \ScaletwoSuprime{(\al)^i \sum_{i_1+i_2+1=i} \nabla^{i_1+1}\left(\frac{a}{\lvert u^\prime \rvert}\tildetr\right) \nabla^{i_2} \alphabar_F} \duprime  \lesssim \frac{O^2}{\lvert u \rvert}\lesssim 1.       \]Note that we have crucially used the fact that in $J_5$ there exists at least one derivative on $\tr\chi$, allowing us to rewrite the expression in terms of $\tildetr$, since $\nabla \tildetr = \nabla \tr\chibar$.
			
			\item There holds \[  J_6 \lesssim \frac{O^3}{a}\lesssim 1.    \] 
			\item There holds \[J_7 \lesssim \frac{\al\cdot O^2}{\lvert u \rvert} \lesssim 1.\] 
		\end{itemize}\par \noindent Combining these estimates together, we arrive at the desired result.
	\end{proof}

	\begin{proposition}\label{othermaxwellprop}
		Under the assumptions of Theorem \ref{main1} and the bootstrap assumption \eqref{bootstrapbounds}, we have 
		\[  \sum_{i \leq 10} \scaletwoSu{(\al \nabla)^i \alphabar_F} \lesssim \mathcal{F}[\rho_F,\sigma_F]+ \underline{\mathcal{F}}[\rho_F,\sigma_F] +1.      \]
	\end{proposition}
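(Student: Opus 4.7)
The plan is to proceed in direct analogy with Proposition \ref{alphaFproposition}, but using the $\nabla_4$ (rather than $\nabla_3$) null Maxwell equation for $\alphabar_F$. Recall
\[\nabla_4 \alphabar_F+ \frac{1}{2}\tr\chi \alphabar_F = -\nabla \rho_F + \Hodge{\nabla}\sigma_F -2\Hodge{\etabar}\sigma_F -2\Hodge{\eta}\rho_F + 2\omega\,\alphabar_F - \chibarhat\cdot \alpha_F,\]
which schematically reads $\nabla_4 \alphabar_F + \tfrac{1}{2}\tr\chi\,\alphabar_F = \nabla(\rho_F,\sigma_F) + \psi\cdot(\rho_F,\sigma_F,\alphabar_F) + \chibarhat\cdot \alpha_F$.

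First I would commute this equation $i$ times with $\nabla$ via Proposition \ref{commutationformulaeprop}, schematically obtaining
\begin{align*}
\nabla_4 \nabla^i \alphabar_F &= \nabla^{i+1}(\rho_F,\sigma_F) + \sum_{i_1+i_2+i_3+1=i}\nabla^{i_1}\psi^{i_2+1}\nabla^{i_3+1}(\rho_F,\sigma_F) \\
&\quad + \sum_{i_1+i_2+i_3+i_4=i}\nabla^{i_1}\psi^{i_2}\nabla^{i_3}(\psi,\chihat)\nabla^{i_4}(\rho_F,\sigma_F,\alphabar_F) \\
&\quad + \sum_{i_1+i_2+i_3+i_4=i}\nabla^{i_1}\psi^{i_2}\nabla^{i_3}\chibarhat\,\nabla^{i_4}\alpha_F.
\end{align*}
Then I would apply the scale-invariant transport inequality \eqref{transport1} of Proposition \ref{prop36}, integrating from $\ubar^\prime=0$ to $\ubar$. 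The initial data term is bounded by $\mathcal{I}^{(0)}(\ubar)\lesssim 1$. The top-order linear term is treated via Cauchy--Schwarz in $\ubar^\prime$:
\[\int_0^{\ubar} \scaletwoSuubarprime{(\al\nabla)^i \nabla(\rho_F,\sigma_F)}\,\dubarprime \leq \scaletwoHu{(\al)^i \nabla^{i+1}(\rho_F,\sigma_F)}\cdot \ubar^{1/2} \lesssim \mathcal{F}[\rho_F,\sigma_F],\]
since the norm on the right is exactly $\al\cdot \mathcal{F}_{i+1}$ modulo constants.

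The remaining nonlinear terms are handled using Proposition \ref{usefulstatements}: terms of the form $\nabla^{i_1}\psi^{i_2}\nabla^{i_3}(\rho_F,\sigma_F,\alphabar_F)$ are bounded by $O$ in scale-invariant norms, and after integrating in $\ubar^\prime \in [0,1]$ contribute $O(\frac{\al O^2}{|u|})$ or better, hence bounded by $1$ thanks to the smallness factor $\al/|u|$. The crucial nonstandard term is $\chibarhat\cdot \alpha_F$: here I would use Proposition \ref{chibarhatproposition} ($\frac{\al}{|u|}\scaletwoSu{(\al\nabla)^i \chibarhat}\lesssim 1$) to bound $\chibarhat$ in $\mathcal{L}^{\infty}_{(sc)}$ via the Sobolev embedding of Proposition \ref{Sobolevembedding}, and Proposition \ref{alphaFproposition} ($\frac{1}{\al}\scaletwoSu{(\al\nabla)^i \alpha_F}\lesssim \underline{\mathcal{F}}[\rho_F,\sigma_F]+1$) for $\alpha_F$ in $\mathcal{L}^2_{(sc)}$; the resulting contribution is of order $\underline{\mathcal{F}}[\rho_F,\sigma_F]+1$, which is precisely the source of the $\underline{\mathcal{F}}[\rho_F,\sigma_F]$ on the right-hand side of the statement.

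The main obstacle is the bookkeeping at top order: since both $\chibarhat$ and $\alpha_F$ can carry up to $10$ derivatives while their product must still be controlled in $\mathcal{L}^2_{(sc)}$, one must split on which factor carries more than $5$ derivatives and place that factor in $\mathcal{L}^2_{(sc)}$ and the other in $\mathcal{L}^{\infty}_{(sc)}$ via Sobolev embedding, as in Proposition \ref{usefulstatements}. This absorbs the $\chibarhat\cdot\alpha_F$ interaction uniformly without loss. Combining all contributions yields the desired estimate.
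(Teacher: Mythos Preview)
Your proposal is correct and follows essentially the same approach as the paper: integrate the $\nabla_4$-transport equation for $\alphabar_F$ via Proposition~\ref{prop36}, pick up $\mathcal{F}[\rho_F,\sigma_F]$ from the top-order $\nabla^{i+1}(\rho_F,\sigma_F)$ term, and trace the $\underline{\mathcal{F}}[\rho_F,\sigma_F]$ contribution to the $\chibarhat\cdot\alpha_F$ interaction via the refined bounds of Propositions~\ref{chibarhatproposition} and~\ref{alphaFproposition}. One minor remark: the initial data term on $\Hbar_0$ actually vanishes (Minkowski data), so it need not even be bounded by $\mathcal{I}^{(0)}$.
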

	\begin{proof}
		For $\Y \in \begin{Bmatrix}
		\rho_F, \sigma_F ,\alphabar_F 
		\end{Bmatrix}$ we have the following schematic equation \[ \nabla_4 \alphabar_F = \nabla (\rho_F,\sigma_F) + (\chibarhat, \psi)\cdot(\Y, \alpha_F).   \]Commuting this with $i$ angular derivatives, we get \begin{align*}
		&\nabla_4 \nabla^i \alphabar_F\\ =& \nabla^{i+1}(\rho_F, \sigma_F) + \sum_{i_1 + i_2 +i_3 +1= i} \nabla^{i_1}\psi^{i_2+1} \nabla^{i_3}(\rho_F,\sigma_F) \\
		&+ \sum_{i_1 + i_2 +i_3 + i_4 =i} \nabla^{i_1} \psi^{i_2} \nabla^{i_3} (\psi, \chihat) \nabla^{i_4} \alphabar_F + \sum_{i_1 + i_2 +i_3 + i_4 =i} \nabla^{i_1} \psi^{i_2} \nabla^{i_3} (\psi, \chibarhat) \nabla^{i_4} (\Y, \alpha_F). 
		\end{align*}By multiplying with $(\al)^i $ on both sides and passing to scale-invariant norms, we have
		
		\begin{align*}
		&\scaletwoSu{(\al \nabla)^i \alphabar_F}\\ \leq& \int_{0}^{\ubar} \scaletwoSuubarprime{(\al)^i \nabla^{i+1}(\rho_F,\sigma_F)} \hspace{.5mm} \text{d}\ubar^{\prime} \\
		&+ \int_{0}^{\ubar} \scaletwoSuubarprime{(\al)^i \sum_{i_1 + i_2 +i_3 +1 =i} \nabla^{i_1} \psi^{i_2+1}\nabla^{i_3+1}(\rho_F,\sigma_F) }\hspace{.5mm} \text{d}\ubar^{\prime} \\
		&+ \int_0^{\ubar} \scaletwoSuubarprime{(\al)^i \sum_{i_1 + i_2 +i_3 + i_4 =i} \nabla^{i_1} \psi^{i_2} \nabla^{i_3} (\psi, \chihat) \nabla^{i_4} \alphabar_F} \hspace{.5mm} \text{d}\ubar^{\prime} \\
		&+ \int_0^{\ubar} \scaletwoSuubarprime{(\al)^i  \sum_{i_1 + i_2 +i_3 + i_4 =i} \nabla^{i_1} \psi^{i_2} \nabla^{i_3} (\psi, \chibarhat) \nabla^{i_4} (\Y, \alpha_F) } \hspace{.5mm} \text{d}\ubar^{\prime} \\
		\leq&  \mathcal{F}[\rho_F,\sigma_F] 
		+ \int_{0}^{\ubar} \scaletwoSuubarprime{(\al)^{i} \sum_{i_1+i_2+i_3+1=i} \nabla^{i_1} \psi^{i_2+1} \nabla^{i_3+1}(\rho_F,\sigma_F)} \hspace{.5mm} \text{d}\ubar^{\prime} \\
		&+ \int_0^{\ubar} \scaletwoSuubarprime{(\al)^{i+1} \sum_{i_1 + i_2 +i_3 + i_4 =i} \nabla^{i_1} \psi^{i_2} \nabla^{i_3} (\frac{\psi}{\al}, \frac{\chihat}{\al}) \nabla^{i_4} \alphabar_F} \hspace{.5mm} \text{d}\ubar^{\prime} \\
		&+ \int_0^{\ubar} a^{-\frac{1}{2}} \lvert u \rvert  \scaletwoSuubarprime{(\al)^{i+1}  \sum_{i_1 + i_2 +i_3 + i_4+1 =i} \nabla^{i_1} \psi^{i_2+1} \nabla^{i_3} (\frac{\al}{\lvert u \rvert}\psi, \frac{\al}{\lvert u \rvert} \chibarhat) \nabla^{i_4} (\frac{\Y}{\al}, \frac{\alpha_F}{\al}) } \hspace{.5mm} \text{d}\ubar^{\prime} \\
		&+ \int_0^{\ubar} a^{-\frac{1}{2}} \lvert u \rvert  \scaletwoSuubarprime{(\al)^{i+1}  \sum_{i_1+i_2=i}\nabla^{i_1} (\frac{\al}{\lvert u \rvert}\psi, \frac{\al}{\lvert u \rvert} \chibarhat) \nabla^{i_2} (\frac{\Y}{\al}, \frac{\alpha_F}{\al}) }\hspace{.5mm} \text{d}\ubar^{\prime}\\ \leq&\mathcal{F}[\rho_F,\sigma_F] + O[\chibarhat]O[\alpha_F]+1 \lesssim \mathcal{F}[\rho_F,\sigma_F] + \underline{\mathcal{F}}[\rho_F,\sigma_F]+1.
		\end{align*}In the last inequality we have used the refined estimates on $\chibarhat$ and $\alpha_F$ from Propositions \ref{chibarhatproposition} and \ref{alphaFproposition} respectively.
	\end{proof}
	
	\par \noindent

	\section{$L^2(S_{u,\ubar})$-estimates for curvature}
	
	\begin{proposition}\label{alphaproposition}
		Under the assumptions of Theorem \ref{main1} and the bootstrap assumption \eqref{bootstrapbounds}, we have  
		\[\sum_{i \leq 9} \frac{1}{\al} \scaletwoSu{(\al \nabla)^i \alpha} \lesssim 1.    \]
	\end{proposition}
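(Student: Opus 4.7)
The plan is to derive this from the renormalized null Bianchi equation for $\nabla_3 \alpha$ given in Section 2.8:
\[
\nabla_3 \alpha + \tfrac{1}{2}\tr\chibar\, \alpha = \nabla \Psi + \alpha_F \cdot \nabla \Y + \Y\cdot(\nabla \alpha_F, \nabla \Y) + (\psi,\chihat)\cdot \Psi + \psi\cdot \alpha + (\psi,\chibarhat,\tr\chibar,\chihat)\cdot(\alpha_F,\Y)\cdot(\alpha_F,\Y).
\]
First I would commute $i \leq 9$ angular derivatives into this identity using Proposition \ref{commutationformulaeprop}, which yields an equation of the schematic form $\nabla_3 \nabla^i \alpha + \tfrac{i+1}{2}\tr\chibar \nabla^i \alpha = G$, where $G$ is a sum of terms containing at most $i+1$ derivatives, and where the dangerous top-order contribution is $\nabla^{i+1}\tbeta$ (all other top-order contributions come from the Maxwell factors $\alpha_F, \Y$).

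Next I would apply Proposition \ref{prop37} with $\lambda_0 = (i+1)/2$, so that $\lambda_1 = i$, and convert the resulting inequality to scale-invariant norms. Since $s_2(\alpha)=0$, we have $s_2(G) = i/2 + 1$, and thus obtain
\[
\tfrac{1}{\al}\scaletwoSu{(\al\nabla)^i \alpha} \lesssim \tfrac{1}{\al}\lVert (\al \nabla)^i \alpha \rVert_{\mathcal{L}^2_{(sc)}(S_{u_\infty,\ubar})} + \tfrac{1}{\al}\int_{u_\infty}^u \tfrac{a}{|u'|^2} \scaletwoSuprime{(\al)^i G}\, du'.
\]
The initial data piece is directly bounded by $\mathcal{I}^{(0)}(\ubar) \lesssim 1$. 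Each piece of $\scaletwoSuprime{(\al)^i G}$ is then estimated using Proposition \ref{usefulstatements}, the refined estimates on $\chihat, \chibarhat, \tr\chibar, \omegabar, \eta, \etabar$ from Section 3.1, and the Maxwell estimates of Propositions \ref{alphaFproposition}, \ref{rhoFsphereestimates}, \ref{othermaxwellprop}.

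The main point of the argument is the treatment of the principal term $\nabla^{i+1}\tbeta$. For this I would apply Cauchy--Schwarz in $u'$:
\[
\tfrac{1}{\al}\int_{u_\infty}^u \tfrac{a}{|u'|^2} \scaletwoSuprime{(\al\nabla)^{i+1}\tbeta}\, du' \leq \tfrac{1}{\al}\cdot \scaletwoHbaru{(\al\nabla)^{i+1}\tbeta} \cdot \Bigl(\int_{u_\infty}^u \tfrac{a}{|u'|^2}du'\Bigr)^{1/2} \lesssim \tfrac{\underline{\mathcal{R}}[\tbeta]}{|u|^{1/2}}.
\]
Because of the bootstrap hierarchy $R^{20}\leq a^{1/16}$ and $|u|\geq a/4$, the quotient $\underline{\mathcal{R}}[\tbeta]/|u|^{1/2}$ is $\lesssim R\cdot a^{-1/2} \ll 1$. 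The nonlinear lower-order terms on the right-hand side of the commuted equation either contain two Maxwell factors (controlled by Proposition \ref{alphaFproposition} and Proposition \ref{rhoFsphereestimates}, together with the smallness gained from the factor $a/|u'|^2$ integrated in $u'$) or consist of products of Ricci coefficients with $\Psi, \alpha$ (handled by the same H\"older-type scale-invariant inequalities used in Propositions \ref{chibarhatproposition}--\ref{trchibarboundRicci}, where again the $1/|u'|$ weight in the $u'$-integration absorbs factors of $O,R,F$).

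The main obstacle is thus not a single hard estimate but the bookkeeping needed to verify that each of the many product terms generated by the commutators and by the multi-indexed RHS carries enough $u'$-decay after integration to yield a bound that is genuinely $\lesssim 1$ rather than merely $\lesssim$ (bootstrap constants). This is where the scale-invariant formalism together with the smallness encoded in the relations $R^{20}\leq a^{1/16}$ and $|u|\geq a/4$ becomes essential.
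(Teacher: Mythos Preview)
Your approach is essentially the same as the paper's: commute the renormalized $\nabla_3\alpha$ equation, apply Proposition~\ref{prop37} with $\lambda_0=(i+1)/2$, pass to scale-invariant norms, and control the principal term $\nabla^{i+1}\tbeta$ via Cauchy--Schwarz and the $\underline{\mathcal R}[\tbeta]$ norm on $\Hb_{\ub}$. One small bookkeeping slip: in your displayed Cauchy--Schwarz step the integrand should carry $(\al)^i\nabla^{i+1}\tbeta$ rather than $(\al\nabla)^{i+1}\tbeta$ (equivalently, the prefactor should be $1/a$ rather than $1/\al$); with the correct power of $a$ the bound $\underline{\mathcal R}[\tbeta]/|u|^{1/2}$ you quote is exactly what the paper obtains, and the remainder of your outline matches the paper's treatment of the lower-order and Maxwell terms.
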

	
	\begin{proof}
			Reading off equation \eqref{renalphaeq}, we have
			
			\begin{align*}   \nabla_3 \alpha + \frac{1}{2}\tr\chibar \alpha =& \nabla \tbeta + \alpha_F \cdot \nabla \Y + \Y \cdot (\nabla \alpha_F,\nabla \Y)  \\ &+ (\psi,\chihat)\cdot \Psi + \psi\cdot \alpha + (\psi,\chibarhat,\tr\chibar,\chihat)\cdot(\alpha_F,\Y)\cdot(\alpha_F,\Y).  \end{align*}
			
			\par \noindent Commuting with $i$ angular derivatives, we obtain
			
			\begin{align*}
			&\nabla_3 \nabla^i \alpha + \frac{i+1}{2}\tr\chibar \nabla^i \alpha  \\
			=& \nabla^{i+1} \tbeta + \sum_{i_1 + i_2 +i_3 +1 =i} \nabla^{i_1} \psi^{i_2+1} \nabla^{i_3}\tbeta + \sum_{i_1+i_2+i_3+i_4=i} \nabla^{i_1}\psi^{i_2}\nabla^{i_3}\alpha_F \nabla^{i_4+1}\Y   \\&+  \sum_{i_1+i_2+i_3+i_4=i} \nabla^{i_1}\psi^{i_2}\nabla^{i_3}\Y \nabla^{i_4+1}(\alpha_F,\Y) + \sum_{ i_1+i_2+i_3+i_4=i} \nabla^{i_1} \psi^{i_2} \nabla^{i_3} (\psi,\chihat)\nabla^{i_4} \Psi              \\
			&+ \sum_{i_1+i_2+i_3+i_4+i_5=i} \nabla^{i_1} \psi^{i_2} \nabla^{i_3}(\psi,\chibarhat,\tr\chibar,\chihat) \nabla^{i_4}(\alpha_F,\Y) \nabla^{i_5}(\alpha_F,Y) \\
			&+ \sum_{i_1+i_2+i_3+i_4 = i} \nabla^{i_1} \psi^{i_2} \nabla^{i_3} (\psi,\chibarhat,\tildetr)\nabla^{i_4}\alpha + \sum_{i_1+i_2+i_3+i_4+1=i} \nabla^{i_1}\psi^{i_2+1}\nabla^{i_3}\tr\chibar\nabla^{i_4}\alpha .
			\end{align*} 
			
			\par \noindent 	Denote the above as \[ \nabla_3 \nabla^i \alpha + \frac{i+1}{2}\tr\chibar \nabla^i \alpha =G.       \]Using the definition of the $\mathcal{L}^2_{(sc)}(u,u)$-norms we have 
			
			\[ \scaletwoSu{\nabla^i \alpha} = a^{-\frac{i}{2}} \lvert u \rvert^i  \lVert \nabla^i \alpha \rVert_{L^2(S_{u,\ubar})} ,    \hspace{2mm}   \scaletwoSu{G} = a^{-\frac{i+2}{2}} \lvert u \rvert^{i+2} \lVert G \rVert_{L^{2}(S_{u,\ubar})} ,      \] which translates to  \[    a^{-\frac{1}{2}} \lvert u \rvert^i  \lVert \nabla^i \alpha \rVert_{L^2(S_{u,\ubar})}   = a^{-\frac{1}{2}}  \scaletwoSu{(\al \nabla)^i \alpha}, \hspace{2mm}   a^{-\frac{1}{2}} \lvert u \rvert^i  \lVert G \rVert_{L^2(S_{u,\ubar})} =\frac{\al}{\lvert u \rvert^2}    \scaletwoSu{(\al)^i G}      .            \]Hence we have 
			
			\begin{align*}
			&a^{-\frac{1}{2}} \scaletwoSu{(\al \nabla)^i \alpha} \\
			\leq&  a^{-\frac{1}{2}} \lVert (\al \nabla)^i \alpha \rVert_{L^{2}_{(sc)}(S_{u_{\infty},\ubar})} + \int_{u_{\infty}}^u \frac{1}{\lvert u^\prime \rvert^2} \scaletwoSuprime{(\al\nabla)^{i+1} \tbeta} \text{d}u^\prime \\
			&+ \int_{u_{\infty}}^u \frac{\al}{\lvert u^\prime \rvert^2} \scaletwoSuprime{ \sum_{i_1 + i_2 +i_3 +1 =i} (\al)^i \nabla^{i_1} \psi^{i_2 +1} \nabla^{i_3+1} \tbeta } \hspace{.5mm} \text{d} u^\prime \\
			&+ \int_{u_{\infty}}^u \frac{\al}{\lvert u^\prime \rvert^2}  \scaletwoSuprime{(\al)^i \sum_{i_1 + i_2 +i_3 +i_4= i} \nabla^{i_1} \psi^{i_2} \nabla^{i_3} (\alpha_F,\Y)  \nabla^{i_4+1} \Y                      }\hspace{.5mm} \text{d}u^{\prime} \\
			&+ \int_{u_{\infty}}^u \frac{\al}{\lvert u^\prime \rvert^2}  \scaletwoSuprime{(\al)^i \sum_{i_1 + i_2 +i_3 +i_4= i} \nabla^{i_1} \psi^{i_2} \nabla^{i_3} \Y \nabla^{i_4+1}   (\alpha_F,\Y)                     }\hspace{.5mm} \text{d}u^{\prime} \\ &+ \int_{u_{\infty}}^u \frac{\al}{\lvert u^\prime \rvert^2} \scaletwoSuprime{(\al)^i \sum_{i_1+i_2+i_3+i_4=i} \nabla^{i_1} \psi^{i_2} \nabla^{i_3} (\psi,\chihat) \nabla^{i_4} \Psi} \hspace{.5mm} \text{d}u^\prime \\
			&+  \int_{u_{\infty}}^u \frac{\al}{\lvert u^\prime \rvert^2}  \scaletwoSuprime{(\al)^i \sum_{i_1 + i_2 +i_3 +i_4+i_5= i} \nabla^{i_1} \psi^{i_2} \nabla^{i_3}(\psi, \chibarhat, \tr \chibar, \chihat)  \nabla^{i_4} (\alpha_F,\Y) \nabla^{i_5} (\alpha_F ,\Y)                    }\hspace{.5mm} \text{d}u^{\prime}  \\
			&+ \int_{u_{\infty}}^u \frac{\al}{\lvert u^\prime \rvert^2}  \scaletwoSuprime{(\al)^i \sum_{i_1+i_2+i_3+i_4=i} \nabla^{i_1}\psi^{i_2}\nabla^{i_3}(\psi,\chibarhat,\tildetr)\nabla^{i_4}\alpha}\hspace{.5mm} \text{d}u^{\prime} \\  &+ \int_{u_{\infty}}^u \frac{\al}{\lvert u^\prime \rvert^2}  \scaletwoSuprime{(\al)^i \sum_{i_1+i_2+i_3+i_4 +1=i} \nabla^{i_1}\psi^{i_2+1}\nabla^{i_3}\tr\chibar \nabla^{i_4}\alpha}\hspace{.5mm} \text{d}u^{\prime} \\ :=& T_1+T_2+\dots+T_9.
			\end{align*}\par \noindent The first term can be bounded by the initial data, since \[ T_1 = a^{-\frac{1}{2}} \lVert (\al \nabla)^i \alpha \rVert_{L^{2}_{(sc)}(S_{u_{\infty},\ubar})} \leq \mathcal{I}^{(0)}(\ubar) \lesssim 1.  \] For the terms involving $\tbeta$, we have   
		
		\begin{align*}
		T_2+T_3 =&  \int_{u_{\infty}}^u \frac{1}{\lvert u^\prime \rvert^2} \scaletwoSuprime{(\al\nabla)^{i+1} \tbeta} \text{d}u^\prime \\
		&+ \int_{u_{\infty}}^u \frac{\al}{\lvert u^\prime \rvert^2} \scaletwoSuprime{ \sum_{i_1 + i_2 +i_3 +1 =i} (\al)^i \nabla^{i_1} \psi^{i_2 +1} \nabla^{i_3+1} \tbeta } \hspace{.5mm} \text{d} u^\prime \\ \leq&  \left( \int_{u_{\infty}}^u \frac{a}{\lvert u^\prime \rvert^2} \scaletwoSuprime{(\al\nabla)^{i+1} \tbeta}^2 \text{d}u^\prime \right)^{\frac{1}{2}} \left( \int_{u_{\infty}}^u \frac{1}{a \lvert u^{\prime}\rvert^2}\hspace{.5mm}\text{d} u^{\prime} \right)^{\frac{1}{2}} +  \int_{u_{\infty}}^u \frac{\al}{\lvert u^{\prime} \rvert^2} \cdot \frac{O^2}{\lvert u^\prime \rvert} \hspace{.5mm} \text{d} u^\prime \\ \leq& a^{-\frac{1}{2}}\scaletwoHbaru{(\al \nabla)^{i+1}\tbeta}\cdot \frac{1}{\lvert u \rvert^{\frac{1}{2}}} + \frac{\al\cdot O^2}{\lvert u \rvert^2 }\lesssim \underline{\mathcal{R}}[\tbeta]\cdot \frac{1}{\lvert u \rvert^{\frac{1}{2}}} +\frac{\al\cdot O^2}{\lvert u \rvert^2 }\lesssim 1 .
		\end{align*}

\par \noindent  Notice that the curvature term actually vanishes. For the next two terms, we need to treat the  cases where all the weight falls on $i_4$ separately. Look, first, at 
		
		\begin{equation*}
		T_4 = \int_{u_{\infty}}^u \frac{\al}{\lvert u^\prime \rvert^2}  \scaletwoSuprime{(\al)^i \sum_{i_1 + i_2 +i_3 +i_4= i} \nabla^{i_1} \psi^{i_2} \nabla^{i_3} (\alpha_F,\Y)  \nabla^{i_4+1} \Y                      }\hspace{.5mm} \text{d}u^{\prime}.
		\end{equation*}If $i_4 =i$, then we can bound the term $(\alpha_F,\Y)$ below in $\mathcal{L}^{\infty}_{(sc)}$ to get
		
		\begin{align*} T_4 =& \int_{u_{\infty}}^u  \frac{1}{\lvert u^{\prime} \rvert^2} \scaletwoSuprime{(\alpha_F,\Y)\cdot (\al \nabla)^{i+1} \Y} \hspace{.5mm} \text{d} u^\prime \\ \leq& \int_{u_{\infty}}^u \frac{\al}{\lvert u^{\prime} \rvert^2} \cdot \frac{O}{\lvert u^\prime \rvert}\cdot \scaletwoSuprime{(\al\nabla)^{i+1}\Y }\hspace{.5mm}\text{d}u^{\prime}  \\\leq&  \intu \frac{\al}{\lvert u^\prime \rvert^2}\cdot \frac{O}{\lvert u^\prime \rvert}\cdot O \duprime \lesssim \frac{\al \cdot O^2}{\lvert u \rvert^2} \lesssim 1. \end{align*} If $i_4 < i \leq 9$ we distinguish two cases:
		
		\begin{itemize}
			\item There holds $i_4+1 \leq 6$. We then write \[(\al)^i \sum_{i_1 + i_2 +i_3 + i_4 =i} \nabla^{i_1} \psi^{i_2} \nabla^{i_3}(\alpha_F, \Y) \nabla^{i_4+1} \Y = (\al)^{i+1} \sum_{i_1 + i_2 +i_3 + i_4 =i} \nabla^{i_1} \psi^{i_2} \nabla^{i_3}(\frac{\alpha_F}{\al}, \frac{\Y}{\al}) \nabla^{i_4+1} \Y \]and bound $(\al \nabla)^{i_4+1} \Y$ in $\mathcal{L}^\infty_{(sc)}$. We have 
			
			\begin{align*}
			&\int_{u_{\infty}}^u \frac{\al}{\lvert u^\prime \rvert^2}  \scaletwoSuprime{(\al)^i \sum_{i_1 + i_2 +i_3 +i_4= i} \nabla^{i_1} \psi^{i_2} \nabla^{i_3} (\alpha_F,\Y)  \nabla^{i_4+1} \Y                      }\hspace{.5mm} \text{d}u^{\prime} \\ 
			\leq& \int_{u_{\infty}}^u \frac{\al}{\lvert u^\prime \rvert^2} \cdot \frac{1}{\lvert u^{\prime} \rvert} \scaleinfinitySuprime{(\al \nabla)^{i_4+1} \Y} \scaletwoSuprime{\sum_{i_1+i_2+i_3\leq 9} (\al)^{i_1+i_2+i_3} \nabla^{i_1} \psi^{i_2} \nabla^{i_3} \Y    } \hspace{.5mm} \text{d}u^{\prime}\\ 
			\leq& \frac{\al \cdot O^2}{\lvert u \rvert} \leq \frac{O^2}{\al}\leq 1. \end{align*} 
			
			\item There holds $7 \leq i_4+1 \leq 9$. We then bound $(\al \nabla)^{i_4+1} \Y$ in $\mathcal{L}^2_{(sc)}$ and the rest of the terms in $\mathcal{L}^\infty_{(sc)}$. This gives
			
			\begin{align*}
			&\int_{u_{\infty}}^u \frac{\al}{\lvert u^\prime \rvert^2}  \scaletwoSuprime{(\al)^i \sum_{i_1 + i_2 +i_3 +i_4= i} \nabla^{i_1} \psi^{i_2} \nabla^{i_3} (\alpha_F,\Y)  \nabla^{i_4+1} \Y                      }\hspace{.5mm} \text{d}u^{\prime} \\
			\leq& \int_{u_{\infty}}^u \frac{\al}{\lvert u^\prime \rvert^2} \cdot \frac{1}{\lvert u^{\prime} \rvert} \scaletwoSuprime{(\al \nabla)^{i_4+1} \Y} \scaleinfinitySuprime{\sum_{i_1+i_2+i_3\leq 9} (\al)^{i_1+i_2+i_3} \nabla^{i_1} \psi^{i_2} \nabla^{i_3} \Y    } \hspace{.5mm} \text{d}u^\prime \\
			\leq& \int_{u_{\infty}}^u \frac{\al}{\lvert u^\prime \rvert^2} \cdot \frac{1}{\lvert u\rvert^{\prime}} \cdot O \cdot \frac{(\al)^{i_1+i_2+i_3} O^{i_1+i_2+i_3}}{\lvert u^\prime \rvert^{i_1+i_2+i_3-1}}\hspace{.5mm} \text{d}u^\prime \leq \frac{\al O}{\lvert u^\prime \rvert}\leq 1.
			\end{align*}
		\end{itemize}We move on to $T_5$. If $i_4=i$, we have  \begin{align*} &\int_{u_{\infty}}^u \frac{\al}{\lvert u^\prime \rvert^2}  \scaletwoSuprime{(\al)^i \sum_{i_1 + i_2 +i_3 +i_4= i} \Y \hspace{.5mm} \nabla^{i+1} (\alpha_F  ,\Y)       }\hspace{.5mm} \text{d}u^{\prime}\\
		\leq& \int_{u_{\infty}}^u \frac{\al}{\lvert u^\prime \rvert^2}\cdot \frac{O}{\lvert u^\prime \rvert} \cdot \frac{1}{\al} \scaletwoSuprime{(\al \nabla)^{i+1} \alpha_F} \text{d}u^\prime \leq \frac{\al \cdot O^2}{\lvert u \rvert^2}
		\leq 1.   \end{align*} 
		
		\par \noindent	If $i_4 < i \leq 9$ we again distinguish two cases:
		
		\begin{itemize}
			\item There holds $i_4+1 \leq 6$. We then write \[(\al)^i \sum_{i_1 + i_2 +i_3 + i_4 =i} \nabla^{i_1} \psi^{i_2} \nabla^{i_3}\Y \nabla^{i_4+1} \alpha_F = (\al)^{i+1} \sum_{i_1 + i_2 +i_3 + i_4 =i} \nabla^{i_1} \psi^{i_2} \nabla^{i_3} \Y \nabla^{i_4+1} \left( \frac{\alpha_F}{\al}\right) \]and bound $\nabla^{i_4+1} \big( \frac{\alpha_F}{\al}\big)$ in $\mathcal{L}^\infty_{(sc)}$. We have 
			\begin{align*} &\int_{u_{\infty}}^u \frac{\al}{\lvert u^\prime \rvert^2}  \scaletwoSuprime{(\al)^i \sum_{i_1 + i_2 +i_3 +i_4= i} \nabla^{i_1} \psi^{i_2} \nabla^{i_3}\Y  \nabla^{i_4+1} \alpha_F                      }\hspace{.5mm} \text{d}u^{\prime}\\
			\leq& \int_{u_{\infty}}^u \frac{\al}{\lvert u^\prime \rvert^2} \cdot \frac{1}{\lvert u^\prime \rvert} \ScaleinfinitySuprime{(\al \nabla)^{i_4+1} \left( \frac{\alpha_F}{\al}    \right) } \scaletwoSuprime{\sum_{i_1 + i_2 +i_3 \leq 9} (\al)^{i_1+i_2+i_3} \nabla^{i_1}\psi^{i_2} \nabla^{i_3} \Y    } \hspace{.5mm} \text{d}u^\prime \\
			\leq& \int_{u_{\infty}}^u \frac{\al}{\lvert u^\prime \rvert^2} \cdot \frac{1}{\lvert u^\prime \rvert}\cdot O^2 \text{d}u^\prime  = \frac{\al O^2}{\lvert u \rvert} \leq 1.\end{align*}
			
			\item There holds $7 \leq i_4 +1 \leq 9$. We then bound $\nabla^{i_4+1}\big( \frac{\alpha_F}{\al} \big)$ in $\mathcal{L}^2_{(sc)}$ and the rest of the terms in $\mathcal{L}^{\infty}_{(sc)}$. We then have \begin{align*}&\int_{u_{\infty}}^u \frac{\al}{\lvert u^\prime \rvert^2}  \ScaletwoSuprime{(\al)^{i+1} \sum_{i_1 + i_2 +i_3 +i_4= i} \nabla^{i_1} \psi^{i_2} \nabla^{i_3}\Y  \nabla^{i_4+1} (\frac{\alpha_F}{\al} )                  }\hspace{.5mm} \text{d}u^{\prime}\\
			\leq& \int_{u_{\infty}}^u \frac{\al}{\lvert u^\prime \rvert^2} \cdot \frac{1}{\lvert u^\prime \rvert} \ScaletwoSuprime{(\al \nabla)^{i_4+1} \left( \frac{\alpha_F}{\al}    \right) } \scaleinfinitySuprime{\sum_{i_1 + i_2 +i_3 \leq 2} (\al)^{i_1+i_2+i_3} \nabla^{i_1}\psi^{i_2} \nabla^{i_3} \Y    } \hspace{.5mm} \text{d}u^\prime \\
			\leq& \int_{u_{\infty}}^u \frac{\al}{\lvert u^\prime \rvert^2} \cdot \frac{O}{\lvert u^\prime \rvert} \cdot \frac{(\al)^{i_1+i_2+i_3} O^{i_1+i_2+i_3}}{\lvert u^\prime \rvert^{i_1+i_2+i_3-1}} \text{d}u^\prime   \leq 1.\end{align*}
		\end{itemize}
		For $T_6$, we have \begin{align*}  &\int_{u_{\infty}}^u \frac{\al}{\lvert u^\prime \rvert^2}  \scaletwoSuprime{(\al)^i \sum_{i_1 + i_2 +i_3 +i_4= i} \nabla^{i_1} \psi^{i_2} \nabla^{i_3}(\psi, \chihat)  \nabla^{i_4} \Psi          }\hspace{.5mm} \text{d}u^{\prime}\\
		=& \int_{u_{\infty}}^u \frac{\al}{\lvert u^\prime \rvert^2 } \ScaletwoSuprime{ (\al)^{i+1} \sum_{i_1 + i_2 +i_3 +i_4= i} \nabla^{i_1} \psi^{i_2} \nabla^{i_3}\left(\frac{\psi}{\al}, \frac{\chihat}{\al }\right)  \nabla^{i_4} \Psi                      }\hspace{.5mm} \text{d}u^{\prime}  \\
		\leq& \int_{u_{\infty}}^u \frac{\al}{\lvert u^\prime \rvert^2 } \cdot \frac{\al \cdot O^2}{\lvert u^\prime \rvert} \hspace{.5mm} \text{d} u^{\prime} = \frac{a\cdot O^2}{\lvert u \rvert^2} \leq 1 . \end{align*}
		For the term $T_7$, which contains a triple anomaly, we have \begin{align*} &\int_{u_{\infty}}^u \frac{\al}{\lvert u^\prime \rvert^2}  \scaletwoSuprime{(\al)^i \sum_{i_1 + i_2 +i_3 +i_4+i_5= i} \nabla^{i_1} \psi^{i_2} \nabla^{i_3}(\psi, \chibarhat, \tr \chibar, \chihat)  \nabla^{i_4} (\alpha_F, \Y) \nabla^{i_5} (\alpha_F, \Y)                     }\hspace{.5mm} \text{d}u^{\prime}\\
		=&  \int_{u_{\infty}}^u \frac{\text{d}u^{\prime}}{\al}  \ScaletwoSuprime{ (\al)^{i+2} \sum_{i_1 + i_2 +i_3 +i_4+i_5= i} \nabla^{i_1} \psi^{i_2} \nabla^{i_3}\left(\frac{a}{\lvert u^\prime \rvert^2}(\psi,\chibarhat,\tr\chibar,\chihat) \right) \nabla^{i_4} \left(\frac{(\alpha_F, \Y)}{\al}\right) \nabla^{i_5} \left(\frac{(\alpha_F,\Y)}{\al}\right)} \\
		=& \int_{u_{\infty}}^u \al \scaletwoSuprime{ (\al)^{i} \sum_{i_1+i_2+i_3+i_4=i}\nabla^{i_1} \psi^{i_2+1} \nabla^{i_3}\Y \nabla^{ i_4 } \Y } \duprime \\ 
		\leq& \int_{u_{\infty}}^u \al \cdot \frac{ O^3}{\lvert u^\prime \rvert^2} \duprime \leq \frac{\al \cdot O^3}{\lvert u \rvert} \leq 1.                \end{align*}   For $T_8$, there holds
		
		\begin{align*}
		&\int_{u_{\infty}}^u \frac{\al}{\lvert u^\prime \rvert^2}  \scaletwoSuprime{(\al)^i \sum_{i_1 + i_2 +i_3 +i_4= i} \nabla^{i_1} \psi^{i_2} \nabla^{i_3}(\psi,\chibarhat,\tildetr)  \nabla^{i_4} \alpha                  }\hspace{.5mm} \text{d}u^{\prime}\\ =& \int_{u_{\infty}}^u \frac{1}{\lvert u^{\prime} \rvert} \ScaletwoSuprime{(\al)^{i+1} \sum_{i_1+i_2+i_3+i_4=i}\nabla^{i_1}\psi^{i_2}\nabla^{i_3}\left( \frac{\al}{\lvert u^{\prime}\rvert}\psi,\frac{\al}{\lvert u^{\prime}\rvert}\chibarhat, \frac{\al}{\lvert u^{\prime}\rvert}\tildetr \right) \nabla^{i_4} \left( \frac{\alpha}{\al}\right) } \hspace{.5mm} \text{d}u^{\prime} \\ \leq&\int_{u_{\infty}}^u \frac{1}{\lvert u^\prime \rvert} \cdot \frac{\al \cdot O^2}{\lvert u^\prime \rvert} \hspace{.5mm} \text{d}u^\prime = \frac{\al \cdot O^2}{\lvert u \rvert}
		\leq 1 . 
		\end{align*} Finally, we can bound $T_9$ as follows: 
		
		\begin{align*}
		&\int_{u_{\infty}}^u \frac{\al}{\lvert u^\prime \rvert^2}  \scaletwoSuprime{(\al)^i \sum_{i_1+i_2+i_3+i_4 +1=i} \nabla^{i_1}\psi^{i_2+1}\nabla^{i_3}\tr\chibar \nabla^{i_4}\alpha}\hspace{.5mm} \text{d}u^{\prime} \\ =& \int_{u_{\infty}}^u \frac{1}{a }\ScaletwoSuprime{(\al)^{i+2} \sum_{i_1+i_2+i_3+i_4 +1=i} \nabla^{i_1}\psi^{i_2+1}\nabla^{i_3}\left( \frac{a}{\lvert u^\prime \rvert^2}\tr\chibar \right) \nabla^{i_4}\left( \frac{\alpha}{\al}\right)}\hspace{.5mm} \text{d}u^{\prime} \\ \leq& \int_{u_{\infty}}^u \frac{1}{a}\cdot \frac{a\cdot O^3}{\lvert u^\prime \rvert^2} \hspace{.5mm} \text{d}u^{\prime} \leq 1.
		\end{align*}In the last inequality above we have used Proposition \ref{usefulstatements}. Putting all the estimates together, the result follows.

	\end{proof}
	\par \noindent
	
	\par \noindent We move on to estimates for the curvature components $\tbeta, \rho, \sigma, \tbetabar, \alphabar$.
	
	\begin{proposition}\label{L2curvature2}
		Let $\Psi \in \begin{Bmatrix} \tbeta, \rho ,\sigma, \tbetabar, \alphabar \end{Bmatrix}$. Under the assumptions of Theorem \ref{main1} and \eqref{bootstrapbounds}, we have
		\[ \sum_{i \leq 9} \hspace{.5mm}\scaletwoSu{(\al \nabla)^i \Psi} \leq \mathcal{R}[\alpha] + \underline{\mathcal{F}}[\rho_F,\sigma_F]  +1.      \]
	\end{proposition}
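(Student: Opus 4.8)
The plan is to run, for every $\Psi \in \{\tbeta,\rho,\sigma,\tbetabar,\alphabar\}$ simultaneously, the scheme already used for $\alpha$ in Proposition~\ref{alphaproposition}, but now in the \emph{outgoing} direction. For each such $\Psi$ we have a renormalized Bianchi equation of the schematic form $\nabla_4 \Psi + c_\Psi\,\tr\chi\,\Psi = \nabla \Psi' + (\psi,\chihat,\chibarhat,\tr\chibar)\cdot(\alpha,\Psi) + (\text{Maxwell derivative terms}) + (\text{lower order})$ recorded in the list of schematic renormalized Bianchi equations above. Commuting with $i\le 9$ angular derivatives via Proposition~\ref{commutationformulaeprop}, and then invoking the scale-invariant $\nabla_4$-transport estimate of Proposition~\ref{prop36}, we obtain
\[ \scaletwoSu{(\al\nabla)^i \Psi} \lesssim \lVert (\al\nabla)^i \Psi \rVert_{\mathcal{L}^2_{(sc)}(S_{u,0})} + \int_0^{\ubar} \lVert (\al\nabla)^i(\mathrm{RHS}) \rVert_{\mathcal{L}^2_{(sc)}(S_{u,\ubar^\prime})}\hspace{.5mm}\text{d}\ubar^\prime. \]
The initial-data term vanishes identically, since $\Hbar_0$ carries Minkowskian data, so every curvature component and every Maxwell component — hence $\tbeta=\beta-\tfrac12 R_{4(\cdot)}$ and $\tbetabar=\betabar+\tfrac12 R_{3(\cdot)}$ — is zero there. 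The \emph{favorable} feature of the $e_4$-direction is that the commutation formula of Proposition~\ref{commutationformulaeprop} produces only $\psi$- and $\chihat$-coefficients, with no $\tr\chibar$-anomaly; the $c_\Psi\,\tr\chi\,\Psi$ term on the left carries an extra $|u|^{-1}$ and is absorbed by Gr\"onwall exactly as in the proof of Proposition~\ref{prop36}.

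It then remains to estimate the right-hand side family by family. First, the top-order curvature term $\nabla^{i+1}\Psi'$: for $\tbeta$ this is $\operatorname{div}\alpha$, and a Cauchy--Schwarz over $\ubar\in[0,1]$ controls it by $\tfrac{1}{\al}\scaletwoHu{(\al\nabla)^{i+1}\alpha}=\mathcal{R}_{i+1}[\alpha]\le \mathcal{R}[\alpha]$ (legitimate since $i+1\le 10$); for $\rho,\sigma,\tbetabar,\alphabar$ the analogous leading term involves $\nabla^{i+1}$ of a lower-signature member of the same renormalized set and lands, in the same way, in the curvature flux $\mathcal{R}$. Second, the Maxwell derivative terms $\alpha_F\cdot\nabla\alpha_F$, $\Y\cdot\nabla(\Y,\alpha_F)$, $(\alpha_F,\Y)\cdot\nabla\Y$: one extracts the $\al$-weights to balance the anomalous scaling of $\alpha_F$, bounds the lowest-order factor in $\mathcal{L}^\infty_{(sc)}$ using the $L^2(S_{u,\ubar})$ estimates of the previous section, and bounds the top-order factor — at most $\scaletwoSu{(\al\nabla)^{i+1}\alpha_F}$ or $\scaletwoSu{(\al\nabla)^{i+1}\alphabar_F}$ — by Propositions~\ref{alphaFproposition} and \ref{othermaxwellprop}; this is where $\underline{\mathcal{F}}[\rho_F,\sigma_F]$ enters, and the scale-invariant H\"older gain $|u|^{-1}$ together with the $\ubar$-integral over $[0,1]$ leaves a contribution $\lesssim \underline{\mathcal{F}}[\rho_F,\sigma_F]+1$. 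Third, products of Ricci coefficients with curvature or among themselves, and the Maxwell quadratic terms $(\psi,\chibarhat,\chih,\tr\chibar)\cdot(\alpha_F,\Y)\cdot(\alpha_F,\Y)$: these are handled schematically through Proposition~\ref{usefulstatements} and the Ricci estimates already established; each such term carries at least one H\"older gain $|u|^{-1}$, so after the $\ubar$-integral one gets $O^k/|u|$-type quantities $\lesssim 1$ for $a$ large, absorbed into the final $+1$. Finally, for terms with a bare $\tr\chibar$-coefficient: when a derivative hits $\tr\chibar$ we rewrite $\nabla\tr\chibar=\nabla\widetilde{\tr\chibar}$ and invoke the refined bound of Proposition~\ref{trchibarboundRicci}, while otherwise the $a/|u|^2$-weight of $\tr\chibar$ supplies the decay. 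Summing over $i\le 9$ and over the five components yields the claim.

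The main obstacle is the top-order bookkeeping. One must ensure that (a) the single curvature derivative $\nabla^{i+1}\Psi'$ in each equation is paired with precisely the right flux norm — no loss of an angular derivative (the cap $i\le 9$ is dictated by this, the fluxes running only up to $10$ derivatives) and no spurious power of $a$; and (b) the anomalous scaling of $\alpha_F$ (size $\al$ rather than $O(1)$ in the scale-invariant norms) is tracked consistently through every H\"older splitting, so that the Maxwell derivative terms produce $\underline{\mathcal{F}}[\rho_F,\sigma_F]$ and nothing worse. The $\alphabar$-equation is the most delicate, since $\alphabar$ has the largest signature and its $\nabla_4$-transport carries the most singular $|u|$-weights near the center; one has to check that each term on its right-hand side — in particular $\Y\cdot\nabla\Y$ and $(\psi,\chibh)\cdot\Y\cdot(\Y,\alpha_F)$ — comes with enough $|u|$-decay to survive the $\ubar$-integration.
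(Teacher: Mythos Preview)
Your overall strategy matches the paper's: use the $\nabla_4$--transport equation, commute $i\le 9$ times, and apply Proposition~\ref{prop36}. The identification of $\mathcal{R}[\alpha]$ as arising from the top-order term $\nabla^{i+1}\alpha$ in the $\tbeta$--equation is correct, as is the vanishing of the initial data on $\Hbar_0$.

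The genuine gap is in your third family. You claim that the products $(\psi,\chibarhat)\cdot(\Psi,\alpha)$ and the Maxwell cubic $(\psi,\chibarhat,\chihat,\tr\chibar)\cdot(\alpha_F,\Y)\cdot(\alpha_F,\Y)$ each ``carry at least one H\"older gain $|u|^{-1}$'' and hence land in $O^k/|u|\lesssim 1$. This is false for two specific terms. First, $\chibarhat\cdot\alpha$ (present in the $\rho,\sigma$ equations): using only the bootstrap one finds
\[
\scaletwoSu{(\al)^i\,\chibarhat\cdot\nabla^i\alpha}\le \frac{1}{|u|}\cdot\frac{|u|\,O}{\al}\cdot \al\,O = O^2,
\]
with \emph{no} residual $|u|$-decay. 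Second --- and this is where the $\underline{\mathcal{F}}[\rho_F,\sigma_F]$ in the statement actually originates, not from your Maxwell derivative family --- the triple anomaly $\tr\chibar\cdot\alpha_F\cdot\alpha_F$: since $\scaleinfinitySu{\tr\chibar}\sim|u|^2/a$ and each $\alpha_F$ factor contributes $\al\,O$, one obtains $O^2$ (or $O^3$ distributing derivatives), again borderline. The paper closes these by invoking the \emph{improved} sphere estimates already proved in Section~4 and Proposition~\ref{alphaproposition}: $O[\chibarhat]\lesssim 1$ (Proposition~\ref{chibarhatproposition}), $O[\alpha]\lesssim 1$ for $\le 9$ derivatives (Proposition~\ref{alphaproposition}), $O[\tr\chibar]\lesssim 1$ (Proposition~\ref{trchibarboundRicci}), and $O_{i_4}[\alpha_F/\al]\cdot O_{i_5}[\alpha_F/\al]\le(\underline{\mathcal F}[\rho_F,\sigma_F]+1)\cdot 1$ via Proposition~\ref{alphaFproposition}, using that at most one of $i_4,i_5$ is near top order. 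Without explicitly feeding these improved bounds back in, the argument does not close; Proposition~\ref{usefulstatements} alone is insufficient here because it records only the bootstrap size $O$.
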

	
	\begin{proof} The terms $\Psi$ satisfy the following schematic equations:
		
			\begin{equation}
			\nabla_4 \Psi = \nabla( \Psi, \alpha) + (\psi,\chibarhat) \cdot (\Psi, \alpha) + (\alpha_F, \Y ) \cdot \nabla(\alpha_F, \Y)+ (\psi,\chibarhat,\tr\chibar,
			\chihat) \cdot (\alpha_F, \Y) \cdot (\alpha_F, \Y). \label{schematicPsi}
			\end{equation} Commuting \eqref{schematicPsi} with $i$ angular derivatives and using Proposition \ref{commutationformulaeprop}, we have
		
	\begin{align*}
			\nabla_4 \nabla^i \Psi =& \nabla^{i+1}(\Psi,\alpha) + \sum_{i_1+i_2+i_3+1=i} \nabla^{i_1} \psi^{i_2+1} \nabla^{i_3+1} (\Psi, \alpha) + \sum_{i_1+i_2+i_3+i_4=i} \nabla^{i_1} \psi^{i_2} \nabla^{i_3}(\psi,\chibarhat) \nabla^{i_4}(\Psi,\alpha) \\ 
			&+ \sum_{i_1+i_2+i_3+i_4=i} \nabla^{i_1} \psi^{i_2} \nabla^{i_3}(\alpha_F,\Y) \nabla^{i_4+1}(\alpha_F,\Y)\\
			&+ \sum_{i_1+i_2+i_3+i_4+i_5=i} \nabla^{i_1} \psi^{i_2} \nabla^{i_3}(\psi,\chibarhat,\tr\chibar,\chihat)\nabla^{i_4}(\alpha_F,\Y) \nabla^{i_5}(\alpha_F,\Y)\\
			&+ \sum_{i_1+i_2+i_3+i_4=i} \nabla^{i_1} \psi^{i_2} \nabla^{i_3}(\psi,\chihat) \nabla^{i_4} \Psi.
			\end{align*} 
		Applying Proposition \ref{prop36} and multiplying both sides by $(\al)^i$ we get
		
		\begin{align*}
		&\scaletwoSu{\aln \Psi} \\  
		\leq& \int_0^{\ubar} \scaletwoSuubarprime{(\al)^i \nabla^{i+1} (\Psi,\alpha)} \hspace{.5mm} \text{d} \ubar^{\prime}+ \sum_{i_1+i_2+i_3+1=i} \int_0^{\ubar} \scaletwoSuubarprime{(\al)^{i} \nabla^{i_1} \psi^{i_2+1} \nabla^{i_3+1}(\Psi,\alpha)} \hspace{.5mm} \text{d}\ubar^\prime\\
		&+ \sum_{i_1+i_2+i_3+i_4=i} \int_0^{\ubar}  \scaletwoSuubarprime{(\al)^i \nabla^{i_1} \psi^{i_2}\nabla^{i_3} (\psi,\chibarhat) \nabla^{i_4}(\Psi,\alpha) } \hspace{.5mm} \text{d}\ubar^{\prime} \\
		&+ \sum_{i_1+ i_2 + i_3 +i_4 =i} \int_0^{\ubar} \scaletwoSuubarprime{(\al)^i\nabla^{i_1} \psi^{i_2} \nabla^{i_3}(\alpha_F,\Y) \nabla^{i_4+1} (\alpha_F,\Y)} \hspace{.5mm} \text{d}\ubar^\prime  \\
		&+ \sum_{i_1+ i_2 + i_3 +i_4 +i_5 =i} \intscaletwoSuubarprime{(\al)^i\nabla^{i_1} \psi^{i_2} \nabla^{i_3} (\psi,\chibarhat,\tr\chibar)\nabla^{i_4}(\alpha_F,\Y) \nabla^{i_5}(\alpha_F,Y)} \\
		&+ \sum_{i_1 + i_2 +i_3 + i_4 =i} \intscaletwoSuubarprime{(\al)^i \nabla^{i_1}\psi^{i_2} \nabla^{i_3}(\psi,\chihat) \nabla^{i_4}\Psi}\\
		\leq& \mathcal{R}[\alpha]+ \frac{1}{\al}\mathcal{R}[\Psi] +  \sum_{i_1+i_2+i_3+1 =i} \intscaleTwoSuubarprime{(\al)^{i+1} \nabla^{i_1}\psi^{i_2+1} \nabla^{i_3+1}\left(\frac{\Psi}{\al}, \frac{\alpha}{\al}\right)}\\
		&+  \sum_{i_1+i_2+i_3+i_4=i} \int_0^{\ubar} \frac{\lvert u \rvert}{\al} \ScaletwoSuubarprime{(\al)^{i+1} \nabla^{i_1} \psi^{i_2} \nabla^{i_3}\left(\frac{\al}{\lvert u \rvert} \psi, \frac{\al}{\lvert u \rvert}\chibarhat \right) \nabla^{i_4}\left(\frac{\Psi}{\al}, \frac{\alpha}{\al}\right)} \hspace{.5mm} \text{d}\ubar^\prime \\
		&+  \sum_{i_1+i_2+i_3+i_4=i} \intscaleTwoSuubarprime{(\al)^{i+2}\nabla^{i_1} \psi^{i_2} \nabla^{i_3}\left(\frac{\alpha_F}{\al}, \frac{\Y}{\al}\right) \nabla^{i_4+1}\left(\frac{\alpha_F}{\al}, \frac{\Y}{\al}\right)}  \\
		&+  \sum_{i_1+i_2+i_3+i_4+i_5=i} \frac{\lvert u \rvert^2}{a} \int_0^{\ubar} \ScaletwoSuubarprime{(\al)^{i+2} \nabla^{i_1} \psi^{i_2} \nabla^{i_3}\left(\frac{a}{\lvert u \rvert^2}\psi, \frac{a}{\lvert u \rvert^2}\chibarhat, \frac{a}{\lvert u \rvert^2}\tr\chibar\right) \nabla^{i_4}\left(\frac{\alpha_F}{\al}, \frac{\Y}{\al}\right)   \nabla^{i_5}\left(\frac{\alpha_F}{\al}, \frac{\Y}{\al}\right) } \hspace{.5mm} \text{d}\ubar^\prime \\ 
		&+ \sum_{i_1+i_2+i_3+i_4=i} \intscaleTwoSuubarprime{(\al)^{i+1} \nabla^{i_1} \psi^{i_2} \nabla^{i_3}\left(\frac{\psi}{\al}, \frac{\chihat}{\al}\right)\nabla^{i_4} \Psi} \\ :=& \sum_{k=1}^7 J_k.
		\end{align*}We focus on each $J_k$-term separately. 
		
		\begin{itemize}
			\item We have $J_1+J_2 \leq \mathcal{R}[\alpha]+1$.
			\item We have \begin{equation}
			J_3 = \sum_{i_1+i_2+i_3+1 =i} \intscaleTwoSuubarprime{(\al)^{i+1} \nabla^{i_1}\psi^{i_2+1} \nabla^{i_3+1}\left(\frac{\Psi}{\al}, \frac{\alpha}{\al}\right)} \\ 
			\leq \frac{\al\cdot O^2}{\lvert u \rvert}.
			\end{equation} 
			\item We have \begin{align*}
			J_4 =& \sum_{i_1+i_2+i_3+i_4=i} \int_0^{\ubar} \frac{\lvert u \rvert}{\al} \ScaletwoSuubarprime{(\al)^{i+1} \nabla^{i_1} \psi^{i_2} \nabla^{i_3}\left(\frac{\al}{\lvert u \rvert} \psi, \frac{\al}{\lvert u \rvert}\chibarhat\right) \nabla^{i_4}\left(\frac{\Psi}{\al}, \frac{\alpha}{\al}\right)} \hspace{.5mm} \text{d}\ubar^\prime \\ 
			\leq& \frac{\lvert u \rvert}{\al}\cdot \frac{\al}{\lvert u \rvert}(O[\chibarhat]\cdot O[\alpha] +1) \leq O[\alpha] +1 \leq 1.
			\end{align*}Note that here we have made use of Proposition \ref{chibarhatproposition} and Proposition \ref{alphaproposition} and used the improved (compared to the bootstrap assumptions) bounds on $\chibarhat$ and $\alpha$.
			\item We have\begin{align*} &J_5\\=&\sum_{i_1+i_2+i_3+i_4=i} \intscaleTwoSuubarprime{(\al)^{i+2}\nabla^{i_1} \psi^{i_2} \nabla^{i_3}\left(\frac{\alpha_F}{\al}, \frac{\Y}{\al}\right) \nabla^{i_4+1}\left(\frac{\alpha_F}{\al}, \frac{\Y}{\al}\right)}     \\
			\leq& \int_0^{\ubar} \ScaletwoSuubarprime{(\al)^{i+2} \left(\frac{\alpha_F}{\al}\right) \cdot \nabla^{i+1} \left(\frac{\alpha_F}{\al}\right)    }  \hspace{.5mm} \text{d} \ubar^{\prime} \\
			&+ \sum_{\substack{i_1+i_2+i_3+i_4=i \\ i_4 \leq i-1}}   \intscaleTwoSuubarprime{(\al)^{i+2}\nabla^{i_1} \psi^{i_2} \nabla^{i_3}\left(\frac{\alpha_F}{\al}, \frac{\Y}{\al}\right) \nabla^{i_4+1}\left(\frac{\alpha_F}{\al}, \frac{\Y}{\al}\right)} \\ 
			\leq&   \frac{\al}{\lvert u \rvert} \cdot O \cdot    \intscaleTwoSuubarprime{(\al \nabla)^{i+1} \left(\frac{\alpha_F}{\al}, \frac{\Y}{\al}\right)} \\ 
			&+ \sum_{\substack{i_1+i_2+i_3+i_4=i \\ i_4 \leq i-1}}   \intscaleTwoSuubarprime{(\al)^{i+2}\nabla^{i_1} \psi^{i_2} \nabla^{i_3}\left(\frac{\alpha_F}{\al}, \frac{\Y}{\al}\right) \nabla^{i_4+1}\left(\frac{\alpha_F}{\al}, \frac{\Y}{\al}\right)} \\ 
			\leq& \frac{\al \cdot O}{\lvert u \rvert} \cdot \left(\mathcal{F}[\alpha_F], \frac{\mathcal{F}[\Y]}{\al}\right) \\
			&+  \sum_{\substack{i_1+i_2+i_3+i_4 = i, \\ 1 \leq i_4+1 \leq 6  }} \int_0^{\ubar} \frac{\al}{\lvert u \rvert} \cdot \big\lVert (\al \nabla)^{i_4+1} \left(\frac{\alpha_F} {\al}, \frac{\Y}{\al}\right) \big\rVert_{\mathcal{L}^{\infty}_{(sc)}(S_{u,\ubar^\prime)}} \ScaletwoSuubarprime{(\al)^{i_1+i_2+i_3}\nabla^{i_1}\psi^{i_2} \nabla^{i_3} \left(\frac{\alpha_F}{\al}, \frac{\Y}{\al} \right)   } \hspace{.5mm} \text{d} \ubar^{\prime} \\ 
			&+ \sum_{\substack{i_1+i_2+i_3+i_4 = i, \\ 6 <i_4+1 \leq 9  }} \int_0^{\ubar} \frac{\al}{\lvert u \rvert} \cdot \big\lVert (\al \nabla)^{i_4+1} \left(\frac{\alpha_F} {\al}, \frac{\Y}{\al}\right) \big\rVert_{\mathcal{L}^{2}_{(sc)}(S_{u,\ubar^\prime)}} \big\lVert (\al)^{i_1+i_2+i_3}\nabla^{i_1}\psi^{i_2} \nabla^{i_3} \left(\frac{\alpha_F}{\al}, \frac{\Y}{\al} \right)   \big\rVert_{\mathcal{L}^\infty_{(sc)}(S_{u,\ubar^\prime})} \hspace{.5mm} \text{d} \ubar^{\prime}\\ 
			\leq&  \frac{\al \cdot O}{\lvert u \rvert}\cdot\left(\mathcal{F}[\alpha_F], \frac{\mathcal{F}[\Y]}{\al}\right) + \frac{\al \cdot O^2}{\lvert u \rvert} + \frac{\al}{\lvert u \rvert}\cdot O \cdot \left(O + \sum_{1\leq i_2 \leq 8} \frac{(\al)^{i_2} \cdot O^{i_2+1}}{\lvert u \rvert^{i_2}}\right)  \leq 1. \end{align*}

			\item We have  \begin{align*}
			J_6=& 			\sum_{i_1+i_2+i_3+i_4+i_5=i} \frac{\lvert u \rvert^2}{a}\\
			&\quad \times \int_0^{\ubar} \ScaletwoSuubarprime{(\al)^{i+2} \nabla^{i_1} \psi^{i_2} \nabla^{i_3}\left(\frac{a}{\lvert u \rvert^2}\psi, \frac{a}{\lvert u \rvert^2}\chibarhat, \frac{a}{\lvert u \rvert^2}\tr\chibar\right)\nabla^{i_4}\left(\frac{\alpha_F}{\al}, \frac{\Y}{\al}\right)   \nabla^{i_5}\left(\frac{\alpha_F}{\al}, \frac{\Y}{\al}\right) } \hspace{.5mm} \text{d}\ubar^\prime \\
			\leq& \frac{\lvert u\rvert^2}{a} \cdot a \cdot \left( \frac{O[\tr\chibar]\cdot O_{i_4}\left[ \frac{\alpha_F}{\al} \right]\cdot O_{i_5}\left[ \frac{\alpha_F}{\al} \right]}{\lvert u \rvert^2}  \right)+1.			\end{align*}The logic behind the bound above is as follows. If the term we wish to bound, schematically, is not in the form of a triple anomaly, then the estimates are not borderline and the term is bounded above by $1$. The worst term is when we wish to bound
			
			\[ \sum_{i_1+i_2+i_3+i_4+i_5=i} \frac{\lvert u \rvert^2}{a} \int_0^{\ubar} \ScaletwoSuubarprime{(\al)^{i+2} \nabla^{i_1} \psi^{i_2} \nabla^{i_3}\left( \frac{a}{\lvert u \rvert^2}\tr\chibar\right) \nabla^{i_4}\left(\frac{\alpha_F}{\al}\right)   \nabla^{i_5}\left(\frac{\alpha_F}{\al}\right) } \hspace{.5mm} \text{d}\ubar^\prime .    \]This term can only be bounded by $O[\tr\chibar]\cdot O_{i_4}\left[ \frac{\alpha_F}{\al} \right]\cdot O_{i_5}\left[ \frac{\alpha_F}{\al} \right]$. We now use the improved bounds from Propositions \ref{trchibarboundRicci} and \ref{alphaFproposition} to bound $O[\tr\chibar] \leq 1$ and\footnote{We need to improve Proposition \ref{alphaFproposition} like Proposition \ref{alphaproposition}.} $O_{i_4}\left[ \frac{\alpha_F}{\al} \right] \cdot O_{i_5}\left[ \frac{\alpha_F}{\al} \right] \leq \left(\underline{\mathcal{F}}[\rho_F,\sigma_F]+1\right)\cdot 1$. This is because at least one of the indices $i_4, i_5$ will not be of top order, hence the estimate from Proposition \ref{alphaFproposition} for that term will be better. Combining these estimates, we arrive at \[J_6 \leq \underline{\mathcal{F}}[\rho_F,\sigma_F]+1.\]
			
			\item The final term $J_7$ is handled as follows:

			\begin{align*}
			&\sum_{i_1+i_2+i_3+i_4=i} \intscaleTwoSuubarprime{(\al)^{i+1} \nabla^{i_1} \psi^{i_2} \nabla^{i_3}\left(\frac{\psi}{\al}, \frac{\chihat}{\al}\right)\nabla^{i_4} \Psi} \\ \leq& \int_0^{\ubar} \frac{\al \cdot O^2}{\lvert u \rvert} \hspace{.5mm} \text{d}\ubar^{\prime} \leq \frac{\al \cdot O^2}{\lvert u \rvert}  \leq 1.
			\end{align*}
		\end{itemize}Combining all the estimates above, we arrive at the desired conclusion.
		
	\end{proof}

	\section{Elliptic estimates for top-order derivatives of Ricci coefficients} 
	
	\subsection{General elliptic estimates for Hodge systems}
	
	We recall here the definition of divergence and curl for a symmetric, covariant tensor of arbitrary rank:
	
	\[ (\div \phi)_{A_1\dots A_r} = \nabla^B \phi_{BA_1\dots A_r}, \] \[    (\curl \phi)_{A_1\dots A_r} = \slashed{\epsilon}^{BC} \nabla_B \phi_{CA_1\dots A_r}. \]The trace of such a tensor is defined by
	
	\[    (\tr\phi)_{A_1\dots A_{r-1}} = (\gamma^{-1})^{BC}\phi_{BCA_1\dots A_{r-1}}.      \] The main elliptic estimate that will be used here is the following:
	
	\begin{proposition} \label{mainellipticlemma}
		Under the assumptions of Theorem \ref{main1} and the bootstrap assumptions \eqref{bootstrapbounds}, let $\phi$ be a totally symmetric $(r+1)-$covariant tensorfield on a metric $2-$sphere $\left(\mathbb{S}^2,\gamma\right)$, satisfying \[ \div \phi = f, \hspace{2mm} \curl \phi = g, \hspace{2mm}\tr\phi =h. \] Then, for $ 1\leq i \leq 11 $, we have \[ \scaletwoSu{\aln \phi} \lesssim \al \sum_{j=0}^{i-1} \scaletwoSu{(\al \nabla)^j (f,g)} + \sum_{j=0}^{i-1} \scaletwoSu{(\al \nabla)^j(\phi,h)}.\]
	\end{proposition}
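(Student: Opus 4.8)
The plan is to argue in two stages: first establish the case $i=1$ from an integrated Bochner (Weitzenb\"ock) identity for Hodge systems on the metric $2$-sphere $(S_{u,\ubar},\gamma)$, and then obtain $2\le i\le 11$ by commuting the system $(\div\phi,\curl\phi,\tr\phi)=(f,g,h)$ with angular derivatives and inducting on $i$.

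For the base case I would integrate by parts the Bochner identity for a totally symmetric $(r+1)$-covariant tensorfield on a $2$-surface, obtaining a relation of the schematic form
\[ \int_{S_{u,\ubar}}\Bigl(|\nabla\phi|^2 + c_r\,K\,|\phi|^2\Bigr)=\int_{S_{u,\ubar}}\Bigl(|\div\phi|^2+|\curl\phi|^2+c_r'\,|\tr\phi|^2\Bigr)+\int_{S_{u,\ubar}}\mathrm{l.o.t.}, \]
with $c_r,c_r'>0$ depending only on $r$, where $K$ is the Gauss curvature and the lower-order terms are of the form $(\text{curvature})\cdot|\phi|^2$; crucially, no derivative of $\tr\phi=h$ appears, since taking a trace and its formal adjoint are algebraic operations. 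The key input is the lower bound $K\gtrsim |u|^{-2}$ in $\mathcal{D}$, which follows from the metric estimates of Section 3 together with the Gauss (constraint) equation $K=-\rho_\mathcal{R}-\tfrac14\tr\chi\,\tr\chibar+\tfrac12\chihat\cdot\chibarhat$ and $\rho_\mathcal{R}=\rho-\tfrac12(\rho_F^2+\sigma_F^2)$; it lets one absorb the lower-order error into the left-hand side once $a$ is large, so that $\int|\nabla\phi|^2 + |u|^{-2}\int|\phi|^2\lesssim \int(|f|^2+|g|^2+|h|^2)$. Rewriting this in scale-invariant norms, with conservation of signatures ensuring that all pieces carry matching weights, gives $\scaletwoSu{\al\nabla\phi}\lesssim \al\scaletwoSu{(f,g)}+\scaletwoSu{(\phi,h)}$.

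For $2\le i\le 11$ I would apply the base case to $\nabla^{i-1}\phi$. Iterating the first-order angular commutation identities (as packaged in Proposition \ref{commutationformulaeprop}), $\nabla^{i-1}\phi$ satisfies a Hodge system of the same type whose sources are $\nabla^{i-1}(f,g,h)$ plus commutator terms, each a finite sum of schematic products $\nabla^{a}(\psi,\Psi,\Y)\cdot\nabla^{b}\phi$ with $a+b\le i-2$; here the Gauss curvature generated by the angular commutators is expanded through the constraint equation above in terms of Ricci coefficients, curvature and Maxwell components, all of which are controlled since at most $10$ angular derivatives occur (this is where $i\le 11$ is used). The base case then bounds $\scaletwoSu{(\al\nabla)^i\phi}$ by $\al\scaletwoSu{(\al\nabla)^{i-1}(f,g)}+\scaletwoSu{(\al\nabla)^{i-1}(\phi,h)}$ plus appropriately weighted scale-invariant norms of the commutator terms, and each such contribution is estimated, via the scale-invariant H\"older inequalities together with \eqref{bootstrapbounds} and Proposition \ref{usefulstatements}, by $\lesssim \sum_{j\le i-1}\scaletwoSu{(\al\nabla)^j\phi}$ with a constant that is small because every product retains a factor $|u|^{-1}\ll1$ after the signatures are matched. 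Induction on $i$ then closes the estimate.

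The main obstacle is the bookkeeping in the second stage: one has to check that \emph{every} term produced by the iterated commutation — especially those in which most of the derivatives land on the Gauss-curvature/Ricci factors — carries a spare power of $|u|^{-1}$ once scale-invariant weights are assigned, so that the large bootstrap constants $O,R,F$ never enter to a damaging power; it is precisely here that $i\le 11$ and $(O+R+F)^{20}\le a^{1/16}$ are needed. A secondary but indispensable point is establishing the sign and the $|u|^{-2}$ lower bound of the curvature term in the Bochner identity, which is what converts the base-case identity into a genuine estimate and rests on the Ricci and curvature bounds proved earlier.
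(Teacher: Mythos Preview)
Your base-case sketch is close to the paper's, but two points are off. First, the exact identity (from Chapter~7 of \cite{Chr:book}) has no error terms: \[\int_{S_{u,\ubar}}\bigl(|\nabla\phi|^2+(r+1)K|\phi|^2\bigr)=\int_{S_{u,\ubar}}\bigl(|f|^2+|g|^2+rK|h|^2\bigr),\] so the issue is not absorbing lower-order terms via a lower bound on $K$ but rather controlling the factor $K$ multiplying $|h|^2$ (and, for higher $i$, the products $\nabla^jK\cdot\nabla^k(\phi,h,f,g)$ that appear). The paper handles this by first proving a separate lemma (Lemma~\ref{Klemma}) giving $\scaleinfinitySu{(\al\nabla)^kK}\lesssim1$ for $k\le7$ and $\mathcal L^2_{(sc)}$-bounds for $k\le9$; these $K$-estimates, not a sign condition on $K$, are what drive the argument.

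The inductive step as you outlined it has a genuine gap. The Bochner identity above is specific to \emph{totally symmetric} tensors, and $\nabla^{i-1}\phi$ is not totally symmetric for $i\ge2$; so you cannot simply ``apply the base case to $\nabla^{i-1}\phi$''. The paper instead passes to the symmetrized derivative $(\nabla\phi)^s$, which \emph{is} totally symmetric of one higher rank and satisfies an explicit Hodge system
\[
\div(\nabla\phi)^s=(\nabla f)^s-\tfrac1{r+2}({}^*\nabla g)^s+(r+1)K\phi-\tfrac{2K}{r+1}(\gamma\otimes^s h),\quad \curl(\nabla\phi)^s=\tfrac{r+1}{r+2}(\nabla g)^s+(r+1)K({}^*\phi)^s,
\]
with trace $\tfrac2{r+2}f+\tfrac r{r+2}(\nabla h)^s$. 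Iterating this system, the only new ingredients in the sources are angular derivatives of $K$, and these are precisely what Lemma~\ref{Klemma} bounds (either in $L^\infty$ when at most $7$ derivatives hit $K$, or in $L^2$ otherwise). Note also that Proposition~\ref{commutationformulaeprop} concerns $[\nabla_3,\nabla]$ and $[\nabla_4,\nabla]$, not angular--angular commutators on $S_{u,\ubar}$; the latter produce Gauss curvature terms, not the schematic $\psi,\Psi,\Upsilon$ factors you describe. Expanding $K$ through the Gauss equation would eventually reduce to those, but the paper avoids that detour entirely by keeping $K$ intact and using Lemma~\ref{Klemma}.
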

	
	\begin{proof}
		Recall the following identity from Chapter 7 in [Christodoulou] that, for $\phi, f,g$ and $h$ as above, there holds \be \label{ellipticid} \int_{S_{u,\ubar}}\left( \lvert \nabla \phi \rvert^2 +(r+1)K \lvert \phi\rvert^2 \right) \hspace{.5mm}\text{d}\mu_{\gamma} =  \int_{S_{u,\ubar}} \left( \lvert f \rvert^2 + \lvert g \rvert^2 + r K \lvert h \rvert^2 \right)\hspace{.5mm}\text{d}\mu_{\gamma}.  \ee Here $K$ denotes the Gauss curvature of the  sphere. To prove the lemma for the case $i=1$ first, we need to control $K$ in $L^{\infty}$. To that end, we will prove the following stronger lemma:

		\begin{lemma} \label{Klemma}
			For $0\leq k \leq 7,$ there holds $\scaleinfinitySu{(\al \nabla)^k K} \lesssim 1.$
		\end{lemma}
		
		\begin{proof}
			We begin by recalling that \[ K = -\rho_{R} - \frac{1}{4}\tr\chi\hspace{.5mm}\tr\chibar + \frac{1}{2}\chihat \cdot \chibarhat = -\rho  - \frac{1}{4}\tr\chi\hspace{.5mm}\tr\chibar + \frac{1}{2}\chihat \cdot \chibarhat + \frac{1}{2}\left(\lvert \rho_F \rvert^2 + \lvert \sigma_F \rvert^2\right)  \]and $s_2(K)=1.$ By virtue of the scale-invariant version of the $L^2-L^{\infty}$ Sobolev embedding inequality from Proposition \ref{Sobolevembedding}, there holds \be \sum_{0 \leq k \leq 7} \scaleinfinitySu{(\al \nabla)^k K} \lesssim \sum_{0\leq j \leq 9} \scaletwoSu{(\al \nabla)^j K}. \ee We proceed to estimate, for a fixed $0 \leq i \leq 9$, the term $\scaletwoSu{(\al \nabla)^i K}$. We have 
			\begin{align*}
			\scaletwoSu{\aln K} \lesssim& \scaletwoSu{\aln \rho} + \sum_{i_1+i_2=i}\scaletwoSu{(\al)^i \nabla^{i_1} \tr\chi \nabla^{i_2}\tr\chibar}\\
			&+\sum_{i_1+i_2=i}\scaletwoSu{(\al)^i \nabla^{i_1} \chihat \nabla^{i_2}\chibarhat}+ \sum_{i_1+i_2=i}\scaletwoSu{(\al)^i \nabla^{i_1} \Y \nabla^{i_2}\Y}. 
			\end{align*}The first term above can be bounded by $1$, by Proposition \ref{L2curvature2}. For the second term, we have \begin{equation}
\begin{split}\label{Ksecondtermbound} 
			\sum_{i_1+i_2=i}\scaletwoSu{(\al)^i \nabla^{i_1} \tr\chi \nabla^{i_2}\tr\chibar} =& \frac{\lvert u \rvert^2}{a} \sum_{i_1+i_2=i}\ScaletwoSu{(\al)^i \nabla^{i_1} \tr\chi \nabla^{i_2}\left(\frac{a}{\lvert u \rvert^2}\tr\chibar \right)}\\ \lesssim& \frac{\lvert u \rvert^2}{a} \cdot \frac{1}{\lvert u \rvert} \cdot \left( O_{\infty}[\tr\chibar]\cdot O_2[\tr\chi] + O_{2}[\tr\chibar]\cdot O_{\infty}[\tr\chi] \right).
			\end{split}
\end{equation}In the above inequality we have conditioned on the number of derivatives that fall on $\tr\chi$ and those that fall on $\tr\chibar$. Notice that, from Proposition \ref{trchibarboundRicci}, there holds $O_{\infty}[\tr\chibar] +O_2[\tr\chibar] \lesssim 1$. For $O_2[\tr\chi],$ from Proposition \ref{trchiboundRicci}, we read off \eqref{Kusefulequation} that \[ O_2[\tr\chi] \leq \frac{a}{\lvert u \rvert}O[\chihat,\alpha_F]\cdot O[\chihat,\alpha_F] + \frac{a }{\lvert u \rvert^2}O^3+ \frac{\al}{\lvert u \rvert}O^2.  \] Plugging this inequality in \eqref{Ksecondtermbound} and using $O[\chihat,\alpha_F]\lesssim 1$ from Propositions \ref{chihatproposition} and \ref{alphaFproposition} (remember crucially that we work with up to 9 derivatives at most, so the top order terms $\mathcal{R}[\alpha]$ and $\underline{\mathcal{F}}[\rho_F,\sigma_F]$ used to estimate $\chihat$ and $\alpha_F$ are redundant on the right-hand side) we arrive at \[ \sum_{i_1+i_2=i}\scaletwoSu{(\al)^i \nabla^{i_1} \tr\chi \nabla^{i_2}\tr\chibar} \lesssim 1. \] For the third term, there holds
			
			\begin{align*}
			\sum_{i_1+i_2=i} \scaletwoSu{(\al)^{i} \nabla^{i_1}\chihat \nabla^{i_2}\chibarhat} =& \lvert u \rvert \sum_{i_1+i_2=i} \ScaletwoSu{(\al)^i \nabla^{i_1}\left( \frac{\chihat}{\al} \right) \nabla^{i_2} \left( \frac{\al}{\lvert u \rvert} \chibarhat \right) }\\ \lesssim& O_{\infty}[\chibarhat] \cdot O_{2}[\chihat] + O_{2}[\chibarhat] \cdot O_{\infty}[\chihat] \lesssim 1 \cdot 1 = 1. 
			\end{align*}Here we have used Proposition \ref{chibarhatproposition} and Proposition \ref{chihatproposition}, where we achieve the better bound $O_2[\chihat] \lesssim 1,$ given that we work up to 9 derivatives, not 10, which means that we can bound the curvature term in the $\mathcal{L}^2_{(sc)}(S_{u,\ubar})-$norm instead of the $\mathcal{L}^2_{(sc)}(H_u^{0,\ubar})-$ norm. The bound on $\alpha$ is then invoked by Proposition \ref{alphaproposition}. Finally, the fourth term is bounded by $O^2/\lvert u \rvert \lesssim 1$, using proposition \ref{usefulstatements}. This concludes the proof of the lemma.
		\end{proof} \par \noindent By applying the scale-invariant version of H\"older's inequality for $K \lvert h \rvert^2$ and using Lemma \ref{Klemma}, we obtain the result for $i=1$. For $i>1$, the symmetrized angular derivative of $\phi$ defined by \[ (\nabla \phi)^s_{BA_1\dots A_{r+1}} = \frac{1}{r+2}\left( \nabla_B \phi_{A_1\dots A_r} +\sum_{i=1}^{r+1} \nabla_{A_i}\phi_{A_1\dots \langle A_i  \rangle B\dots A_{r+1}} \right)    \]satisfies the div-curl system \begin{equation} \begin{cases}
		\div (\nabla\phi)^s = (\nabla f)^s - \frac{1}{r+2}(\Hodge{\nabla g})^s +(r+1)K\phi -\frac{2K}{r+1}(\gamma \otimes^s h), \\ \curl (\nabla \phi)^s = \frac{r+1}{r+2}(\nabla g)^s + (r+1) K (\Hodge{ \phi})^s, \\ \tr (\nabla \phi)^s = \frac{2}{r+2}f + \frac{r}{r+2}(\nabla h)^s, \end{cases} \end{equation}where \[ \gamma \otimes^s h := \gamma_{A_i A_j} \sum_{i\leq j=1,\dots,r+1} h_{A_1\dots \langle A_i \rangle \dots \langle A_j \rangle \dots A_{r+1}}\]and \[ (\Hodge{\phi})^s_{A_1\dots A_{r+1}} :=  \frac{1}{r+1} \sum_{i=1}^{r+1} {\slashed{\epsilon}_{A_i}}^B \phi_{A_1\dots \langle A_i \rangle B \dots A_{r+1}}. \]Using \eqref{ellipticid} and iterating, we obtain that for $i\leq 11$ there holds

		\begin{equation}
\begin{split} \label{ellipticineq} 
		&\twoSu{\nabla^i \phi}^2\\ \lesssim& \twoSu{\nabla^{i-1}(f,g)}^2+ \lVert K ( \lvert \nabla^{i-2}(f,g) \rvert^2  + \lvert \nabla^{i-1}(\phi,h) \rvert^2 ) \rVert_{L^1(S_{u,\ubar})} \\    &+ \bigg\lVert K \left(\sum_{i_1+2i_2+i_3 =i-3} \nabla^{i_1}K^{i_2+1} \nabla^{i_3}(\phi,h) \right)^2 \bigg\rVert_{L^1(S_{u,\ubar})} +   \bigg\lVert K \left(\sum_{i_1+2i_2+i_3=i-4} \nabla^{i_1}K^{i_2+1} \nabla^{i_3}f\right)^2 \bigg\rVert_{L^1(S_{u,\ubar})} \\ &+ \sum_{i_1+2i_2+i_3 = i-2} \twoSu{\nabla^{i_1}K^{i_2+1} \nabla^{i_3}(\phi,h)}^2    + \sum_{i_1+2i_2+i_3= i-3} \twoSu{\nabla^{i_1}K \nabla^{i_2}(f,g)}^2,   \end{split}
\end{equation}where we have adopted the convention that $\sum_{i\leq -1} = 0$. Whenever a $K$--term appears with at most $7$ derivatives, we estimate it in $L^{\infty}$ or equivalently in $\mathcal{L}^{\infty}_{(sc)}$. Whenever a $K$--term contains between 8 and 9 derivatives we shall estimate it in $L^2$ and the rest of the terms in $L^{\infty}$, noting that we can estimate terms of the form $\lVert \nabla^i (f,g,\phi, h) \rVert_{L^{\infty}}$ with $i\leq 7$ by the corresponding norms in $L^2$ through the standard Sobolev embedding. By Lemma \ref{Klemma}, after translating back to standard $L^p$ norms, there holds
		\[ \sum_{i \leq 7} \inftySu{\lvert u\rvert^i \nabla^i K} + \sum_{j\leq 9} \twoSu{\lvert u \rvert^j \nabla^j K}  \lesssim 1.         \]Therefore, for $i\leq 11$, we have \[   \twoSu{ \lvert u \rvert^i \nabla^i \phi}^2 \lesssim  \sum_{j\leq i-1}  \left( \twoSu{ \lvert u \rvert^{j+1} \nabla^{j} (f,g)}^2  + \twoSu{\lvert u \rvert^j \nabla^j (\phi,h)}^2 \right) .            \]Translating the above equation  into scale-invariant norms and then multiplying it by $\frac{\lvert u \rvert^{2s_2(\phi)}}{a^{s_2(\phi)}}$, we arrive at \begin{equation} \scaletwoSu{\aln \phi}^2 \lesssim \sum_{j\leq i-1} \left(\scaletwoSu{ (\al)^{j+1} \nabla^j (f,g) }^2 + \scaletwoSu{ (\al \nabla)^j (\phi,h) }^2\right). \end{equation}Taking square roots above yields Proposition \ref{mainellipticlemma}. \end{proof}\par \noindent Finally, for the special case where $\phi$ is a symmetric, traceless $2-$tensor, we need only know its divergence:
	
	\begin{proposition}\label{mainellipticlemma2}
		Suppose $\phi$ is a symmetric, traceless $2-$tensor satisfying \[ \div \phi = f. \] Then, under the assumptions of Theorem \ref{main1} and the bootstrap assumptions \eqref{bootstrapbounds}, for $1\leq i \leq 11$, there holds \[ \scaletwoSu{\aln \phi} \lesssim \sum_{j\leq i-1} \left( \scaletwoSu{ (\al)^{j+1} \nabla^j f }+ \scaletwoSu{ (\al \nabla)^j \phi }\right). \] \end{proposition}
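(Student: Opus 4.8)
The strategy is to reduce the traceless symmetric $2$-tensor case to the general Hodge-system estimate of Proposition \ref{mainellipticlemma} by exhibiting, for a traceless symmetric $\phi$, a natural choice of $\curl \phi$ and $\tr\phi$ in terms of $f = \div\phi$ and $\phi$ itself, so that the extra inputs $g$ and $h$ in Proposition \ref{mainellipticlemma} can be dispensed with. First I would recall that for a symmetric traceless $2$-tensor on a $2$-sphere, $\tr\phi = 0$ identically, so $h=0$ and the term $\scaletwoSu{(\al\nabla)^j h}$ simply vanishes. The only genuine work is controlling $\curl\phi$: on a $2$-surface there is the pointwise identity relating $\curl$ of a symmetric traceless $2$-tensor to $\div$ of its Hodge dual, namely $(\curl\phi)_A = \slashed{\epsilon}^{BC}\nabla_B\phi_{CA}$, and since $\Hodge\phi$ (the Hodge dual on one index) is again symmetric traceless with $\div(\Hodge\phi)$ expressible through $\div\phi$ via $\slashed{\epsilon}$-contractions, one gets $\curl\phi = \Hodge{f}$ up to lower-order curvature corrections coming from commuting $\nabla$ past $\slashed{\epsilon}$. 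Concretely $\curl\phi = {}^*(\div\phi)$ exactly in two dimensions for traceless symmetric $\phi$ (this is the standard fact used in \cite{Chr:book}), so $g = \Hodge{f}$ and $\scaletwoSu{(\al\nabla)^j g} = \scaletwoSu{(\al\nabla)^j f}$.

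Having identified $g$ and $h$, I would then invoke Proposition \ref{mainellipticlemma} directly with $r=1$: for $1\le i\le 11$,
\[
\scaletwoSu{\aln \phi} \lesssim \al \sum_{j=0}^{i-1}\scaletwoSu{(\al\nabla)^j (f,g)} + \sum_{j=0}^{i-1}\scaletwoSu{(\al\nabla)^j(\phi,h)}.
\]
Substituting $g = \Hodge f$ and $h=0$, the first sum collapses to $\al\sum_{j\le i-1}\scaletwoSu{(\al\nabla)^j f}$ and the second to $\sum_{j\le i-1}\scaletwoSu{(\al\nabla)^j\phi}$. The factor $\al$ in front of the $f$-terms is exactly what appears in the claimed inequality (note Proposition \ref{mainellipticlemma2} writes $\scaletwoSu{(\al)^{j+1}\nabla^j f}$, which is the same as $\al\scaletwoSu{(\al\nabla)^j f}$ by pulling one power of $\al$ outside), so the two formulations agree term by term. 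Thus the statement follows immediately once the $\curl$–$\div$ identity is in place.

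**Main obstacle.** The one place requiring care is the precise bookkeeping of the Hodge-dual identity $\curl\phi = \Hodge{(\div\phi)}$ in scale-invariant norms: one must check that taking the Hodge dual does not change the signature $s_2$ (it does not, since $\slashed\epsilon$ has $s_2=0$, being built from $\gamma$) and that any commutator terms generated by moving $\slashed{\epsilon}^{BC}\nabla_B$ around are either identically zero on a $2$-sphere for traceless symmetric tensors or are absorbed into the $\scaletwoSu{(\al\nabla)^j\phi}$ terms on the right-hand side with acceptable powers of $a/\lvert u\rvert$. I expect this to be routine given the commutation formulae in Proposition \ref{commutationformulaeprop} and the curvature bounds of Lemma \ref{Klemma}, but it is the only step where an error could slip in. Everything else is a direct specialization of Proposition \ref{mainellipticlemma}, so no separate bootstrap or integration argument is needed.
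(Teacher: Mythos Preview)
Your proposal is correct and follows essentially the same approach as the paper: observe that $\tr\phi=0$ and that for a symmetric traceless $2$-tensor on a $2$-sphere the identity $\curl\phi = \Hodge{f}$ holds exactly, then apply Proposition \ref{mainellipticlemma} with $g=\Hodge{f}$, $h=0$. Your ``main obstacle'' is overcautious---the identity $\curl\phi = \Hodge{(\div\phi)}$ is a pointwise algebraic fact with no commutator or curvature correction terms, so there is nothing to absorb; the paper's proof accordingly consists of a single sentence stating this identity and invoking Proposition \ref{mainellipticlemma}.
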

	
	\begin{proof}
		This is a direct application of Proposition \ref{mainellipticlemma}, by noticing that \[ \curl \phi = \Hodge{f}.\] This is a straightforward calculation, using that the $2-$tensor $\phi$ is symmetric and traceless. 
	\end{proof}

	\subsection{Elliptic estimates for 11 derivatives of Ricci coefficients} 
	
	We start this section with the following auxiliary bootstrap assumption. Introduce the top-order quantity
	
\begin{equation}\begin{split}
	\mathcal{O}_{11,2}(u,\ubar) =& \frac{1}{a^{\frac{1}{2}}} \scaletwoHu{ ( a^{\frac{1}{2}})^{10}\nabla ^{11} \chihat}     +\scaletwoHu{ ( a^{\frac{1}{2}})^{10}\nabla^{11} (\tr\chi,\omega)} \\ &+ \scaletwoHbaru{a^5 \nabla^{11} \eta} + \frac{a}{\lvert u \rvert}\scaletwoHu{a^5 \nabla^{11}(\eta,\etabar)}  \\ &+ \intu \frac{a^{\frac{3}{2}}}{\lvert u \rvert^3 } \scaletwoSuprime{( a^{\frac{1}{2}})^{10}\nabla^{11} \chibarhat} \duprime  +\scaletwoHbaru{(\al)^{10} \nabla^{11}\omegabar}\\ &+  \intu \frac{a^2}{\lvert u\rvert^3 }\scaletwoSuprime{ a^5 \nabla^{11} \tr\chibar}\duprime. \end{split} \end{equation}Throughout this section we assume \begin{equation}\label{bootstrapelliptic} \mathcal{O}_{11,2} \leq O_{11} \lesssim a^{\frac{1}{320}}.\end{equation}We begin with estimates for $\tr\chi$ and $\chibarhat$.

	\begin{proposition}
		\label{trchichihatellipticprop} Under the assumptions of Theorem \ref{main1} and the bootstrap assumptions \eqref{bootstrapbounds}, there holds
		\[   \scaletwoSu{(\al)^{10} \nabla^{11} \tr\chi} \lesssim 1+ \mathcal{R}[\tbeta]+\mathcal{R}[\alpha] + \mathcal{F}[\alpha_F],  \]\[ \frac{1}{\al} \scaletwoSu{(\al)^{10}\nabla^{11} \chihat} \lesssim 1+ \mathcal{R}[\tbeta]+\mathcal{R}[\alpha].   \]
	\end{proposition}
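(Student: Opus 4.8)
The plan is to control the top-order ($11$th) derivatives of $\tr\chi$ and $\chihat$ by combining the div-curl elliptic estimates of Propositions \ref{mainellipticlemma} and \ref{mainellipticlemma2} with the transport estimates already established, using the renormalized quantity $\tbeta$ in place of $\beta$ wherever the Codazzi system for $\chihat$ is invoked. For $\tr\chi$, I would first commute the transport equation $\nabla_4 \tr\chi = \chihat\cdot\chihat + \alpha_F\cdot\alpha_F + \psi\psi$ with $11$ angular derivatives, exactly as in the proof of Proposition \ref{trchiboundRicci}, and apply Proposition \ref{prop36}. The point is that all terms of the form $\nabla^{i_1}(\chihat,\alpha_F)\nabla^{i_2}(\chihat,\alpha_F)$ with $i_1+i_2=11$ are bilinear, so at least one factor carries $\leq 5$ derivatives and can be placed in $\mathcal{L}^\infty_{(sc)}$ via Sobolev embedding (Proposition \ref{Sobolevembedding}), while the other factor — with up to $11$ derivatives — is bounded in $\mathcal{L}^2_{(sc)}(S_{u,\ubar})$ using the auxiliary bootstrap assumption \eqref{bootstrapelliptic}, i.e. by $O_{11}\lesssim a^{1/320}$, which after multiplication by the gain $1/|u|\ll 1$ and the powers of $a$ absorbs. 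However, the very top-order factor $\nabla^{11}\chihat$ and $\nabla^{11}\alpha_F$ must ultimately be fed back through the energy-type quantities $\mathcal{R}[\alpha]$ and $\mathcal{F}[\alpha_F]$ along $H_u$, which is the source of those two terms on the right-hand side; the term $\mathcal{R}[\tbeta]$ enters through the $\nabla^{11}\chihat$ estimate, explained below.

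For $\chihat$ itself I would use the Codazzi equation $\div\chihat = \frac12\nabla\tr\chi - \frac12(\eta-\etabar)\cdot(\chihat-\frac12\tr\chi) - \beta_{\mathcal R}$, rewritten using $\beta_{\mathcal R} = \tbeta - \frac12(R_{4(\cdot)} - R_{4(\cdot)}) $; more precisely, since $\beta_{\mathcal R} = \beta - \frac12 R_{4(\cdot)} = \tbeta$, the Codazzi system for the symmetric traceless $2$-tensor $\chihat$ reads schematically $\div\chihat = \nabla\tr\chi + \psi\cdot(\psi,\chihat) + \tbeta$. Applying Proposition \ref{mainellipticlemma2} with $i=11$, we get
\[ \frac{1}{\al}\scaletwoSu{(\al\nabla)^{11}\chihat} \lesssim \sum_{j\leq 10}\frac{1}{\al}\scaletwoSu{(\al)^{j+1}\nabla^j f} + \sum_{j\leq 10}\frac{1}{\al}\scaletwoSu{(\al\nabla)^j\chihat}, \]
where $f = \div\chihat$. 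The lower-order terms $\sum_{j\leq 10}\frac{1}{\al}\scaletwoSu{(\al\nabla)^j\chihat}$ are controlled by Proposition \ref{chihatproposition} (giving $\lesssim \mathcal{R}[\alpha]+1$), and the $\nabla^j f$ terms break into $\nabla^{j+1}\tr\chi$ (handled by the first part of this proposition once we note $j+1\leq 11$), $\nabla^j\tbeta$ with $j\leq 10$ (controlled by $\mathcal{R}[\tbeta]$ through the definition of $\mathcal{R}$ along $H_u$ combined with Proposition \ref{prop36}, or directly by the $L^2$-sphere estimates of Proposition \ref{L2curvature2} since $\tbeta$ appears there with $\leq 9$ derivatives — here I need the $10$th and $11$th derivative input, which is where $\mathcal{R}[\tbeta]$ genuinely enters), and lower-order products $\nabla^{i_1}\psi^{i_2}\nabla^{i_3}(\psi,\chihat)$ which are cubic/bilinear and hence small by Proposition \ref{usefulstatements} and the gain $1/|u|$.

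The main obstacle I anticipate is the bookkeeping at exactly top order: separating which $11$-derivative factors can be absorbed using the auxiliary bootstrap bound \eqref{bootstrapelliptic} (paying $1/|u|$ or powers of $a^{-1}$ to beat $O_{11}\lesssim a^{1/320}$) versus which must be genuinely estimated by the curvature/Maxwell flux quantities $\mathcal{R}[\alpha], \mathcal{R}[\tbeta], \mathcal{F}[\alpha_F]$ and hence propagated forward. In particular, for $\tr\chi$ one must be careful that the transport integral $\int_0^{\ubar}$ of a top-order term like $\scaletwoSu{(\al)^{10}\nabla^{11}\chihat}$ does not itself require the elliptic estimate one is trying to prove — this is resolved because $\nabla^{11}\tr\chi$ is estimated from the transport equation whose right-hand side contains $\nabla^{11}\chihat$, which in turn satisfies the elliptic estimate whose right-hand side contains $\nabla^{11}\tr\chi$; one must verify the coupling is not circular, i.e. that each top-order term ultimately reduces to a flux norm ($\mathcal R, \mathcal F$) plus lower-order/small contributions, with no net self-referential loop. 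The structure is that $\chihat$'s elliptic estimate uses $\nabla^{11}\tr\chi$ only through $\nabla f = \nabla\div\chihat$, i.e. through $\nabla^{12}$-type terms which are not present (the div-curl system produces $\nabla^j f$ for $j\leq i-1 = 10$, hence at most $\nabla^{11}\tr\chi$), so closing requires plugging the $\tr\chi$ bound into the $\chihat$ bound exactly once, giving the stated right-hand sides.
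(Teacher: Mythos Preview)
Your approach is essentially the paper's: commute the $\nabla_4\tr\chi$ equation to order $11$, invoke the Codazzi equation for $\chihat$ in the form $\div\chihat = \tfrac12\nabla\tr\chi + \psi\cdot(\psi,\chihat) - \tbeta + \alpha_F\cdot\Upsilon$ together with the elliptic estimate of Proposition~\ref{mainellipticlemma2}, and couple the two. Your identification $\beta_{\mathcal R}=\tbeta$ is correct, and your accounting of where $\mathcal R[\alpha]$, $\mathcal R[\tbeta]$, $\mathcal F[\alpha_F]$ enter matches the paper.

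There is one genuine (though easily repaired) gap in your closure argument. The coupling \emph{is} self-referential: after substituting the transport bound for $(\al)^{10}\nabla^{11}\tr\chi$ (which carries $\int_0^{\ubar}\lVert(\al)^{10}\nabla^{11}\chihat\rVert\,d\ubar'$) into the elliptic bound for $\chihat$, you land on an inequality of the form
\[
\scaletwoSu{(\al)^{10}\nabla^{11}\chihat}(u,\ubar)\;\lesssim\;\frac{\al}{|u|}\,O_\infty[\chihat]\int_0^{\ubar}\scaletwoSuubarprime{(\al)^{10}\nabla^{11}\chihat}\,d\ubar'\;+\;\text{good},
\]
which is not closed by ``plugging in once''; the paper invokes Gr\"onwall in $\ubar$ at exactly this step (equivalently, take a supremum in $\ubar$ and absorb, using $\frac{\al}{|u|}\leq\frac{4}{\al}\ll 1$). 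Related to this, the auxiliary bootstrap bound \eqref{bootstrapelliptic} controls flux norms along $H_u$ and $\Hb_{\ubar}$, not sphere norms, so you cannot invoke $O_{11}$ to bound $\scaletwoSu{(\al)^{10}\nabla^{11}\chihat}$ directly; the paper keeps that term as an explicit $\ubar$-integral and closes it through Gr\"onwall, while the top-order $\nabla^{11}\alpha_F$ contribution is handled separately by Cauchy--Schwarz in $\ubar$, producing the $\mathcal F[\alpha_F]$ term.
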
	
	
	\begin{proof}
		Consider the following equation: 
		
		\begin{equation}
		\nabla_4 \tr\chi + \frac{1}{2}(\tr\chi)^2 = -\lvert \chihat \rvert^2 -\lvert \alpha_F \rvert^2 -2 \omega \hspace{.5mm} \tr\chi.
		\end{equation}Commuting with angular derivatives $i$ times, we arrive at 	\begin{equation*}\begin{split}\nabla_4 \nabla^i \tr\chi =& \sum_{i_1+ i_2 + i_3 +i_4 =i} \nabla^{i_1}\psi^{i_2}\nabla^{i_3}\chihat \nabla^{i_4} \chihat  
		+ \sum_{i_1 + i_2 +i_3 + i_4 =i}  \nabla^{i_1}\psi^{i_2} \nabla^{i_3}\alpha_F \nabla^{i_4} \alpha_F  \\ 
	&+ \sum_{i_1 + i_2 +i_3 + i_4 =i} \nabla^{i_1} \psi^{i_2} \nabla^{i_3} (\psi, \chihat) \nabla^{i_4} \psi \\  =& \sum_{i_1+i_2 =i} \nabla^{i_1} \chihat \nabla^{i_2} \chihat + \sum_{i_1+i_2+i_3+i_4+1=i} \nabla^{i_1} \psi^{i_2+1} \nabla^{i_3}\chihat \nabla^{i_4}\chihat    \\ &+   \sum_{i_1+i_2 =i} \nabla^{i_1} \alpha_F \nabla^{i_2} \alpha_F + \sum_{i_1+i_2+i_3+i_4+1=i} \nabla^{i_1} \psi^{i_2+1} \nabla^{i_3}\alpha_F \nabla^{i_4}\alpha_F   \\  &+\sum_{i_1 + i_2 +i_3 + i_4 =i} \nabla^{i_1} \psi^{i_2} \nabla^{i_3} (\psi, \chihat) \nabla^{i_4} \psi. \end{split}\end{equation*} Passing the above to  scale-invariant norms and applying the triangle inequality, we have 	\begin{equation}\label{trchiellipticalign*1}
\begin{split}
		&\scaletwoSu{(\al)^{10}\nabla^{11} \tr\chi}\\
		\leq&   \int_{0}^{\ubar} a \big\lVert (\al)^{10} \left(\frac{\chihat}{\al}, \frac{\alpha_F}{\al}\right)\nabla^{11} \left(\frac{\chihat}{\al}, \frac{\alpha_F}{\al}\right)\big \rVert_{\mathcal{L}^2_{(sc)}(S_{u,\ubar^\prime})} \text{d}\ubar^\prime \\ &+\sum_{\substack{i_1+i_2=11 \\i_1,i_2 \leq 10}}  \int_{0}^{\ubar} a \big\lVert (\al)^{10} \nabla^{i_1}\left(\frac{\chihat}{\al}, \frac{\alpha_F}{\al}\right)\nabla^{i_2}\left(\frac{\chihat}{\al}, \frac{\alpha_F}{\al}\right)\big \rVert_{\mathcal{L}^2_{(sc)}(S_{u,\ubar^\prime})} \text{d}\ubar^\prime \\
		&+ \sum_{i_1 + i_2 +i_3 + i_4 +1 =11} \int_0^{\ubar} a \big\lVert  (\al)^{10} \nabla^{i_1} \psi^{i_2+1} \nabla^{i_3} \left(\frac{\chihat}{\al}, \frac{\alpha_F}{\al}\right)\nabla^{i_4}\left(\frac{\chihat}{\al}, \frac{\alpha_F}{\al}\right)\big \rVert_{\mathcal{L}^2_{(sc)}(S_{u,\ubar^\prime})} \text{d}\ubar^\prime \\
		&+  \sum_{i_1 + i_2 +i_3 + i_4 =11} \int_0^{\ubar}  \al \big\lVert (\al)^{10} \nabla^{i_1} \psi^{i_2} \nabla^{i_3} \left(\frac{\psi}{\al}, \frac{\chihat}{\al}\right) \nabla^{i_4} \psi\big\rVert_{\mathcal{L}^{2}_{(sc)}(S_{u,\ubar^\prime})}\text{d}\ubar^\prime \\
		\leq& \frac{a}{\lvert u \rvert} O[\chihat] \cdot \int_0^{\ubar} \ScaletwoSuubarprime{(\al)^{10} \nabla^{11} \left(\frac{\chihat}{\al} \right)} \hspace{.5mm} \text{d}\ubar^\prime \\ &+ \frac{\al}{\lvert u \rvert}O[\alpha_F]\cdot \mathcal{F}[\alpha_F]+   \frac{\al}{\lvert u \rvert}O_{2}[\chihat,\alpha_F]\cdot O_{\infty}[\chihat,\alpha_F] + \frac{\al }{\lvert u \rvert^2}O^3+ \frac{O^2}{\lvert u \rvert}+ \frac{\al}{\lvert u \rvert}O\cdot O_{11}\\ \leq& \frac{\al}{\lvert u \rvert} O[\chihat] \cdot \int_0^{\ubar} \scaletwoSuubarprime{(\al)^{10} \nabla^{11} \chihat} \hspace{.5mm} \text{d}\ubar^\prime + \frac{a}{\lvert u \rvert}O[\alpha_F]\cdot \mathcal{F}[\alpha_F] +
		\frac{\al}{\lvert u \rvert}(\mathcal{R}[\alpha]+1)(O[\alpha]+1)\\ \leq& \frac{\al}{\lvert u \rvert} O_{\infty}[\chihat] \cdot \int_0^{\ubar} \scaletwoSuubarprime{(\al)^{10} \nabla^{11} \chihat} \hspace{.5mm} \text{d}\ubar^\prime + \frac{a}{\lvert u \rvert}O_{\infty}[\alpha_F]\cdot \mathcal{F}[\alpha_F] +1. 
\end{split}
\end{equation}

For $\chihat$, we have \[ \div \chihat = \frac{1}{2}\nabla \tr\chi - \frac{1}{2}(\eta-\etabar)\cdot(\chihat - \frac{1}{2}\tr\chi \hspace{.5mm} \gamma)- \tbeta + \frac{1}{2}R_{a4}. \]Schematically, \[ \div \chihat- \frac{1}{2}\nabla \tr\chi + \tbeta = \psi \cdot \psi + \alpha_F \cdot \Y.   \]Applying Proposition \ref{mainellipticlemma2}, we arrive at \begin{equation}\begin{split}
		&\scaletwoSu{(\al)^{10} \nabla^{11} \chihat}\\ \lesssim& \sum_{i \leq 10}  \frac{1}{\al}\scaletwoSu{\aln \tr\chi}+ \sum_{i \leq 10}\scaletwoSu{\aln \tbeta} \\&+ \sum_{i \leq 10} \sum_{i_1+i_2=i} \scaletwoSu{(\al)^i \nabla^{i_1}\psi \nabla^{i_2}\psi}+  \sum_{i \leq 10} \sum_{i_1+i_2=i} \scaletwoSu{(\al)^i \nabla^{i_1}\alpha_F \nabla^{i_2}\Y}\\ &+ \frac{1}{\al} \sum_{i \leq 10} \scaletwoSu{\aln \chihat}.
		\end{split}\end{equation}By using the estimate on $\scaletwoSu{\aln \tr\chi}$ from Proposition \ref{trchiboundRicci} and applying Gr\"onwall's inequality, we get
		
		\begin{equation}\begin{split} \label{chihellipticalign*} 
		&\scaletwoSu{(\al)^{10} \nabla^{11} \chihat} \\ \lesssim& \frac{a}{\lvert u \rvert}O[\alpha_F]\cdot \mathcal{F}[\alpha_F] +
		\mathcal{R}[\alpha]+1+  \sum_{i \leq 10}\scaletwoSu{\aln \tbeta} \\&+  \sum_{i \leq 10} \sum_{i_1+i_2=i} \scaletwoSu{(\al)^i \nabla^{i_1}\psi \nabla^{i_2}\psi}+  \sum_{i \leq 10} \sum_{i_1+i_2=i} \scaletwoSu{(\al)^i \nabla^{i_1}\alpha_F \nabla^{i_2}\Y} \\&+ \frac{1}{\al} \sum_{i \leq 10} \scaletwoSu{\aln \chihat}.
		\end{split}\end{equation}Raising \eqref{chihellipticalign*} to the square and integrating along $\ubar$, we arrive at  
		\begin{equation}
\begin{split}
 \label{chihellipticalign*2}
		&\int_0^{\ubar} \scaletwoSuubarprime{(\al)^{10} \nabla^{11} \chihat}^2 \hspace{.5mm} \text{d}\ubar^{\prime}\\ \lesssim&  \left(\frac{a}{\lvert u \rvert}O[\alpha_F]\cdot \mathcal{F}[\alpha_F] +
		\mathcal{R}[\alpha]+1 \right)^2+ \int_0^{\ubar}  \sum_{i \leq 10}\scaletwoSuubarprime{\aln \tbeta}^2 \hspace{.5mm} \text{d}\ubar^{\prime}\\ &+  \int_0^{\ubar} \sum_{i \leq 10} \sum_{i_1+i_2=i} \scaletwoSuubarprime{(\al)^i \nabla^{i_1}\psi \nabla^{i_2}\psi}^2 \hspace{.5mm} \text{d}\ubar^{\prime} \\&+  \int_0^{\ubar} \sum_{i \leq 10} \sum_{i_1+i_2=i} \scaletwoSuubarprime{(\al)^i \nabla^{i_1}\alpha_F \nabla^{i_2}\Y}^2 \hspace{.5mm} \text{d}\ubar^{\prime} + \int_0^{\ubar} \sum_{i \leq 10} \ScaletwoSuubarprime{\aln \left(\frac{\chihat}{\al}\right)}^2 \hspace{.5mm} \text{d}\ubar^{\prime}.
		\end{split}
\end{equation}Taking the square roots of the above inequality, we arrive at
		
		\begin{equation}
		\frac{1}{\al}  \scaletwoHu{(\al \nabla)^{11} \chihat} \lesssim  \mathcal{R}[\tbeta] + \mathcal{R}[\alpha]+ \mathcal{F}[\alpha_F]+1.
		\end{equation}
		
		\par \noindent By plugging this back to \eqref{trchiellipticalign*1} and applying H\"older's inequality, we get 
		
		\begin{equation}\begin{split}
		&\scaletwoSu{a^5 \nabla^{11} \tr\chi} \\ \lesssim& \frac{\al}{\lvert u \rvert}(\mathcal{R}[\alpha]+\mathcal{R}[\tbeta]+1) + \frac{a}{\lvert u \rvert}O[\alpha_F]\mathcal{F}[\alpha_F]+\mathcal{R}[\alpha]+1 \\ \lesssim& 1+ \mathcal{R}[\tbeta]+\mathcal{R}[\alpha] + \mathcal{F}[\alpha_F].
		\end{split}\end{equation}Squaring and taking $\mathcal{L}^2_{(sc)}$ in the $\ubar$-direction, we arrive at
		
		\begin{equation}
		\scaletwoHu{a^5 \nabla^{11}\tr\chi} \lesssim \mathcal{R}[\alpha] + \mathcal{F}[\alpha_F]+1.
		\end{equation}
	\end{proof}\par \noindent	We now proceed with estimates for the highest number of derivatives in $\omega$. Define the following Hodge operators acting on the leaves $S_{u,\ubar}$ of our double null foliation
	
	\begin{itemize}
		\item The operator $\mathcal{D}_1$ maps a $1-$form $F$ to the pair of functions $(\text{div}\hspace{.5mm}F, \text{curl}\hspace{.5mm} F)$,
		
		\item The operator $\mathcal{D}_2$ maps an $S-$tangent, symmetric traceless tensor $F$  into the $S-$tangent one-form $\text{div}F$,
		
		\item The operator $\Hodge{\mathcal{D}}_1$ maps a pair of scalar functions $(F_1,F_2)$ to the $S-$tangent $1-$form $-\nabla F_1 + \Hodge{\nabla}F_2$,
		
		\item The operator $\Hodge{\mathcal{D}_2}$ maps a $1-$form $F$ to the $2-$covariant, symmetric, traceless tensor $ -\frac{1}{2}\widehat{\mathcal{L}_F \gamma}, $where  \[ \widehat{\mathcal{L}_F\gamma}_{ab} = \nabla_a F_b + \nabla_b F_a - (\text{div} \hspace{.5mm} F)\gamma_{ab}.   \]
	\end{itemize}
	
	\begin{proposition}\label{omegaellipticprop} Under the assumptions of Theorem \ref{main1} and the bootstrap assumptions \eqref{bootstrapbounds}- \eqref{bootstrapelliptic}, there holds \[ \scaletwoHu{a^5 \nabla^{11}\omega} \lesssim \mathcal{R}[\tbeta]+1. \]

	\end{proposition}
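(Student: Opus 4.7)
The plan is to exploit a dual-scalar renormalization tailored to the Einstein--Maxwell setting. Introduce the auxiliary scalar $\omega^{\dagger}$ as the unique solution to
\[
\nabla_3 \omega^{\dagger} = -\tfrac12 \sigma + \psi\cdot\psi + \Y\cdot\Y, \qquad \omega^{\dagger}\big|_{H_{u_\infty}} = 0,
\]
so that $(\omega,\omega^{\dagger})$ forms a matched pair whose $\nabla_3$-transport mirrors $\rho$ and $\sigma$ respectively. Set $\kappa := -\nabla\omega + \Hodge{\nabla}\omega^{\dagger}$ and $\mu := \kappa + \tfrac12 \tbeta$.

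Using the commutator $[\nabla_3,\nabla]f = \tfrac12(\eta+\etabar)\nabla_3 f - \chibar\cdot\nabla f$, a direct computation produces
\[
\nabla_3 \kappa + \tfrac12 \tr\chibar\, \kappa = -\tfrac12(\nabla\rho + \Hodge{\nabla}\sigma) + \text{R.T.},
\]
where R.T.\ collects products of Ricci, Maxwell, and lower-order curvature. The Bianchi equation for $\nabla_3 \tbeta$ from Section 2.8.2 supplies exactly the opposite top-order curvature combination, hence the top-order cancellation
\[
\nabla_3 \mu + \tfrac12 \tr\chibar\,\mu = -\tfrac14 \tr\chibar\, \tbeta + (\text{Ricci, Maxwell, cubic terms}).
\]
Commute $10$ times with $\nabla$ via Proposition \ref{commutationformulaeprop} and apply the weighted transport estimate Proposition \ref{prop37} with $\lambda_0 = \tfrac12$ to obtain a pointwise bound for $\scaletwoSu{(\al)^{10}\nabla^{10}\mu}$ in terms of the $L^1_{u'}$-integral of the scale-invariant $\mathcal{L}^2_{(sc)}(S_{u',\ubar})$-norm of the right-hand side. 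Each such term is controlled by $1$ using the estimates of Section 3 together with Propositions \ref{usefulstatements}, \ref{alphaFproposition}, \ref{rhoFsphereestimates}, except for the contribution proportional to $\nabla^{10}\tbeta$, which survives; squaring the pointwise estimate and integrating in $\ubar$ via Minkowski's inequality converts that surviving piece into $\scaletwoHu{a^5 \nabla^{10}\tbeta} \lesssim \mathcal{R}[\tbeta]$.

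Finally, invoke the elliptic estimate of Proposition \ref{mainellipticlemma}, adapted to the Hodge operator $\Hodge{\mathcal{D}}_1$ acting on scalar pairs (the corresponding Bochner identity on $(\mathbb{S}^2,\gamma)$ is completely standard), to trade control of $\nabla^{10}\kappa$ for control of $\nabla^{11}(\omega,\omega^{\dagger})$. Since $\kappa = \mu - \tfrac12\tbeta$, combining these yields $\scaletwoHu{a^5 \nabla^{11}\omega} \lesssim \mathcal{R}[\tbeta] + 1$, as desired. The main technical obstacle is verifying the top-order curvature cancellation in the Einstein--Maxwell setting: one must check that all additional terms arising from the renormalization $\tbeta = \beta - \tfrac12 R_{4(\cdot)}$ and from the non-trivial Ricci tensor remain genuinely sub-top order, which relies on combining the Maxwell bounds of Propositions \ref{alphaFproposition}--\ref{rhoFsphereestimates} with the product estimates of Proposition \ref{usefulstatements}. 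A secondary difficulty is the bookkeeping of $a$ and $|u|$ weights when passing from pointwise $\mathcal{L}^2_{(sc)}(S_{u,\ubar})$-bounds to the integrated $\mathcal{L}^2_{(sc)}(H_u)$-norm, ensuring that the $\tr\chibar\,\tbeta$ forcing does not generate a logarithmic loss against the weight $a/|u'|^2$ in Proposition \ref{prop37}.
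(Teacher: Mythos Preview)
Your approach is essentially the paper's: introduce $\omega^{\dagger}$ via a $\nabla_3$-equation sourced by $\sigma$, form the Hodge combination $\Hodge{\mathcal{D}}_1(-\omega,\omega^{\dagger})$, subtract $\tfrac12\tbeta$ so that the top-order $\nabla\rho+\Hodge{\nabla}\sigma$ cancels against the Bianchi equation, propagate the resulting quantity along $\nabla_3$ via Proposition~\ref{prop37}, square and integrate in $\ubar$, and finally recover $\nabla^{11}\omega$ from the div--curl system using Proposition~\ref{mainellipticlemma}. The extra lower-order terms you add to the definition of $\omega^{\dagger}$ are harmless, and your $\mu$ is just the paper's $\kappa$ up to signs.

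Two small points. First, after commuting ten times the coefficient of $\tr\chibar$ in the transport equation for $\nabla^{10}\mu$ is $\tfrac{11}{2}$, not $\tfrac12$, so Proposition~\ref{prop37} should be applied with $\lambda_0=\tfrac{11}{2}$. Second, the residual $-\tfrac14\tr\chibar\,\tbeta$ term you isolate is indeed the one place where weights are tight: your Minkowski argument yields $\int_{u_\infty}^u |u'|^{-1}\,\lVert(\al\nabla)^{10}\tbeta\rVert_{\mathcal{L}^2_{(sc)}(H_{u'})}\,du'$, which carries a $\log(|u_\infty|/|u|)$ factor and does not close as stated. The fix is to split $\tr\chibar=\widetilde{\tr\chibar}-2/|u|$; the $\widetilde{\tr\chibar}$ piece falls under $(\psi,\chibarhat,\widetilde{\tr\chibar})\cdot\Psi$ and is controlled as in the paper's term $T_8$, while the $-2/|u|$ piece can be absorbed into the weight of the transport estimate (i.e.\ into a shift of $\lambda_0$ for the $\tbeta$ component of $\mu$). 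The paper's schematic equation for $\kappa$ suppresses this residual, but the mechanism is the same.
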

	\begin{proof}
		Introduce $\omega^{\dagger}$, defined as the solution to \[ \nabla_3 \omega^{\dagger} = \frac{1}{2} \sigma\] with zero initial data on $H_{u_{\infty}}$.  Introduce the pair of scalars $\langle \omega \rangle = (-\omega, \omegad)$ and define $\kappa$ by \[   \kappa := \Hodge{\mathcal{D}}_1\langle \omega\rangle - \frac{1}{2}\tbeta = \nabla \omega + \Hodge{\nabla} \omegad- \frac{1}{2}\tbeta.\]We need to derive a transport equation for $\Hodge{\mathcal{D}}_1\langle \omega \rangle$. To this end, recall the commutation formula \[  [\nabla_3, \nabla] f = - \frac{1}{2} \tr\chibar \nabla f - \chibarhat \cdot \nabla f+ \frac{1}{2}(\eta+\etabar) \nabla_3 f, \]\[ [\nabla_3,\Hodge{\nabla}]g = - \frac{1}{2}\tr\chibar \Hodge{\nabla}g + \chibarhat \cdot \Hodge{\nabla}g +\frac{1}{2}(\Hodge{\eta}+\Hodge{\etabar})\nabla_3 g. \]Therefore, \[ [\nabla_3, \Hodge{\mathcal{D}_1}](f,g) = - \frac{1}{2}\tr\chibar \Hodge{\mathcal{D}_1}(f,g) + \chibarhat \cdot (\nabla f +\Hodge{\nabla} g)  - \frac{1}{2}(\eta+ \etabar) \nabla_3 f + \frac{1}{2}(\Hodge{\eta} + \Hodge{\etabar})\nabla_3 g.  \]Now recall that \[ \nabla_3 \omega = \frac{1}{2}\rho + \psi \hspace{.5mm} \psi + \Y \hspace{.5mm} \Y := \frac{1}{2} \rho + \underline{F}. \]This means that \begin{align*}
		\nabla_3 \Hodge{\mathcal{D}_1}\langle \omega \rangle =& \Hodge{\mathcal{D}_1}\left(-\frac{1}{2}\rho - \underline{F},  \frac{1}{2}\sigma \right) + [\nabla_3,\Hodge{\mathcal{D}_1}]\langle \omega \rangle \\= &\frac{1}{2}\nabla \rho + \frac{1}{2}\Hodge{\nabla} \sigma + \nabla \underline{F} - \frac{1}{2}\tr\chibar \Hodge{\mathcal{D}_1}\langle \omega \rangle + \chibarhat \cdot (-\nabla \omega+ \Hodge{\nabla}\omegad)\\ &- \frac{1}{2}(\eta+\etabar)\left(\frac{1}{2}\rho + \underline{F}\right) + \frac{1}{4}(\Hodge{\eta}+\Hodge{\etabar})\sigma.
		\end{align*}Schematically, we reduce this to the equation \begin{equation}
\begin{split}  &\nabla_3 \Hodge{\mathcal{D}_1}\langle \omega \rangle + \frac{1}{2}\tr\chibar  \Hodge{\mathcal{D}_1}\langle \omega \rangle  - \frac{1}{2}(\nabla \rho + \Hodge{\nabla} \sigma)\\ =& \psi \nabla (\omega,\omegabar,\eta,\etabar) + \chibarhat \nabla(\omega,\omegad) + (\rho_F,\sigma_F) \nabla (\rho_F,\sigma_F) + \psi \cdot \Psi + \psi \cdot \psi \cdot \psi + \psi \cdot \Y \cdot \Y. \label{kappa1} \end{split}
\end{equation} Recall also the $\nabla_3$-direction schematic equation for $\tbeta$: \begin{equation}\begin{split} \label{kappa2}	\nabla_3 \tbeta + \tr\chibar \tbeta - \nabla \rho - \Hodge{\nabla} \sigma =&  (\psi,\chihat) \Psi +  \alpha_F \nabla \alphabar_F + \alphabar_F \nabla \alpha_F + (\rho_F,\sigma_F)\nabla (\rho_F,\sigma_F)\\ &+ (\psi,\chibarhat,\tr\chibar, \chih)\cdot (\alpha_F,\Y)\cdot \Y. \end{split}\end{equation} From \eqref{kappa1} and \eqref{kappa2} we see that $\kappa$ obeys the following schematic equation:
		\begin{equation}\begin{split}
		\label{kappaeq} \nabla_3 \kappa + \frac{1}{2}\tr\chibar \kappa =&(\psi,\chihat)\Psi+ (\psi,\chibarhat)\nabla \psi   +\Y \nabla (\Y,\alpha_F) \\&+(\alpha_F,\Y)\nabla \Y  + (\psi,\chibarhat,\tr\chibar, \chih)\cdot (\alpha_F,\Y)\cdot \Y + \psi\cdot \psi \cdot \psi. \end{split}\end{equation} By commuting \eqref{kappaeq} with $i\leq 10$ angular derivatives, we arrive at
		\begin{equation}\begin{split}
		\nabla_3 \nabla^i \kappa + \frac{i+1}{2}\tr\chibar \nabla^i \kappa =& \sum_{i_1+i_2+i_3+i_4=i\leq 10} \nabla^{i_1}\psi^{i_2}\nabla^{i_3}(\psi,\chihat)\nabla^{i_4}\Psi \\ &+ \sum_{i_1+i_2+i_3+i_4=i\leq 10} \nabla^{i_1}\psi^{i_2}\nabla^{i_3}(\psi,\chibarhat)\nabla^{i_4+1}\psi \\&+ \sum_{i_1+i_2+i_3+i_4=i\leq 10}\nabla^{i_1}\psi^{i_2}\nabla^{i_3} \Y \nabla^{i_4+1}(\Y,\alpha_F)\\ &+ \sum_{i_1+i_2+i_3+i_4=i\leq 10}\nabla^{i_1}\psi^{i_2}\nabla^{i_3} (\alpha_F,\Y) \nabla^{i_4+1}\Y \\ &+ \sum_{i_1+i_2+i_3+i_4+i_5=i\leq 10} \nabla^{i_1}\psi^{i_2} \nabla^{i_3}(\psi,\chibarhat,\tr\chibar,\chihat)\nabla^{i_4}(\alpha_F,\Y)\nabla^{i_5} \Y \\ &+ \sum_{i_1+i_2+i_3+i_4+i_5=i\leq 10} \nabla^{i_1}\psi^{i_2} \nabla^{i_3}\psi \nabla^{i_4}\psi \nabla^{i_5} \psi \\ &+ \sum_{i_1+i_2+i_3+i_4=i\leq 10} \nabla^{i_1} \psi^{i_2} \nabla^{i_3}(\psi,\chibarhat,\tildetr)\nabla^{i_4}\kappa \\ &+ \sum_{i_1+i_2+i_3+i_4+1=i\leq 10} \nabla^{i_1}\psi^{i_2+1}\nabla^{i_3}\tr\chibar \nabla^{i_4}\kappa :=G.    \end{split}\end{equation}Applying Proposition \ref{prop37} with $\lambda_0 = \frac{i+1}{2}$, we get
		
		\[   \lvert u \rvert^i \hspace{.5mm} \twoSu{\nabla^i \kappa} \lesssim \lvert u_{\infty} \rvert^i \hspace{.5mm} \lVert \nabla^i \kappa \rVert_{L^2(S_{u_{\infty},\ubar})} + \int_{u_{\infty}}^u \lvert u^\prime \rvert^i \hspace{.5mm} \lVert G \rVert_{L^2(S_{u^{\prime},\ubar})}\hspace{.5mm} \text{d}u^\prime  .           \]Now by definition, we have $s_2(\kappa) = s_2(\nabla \omega, \tbeta) = 0.5$. So $s_2(\nabla^i \kappa) = \frac{i+1}{2}$. This means that
		
		\begin{align*} \scaletwoSu{\aln \kappa} =&  a^{\frac{i-1}{2}} \lvert u \rvert^{i+1} \twoSu{\nabla^i \kappa} = (a^{\frac{i-1}{2}} \lvert u \rvert) \cdot \lvert u \rvert^i \twoSu{\nabla^i \kappa} \\ \lesssim&  (a^{\frac{i-1}{2}} \lvert u \rvert)  \cdot \left(  \lvert u_{\infty} \rvert^i \hspace{.5mm} \lVert \nabla^i \kappa \rVert_{L^2(S_{u_{\infty},\ubar})} + \int_{u_{\infty}}^u \lvert u^\prime \rvert^i \hspace{.5mm} \lVert G \rVert_{L^2(S_{u^{\prime},\ubar})}\hspace{.5mm} \text{d}u^\prime  \right) \\ \lesssim& a^{\frac{i-1}{2}} \lvert u_{\infty} \rvert^{i+1}  \lVert \nabla^i \kappa \rVert_{L^2(S_{u_{\infty},\ubar})} +  \int_{u_{\infty}}^u a^{\frac{i-1}{2}} \lvert u^{\prime} \rvert^{i+1} \lVert G \rVert_{L^2(S_{u^\prime, \ubar})} \hspace{.5mm} \text{d}u^{\prime}.  \end{align*}In the last inequality we have used the facts that $\lvert u \rvert \leq \lvert u_{\infty}\rvert$, $\lvert u \rvert \leq \lvert u^\prime \rvert$ for $\lvert u^\prime \rvert$ in the range given above. From this we conclude that
		
		\begin{align*}
		&\scaletwoSu{\aln \kappa}\\
		 \lesssim& \lVert \aln \kappa \rVert_{\mathcal{L}^{2}_{(sc)}(S_{u_{\infty},\ubar})} + \int_{u_{\infty}}^u \frac{a}{\lvert u^\prime \rvert^2} \scaletwoSuprime{(\al)^i G}\hspace{.5mm} \text{d}u^\prime \\ \lesssim& \lVert \aln \kappa \rVert_{\mathcal{L}^{2}_{(sc)}(S_{u_{\infty},\ubar})} + \int_{u_{\infty}}^u \frac{a}{\lvert u^\prime \rvert^2} \sum_{i_1+i_2+i_3+i_4=i} \scaletwoSuprime{(\al)^i \nabla^{i_1}\psi^{i_2}\nabla^{i_3}(\psi,\chih)\nabla^{i_4}\Psi }\hspace{.5mm} \text{d}u^\prime \\ &+ \int_{u_{\infty}}^u \frac{a}{\lvert u^\prime \rvert^2} \sum_{i_1+i_2+i_3+i_4=i} \scaletwoSuprime{(\al)^i \nabla^{i_1}\psi^{i_2}\nabla^{i_3}(\psi,\chibarhat)\nabla^{i_4+1}\psi }\hspace{.5mm} \text{d}u^\prime \\ &+ \int_{u_{\infty}}^u \frac{a}{\lvert u^\prime \rvert^2} \sum_{i_1+i_2+i_3+i_4=i} \scaletwoSuprime{(\al)^i \nabla^{i_1}\psi^{i_2}\nabla^{i_3}\Y \nabla^{i_4+1}(\Y,\alpha_F) }\hspace{.5mm} \text{d}u^\prime \\ &+ \int_{u_{\infty}}^u \frac{a}{\lvert u^\prime \rvert^2} \sum_{i_1+i_2+i_3+i_4=i} \scaletwoSuprime{(\al)^i \nabla^{i_1}\psi^{i_2}\nabla^{i_3}(\Y,\alpha_F) \nabla^{i_4+1}\Y }\hspace{.5mm} \text{d}u^\prime \\  &+ \int_{u_{\infty}}^u \frac{a}{\lvert u^\prime \rvert^2} \sum_{i_1+i_2+i_3+i_4+i_5=i} \scaletwoSuprime{(\al)^i \nabla^{i_1}\psi^{i_2}\nabla^{i_3}(\psi,\chibarhat,\tr\chibar,\chihat) \nabla^{i_4}(\alpha_F,\Y)\nabla^{i_5}\Y }\hspace{.5mm} \text{d}u^\prime \\  &+ \int_{u_{\infty}}^u \frac{a}{\lvert u^\prime \rvert^2} \sum_{i_1+i_2+i_3+i_4+i_5=i} \scaletwoSuprime{(\al)^i \nabla^{i_1}\psi^{i_2}\nabla^{i_3}\psi \nabla^{i_4}\psi \nabla^{i_5} \psi }\hspace{.5mm} \text{d}u^\prime \\   &+ \int_{u_{\infty}}^u \frac{a}{\lvert u^\prime \rvert^2} \sum_{i_1+i_2+i_3+i_4=i} \scaletwoSuprime{(\al)^i \nabla^{i_1}\psi^{i_2}\nabla^{i_3}(\psi,\chibarhat,\tildetr)\nabla^{i_4}\kappa }\hspace{.5mm} \text{d}u^\prime \\  &+ \int_{u_{\infty}}^u \frac{a}{\lvert u^\prime \rvert^2} \sum_{i_1+i_2+i_3+i_4+1=i} \scaletwoSuprime{(\al)^i \nabla^{i_1}\psi^{i_2+1}\nabla^{i_3}\tr\chibar \nabla^{i_4}\kappa }\hspace{.5mm} \text{d}u^\prime .
		\end{align*} By raising the above to the second power and integrating in $\ubar$ we get
		
		\begin{align*}
		&\intubar \scaletwoSuubarprime{\aln \kappa}^2 \hspace{.5mm} \text{d}\ubar^{\prime}\\ 
		\lesssim& \lVert \aln \kappa \rVert_{\mathcal{L}^{2}_{(sc)}(H_{u_{\infty}})}^2 + \intubar  \frac{a}{\lvert u \rvert} \intu \frac{a}{\lvert u^\prime \rvert^2}\scaletwoSuprimeubarprime{(\al)^i G}^2\duprime \dubarprime \\ =& \lVert \aln \kappa \rVert_{\mathcal{L}^{2}_{(sc)}(H_{u_{\infty}})}^2 + \frac{a}{\lvert u \rvert} \intu  \frac{a}{\lvert u^{\prime} \rvert^2} \left(\intubar \scaletwoSuprimeubarprime{(\al)^i G}^2\dubarprime \right) \duprime \\ \lesssim& \lVert \aln \kappa \rVert_{\mathcal{L}^{2}_{(sc)}(H_{u_{\infty}})}^2 + \intubar \frac{a}{\lvert u \rvert} \int_{u_{\infty}}^u \frac{a}{\lvert u^\prime \rvert^2} \sum_{i_1+i_2+i_3+i_4=i} \scaletwoSuprimeubarprime{(\al)^i \nabla^{i_1}\psi^{i_2}\nabla^{i_3}(\psi,\chih)\nabla^{i_4}\Psi }^2 \hspace{.5mm} \text{d}u^\prime \dubarprime \\ &+ \intubar \frac{a}{\lvert u \rvert} \int_{u_{\infty}}^u \frac{a}{\lvert u^\prime \rvert^2} \sum_{i_1+i_2+i_3+i_4=i} \scaletwoSuprimeubarprime{(\al)^i \nabla^{i_1}\psi^{i_2}\nabla^{i_3}(\psi,\chibarhat)\nabla^{i_4+1}\psi }^2 \hspace{.5mm} \text{d}u^\prime \dubarprime \\ &+ \intubar \frac{a}{\lvert u \rvert} \int_{u_{\infty}}^u \frac{a}{\lvert u^\prime \rvert^2} \sum_{i_1+i_2+i_3+i_4=i} \scaletwoSuprimeubarprime{(\al)^i \nabla^{i_1}\psi^{i_2}\nabla^{i_3}\Y \nabla^{i_4+1}(\Y,\alpha_F) }^2 \hspace{.5mm} \text{d}u^\prime \dubarprime \\ &+ \intubar \frac{a}{\lvert u \rvert} \int_{u_{\infty}}^u \frac{a}{\lvert u^\prime \rvert^2} \sum_{i_1+i_2+i_3+i_4=i} \scaletwoSuprimeubarprime{(\al)^i \nabla^{i_1}\psi^{i_2}\nabla^{i_3}(\Y,\alpha_F) \nabla^{i_4+1}\Y }^2 \hspace{.5mm} \text{d}u^\prime \dubarprime\\  &+\intubar \frac{a}{\lvert u \rvert} \int_{u_{\infty}}^u \frac{a}{\lvert u^\prime \rvert^2} \sum_{i_1+i_2+i_3+i_4+i_5=i} \scaletwoSuprimeubarprime{(\al)^i \nabla^{i_1}\psi^{i_2}\nabla^{i_3}(\psi,\chibarhat,\tr\chibar,\chihat) \nabla^{i_4}(\alpha_F,\Y)\nabla^{i_5}\Y }^2 \hspace{.5mm} \text{d}u^\prime \dubarprime \\  &+ \intubar \frac{a}{\lvert u \rvert}\int_{u_{\infty}}^u \frac{a}{\lvert u^\prime \rvert^2} \sum_{i_1+i_2+i_3+i_4+i_5=i} \scaletwoSuprimeubarprime{(\al)^i \nabla^{i_1}\psi^{i_2}\nabla^{i_3}\psi \nabla^{i_4}\psi \nabla^{i_5} \psi }^2\hspace{.5mm} \text{d}u^\prime \dubarprime \\   &+\intubar \frac{a}{\lvert u \rvert} \int_{u_{\infty}}^u \frac{a}{\lvert u^\prime \rvert^2} \sum_{i_1+i_2+i_3+i_4=i} \scaletwoSuprimeubarprime{(\al)^i \nabla^{i_1}\psi^{i_2}\nabla^{i_3}(\psi,\chibarhat,\tildetr)\nabla^{i_4}\kappa }^2\hspace{.5mm} \text{d}u^\prime \dubarprime\\  &+\intubar \frac{a}{\lvert u \rvert} \int_{u_{\infty}}^u \frac{a}{\lvert u^\prime \rvert^2} \sum_{i_1+i_2+i_3+i_4+1=i} \scaletwoSuprimeubarprime{(\al)^i \nabla^{i_1}\psi^{i_2+1}\nabla^{i_3}\tr\chibar \nabla^{i_4}\kappa }^2\hspace{.5mm} \text{d}u^\prime \dubarprime\\ :=&  T_1+ \dots +T_9.
		\end{align*}We bound each term separately. \begin{itemize}
			\item There holds \begin{align*} T_1 =  \lVert \aln \kappa \rVert_{\mathcal{L}^{2}_{(sc)}(H_{u_{\infty}})}^2\lesssim&\lVert \aln \tbeta \rVert_{\mathcal{L}^{2}_{(sc)}(H_{u_{\infty}})}^2 +\frac{1}{a}  \sum_{i \leq 11}\lVert \aln \omega \rVert_{\mathcal{L}^{2}_{(sc)}(H_{u_{\infty}})}^2 \\ \lesssim& \mathcal{R}[\tbeta]^2 + \left(\mathcal{I}^{(0)}\right)^2 +1 \lesssim \mathcal{R}[\tbeta]^2 +1 .     \end{align*}
			\item There holds \begin{align*}T_2=&\intubar \sum_{i_1+i_2+i_3+i_4=i} \scaletwoSuprimeubarprime{(\al)^i \nabla^{i_1}\psi^{i_2} \nabla^{i_3}(\psi,\chihat) \nabla^{i_4}\Psi}^2 \dubarprime \\ \lesssim& \frac{a \cdot O^4}{\lvert u^{\prime} \rvert^2} + \intubar a \ScaletwoSuprimeubarprime{\left( \frac{\psi}{\al},\frac{\chihat}{\al} \right) \cdot (\al\nabla)^{10} \Psi}^2 \dubarprime \lesssim \frac{a \cdot O^4}{\lvert u^{\prime} \rvert^2} + \frac{a \cdot O^2}{\lvert u^\prime \rvert^2 } \mathcal{R}[\Psi]^2.\end{align*} Therefore \begin{equation}
			T_2 \lesssim \frac{a}{\lvert u \rvert} \intu \frac{a}{\lvert u^\prime \rvert^2} \cdot \left(\frac{a \cdot O^4}{\lvert u^{\prime} \rvert^2} + \frac{a \cdot O^2}{\lvert u^\prime \rvert^2 } \mathcal{R}[\Psi]^2 \right)\duprime  \lesssim \frac{a^3 \cdot O^4}{\lvert u \rvert^4} + \frac{a^3 \cdot O^2 \cdot R^2}{\lvert u \rvert^4} \lesssim 1.
			\end{equation}
			\item For the third term \[\intubar \frac{a}{\lvert u \rvert} \int_{u_{\infty}}^u \frac{a}{\lvert u^\prime \rvert^2} \sum_{i_1+i_2+i_3+i_4=i} \scaletwoSuprimeubarprime{(\al)^i \nabla^{i_1}\psi^{i_2}\nabla^{i_3}(\psi,\chibarhat)\nabla^{i_4+1}\psi }^2 \hspace{.5mm} \text{d}u^\prime \dubarprime \]we bound separately the cases where $i_4=10$ and where not. In the former case, we need to distinguish the subcases where the $\psi$-term in $\nabla^{i_4+1}\psi$ belongs to those components that are bounded in the $\scaletwoHu{\cdot}$-norm and those bounded in the $\scaletwoHbaru{\cdot}$-norm in the bootstrap assumption \eqref{bootstrapelliptic}. \begin{itemize}
				\item When $i_4 < 10$ we can bound $T_3$ by $1$, using Proposition \ref{usefulstatements}.
				
				\item When $i_4 =10$ we have \begin{align*} &\intubar \frac{a}{\lvert u \rvert} \int_{u_{\infty}}^u \frac{a}{\lvert u^\prime \rvert^2} \cdot \frac{\upr^2}{a}  \ScaletwoSuprimeubarprime{(\al)^{10} \left( \frac{\al}{\upr} \psi, \frac{\al}{\upr}\chibarhat \right) \nabla^{11}\psi }^2 \hspace{.5mm} \text{d}u^\prime \dubarprime \\ =& \intubar \frac{a}{\lvert u \rvert} \int_{u_{\infty}}^u  \frac{O^2}{\lvert u^\prime \rvert^2} \scaletwoSuprimeubarprime{(\al)^{10} \nabla^{11}\psi }^2 \hspace{.5mm} \text{d}u^\prime \dubarprime \\ =& \intubar \frac{O^2}{ \lvert u \rvert} \int_{u_{\infty}}^u  \frac{a}{\lvert u^\prime \rvert^2} \scaletwoSuprimeubarprime{(\al)^{10} \nabla^{11}\psi }^2 \hspace{.5mm} \text{d}u^\prime \dubarprime \\ \lesssim& \frac{O^2}{\lvert u \rvert}\cdot O_{11}^2 + \frac{a}{\lvert u \rvert}\cdot \frac{O^2}{\lvert u \rvert}\cdot O_{11}^2.  \end{align*}In the last inequality we have distinguished the cases according to the exact form of $\psi$ in \eqref{bootstrapelliptic}, so that we are able to bound it by $O_{11}$.
				
			\end{itemize}
			\item There holds \begin{align*}
			T_4 =& \intubar \frac{a}{\lvert u \rvert} \int_{u_{\infty}}^u \frac{a}{\lvert u^\prime \rvert^2} \sum_{i_1+i_2+i_3+i_4=i} \scaletwoSuprimeubarprime{(\al)^i \nabla^{i_1}\psi^{i_2}\nabla^{i_3}\Y \nabla^{i_4+1}(\Y,\alpha_F) }^2 \hspace{.5mm} \text{d}u^\prime \dubarprime \\ \lesssim& \frac{a^2 \cdot O^4}{\lvert u \rvert^4} + \intubar \frac{a}{\lvert u \rvert} \int_{u_{\infty}}^u \frac{a^2}{\lvert u^\prime \rvert^2}  \ScaletwoSuprimeubarprime{(\al)^{10} \Y \nabla^{11}\left(\frac{\Y}{\al}, \frac{\alpha_F}{\al} \right) }^2 \hspace{.5mm} \text{d}u^\prime \dubarprime\\ \lesssim& \frac{a^2 \cdot O^4}{\lvert u \rvert^4} + \frac{a^3 \cdot O^2 \cdot \mathcal{F}^2[\alpha_F]}{\lvert u \rvert^4}+\frac{a^2 \cdot O^2 \cdot \mathcal{F}^2[\rho_F,\sigma_F]}{\lvert u \rvert^4}.
			\end{align*}Here we have calculated explicitly all the possible pairs that appear in the schematic $\Y \nabla (\Y,\alpha_F)$ and those are $\rho_F\nabla \rho_F, \sigma_F \nabla \sigma_F$ and $\alphabar_F \nabla \alpha_F$.
			\item Similarly, there holds \begin{align*}
			T_5 =& \intubar \frac{a}{\lvert u \rvert} \int_{u_{\infty}}^u \frac{a}{\lvert u^\prime \rvert^2} \sum_{i_1+i_2+i_3+i_4=i} \scaletwoSuprimeubarprime{(\al)^i \nabla^{i_1}\psi^{i_2}\nabla^{i_3}(\alpha_F,\Y) \nabla^{i_4}\Y }^2 \hspace{.5mm} \text{d}u^\prime \dubarprime \\ \lesssim& \frac{a^2 \cdot O^4}{\lvert u \rvert^4} + \intubar \frac{a}{\lvert u \rvert} \int_{u_{\infty}}^u \frac{a^2}{\lvert u^\prime \rvert^2}  \ScaletwoSuprimeubarprime{(\al)^{10} \left( \frac{\alpha_F}{\al}, \frac{\Y}{\al} \right) \nabla^{11}\Y }^2 \hspace{.5mm} \text{d}u^\prime \dubarprime \\ \lesssim& \frac{a^2 \cdot O^4}{\lvert u \rvert^4} + \frac{a^3 \cdot O^2 \cdot \underline{\mathcal{F}}^2\hspace{.5mm}[\alphabar_F]}{\lvert u \rvert^4}+\frac{a^2 \cdot O^2 \cdot \mathcal{F}^2[\rho_F,\sigma_F]}{\lvert u \rvert^4} .\end{align*}Here we have calculated explicitly all the possible pairs that appear in the schematic $(\alpha_F,\Y)\nabla\Y$ and those are $\alpha_F \nabla \alphabar_F, \rho_F\nabla \rho_F$ and $\sigma_F \nabla  \sigma_F$.
			
			\item There holds \begin{align*}
			T_6 =& \intubar \frac{a}{\lvert u \rvert} \int_{u_{\infty}}^u \frac{a}{\lvert u^\prime \rvert^2} \sum_{i_1+i_2+i_3+i_4+i_5=i} \scaletwoSuprimeubarprime{(\al)^i \nabla^{i_1}\psi^{i_2}\nabla^{i_3}(\psi,\chibarhat,\tr\chibar,\chihat) \nabla^{i_4}(\alpha_F,\Y)\nabla^{i_5}\Y }^2 \hspace{.5mm} \text{d}u^\prime \dubarprime \\ \lesssim& \intubar \frac{a}{\lvert u \rvert} \intu \frac{a}{\lvert u^\prime \rvert^2} \cdot \frac{O^6}{a}\duprime \dubarprime = \frac{a \cdot O^6}{\lvert u \rvert^2}.
			\end{align*}Here we have estimated \begin{align*}
			    &\int_{u_{\infty}}^u \frac{a}{\lvert u^\prime \rvert^2} \sum_{i_1+i_2+i_3+i_4+i_5=i} \scaletwoSuprimeubarprime{(\al)^i \nabla^{i_1}\psi^{i_2}\nabla^{i_3}(\psi,\chibarhat,\tr\chibar,\chihat) \nabla^{i_4}(\alpha_F,\Y)\nabla^{i_5}\Y }^2 \hspace{.5mm} \text{d}u^\prime \\ =&\intu \frac{a}{\upr^2}\cdot \frac{\upr^4}{a^2}\cdot a \cdot \ScaletwoSuprimeubarprime{(\al)^i \nabla^{i_1}\psi^{i_2}\nabla^{i_3}\left(\frac{a(\psi,\chibarhat,\tr\chibar,\chihat)}{\upr^2} \right) \nabla^{i_4}\left(\frac{\alpha_F}{\al}, \frac{\Y}{\al}\right)\nabla^{i_5}\Y }^2 \hspace{.5mm} \text{d}u^\prime \\ \lesssim& \intu \frac{a}{\upr^2}\cdot \frac{\upr^4}{a^2}\cdot a \cdot \frac{O^6}{\upr^4} \duprime \lesssim \frac{O^6}{u}.
			\end{align*}
			
			\item There holds \begin{align*}
			T_7 =& \intubar \frac{a}{\lvert u \rvert}\int_{u_{\infty}}^u \frac{a}{\lvert u^\prime \rvert^2} \sum_{i_1+i_2+i_3+i_4+i_5=i} \scaletwoSuprime{(\al)^i \nabla^{i_1}\psi^{i_2}\nabla^{i_3}\psi \nabla^{i_4}\psi \nabla^{i_5} \psi }^2\hspace{.5mm} \text{d}u^\prime \dubarprime \\ \lesssim& \frac{a}{\lvert u \rvert} \cdot \frac{a \cdot O^6}{\lvert u \rvert^5}.
			\end{align*} \item The final two terms can be absorbed to the left by Gr\"onwall's inequality, by virtue of schematically containing the term $\nabla^{i_4}\kappa$. 
		\end{itemize}
		
		\par \noindent From the following $\div-\curl$ system

		\[ \div \nabla \omega = \div \kappa + \frac{1}{2}\nabla \tbeta,     \]\[ \curl \nabla \omega = 0,    \] \[    \div \nabla \omegad = \curl \kappa + \frac{1}{2} \curl \tbeta, \] \[ \curl \nabla \omegad = 0  \]and Proposition \ref{mainellipticlemma} we have that  
		
		\begin{align*}
		&\scaletwoSu{a^5 \nabla^{11} (\omega, \omegad)}\\ \lesssim& \sum_{j=0}^{10} \scaletwoSu{(\al \nabla)^j \kappa }+ \scaletwoSu{(\al \nabla)^i \tbeta}+ \frac{1}{\al} \sum_{j=0}^{10}\scaletwoSu{(\al \nabla)^j (\omega,\omegad)}.
		\end{align*}Passing to $\scaletwoHu{\cdot}$-norms, we arrive at \be  \scaletwoHu{(\al)^{10}\nabla^{11}(\omega,\omegabard)}\lesssim \mathcal{R}[\tbeta]+1.           \ee
		
	\end{proof}
	\par \noindent	We move on to top order estimates for $\eta$. 
	
	\begin{proposition} \label{etaellipticprop}
		Under the assumptions of Theorem \ref{main1} and the bootstrap assumptions \eqref{bootstrapbounds}-\eqref{bootstrapelliptic}, there holds \[ \frac{a}{\lvert u \rvert}  \scaletwoHu{a^5 \nabla^{11} \eta} + \scaletwoHbaru{a^5 \nabla^{11} \eta} \lesssim \mathcal{R}+1.     \]
    \end{proposition}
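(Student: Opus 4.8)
The plan is to control $\nabla^{11}\eta$ by the same elliptic scheme used for $\tr\chi,\chihat,\omega$ in Propositions \ref{trchichihatellipticprop} and \ref{omegaellipticprop}: feed the Hodge pair $(\div\eta,\curl\eta)$ into the identity \eqref{ellipticid} and into Proposition \ref{mainellipticlemma}. The curl is supplied directly by the constraint equation $\curl\eta=\sigma+\tfrac12\chibarhat\wedge\chihat$, so the only nontrivial input is a bound on $\div\eta$ with $\le 10$ angular derivatives, and this I would extract from a renormalized ``mass aspect'' quantity satisfying a good outgoing transport equation. A direct commutation of the transport law $\nabla_4\eta_a=-\chi_{ab}(\eta-\etabar)_b-\tbeta_a-R_{a4}$ with eleven derivatives is not available here, since it would require $\nabla^{11}\tbeta$, one derivative beyond what $\mathcal{R}$ controls — hence the need for the elliptic detour.

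Concretely, I would work with $\mu:=\div\eta+\rho$. Using the commutator formula for $S$-tangent $1$-forms and the Bianchi equation $\nabla_4\rho+\tfrac32\tr\chi\,\rho=\div\beta-\tfrac12\chibarhat\cdot\alpha+\dots=\div\tbeta+\tfrac12\div R_{4(\cdot)}-\tfrac12\chibarhat\cdot\alpha+\dots$ (with $\tbeta=\beta-\tfrac12 R_{4(\cdot)}$), the two copies of $\div\tbeta$ cancel, and one is left with a schematic transport equation $\nabla_4\mu=\psi\cdot\nabla(\psi,\chihat,\tr\chi)+(\alpha_F,\Y)\cdot\nabla(\alpha_F,\Y)+(\psi,\chibarhat,\tr\chibar,\chihat)\cdot(\tbeta,\rho,\sigma,\alpha)+\psi\cdot\psi\cdot\psi+\tr\chi\,\rho+\dots$ in which no curvature component is differentiated, at most one derivative falls on a Maxwell component, and — after eliminating $\beta_{\mathcal{R}}$ from the commutator via the Codazzi relation $\beta_{\mathcal{R}}=-\div\chihat+\tfrac12\nabla\tr\chi-\tfrac12(\eta-\etabar)\cdot(\chihat-\tfrac12\tr\chi)$ — the differentiated Ricci coefficients are drawn from $\{\chihat,\tr\chi\}$ together with $\eta,\etabar$, the latter two appearing only against the small factor $\chi$. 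I would then commute this equation with $i\le 10$ angular derivatives via Proposition \ref{commutationformulaeprop}, apply the $\nabla_4$ transport estimate of Proposition \ref{prop36}, and estimate the right-hand side term by term. The genuinely top-order pieces are: $\nabla^{11}\chihat$ and $\nabla^{11}\tr\chi$, already controlled in Proposition \ref{trchichihatellipticprop}; $\nabla^{11}\alpha_F$ and $\nabla^{11}(\rho_F,\sigma_F)$, controlled by $\mathcal{F}$; at most ten derivatives of $\tbeta,\rho,\sigma,\alpha$, controlled by $\mathcal{R}$; and $\chi\cdot\nabla^{11}\etabar$. Every one of these except $\nabla^{11}\chihat,\nabla^{11}\tr\chi$ enters multiplied by a factor carrying a $|u|^{-1}$, $\chihat/\al$ or $a|u|^{-2}$ gain, so that with $|u|\ge a/4$ and the auxiliary bootstrap \eqref{bootstrapelliptic} ($O_{11},F\lesssim a^{1/320}$) their contribution is $\lesssim 1$; the $\nabla^{11}\eta$ term is absorbed by Gr\"onwall thanks to its small coefficient $\chi$; and all remaining lower-order nonlinear products are dispatched by Proposition \ref{usefulstatements} and the $L^2(S_{u,\ubar})$ bounds of Section 4. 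This yields $\scaletwoSu{\aln\mu}$, hence $\scaletwoSu{\aln\div\eta}$ via $\div\eta=\mu-\rho$, for all $i\le 10$.

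Commuting the constraint $\curl\eta=\sigma+\tfrac12\chibarhat\wedge\chihat$ with $i\le 10$ angular derivatives gives $\scaletwoSu{\aln\curl\eta}\lesssim\scaletwoSu{\aln\sigma}+(\text{products of }\le10\text{ derivatives of }\chibarhat,\chihat)+(\text{lower-order commutator terms})$, all dominated by $\mathcal{R}$, $\mathcal{O}$ and Proposition \ref{usefulstatements}. Applying Proposition \ref{mainellipticlemma} to the $1$-form $\eta$ ($r=0$, $f=\div\eta$, $g=\curl\eta$, $h=\tr\eta=0$) I would then get $\scaletwoSu{(\al\nabla)^{11}\eta}\lesssim\al\sum_{j\le10}\scaletwoSu{(\al\nabla)^j(\div\eta,\curl\eta)}+\sum_{j\le10}\scaletwoSu{(\al\nabla)^j\eta}$; substituting the above while keeping the precise $\rho$- and $\sigma$-contributions (the target allows an $\mathcal{R}$), I would square and integrate. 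Integrating in $\ubar$ turns the left side into $\tfrac{a}{|u|}\scaletwoHu{a^5\nabla^{11}\eta}$ and the $\rho,\sigma$ terms into $\tfrac{a}{|u|}\scaletwoHu{(\al\nabla)^{10}(\rho,\sigma)}\lesssim\mathcal{R}$ (using $\tfrac{a}{|u|}\le4$), while integrating in $u$ against $\tfrac{a}{|u'|^2}$ produces $\scaletwoHbaru{a^5\nabla^{11}\eta}$ together with the analogous incoming fluxes; in both cases the $O$-, $F$- and $O_{11}$-terms come with enough negative powers of $a$ to be $\lesssim 1$. Collecting, $\tfrac{a}{|u|}\scaletwoHu{a^5\nabla^{11}\eta}+\scaletwoHbaru{a^5\nabla^{11}\eta}\lesssim\mathcal{R}+1$.

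The main obstacle is the construction of $\mu$ and the organization of the commuted transport equation so that the \emph{only} uncontrolled-looking top-order Ricci terms on its right-hand side are $\chi\cdot\nabla^{11}\etabar$ and $\chi\cdot\nabla^{11}\eta$, both with the small factor $\chi$: one must verify that no $\nabla^{11}\chibarhat$, $\nabla^{11}\tr\chibar$ or $\nabla^{11}\omegabar$ can enter (which would be circular, as those are estimated only afterwards) and that each appearance of $\mathcal{F}$, $O$ or $O_{11}$ is multiplied by a compensating power of $a^{-1}$ or $|u|^{-1}$. This is bookkeeping on the schematic form of $\nabla_4\mu$ and on the weights in \eqref{bootstrapelliptic}; the transport and elliptic machinery itself (Propositions \ref{prop36}, \ref{commutationformulaeprop}, \ref{mainellipticlemma}) is already in place.
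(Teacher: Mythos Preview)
Your proposal is correct and follows essentially the same route as the paper: define the mass aspect $\mu$ (the paper uses $\mu=-\div\eta-\rho$, you use the opposite sign, which is immaterial), derive a $\nabla_4$-transport equation in which $\div\tbeta$ cancels, commute with $i\le 10$ derivatives, bound the right-hand side using the already-established top-order estimates for $\chihat,\tr\chi$ (Proposition~\ref{trchichihatellipticprop}), the Maxwell fluxes $\mathcal{F}$, and the curvature norms $\mathcal{R}$, and then feed $(\div\eta,\curl\eta)$ into Proposition~\ref{mainellipticlemma} before integrating in both $\ubar$ and $u$.

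One small point: your suggestion to absorb the $\chi\cdot\nabla^{11}\eta$ term by Gr\"onwall is not quite how the paper proceeds, and indeed Gr\"onwall is awkward here since $\nabla^{11}\eta$ is not the same object as $\nabla^{10}\mu$ (only $\div\eta$ enters $\mu$, not the full gradient). The paper instead bounds both $\nabla^{11}\eta$ and $\nabla^{11}\etabar$ directly by the auxiliary bootstrap constant $O_{11}$ from \eqref{bootstrapelliptic}, exploiting the smallness of the prefactor $\frac{\al O}{|u|}\cdot O_{11}\lesssim\frac{O}{\al}\cdot O_{11}\ll 1$. This is a minor bookkeeping difference; your overall architecture is the paper's.
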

	
	\begin{proof}
		Introduce the quantity \[ \mu = -\div \eta - \rho.  \]Our goal is to derive a $\nabla_4$-transport equation for $\mu$. Recall the commutation formula, for a $1-$form $U$ \begin{align*}
		[\nabla_4,\text{div}] \hspace{.5mm} U = - \frac{1}{2}\tr\chi \hspace{.5mm} \div U- \chihat \cdot \nabla U- \tbeta \cdot U + \frac{1}{2}(\eta+\etabar) \cdot \nabla_4 U - \etabar \cdot \chibarhat \cdot U- \frac{1}{2}\tr\chi \hspace{.5mm} \etabar \cdot U+ \tr\chi \hspace{.5mm} \etabar \cdot U.
		\end{align*} In particular \begin{align*}
		\nabla_4 \hspace{.5mm} \div \eta = \text{div}\left(\nabla_4 \eta\right) +	[\nabla_4,\text{div}] \hspace{.5mm} \eta = \div(\chihat, \tr \chi \hspace{.5mm}\gamma)\cdot (\eta-\etabar) + (\chihat,\tr\chi \hspace{.5mm} \gamma)\cdot \div (\eta-\etabar) - \div \tbeta\\ - \frac{1}{2}\tr\chi \hspace{.5mm} \div \eta- \chihat \cdot \nabla \eta- \tbeta \cdot \eta + \frac{1}{2}(\eta+\etabar) \cdot \nabla_4 \eta - \etabar \cdot \chibarhat \cdot \eta- \frac{1}{2}\tr\chi \hspace{.5mm} \etabar \cdot\eta+ \tr\chi \hspace{.5mm} \etabar \cdot \eta.
		\end{align*}Schematically, this rewrites as 
		
		\begin{equation}
		\nabla_4 (\text{div} \hspace{.5mm} \eta) + \div \tbeta = (\psi,\chihat) \cdot \nabla (\eta, \etabar) + \psi \nabla (\psi,\chihat) + \psi \cdot \Psi + \psi \cdot (\psi,\chihat,\chibarhat) \cdot \psi + \psi \cdot \alpha_F \cdot \Y. 
		\end{equation}
		Moreover, \[\nabla_4  \rho - \div \tbeta =  (\psi,\chibarhat)\cdot(\alpha,\Psi) + \alpha_F \cdot \nabla (\rho_F,\sigma_F) + (\rho_F,\sigma_F) \cdot \nabla (\alpha_F, \rho_F,\sigma_F)+ (\psi,\chibarhat,\tr\chibar, \chih)\cdot (\alpha_F,\Y)\cdot (\alpha_F,\Y)  . \]Consequently, $\mu$ satisfies the following transport equation:
		
		\begin{equation}
\begin{split}
		\nabla_4 \mu = (\psi,\chihat) \cdot \nabla(\eta,\etabar)+ \psi \nabla (\psi,\chihat)  + \alpha_F \cdot \nabla (\rho_F,\sigma_F) + (\rho_F,\sigma_F) \cdot \nabla \alpha_F + (\psi,\chibarhat)\cdot(\alpha,\Psi) \\+\psi \cdot (\psi,\chihat,\chibarhat) \cdot \psi + (\psi,\chibarhat,\tr\chibar, \chih)\cdot (\alpha_F,\Y)\cdot (\alpha_F,\Y).  \label{mueq}
		\end{split}
\end{equation}Commuting \eqref{mueq} with $i \leq 10$ angular derivatives we arrive at
		
		\begin{align*}
		&\nabla_4 \nabla^i \mu\\ =& \sum_{i_1+i_2+i_3+i_4=i} \nabla^{i_1}\psi^{i_2}\nabla^{i_3}(\psi,\chihat)\nabla^{i_4+1}(\eta,\etabar)+ \sum_{i_1+i_2+i_3+i_4=i} \nabla^{i_1}\psi^{i_2}\nabla^{i_3} \psi \nabla^{i_4+1}(\psi,\chihat) \\ &+ \sum_{i_1+i_2+i_3+i_4=i} \nabla^{i_1}\psi^{i_2}\nabla^{i_3} \alpha_F \nabla^{i_4+1}(\rho_F,\sigma_F) + \sum_{i_1+i_2+i_3+i_4=i} \nabla^{i_1}\psi^{i_2}\nabla^{i_3} (\rho_F,\sigma_F) \nabla^{i_4+1}(\alpha_F, \rho_F,\sigma_F) \\ &+ \sum_{i_1+i_2+i_3+i_4=i} \nabla^{i_1}\psi^{i_2}\nabla^{i_3}(\psi,\chibarhat)\nabla^{i_4}(\alpha,\Psi) + \sum_{i_1+i_2+i_3+i_4+i_5=i} \nabla^{i_1}\psi^{i_2}\nabla^{i_3}\psi \nabla^{i_4}(\psi,\chihat,\chibarhat)\nabla^{i_5}\psi\\ &+\sum_{i_1+i_2+i_3+i_4+i_5=i} \nabla^{i_1}\psi^{i_2}\nabla^{i_3}(\psi,\chibarhat,\tr\chibar,\chihat)\nabla^{i_4}(\alpha_F,\Y)\nabla^{i_5}(\alpha_F,\Y)\\ &+ \sum_{i_1+i_2+i_3+i_4=i} \nabla^{i_1}\psi^{i_2}\nabla^{i_3}(\psi,\chihat)\nabla^{i_4}\mu.
		\end{align*}We now pass to scale-invariant norms. Noticing that $\restri{(\nabla^{i}\mu)}{\Hbar_0} = 0$, we can apply Proposition \ref{prop36} to obtain
		
		\begin{align*}
		&\scaletwoSu{\aln \mu}\\ \lesssim& \intubar  \sum_{i_1+i_2+i_3+i_4=i} \ScaletwoSuubarprime{(\al)^i \nabla^{i_1}\psi^{i_2}\nabla^{i_3}(\psi,\chihat)\nabla^{i_4+1}(\eta,\etabar)} \dubarprime \\ &+\intubar \sum_{i_1+i_2+i_3+i_4=i} \scaletwoSuubarprime{(\al)^i\nabla^{i_1}\psi^{i_2} \nabla^{i_3}\psi \nabla^{i_4+1}(\psi,\chihat)} \dubarprime \\  &+\intubar \sum_{i_1+i_2+i_3+i_4=i} \scaletwoSuubarprime{(\al)^i\nabla^{i_1}\psi^{i_2} \nabla^{i_3}\alpha_F \nabla^{i_4+1}(\rho_F,\sigma_F)} \dubarprime \\  &+\intubar \sum_{i_1+i_2+i_3+i_4=i} \scaletwoSuubarprime{(\al)^i\nabla^{i_1}\psi^{i_2} \nabla^{i_3}(\rho_F,\sigma_F) \nabla^{i_4+1}(\alpha_F, \rho_F,\sigma_F)} \dubarprime \\  &+\intubar \sum_{i_1+i_2+i_3+i_4=i} \scaletwoSuubarprime{(\al)^i\nabla^{i_1}\psi^{i_2} \nabla^{i_3}(\psi,\chibarhat) \nabla^{i_4}(\alpha,\Psi)}\dubarprime \\ &+ \intubar \sum_{i_1+i_2+i_3+i_4+i_5=i} \scaletwoSuubarprime{(\al)^i\nabla^{i_1}\psi^{i_2} \nabla^{i_3}\psi \nabla^{i_4}(\psi,\chihat, \chibarhat) \nabla^{i_4}\psi} \dubarprime \\ &+  \intubar \sum_{i_1+i_2+i_3+i_4+i_5=i} \scaletwoSuubarprime{(\al)^i\nabla^{i_1}\psi^{i_2} \nabla^{i_3}(\psi,\chihat, \tr\chibar, \chibarhat) \nabla^{i_4}(\alpha_F,\Y) \nabla^{i_5} (\alpha_F,\Y)} \dubarprime \\ &+\intubar \sum_{i_1+i_2+i_3+i_4=i} \scaletwoSuubarprime{(\al)^i\nabla^{i_1}\psi^{i_2} \nabla^{i_3}(\psi,\chihat) \nabla^{i_4}\mu }\dubarprime\\
		 =& I_1+ \dots + I_8.
		\end{align*}\begin{itemize}
			\item  We have \begin{align*}
			I_1 \leq& \intubar \ScaletwoSuubarprime{(\al)^{i+1}\left( \frac{\psi}{\al}, \frac{\chihat}{\al}\right) \nabla^{i+1}(\eta,\etabar) } \dubarprime + \frac{\al \cdot O^2}{\lvert u \rvert} \\ \lesssim& \frac{\al \cdot O}{\lvert u \rvert}\cdot \scaletwoHu{a^5 \nabla^{11} (\eta,\etabar)} + \frac{\al \cdot O^2}{\lvert u \rvert} \leq \frac{O}{\al}\cdot O_{11}[\eta, \etabar] + \frac{\al \cdot O^2}{\lvert u \rvert} \lesssim 1.
			\end{align*}
			\item We have  \begin{equation}
			I_2 \lesssim \intubar \ScaletwoSuubarprime{(\al)^{11}\psi \nabla^{11}\left(\frac{\psi}{\al},\frac{\chihat}{\al}\right) } \dubarprime + \frac{\al \cdot O^2}{\lvert u \rvert}  \lesssim \frac{\al \cdot O \cdot O_{11}}{\lvert u \rvert} + \frac{\al \cdot O^2}{\lvert u \rvert}.
			\end{equation}
			
			\item We have \begin{equation}
			I_3 \lesssim \intubar \ScaletwoSuubarprime{\left( \frac{\alpha_F}{\al} \right) (\al \nabla)^{11} \Y} \dubarprime +  \frac{\al \cdot O^2}{\lvert u \rvert} \\ \lesssim \frac{\al \cdot O \cdot \mathcal{F}[\Y]}{\lvert u \rvert} +\frac{\al \cdot O^2}{\lvert u \rvert}.
			\end{equation}
			
			\item Similarly, we have \begin{equation}
			I_4 \lesssim \intubar \ScaletwoSuubarprime{\Y (\al \nabla)^{11} \left( \frac{\Y}{\al}, \frac{\alpha_F}{\al}    \right)} \dubarprime +  \frac{\al \cdot O^2}{\lvert u \rvert} \\ \lesssim \frac{\al \cdot O \cdot \mathcal{F}[\alpha_F]}{\lvert u \rvert} +\frac{\al \cdot O^2}{\lvert u \rvert}.
			\end{equation}
			\item There holds
			
			\begin{align*}
			I_5 \lesssim& \intubar \frac{\upr}{\al} \cdot  \ScaletwoSuubarprime{ \left (\frac{\al}{u}\psi,\frac{\al}{u}\chibarhat \right) \hspace{.5mm}\cdot (\al \nabla)^{10}(\alpha,\Psi)} \dubarprime +1\\ \lesssim&  \intubar \frac{u}{\al} \cdot \frac{O_{\infty}[\chibarhat]}{u} \cdot  \scaletwoSuubarprime{  (\al \nabla)^{10}(\alpha,\Psi)} \dubarprime + 1      \lesssim \mathcal{R}[\alpha]+1,
			\end{align*}since $O_{\infty}[\chibarhat]\lesssim 1$ by Proposition \ref{chibarhatproposition}.
			
			\item There holds \begin{equation}
			I_6 \lesssim \frac{O^6}{\lvert u \rvert \cdot \al}.
			\end{equation}
			\item There holds \begin{equation}
			I_7 \lesssim 1,
			\end{equation}just as in the term $J_6$ in the proof of Proposition \ref{L2curvature2}.
			\item Finally, the term $I_8,$ after expanding $\mu = -\div \eta - \rho$, can be controlled by $I_1 +I_5$.
		\end{itemize}

		\par \noindent	Consequently, \be \scaletwoSu{(\al \nabla)^i \mu} \lesssim \mathcal{R}[\alpha]+1. \ee
		
		\par \noindent	Now observe the div--curl system (the second equation is given schematically):
		
		\[ \div \eta = -\mu - \rho, \]
		\[  \curl \eta = \sigma + \chibarhat \wedge \chihat + \Y \cdot \Y. \]

		\par \noindent	So that, applying Proposition \ref{mainellipticlemma} , we have
		
		\begin{equation}
\begin{split}\label{etaresults}
		&\scaletwoSu{(\al \nabla)^{11}
			\eta}\\ \lesssim& \al \sum_{i\leq 10} \big( \scaletwoSu{(\al \nabla)^{i}\mu} + \scaletwoSu{(\al \nabla)^i (\rho,\sigma)}\\ &+ u \cdot \ScaletwoSu{(\al \nabla)^i \left(\frac{\al}{u}\chibarhat \cdot \frac{\chihat}{\al}\right)} + \scaletwoSu{(\al \nabla)^i (\Y \cdot \Y)} \big) + \sum_{i \leq 10} \scaletwoSu{(\al \nabla)^i \eta}.
		\end{split}
\end{equation}
		
		\par \noindent Integrating along the $\ubar$-direction and raising to the second power, we arrive at
		
		\begin{equation}
		\frac{a}{\lvert u \rvert} \scaletwoHu{a^5 \nabla^{11} \eta} \lesssim \mathcal{R} +1.
		\end{equation}In a similar way, using \eqref{etaresults}, we get \[ \scaletwoHbaru{a^5 \nabla^{11} \eta} \lesssim \mathcal{R} +1.     \]	\end{proof}
	\par \noindent 	We move on to estimates for $\etabar$.
	
	\begin{proposition} \label{etabarellipticprop}
		Under the assumptions of Theorem \ref{main1} and the bootstrap assumptions \eqref{bootstrapbounds}-\eqref{bootstrapelliptic}, there holds \[ \frac{a}{\lvert u \rvert}    \scaletwoHu{a^5 \nabla^{11}\etabar} \lesssim \mathcal{R}+1. \]
	\end{proposition}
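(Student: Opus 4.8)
The plan is to imitate, step by step, the argument used for $\eta$ in Proposition~\ref{etaellipticprop}, replacing the mass-aspect function $\mu$ by its incoming analogue. Concretely, I would introduce
\[ \mubar := -\div\etabar - \rho, \]
and derive a $\nabla_3$-transport equation for $\mubar$ out of three ingredients: the structure equation $\nabla_3\etabar + \tfrac12\tr\chibar\,\etabar = \tbetabar + \tr\chibar\,\eta + \chibarhat\cdot\psi + \Y\cdot\Y$ (already used in the $L^2$-estimate for $\etabar$), the commutation identity for $[\nabla_3,\nabla]$ acting on a $1$-form recorded among the commutation formulae above, and the renormalized Bianchi equation for $\nabla_3\rho$. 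The point of the definition of $\mubar$, exactly as for $\mu$, is that the term $\div\tbetabar$ produced by $\div(\nabla_3\etabar)$ cancels the $\div\tbetabar$ appearing in the $\nabla_3\rho$ equation, so that $\mubar$ solves a transport equation of the schematic form
\begin{align*} \nabla_3 \mubar + \lambda_0\tr\chibar\,\mubar =& (\psi,\chibarhat)\cdot\nabla(\eta,\etabar) + \psi\cdot\nabla(\psi,\chibarhat) + \alpha_F\cdot\nabla(\rho_F,\sigma_F) + (\rho_F,\sigma_F)\cdot\nabla\alpha_F \\ &+ (\psi,\chibarhat,\tr\chibar,\chihat)\cdot(\alpha,\Psi) + \psi\cdot(\psi,\chihat,\chibarhat)\cdot\psi + (\psi,\chibarhat,\tr\chibar,\chihat)\cdot(\alpha_F,\Y)\cdot(\alpha_F,\Y) \end{align*}
for some fixed rational $\lambda_0$, whose right-hand side carries at most \emph{one} angular derivative on the curvature and Maxwell components. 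I note one structural difference from the $\eta$ case: the presence of $\tr\chibar\,\eta$ in the $\nabla_3\etabar$ equation forces $\eta$ (not merely $\etabar$) to appear on the right-hand side after commutation, which is precisely why Proposition~\ref{etaellipticprop} must be established first.

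Next I would commute this equation with $\nabla^i$ for $i\le 10$ using Proposition~\ref{commutationformulaeprop}, note that $\restri{\nabla^i\mubar}{H_{u_\infty}}$ is controlled by the characteristic data (hence $\lesssim 1$), apply Proposition~\ref{prop37} with $\lambda_1 = 2\lambda_0 - 1$, and pass to scale-invariant norms to reach
\[ \scaletwoSu{\aln\mubar} \lesssim \lVert \aln\mubar\rVert_{\mathcal{L}^2_{(sc)}(S_{u_\infty,\ubar})} + \intu \frac{a}{\lvert u^\prime\rvert^2}\scaletwoSuprime{(\al)^i G}\,\duprime. \]
The right-hand side would then be handled on a term-by-term basis, exactly as for $I_1,\dots,I_8$ in the proof of Proposition~\ref{etaellipticprop}. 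The genuinely borderline contributions are the top-order terms $\nabla^{11}\eta$, $\nabla^{11}\etabar$, $\nabla^{11}(\psi,\chihat)$ and $\nabla^{11}(\alpha_F,\rho_F,\sigma_F)$: the $\eta$-term is absorbed using $\tfrac{a}{\lvert u\rvert}\scaletwoHu{a^5\nabla^{11}\eta}\lesssim\mathcal{R}+1$ from Proposition~\ref{etaellipticprop}; the $\etabar$-self-term by the auxiliary bootstrap assumption~\eqref{bootstrapelliptic}; the $(\psi,\chihat)$-term by Proposition~\ref{trchichihatellipticprop}; and the Maxwell terms by Cauchy--Schwarz in $u$, turning them into the $\mathcal{F}$-norms which, once multiplied by the extra factor $\al^{-1}$ and the $u$-weights, are $\lesssim 1$ because $(O+R+F)^{20}\le a^{1/16}$. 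The remaining terms are controlled by $\mathcal{R}$, by Proposition~\ref{usefulstatements}, and by Proposition~\ref{chibarhatproposition} for the $\chibarhat$-coefficients, while the $\nabla^{i_4}\mubar$ term is absorbed by Gr\"onwall's inequality. This should give $\scaletwoSu{\aln\mubar}\lesssim\mathcal{R}[\alpha]+1$ for all $i\le 10$.

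Finally, I would close the estimate elliptically via the div-curl system $\div\etabar = -\mubar - \rho$, $\curl\etabar = -\sigma - \tfrac12\chibarhat\wedge\chihat - \Y\cdot\Y$ (the curl equation being schematic, read off from the constraint $\curl\eta = -\curl\etabar$), applying Proposition~\ref{mainellipticlemma} together with the $L^\infty$-control of $K$ and its derivatives from Lemma~\ref{Klemma}. This produces
\[ \scaletwoSu{(\al\nabla)^{11}\etabar}\lesssim \al\sum_{j\le 10}\Big(\scaletwoSu{(\al\nabla)^j\mubar} + \scaletwoSu{(\al\nabla)^j(\rho,\sigma)} + \lvert u\rvert\,\ScaletwoSu{(\al\nabla)^j\big(\tfrac{\al}{\lvert u\rvert}\chibarhat\cdot\tfrac{\chihat}{\al}\big)} + \scaletwoSu{(\al\nabla)^j(\Y\cdot\Y)}\Big) + \sum_{j\le 10}\scaletwoSu{(\al\nabla)^j\etabar}. \]
Squaring, integrating in $\ubar$ over $[0,1]$, using $\int_0^{\ubar}\scaletwoSuubarprime{(\al\nabla)^j(\rho,\sigma)}^2\,\dubarprime\le\mathcal{R}^2$, the uniform $\mubar$-bound just obtained, Propositions~\ref{chibarhatproposition}, \ref{chihatproposition} and~\ref{usefulstatements} for the quadratic terms, and the $S$-estimate for $\etabar$ proved earlier, then taking square roots, yields the claimed bound $\tfrac{a}{\lvert u\rvert}\scaletwoHu{a^5\nabla^{11}\etabar}\lesssim\mathcal{R}+1$. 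I expect the main obstacle to be two-fold: carrying out the cancellation of $\div\tbetabar$ in the $\nabla_3\mubar$ equation cleanly, and then the careful bookkeeping that places, for each top-order factor in $G$, the complementary factors in $\mathcal{L}^\infty_{(sc)}$ with enough $\al^{-1}$- and $\lvert u\rvert^{-1}$-decay to close — with the new $\tr\chibar\,\eta$ term making it essential to have already secured the top-order bound for $\eta$.
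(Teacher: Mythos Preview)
Your approach is correct and matches the paper's exactly: define $\mubar=-\div\etabar-\rho$, derive a $\nabla_3$-transport equation for it (with the $\div\tbetabar$ terms cancelling), estimate $\nabla^i\mubar$ via Proposition~\ref{prop37}, then close through the div--curl system and Proposition~\ref{mainellipticlemma}. Three corrections to your schematic: the Maxwell quadratics coming from $R_{a3}$ and $\nabla_3\rho$ involve $\alphabar_F$, not $\alpha_F$ (this matters, since it is $\nabla^{11}\alphabar_F$ that is controlled along $\Hbar$ via $\underline{\mathcal{F}}$); since $s_2(\mubar)=1$, Proposition~\ref{prop37} yields the weighted quantity $\tfrac{a}{|u|}\scaletwoSu{(\al\nabla)^i\mubar}$ rather than the unweighted norm you wrote; and the top-order term $\psi\cdot\nabla^{11}\chibarhat$ (present in your $\psi\cdot\nabla(\psi,\chibarhat)$) is handled via Proposition~\ref{chibarhattrchibarelliptic} or the bootstrap~\eqref{bootstrapelliptic}, not Proposition~\ref{trchichihatellipticprop}.
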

	
	\begin{proof} 
		Introduce $\mubar$ defined by   \[  \mubar= -\div \etabar -\rho. \]We then have the Hodge system for $\etabar$
		
		\[   \div \etabar = -\mubar- \rho, \]
		\[     \curl \etabar = -\sigma - \frac{1}{2}\chibarhat \wedge \chihat. \]
		
		\par \noindent For a $1-$form $U_b$ we have
		
		\[ [\nabla_3,\text{div}]U = - \frac{1}{2}\tr\chibar \hspace{.5mm} \text{div}U -\chibarhat \cdot \nabla U - \tbetabar \cdot U    + \frac{1}{2}(\eta+\etabar)\nabla_3 U - \eta\cdot \chihat \cdot U - \frac{1}{2}\tr\chibar \eta \cdot U + \tr\chibar \eta \cdot U.   \]
		
		\vspace{3mm}
		
		\par \noindent Consequently, \begin{align*} \nabla_3 \hspace{.5mm} \div \etabar =& \div (\nabla_3 \hspace{.5mm} \etabar) + [\nabla_3,\text{div}]\hspace{.5mm} \etabar\\ =& \div\left( - \chibarhat \cdot (\etabar-\eta )  - \frac{1}{2}\tr\chibar \cdot (\etabar-\eta)+\tbetabar - \alphabar_F\cdot (\rho_F,\sigma_F)     \right) + [\nabla_3, \div]\hspace{.5mm}\etabar  \\=& -(\div \hspace{.5mm} \chibarhat) \cdot (\etabar-\eta) - \chibarhat \cdot \div(\etabar-\eta) - \frac{1}{2}\tr\chibar \div \etabar + \frac{1}{2}\tr\chibar \div \eta\\ &-\frac{1}{2}(\etabar-\eta)\div(\tildetr) + \div\tbetabar - \alphabar_F \div (\rho_F,\sigma_F)-(\rho_F,\sigma_F) \div \alphabar_F \\ &- \frac{1}{2}\tr\chibar \div \etabar -\chibarhat \cdot \nabla \etabar - \tbetabar \cdot \eta + \frac{1}{2}(\eta+\etabar)\cdot \left( \psi\cdot(\chibarhat,\tr\chibar) +\tbetabar + \Y \cdot \Y \right) + \frac{1}{2} tr\chibar\cdot \eta \cdot \etabar . \end{align*}We thus have the (semi--)schematic identity
		
		\begin{align*}
		\nabla_3 \hspace{.5mm} \div \hspace{.5mm} \etabar + \tr\chibar \hspace{.5mm} \div \hspace{.5mm} \etabar - \div \tbetabar =& \psi \cdot \div \chibarhat + \chibarhat \nabla (\eta,\etabar)+ \tr\chibar \nabla \eta + \psi \hspace{.5mm} \div(\tildetr) +  \Y \nabla (\rho_F,\sigma_F)\\ &+ \Y \hspace{.5mm} \nabla \alphabar_F + \psi \cdot \tbetabar + \psi \cdot (\chibarhat,\tr\chibar)\cdot \psi + \psi \cdot \Y \cdot \Y.
		\end{align*}

		\par \noindent 	Also,
		
		\begin{align*}
		\nabla_3 \rho + \tr\chibar \rho + \div \tbetabar =&  - \frac{1}{2}\tr\chibar \cdot \rho + \chihat \cdot \alphabar + \psi \cdot \tbetabar + (\rho_F,\sigma_F) \nabla \alphabar_F\\ &+ \alphabar_F\nabla (\rho_F,\sigma_F) +(\psi, \tr\chibar) \cdot \Y\cdot \Y + (\psi,\chibarhat) \cdot (\Y,\alpha_F)\cdot \Y.
		\end{align*}
		
		\par \noindent Combining the above two equations, $\mubar$ satisfies the following transport equation:
		
		\begin{equation}
\begin{split}
		&\nabla_3 \mubar + \tr\chibar \hspace{.5mm} \mubar\\ =&    \psi \cdot \nabla \chibarhat + \chibarhat \hspace{.5mm} \nabla(\eta,\etabar) + \tr\chibar \nabla \eta + \psi \nabla(\tildetr) + \alphabar_F \nabla(\rho_F,\sigma_F) \\ &+ (\rho_F,\sigma_F)\nabla \alphabar_F + \psi \cdot \tbetabar + \tr\chibar \cdot \rho + \psi \cdot (\chibarhat,\tr\chibar)\cdot \psi + (\psi,\tr\chibar)\cdot \Y \cdot \Y + (\psi,\chibarhat)\cdot (\Y,\alpha_F)\cdot \Y. \label{mubareq1}
		\end{split}
\end{equation}By commuting \eqref{mubareq1} with $i\leq 10$ angular derivatives, we arrive at 
		
		\begin{align*}
		&\nabla_3 \nabla^i \mubar + \frac{i+2}{2}\tr\chibar \hspace{.5mm} \mubar\\ =& \sum_{ i_1+i_2+i_3+i_4=i} \nabla^{i_1}\psi^{i_2} \nabla^{i_3}\psi \nabla^{i_4+1}\chibarhat +  \sum_{ i_1+i_2+i_3+i_4=i} \nabla^{i_1}\psi^{i_2} \nabla^{i_3} \chibarhat \nabla^{i_4+1} (\eta,\etabar)\\ &+ \sum_{ i_1+i_2+i_3+i_4=i} \nabla^{i_1}\psi^{i_2} \nabla^{i_3} \tr\chibar \nabla^{i_4+1}\eta +  \sum_{ i_1+i_2+i_3+i_4=i} \nabla^{i_1}\psi^{i_2} \nabla^{i_3} \psi \nabla^{i_4+1}\tildetr \\ &+  \sum_{ i_1+i_2+i_3+i_4=i} \nabla^{i_1}\psi^{i_2} \nabla^{i_3} \alpha_F \nabla^{i_4+1}(\rho_F,\sigma_F) +  \sum_{ i_1+i_2+i_3+i_4=i} \nabla^{i_1}\psi^{i_2} \nabla^{i_3} (\rho_F,\sigma_F) \nabla^{i_4+1}\alphabar_F \\ &+  \sum_{ i_1+i_2+i_3+i_4=i} \nabla^{i_1}\psi^{i_2} \nabla^{i_3} \psi \nabla^{i_4}\tbetabar+  \sum_{ i_1+i_2+i_3+i_4=i} \nabla^{i_1}\psi^{i_2} \nabla^{i_3} \tr\chibar \nabla^{i_4}\rho \\ &+  \sum_{ i_1+i_2+i_3+i_4+i_5=i} \nabla^{i_1}\psi^{i_2} \nabla^{i_3}\psi \nabla^{i_4}(\chibarhat,\tr\chibar) \nabla^{i_5}\psi + \sum_{ i_1+i_2+i_3+i_4+i_5=i} \nabla^{i_1}\psi^{i_2} \nabla^{i_3} (\psi,\tr\chibar) \nabla^{i_4}\Y\nabla^{i_5}\Y \\ &+\sum_{ i_1+i_2+i_3+i_4+i_5=i} \nabla^{i_1}\psi^{i_2} \nabla^{i_3} (\psi,\chibarhat) \nabla^{i_4}(\Y,\alpha_F)\nabla^{i_5}\Y + \sum_{i_1 + i_2 + 1=i} \nabla^{i_1+1}\tr\chibar \nabla^{i_2}\mubar\\ &+ \sum_{i_1+i_2+i_3+i_4+1=i} \nabla^{i_1}\psi^{i_2+1}\nabla^{i_3}\tr\chibar \nabla^{i_4}\mubar + \sum_{i_1+i_2+i_3+i_4=i } \nabla^{i_1}\psi^{i_2}\nabla^{i_3}(\psi,\chibarhat,\tildetr)\nabla^{i_4}\mubar\\ :=& G.
		\end{align*}   By Proposition \ref{prop37} we can bound \[ \lvert u \rvert^{i+1}\twoSu{\nabla^i \mubar} \lesssim \lvert u_{\infty} \rvert^{i+1} \lVert \nabla^i \mubar \rVert_{L^{2}(S_{u_\infty,\ubar})} + \intu \lvert u^\prime \rvert^{i+1} \lVert G \rVert_{L^{2}(S_{u^\prime,\ubar})} \duprime.       \]We have $s_2(\nabla^i \mubar)=\frac{i+2}{2}$ and $s_2(G) = \frac{i+4}{2}$. By passing to scale-invariant norms, we have \begin{align*} &\frac{a}{\lvert u \rvert} \scaletwoSu{\aln \mubar}\\ \lesssim& \frac{a}{\lvert u_\infty \rvert } \lVert \aln \mubar \rVert_{\mathcal{L}^2_{(sc)}(S_{u_\infty,\ubar})} + \intu \frac{a^2}{\lvert u^\prime \rvert^3}\scaletwoSuprime{(\al)^i G}\duprime\\ \lesssim&    \frac{a}{\lvert u_\infty \rvert } \lVert \aln \mubar \rVert_{\mathcal{L}^2_{(sc)}(S_{u_\infty,\ubar})} + \intu \frac{a^2}{\lvert u^\prime \rvert^3}\ScaletwoSuprime{(\al)^i \sum_{i_1+i_2+i_3+i_4=i}\nabla^{i_1}\psi^{i_2}\nabla^{i_3}\psi \nabla^{i_4+1}\chibarhat} \duprime   \\  &+ \intu \frac{a^2}{\lvert u^\prime \rvert^3}\ScaletwoSuprime{(\al)^i \sum_{i_1+i_2+i_3+i_4=i}\nabla^{i_1}\psi^{i_2}\nabla^{i_3}\chibarhat \nabla^{i_4+1}(\eta,\etabar)} \duprime \\ &+ \intu \frac{a^2}{\lvert u^\prime \rvert^3}\ScaletwoSuprime{(\al)^i \sum_{i_1+i_2+i_3+i_4=i}\nabla^{i_1}\psi^{i_2}\nabla^{i_3}\tr\chibar \nabla^{i_4+1}\eta} \duprime \\ &+ \intu \frac{a^2}{\lvert u^\prime \rvert^3}\ScaletwoSuprime{(\al)^i \sum_{i_1+i_2+i_3+i_4=i}\nabla^{i_1}\psi^{i_2}\nabla^{i_3}\psi \nabla^{i_4+1}\tildetr} \duprime \\  &+ \intu \frac{a^2}{\lvert u^\prime \rvert^3}\ScaletwoSuprime{(\al)^i \sum_{i_1+i_2+i_3+i_4=i}\nabla^{i_1}\psi^{i_2}\nabla^{i_3}\alphabar_F \nabla^{i_4+1}(\rho_F,\sigma_F)}   \duprime \\  &+ \intu \frac{a^2}{\lvert u^\prime \rvert^3}\ScaletwoSuprime{(\al)^i \sum_{i_1+i_2+i_3+i_4=i}\nabla^{i_1}\psi^{i_2}\nabla^{i_3}(\rho_F,\sigma_F) \nabla^{i_4+1}\alphabar_F} \duprime    \\  &+ \intu \frac{a^2}{\lvert u^\prime \rvert^3}\ScaletwoSuprime{(\al)^i \sum_{i_1+i_2+i_3+i_4=i}\nabla^{i_1}\psi^{i_2}\nabla^{i_3}\psi \nabla^{i_4}\tbetabar} \duprime  \\  &+ \intu \frac{a^2}{\lvert u^\prime \rvert^3}\ScaletwoSuprime{(\al)^i \sum_{i_1+i_2+i_3+i_4=i}\nabla^{i_1}\psi^{i_2}\nabla^{i_3}\tr\chibar\nabla^{i_4}\rho} \duprime \\  &+ \intu \frac{a^2}{\lvert u^\prime \rvert^3}\ScaletwoSuprime{(\al)^i \sum_{i_1+i_2+i_3+i_4+i_5=i}\nabla^{i_1}\psi^{i_2}\nabla^{i_3}\psi \nabla^{i_4}(\chibarhat,\tr\chibar) \nabla^{i_5}\psi} \duprime \\ &+ \intu \frac{a^2}{\lvert u^\prime \rvert^3}\ScaletwoSuprime{(\al)^i \sum_{i_1+i_2+i_3+i_4+i_5=i}\nabla^{i_1}\psi^{i_2}\nabla^{i_3}(\psi,\tr\chibar) \nabla^{i_4}\Y \nabla^{i_5}\Y} \duprime\\ &+ \intu \frac{a^2}{\lvert u^\prime \rvert^3}\ScaletwoSuprime{(\al)^i \sum_{i_1+i_2+i_3+i_4+i_5=i}\nabla^{i_1}\psi^{i_2}\nabla^{i_3}(\psi,\chihat) \nabla^{i_4}(\Y,\alpha_F) \nabla^{i_5}\Y} \duprime \\ &+ \intu \frac{a^2}{\lvert u^\prime \rvert^3}\ScaletwoSuprime{(\al)^i \sum_{i_1+i_2+1=i} \nabla^{i_1+1}\tr\chibar \nabla^{i_2}\mubar} \duprime \\ &+ \intu \frac{a^2}{\lvert u^\prime \rvert^3}\ScaletwoSuprime{(\al)^i \sum_{i_1+i_2+i_3+i_4+1=i}\nabla^{i_1}\psi^{i_2+1}\nabla^{i_3}\tr\chibar \nabla^{i_4}\mubar } \duprime \\ &+ \intu \frac{a^2}{\lvert u^\prime \rvert^3}\ScaletwoSuprime{(\al)^i \sum_{i_1+i_2+i_3+i_4=i}\nabla^{i_1}\psi^{i_2}\nabla^{i_3}(\psi,\chibarhat,\tildetr) \nabla^{i_4}\mubar} \duprime\\ :=& T_1 +\dots + T_{15}. \end{align*}We bound $T_1$ to $T_{15}$ individually.
		
		\begin{itemize}
			\item Given that $\mubar = -\div \etabar - \rho$ and the fact that $\mathcal{I}^{(0)}$ bounds up to 14 derivatives for $\etabar$ and $\rho$, there holds   \begin{equation}\begin{split}T_1 = \frac{a}{\lvert u_\infty \rvert } \lVert \aln \mubar \rVert_{\mathcal{L}^2_{(sc)}(S_{u_\infty,\ubar})}\lesssim \mathcal{I}^{(0)}\lesssim 1.\end{split}\end{equation}
			
			\item There holds \begin{align*} T_2 =&  \intu \frac{a^2}{\lvert u^\prime \rvert^3}\ScaletwoSuprime{(\al)^i \sum_{i_1+i_2+i_3+i_4=i}\nabla^{i_1}\psi^{i_2}\nabla^{i_3}\psi \nabla^{i_4+1}\chibarhat} \duprime \\ \lesssim& \intu \frac{a^2}{\upr^3} \scaletwoSuprime{a^5\cdot \psi \cdot \nabla^{11}\chibarhat} \duprime \\ &+ \intu \frac{a^{\f32}}{\upr^2} \ScaletwoSuprime{(\al)^i \sum_{\substack{i_1+i_2+i_3+i_4=i,\\ i_4\leq 9}} \nabla^{i_1}\psi^{i_2}\nabla^{i_3}\psi \nabla^{i_4+1} \left(\frac{\al}{\upr}\chibarhat \right) } \duprime \\   \lesssim& \intu \frac{a^2}{\upr^3} \scaletwoSuprime{a^5 \cdot \psi \cdot \nabla^{11}\chibarhat} \duprime \\ &+ \intu \frac{a^{\f32}}{\upr^2} \cdot \frac{1}{\al} \sum_{i_1+i_2=i\leq 10} \scaletwoSuprime{(\al)^i \nabla^{i_1}\psi^{i_2+2}} \duprime   \\       \lesssim& 1 + \frac{a}{\lvert u \rvert^{\frac{3}{2}}}\cdot O \cdot O_{11}[\chibarhat] \lesssim 1 .      \end{align*}Here we have made use of Proposition \ref{chibarhattrchibarelliptic}.
			\item There holds \begin{equation}\begin{split} T_3 \lesssim&  \frac{a^{\frac{3}{2}}O^2}{\lvert u \rvert^2}+ \intu \frac{a^{\frac{3}{2}}}{\lvert u^\prime \rvert^2} \cdot \frac{O}{\lvert u^\prime \rvert} \ScaletwoSuprime{a^5 \nabla^{11}(\eta,\etabar)} \duprime \\ \lesssim&  \intu \frac{a^{\frac{3}{2}}}{\lvert u^\prime \rvert^2} \cdot \frac{O}{\lvert u^\prime \rvert} \ScaletwoSuprime{a^5 \nabla^{11}(\eta,\etabar)} \duprime +1.   \end{split}\end{equation}This term is controlled by Propositions \ref{etaellipticprop} and \ref{etabarellipticprop}.
			\item There holds \begin{equation}\begin{split} T_4 &\lesssim \frac{\al \cdot O^2}{\lvert u \rvert}+ \intu \frac{a^2}{\lvert u^\prime \rvert^3} \cdot \frac{\lvert u^\prime \rvert^2}{a} \cdot \frac{O_{\infty}[\tr\chibar]}{\lvert u^\prime \rvert} \scaletwoSuprime{a^5 \nabla^{11}\eta} \\  &\lesssim \intu \frac{a\cdot O_{\infty}[\tr\chibar]}{\lvert u^\prime \rvert^2}  \scaletwoSuprime{a^5 \nabla^{11}\eta} \duprime +1 \\ &\lesssim   \intu \frac{a}{\lvert u^\prime \rvert^2}  \scaletwoSuprime{a^5 \nabla^{11}\eta} \duprime
			+1 \\ &\lesssim \frac{a}{\lvert u \rvert} \scaletwoHu{a^5 \nabla^{11} \eta} +1 .  \end{split}\end{equation}Here we have used the fact that $O_{\infty}[\tr\chibar] \lesssim 1$, shown in Section 4.
			\item There holds \begin{equation}\begin{split} T_5 &\lesssim \frac{\al \cdot O^2}{\lvert u \rvert^2}+ \intu \frac{a^2}{\lvert u^\prime \rvert^3} \cdot \frac{\lvert u^\prime \rvert}{a} \cdot \frac{O}{\lvert u^\prime \rvert} \scaletwoSuprime{a^5 \nabla^{11}\tildetr} \duprime \\ &\lesssim \frac{\al \cdot O^2}{\lvert u\rvert^2} + \frac{\al \cdot O}{\lvert u\rvert^{\frac{3}{2}}}\scaletwoHbaru{a^5 \nabla^{11}\tildetr} \\&\lesssim \frac{O}{\lvert u\rvert^{\frac{1}{2}} \cdot \al}\cdot O_{11}[\tildetr] +1. \end{split}\end{equation}
			
			\item There holds \begin{align*}
			T_6 \lesssim& \intu \frac{a^2}{\lvert u^\prime \rvert^3} \cdot \frac{1}{\al} \cdot \frac{O^2}{\lvert u^\prime \rvert} \duprime +\intu \frac{a^2}{\lvert u^\prime \rvert^3} \cdot \frac{O}{\lvert u^\prime \rvert} \scaletwoSuprime{a^5 \nabla^{11}(\rho_F,\sigma_F)}  \duprime \\ \lesssim& \frac{a^{\frac{3}{2}} \cdot O^2}{\lvert u \rvert^3} + \left( \intu \frac{a}{\lvert u^\prime \rvert^2} \scaletwoSuprime{a^5 \nabla^{11}(\rho_F,\sigma_F)}^2 \duprime \right)^{\frac{1}{2}} \left(\intu \frac{a^3 \cdot O^2}{\lvert u^\prime \rvert^6} \duprime \right)^{\frac{1}{2}}\\ \lesssim &1 + \frac{a^{\frac{3}{2}} \cdot O}{\lvert u \rvert^{\frac{5}{2}}} \cdot \scaletwoHbaru{a^5 \nabla^{11}(\rho_F,\sigma_F)} \lesssim 1 + \frac{a^2 \cdot O}{\lvert u \rvert^{\frac{5}{2}}} \cdot \underline{\mathcal{F}}[\rho_F,\sigma_F] \lesssim 1. \end{align*}
			
			\item  Similarly, there holds \begin{equation}
			T_7 \lesssim \frac{a^{\frac{3}{2}} \cdot O^2}{\lvert u \rvert^3} + \frac{a^{\frac{3}{2}} \cdot O}{\lvert u \rvert^{\frac{5}{2}}} \cdot \scaletwoHbaru{a^5 \nabla^{11}\alphabar_F} \lesssim \frac{a^{\frac{3}{2}} \cdot O^2}{\lvert u \rvert^3} + \frac{a^{\frac{3}{2}} \cdot O}{\lvert u \rvert^{\frac{5}{2}}} \cdot \underline{\mathcal{F}}\hspace{.5mm}[\alphabar_F] \lesssim 1. 
			\end{equation}
			
			\item There holds \begin{align*} T_8 \lesssim& \intu \frac{a^2}{\lvert u^\prime \rvert^3} \cdot \frac{O}{\lvert u^\prime \rvert} \cdot \sum_{k\leq 10} \scaletwoSuprime{(\al \nabla)^k \tbetabar}  \duprime \\ \lesssim&  \left(\sum_{k \leq 10} \intu   \frac{a}{\lvert u^\prime \rvert^2} \scaletwoSuprime{(\al \nabla)^k \tbetabar}^2 \duprime \right)^{\frac{1}{2}} \left( \intu \frac{a^3 \cdot O^2}{\lvert u^\prime \rvert^6}  \duprime \right)^{\frac{1}{2}} \lesssim  \frac{a^{\frac{3}{2}}\cdot O}{\lvert u \rvert^{\frac{5}{2}}} \cdot \underline{\mathcal{R}}\hspace{.5mm} [\tbetabar] \lesssim 1.   \end{align*}
			
			\item There holds (this is the most marginal term) \begin{align*}
			T_9 \lesssim& \intu \frac{a^2}{\lvert u^\prime \rvert^3} \cdot \frac{O}{\lvert u^\prime \rvert} \cdot \frac{\lvert u^\prime \rvert^2}{a} \cdot \sum_{k\leq 10} \scaletwoSuprime{(\al \nabla)^k \rho}  \duprime \\ \lesssim&  \left(\sum_{k \leq 10} \intu \frac{a}{\lvert u^\prime \rvert^2} \scaletwoSuprime{(\al \nabla)^k \rho}^2 \duprime \right)^{\frac{1}{2}} \left( \intu \frac{a \cdot O^2}{\lvert u^\prime \rvert^2}  \duprime \right)^{\frac{1}{2}} \lesssim&  \frac{a^{\frac{1}{2}}\cdot O[\tr\chibar]}{\lvert u \rvert^{\frac{1}{2}}} \cdot \underline{\mathcal{R}}\hspace{.5mm} [\rho] \lesssim \underline{\mathcal{R}}[\rho].   
			\end{align*}
			
			\item There holds \begin{align*} T_{11}+T_{12} =& \intu \frac{a^2}{\lvert u^\prime \rvert^3}\ScaletwoSuprime{(\al)^i \sum_{i_1+i_2+i_3+i_4+i_5=i}\nabla^{i_1}\psi^{i_2}\nabla^{i_3}(\psi,\tr\chibar) \nabla^{i_4}\Y \nabla^{i_5}\Y} \duprime\\ &+ \intu \frac{a^2}{\lvert u^\prime \rvert^3}\ScaletwoSuprime{(\al)^i \sum_{i_1+i_2+i_3+i_4+i_5=i}\nabla^{i_1}\psi^{i_2}\nabla^{i_3}(\psi,\chihat) \nabla^{i_4}(\Y,\alpha_F) \nabla^{i_5}\Y} \duprime \\ \lesssim& \intu \frac{a^2}{\lvert u^\prime \rvert^3}\cdot \frac{\upr^2}{a}\cdot \ScaletwoSuprime{(\al)^i \sum_{i_1+i_2+i_3+i_4+i_5=i}\nabla^{i_1}\psi^{i_2}\nabla^{i_3}\left(\frac{a}{\upr^2} \psi,\frac{a}{\upr^2}\tr\chibar   \right) \nabla^{i_4}\Y \nabla^{i_5}\Y} \duprime\\ &+ \intu \frac{a^3}{\lvert u^\prime \rvert^3}\ScaletwoSuprime{(\al)^i \sum_{i_1+i_2+i_3+i_4+i_5=i}\nabla^{i_1}\psi^{i_2}\nabla^{i_3}\left( \frac{\psi}{\al},\frac{\chihat}{\al}\right) \nabla^{i_4}\left(\frac{\Y}{\al},\frac{\alpha_F}{\al} \right) \nabla^{i_5}\Y} \duprime \\ \lesssim& \intu \left( \frac{a}{\upr} + \frac{a^3}{\upr^3} \right) \cdot \frac{O^3}{\upr^2}\duprime \lesssim \frac{a\cdot O^3}{u^2} \lesssim 1. \end{align*}
			
			\item The last three terms can be controlled by Gr\"onwall's inequality. Indeed, \begin{align*}
			   & \intu \frac{a^2}{\lvert u^\prime \rvert^3}\ScaletwoSuprime{(\al)^i \sum_{i_1+i_2+1=i} \nabla^{i_1+1}\tr\chibar \nabla^{i_2}\mubar} \duprime \\ &+ \intu \frac{a^2}{\lvert u^\prime \rvert^3}\ScaletwoSuprime{(\al)^i \sum_{i_1+i_2+i_3+i_4+1=i}\nabla^{i_1}\psi^{i_2+1}\nabla^{i_3}\tr\chibar \nabla^{i_4}\mubar } \duprime \\ &+ \intu \frac{a^2}{\lvert u^\prime \rvert^3}\ScaletwoSuprime{(\al)^i \sum_{i_1+i_2+i_3+i_4=i}\nabla^{i_1}\psi^{i_2}\nabla^{i_3}(\psi,\chibarhat,\tildetr) \nabla^{i_4}\mubar} \duprime \\ \lesssim& \intu \frac{a}{\lvert u^\prime \rvert^2}\ScaletwoSuprime{(\al)^i \sum_{i_1+i_2=i} \nabla^{i_1}\left(\frac{a}{\upr}\tildetr\right) \nabla^{i_2}\mubar} \duprime \\ &+ \intu \frac{a^2}{\lvert u^\prime \rvert^3}\cdot \frac{\upr^2}{a}\cdot \ScaletwoSuprime{(\al)^i \sum_{i_1+i_2+i_3+i_4+1=i}\nabla^{i_1}\psi^{i_2+1}\nabla^{i_3}\left( \frac{a}{\upr^2}\tr\chibar \right) \nabla^{i_4}\mubar } \duprime \\ &+ \intu \frac{a^2}{\lvert u^\prime \rvert^3}\cdot \frac{\upr}{\al}\cdot \ScaletwoSuprime{(\al)^i \sum_{i_1+i_2+i_3+i_4=i}\nabla^{i_1}\psi^{i_2}\nabla^{i_3}\left(\frac{\al}{\upr}\psi,\frac{\al}{\upr}\chibarhat,\frac{\al}{\upr}\tildetr \right) \nabla^{i_4}\mubar} \duprime \\ \lesssim& \intu \left(\frac{a\cdot O}{\upr^3} + \frac{a\cdot O^2}{\upr^3} +\frac{a^{\f32}\cdot }{\upr^3} \right)\cdot \ScaletwoSuprime{\sum_{i_1\leq i} (\al \nabla)^{i_1}\mubar} \duprime.
			\end{align*}When $i_1=i$ the three terms in the parenthesis of the line above are integrable with respect to $u$ and so Gr\"onwall's inequality allows us to control the term. When $i_1<i$, we can use the definition $\mubar= -\div \etabar -\rho$ and bound the terms by the already established estimates of the previous sections.

		\end{itemize}Consequently, there holds
	
\begin{equation}\label{etabarresults}
	\frac{a}{\lvert u \rvert}\scaletwoSu{\aln \mubar} \lesssim  \intu \frac{a^{\frac{3}{2}} \cdot O}{\lvert u^\prime \rvert^3} \scaletwoSuprime{a^5\nabla^{11}(\eta,\etabar)} \duprime + \frac{a}{\lvert u \rvert}\scaletwoHu{a^5 \nabla^{11}\eta} +\underline{\mathcal{R}}[\rho]+1.
\end{equation}Now observe the $\div-\curl$ system (the second equation is given schematically):

\begin{gather}
\div \hspace{.5mm} \etabar = -\mubar-\rho, \\ \curl \hspace{.5mm} \etabar = -\sigma -\chibarhat \wedge \chihat +\Y\cdot \Y.
\end{gather}Consequently,

\begin{align*}
\frac{a}{\lvert u \rvert} \lVert a^5 \nabla^{11} \etabar \rVert_{L^{2}_{(sc)}(S_{u,\ubar})} \lesssim& \sum_{i \leq 10} \big(\frac{a}{\lvert u \rvert} \scaletwoSu{\aln \mubar} + \scaletwoSu{\aln (\rho,\sigma)} \\ &+ \scaletwoSu{\aln (\chibarhat \cdot \chihat)} + \scaletwoSu{\aln (\Y\cdot \Y)} \big) \\&+ \frac{1}{\al}\sum_{i \leq 10} \scaletwoSu{\aln \etabar}. 
\end{align*}By raising the above to the second power, integrating along $\ubar$ and using \eqref{etabarresults} along with Gr\"onwall's inequality  we can get that \be \frac{a}{\lvert u \rvert} \scaletwoHu{a^5 \nabla^{11} \etabar} \lesssim \mathcal{R} +1. \ee

	\end{proof}

	\vspace{3mm} \par \noindent We now prove the highest order bounds for $\omegabar$.
	
	\begin{proposition} \label{omegabarellipticprop}
		Under the assumptions of Theorem \ref{main1} and the bootstrap assumptions \eqref{bootstrapbounds}-\eqref{bootstrapelliptic}, we have 
		\[ \scaletwoHbaru{a^5 \nabla^{11} \omegabar}  \lesssim 1+\mathcal{R} +\mathcal{F}. \]
	\end{proposition}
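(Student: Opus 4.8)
The strategy mirrors the treatment of $\omega$ in Proposition~\ref{omegaellipticprop}, but now along the incoming direction. First I would introduce an auxiliary scalar $\omegabard$ solving $\nabla_4 \omegabard = \tfrac12 \sigma$ with zero data on $\Hbar_0$, form the pair $\langle \omegabar \rangle = (-\omegabar, \omegabard)$, and define a renormalized quantity $\kappabar := \Hodge{\mathcal{D}}_1\langle \omegabar \rangle + \tfrac12 \tbetabar = \nabla \omegabar + \Hodge{\nabla}\omegabard + \tfrac12 \tbetabar$ (the sign chosen so that the bad $\nabla\rho$, $\Hodge\nabla\sigma$ terms cancel against those in the $\nabla_3\tbetabar$ equation). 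Using the structure equation $\nabla_3\omegabar = \tfrac12 \rho + \psi\cdot\psi + \Y\cdot\Y$ together with the commutators $[\nabla_3,\nabla]$, $[\nabla_3,\Hodge{\nabla}]$ applied to scalars, and the schematic $\nabla_3\tbetabar$ equation from the list of renormalized Bianchi equations, one gets a $\nabla_3$-transport equation for $\kappabar$ of the schematic form
\[
\nabla_3 \kappabar + \tfrac{1}{2}\tr\chibar\, \kappabar = (\psi,\chihat)\cdot\Psi + (\psi,\chibarhat)\cdot\nabla\psi + \Y\cdot\nabla(\Y,\alpha_F) + (\alpha_F,\Y)\cdot\nabla\Y + (\psi,\chibarhat,\tr\chibar,\chih)\cdot(\alpha_F,\Y)\cdot\Y + \psi\cdot\psi\cdot\psi.
\]
Note $s_2(\kappabar) = s_2(\nabla\omegabar,\tbetabar) = 1.5$; the crucial structural gain is that every angular derivative of a curvature or Maxwell component appearing on the right is either already controlled in the $\mathcal{L}^2(S_{u,\ubar})$ norms of Section~4 (for $\le 10$ derivatives) or is one of the fluxes in $\mathcal{R},\mathcal{F},\underline{\mathcal{R}},\underline{\mathcal{F}}$.

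Next I would commute this transport equation with $i \le 10$ angular derivatives using Proposition~\ref{commutationformulaeprop}, apply Proposition~\ref{prop37} with $\lambda_0 = \tfrac{i+1}{2}$ (so $\lambda_1 = i$), and translate into scale-invariant norms exactly as in the proof of Proposition~\ref{omegabarprop}, obtaining
\[
\scaletwoSu{(\al\nabla)^i \kappabar} \lesssim \lVert (\al\nabla)^i \kappabar \rVert_{\mathcal{L}^2_{(sc)}(S_{u_\infty,\ubar})} + \int_{u_\infty}^u \frac{a}{\lvert u^\prime\rvert^2}\scaletwoSuprime{(\al)^i G}\, \duprime.
\]
The initial-data term is bounded by $\mathcal{R}[\tbetabar] + \mathcal{I}^{(0)} \lesssim \mathcal{R} + 1$, using $\omegabard|_{H_{u_\infty}} = 0$. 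For the bulk term I would split $G$ term by term and estimate each factor using Proposition~\ref{usefulstatements} for the purely Ricci products, the flux norms $\mathcal{R}[\Psi]$, $\mathcal{F}[\rho_F,\sigma_F]$, $\mathcal{F}[\alpha_F]$, $\underline{\mathcal{F}}[\alphabar_F]$ for the terms with a top-order curvature/Maxwell derivative (pairing the anomalous factor $(\alpha_F,\Y)$ or $\Y$ in $\mathcal{L}^\infty_{(sc)}$, which supplies a gain of $O/\lvert u\rvert$), and Proposition~\ref{mainellipticlemma} only at the very end. The terms schematically containing $\nabla^{i_4}\kappabar$ are absorbed on the left by Gr\"onwall. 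This gives $\scaletwoSu{(\al\nabla)^i\kappabar} \lesssim \mathcal{R}+\mathcal{F}+1$ for $i \le 10$, and after integrating in $u$ against $\tfrac{a}{\lvert u'\rvert^2}$, the flux version $\scaletwoHbaru{(\al)^{10}\nabla^{11}\kappabar}$-type bound. Finally, from the elliptic/Hodge system
\[
\div\nabla\omegabar = \div\kappabar - \tfrac12\nabla\tbetabar, \quad \curl\nabla\omegabar = 0, \quad \div\nabla\omegabard = \curl\kappabar - \tfrac12\curl\tbetabar, \quad \curl\nabla\omegabard = 0,
\]
Proposition~\ref{mainellipticlemma} yields
\[
\scaletwoSu{a^5\nabla^{11}(\omegabar,\omegabard)} \lesssim \sum_{j=0}^{10}\scaletwoSu{(\al\nabla)^j\kappabar} + \scaletwoSu{(\al\nabla)^{10}\tbetabar} + \frac{1}{\al}\sum_{j=0}^{10}\scaletwoSu{(\al\nabla)^j(\omegabar,\omegabard)},
\]
and passing to the $\scaletwoHbaru{\cdot}$ norm in the $u$-direction (the natural flux norm for a $\nabla_3$-renormalized quantity) gives $\scaletwoHbaru{a^5\nabla^{11}\omegabar} \lesssim 1+\mathcal{R}+\mathcal{F}$.

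The main obstacle I anticipate is the bookkeeping around $\omegabard$: unlike $\omegad$ in Proposition~\ref{omegaellipticprop} (which solves a $\nabla_3$ equation and so is naturally estimated in the $\Hbar$-flux), here $\omegabard$ is defined by a $\nabla_4$ equation, so one must first produce $L^2(S)$ bounds for $\le 11$ angular derivatives of $\omegabard$ via Proposition~\ref{prop36} before the Hodge system closes, and check that $s_2(\omegabard) = s_2(\sigma) - 1 = 0$... actually $s_2(\omegabard) = 0$ matches $s_2(\omegabar)$ only if we assign $\omegabard$ the same signature $1$ as $\omegabar$; I would verify that $\nabla_4\omegabard = \tfrac12\sigma$ is signature-consistent ($s_2(\sigma) = 1 = s_2(\nabla_4\omegabard) = s_2(\omegabard)$, so $s_2(\omegabard) = 1$, consistent). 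The second subtlety is that the renormalization must be $\kappabar = \Hodge{\mathcal{D}}_1\langle\omegabar\rangle + \tfrac12\tbetabar$ rather than $-\tfrac12$; the sign is dictated by \eqref{tbetabardef} and the $\nabla_3\tbetabar$ equation, and getting it wrong would fail to cancel the $\nabla\rho$ term. Apart from these, every estimate is of a type already executed in Sections~3--6, so the argument should go through verbatim in structure.
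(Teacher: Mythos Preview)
Your overall architecture (auxiliary $\omegabard$, renormalized quantity $\kappabar$, Hodge system at the end) matches the paper, but you have the direction of the transport equation wrong, and this is not a cosmetic slip. You invoke ``the structure equation $\nabla_3\omegabar = \tfrac12\rho + \psi\cdot\psi + \Y\cdot\Y$'', but no such equation exists in the double-null formalism: $\omegabar$ only has a $\nabla_4$ structure equation (look back at the list of null structure equations; only $\nabla_4\omegabar$ and $\nabla_3\omega$ appear). Likewise $\omegabard$, which you yourself define via $\nabla_4\omegabard=\tfrac12\sigma$, has no $\nabla_3$ equation to feed into your scheme. Consequently your derivation of a $\nabla_3$ transport equation for $\kappabar$, your application of Proposition~\ref{prop37}, and the resulting integral $\int_{u_\infty}^u \frac{a}{|u'|^2}(\cdots)\,du'$ all collapse: there is nothing to integrate.

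The paper instead combines the three available $\nabla_4$ equations---for $\omegabar$, for $\omegabard$, and the Bianchi equation $\nabla_4\tbetabar+\tr\chi\,\tbetabar+\nabla\rho-\Hodge{\nabla}\sigma=\cdots$---to obtain a $\nabla_4$ transport equation for $\kappabar$ (with $\kappabar=\Hodge{\mathcal D}_1\langle\omegabar\rangle-\tfrac12\tbetabar$, so the $\nabla\rho$, $\Hodge\nabla\sigma$ terms cancel). One then integrates in $\ubar$ via Proposition~\ref{prop36} from trivial data on $\Hbar_0$, obtaining a uniform $\mathcal L^2_{(sc)}(S_{u,\ubar})$ bound $\scaletwoSu{(\al\nabla)^i\kappabar}\lesssim 1+\mathcal R+\mathcal F$. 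Only \emph{after} the elliptic step does one integrate in $u$ to produce the $\scaletwoHbaru{\cdot}$ flux. Your remark about having to ``first produce $L^2(S)$ bounds for $\omegabard$ via Proposition~\ref{prop36}'' is actually pointing you toward the right direction; the whole argument for $\kappabar$ runs along $e_4$, not $e_3$.
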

	\begin{proof}
		Define the auxiliary function $\omegabard$ by \[ \nabla_4 \omegabard = \frac{1}{2}\sigma \]and trivial initial data along $\Hb_{0}$. We then define $\kappabar$ by \[   \kappabar = -\nabla \omegabar + \Hodge{\nabla} \omegabard - \frac{1}{2}\tbetabar. \] We need to obtain a transport equation for $\kappabar$. Notice that if we set $\langle \omegabar \rangle = (\omegabar, \omegabard)$, then \[  \kappabar=  \Hodge{\mathcal{D}}_1\langle \omegabar \rangle - \frac{1}{2}\tbetabar.        \] Recall the commutation formulae
		
		\[ [\nabla_4, \nabla] f = - \frac{1}{2}\tr\chi \nabla f - \chihat \cdot \nabla f + \frac{1}{2}(\eta+\etabar)D_4 f,            \]   \[ [\nabla_4,\Hodge{\nabla}]g = - \frac{1}{2}\tr\chi \Hodge{\nabla} g + \chihat \cdot \Hodge{\nabla} g +\frac{1}{2}(\Hodge{\eta} + \Hodge{\etabar})D_4 g.    \] Thus, for a pair of scalars $(f,g)$, there holds \be [\nabla_4 , \Hodge{\mathcal{D}}_1](f,g) = -\frac{1}{2}\tr\chi \Hodge{\mathcal{D}}_1(f,g) + \chihat \cdot (\nabla f + \Hodge{\nabla} g) - \frac{1}{2}(\eta+\etabar) \nabla_4 f+ \frac{1}{2}(\Hodge{\eta} + \Hodge{\etabar})\nabla_4 g.     \ee Now recall that \[ \nabla_4 \omegabar = \frac{1}{2} \rho + \psi \cdot \psi + \Y \cdot \Y := \frac{1}{2}\rho + F.   \]Therefore,
		
		\begin{equation}
		\nabla_4 \Hodge{\mathcal{D}}_1 \langle \omegabar \rangle+ \frac{1}{2} \nabla \rho - \frac{1}{2}\Hodge{\nabla}\sigma  = (\psi,\chihat) \cdot \nabla \psi + \Y \cdot \nabla \Y + \psi \cdot \Psi + \psi \cdot \psi \cdot \psi + \psi \cdot \Y \cdot \Y.
		\end{equation}Moreover,
		
		\begin{equation}
		\nabla_4 \tbetabar+ \tr\chi \tbetabar + \nabla \rho - \Hodge{\nabla} \sigma = (\psi,\chibarhat) \cdot \Psi + (\alpha_F, \Y) \cdot \nabla \Y + (\psi,\chihat,\chibarhat,\tr\chibar) \cdot (\alpha_F,\Y) \cdot \Y.
		\end{equation}Consequently, 
		
		\begin{equation}
		\nabla_4 \kappabar = (\psi,\chibarhat)\cdot \Psi + (\alpha_F,\Y)\cdot \nabla \Y + (\psi,\chihat) \cdot \nabla \psi + (\psi,\chihat,\chibarhat,\tr\chibar)\cdot (\alpha_F,\Y)\cdot \Y + \psi \cdot \psi \cdot \psi.
		\end{equation}Commuting with $i\leq 10$ angular derivatives, we get 
		
		\begin{align*}
		&\nabla_4 \nabla^i \kappabar\\ =& \sum_{i_1+i_2+i_3+i_4=i}\nabla^{i_1}\psi^{i_2}\nabla^{i_3}(\psi,\chibarhat) \nabla^{i_4}\Psi + \sum_{i_1+i_2+i_3+i_4=i}\nabla^{i_1}\psi^{i_2}\nabla^{i_3}(\alpha_F,\Y ) \nabla^{i_4+1}\Y \\ &+ \sum_{i_1+i_2+i_3+i_4=i}\nabla^{i_1}\psi^{i_2}\nabla^{i_3}(\psi,\chihat) \nabla^{i_4+1}\psi  + \sum_{i_1+i_2+i_3+i_4+i_5=i}\nabla^{i_1}\psi^{i_2}\nabla^{i_3} (\psi,\chihat,\chibarhat,\tr\chibar)\nabla^{i_4}(\alpha_F,\Y)\nabla^{i_5}\Y \\ &+  \sum_{i_1+i_2+i_3+i_4+i_5=i}\nabla^{i_1}\psi^{i_2}\nabla^{i_3} \psi \nabla^{i_4}\psi\nabla^{i_5}\psi + \sum_{i_1+i_2+i_3+i_4=i}\nabla^{i_1}\psi^{i_2}\nabla^{i_3}(\psi,\chihat) \nabla^{i_4}\kappabar. 
		\end{align*}Passing to scale-invariant norms, there holds
		
		\begin{align*}
		&\scaletwoSu{\aln \kappabar}\\ \lesssim& \intubar \sum_{i_1 + i_2 + i_3+i_4 =i} \scaletwoSuubarprime{(\al)^i \nabla^{i_1}\psi^{i_2}\nabla^{i_3}(\psi,\chibarhat) \nabla^{i_4}\Psi}\dubarprime \\ &+ \intubar \sum_{i_1 + i_2 + i_3+i_4 =i} \scaletwoSuubarprime{(\al)^i \nabla^{i_1}\psi^{i_2}\nabla^{i_3}(\alpha_F,\Y) \nabla^{i_4+1}\Y}\dubarprime \\ &+ \intubar \sum_{i_1 + i_2 + i_3+i_4 =i} \scaletwoSuubarprime{(\al)^i \nabla^{i_1}\psi^{i_2}\nabla^{i_3}(\psi,\chihat) \nabla^{i_4+1}\psi}\dubarprime \\ &+ \intubar \sum_{i_1 + i_2 + i_3+i_4+i_5 =i} \scaletwoSuubarprime{(\al)^i \nabla^{i_1}\psi^{i_2}\nabla^{i_3} (\psi,\chihat,\chibarhat,\tr\chibar)\nabla^{i_4}(\alpha_F,\Y)\nabla^{i_5}\Y} \dubarprime \\ &+ \intubar \sum_{i_1 + i_2 + i_3+i_4+i_5 =i} \scaletwoSuubarprime{(\al)^i \nabla^{i_1}\psi^{i_2}\nabla^{i_3} \psi\nabla^{i_4}\psi \nabla^{i_5}\psi} \dubarprime \\ &+ \intubar \sum_{i_1+i_2+i_3+i_4=i}  \scaletwoSuubarprime{(\al)^i\nabla^{i_1}\psi^{i_2}\nabla^{i_3}(\psi,\chihat) \nabla^{i_4}\kappabar} \dubarprime\\ :=& J_1 +\dots + J_6.
		\end{align*}We again estimate term by term.
		\begin{itemize}
			\item We have \begin{equation}\begin{split} &\scaletwoSuubarprime{(\al)^i \nabla^{i_1}\psi^{i_2}\nabla^{i_3}(\psi,\chibarhat) \nabla^{i_4}\Psi}\\ \lesssim&    \ScaletwoSuubarprime{\left(\frac{\al}{u}\psi,\frac{\al}{u}\chibarhat\right)(\al\nabla)^{10}\Psi} + \frac{u}{\al} \ScaletwoSuubarprime{(\al)^i \nabla^{i_1}\psi^{i_2}\nabla^{i_3}\left(\frac{\al}{u}\psi,\frac{\al}{u}\chibarhat\right) \nabla^{i_4}\Psi}     \\ \lesssim& \frac{O^2}{\al} + \frac{O_{\infty}[\chibarhat]}{\al} \scaletwoSuubarprime{(\al \nabla)^{10}\Psi}.  \end{split}\end{equation} Consequently, \[     J_1 \lesssim \frac{O^2}{\al}+ \frac{O_{\infty}[\chibarhat]\mathcal{R}[\Psi]}{\al} \lesssim 1.  \]     
			\item We have \begin{equation}\begin{split} &\scaletwoSuubarprime{(\al)^i \nabla^{i_1}\psi^{i_2}\nabla^{i_3}(\alpha_F,\Y) \nabla^{i_4+1}\Y} \\ =& \al \cdot\ScaletwoSuubarprime{(\al)^i \nabla^{i_1}\psi^{i_2}\nabla^{i_3}\left(\frac{\alpha_F}{\al},\frac{\Y}{\al} \right) \nabla^{i_4+1}\Y} \\ \lesssim& \frac{\al \cdot O^2}{\lvert u \rvert} + \frac{\al}{u}\cdot \left(O[\alpha_F]+ \frac{O[\Y]}{\al}\right) \scaletwoSuubarprime{a^5 \nabla^{11} \Y}.   \end{split}\end{equation}Consequently,
			
			\[ J_2 \lesssim      \frac{\al \cdot O^2}{\lvert u \rvert} + \mathcal{F}[\Y]+ \frac{\al \cdot O_{\infty}[\alpha_F] \cdot \mathcal{F}[\Y]}{u}.       \]
			\item We have  \begin{equation}\begin{split} &\scaletwoSuubarprime{(\al)^i \nabla^{i_1}\psi^{i_2}\nabla^{i_3}(\psi,\chihat) \nabla^{i_4+1}\psi} \\ =& \al \cdot \ScaletwoSuubarprime{(\al)^i \nabla^{i_1}\psi^{i_2}\nabla^{i_3}\left(\frac\psi\al,\frac\chihat\al \right) \nabla^{i_4+1}\psi}\\ \lesssim& \frac{\al \cdot O^2}{\lvert u \rvert} + \frac{\al \cdot O \cdot O_{11}}{\lvert u \rvert}.   \end{split}\end{equation} Then $J_3$ satisfies the same bound.
			
			\item We have \begin{equation}\begin{split} &\scaletwoSuubarprime{(\al)^i \nabla^{i_1}\psi^{i_2}\nabla^{i_3} (\psi,\chihat,\chibarhat,\tr\chibar)\nabla^{i_4}(\alpha_F,\Y)\nabla^{i_5}\Y}  \\ =& \frac{u^2}{a} \cdot \al \cdot  \ScaletwoSuubarprime{(\al)^i \nabla^{i_1}\psi^{i_2}\nabla^{i_3} \left(\frac{a}{u^2}\psi,\frac{a}{u^2}\chihat,\frac{a}{u^2}\chibarhat,\frac{a}{u^2}\tr\chibar\right)\nabla^{i_4}\left(\frac{\alpha_F}{\al},\frac{Y}{\al} \right)\nabla^{i_5}\Y}  \\ \lesssim& \frac{u^2}{\al} \cdot \frac{O^3}{u^2} \lesssim  \frac{O^3}{\al}.\end{split}\end{equation} Thus $J_4$ satisfies, upon integration, the same bound.
			
			\item There holds  \[ \sum_{i_1 + i_2 + i_3+i_4+i_5 =i} \scaletwoSuubarprime{(\al)^i \nabla^{i_1}\psi^{i_2}\nabla^{i_3} \psi \nabla^{i_4}\psi \nabla^{i_5}\psi}  \lesssim \frac{O^3}{\lvert u \rvert^2}.\]
			
			\item The final term can be absorbed to the left by a Gr\"onwall-type argument.
			
		\end{itemize}Overall, \[ \scaletwoSu{\aln \kappabar} \lesssim 1+\mathcal{R}+\mathcal{F}. \] By the following div-curl system \[     \div \nabla \omegabar = - \div \kappabar - \frac{1}{2}\div \tbetabar, \] \[ \curl \nabla \omegabar=0,\] \[ \curl \omegabard= \curl \kappabar + \frac{1}{2} \curl \tbetabar,\] \[ \div \nabla \omegabard =0, \]applying Proposition \ref{mainellipticlemma}, we have \begin{align*}
		\scaletwoSu{(\al)^{10} \nabla^{11}(\omegabar,\omegabard)} \lesssim&  \sum_{j=0}^{10}\left( \scaletwoSu{(\al \nabla)^j \kappabar} + \scaletwoSu{(\al \nabla)^j \tbetabar}\right) \\ &+\frac{1}{\al} \sum_{j=0}^{10}\scaletwoSu{(\al \nabla)^j (\omegabar,\omegabard)}.
		\end{align*} Integrating in the $u$--direction, we get the desired result.
	\end{proof} 
	Finally, we prove top order estimates for the remaining Ricci coefficients $\tr\chibar, \chibarhat$. 
	
	\begin{proposition} \label{chibarhattrchibarelliptic}
		Under the assumptions of Theorem \ref{main1} and the bootstrap assumptions \eqref{bootstrapbounds}-\eqref{bootstrapelliptic}, there holds \[    \intu \frac{a^2}{\lvert u\rvert^3 }\scaletwoSuprime{ a^5 \nabla^{11} \tr \chibar}\duprime \lesssim \mathcal{R}+\underline{\mathcal{R}}+1, \] \[ \intu \frac{a^{\frac{3}{2}}}{\upr^3}\scaletwoSuprime{a^5 \nabla^{11}\chibarhat} \duprime \lesssim 1.\]
	
	\end{proposition}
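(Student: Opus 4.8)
The plan is to treat $\tr\chibar$ and $\chibarhat$ together, mirroring the structure of Proposition \ref{trchibarboundRicci} but now at one derivative higher, using the elliptic estimate of Proposition \ref{mainellipticlemma2} to pass from a controlled divergence to a controlled full gradient. First I would recall the two relevant equations: the $\nabla_3$-transport equation for $\tildetr = \tr\chibar + \frac{2}{\lvert u\rvert}$, namely
\[ \nabla_3 \widetilde{\tr\chibar} + \tr\chi \hspace{.5mm}\tildetr = \frac{2}{\lvert u \rvert^2}(\Omega^{-1}-1) + \tildetr \hspace{.5mm}\tildetr + \psi \hspace{.5mm}\tr\chibar - \lvert \chibarhat \rvert^2 - \lvert \alphabar_F \rvert^2, \]
and the Codazzi equation for $\chibarhat$,
\[ \div \chibarhat - \frac{1}{2}\nabla \tr\chibar + \tbetabar = \psi\cdot\psi + \Y\cdot\Y + \alpha_F\cdot\Y \]
(the sign conventions being read off from the constraint equations and the definition \eqref{tbetabardef}). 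I would commute the $\tildetr$ equation with $11$ angular derivatives exactly as in Proposition \ref{trchibarboundRicci}, apply Proposition \ref{prop37} with $\lambda_0 = \frac{i+2}{2}$, and pass to scale-invariant norms. Almost every term on the right-hand side is bounded by a product of $\psi$-type and lower-order factors (handled by Proposition \ref{usefulstatements}) plus the already-established lower-order estimates; the genuinely new contributions come from the terms that now carry $11$ derivatives on $\chibarhat$, on $\tildetr$ itself, or on $\alphabar_F$, together with the term $\nabla^{11}(\Omega^{-1}-1)$, which is handled exactly as $\mathcal{I}_3$ in Proposition \ref{trchibarboundRicci} by writing $\Omega^{-1}-1 = \int_0^{\ubar} 2\omega \hspace{.5mm}d\ubar'$ and using the top-order estimate $\scaletwoHu{a^5\nabla^{11}\omega} \lesssim \mathcal{R}[\tbeta]+1$ from Proposition \ref{omegaellipticprop}. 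The $\nabla^{11}\alphabar_F$ term is controlled by $\underline{\mathcal{F}}[\alphabar_F] \lesssim \mathcal{F}$ after a Cauchy--Schwarz in $u'$ against $\scaletwoHbaru{\cdot}$, and the $\nabla^{11}\chibarhat$ and $\nabla^{11}\tildetr$ contributions are absorbed by Gr\"onwall (the $\tildetr$-on-itself term) or fed into the bootstrap quantity $O_{11}$ with a small coefficient $O/\al$ or $a/\lvert u\rvert^{3/2}$ times a power of $a$ that is beaten by $(O+R+F)^{20}\leq a^{1/16}$ and $O_{11}\lesssim a^{1/320}$.

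Once the transport estimate yields
\[ \intu \frac{a^2}{\lvert u\rvert^3}\scaletwoSuprime{a^5\nabla^{11}\tildetr}\duprime \lesssim \mathcal{R}+\underline{\mathcal{R}}+1 \]
(note $\nabla^{11}\tildetr = \nabla^{11}\tr\chibar$, since $\frac{2}{\lvert u\rvert}$ is constant on each sphere), I would then use this to close the $\chibarhat$ estimate. Applying Proposition \ref{mainellipticlemma2} to the Codazzi system with $f = \frac{1}{2}\nabla\tr\chibar - \tbetabar + (\text{lower order quadratic})$, we get
\[ \scaletwoSu{(\al)^{10}\nabla^{11}\chibarhat} \lesssim \sum_{j\leq 10}\Big( \frac{1}{\al}\scaletwoSu{(\al\nabla)^j \tr\chibar} + \scaletwoSu{(\al\nabla)^j \tbetabar} + \scaletwoSu{(\al\nabla)^j\chibarhat} + (\text{quadratic terms})\Big), \]
where the gradient of $\tr\chibar$ costs one factor of $\al$, matching the $(\al)^{j+1}\nabla^j f$ weighting in Proposition \ref{mainellipticlemma2}. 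Multiplying by the appropriate weight, integrating in $u'$ against $\frac{a^{3/2}}{\lvert u'\rvert^3}$, and invoking the transport bound just proved for $\tr\chibar$ together with Propositions \ref{chibarhatproposition}, \ref{L2curvature2}, and \ref{usefulstatements} for the lower-order and quadratic pieces, gives $\intu \frac{a^{3/2}}{\upr^3}\scaletwoSuprime{a^5\nabla^{11}\chibarhat}\duprime \lesssim 1$ after the final factors of $a$ are absorbed using the smallness hypotheses. A subtle point is that the $\chibarhat$-on-itself term inside the elliptic estimate must be dealt with by Gr\"onwall in $u'$, since it appears on both sides after integration.

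\textbf{Main obstacle.} The delicate part is the bookkeeping of $a$-weights in the transport estimate for $\nabla^{11}\tildetr$: because the incoming-direction integration contributes a favorable $\int \frac{a}{\lvert u'\rvert^2}du' \lesssim O/a \lesssim 1$ only when the integrand is genuinely lower order, the borderline terms --- precisely those where all $11$ derivatives land on $\tildetr$, on $\chibarhat$, on $\alphabar_F$, or on $\Omega^{-1}-1$ --- each need individual handling, and one must verify that the coefficient in front of $O_{11}$ (or $\mathcal{F}$, or $\underline{\mathcal{R}}$) is strictly better than $1$ so as not to destroy the closure of the auxiliary bootstrap \eqref{bootstrapelliptic}. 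This is exactly the role played by the carefully chosen powers in $\mathcal{O}_{11,2}$ and by the hierarchy $O_{11}\lesssim a^{1/320}$, $(O+R+F)^{20}\leq a^{1/16}$; the proof amounts to checking that every borderline term, after translating to scale-invariant norms and performing the $u'$- or $\ubar'$-integration, emerges with a gain of at least one negative power of $a^{1/2}$ or one factor of $\lvert u\rvert^{-1/2}$, which is then more than enough. The $\Omega^{-1}-1$ term, requiring one to commute $\nabla^{11}$ past the $\ubar'$-integral defining it and then invoke the $\omega$ top-order bound, is the one place where the argument genuinely reaches back into the elliptic estimates of this section rather than being self-contained.
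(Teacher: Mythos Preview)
Your proposal is essentially the same approach as the paper's: commute the $\nabla_3$-transport equation for $\tildetr$ with $11$ angular derivatives, apply Proposition \ref{prop37} with $\lambda_0=\tfrac{i+2}{2}$, estimate each source term (including the $\Omega^{-1}-1$ term via $\int_0^{\ubar}2\omega\,d\ubar'$ and the top-order $\omega$/$\omegabar$ estimates from this section), obtain an intermediate inequality of the form \eqref{trchiintermediate} containing $\int \frac{a^{3/2}}{|u'|^3}\|a^5\nabla^{11}\chibarhat\|$ on the right, then apply the elliptic Proposition \ref{mainellipticlemma2} to the Codazzi equation for $\chibarhat$ to bound $\|a^5\nabla^{11}\chibarhat\|$ back by $\|a^5\nabla^{11}\tildetr\|$ plus lower order, and close by Gr\"onwall in $u'$. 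One small correction: the $\chibarhat$-on-itself term in the elliptic estimate is not handled by Gr\"onwall --- it is simply lower order (the $\sum_{j\le 10}\|(\al\nabla)^j\chibarhat\|$ tail in Proposition \ref{mainellipticlemma2}), already controlled by Proposition \ref{chibarhatproposition}; the Gr\"onwall step is applied only after substituting the elliptic bound for $\chibarhat$ back into the transport inequality for $\tildetr$.
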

	\begin{proof}
		We begin with the equation \[  \nabla_3 \tildetr + \tr\chibar \tildetr = \frac{2}{\lvert u \rvert^2}(\Omega^{-1}-1)+\tildetr \tildetr +2 \omegabar \tr\chibar - \lvert \chibarhat \rvert^2 - \lvert \alphabar_F \rvert^2.\]Commuting this equation with $i$ angular derivatives, we get 	\begin{align*}  \nabla_3 \nabla^i \tildetr + \frac{i+2}{2}&\tr\chibar \tildetr = \sum_{i_1 + i_2 +i_3 = i}\nabla^{i_1} \psi^{i_2} \nabla^{i_3} \left(\frac{2}{\lvert u \rvert^2}(\Omega^{-1}-1)+\tildetr \tildetr +2 \omegabar \tr\chibar - \lvert \chibarhat \rvert^2 - \lvert \alphabar_F \rvert^2 \right)  \\ 
		&+ \sum_{i_1 + i_2 +i_3 + i_4 =i} \nabla^{i_1} \psi^{i_2} \nabla^{i_3}(\psi,\chibarhat, \tildetr)\nabla^{i_4} \tildetr       + \sum_{i_1 + i_2 +i_3 + i_4 +1 =i} \nabla^{i_1} \psi^{i_2+1} \nabla^{i_3} \tr\chibar \nabla^{i_4}\tildetr\\ &\quad \quad \quad:= \tilde{F}_i .        \end{align*}Rewriting in terms of scale-invariant norms, \begin{align*} \frac{a}{\lvert u \rvert} \scaletwoSu{(\al)^{i-1} \nabla^i \tildetr } \leq& \frac{a}{\lvert u_\infty \rvert} \rvert \lVert (\al)^{i-1} \nabla^i \tildetr  \rVert_{\mathcal{L}^2_{(sc)}(S_{u_{\infty},u})} +\int_{u_{\infty}}^{u}  \frac{a^2}{\lvert  u^\prime \rvert^3    } \lVert (\al)^{i-1} \tilde{F}_i\rVert_{\mathcal{L}^2_{(sc)}(S_{u^\prime, \ubar})}\text{d}u^\prime\\
		=&\frac{a}{\lvert u_\infty \rvert} \lVert (\al)^{i-1} \nabla^i \tildetr  \rVert_{\mathcal{L}^2_{(sc)}(S_{u_{\infty},u})}  + I_1+I_2+I_3,       \end{align*}where \[  \frac{a}{\lvert u \rvert} \scaletwoSu{(\al)^{i-1} \nabla^i \tildetr } \leq \frac{a}{\lvert u_\infty \rvert} \rvert \lVert (\al)^{i-1} \nabla^i \tildetr  \rVert_{\mathcal{L}^2_{(sc)}(S_{u_{\infty},u})} \lesssim 1.  \]We proceed to estimate $I_1$ to $I_3$.
		
	\begin{itemize}
		\item We can rewrite $I_1= I_{11}+I_{12}+I_{13}+I_{14}+I_{15}$ in the obvious way. We further decompose $I_{11}=I_{111}+I_{112}$. There holds \begin{align*}
		 I_{111} = &	\intu \frac{a^2}{\lvert u^\prime \rvert^3} \ScaletwoSuprime{(\al)^{i-1} \sum_{i_1 + i_2 + i_3=i-1} \nabla^{i_1}\psi^{i_2}\nabla^{i_3+1}\frac{2}{\lvert u \rvert^2}(\Omega^{-1}-1)} \duprime \\  =&\intu \frac{a^2}{\lvert u^\prime \rvert^5} \ScaletwoSuprime{(\al)^{i-1} \sum_{i_1 + i_2 =i-1} \nabla^{i_1}\p^{i_2+1}} \duprime \lesssim\f{a^2\cdot O}{|u|^4}. \end{align*}  
	Also,
	\begin{equation*}
\begin{split}
I_{112}=&\int_{u_{\infty}}^u \f{a^2}{|u'|^3}\|(\at)^i \sum_{i_1+i_2+i_3=i}\nab^{i_1}\p^{i_2}\nab^{i_3}(\f{\O^{-1}-1}{|u'|^2})\|_{L^2_{sc}{(S_{u',\ub})}}du'\\
=&\int_{u_{\infty}}^u |u'|^{i+1} \|\sum_{i_1+i_2+i_3=i}\nab^{i_1}\p^{i_2}\nab^{i_3}(\f{\O^{-1}-1}{|u'|^2})\|_{L^2{(S_{u',\ub})}}du' \quad (\mbox{in standard norms})\\
=&\int_{u_{\infty}}^u |u'|^{i+1} \|\sum_{i_1+i_2+i_3=i}\nab^{i_1}\p^{i_2}\nab^{i_3}(\f{\O^{-1}-1}{|u'|^2})\|_{L^2{(S_{u',\ub})}}du' \,\, (\mbox{Using} \,\, \f{\partial}{\partial \ub}\O^{-1}=2\o \Leftrightarrow \nab_4\O^{-1}=2\O^{-1}\o)\\
=&\int_{u_{\infty}}^u |u'|^{i+1} \|\sum_{i_1+i_2+i_3=i}\nab^{i_1}\p^{i_2}\nab^{i_3}[\f{1}{|u'|^2}\cdot \int_0^{\ub}2\o(u',\ub',\theta^1, \theta^2)d\ub']\|_{L^2{(S_{u',\ub})}}du' \\
=&\int_{u_{\infty}}^u |u'|^{i+1} \|\sum_{i_1+i_2+i_3=i}\nab^{i_1}\p^{i_2}[\f{1}{|u'|^2}\cdot \int_0^{\ub}2\nab^{i_3}\o(u',\ub',\theta^1, \theta^2)d\ub']\|_{L^2{(S_{u',\ub})}}du' \\
\leq& \int_{u_{\infty}}^u |u'|^{i+1} \sum_{i_1+i_2+i_3=i}\f{1}{|u'|^{i_1+i_2}}\cdot\f{1}{|u'|^2}\cdot\f{1}{|u'|^{i_3}}\cdot \f{\at}{|u'|^{\f12}}\cdot \big(\underline{\mathcal{R}}[\rho]+1\big)\, du'    (\mbox{by Proposition }  \ref{omegaprop})\\
\leq& \int_{u_{\infty}}^u \f{\at}{|u'|^{\f32}} \big(\underline{\mathcal{R}}[\rho]+1\big)\,du' \ls \underline{\mathcal{R}}[\rho]+1.
\end{split}
\end{equation*}

		 Similarly, there holds \begin{align*}
	 I_{12} =&	\intu \frac{a^2}{\lvert u^\prime \rvert^3} \ScaletwoSuprime{(\al)^{i-1} \sum_{i_1 + i_2 + i_3+i_4 =i} \nabla^{i_1}\psi^{i_2}\nabla^{i_3}\tildetr \nabla^{i_4}\tildetr} \duprime \\ =& \intu \frac{a^2}{\lvert u^\prime \rvert^3} \cdot \frac{\upr^2}{a^2} \ScaletwoSuprime{(\al)^{i-1} \sum_{i_1 + i_2 + i_3+i_4 =i} \nabla^{i_1}\psi^{i_2}\nabla^{i_3}\left(\frac{a}{\upr}\tildetr \right) \nabla^{i_4}\left(\frac{a}{\upr}\tildetr\right)} \duprime\\ \lesssim& \intu  \frac{a^{\frac{3}{2}}}{\lvert u^\prime \rvert^3} \cdot \frac{\lvert u^\prime \rvert^2}{a^2} \cdot\frac{O^2}{\lvert u^\prime \rvert} \duprime + \intu \frac{a^2}{\lvert u^\prime \rvert^3} \cdot \frac{\lvert u^\prime \rvert}{a} \cdot \frac{O}{\lvert u^\prime \rvert} \cdot \scaletwoSuprime{a^5 \nabla^{11}\tildetr} \duprime .
		\end{align*}Also, \begin{equation}\begin{split}
		 I_{13} &= \intu \frac{a^2}{\lvert u^\prime \rvert^3} \ScaletwoSuprime{(\al)^{i-1} \sum_{i_1 + i_2 + i_3+i_4 =i} \nabla^{i_1}\psi^{i_2}\nabla^{i_3}\omegabar \nabla^{i_4}\tr\chibar} \duprime \\  &= \intu \frac{a^2}{\lvert u^\prime \rvert^3} \cdot \frac{\upr^2}{a}\cdot  \ScaletwoSuprime{(\al)^{i-1} \sum_{i_1 + i_2 + i_3+i_4 =i} \nabla^{i_1}\psi^{i_2}\nabla^{i_3}\omegabar \nabla^{i_4}\left(\frac{a}{\upr^2}\tr\chibar\right)} \duprime \\&\lesssim \intu \frac{a^{\frac{3}{2}}}{\lvert u^\prime \rvert^3} \cdot \frac{\lvert u^\prime \rvert^2}{a}\cdot \frac{O^2}{\upr} \duprime  +\intu \frac{a^2}{\upr^3} \cdot \frac{\upr^2}{a}\scaletwoSu{a^5 \nabla^{11}\omegabar}\cdot \frac{O_{\infty}[\tr\chibar]}{\upr} \duprime\\ &\,\,+\intu \frac{a^2}{\upr^3} \cdot \frac{\upr}{a} \cdot \frac{O_{\infty}[\omegabar]}{\upr}\ScaletwoSuprime{a^5 \nabla^{11}\left(\frac{a}{\upr}\tr\chibar\right)}	\duprime \\ &\lesssim \frac{\al \cdot O^2}{\lvert u \rvert} + \intu \frac{a}{\lvert u^{\prime} \rvert^2} \scaletwoSuprime{a^5\nabla^{11}\omegabar} \duprime + (\text{a term handled by Gr\"onwall's inequality}).	\end{split}\end{equation}There also holds \begin{equation}\begin{split}\label{I13eq} I_{14} &=
		 \intu \frac{a^2}{\upr^3} \ScaletwoSuprime{(\al)^{i-1} \sum_{i_1+i_2+i_3+i_4=i} \nabla^{i_1}\psi^{i_2}\nabla^{i_3}\chibarhat \nabla^{i_4}\chibarhat}\duprime \\ &= \intu \frac{a^2}{\upr^3} \cdot\frac{\upr^2}{a}\cdot \ScaletwoSuprime{(\al)^{i-1} \sum_{i_1+i_2+i_3+i_4=i} \nabla^{i_1}\psi^{i_2}\nabla^{i_3}\left( \frac{\al}{\upr}\chibarhat \right) \nabla^{i_4}\left( \frac{\al}{\upr}\chibarhat\right)}\duprime \\&\lesssim \intu \frac{a^{\frac{3}{2}}}{\upr^3} \cdot \frac{\upr^2}{a}\cdot \frac{O^2}{\upr}\duprime +\intu \frac{a^2}{\upr^3} \cdot \frac{\upr}{\al}\cdot \frac{O_{\infty}[\chibarhat]}{ \upr}\scaletwoSuprime{a^5 \nabla^{11}\chibarhat}\duprime \\ &\lesssim \frac{\al\cdot O^2}{\lvert u \rvert^2}+ \intu \frac{a^{\frac{3}{2}}}{\upr^3} \scaletwoSuprime{a^5 \nabla^{11}\chibarhat}\duprime.
		 \end{split}\end{equation}Finally, for the term $I_{15}$ we have
		 
		 \begin{equation}\begin{split}
		  &\intu \frac{a^2}{\upr^3} \ScaletwoSuprime{(\al)^{i-1}\sum_{i_1+i_2+i_3 +i_4=i} \nabla^{i_1}\psi^{i_2}\nabla^{i_3}\alphabar_F \nabla^{i_4}\alphabar_F}\duprime \\ \lesssim &\intu \frac{a^{\frac{3}{2}}}{\upr^3}\cdot \frac{O^2}{\upr}\duprime +  \intu \frac{a^2}{\upr^3}\cdot \frac{O}{\upr}\cdot \scaletwoSuprime{a^5\nabla^{11}\alphabar_F} \duprime \\\lesssim  &1 + \frac{a^{\frac{3}{2}}\cdot O}{\lvert u \rvert^{\frac{5}{2}}} \cdot \scaletwoHbaru{a^5 \nabla^{11}\alphabar_F} \lesssim 1.
		 \end{split}\end{equation}
		 
		 \item There holds  \begin{align*}
		  &\intu \frac{a^2}{\upr^3} \ScaletwoSuprime{(\al)^{i-1}\sum_{i_1+i_2+i_3 +i_4=i}  \nabla^{i_1}\psi^{i_2}\nabla^{i_3}(\psi, \chibarhat,\tildetr) \nabla^{i_4}\tildetr}\duprime \\ \lesssim& \intu \frac{a^2}{\upr^3} \ScaletwoSuprime{(\al)^{i-1}\sum_{i_1+i_2=i} \nabla^{i_1}(\psi, \chibarhat,\tildetr) \nabla^{i_2}\tildetr}\duprime \\ &+ \intu \frac{a^2}{\upr^3} \ScaletwoSuprime{(\al)^{i-1}\sum_{i_1+i_2+i_3 +i_4+1=i} \nabla^{i_1}\psi^{i_2+1}\nabla^{i_3}(\psi, \chibarhat,\tildetr) \nabla^{i_4}\tildetr}\duprime \\ \lesssim& \intu \frac{a^2}{\upr^3} \ScaletwoSuprime{a^5(\psi, \chibarhat,\tildetr) \nabla^{11}\tildetr}\duprime \\ &+ \intu \frac{a^2}{\upr^3} \ScaletwoSuprime{a^5\nabla^{11} (\psi, \chibarhat,\tildetr) \tildetr}\duprime +1 \\ \lesssim& \intu \frac{a^2}{\upr^3} \cdot \frac{\upr}{\al} \cdot \frac{\upr}{a} \cdot \frac{O_{\infty}[\chibarhat]}{\upr} \cdot \ScaletwoSu{a^5 \nabla^{11} \left( \frac{a}{\upr}\tildetr \right)} \duprime \\ &+ \intu \frac{a^2}{\upr^3} \cdot \frac{\upr}{a} \cdot \frac{O_{\infty}[\tildetr]}{\upr} \cdot  \scaletwoSu{a^5 \nabla^{11}(\psi, \chibarhat, \tildetr)} \duprime +1 \\  \lesssim& \intu \frac{\al}{\upr^2} \cdot \ScaletwoSu{a^5 \nabla^{11} \left( \frac{a}{\upr}\tildetr \right)} \duprime \\ &+ \intu \frac{a}{\upr^3}  \cdot  \scaletwoSu{a^5 \nabla^{11}(\psi, \chibarhat, \tildetr)} \duprime +1.		  \end{align*}\par \noindent Noting that $\psi \in \begin{Bmatrix} \eta,\etabar \end{Bmatrix}$ and recalling Propositions \ref{etaellipticprop} and \ref{etabarellipticprop}, using Gr\"onwall's inequality, we can bound this term by \[   \intu \frac{a}{\upr^3}  \cdot  \scaletwoSu{a^5 \nabla^{11} \chibarhat} \duprime +1	. \]

		  \item For the last term there holds 
		  \begin{equation}\begin{split}
		       &\intu \frac{a^2}{\upr^3} \ScaletwoSuprime{(\al)^{i-1}\sum_{i_1+i_2+i_3 +i_4+1=i} \nabla^{i_1}\psi^{i_2+1}\nabla^{i_3}\tr\chibar \nabla^{i_4}\tildetr }\duprime \\  =   &\intu  \ScaletwoSuprime{(\al)^{i-1}\sum_{i_1+i_2+i_3 +i_4+1=i} \nabla^{i_1}\psi^{i_2+1}\nabla^{i_3}\left(\frac{a}{\upr^2}\tr\chibar\right) \nabla^{i_4}\left( \frac{a}{\upr}\tildetr\right) }\duprime      \\\lesssim &\intu \frac{a^2}{\upr^3}\cdot \frac{\upr^2}{a} \cdot \frac{\upr}{a} \ScaletwoSuprime{(\al)^{i-1}\sum_{i_1+i_2+i_3 +i_4+1=i} \nabla^{i_1}\psi^{i_2+1}\nabla^{i_3}\left( \frac{a}{\upr^2}\tr\chibar \right) \nabla^{i_4}\left( \frac{a}{\upr}\tildetr \right) }\duprime \\ \lesssim &\frac{O^3}{\al \cdot \lvert u \rvert} \lesssim 1.
		  \end{split}\end{equation}
	\end{itemize}We thus have, using Gr\"onwall's inequality, \be \label{trchiintermediate} \frac{a}{\lvert u \rvert}\scaletwoSu{a^5 \nabla^{11}\tildetr} \lesssim  1+\underline{\mathcal{R}}[\rho]+\intu \frac{a^{\frac{3}{2}}}{\upr^3} \scaletwoSuprime{a^5 \nabla^{11}\chibarhat}\duprime + \intu \frac{a}{\lvert u^{\prime} \rvert^2} \scaletwoSuprime{a^5\nabla^{11}\omegabar} \duprime .   \ee
		For $\chibarhat$, we have the constraint equation \[ \div\chibarhat = \frac{1}{2}\nabla \tildetr - \frac{1}{2}(\etabar-\eta)(\chibarhat-\frac{1}{2}\tr\chibar \gamma) + \tbetabar. \]Consequently, \begin{align*}
		\scaletwoSu{a^5 \nabla^{11}\chibarhat} \lesssim& \sum_{j \leq 10} \big( \scaletwoSu{(\al)^j \nabla^{j+1}\tildetr} + \scaletwoSu{(\al \nabla)^j \tbetabar} \\ &+ \ScaletwoSu{(\al)^j \sum_{j_1 +j_2=j} \nabla^{j_1}(\eta,\etabar)\nabla^{j_2}(\chibarhat,\tr\chibar)} \big) \\&+ \sum_{j \leq 10} \frac{1}{\al} \cdot \scaletwoSu{\aln \chibarhat}. 
		\end{align*}Integrating this along the incoming direction, we have
		
		\begin{equation}\begin{split} &\intu \frac{a^{\frac{3}{2}}}{\upr^3}\scaletwoSuprime{a^5 \nabla^{11}\chibarhat} \duprime \\ \lesssim \,\, & \intu \frac{a^{\frac{3}{2}}}{\upr^3}   \sum_{j \leq 10} \big( \scaletwoSuprime{(\al)^j \nabla^{j+1}\tildetr} + \scaletwoSuprime{(\al \nabla)^j \tbetabar} \\+ &\frac{\upr^2}{a} \ScaletwoSuprime{(\al)^j \sum_{j_1 +j_2=j} \nabla^{j_1}(\eta,\etabar)\nabla^{j_2}\left(\frac{a}{\upr^2}\chibarhat,\frac{a}{\upr^2}\tr\chibar\right)} \big) \duprime \\+ &\intu \frac{a}{\upr^3} \sum_{i\leq 10} \scaletwoSuprime{\aln \chibarhat} \duprime  \lesssim \intu \frac{a^{\frac{3}{2}}}{\upr^3}\scaletwoSuprime{a^5 \nabla^{11}\tildetr}\duprime + \frac{a^{\f32}}{\lvert u \rvert^2} \cdot R[\tbetabar] \\ + &\intu \frac{a^{\f32}}{\upr^3} \cdot \frac{\upr^2}{a} \cdot \frac{O^2}{\upr} \duprime + \intu \frac{a}{\upr^3} \cdot \frac{\upr}{\al} \duprime \\ \lesssim \,\, &\intu \frac{a^{\frac{3}{2}}}{\upr^3}\scaletwoSuprime{a^5 \nabla^{11}\tildetr}\duprime + 1. \end{split}\end{equation} Plugging this back to \eqref{trchiintermediate} and using Gr\"onwall's inequality, we get
		
		\begin{align*} \frac{a}{\lvert u \rvert} \scaletwoSu{a^5\nabla^{11}\tildetr} \lesssim& 1+\underline{\mathcal{R}}[\rho] + \intu \frac{a}{\lvert u^{\prime} \rvert^2} \scaletwoSuprime{a^5\nabla^{11}\omegabar} \duprime \\ \lesssim& 1 +\underline{\mathcal{R}}[\rho]+ \frac{a}{\lvert u \rvert}\scaletwoHbaru{a^5\nabla^{11}\omegabar} \lesssim \mathcal{R}+\underline{\mathcal{R}}+1. \end{align*}

\par \noindent Integrating in the $u$--direction we obtain

\[ \intu \frac{a^2}{\upr^3} \scaletwoSuprime{a^5 \nabla^{11}\tildetr} \lesssim \mathcal{R}+\underline{\mathcal{R}}+1. \] 
	
\begin{remark}
    In a similar fashion, we can obtain the following estimates:
            \be \left(\intu \frac{a^3}{\upr^4} \scaletwoSuprime{a^5 \nabla^{11} \tildetr}^2 \duprime \right)^{\f12} +\left(\intu \frac{a^2}{\upr^4} \scaletwoSuprime{a^5 \nabla^{11} \chibarhat}^2 \duprime \right)^{\f12}  \lesssim \mathcal{R} +\underline{\mathcal{R}}+1. \label{chibarhattrchibartraditionalformestimates} \ee
    
\end{remark}

\end{proof}

	\section{Energy estimates}
	
	In this section with scale invariant norms we will derive energy estimates for curvature components
	and their angular derivatives. Our goal is to show that \[ \mathcal{R}+\underline{\mathcal{R}}+\mathcal{F}+\underline{\mathcal{F}} \lesssim \left(\mathcal{I}^{(0)}\right)^4 +\left(\mathcal{I}^{(0)}\right)^2+ \mathcal{I}^{(0)}+1.     \]
	
\par \noindent 	We begin with the integration by parts formula.
	
	\subsection{Integration by parts}
	The following holds. Define $\mathcal{D}_{u,\ubar}:= (u_\infty, u)\times (0,\ubar)$. A direct computation yields the following:
	
	\begin{proposition} \label{byparts}
		Suppose $\phi_1, \phi_2$ are $r-$tensorfields. Then there holds 
		
		\begin{equation*}
		\int_{\mathcal{D}_{u,\ubar}} \phi_1 \nabla_4 \phi_2 +	\int_{\mathcal{D}_{u,\ubar}} \phi_2 \nabla_4 \phi_1  = \int_{\Hu}\phi_1 \phi_2 - \int_{\Hbar_0^{(u_{\infty}, u)}} \phi_1 \phi_2 + \int_{\mathcal{D}_{u,\ubar}} (2\omega-\tr\chi)\phi_1 \phi_2.
		\end{equation*}
	\end{proposition}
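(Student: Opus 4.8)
\textbf{Proof plan for the integration-by-parts formula (Proposition \ref{byparts}).}

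The plan is to compute $\int_{\mathcal{D}_{u,\ubar}} \nabla_4(\phi_1 \cdot \phi_2)$ in two ways. On one hand, since $\nabla_4$ is a metric-compatible derivative with respect to the induced metric $\gamma$, the Leibniz rule gives $\nabla_4\langle \phi_1,\phi_2\rangle_\gamma = \langle \nabla_4 \phi_1,\phi_2\rangle_\gamma + \langle\phi_1,\nabla_4\phi_2\rangle_\gamma$, so the left-hand side of the claimed identity is precisely $\int_{\mathcal{D}_{u,\ubar}} e_4\big(\langle \phi_1,\phi_2\rangle_\gamma\big)$ after recalling that $\nabla_4 f = e_4(f)$ on scalars. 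On the other hand, we evaluate this integral directly in the double null coordinate system $(u,\ubar,\theta^1,\theta^2)$ using the explicit volume form from Subsection \ref{24Integration}, namely $\int_{\mathcal{D}_{u,\ubar}} \psi = \sum_U \int_{u_\infty}^u \int_0^{\ubar}\int\int \psi\, p_U\, \Omega^2 \sqrt{-\det g}\, d\theta^1 d\theta^2 d\ubar' du'$. Since $e_4 = \Omega^{-1}\partial_{\ubar}$, for a scalar $f$ we have $e_4(f)\cdot \Omega^2 = \Omega\, \partial_{\ubar} f$; so the $\ubar'$-integral becomes $\int_0^{\ubar}\Omega\,\partial_{\ubar'}f\, d\ubar'$, and we integrate by parts in the $\ubar'$ variable.

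The key computational input is the evolution of the area element along $e_4$. From the first variation formula $\Ls_L \gamma = 2\Omega\chi$ (used already in the proof of Proposition \ref{prop32}), one has $\partial_{\ubar}\big(\sqrt{\det\gamma}\big) = \Omega\, \tr\chi\, \sqrt{\det\gamma}$, and from $\omega = \frac{1}{2}\partial_{\ubar}(\Omega^{-1})$ one gets $\partial_{\ubar}\Omega = -\Omega^2 \cdot(-2\omega)\cdot\Omega^{-1}\cdots$ — more cleanly, $\partial_{\ubar}(\log\Omega) = -\Omega\omega \cdot$ (sign to be tracked: from $\omega = -\frac12\nabla_4\log\Omega = -\frac12 e_4(\log\Omega) = -\frac{1}{2}\Omega^{-1}\partial_{\ubar}\log\Omega$ we get $\partial_{\ubar}\log\Omega = -2\Omega\omega$). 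Also $\sqrt{-\det g} = 2\Omega^2\sqrt{\det\gamma}$ from \eqref{equation g}. Putting these together, $\partial_{\ubar}\big(\Omega \sqrt{\det\gamma}\big) = \partial_{\ubar}\Omega \cdot \sqrt{\det\gamma} + \Omega\,\partial_{\ubar}\sqrt{\det\gamma} = (\Omega\tr\chi - 2\Omega^2\omega)\sqrt{\det\gamma} = \Omega(\tr\chi - 2\omega)\sqrt{\det\gamma}$. Hence integrating by parts in $\ubar'$:
\[
\int_0^{\ubar}\Omega\,\partial_{\ubar'}f\,\sqrt{\det\gamma}\, d\ubar' = \Big[\Omega f \sqrt{\det\gamma}\Big]_{0}^{\ubar} - \int_0^{\ubar} f\,\partial_{\ubar'}\big(\Omega\sqrt{\det\gamma}\big)d\ubar' = \Big[\Omega f\sqrt{\det\gamma}\Big]_0^{\ubar} + \int_0^{\ubar}(2\omega - \tr\chi)\,\Omega f\sqrt{\det\gamma}\, d\ubar'.
\]
Reassembling over the coordinate patches and over the remaining $du'\, d\theta^1 d\theta^2$ integrations, and comparing with the volume-form definitions of $\int_{H_u}$, $\int_{\Hbar_0^{(u_\infty,u)}}$ (which carry the factor $2\,p_U\,\Omega\sqrt{\det\gamma}$) and $\int_{\mathcal{D}_{u,\ubar}}$ (which carries $p_U\,\Omega^2\sqrt{-\det g}\, \cdots$, wait — one must double check the normalization): the boundary term at $\ubar' = \ubar$ reproduces $\int_{H_u}\phi_1\phi_2$, the boundary term at $\ubar'=0$ reproduces $-\int_{\Hbar_0^{(u_\infty,u)}}\phi_1\phi_2$, and the bulk term reproduces $\int_{\mathcal{D}_{u,\ubar}}(2\omega-\tr\chi)\phi_1\phi_2$, with $f = \langle\phi_1,\phi_2\rangle_\gamma$.

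\textbf{Main obstacle.} The only real subtlety is bookkeeping of the metric factors: one must verify that the combination $\Omega^2\sqrt{-\det g} = 2\Omega^4\sqrt{\det\gamma}$ appearing in the bulk integral, against $e_4$ carrying a factor $\Omega^{-1}$ and the Leibniz identity for $\nabla_4$ on the $\gamma$-contracted norm, collapses exactly to the hypersurface measures $2\Omega\sqrt{\det\gamma}$ on $H_u$ and $\Hbar_0$, with the correct signs (the $e_4$-flux hypersurfaces $H_u$ having outward orientation toward larger $\ubar$, hence the $+$ at $\ubar'=\ubar$ and $-$ at $\ubar'=0$). The partition of unity $p_U$ passes through all steps harmlessly since $\sum_U p_U \equiv 1$ and $e_4(p_U)$ terms cancel upon summation. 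No estimate or bootstrap assumption is actually needed for the identity itself — it is a clean distributional/Stokes computation — so once the volume-form normalizations are pinned down the result follows immediately.
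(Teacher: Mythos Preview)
Your approach is correct and is exactly the ``direct computation'' the paper alludes to (the paper gives no proof beyond that phrase). The Leibniz rule for $\nabla_4$ on the $\gamma$-contraction, followed by integration by parts in the $\ubar'$ variable using the evolution $\partial_{\ubar}\sqrt{\det\gamma}=\Omega\,\tr\chi\sqrt{\det\gamma}$ and $\partial_{\ubar}\log\Omega=-2\Omega\omega$, is the right computation.

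Two points worth flagging. First, your computation of $\partial_{\ubar}(\Omega\sqrt{\det\gamma})$ should give $\Omega^{2}(\tr\chi-2\omega)\sqrt{\det\gamma}$, not $\Omega(\tr\chi-2\omega)\sqrt{\det\gamma}$; this is precisely the $\Omega$-bookkeeping you correctly flag as the only real obstacle, and once you carry the powers through against the bulk measure it collapses to the claimed $(2\omega-\tr\chi)$ term. Second, the boundary term at $\ubar'=\ubar$ (with $u'$ still ranging over $[u_\infty,u]$) is the incoming hypersurface $\Hbar_{\ubar}^{(u_\infty,u)}$, not the outgoing cone $H_u^{(0,\ubar)}$ --- the proposition as printed has a typo here. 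You can confirm this by looking at how the proposition is actually applied in equation~(\ref{6.7}), where the boundary terms are $\Hbar_{\ubar}^{(u_\infty,u)}$ and $\Hbar_{0}^{(u_\infty,u)}$. Your integration-by-parts in $\ubar'$ naturally produces exactly those boundaries, so no change to your argument is needed; just correct the target identity.

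On the partition of unity: since the $p_U$ depend only on the Lie-transported angular coordinates $(\theta^1,\theta^2)$, one has $\partial_{\ubar}p_U=0$ directly, so no cancellation argument is required.
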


\begin{proposition} \label{energyprop2}
	Given an $r-$tensorfield $~^{(1)}\!\phi$ and an $(r-1)$-tensorfield $~^{(2)}\!\phi$, there holds \begin{equation*}
	\int_{\mathcal{D}_{u,\ubar}} ~^{(1)}\!\phi^{A_1\dots A_r} \nabla_{A_r}~^{(2)}\!\phi^{A_1\dots A_{r-1}} + 	\int_{\mathcal{D}_{u,\ubar}} \nabla^{A_r}~^{(1)}\!\phi_{A_1\dots A_r}~^{(2)}\!\phi^{A_1\dots A_{r-1}} \\ = -(\eta+\etabar) ~^{(1)}\hspace{.5mm}\!\phi^{(2)}\!\phi.
	\end{equation*} 
\end{proposition}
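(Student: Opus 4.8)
The statement is an integration-by-parts identity on the spacetime region $\mathcal{D}_{u,\ubar}$, relating the integral of $~^{(1)}\!\phi \cdot \nabla ~^{(2)}\!\phi$ to that of $\nabla ~^{(1)}\!\phi \cdot ~^{(2)}\!\phi$ up to a lower-order term involving $\eta, \etabar$. The plan is to reduce this to the divergence theorem on each $2$-sphere $S_{u,\ubar}$, then integrate in $u$ and $\ubar$. First I would observe that the quantity $X^{A_1\dots A_{r-1}} := ~^{(1)}\!\phi^{A_1\dots A_{r-1} B}\, ~^{(2)}\!\phi_{A_1\dots A_{r-1}}$ (contracting the last index of $~^{(1)}\!\phi$ against the auxiliary slot) is an $S$-tangent vector field on each leaf $S_{u,\ubar}$, and that $\nabla_B X^{A_1\dots A_{r-1} B} = \nabla^B ~^{(1)}\!\phi_{A_1\dots A_{r-1} B}\, ~^{(2)}\!\phi^{A_1\dots A_{r-1}} + ~^{(1)}\!\phi^{A_1\dots A_{r-1} B}\, \nabla_B ~^{(2)}\!\phi^{A_1\dots A_{r-1}}$ by the Leibniz rule for the Levi-Civita connection $\nabla$ of $\gamma$ on $S_{u,\ubar}$. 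Hence the integrand on the left-hand side is exactly $\operatorname{div}_\gamma X$ on each sphere.

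The next step is to integrate $\operatorname{div}_\gamma X$ over $\mathcal{D}_{u,\ubar}$ using the volume form $\Omega^2 \sqrt{-\det g}\,\dtext\theta^1\dtext\theta^2\,\dtext\ubar'\,\dtext u'$ as in Subsection~\ref{24Integration}. For a fixed $(u',\ubar')$, the divergence theorem on the closed surface $S_{u',\ubar'}$ gives $\int_{S_{u',\ubar'}} \operatorname{div}_\gamma X \,\dtext\mu_\gamma = 0$, since $S_{u',\ubar'}$ is a closed $2$-manifold without boundary. The only subtlety is that the spacetime volume element differs from the product of the sphere area element $\dtext\mu_\gamma = \sqrt{\det\gamma}\,\dtext\theta^1\dtext\theta^2$ and $\dtext\ubar'\dtext u'$ by the factor coming from $\sqrt{-\det g} = \Omega^2 \sqrt{\det\gamma}$ relative to $\sqrt{\det\gamma}$ together with the $\Omega^2$ weight — in fact these combine to give a clean relation, but even more simply: since the sphere integral of a pure divergence vanishes pointwise in $(u',\ubar')$ \emph{before} inserting any $(u',\ubar')$-dependent weight, one must instead keep the weight $w(u',\ubar',\theta) := \Omega^2 \sqrt{\det\gamma}$ inside, and the divergence theorem then produces $\int_{S} \operatorname{div}_\gamma(wX)\,\dtext\theta = 0$, i.e. $\int_S w\operatorname{div}_\gamma X\,\dtext\theta = -\int_S (\nabla_B w) X^B\,\dtext\theta$. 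The term $\nabla_B \log w = \nabla_B \log(\Omega^2) + \nabla_B \log\sqrt{\det\gamma}$; on $S_{u,\ubar}$ the angular gradient of $\log\sqrt{\det\gamma}$ vanishes in the chosen coordinates only up to the connection, and the standard computation (cf. Chapter~1 of \cite{Chr:book}) identifies the relevant combination with $\eta + \etabar$ — more precisely, the torsion one-form $\zeta$ and the identities $\eta = \zeta + \nabla\log\Omega$, $\etabar = -\zeta + \nabla\log\Omega$ give $\eta + \etabar = 2\nabla\log\Omega = \nabla\log(\Omega^2)$, which is exactly the weight contribution, while the $\sqrt{\det\gamma}$ part is accounted for by the divergence theorem itself being with respect to $\dtext\mu_\gamma$.

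So the cleanest route, and the one I would actually write, is: work directly with the sphere area form $\dtext\mu_\gamma$, write $\int_{\mathcal{D}_{u,\ubar}} (\cdots) = \int_{u_\infty}^u \int_0^{\ubar} \left( \int_{S_{u',\ubar'}} \Omega^2\, \operatorname{div}_\gamma X \,\dtext\mu_\gamma \right) \dtext\ubar'\,\dtext u'$, pull the $\Omega^2$ inside as $\operatorname{div}_\gamma(\Omega^2 X) - \Omega^2 (\nabla\log\Omega^2)\cdot X$, apply $\int_{S} \operatorname{div}_\gamma(\Omega^2 X)\,\dtext\mu_\gamma = 0$, and use $\nabla\log\Omega^2 = \eta + \etabar$ to arrive at $-\int_{\mathcal{D}_{u,\ubar}} (\eta+\etabar)\, ~^{(1)}\!\phi\, ~^{(2)}\!\phi$, matching the stated right-hand side (the convention being that $(\eta+\etabar)~^{(1)}\!\phi~^{(2)}\!\phi$ denotes this contracted integral, as in Proposition~\ref{byparts}). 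I expect the main obstacle to be purely bookkeeping: getting the contractions of the auxiliary index right so that $X$ is genuinely $S$-tangent and the Leibniz rule produces precisely the two stated terms with the correct signs, and carefully invoking the identity $\eta + \etabar = 2\nabla\log\Omega$ from the double null foliation setup in Section~\ref{secdnf}. No hard analysis is involved; it is a direct computation of the same type as Proposition~\ref{byparts}, whose proof the statement explicitly parallels.
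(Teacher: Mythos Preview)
Your proposal is correct and is exactly the direct computation the paper has in mind; the paper itself does not write out a proof, stating only that ``a direct computation yields'' the identity. Your use of the Leibniz rule to recognise the left-hand side as $\operatorname{div}_\gamma X$ on each sphere, the divergence theorem on the closed leaves $S_{u',\ubar'}$, and the identity $\eta+\etabar = 2\nabla\log\Omega$ to absorb the $\Omega^2$ weight from the $\mathcal{D}_{u,\ubar}$ volume element is the standard argument here and matches how the paper uses the result (see \eqref{snb 5}).
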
Moreover, we shall require the following bound:

\begin{proposition}\label{energyprop3}
	Suppose $\phi$ is an $r-$tensorfield and let $\lambda_1 = 2\lambda_0 -1$. Then
	
	\begin{equation*}
		2\int_{\mathcal{D}_{u,\ubar}}\upr^{2\lambda_1}\phi (\nabla_3 + \lambda_0 \tr\chibar)\phi = \int_{\Hu} \upr^{2\lambda_1}\phi^2 -\int_{H_{u_{\infty}}^{(0,\ubar)}} \lvert u_{\infty} \rvert^{2\lambda_1}\phi^2 + \int_{\mathcal{D}_{u,\ubar}} \upr^{2\lambda_1} f\phi^2,
	\end{equation*}where $f$ satisfies the bound \[ f\lesssim \frac{O}{\lvert u \rvert^2}.  \] 
\end{proposition}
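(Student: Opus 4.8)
The statement is the ``incoming direction'' analogue of Proposition \ref{byparts}, so the plan is to derive it by the same divergence-theorem/integration-by-parts mechanism, but now carrying along the weight $\lvert u' \rvert^{2\lambda_1}$. First I would recall that in the coordinate system $(u,\ubar,\theta^1,\theta^2)$ we have $e_3 = \Omega^{-1}(\partial_u + b^A \partial_{\theta^A})$, $e_4 = \Omega^{-1}\partial_{\ubar}$, and the spacetime volume element on $\mathcal{D}_{u,\ubar}$ is $\Omega^2 \sqrt{-\det g}\, d\theta^1 d\theta^2 d\ubar' du'$, as set out in Subsection \ref{24Integration}. The key computational input is the evolution identity for the pullback of a scalar density along $\Lb = \Omega e_3$, namely $\Lb \int_{S_{u,\ubar}} f = \int_{S_{u,\ubar}} \Omega(e_3(f) + \tr\chibar\, f)$, which already appeared (for the unweighted case) in the proof of Proposition \ref{prop37}. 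Taking $f = \lvert u' \rvert^{2\lambda_1}\lvert \phi \rvert_\gamma^2$ and using $e_3 u' = \Omega^{-1}$ together with the Leibniz rule, one gets
\begin{align*}
\Lb\left(\int_{S_{u',\ubar}} \lvert u'\rvert^{2\lambda_1}\lvert\phi\rvert^2\right) = \int_{S_{u',\ubar}} \Omega\, \lvert u'\rvert^{2\lambda_1}\Big( 2\langle \phi, \nabla_3\phi + \lambda_0 \tr\chibar\,\phi\rangle + \big(\tfrac{2\lambda_1 e_3 u'}{u'} + (1-2\lambda_0)\tr\chibar\big)\lvert\phi\rvert^2 \Big),
\end{align*}
which is exactly the computation \eqref{evolution.id}.

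Second, I would integrate this identity in $\ubar'$ over $(0,\ubar)$ and in $u'$ over $(u_\infty, u)$. The $u'$-integral of the left-hand side telescopes to a difference of fluxes: by the definitions in Subsection \ref{24Integration} it produces $\int_{H_u^{(0,\ubar)}} \lvert u\rvert^{2\lambda_1}\lvert\phi\rvert^2$ (note $\lvert u'\rvert = \lvert u\rvert$ on $H_u$) minus $\int_{H_{u_\infty}^{(0,\ubar)}} \lvert u_\infty\rvert^{2\lambda_1}\lvert\phi\rvert^2$, after accounting for the factor $2\Omega$ in the $H_u$-integral versus $\Omega^2$ in the spacetime integral and using the fact that $b^A = 0$ on $\Hbar_0$ so no boundary contribution on $\{\ubar' = 0\}$ survives, and the factor of $2$ comes out as in Proposition \ref{byparts}. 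The right-hand side becomes $\int_{\mathcal{D}_{u,\ubar}} \lvert u'\rvert^{2\lambda_1}\big(2\langle\phi, (\nabla_3+\lambda_0\tr\chibar)\phi\rangle + f\lvert\phi\rvert^2\big)$ where I set
\[ f := -\frac{2\lambda_1 (e_3 u')}{\lvert u'\rvert} + (1-2\lambda_0)\tr\chibar + (\text{error from commuting } \Omega\text{ through and from } \langle\cdot,\cdot\rangle_\gamma). \]
Here a minor care point is that $\lvert\phi\rvert_\gamma^2$ depends on $\gamma$, so $e_3(\lvert\phi\rvert^2) = 2\langle\phi,\nabla_3\phi\rangle + \chibar\text{-terms}$; but those $\chibar$-terms are of the form $\tr\chibar\lvert\phi\rvert^2$ and $\chibarhat\cdot\phi\cdot\phi$, which get absorbed into $f$ and are harmless.

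Third, I would verify the bound $f \lesssim O/\lvert u\rvert^2$. Using $e_3 u' = \Omega^{-1}$ and $\lambda_1 = 2\lambda_0 - 1$, the ``main'' part of $f$ is $-\frac{2\lambda_1\Omega^{-1}}{\lvert u'\rvert} + (1-2\lambda_0)\tr\chibar$, which I rewrite exactly as in \eqref{trchib additional}: $-\frac{2\lambda_1(\Omega^{-1}-1)}{\lvert u'\rvert} + (1-2\lambda_0)(\tr\chibar + \tfrac{2}{\lvert u'\rvert}) - \frac{2\lambda_1 + 2 - 4\lambda_0}{\lvert u'\rvert}$, and the last term vanishes identically since $2\lambda_1 + 2 - 4\lambda_0 = 2(2\lambda_0-1) + 2 - 4\lambda_0 = 0$. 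The first term is $\lesssim O/\lvert u'\rvert^2$ by Proposition \ref{Omega} (i.e. \eqref{Omega-1}), and the second is $\lesssim O/\lvert u'\rvert^2$ by the bootstrap bound $\|\tr\chibar + \tfrac{2}{\lvert u\rvert}\|_{L^\infty(S_{u,\ubar})} \leq O/\lvert u\rvert^2$. The remaining $\chibarhat$- and $\tr\chibar$-type error terms coming from differentiating $\lvert\phi\rvert_\gamma^2$ and from $\Omega$ itself are controlled by the bootstrap assumption $\mathcal{O}(u,\ubar) \leq O$ together with the $L^\infty$ bound on $\Omega$ from Proposition \ref{Omega}; schematically they are $\Omega$ times $(\tr\chibar, \chibarhat)$, each of size $\lesssim O/\lvert u\rvert^2$ after using $\lvert u\rvert \leq \lvert u'\rvert$ in the region of integration. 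Thus $f \lesssim O/\lvert u\rvert^2$ on all of $\mathcal{D}_{u,\ubar}$, as claimed.

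\textbf{Main obstacle.} The substance of the argument is purely bookkeeping: the only genuinely delicate point is the algebraic cancellation $2\lambda_1 + 2 - 4\lambda_0 = 0$, which is precisely the reason the precise coefficient $\lambda_0$ in front of $\tr\chibar$ matters (as the remark preceding Proposition \ref{prop37} foreshadows) — without the relation $\lambda_1 = 2\lambda_0 - 1$ one would be left with an uncancelled $\tfrac{1}{\lvert u'\rvert}$-term in $f$, which is \emph{not} $O(1/\lvert u\rvert^2)$ and would break the estimate. Everything else (the telescoping of the flux integrals, the correct powers of $\Omega$, the absence of a $\{\ubar'=0\}$ boundary term) follows the template of Proposition \ref{byparts} and the computation already carried out in the proof of Proposition \ref{prop37}, so I would keep those parts brief.
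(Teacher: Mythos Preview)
Your proposal is correct and follows essentially the same approach as the paper: differentiate the weighted sphere integral along $\Lb = \Omega e_3$, extract the $2\langle\phi,(\nabla_3+\lambda_0\tr\chibar)\phi\rangle$ piece, and observe that the remainder is $O/\lvert u\rvert^2$ thanks to the algebraic cancellation $2\lambda_1+2-4\lambda_0=0$. The only cosmetic difference is that the paper differentiates $\int_S \lvert u\rvert^{2\lambda_1}\Omega\lvert\phi\rvert^2$ (with an explicit $\Omega$ inside, matching the $2\Omega$ in the definition of the $H_u$-integral), which makes the extra error term $-2\omegabar$ appear explicitly in $f$; you handle this schematically as ``error from commuting $\Omega$ through,'' which is fine since $\lvert\omegabar\rvert\lesssim O/\lvert u\rvert^2$ by the bootstrap. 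One small over-caution on your part: for $S$-tangent tensors $\nabla_3$ is compatible with $\gamma$, so $e_3(\lvert\phi\rvert_\gamma^2)=2\langle\phi,\nabla_3\phi\rangle_\gamma$ exactly, with no additional $\chibar$-terms.
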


\begin{proof}
	There holds \begin{align*}
	&\frac{\text{d}}{\text{d}u} \left( \int_{\S} \lvert u \rvert^{2\lambda_1} \Omega \lvert \phi \rvert^2  \right)= \Lbar  \left( \int_{\S} \lvert u \rvert^{2\lambda_1} \Omega \lvert \phi \rvert^2  \right) \\ =& \int_{\S} \Omega^2 \left(2 \lvert u \rvert^{2\lambda_1}\langle \phi, \nabla_3 \phi + \lambda_0 \tr\chibar  \phi \rangle \right) + \int_{\S}\Omega^2 \left(  \lvert u \rvert^{2\lambda_1}\left(-2\frac{\lambda_1(e_3(u))}{\lvert u\rvert} +(1-2\lambda_0)\tr\chibar -2 \omegabar\right) \lvert \phi\rvert^2  \right).
	\end{align*}Here we have used that $\Lbar = \Omega e_3 = \frac{\partial}{\partial u} +b^A \frac{\partial}{\partial \theta^A}$. Immediate calculations imply that \[\big \lvert -2\frac{\lambda_1(e_3(u))}{\lvert u\rvert} +(1-2\lambda_0)\tr\chibar -2 \omegabar \big\rvert \lesssim \frac{O}{\lvert u \rvert^2} .   \]The proposition then follows by integrating in the slab $\mathcal{D}_{u,\ubar}$ and using the fundamental theorem of calculus.
\end{proof}
\subsection{The Hodge structure as an aid for energy estimates}

Observe that for $(\mathfrak{Y}_1, \mathfrak{Y}_2) \in \begin{Bmatrix} (\alpha, \tbeta), (\tbeta, (\rho,\sigma)) , ((\rho,\sigma),\tbetabar), (\tbetabar, \alphabar) \end{Bmatrix} \cup \begin{Bmatrix} (\alpha_F, (\rho_F, \sigma_F)), ((\rho_F,\sigma_F),\alphabar_F) \end{Bmatrix}$ we can write the equations for $\mathfrak{Y}_1$ and $\mathfrak{Y}_2$ in the following form:

\begin{gather}
\nabla_3 \mathfrak{Y}_1 + \left(\frac{1}{2}+s_2(\mathfrak{Y}_1) \right)\tr\chibar \mathfrak{Y}_1 - \mathcal{D}\mathfrak{Y}_2 = P_0, \\ \nabla_4 \mathfrak{Y}_2 - \Hodge{\mathcal{D}}\hspace{.5mm} \mathfrak{Y}_1 = Q_0.
\end{gather}	Here by $\mathcal{D}$ we denote a differential operator on $\S$ and by $\Hodge{\mathcal{D}}$ its $L^2$-adjoint. By commuting the above equations $i$ times, we arrive at

\begin{gather}\label{ref1}
\nabla_3 \nabla^i \mathfrak{Y}_1 + \left(\frac{i+1}{2}+s_2(\mathfrak{Y}_1) \right)\tr\chibar \mathfrak{Y}_1 - \mathcal{D}\nabla^i \mathfrak{Y}_2 = P_i, \\\label{ref2} \nabla_4 \nabla^i \mathfrak{Y}_2 - \Hodge{\mathcal{D}}\hspace{.5mm} \nabla^i \mathfrak{Y}_1 = Q_i.
\end{gather}

\par \noindent The purpose of this section is to prove the following:

\begin{proposition}
	Under the assumptions of Theorem \ref{main1} and the bootstrap assumptions \eqref{bootstrapbounds} and given a pair $(\mathfrak{Y}_1, \mathfrak{Y}_2)$ satisfying 
	\begin{gather*}
	\nabla_3\nabla^i \mathfrak{Y}_1 + \left( \frac{i+1}{2}+ s_2(\Y_1)\right) \tr\chibar \nabla^i \mathfrak{Y}_1 - \mathcal{D}\nabla^i \mathfrak{Y}_2 = P, \\ 
	\nabla_4 \nabla^i \mathfrak{Y}_2 - \Hodge{\mathcal{D}} \nabla^i \mathfrak{Y}_1 = Q,
	\end{gather*}the following inequality holds:

	\begin{align*}
	&\int_{H_u^{(0,\ubar)}} \scaletwoSuubarprime{\nabla^i \mathfrak{Y}_1}^2 \hspace{.5mm} \text{d}\ubar^\prime + \int_{\Hbar_{\ubar}^{(u_\infty,u)}} \frac{a}{\lvert u^\prime \rvert^2}\hspace{.5mm} \scaletwoSuprime{\nabla^i \mathfrak{Y}_2}^2 \hspace{.5mm} \text{d}u^\prime \\ 
	\lesssim& \int_{H_{u_\infty}^{(0,\ubar)}} \lVert{\nabla^i \mathfrak{Y}_1} \rVert^2_{\mathcal{L}^{2}_{(sc)}(S_{u_{\infty},\ubar^\prime})} \hspace{.5mm} \text{d}\ubar^\prime + \int_{\Hbar_{0}^{(u_\infty,u)}} \frac{a}{\lvert u^\prime \rvert^2}\hspace{.5mm} \lVert{\nabla^i \mathfrak{Y}_2}\rVert^2_{\mathcal{L}^2_{(sc)}(S_{u^\prime,0})} \hspace{.5mm} \text{d}u^\prime \\ 
	&+ \iint_{\mathcal{D}_{u,\ubar}} \frac{a}{\lvert u^\prime \rvert} \lVert \nabla^i \mathfrak{Y}_1 \cdot P \rVert_{\mathcal{L}^1_{(sc)}(S_{u^\prime, \ubar^\prime})} \hspace{.5mm} \text{d}u^\prime \text{d}\ubar^\prime + \iint_{\mathcal{D}_{u,\ubar}} \frac{a}{\lvert u^\prime \rvert} \lVert \nabla^i \mathfrak{Y}_2 \cdot Q \rVert_{\mathcal{L}^1_{(sc)}(S_{u^\prime, \ubar^\prime})} \hspace{.5mm} \text{d}u^\prime \text{d}\ubar^\prime.
	\end{align*}
\end{proposition}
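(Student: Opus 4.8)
The plan is to run the standard pair-energy estimate for a Hodge system of the type \eqref{ref1}--\eqref{ref2}, carried out directly in the scale-invariant framework. First I would contract the $\nabla_3$-equation for $\nabla^i\mathfrak{Y}_1$ with $\nabla^i\mathfrak{Y}_1$ and the $\nabla_4$-equation for $\nabla^i\mathfrak{Y}_2$ with $\nabla^i\mathfrak{Y}_2$, obtaining two pointwise identities. Integrating the first over $\mathcal{D}_{u,\ubar}$ with the weight $|u'|^{2\lambda_1}$ where $\lambda_0 = \tfrac{i+1}{2}+s_2(\mathfrak{Y}_1)$, I apply Proposition \ref{energyprop3}: the left-hand side produces the flux terms $\int_{H_u}|u'|^{2\lambda_1}|\nabla^i\mathfrak{Y}_1|^2$ and $-\int_{H_{u_\infty}^{(0,\ubar)}}|u_\infty|^{2\lambda_1}|\nabla^i\mathfrak{Y}_1|^2$, plus the error term $\int_{\mathcal{D}_{u,\ubar}}|u'|^{2\lambda_1}f|\nabla^i\mathfrak{Y}_1|^2$ with $f\lesssim O/|u|^2$, plus the cross term coming from $\mathcal{D}\nabla^i\mathfrak{Y}_2$ and the source term coming from $P$. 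Similarly, contracting the $\nabla_4$-equation and integrating (no extra weight needed) via Proposition \ref{byparts} yields the fluxes $\int_{H_u^{(0,\ubar)}\text{-analogue along }\Hbar}$... more precisely $\int_{\Hbar_{\ubar}}|\nabla^i\mathfrak{Y}_2|^2 - \int_{\Hbar_0}|\nabla^i\mathfrak{Y}_2|^2$ together with an error term controlled by $(2\omega-\tr\chi)$ and the cross term from $\Hodge{\mathcal{D}}\nabla^i\mathfrak{Y}_1$ and the source $Q$.

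The second step is the crucial cancellation: add the two identities. Because $\Hodge{\mathcal{D}}$ is the $L^2(S_{u,\ubar})$-adjoint of $\mathcal{D}$, the integration-by-parts formula of Proposition \ref{energyprop2} shows that $\int_{\mathcal{D}_{u,\ubar}}\nabla^i\mathfrak{Y}_1\cdot\mathcal{D}\nabla^i\mathfrak{Y}_2 + \int_{\mathcal{D}_{u,\ubar}}\nabla^i\mathfrak{Y}_2\cdot\Hodge{\mathcal{D}}\nabla^i\mathfrak{Y}_1$ equals a term of the schematic form $\int_{\mathcal{D}_{u,\ubar}}(\eta+\etabar)\cdot\nabla^i\mathfrak{Y}_1\cdot\nabla^i\mathfrak{Y}_2$, i.e. it is lower order (it has lost a derivative and carries a Ricci coefficient). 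Choosing the relative weights between the two contracted identities so that these two cross terms appear with opposite signs and cancel up to this lower-order remainder is exactly where the precise power $\lambda_1 = 2\lambda_0-1$ with $\lambda_0 = \tfrac{i+1}{2}+s_2(\mathfrak{Y}_1)$ must be used; the weight $a/|u'|^2$ appearing in front of the $\mathfrak{Y}_2$-flux in the statement is the one that makes the scale-invariant $\mathcal{L}^2$-norms along $\Hbar$ come out with the correct normalization (matching the definition of $\scaletwoHbaru{\cdot}$).

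The third step is bookkeeping: translate everything into scale-invariant norms. Using $\scaletwoSu{\phi}=a^{-s_2(\phi)}|u|^{2s_2(\phi)}\twoSu{\phi}$ and the conservation of signature ($s_2(P)=s_2(\nabla_3\nabla^i\mathfrak{Y}_1)$, $s_2(Q)=s_2(\nabla_4\nabla^i\mathfrak{Y}_2)$), the weighted $L^2$-fluxes become precisely $\int\scaletwoSuubarprime{\nabla^i\mathfrak{Y}_1}^2\,\text{d}\ubar'$ and $\int\frac{a}{|u'|^2}\scaletwoSuprime{\nabla^i\mathfrak{Y}_2}^2\,\text{d}u'$, and similarly on the initial hypersurfaces. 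The spacetime source terms $\int_{\mathcal{D}_{u,\ubar}}|u'|^{2\lambda_1}\nabla^i\mathfrak{Y}_1\cdot P$ and the $Q$-analogue become, after inserting the scale-invariant weights and an application of the scale-invariant H\"older inequality (§2.10), the terms $\iint\frac{a}{|u'|}\lVert\nabla^i\mathfrak{Y}_1\cdot P\rVert_{\mathcal{L}^1_{(sc)}}$ and $\iint\frac{a}{|u'|}\lVert\nabla^i\mathfrak{Y}_2\cdot Q\rVert_{\mathcal{L}^1_{(sc)}}$ claimed in the statement; the factor $a/|u'|$ is exactly the conversion constant between the $\mathcal{L}^1_{(sc)}$ and $\mathcal{L}^2_{(sc)}$ normalizations on $S_{u',\ubar'}$ integrated against $\text{d}u'\,\text{d}\ubar'$.

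Finally, the error terms $\int_{\mathcal{D}_{u,\ubar}}|u'|^{2\lambda_1}f|\nabla^i\mathfrak{Y}_1|^2$ with $f\lesssim O/|u|^2$, the term with $(2\omega-\tr\chi)$, and the lower-order cross remainder involving $(\eta+\etabar)$ are all absorbed: in scale-invariant norms they carry a factor $O/|u'|^2$ or better against the fluxes, and since $O\|u^{-2}\|_{L^1_u}\lesssim O/a\ll 1$ in our region, they can be moved to the left-hand side (a Gr\"onwall/absorption argument identical to the one used in Proposition \ref{prop37}). The main obstacle I anticipate is not any single estimate but organizing the sign and weight choices in the second step so that the two top-order cross terms genuinely cancel rather than merely combine — one must be careful that the adjointness of $\mathcal{D}$ and $\Hodge{\mathcal{D}}$ is being invoked on $S_{u,\ubar}$ at each fixed $(u,\ubar)$, and that the extra $|u'|^{2\lambda_1}$ weight (which depends on the leaf) does not spoil the integration by parts in the angular directions. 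Everything else is routine once the pairing is set up correctly.
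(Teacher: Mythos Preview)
Your proposal is correct and follows essentially the same route as the paper: apply Proposition~\ref{energyprop3} to the $\nabla_3$-equation with weight $|u'|^{2\lambda_1}$ where $\lambda_0=\tfrac{i+1}{2}+s_2(\mathfrak{Y}_1)$, apply Proposition~\ref{byparts} (with the \emph{same} weight $|u'|^{2\lambda_1}$, not ``no extra weight'') to the $\nabla_4$-equation, add, cancel the top-order cross terms via Proposition~\ref{energyprop2}, absorb the lower-order errors by Gr\"onwall (once in $u$, once in $\ub$), and then translate into scale-invariant norms. Your worry about the $|u'|^{2\lambda_1}$ weight interfering with the angular integration by parts is unfounded: this weight is constant on each sphere $S_{u',\ub'}$, so it commutes with $\mathcal{D}$ and $\Hodge{\mathcal{D}}$ at every leaf.
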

\begin{proof}
	The Hodge structure will play a crucial role: For a pair $(\mathfrak{Y}_1, \mathfrak{Y}_2)$ or a pair $(\nab^i\mathfrak{Y}_1, \nab^i\mathfrak{Y}_2)$, the angular derivative operator $\M D$ and its $L^2$ adjoint operator $\M D^*$ form a Hodge system. Through Proposition \ref{energyprop2}, we have
	\begin{equation}\label{snb 5}
	\begin{split}
	&\int_{\S}\mathfrak{Y}_1\,\M D \mathfrak{Y}_2+\mathfrak{Y}_2\,\M D^*\mathfrak{Y}_1=-\int_{\S}(\eta+\etb)\mathfrak{Y}_1 \mathfrak{Y}_2,\\
	&\int_{\S}\nab^i\mathfrak{Y}_1\,\M D \nab^i\mathfrak{Y}_2+\nab^i\mathfrak{Y}_2\,\M D^*\nab^i\mathfrak{Y}_1=-\int_{\S}(\eta+\etb)\nab^i\mathfrak{Y}_1\nab^i\mathfrak{Y}_2.
	\end{split}
	\end{equation}
	
\par \noindent We now move forward and apply Proposition \ref{energyprop3} for $\nab^i\mathfrak{Y}_1$.  With 
$$\lambda_0=\f{1+i}{2}+s_2(\mathfrak{Y}_1), \quad \lambda_1:=2\lambda_0-1=i+2s_2(\mathfrak{Y}_1), \mbox{ we get}$$
\begin{equation}\label{6.6}
\begin{split}
&2\int_{D_{u,\ub}} |u'|^{2i+4s_2(\mathfrak{Y}_1)}\nab^i\mathfrak{Y}_1 \bigg(\nabla_3+\big(\f{1+i}{2}+s_2(\mathfrak{Y}_1)\big)\trchb\bigg)\nab^i\mathfrak{Y}_1\\\
=& \int_{H_u^{(0,\ub)}} |u|^{2i+4s_2(\mathfrak{Y}_1)}|\nab^i\mathfrak{Y}_1|^2-\int_{H_{\ui}^{(0,\ub)}} |\ui|^{2i+4s_2(\mathfrak{Y}_1)}|\nab^i\mathfrak{Y}_1|^2+\int_{D_{u,\ub}}|u'|^{2i+4s_2(\mathfrak{Y}_1)}f|\nab^i\mathfrak{Y}_1|^2,
\end{split}
\end{equation}
where $|f|\leq O/|u'|^2$. 
\vspace{3mm}

\par\noindent We also use Proposition \ref{byparts}, plugging in $\phi_1=\phi_2=|u|^{i+2s_2(\mathfrak{Y}_1)}\nab^i\mathfrak{Y}_2$
\begin{equation}\label{6.7}
\begin{split}
&2\int_{D_{u,\ub}} |u'|^{2i+4s_2(\mathfrak{Y}_1)} \nab^i  \mathfrak{Y}_2 \nabla_4\nab^i\mathfrak{Y}_2\\
=& \int_{\Hb_{\ub}^{(\ui,u)}} |u'|^{2i+4s_2(\mathfrak{Y}_1)}|\nab^i\mathfrak{Y}_2|^2-\int_{\Hb_0^{(\ui,u)}} |u'|^{2i+4s_2(\mathfrak{Y}_1)}|\nab^i\mathfrak{Y}_2|^2\\
&+\int_{D_{u,\ub}}|u'|^{2i+4s_2(\mathfrak{Y}_1)}(2\omega-\trch)|\nab^i\mathfrak{Y}_2|^2.
\end{split}
\end{equation}
Adding (\ref{6.6}) and (\ref{6.7}), we obtain
\begin{equation*}
\begin{split}
&2\int_{D_{u,\ub}} |u'|^{2i+4s_2(\mathfrak{Y}_1)}\nab^i\mathfrak{Y}_1 \bigg(\nabla_3+\big(\f{1+i}{2}+s_2(\mathfrak{Y}_1)\big)\trchb\bigg)\nab^i\mathfrak{Y}_1\\
&+2\int_{D_{u,\ub}} |u'|^{2i+4s_2(\mathfrak{Y}_1)} \nab^i\mathfrak{Y}_2 \nabla_4\nab^i\mathfrak{Y}_2\\
=& \int_{H_u^{(0,\ub)}} |u|^{2i+4s_2(\mathfrak{Y}_1)}|\nab^i\mathfrak{Y}_1|^2-\int_{H_{\ui}^{(0,\ub)}} |\ui|^{2i+4s_2(\mathfrak{Y}_1)}|\nab^i\mathfrak{Y}_1|^2+\int_{D_{u,\ub}}|u'|^{2i+4s_2(\mathfrak{Y}_1)}f|\nab^i\mathfrak{Y}_1|^2\\
&+\int_{\Hb_{\ub}^{(\ui,u)}}|u'|^{2i+4s_2(\mathfrak{Y}_1)}|\nab^i\mathfrak{Y}_2|^2-\int_{\Hb_0^{(\ui,u)}} |u'|^{2i+4s_2(\mathfrak{Y}_1)}|\nab^i\mathfrak{Y}_2|^2\\
&+\int_{D_{u,\ub}}|u'|^{2i+4s_2(\mathfrak{Y}_1)}(2\omega-\trch)|\nab^i\mathfrak{Y}_2|^2.
\end{split}
\end{equation*}
We now take (\ref{ref1}) and (\ref{ref2}) into account. With the help of (\ref{snb 5}), we then arrive at 
\begin{equation*}
\begin{split}
&\int_{H_u^{(0,\ub)}} |u|^{2i+4s_2(\mathfrak{Y}_1)}|\nab^i\mathfrak{Y}_1|^2+\int_{\Hb_{\ub}^{(\ui,u)}}|u'|^{2i+4s_2(\mathfrak{Y}_1)}|\nab^i\mathfrak{Y}_2|^2\\
=&\int_{H_{\ui}^{(0,\ub)}} |\ui|^{2i+4s_2(\mathfrak{Y}_1)}|\nab^i\mathfrak{Y}_1|^2+\int_{\Hb_{0}^{(\ui,u)}}|u'|^{2i+4s_2(\mathfrak{Y}_1)}|\nab^i\mathfrak{Y}_2|^2\\
&+2\int_{D_{u,\ub}} |u'|^{2i+4s_2(\mathfrak{Y}_1)}\nab^i\mathfrak{Y}_1 \cdot P+2\int_{D_{u,\ub}} |u'|^{2i+4s_2(\mathfrak{Y}_1)}\nab^i\mathfrak{Y}_2 \cdot Q\\
&-2\int_{D_{u,\ub}}|u'|^{2i+4s_2(\mathfrak{Y}_1)}(\eta+\etb)\nab^i\mathfrak{Y}_1\nab^i\mathfrak{Y}_2\\
&+\int_{D_{u,\ub}}|u'|^{2i+4s_2(\mathfrak{Y}_1)}f|\nab^i\mathfrak{Y}_1|^2+\int_{D_{u,\ub}}|u'|^{2i+4s_2(\mathfrak{Y}_1)}(2\omega-\trch)|\nab^i\mathfrak{Y}_2|^2.
\end{split}
\end{equation*}
Using $|(\eta+\etb)\nab^i\mathfrak{Y}_1\nab^i\mathfrak{Y}_2|\leq |\eta+\etb| \cdot \left(\lvert \nab^i\mathfrak{Y}_1\rvert^2+\lvert\nab^i\mathfrak{Y}_2\rvert^2\right),$ and the fact
$$|\eta+\etb|\leq \at O/{|u'|^2}, \quad |f|\leq O/|u'|^2, \quad |2\o-\tr\chi|\leq {O}/{|u'|},$$
by applying Gr\"onwall's inequality twice (one for $du$, one for $d\ub$), we obtain
\begin{equation*}
\begin{split}
&\int_{H_u^{(0,\ub)}} |u|^{2i+4s_2(\mathfrak{Y}_1)}|\nab^i\mathfrak{Y}_1|^2+\int_{\Hb_{\ub}^{(\ui,u)}}|u'|^{2i+4s_2(\mathfrak{Y}_1)}|\nab^i\mathfrak{Y}_2|^2\\
\ls&\int_{H_{\ui}^{(0,\ub)}} |\ui|^{2i+4s_2(\mathfrak{Y}_1)}|\nab^i\mathfrak{Y}_1|^2+\int_{\Hb_{0}^{(\ui,u)}}|u'|^{2i+4s_2(\mathfrak{Y}_1)}|\nab^i\mathfrak{Y}_2|^2\\
&+2\int_{D_{u,\ub}} |u'|^{2i+4s_2(\mathfrak{Y}_1)}\nab^i\mathfrak{Y}_1 \cdot P+2\int_{D_{u,\ub}}|u'|^{2i+4s_2(\mathfrak{Y}_1)}\nab^i\mathfrak{Y}_2 \cdot Q.
\end{split}
\end{equation*}
Multiplying by $a^{-i-2s_2(\mathfrak{Y}_1)}$ on both sides, we get
\begin{equation}\label{6.8}
\begin{split}
&\int_{H_u^{(0,\ub)}} a^{-i-2s_2(\mathfrak{Y}_1)}|u|^{2i+4s_2(\mathfrak{Y}_1)}|\nab^i\mathfrak{Y}_1|^2+\int_{\Hb_{\ub}^{(\ui,u)}}a^{-i-2s_2(\mathfrak{Y}_1)}|u'|^{2i+4s_2(\mathfrak{Y}_1)}|\nab^i\mathfrak{Y}_2|^2\\
\ls&\int_{H_{\ui}^{(0,\ub)}} a^{-i-2s_2(\mathfrak{Y}_1)}|\ui|^{2i+4s_2(\mathfrak{Y}_1)}|\nab^i\mathfrak{Y}_1|^2+\int_{\Hb_{0}^{(\ui,u)}}a^{-i-2s_2(\mathfrak{Y}_1)}|u'|^{2i+4s_2(\mathfrak{Y}_1)}|\nab^i\mathfrak{Y}_2|^2\\
&+2\int_{D_{u,\ub}} a^{-i-2s_2(\mathfrak{Y}_1)}|u'|^{2i+4s_2(\mathfrak{Y}_1)}\nab^i\mathfrak{Y}_1 \cdot P+2\int_{D_{u,\ub}}a^{-i-2s_2(\mathfrak{Y}_1)}|u'|^{2i+4s_2(\mathfrak{Y}_1)}\nab^i\mathfrak{Y}_2 \cdot Q.
\end{split}
\end{equation}
Taking into account the signature identities 

\begin{equation*}
\begin{split}
s_2(\nab^i \mathfrak{Y}_1)=\f{i}{2}+s_2(\mathfrak{Y}_1), \quad &s_2(\nab^i \mathfrak{Y}_2)=\f{i+1}{2}+s_2(\mathfrak{Y}_1),\\
s_2(P)=s_2(\nab_3\nab^i\mathfrak{Y}_1)=\f{i+2}{2}+s_2(\mathfrak{Y}_1), \quad &s_2(Q)=s_2(\M D^*\nab^i\mathfrak{Y}_1)=\f{i+1}{2}+s_2(\mathfrak{Y}_1),
\end{split}
\end{equation*}
and definitions
$$\|\phi\|_{L^2_{sc}(\S)}=a^{-s_2(\phi)}|u|^{2s_2(\phi)}\|\phi\|_{L^2(\S)},$$
$$\|\phi\|_{L^1_{sc}(\S)}=a^{-s_2(\phi)}|u|^{2s_2(\phi)-1}\|\phi\|_{L^1(\S)},$$
we rewrite (\ref{6.8}) as   
\begin{equation*}
\begin{split}
&\int_{H_u^{(0,\ub)}} \|\nab^i\mathfrak{Y}_1\|^2_{L^2_{sc}(\S)}+\int_{\Hb_{\ub}^{(\ui,u)}}\f{a}{|u'|^2}\|\nab^i\mathfrak{Y}_2\|^2_{L^2_{sc}(S_{u',\ub})}\\
\ls&\int_{H_{\ui}^{(0,\ub)}} \|\nab^i\mathfrak{Y}_1\|^2_{L^2_{sc}(S_{\ui,\ub})}+\int_{\Hb_{0}^{(\ui,u)}}\f{a}{|\ui|^2}\|\nab^i\mathfrak{Y}_2\|^2_{L^2_{sc}(S_{\ui,\ub})}\\
&+2\int_{D_{u,\ub}} \f{a}{|u'|}\|\nab^i\mathfrak{Y}_1 \cdot P\|_{L^1_{sc}(S_{u',\ub'})}+2\int_{D_{u,\ub}}\f{a}{|u'|}\|\nab^i\mathfrak{Y}_2 \cdot Q\|_{L^1_{sc}(S_{u',\ub'})}.
\end{split}
\end{equation*}
Recalling the definitions
\begin{equation*}
\begin{split}
\|\phi\|^2_{\mathcal{L}^2_{sc}(H_u^{(0,\underline{u})})}:=&
\int_0^{\underline{u}}\|\phi\|^2_{\mathcal{L}^2_{sc}(S_{u,\underline{u}'})}d\underline{u}',\\
\|\phi\|^2_{\mathcal{L}^2_{sc}(\underline{H}_{\underline{u}}^{(u_{\infty},u)})}:=&
\int_{u_{\infty}}^{u}{\frac{a}{|u'|^2}}\|\phi\|^2_{\mathcal{L}^2_{sc}(S_{u',\underline{u}})}du',
\end{split}
\end{equation*}and substituting them in the above, we arrive at the desired result.
\end{proof}
 
	\subsection{Energy estimates on the Maxwell components}
	
	Recall the null Maxwell equations
	
	\begin{gather}
	\nabla_4 \alphabar_F+ \frac{1}{2}\tr\chi \alphabar_F = - \nabla \rho_F - \Hodge{\nabla} \sigma_F -2 \Hodge{\etabar}\cdot \sigma_F -2 \Hodge{\eta}\cdot \rho_F+2\omega \hspace{.5mm} \alphabar_F- \chibarhat \cdot \alpha_F, \\ 
	\nabla_3 \alpha_F+ \frac{1}{2} \tr\chibar \alpha_F = - \nabla \rho_F+ \Hodge{\nabla}\sigma_F -2\Hodge{\etabar} \cdot \sigma_F + 2 \etabar \cdot \rho_F +2\omegabar \alpha_F -\chihat \cdot \alphabar_F,\\
	\nabla_4 \rho_F = -\text{div}\hspace{.5mm} \alpha_F - \tr\chi \rho_F - (\eta-\etabar) \cdot \alpha_F, \\
	\nabla_4 \sigma_F= - \text{curl}\hspace{.5mm} \alpha_F  -\tr\chi \sigma_F +(\eta-\etabar) \cdot \Hodge{\alpha_F}, \\ \nabla_3 \rho_F +\tr\chibar \rho_F =\text{div} \alphabar_F+(\eta-\etabar)\cdot \alphabar_F, \\
	\nabla_3 \sigma_F +\tr\chibar \sigma_F = -\text{curl} \alphabar_F + (\eta-\etabar) \cdot \Hodge{\alphabar_F}.
	\end{gather}Notice that for the pair $(\Y_1, \Y_2) = \begin{Bmatrix} \alpha_F, (\rho_F, \sigma_F) \end{Bmatrix}$ we have
	
	\begin{gather} \nabla_3 \Y_1 + \left(\frac{1}{2}+s_2(\Y_1)\right)\tr\chibar  \Y_1    - \Hodge{\mathcal{D}}_1 \Y_2 = (\psi,\chihat) \cdot \Y + \psi \cdot (\Y,\alpha_F), \\  
	\nabla_4 \Y_2 +\mathcal{D}_1 \Y_1 = \psi \cdot(\alpha_F, \Y),
	\end{gather}  while for the pair $(\Y_1, \Y_2)=\begin{Bmatrix}
	(\rho_F, -\sigma_F), \alphabar_F
	\end{Bmatrix}$ we have
	
	\begin{gather}
	\nabla_3 \Y_1 + \left(\frac{1}{2}+s_2(\Y_1)\right)\tr\chibar  \Y_1    - {\mathcal{D}}_1 \Y_2 =  \psi \cdot \Y, \\ \nabla_4 \Y_2 - \Hodge{\mathcal{D}}_1 \Y_1 = (\psi,\chibarhat)\cdot (\Y, \alpha_F).
	\end{gather}We introduce the following proposition
	
	\begin{proposition}
		Under the assumptions of Theorem \ref{main1} and the bootstrap assumptions \eqref{bootstrapbounds} and given a pair $(\Y_1, \Y_2)$ satisfying 
		\begin{gather*}
		\nabla_3\nabla^i \Y_1 + \left( \frac{i+1}{2}+ s_2(\Y_1)\right) \tr\chibar \nabla^i \Y_1 - \mathcal{D}\nabla^i \Y_2 = P, \\ 
		\nabla_4 \nabla^i \Y_2 - \Hodge{\mathcal{D}} \nabla^i \Y_1 = Q,
		\end{gather*}the following inequality holds:

		\begin{align*}
		&\int_{H_u^{(0,\ubar)}} \scaletwoSuubarprime{\nabla^i \Y_1}^2 \hspace{.5mm} \text{d}\ubar^\prime + \int_{\Hbar_{\ubar}^{(u_\infty,u)}} \frac{a}{\lvert u^\prime \rvert^2}\hspace{.5mm} \scaletwoSuprime{\nabla^i \Y_2}^2 \hspace{.5mm} \text{d}u^\prime \\ 
		\lesssim& \int_{H_{u_\infty}^{(0,\ubar)}} \lVert{\nabla^i \Y_1} \rVert^2_{\mathcal{L}^{2}_{(sc)}(S_{u_{\infty},\ubar^\prime})} \hspace{.5mm} \text{d}\ubar^\prime + \int_{\Hbar_{0}^{(u_\infty,u)}} \frac{a}{\lvert u^\prime \rvert^2}\hspace{.5mm} \lVert{\nabla^i \Y_2}\rVert^2_{\mathcal{L}^2_{(sc)}(S_{u^\prime,0})} \hspace{.5mm} \text{d}u^\prime \\ 
		&+ \iint_{\mathcal{D}_{u,\ubar}} \frac{a}{\lvert u^\prime \rvert} \lVert \nabla^i \Y_1 \cdot P \rVert_{\mathcal{L}^1_{(sc)}(S_{u^\prime, \ubar^\prime})} \hspace{.5mm} \text{d}u^\prime \text{d}\ubar^\prime + \iint_{\mathcal{D}_{u,\ubar}} \frac{a}{\lvert u^\prime \rvert} \lVert \nabla^i \Y_2 \cdot Q \rVert_{\mathcal{L}^1_{(sc)}(S_{u^\prime, \ubar^\prime})} \hspace{.5mm} \text{d}u^\prime \text{d}\ubar^\prime.
		\end{align*}
	\end{proposition}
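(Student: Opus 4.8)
The plan is to observe that this proposition is literally an instance of the previously established energy estimate (Proposition \ref{byparts}, \ref{energyprop2}, \ref{energyprop3} combined into the displayed abstract statement). The pairs $(\Y_1,\Y_2)$ arising from the null Maxwell equations fit exactly the Hodge-structured framework: the $\nabla_3$-equation for $\Y_1$ carries a factor $\left(\tfrac12 + s_2(\Y_1)\right)\tr\chibar$ in front of the zeroth-order term, and the $\nabla_4$-equation for $\Y_2$ couples through the adjoint operator $\Hodge{\mathcal D}$. Hence the same integration-by-parts computation applies verbatim with $\mathfrak Y_j$ replaced by $\Y_j$. First I would note that, upon commuting with $i$ angular derivatives via Proposition \ref{commutationformulaeprop}, the commutator correction to the coefficient of $\tr\chibar$ is exactly $\tfrac i2$, so that the total coefficient becomes $\left(\tfrac{i+1}{2}+s_2(\Y_1)\right)$, matching the hypothesis; the remaining commutator terms are absorbed into $P$ and $Q$.

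Second, I would run the energy identity: apply Proposition \ref{energyprop3} to $\nabla^i \Y_1$ with $\lambda_0 = \tfrac{1+i}{2} + s_2(\Y_1)$, so that $\lambda_1 = 2\lambda_0 - 1 = i + 2s_2(\Y_1)$, producing the flux terms on $H_u^{(0,\ubar)}$ and $H_{u_\infty}^{(0,\ubar)}$ weighted by $\upr^{2i+4s_2(\Y_1)}$ plus a spacetime error with coefficient $f$ satisfying $|f| \lesssim O/|u'|^2$. Simultaneously apply Proposition \ref{byparts} to $\phi_1 = \phi_2 = |u|^{i+2s_2(\Y_1)}\nabla^i \Y_2$, producing flux terms on $\Hbar_{\ubar}^{(u_\infty,u)}$ and $\Hbar_0^{(u_\infty,u)}$ plus a spacetime error with coefficient $2\omega - \tr\chi$, which obeys $|2\omega - \tr\chi| \lesssim O/|u'|$. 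Adding the two identities and feeding in \eqref{ref1} and \eqref{ref2}, the principal (differential-operator) terms cancel against the Hodge duality relation \eqref{snb 5}, leaving the source pairings $\nabla^i \Y_1 \cdot P$ and $\nabla^i \Y_2 \cdot Q$, together with a cross term $(\eta+\etabar)\nabla^i \Y_1 \nabla^i \Y_2$ bounded by $|\eta+\etabar|(|\nabla^i\Y_1|^2 + |\nabla^i \Y_2|^2)$ with $|\eta+\etabar| \lesssim \al O/|u'|^2$.

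Third, I would absorb all three quadratic error terms (those with coefficients $f$, $2\omega-\tr\chi$, $\eta+\etabar$) by a double Grönwall argument — once in the $u$-variable and once in the $\ubar$-variable — using that $O\|u^{-2}\|_{L^1_u} \lesssim O/a \leq 1$ and $O\|u^{-1}\|_{L^\infty_u}\lesssim 1$ in the region of study, exactly as in the proof of the abstract energy proposition above. Finally, multiply through by $a^{-i-2s_2(\Y_1)}$ and translate the weighted $L^2$ integrals into the scale-invariant norms $\scaletwoSuubarprime{\cdot}$, $\scaletwoSuprime{\cdot}$ and $\scaleoneSuprimeubarprime{\cdot}$ using the signature bookkeeping $s_2(\nabla^i \Y_1) = \tfrac i2 + s_2(\Y_1)$, $s_2(\nabla^i \Y_2) = \tfrac{i+1}{2} + s_2(\Y_1)$, $s_2(P) = \tfrac{i+2}{2}+s_2(\Y_1)$, $s_2(Q) = \tfrac{i+1}{2}+s_2(\Y_1)$; the weight $\tfrac{a}{|u'|^2}$ in the $\Hbar$-norm and the weight $\tfrac{a}{|u'|}$ in front of the $\mathcal{L}^1_{(sc)}$ spacetime integrals emerge precisely from this rescaling, as they did in the preceding proposition. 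Since the computation is identical in structure, the only thing to verify carefully is that the coefficient of $\tr\chibar$ in the commuted $\Y_1$-equation is indeed $\left(\tfrac{i+1}{2}+s_2(\Y_1)\right)$ — this is the one place where the Maxwell case could conceivably differ from the Weyl case, and it is the step I expect to need the most care; but it follows directly from Proposition \ref{commutationformulaeprop} and the definition of $s_2$. Therefore the proof reduces to citing the abstract energy proposition with $\mathfrak Y_j \mapsto \Y_j$.
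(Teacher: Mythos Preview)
Your proposal is correct and matches the paper's treatment: the paper states this proposition without a separate proof, since it is word-for-word the abstract energy estimate just established with $\mathfrak{Y}_j$ replaced by $\Y_j$. Your recapitulation of the weighted integration-by-parts, Hodge cancellation via \eqref{snb 5}, double Gr\"onwall absorption of the $f$, $2\omega-\tr\chi$, and $(\eta+\etabar)$ errors, and final conversion to scale-invariant norms via the signature bookkeeping is exactly the argument already given for the general pair.
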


	\par \noindent We begin with the pair $(\alpha_F, (-\rho_F, \sigma_F))$.
	
	\begin{proposition}
		Under the assumptions of Theorem \ref{main1} and the bootstrap assumptions \eqref{bootstrapbounds}, for $i\leq 11$, we have
		\begin{align*}
		&\frac{1}{\al} \scaletwoHu{(\al)^{i-1} \nabla^i \alpha_F} + \frac{1}{\al} \scaletwoHbaru{(\al)^{i-1} \nabla^i (\rho_F, \sigma_F)} \\
		\leq& \frac{1}{\al} \lVert (\al)^{i-1} \nabla^i \alpha_F \rVert_{\mathcal{L}^2_{(sc)}(H_{u_\infty}^{(0,\underline{u})})} + \frac{1}{\al} \lVert (\al)^{i-1} \nabla^i (\rho_F,\sigma_F) \rVert_{\mathcal{L}^2_{(sc)}(\Hbar_{0}^{(u_{\infty},\underline{u})})} + \frac{1}{a^{\frac{1}{3}}}.
		\end{align*}
	\end{proposition}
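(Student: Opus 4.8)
The plan is to apply the energy estimate proposition of this subsection (the ``Hodge structure'' energy inequality) to the pair $(\Y_1,\Y_2) = (\alpha_F, (-\rho_F,\sigma_F))$, after commuting the null Maxwell equations $i$ times with $\nabla$. First I would read off from the null Maxwell equations for $\alpha_F$ and $(\rho_F,\sigma_F)$ that this pair fits the schematic Hodge form
\begin{gather*}
\nabla_3 \alpha_F + \left(\tfrac12 + s_2(\alpha_F)\right)\tr\chibar\,\alpha_F - \Hodge{\mathcal D}_1(-\rho_F,\sigma_F) = (\psi,\chihat)\cdot\Y + \psi\cdot(\Y,\alpha_F), \\
\nabla_4 (-\rho_F,\sigma_F) - \mathcal D_1 \alpha_F = \psi\cdot(\alpha_F,\Y),
\end{gather*}
then commute with $\nabla^i$ using Proposition \ref{commutationformulaeprop}. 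The commutator terms produce exactly the schematic right-hand sides $P$ and $Q$ in the statement of the energy proposition, with the structure $P = \sum \nabla^{i_1}\psi^{i_2}\nabla^{i_3}(\psi,\chih)\nabla^{i_4}(\alpha_F,\Y) + (\text{lower-order } \tr\chibar,\,\tildetr\text{ terms})$ and similarly for $Q$.

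Once the energy proposition is invoked, the two boundary terms on $H_{u_\infty}^{(0,\ubar)}$ and $\Hbar_0^{(u_\infty,u)}$ are precisely the first two terms in the claimed inequality, so the work reduces to bounding the two spacetime integrals
\[
\iint_{\mathcal D_{u,\ubar}} \frac{a}{\lvert u'\rvert}\,\lVert \nabla^i\alpha_F\cdot P\rVert_{\mathcal L^1_{(sc)}(S_{u',\ubar'})}\,\text{d}u'\text{d}\ubar' \quad\text{and}\quad \iint_{\mathcal D_{u,\ubar}} \frac{a}{\lvert u'\rvert}\,\lVert \nabla^i(\rho_F,\sigma_F)\cdot Q\rVert_{\mathcal L^1_{(sc)}(S_{u',\ubar'})}\,\text{d}u'\text{d}\ubar'
\]
by $\frac{1}{a^{1/3}}$ times (the squared left-hand side) plus lower-order quantities that are themselves $\lesssim a^{-1/3}$. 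The strategy for each such term is the standard one used throughout Sections 4--5: apply the scale-invariant H\"older inequalities to peel off $\nabla^i\alpha_F$ (resp. $\nabla^i(\rho_F,\sigma_F)$) in $\mathcal L^2_{(sc)}$, estimate the remaining top-order factor inside $P$ (resp. $Q$) in $\mathcal L^2_{(sc)}$ using the $\mathcal F$, $\underline{\mathcal F}$, $\mathcal O_{11,2}$ bounds and the elliptic estimates of Section 6, and bound all genuinely lower-order factors in $\mathcal L^\infty_{(sc)}$ via Proposition \ref{Sobolevembedding} and Proposition \ref{usefulstatements}. Each extra factor of a Ricci coefficient or Maxwell component contributes a gain of $\lvert u\rvert^{-1} \leq (a/4)^{-1}$, and each integration in $u'$ over $\int_{u_\infty}^u \lvert u'\rvert^{-2}\,\text{d}u' \lesssim a^{-1}$ or in $\ubar'$ over the unit interval; combined with the overall $a$-weights and the bootstrap bound $(O+R+F)^{20}\leq a^{1/16}$ this produces a net negative power of $a$, comfortably absorbed into $a^{-1/3}$. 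The borderline terms — those involving $\tr\chibar$ with no derivative spared, or the ``triple anomaly'' cubic Maxwell terms — are handled exactly as in the proofs of Propositions \ref{alphaFproposition} and \ref{L2curvature2}: one uses that at least one index is not top order (so Proposition \ref{alphaFproposition}'s $\mathcal L^2_{(sc)}$ bound applies there) and rewrites $\nabla\tr\chibar = \nabla\tildetr$ to trade a bad $\tr\chibar$ weight for the good $\widetilde{\tr\chibar}$ weight.

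The main obstacle will be the top-order term ($i=11$) where all eleven derivatives land on $\alpha_F$ inside $P$ paired with a curvature/Ricci factor that is also near top order, e.g. a term schematically of the form $\chibarhat\cdot\nabla^{11}\alpha_F$ or $\nabla^{10}\chibarhat\cdot\nabla\alpha_F$ in the commutator: here one cannot afford to lose any $a$-weight and must carefully track that the elliptic estimate $\intu \frac{a^{3/2}}{\lvert u'\rvert^3}\scaletwoSuprime{a^5\nabla^{11}\chibarhat}\duprime \lesssim 1$ from Proposition \ref{chibarhattrchibarelliptic} (and the companion $\eta,\etabar$ bounds from Propositions \ref{etaellipticprop}--\ref{etabarellipticprop}) feeds in with the correct weight, and that the remaining $\nabla^{11}\alpha_F$ factor is absorbed back into the left-hand side $\frac{1}{\al}\scaletwoHu{(\al)^{10}\nabla^{11}\alpha_F}$ via a Gr\"onwall-type argument. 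Managing this coupling — so that the top-order Maxwell norm appearing on the right is dominated by $a^{-1/3}$ times the same norm on the left, rather than merely equalling it — is the delicate bookkeeping step; everything else is a routine application of the scale-invariant H\"older inequalities and the estimates already established in Sections 3--6.
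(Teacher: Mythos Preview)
Your proposal is correct and follows essentially the same route as the paper. Two small corrections: the ``triple anomaly'' cubic Maxwell terms do not actually appear in this pair (the null Maxwell equations for $\alpha_F$ and $(\rho_F,\sigma_F)$ have only quadratic nonlinearities in Ricci/Maxwell components --- the cubic terms arise only in the curvature Bianchi pairs); and the closing step is simpler than your final paragraph suggests --- rather than a delicate Gr\"onwall absorption, the paper bounds $N_1 \lesssim \bigl(F[\alpha_F]\,O[\chibarhat]+1\bigr)\cdot \sup_{u'}\lVert(\al)^{i-1}\nabla^i\alpha_F\rVert_{\mathcal L^2_{(sc)}(H_{u'})}$, uses the improved estimate $O[\chibarhat]\lesssim 1$ from Proposition~\ref{chibarhatproposition} together with the bootstrap bound on $F$, and then the global factor $a^{-1}$ yields $a^{-1/2}(FO+1)F \ll a^{-1/4}$, comfortably under the stated $a^{-1/3}$.
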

	
	\begin{proof}
		We schematically have 
		\begin{gather*} \nabla_3 \alpha_F + \frac{1}{2}\tr\chibar \alpha_F - \mathcal{D}(-\rho_F, \sigma_F)  = (\psi, \chihat) \cdot \Y + \psi \cdot (\Y, \alpha_F),    \\ 
		\nabla_4 (-\rho_F, \sigma_F) - \Hodge{\mathcal{D}}\alpha_F = \psi \cdot (\Y, \alpha_F).   \end{gather*}

		\par \noindent Commuting with $i$ angular derivatives we arrive at
		
		\begin{align*}
		&\nabla_3 \nabla^i \alpha_F + \frac{i+1}{2} \tr\chibar \alpha_F - \mathcal{D} \nabla^i (\rho_F, \sigma_F)\\ 
		=& \sum_{i_1 + i_2 + i_3+i_4 =i}  \nabla^{i_1}\psi^{i_2}\nabla^{i_3}(\eta,\etabar)\nabla^{i_4}(\rho_F,\sigma_F)  +  \sum_{i_1 + i_2 + i_3+i_4 =i}  \nabla^{i_1}\psi^{i_2}\nabla^{i_3} \omegabar \nabla^{i_4} \alpha_F  \\ &+ \sum_{i_1 + i_2 + i_3+i_4 =i}  \nabla^{i_1}\psi^{i_2}\nabla^{i_3}\chihat \nabla^{i_4}\alphabar_F +\sum_{ i_1+i_2+i_3+i_4+1=i}\nabla^{i_1}\psi^{i_2+1}\nabla^{i_3}\tr\chibar\nabla^{i_4}\alpha_F\\ &+  \sum_{i_1 + i_2 +i_3 + i_4 =i} \nabla^{i_1} \psi^{i_2} \nabla^{i_3} (\psi,\chibarhat, \widetilde{\tr\chibar}) \nabla^{i_4} \alpha_F \\ :=& P_1,
		\end{align*}	while for $(\rho_F,\sigma_F)$ we similarly obtain 
		
		\begin{align*}
		&\nabla_4 \nabla^i (\rho_F, \sigma_F) - \Hodge{\mathcal{D}}\nabla^{i} \alpha_F \\ =& \sum_{i_1+i_2+i_3+i_4=i} \nabla^{i_1}\psi^{i_2}\nabla^{i_3}(\eta,\etabar)\nabla^{i_4}\alpha_F + \sum_{i_1+i_2+i_3+i_4=i}\nabla^{i_1}\psi^{i_2}\nabla^{i_3}\tr\chi \nabla^{i_4}(\rho_F,\sigma_F) \\ &+\sum_{i_1 + i_2 + i_3+i_4 =i} \nabla^{i_1}\psi^{i_2} \nabla^{i_3}(\eta,\etabar,\chihat)\nabla^{i_4}(\rho_F,\sigma_F) \\ :=& Q_1.
		\end{align*}We arrive at
		
		\begin{align*}
		&\scaletwoHu{(\al)^{i-1} \nabla^i \alpha_F} + \scaletwoHbaru{(\al)^{i-1} \nabla^i (\rho_F, \sigma_F) } \\
		\leq& \lVert (\al)^{i-1} \nabla^i \alpha_F \rVert_{\mathcal{L}^2_{(sc)}(H_{u_\infty}^{(0,\ubar)})} + \lVert (\al)^{i-1} \nabla^i (\rho_F, \sigma_F) \rVert_{\mathcal{L}^2_{(sc)}(\Hbar_0^{(u_\infty,u)})}+N_1+M_1,
		\end{align*}where \begin{gather*}
		N_1 = \int_0^{\ubar} \int_{u_\infty}^u  \frac{a}{\lvert u^\prime \rvert} \scaleoneSuprimeubarprime{(\al)^{i-1} P_1  \cdot (\al)^{i-1} \nabla^i \alpha_F} \hspace{.5mm} \text{d}u^\prime \hspace{.5mm} \text{d}\ubar^\prime, \\
		M_1 = \int_0^{\ubar} \int_{u_\infty}^u  \frac{a}{\lvert u^\prime \rvert} \scaleoneSuprimeubarprime{(\al)^{i-1} Q_1  \cdot (\al)^{i-1} \nabla^i \alpha_F} \hspace{.5mm} \text{d}u^\prime \hspace{.5mm} \text{d}\ubar^\prime.
		\end{gather*} Let us focus on the term $N_1$ first. Using the scale-invariant version of H\"older's inequality we get
		
		\begin{align*}
		N_1 \leq&  \int_0^{\ubar} \int_{u_\infty}^u  \frac{a}{\lvert u^\prime \rvert^2} \scaletwoSuprimeubarprime{(\al)^{i-1} P_1} \scaletwoSuprimeubarprime{(\al)^{i-1} \nabla^i \alpha_F} \hspace{.5mm} \text{d}u^\prime \hspace{.5mm} \text{d}\ubar^\prime \\ 
		\leq&    \int_{u_\infty}^u  \frac{a}{\lvert u^\prime \rvert^2}  \left( \int_0^{\ubar} \scaletwoSuprimeubarprime{(\al)^{i-1} P_1}^2   \hspace{.5mm} \text{d}\ubar^\prime   \right)^{\frac{1}{2}} \hspace{.5mm} \text{d}u^\prime \cdot \sup_{u^\prime} \lVert (\al)^{i-1} \nabla^i \alpha_F \rVert_{\mathcal{L}^2_{(sc)}(H_{u^\prime}^{(0,\ubar)})     },
		\end{align*}where 
		\begin{align*}
         P_1	=& \sum_{i_1 + i_2 + i_3+i_4 =i}  \nabla^{i_1}\psi^{i_2}\nabla^{i_3}(\eta,\etabar)\nabla^{i_4}(\rho_F,\sigma_F)  +  \sum_{i_1 + i_2 + i_3+i_4 =i}  \nabla^{i_1}\psi^{i_2}\nabla^{i_3} \omegabar \nabla^{i_4} \alpha_F  \\ &+ \sum_{i_1 + i_2 + i_3+i_4 =i}  \nabla^{i_1}\psi^{i_2}\nabla^{i_3}\chihat \nabla^{i_4}\alphabar_F +\sum_{ i_1+i_2+i_3+i_4+1=i}\nabla^{i_1}\psi^{i_2+1}\nabla^{i_3}\tr\chibar\nabla^{i_4}\alpha_F\\ &+  \sum_{i_1 + i_2 +i_3 + i_4 =i} \nabla^{i_1} \psi^{i_2} \nabla^{i_3} (\psi,\chibarhat, \widetilde{\tr\chibar}) \nabla^{i_4} \alpha_F \\ :=& \sum_{j=1}^5 P_{1j}.
	\end{align*}Denote \[H_1 = \intubar \scaletwoSuprimeubarprime{(\al)^{i-1} P_1}^2 \hspace{.5mm} \text{d}\ubar^\prime.      \]We further have

		\begin{equation}
		H_1 = \int_{0}^{\ubar} \scaletwoSuprimeubarprime{(\al)^{i-1} P_1}^2 \hspace{.5mm} \text{d}\ubar^\prime \lesssim \sum_{j=1}^5 \intubar \scaletwoSuprimeubarprime{(\al)^{i-1}P_{1j}}^2 \dubarprime
		\end{equation}We treat each of those five terms separately.
		
		\begin{itemize}
			\item There holds \begin{align*} &\intubar \scaletwoSuprimeubarprime{(\al)^{i-1}P_{11}}^2 \dubarprime \\ =& \intubar \scaletwoSuprimeubarprime{(\al)^{i-1} \sum_{i_1+i_2+i_3+i_4=i} \nabla^{i_1} \psi^{i_2}\nabla^{i_3}(\eta,\etabar)\nabla^{i_4}(\rho_F,\sigma_F)}^2 \dubarprime \\ \lesssim& \frac{O^4}{a\cdot \lvert u^\prime \rvert^2} +  \frac{O^2}{\upr^2}\cdot \intubar \scaletwoSuprimeubarprime{a^5 \nabla^{11}(\eta,\etabar)}^2 \dubarprime + \frac{O^2}{\upr^2}\cdot \intubar \scaletwoSuprimeubarprime{a^5 \nabla^{11}(\rho_F,\sigma_F)}^2 \dubarprime \\ \lesssim& \frac{O^4}{a\cdot \lvert u^\prime \rvert^2} +  \frac{O^2}{a^2}\cdot \intubar (1+\mathcal{R}+\underline{\mathcal{R}})^2 \dubarprime + \frac{O^2}{\upr^2} F[\rho_F,\sigma_F]^2. 
			\end{align*}Here we have used Propositions \ref{etaellipticprop} and \ref{etabarellipticprop} as well as the bootstrap bounds \eqref{bootstrapbounds}.
			
			\item  There holds \begin{align*} &\intubar \scaletwoSuprimeubarprime{(\al)^{i-1}P_{12}}^2 \dubarprime \\ =& \intubar \ScaletwoSuprimeubarprime{(\al)^{i} \sum_{i_1+i_2+i_3+i_4=i} \nabla^{i_1} \psi^{i_2}\nabla^{i_3}\omegabar\nabla^{i_4}\left(\frac{\alpha_F}{\al}\right)}^2 \dubarprime \\ \lesssim& \frac{O^4}{ \lvert u^\prime \rvert^2} +  \frac{a\cdot O^2}{\upr^2}\cdot \intubar \scaletwoSuprimeubarprime{a^5 \nabla^{11}\omegabar}^2 \dubarprime + \frac{a\cdot O^2}{\upr^2}\cdot \intubar \scaletwoSuprimeubarprime{a^5 \nabla^{11}a_F}^2 \dubarprime \\ \lesssim& \frac{O^4}{ \lvert u^\prime \rvert^2} +  \frac{a\cdot O^2}{\upr^2}\cdot \intubar \scaletwoSuprimeubarprime{a^5 \nabla^{11}\omegabar}^2 \dubarprime + \frac{a\cdot O^2}{\upr^2} F[\alpha_F]^2. 
			\end{align*}Here we have made use of the bootstrap bounds \eqref{bootstrapbounds}.
			
			\item There holds
			
			\begin{align*}
			&\intubar \scaletwoSuprimeubarprime{(\al)^{i-1}P_{13}}^2 \dubarprime \\ =& \intubar \scaletwoSuprimeubarprime{(\al)^{i} \sum_{i_1+i_2+i_3+i_4=i} \nabla^{i_1} \psi^{i_2}\nabla^{i_3}\left(\frac{\chihat}{\al}\right)\nabla^{i_4}\alphabar_F}^2 \dubarprime \\ \lesssim& \frac{O^4}{\upr^2} + \frac{a\cdot O^2}{\upr^2} \cdot \intubar \scaletwoSuprimeubarprime{a^5 \nabla^{11}\left(\frac{\chihat}{\al}\right)}^2 \dubarprime  + \frac{a\cdot O^2}{\upr^2} \cdot \intubar \scaletwoSuprimeubarprime{a^5 \nabla^{11}\alphabar_F}^2 \dubarprime \\ \lesssim& 1 + \frac{a\cdot O^2}{\upr^2} \cdot \intubar \scaletwoSuprimeubarprime{a^5 \nabla^{11}\alphabar_F}^2 \dubarprime. 
			\end{align*} Here we have made used of Proposition \ref{trchichihatellipticprop}. \item The last two terms can be bounded above  by \[  F^2[\alpha_F] \cdot O[\chibarhat]^2+ 1,    \]using Gr\"onwall's inequality and the elliptic estimates.
		\end{itemize}
		
		\par \noindent We arrive at the bound
		
		\[  H_1 \lesssim 1+ F^2[\alpha_F]\cdot O[\chibarhat]^2+ \frac{a \cdot O^2}{\upr^2} \cdot \intubar \scaletwoSuprimeubarprime{a^5 \nabla^{11}\omegabar}^2 \duprime + \frac{a \cdot O^2}{\upr^2} \cdot \intubar \scaletwoSuprimeubarprime{a^5 \nabla^{11}\alphabar_F}^2 \duprime .    \]The two integrals above cannot be estimated along the $\ubar$-direction, but only along the $u-$direction.	Integrating along the $u$-direction, these bounds translate to a bound on $N_1$
		
		\be \label{N1bound}  N_1 \lesssim \left(    F[\alpha_F]\cdot O[\chibarhat]  + 1 \right) \sup_{u^\prime}\lVert (\al)^{i-1} \nabla^i \alpha_F \rVert_{\mathcal{L}^{2}_{(sc)}(H_{u^\prime}^{(0,\ubar)})}.     \ee For the term $M_1$ we follow the same procedure. We have
		
		\begin{align*}
		M_1 =& \int_0^{\ubar} \int_{u_\infty}^u  \frac{a}{\lvert u^\prime \rvert} \scaleoneSuprimeubarprime{(\al)^{i-1} Q_1  \cdot (\al)^{i-1} \nabla^i (\rho_F,\sigma_F)} \hspace{.5mm} \text{d}u^\prime \hspace{.5mm} \text{d}\ubar^\prime \\
		\leq&\int_0^{\ubar} \int_{u_\infty}^u  \frac{a}{\lvert u^\prime \rvert^2} \scaletwoSuprimeubarprime{(\al)^{i-1} Q_1} \scaletwoSuprimeubarprime{(\al)^{i-1} \nabla^i (\rho_F,\sigma_F)}\hspace{.5mm} \text{d}u^{\prime} \text{d}{\ubar}^{\prime} \\ 
		\leq& \int_0^{\ubar} \left( \int_{u_{\infty}}^u  \frac{a}{\lvert u^\prime \rvert^2} \scaletwoSuprimeubarprime{(\al)^{i-1} Q_1}^2  \hspace{.5mm} \text{d}u^\prime     \right)^{\frac{1}{2}}  \lVert (\al)^{i-1} \nabla^i(\rho_F,\sigma_F) \rVert_{\mathcal{L}^2_{(sc)}(\Hbar_{\ubar}^{(u_\infty, u)})}\hspace{.5mm} \text{d}\ubar^\prime \\
		\leq& \left( \int_0^{\ubar} \int_{u_\infty}^u  \frac{a}{\lvert u^\prime \rvert^2} \scaletwoSuprimeubarprime{(\al)^{i-1} Q_1}^2  \hspace{.5mm} \text{d}u^\prime  \dubarprime   \right)^{\frac{1}{2}} \cdot \sup_{\ubar^\prime} \lVert (\al)^{i-1} \nabla^i(\rho_F,\sigma_F) \rVert_{\mathcal{L}^2_{(sc)}(\Hbar_{\ubar}^{(u_\infty, u)})}.
		\end{align*}Here we recall that \begin{align*}Q_1 =& \sum_{i_1+i_2+i_3+i_4=i} \nabla^{i_1}\psi^{i_2}\nabla^{i_3}(\eta,\etabar)\nabla^{i_4}\alpha_F + \sum_{i_1+i_2+i_3+i_4=i}\nabla^{i_1}\psi^{i_2}\nabla^{i_3}\tr\chi \nabla^{i_4}(\rho_F,\sigma_F) \\ &+\sum_{i_1 + i_2 + i_3+i_4 =i} \nabla^{i_1}\psi^{i_2} \nabla^{i_3}(\eta,\etabar,\chihat)\nabla^{i_4}(\rho_F,\sigma_F)  \\ :=& Q_{11}+Q_{12}+Q_{13}.  \end{align*}
		
		\par \noindent Let \[  J_1 =  \int_0^{\ubar} \int_{u_\infty}^u  \frac{a}{\lvert u^\prime \rvert^2} \scaletwoSuprimeubarprime{(\al)^{i-1} Q_1}^2  \hspace{.5mm} \text{d}u^\prime \dubarprime   := J_{11}+J_{12}+J_{13} .         \]We have 
		
	\begin{equation} J_1 \lesssim \sum_{j=1}^3 \intubar\intu \frac{a}{\upr^2} \scaletwoSuprimeubarprime{(\al)^{i-1}Q_{1j}}^2 \duprime \dubarprime.
	\end{equation}Then, separating the cases where $i_4\neq i$ and $i_4 = i$ and treating each of the three terms separately, we get 
	
	\begin{itemize}
		\item There holds \begin{align*}
		J_{11} =& \intubar \intu \frac{a}{\upr^2} \scaletwoSuprimeubarprime{(\al)^{i-1} \sum_{i_1+i_2+i_3+i_4=i} \nabla^{i_1}\psi^{i_2}\nabla^{i_3}(\eta,\etabar)\nabla^{i_4}\alpha_F}^2 \duprime \dubarprime \\ =& \intu \frac{a}{\upr^2} \left( \intubar\scaletwoSuprimeubarprime{(\al)^{i} \sum_{i_1+i_2+i_3+i_4=i} \nabla^{i_1}\psi^{i_2}\nabla^{i_3}(\eta,\etabar)\nabla^{i_4}\left(\frac{\alpha_F}{\al}\right)}^2 \dubarprime \right) \duprime \\ \lesssim& \intu \frac{a}{\upr^2} \left( \frac{O^4}{\upr^2} + \frac{a\cdot O^2 \cdot F[\alpha_F]^2}{\upr^2} + \frac{O^2 \cdot (1+R)^2 }{a} \right) \duprime \lesssim 1.
		\end{align*}We have made use of Propositions \ref{etaellipticprop} and \ref{etabarellipticprop} here.
	     \item There holds  \begin{align*}
	     J_{12} =& \intubar \intu \frac{a}{\upr^2} \scaletwoSuprimeubarprime{(\al)^{i-1} \sum_{i_1+i_2+i_3+i_4=i} \nabla^{i_1}\psi^{i_2}\nabla^{i_3}\tr\chi \nabla^{i_4}(\rho_F,\sigma_F)}^2 \duprime \dubarprime \\ =& \intu \frac{a}{\upr^2} \left( \intubar\scaletwoSuprimeubarprime{(\al)^{i-1} \sum_{i_1+i_2+i_3+i_4=i} \nabla^{i_1}\psi^{i_2}\nabla^{i_3}\tr\chi \nabla^{i_4}(\rho_F,\sigma_F)}^2 \dubarprime \right) \duprime \\ \lesssim& \intu \frac{a}{\upr^2} \left( \frac{O^4}{a\cdot \upr^2} + \frac{ O^2 \cdot F[\rho_F,\sigma_F]^2}{\upr^2} + \frac{O^2 \cdot (1+R)^2 }{\upr^2} \right) \duprime \lesssim 1.
	     \end{align*}We have made use of Propositions \ref{etaellipticprop} and \ref{etabarellipticprop} as well as the bootstrap assumptions \eqref{bootstrapbounds}.
	     
	       \item There holds  \begin{align*}
	     J_{13} =& \intubar \intu \frac{a}{\upr^2} \scaletwoSuprimeubarprime{(\al)^{i-1} \sum_{i_1+i_2+i_3+i_4=i} \nabla^{i_1}\psi^{i_2}\nabla^{i_3}(\eta,\etabar,\chihat)\nabla^{i_4}(\rho_F,\sigma_F)}^2 \duprime \dubarprime \\ =& \intu \frac{a}{\upr^2} \left( \intubar\scaletwoSuprimeubarprime{(\al)^{i} \sum_{i_1+i_2+i_3+i_4=i} \nabla^{i_1}\psi^{i_2}\nabla^{i_3}\left(\frac{\eta,\etabar,\chihat}{\al}\right) \nabla^{i_4}(\rho_F,\sigma_F)}^2 \dubarprime \right) \duprime \\ \lesssim& \intu \frac{a}{\upr^2} \left( \frac{O^4}{\upr^2} + \frac{a \cdot  O^2 \cdot F[\rho_F,\sigma_F]^2}{\upr^2} + \frac{a \cdot O^2 \cdot (1+R)^2 }{\upr^2} \right) \duprime \lesssim 1.
	     \end{align*}Here we have made use of Propositions \ref{trchichihatellipticprop}, \ref{etaellipticprop} and \ref{etabarellipticprop} as well as the bootstrap bounds \ref{bootstrapbounds}.
	\end{itemize}

\par \noindent	Hence
		
		\be \label{M1bound} M_1 \leq  \sup_{\ubar^\prime} \lVert (\al)^{i-1} \nabla^i (\rho_F,\sigma_F) \rVert_{\mathcal{L}^2_{(sc)}(\Hbar_{\ubar^\prime}^{(u_\infty, u)})}.     \ee Taking the bounds \eqref{N1bound} and \eqref{M1bound} into account, we get
		
		\begin{align*}
		&a^{-1} \scaletwoHu{(\al)^{i-1} \nabla^i \alpha_F}^2 + a^{-1} \scaletwoHbaru{(\al)^{i-1} \nabla^i (\rho_F, \sigma_F)}^2 \\ 
		\leq& a^{-1} \scaletwoHzero{(\al)^{i-1} \nabla^i \alpha_F}^2 + a^{-1} \scaletwoHbarzero{(\al)^{i-1} \nabla^i (\rho_F, \sigma_F)}^2 + a^{-1}(N_1+M_1) \\
		\leq&  a^{-1} \scaletwoHzero{(\al)^{i-1} \nabla^i \alpha_F}^2 + a^{-1} \scaletwoHbarzero{(\al)^{i-1} \nabla^i (\rho_F, \sigma_F)}^2 \\ 
		&+ a^{-\frac{1}{2}} (F\cdot O + 1)\cdot F + a^{-\frac{1}{2}}  F\\ 
		\leq&  a^{-1} \scaletwoHzero{(\al)^{i-1} \nabla^i \alpha_F}^2 + a^{-1} \scaletwoHbarzero{(\al)^{i-1} \nabla^i (\rho_F, \sigma_F)}^2 + a^{-\frac{1}{4}}. 
		\end{align*} Thus
		
		\[
		\mathcal{F}^2[\alpha_F]+ \underline{\mathcal{F}}^2[\rho_F,\sigma_F] \leq  \mathcal{F}^2_0[\alpha_F] + \underline{\mathcal{F}}^2_0[\rho_F,\sigma_F] + \frac{1}{a^{\frac{1}{4}}}, 
		\]which translates to the desired energy bound
		
		\be \label{energymaxwell1} \Rightarrow \mathcal{F}[\alpha_F]+ \underline{\mathcal{F}}[\rho_F,\sigma_F] \leq  2\mathcal{F}_0[\alpha_F] + 2\underline{\mathcal{F}}_0[\rho_F,\sigma_F] + \frac{1}{a^{\frac{1}{8}}}. \ee \end{proof}
	\par \noindent  Continuing the estimates for the Maxwell components, we shift attention to the pair $\left((-\rho_F, \sigma_F), \alphabar_F\right)$.  
	
	\begin{proposition}
		Under the assumptions of Theorem \ref{main1} and the bootstrap assumptions \eqref{bootstrapbounds}, we have
		
		\begin{equation*}
		{\color{black}\scaletwoHu{(\al)^{i-1} \nabla^i (-\rho_F, \sigma_F)}^2 + \scaletwoHbaru{(\al)^{i-1} \nabla^i \alphabar_F}^2 \lesssim \mathcal{R}^4 + \underline{\mathcal{R}}^4 +\left( \mathcal{I}^{(0)} \right)^4 + \left( \mathcal{I}^{(0)} \right)^2 +1. }
		\end{equation*} 
		
	\end{proposition}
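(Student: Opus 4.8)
The strategy is to apply the general energy-estimate proposition of this subsection to the Hodge pair $(\mathfrak{Y}_1,\mathfrak{Y}_2)=\bigl((-\rho_F,\sigma_F),\alphabar_F\bigr)$, exactly as was done for the pair $(\alpha_F,(-\rho_F,\sigma_F))$ in the preceding proposition, and then to close the estimate by controlling the resulting error integrals using the bootstrap assumptions, the elliptic estimates of Section 6, and the already-established energy bound \eqref{energymaxwell1} for $\mathcal{F}[\alpha_F]$ and $\underline{\mathcal{F}}[\rho_F,\sigma_F]$. Concretely, I would first record that this pair satisfies
\[
\nabla_3 \Y_1 + \left(\tfrac{1}{2}+s_2(\Y_1)\right)\tr\chibar\, \Y_1 - \mathcal{D}_1 \Y_2 = \psi\cdot\Y,\qquad
\nabla_4 \Y_2 - \Hodge{\mathcal{D}}_1 \Y_1 = (\psi,\chibarhat)\cdot(\Y,\alpha_F),
\]
commute with $i\le 11$ angular derivatives using Proposition \ref{commutationformulaeprop} to obtain schematic right-hand sides $P$ (for $\nabla_3\nabla^i\Y_1$) and $Q$ (for $\nabla_4\nabla^i\Y_2$), and then invoke the energy proposition to get
\[
\scaletwoHu{(\al)^{i-1}\nabla^i(-\rho_F,\sigma_F)}^2 + \scaletwoHbaru{(\al)^{i-1}\nabla^i\alphabar_F}^2 \lesssim (\text{initial data})^2 + N + M,
\]
where $N$ and $M$ are the spacetime error integrals against $P$ and $Q$ respectively.

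The bulk of the work is estimating $N$ and $M$ term by term. For each schematic summand in $P$ and $Q$ I would apply the scale-invariant Hölder inequalities to pull out one factor in $\mathcal{L}^2_{(sc)}(S)$ and bound the rest in $\mathcal{L}^\infty_{(sc)}(S)$ via the Sobolev embedding of Proposition \ref{Sobolevembedding}, then integrate in $u$ and $\ubar$. The terms where all derivatives fall on a single curvature/Maxwell factor (top-order terms) are the delicate ones: there I would split off the top-order piece, bound it in the null-hypersurface norm $\scaletwoHu{\cdot}$ or $\scaletwoHbaru{\cdot}$ (producing a factor $\mathcal{F}$, $\underline{\mathcal{F}}$, $\mathcal{R}$ or $\underline{\mathcal{R}}$), and use the $a$-weights together with $O,R,F \lesssim a^{1/16}$ to absorb everything into a small constant times the energy quantity being estimated plus lower-order contributions. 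Whenever a Ricci coefficient appears at top order (e.g.\ $\nabla^{11}\eta$, $\nabla^{11}\etabar$, $\nabla^{11}\chibarhat$, $\nabla^{11}\tr\chibar$, $\nabla^{11}\omegabar$) I would invoke the corresponding elliptic estimate from Propositions \ref{etaellipticprop}, \ref{etabarellipticprop}, \ref{omegabarellipticprop}, \ref{chibarhattrchibarelliptic}, which bounds it by $\mathcal{R}+\underline{\mathcal{R}}+\mathcal{F}+\underline{\mathcal{F}}+1$; quadratic curvature-type error terms contribute factors like $\mathcal{R}^2$, $\underline{\mathcal{R}}^2$, which after the Cauchy–Schwarz/Gr\"onwall bookkeeping produce the $\mathcal{R}^4+\underline{\mathcal{R}}^4$ on the right-hand side. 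The terms schematically containing $\nabla^i\Y_2$ itself (or $\mubar$-type auxiliary quantities after expansion) are absorbed to the left by Gr\"onwall's inequality in the $u$- and $\ubar$-directions.

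The main obstacle I anticipate is the careful treatment of the cross terms of the form $\alpha_F\cdot\nabla^{i}(\rho_F,\sigma_F,\alphabar_F)$ and $\Y\cdot\nabla^{i}\alpha_F$ appearing in $Q$ (since $\nabla_4\alphabar_F$ involves $\chibarhat\cdot\alpha_F$), because $\alpha_F$ must be controlled by the already-proved bound \eqref{energymaxwell1} rather than by a bootstrap constant, and one has to verify that at least one of the two indices in each quadratic pair is below top order so that Proposition \ref{alphaFproposition} (and its improvement) gives the sharper $\mathcal{L}^2_{(sc)}(S)$ bound needed — mirroring the logic in the $J_5$, $J_6$ estimates of Proposition \ref{L2curvature2}. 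Once all error integrals are shown to be $\lesssim \mathcal{R}^4+\underline{\mathcal{R}}^4+(\mathcal{I}^{(0)})^4+(\mathcal{I}^{(0)})^2+1$ plus a term absorbable to the left, one concludes by adding and using smallness of the $a$-weights, exactly as in the previous proposition, yielding the stated inequality.
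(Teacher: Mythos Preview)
Your proposal is correct and follows essentially the same approach as the paper: apply the general energy proposition to the Hodge pair $((-\rho_F,\sigma_F),\alphabar_F)$, commute, split the error integrals $N_2,M_2$ into schematic pieces, invoke the elliptic estimates of Section~6 for top-order Ricci coefficients, and use the already-established bound \eqref{energymaxwell1} on $\mathcal{F}[\alpha_F],\underline{\mathcal{F}}[\rho_F,\sigma_F]$ to trace the dangerous $\alpha_F$-terms back to initial data. The paper's borderline terms are precisely the $\chibarhat\cdot\alpha_F$ term in $Q_2$ (handled via the $\nabla^{11}\chibarhat$ estimate of Proposition~\ref{chibarhattrchibarelliptic}) and the $(\chibarhat,\tildetr)\cdot(\rho_F,\sigma_F)$ term in $P_2$ (handled via \eqref{chibarhattrchibartraditionalformestimates}), both of which you correctly anticipated; the $\mathcal{R}^4$ on the right arises exactly as you describe, via a product $(\mathcal{R}^2+\underline{\mathcal{R}}^2)(\mathcal{F}^2[\alpha_F]+\underline{\mathcal{F}}^2[\rho_F,\sigma_F]+1)$ followed by Young's inequality.
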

	
	\begin{proof}
		We recall the following schematic equations for the pair $(\Y_1, \Y_2) = \left((-\rho_F, \sigma_F), \alphabar_F\right)$:
		
		\begin{gather}
		\nabla_3 \Y_1 + \left(\frac{1}{2}+s_2(\Y_1)\right)\tr\chibar  \Y_1    - {\mathcal{D}}_1 \Y_2 =  (\eta,\etabar) \cdot \alphabar_F, \\ \nabla_4 \Y_2 - \Hodge{\mathcal{D}}_1 \Y_1 = (\eta,\etabar)\cdot (\rho_F,\sigma_F) + \omega\cdot \alphabar_F + \chibarhat\cdot \alpha_F.
		\end{gather}
		Commuting these equations $i$ times with angular derivatives $\nabla$ we arrive at the equation

		\begin{align*}
		&\nabla_3\nabla^i \Y_1 + \left( \frac{i+1}{2}+ s_2(\Y_1)\right) \tr\chibar \nabla^i \Y_1 - \mathcal{D}\nabla^i \Y_2  \\
		=&  \sum_{i_1+i_2+i_3=i} \nabla^{i_1} \psi^{i_2} \nabla^{i_3}(\eta,\etabar) \nabla^{i_4} \alphabar_F+ \sum_{i_1+ i_2 + i_3 +i_4+1 =i} \nabla^{i_1} \psi^{i_2+1} \nabla^{i_3}(\chibarhat,\tr\chibar) \nabla^{i_4}(\rho_F,\sigma_F)\\ &+ \sum_{i_1 + i_2 +i_3 + i_4 =i} \nabla^{i_1} \psi^{i_2} \nabla^{i_3} (\eta,\etabar,\chibarhat,\widetilde{\tr\chibar}) \nabla^{i_4} (\rho_F,\sigma_F) \\ :=&P_2,\\
		\end{align*}as well as the equation
		
		\begin{align*} &\nabla_4 \nabla^{i} \Y_2 - \Hodge{\mathcal{D}}_1 \nabla^i \Y_1 \\
		=& \sum_{i_1+i_2+i_3+ i_4 =i} \nabla^{i_1} \psi^{i_2} \nabla^{i_3} (\eta,\etabar) \nabla^{i_4}(\rho_F,\sigma_F) + \sum_{i_1+i_2+i_3 = i}\nabla^{i_1} \psi^{i_2} \nabla^{i_3} \omega \nabla^{i_4}\alphabar_F +\\ &+\sum_{i_1+i_2+i_3 = i}\nabla^{i_1} \psi^{i_2} \nabla^{i_3} \chibarhat \nabla^{i_4}\alpha_F+
		\sum_{i_1 + i_2 +i_3 + i_4 =i} \nabla^{i_1} \psi^{i_2} \nabla^{i_3}(\eta,\etabar,\chihat)\nabla^{i_4} \alphabar_F \\ :=&Q_2. 
		\end{align*}Applying the proposition, we arrive at
		
		\begin{align*}
		\scaletwoHu{(\al)^{i-1} \nabla^i (\rho_F,\sigma_F)}^2 + \scaletwoHbaru{(\al)^{i-1} \nabla^i \alphabar_F}^2 \\
		\leq \scaletwoHzero{(\al)^{i-1} \nabla^i (\rho_F,\sigma_F)}^2 + \scaletwoHbarzero{(\al)^{i-1} \nabla^i \alphabar_F}^2 + N_2 +M_2,
		\end{align*}where
		
		\begin{gather}
		N_2 = \int_0^{\ubar} \int_{u_\infty}^u  \frac{a}{\lvert u^\prime \rvert} \scaleoneSuprimeubarprime{(\al)^{i-1} P_2 \cdot(\al)^{i-1} \nabla^i (\rho_F,\sigma_F)} \hspace{.5mm} \text{d}u^\prime \hspace{.5mm} \text{d}\ubar^\prime, \\
		M_2 = \int_0^{\ubar} \int_{u_\infty}^u \frac{a}{\lvert u^\prime \rvert }\scaleoneSuprimeubarprime{(\al)^{i-1} Q_2 \cdot(\al)^{i-1} \nabla^i \alphabar_F} \hspace{.5mm} \text{d}u^\prime \hspace{.5mm} \text{d}\ubar^\prime.
		\end{gather}We focus on $N_2$ first. Using the same reasoning as for $N_1$, we have
		
		\begin{align*}
		N_2 =& \int_0^{\ubar} \int_{u_\infty}^u  \frac{a}{\lvert u^\prime \rvert} \scaleoneSuprimeubarprime{(\al)^{i-1} P_2 \cdot(\al)^{i-1} \nabla^i (\rho_F,\sigma_F)} \hspace{.5mm} \text{d}u^\prime \hspace{.5mm} \text{d}\ubar^\prime \\ 
		\leq& \int_0^{\ubar} \int_{u_\infty}^u  \frac{a}{\lvert u^\prime \rvert^2} \scaletwoSuprimeubarprime{(\al)^{i-1} P_2} \cdot \scaletwoSuprimeubarprime{(\al)^{i-1} \nabla^i (\rho_F,\sigma_F)} \hspace{.5mm} \text{d}u^\prime \hspace{.5mm} \text{d}\ubar^\prime .  
		\end{align*}
		Recall at this point the form that $P_2$ assumes: 
		
		\begin{align*} P_2 =&  	
		 \sum_{i_1+i_2+i_3+i_4=i} \nabla^{i_1} \psi^{i_2} \nabla^{i_3}(\eta,\etabar) \nabla^{i_4} \alphabar_F 
		+ \sum_{i_1+ i_2 + i_3 +i_4+1 =i} \nabla^{i_1} \psi^{i_2+1} \nabla^{i_3}(\chibarhat,\tr\chibar) \nabla^{i_4}(\rho_F,\sigma_F) \\ &+ \sum_{i_1 + i_2 +i_3 + i_4 =i} \nabla^{i_1} \psi^{i_2} \nabla^{i_3} (\eta,\etabar,\chibarhat,\widetilde{\tr\chibar}) \nabla^{i_4} (\rho_F,\sigma_F) := P_{21} + P_{22} + P_{23} . \end{align*}Consequently,we have the bound
		
		\begin{equation} \begin{split} N_2 &\leq  \sum_{j=1}^{3} \int_0^{\ubar} \int_{u_\infty}^u  \frac{a}{\lvert u^\prime \rvert^2} \scaletwoSuprimeubarprime{(\al)^{i-1} P_{2j}} \cdot \scaletwoSuprimeubarprime{(\al)^{i-1} \nabla^i (\rho_F,\sigma_F)} \hspace{.5mm} \text{d}u^\prime \hspace{.5mm} \text{d}\ubar^\prime \\&= N_{21} +N_{22}+N_{23}. \end{split} \end{equation}\par \noindent		We estimate each term separately.
		
		\begin{itemize}
			\item There holds \begin{align*}N_{21} &= \intu \frac{a}{\upr^2}  \intubar \scaletwoSuprimeubarprime{(\al)^{i-1} P_{21}} \scaletwoSuprimeubarprime{(\al)^{i-1} \nabla^i (\rho_F,\sigma_F)} \dubarprime  \duprime  \\ &\lesssim \intu \frac{a}{\upr^2} \left( \frac{O^4}{a \cdot \upr^2} + \frac{O^2}{\upr^2}\lVert a^5\nabla^{11}(\eta,\etabar) \rVert^2_{\mathcal{L}^2_{(sc)}(H_{u^\prime}^{(0,\ubar)})  }  \right)^{\frac{1}{2}} \cdot \lVert (\al)^{i-1}(\rho_F, \sigma_F) \rVert_{\mathcal{L}^2_{(sc)}(H_{u^{\prime}}^{(0,\ubar)})}	\duprime \\ &+ \intubar \left( \intu \frac{a}{\upr^2} \cdot  \frac{O^2}{\upr^2} \scaletwoSuprimeubarprime{a^5 \nabla^{11}\alphabar_F}^2 \duprime \right)^{\f12} \cdot \lVert (\al)^{i-1}(\rho_F, \sigma_F) \rVert_{\mathcal{L}^2_{(sc)}(\Hbar_{\ubar^{\prime}}^{(u_{\infty},u)})} \dubarprime \\ &\lesssim \intu \frac{a}{\upr^2} \left( \frac{O^4}{a \cdot \upr^2} + \frac{O^2}{\upr^2}\lVert a^5\nabla^{11}(\eta,\etabar) \rVert^2_{\mathcal{L}^2_{(sc)}(H_{u^\prime}^{(0,\ubar)})  }  \right)^{\frac{1}{2}} \cdot F \duprime \\ &+ \intubar \left( \intu \frac{a}{\upr^2} \cdot  \frac{O^2}{\upr^2} \scaletwoSuprimeubarprime{a^5 \nabla^{11}\alphabar_F}^2 \duprime \right)^{\f12} \cdot \al \cdot F \dubarprime
			\end{align*}Here $F$ is the bootstrap constant appearing in \eqref{bootstrapbounds}. Notice that what we have done in the above is to separate between three cases. The first is when neither $(\eta,\etabar)$ nor $\alphabar_F$ have $11$ derivatives, the second is when  11 derivatives fall on $(\eta, \etabar)$ and finally the third case is for when $11$ derivatives fall on $\alphabar_F$. The reason for this distinction is that we use H\"older's inequality in different directions, depending on what elliptic estimates and Maxwell norms we have. Making use of Propositions \ref{etaellipticprop} and \ref{etabarellipticprop}, we conclude that the last two terms can be bounded above by $1$, using the section on elliptic estimates. In particular, \[ N_{21} \lesssim 1. \]
			\item There holds 
			\begin{align*}N_{22}=& \intu \frac{a \cdot F}{\upr^2} \left( \intubar \scaletwoSuprimeubarprime{(\al)^{i-1}\sum_{i_1 + i_2 + i_3+i_4+1 =i}\nabla^{i_1}\psi^{i_2+1}\nabla^{i_3}(\chibarhat,\tr\chibar)\nabla^{i_4}(\rho_F,\sigma_F)}^2  \dubarprime     \right)^{\frac{1}{2}} \duprime  \\  =& \intu \left( \intubar \ScaletwoSuprimeubarprime{(\al)^{i-1}\sum_{i_1 + i_2 + i_3+i_4+1 =i}\nabla^{i_1}\psi^{i_2+1}\nabla^{i_3}\left(\frac{a}{\upr^2}\chibarhat,\frac{a}{\upr^2}\tr\chibar\right)\nabla^{i_4}(\rho_F,\sigma_F)}^2  \dubarprime     \right)^{\frac{1}{2}} \duprime \cdot F  \\\lesssim& \intu   \frac{1}{\al}\cdot \frac{O^3}{\upr^2} \duprime \cdot F \lesssim 1.
			\end{align*}
				\item There holds 
			\begin{align*}N_{23} &= \intu \intubar \frac{a }{\upr^2} \scaletwoSuprimeubarprime{(\al)^{i-1} P_{23}} \scaletwoSuprimeubarprime{(\al)^{i-1} \nabla^{i} (\rho_F, \sigma_F)} \duprime.  \\ 
			\end{align*} We first distinguish between the cases where $i <11$ and $i=11$. For the case $i< 11$, there holds
		
		\[ \scaletwoSu{ (\al)^{i-1} \sum_{i_1+i_2+i_3+i_4=i} \nabla^{i_1}\psi^{i_2}\nabla^{i_3}(\eta,\etabar,\chibarhat, \tildetr) \nabla^{i_4}(\rho_F, \sigma_F)} \lesssim \frac1\al \cdot \frac{\lvert u \rvert}{\al} \cdot \frac{O^2}{\lvert u\rvert} \lesssim \frac{O^2}{a}.    \]Moreover, since $i<11$, we can bound \[ \scaletwoSu{(\al)^{i-1}\nabla^i (\rho_F, \sigma_F)} \lesssim \frac{O}{\al}. \]Hence, when $i<11$, it is easy to establish that $N_{23} \lesssim 1.$ When $i=11$, we distinguish between four cases. The first one is when neither $(\eta, \etabar, \chibarhat, \tildetr)$ nor $(\rho_F, \sigma_F)$ have $11$ derivatives. The second is when $11$ derivatives fall on $(\rho_F, \sigma_F)$. The third is when $11$ derivatives fall on $(\eta, \etabar)$ and the fourth is when $11$ derivatives fall on $(\chibarhat, \tildetr)$. We treat these cases below:
		
		\begin{equation}
		    \begin{split}
		        N_{23} &\lesssim \intu \intubar \frac{a}{\upr^2} \cdot \frac{O^2}{a} \cdot \scaletwoSuprimeubarprime{ a^5 \nabla^{11} (\rho_F, \sigma_F)} \dubarprime \duprime \\ &+ \intu \intubar \frac{a}{\upr^2}\cdot \frac{\upr}{\al} \cdot \frac{O}{\upr} \cdot \scaletwoSuprimeubarprime{a^5 \nabla^{11}(\rho_F , \sigma_F)}^2 \dubarprime \duprime \\ &+  \intu \intubar \frac{a}{\upr^2}\cdot \frac{O}{\upr} \cdot \scaletwoSuprimeubarprime{a^5 \nabla^{11} (\eta,\etabar)} \scaletwoSuprimeubarprime{a^5 \nabla^{11} (\rho_F, \sigma_F)} \dubarprime \duprime \\  &+  \intu \intubar \frac{a}{\upr^2}\cdot \frac{O}{\upr} \cdot \scaletwoSuprimeubarprime{a^5 \nabla^{11} (\chibarhat,\tildetr)} \scaletwoSuprimeubarprime{a^5 \nabla^{11} (\rho_F, \sigma_F)} \dubarprime \duprime \\ &\lesssim 1 + \intu \frac{a\cdot O}{\upr^3} \lVert a^5 \nabla^{11}(\eta,\etabar) \rVert_{\mathcal{L}^2_{(sc)}(H_{u^{\prime}}^{(0,\ubar)})} \cdot \lVert a^5 \nabla^{11}(\rho_F,\sigma_F)  \rVert_{\mathcal{L}^2_{(sc)}(H_{u^{\prime}}^{(0,\ubar)})}\duprime \\ &+ \intubar O \intu \frac{a}{\upr^3} \scaletwoSuprimeubarprime{a^5 \nabla^{11} (\chibarhat, \tildetr)}  \scaletwoSuprimeubarprime{a^5 \nabla^{11}(\rho_F, \sigma_F)} \duprime \dubarprime \\ &\lesssim 1 + \intu \frac{a\cdot O}{\upr^3 }\cdot \frac{\upr}{a} \cdot R  \cdot F \duprime \\ &+ \sup_{\ubar}  \left( \intu \frac{a^2 O_{\infty}^2[\rho_F,\sigma_F]}{\upr^4}\scaletwoSuprimeubarprime{a^5 \nabla^{11}(\chibarhat,\tildetr)}^2 \duprime \right)^{\f12} \cdot a^{- \f12} \cdot \scaletwoHbaru{a^5 \nabla^{11} (\rho_F, \sigma_F)} \\ &\lesssim 1 + \mathcal{I}^{(0)} \cdot (\mathcal{R} +\underline{\mathcal{R}} +1) \lesssim (\mathcal{I}^{(0)})^2 + \mathcal{R}^2 +\underline{\mathcal{R}}^2 +1.
		    \end{split}
		\end{equation}Here we have used the fact that $O_{\infty}[\rho_F,\sigma_F] \lesssim 1$ from Proposition \ref{rhoFsphereestimates}, the energy estimates on $\underline{\mathcal{F}}[\rho_F, \sigma_F]$ to bound the term by the initial data, as well as Proposition \ref{chibarhattrchibarelliptic} and in particular \eqref{chibarhattrchibartraditionalformestimates}.

		 Combining these estimates, we arrive at

			\[N_{23}\lesssim (\mathcal{I}^{(0)})^2 + \mathcal{R}^2 +\underline{\mathcal{R}}^2 +1.\]
		\end{itemize}Putting everything together, there holds 
	
	\[ N_{2}\lesssim (\mathcal{I}^{(0)})^2 + \mathcal{R}^2 +\underline{\mathcal{R}}^2 +1.   \]

		\par \noindent We move on to $M_2$. We have 
		
		\begin{align*}
		&\int_0^{\ubar} \int_{u_\infty}^u \frac{a}{\lvert u^\prime \rvert }\scaleoneSuprimeubarprime{(\al)^{i-1} Q_2 \cdot(\al)^{i-1} \nabla^i \Y_2} \hspace{.5mm} \text{d}u^\prime \hspace{.5mm} \text{d}\ubar^\prime \\ 
		\leq& \int_0^{\ubar} \int_{u_\infty}^u \frac{a}{\lvert u^\prime \rvert^2} \scaletwoSuprimeubarprime{(\al)^{i-1} Q_2} \scaletwoSuprimeubarprime{(\al)^{i-1} \nabla^i \Y_2} \hspace{.5mm} \text{d}u^\prime \hspace{.5mm} \text{d}\ubar^\prime \\ \leq&  \int_{0}^{\ubar} \left( \int_{u_{\infty}}^u \frac{a}{\lvert u^\prime \rvert^2} \scaletwoSuprimeubarprime{(\al)^{i-1} Q_2}^2 \hspace{.5mm} \text{d} u^\prime \right)^{\frac{1}{2}}\left( \int_{u_{\infty}}^u \frac{a}{\lvert u^\prime \rvert^2} \scaletwoSuprimeubarprime{(\al)^{i-1} \nabla^i \Y_2}^2 \hspace{.5mm} \text{d} u^\prime \right)^{\frac{1}{2}} \hspace{.5mm} \text{d} \ubar^\prime \\ \leq&  
		\left( \int_0^{\ubar} \int_{u_{\infty}}^u \frac{a}{\lvert u^\prime \rvert^2} \scaletwoSuprimeubarprime{(\al)^{i-1} Q_2}^2 \hspace{.5mm} \text{d} u^\prime \hspace{.5mm} \text{d}\ubar^\prime \right)^{\frac{1}{2}} \cdot \sup_{\ubar^\prime} \lVert (\al)^{i-1} \nabla^i \Y_2 \rVert_{\mathcal{L}^2_{(sc)}(\Hbar_{\ubar^\prime}^{(u_\infty,u)})     } .  	\end{align*} Denote \[         H_2 = \int_0^{\ubar} \int_{u_\infty}^u  \frac{a}{\lvert u^\prime \rvert^2} \scaletwoSuprimeubarprime{(\al)^{i-1} Q_2}^2 \hspace{.5mm} \text{d}u^\prime\hspace{.5mm} \text{d}\ubar^\prime.        \]Recall at this point that
		
		\begin{align*}  Q_2 =&  \sum_{i_1+i_2+i_3+ i_4 =i} \nabla^{i_1} \psi^{i_2} \nabla^{i_3} (\eta,\etabar) \nabla^{i_4}(\rho_F,\sigma_F) + \sum_{i_1+i_2+i_3 = i}\nabla^{i_1} \psi^{i_2} \nabla^{i_3} \omega \nabla^{i_4}\alphabar_F\\ &+\sum_{i_1+i_2+i_3 = i}\nabla^{i_1} \psi^{i_2} \nabla^{i_3} \chibarhat \nabla^{i_4}\alpha_F +
		\sum_{i_1 + i_2 +i_3 + i_4 =i} \nabla^{i_1} \psi^{i_2} \nabla^{i_3}(\eta,\etabar,\chihat)\nabla^{i_4} \alphabar_F := Q_{21}+Q_{22}+Q_{23}+Q_{24}.     \end{align*} Thus, 
		
		\begin{equation}
		H_2 \leq \sum_{j=1}^4 \int_0^{\ubar} \int_{u_\infty}^u  \frac{a}{\lvert u^\prime \rvert^2} \scaletwoSuprimeubarprime{(\al)^{i-1}Q_{2j}}^2 \duprime \dubarprime . \end{equation}	We estimate term by term.
		
		\begin{itemize}
			\item The first two terms 
			
			\[\int_0^{\ubar} \int_{u_\infty}^u  \frac{a}{\lvert u^\prime \rvert^2} \scaletwoSuprimeubarprime{(\al)^{i-1}(Q_{21}, Q_{22})}^2 \duprime \dubarprime \]can be bounded by $1$ as before.
			
			\item For the third term, there holds
			\begin{align*}
			&\int_0^{\ubar} \int_{u_\infty}^u  \frac{a}{\lvert u^\prime \rvert^2} \scaletwoSuprimeubarprime{(\al)^{i-1}Q_{23}}^2 \duprime \dubarprime \\=&\int_0^{\ubar} \int_{u_\infty}^u  \frac{a}{\lvert u^\prime \rvert^2} \scaletwoSuprimeubarprime{(\al)^{i-1}\sum_{i_1+i_2+i_3+i_4=i} \nabla^{i_1}\psi^{i_2}\nabla^{i_3}\chibarhat\nabla^{i_4}\alpha_F }^2 \duprime \dubarprime \\ \lesssim&  \int_0^{\ubar} \int_{u_\infty}^u  \frac{a}{\lvert u^\prime \rvert^2} \cdot \frac{1}{a}\cdot  \frac{\upr^2}{a}\cdot a \cdot \frac{O[\chibarhat]^2O[\alpha_F]^2}{\upr^2} \duprime +  \int_{u_\infty}^u  \frac{a}{\lvert u^\prime \rvert^2} \cdot \frac{\upr^2}{a} \cdot \frac{O^2}{\upr^2} \left( \int_0^{\ubar} \scaletwoSuprimeubarprime{a^5 \nabla^{11}\alpha_F}^2 \dubarprime \right) \duprime \\ &+ \intubar \left( \intu \frac{a}{\upr^2} \cdot \frac{a\cdot O^2}{\upr^2}\cdot \frac{\upr^2}{a} \cdot \scaletwoSuprimeubarprime{a^5\nabla^{11}\left(\frac{\al}{\upr}\chibarhat  \right) }^2 \duprime        \right)\dubarprime \\
					\lesssim& 1	 + (\mathcal{R}^2 + \underline{\mathcal{R}}^2+1)\cdot (\mathcal{F}^2[\alpha_F] + \underline{\mathcal{F}}^2[\rho_F,\sigma_F]+1) + \intubar \intu \frac{a^2 \cdot O_{\infty}[\alpha_F]^2}{\upr^4} \cdot\scaletwoSuprimeubarprime{a^5 \nabla^{11} \chibarhat}^2 \duprime \dubarprime				\end{align*}We focus on the term \[ \intu \frac{a^2\cdot O_{\infty}[\alpha_F]^2}{\upr^4} \scaletwoSuprime{a^5\nabla^{11}\chibarhat}^2 \duprime \lesssim  \intu \frac{a^2}{\upr^4} \scaletwoSuprime{a^5\nabla^{11}\chibarhat}^2 \duprime.\]
Recall that, from the proof of Proposition \ref{chibarhattrchibarelliptic}, there holds

\begin{align*}
		\scaletwoSu{a^5 \nabla^{11}\chibarhat} \lesssim& \sum_{j \leq 10} \big( \scaletwoSu{(\al)^j \nabla^{j+1}\tildetr} + \scaletwoSu{(\al \nabla)^j \tbetabar} \\ &+ \ScaletwoSu{(\al)^j \sum_{j_1 +j_2=j} \nabla^{j_1}(\eta,\etabar)\nabla^{j_2}(\chibarhat,\tr\chibar)} \big) + \sum_{j \leq 10} \frac{1}{\al} \cdot \scaletwoSu{\aln \chibarhat}. 
		\end{align*}This implies that 
		
		\begin{align*}
		&\scaletwoSu{a^5 \nabla^{11}\chibarhat}^2\\ \lesssim& \sum_{j \leq 10} \big( \scaletwoSu{(\al)^j \nabla^{j+1}\tildetr}^2 + \scaletwoSu{(\al \nabla)^j \tbetabar}^2 \\ &+ \frac{u^2}{a}\ScaletwoSu{(\al)^j \sum_{j_1 +j_2=j} \nabla^{j_1}(\eta,\etabar)\nabla^{j_2}\left(\frac{a}{\lvert u \rvert^2}\chibarhat,\frac{a}{\lvert u \rvert ^2}\tr\chibar\right)}^2 \big) + \sum_{j \leq 10}  \cdot \ScaletwoSu{\aln \left(\frac{\chibarhat}{\al}\right)}^2\\ \lesssim& \frac{\lvert u \rvert^2}{a^2}(\mathcal{R}+\underline{\mathcal{R}}+1)^2  + \scaletwoSu{(\al \nabla)^j \tbetabar}^2 + \frac{\lvert u \rvert^4 }{a^2 }\cdot \frac{O^4}{\lvert u \rvert^2}+ \frac{1}{a}\cdot \frac{\lvert u\rvert^2}{a} \\ \lesssim& \frac{\lvert u \rvert^2 \cdot R^2}{a^2}  + \scaletwoSu{(\al \nabla)^j \tbetabar}^2 + \frac{\lvert u \rvert^4 }{a^2 }\cdot \frac{O^4}{\lvert u \rvert^2}+ \frac{1}{a}\cdot \frac{\lvert u\rvert^2}{a}.
		\end{align*}Multiplying the above by $\frac{a^2}{\lvert u \rvert^4}$ and taking the integral in the incoming direction, we have
		
		\begin{align*}
		    \intu \frac{a^2}{\upr^4}\scaletwoSuprime{a^5\nabla^{11}\chibarhat}^2 \duprime \lesssim& \intu \frac{R^2}{\upr^2}\duprime + \intu \frac{a^2}{\upr^4}\scaletwoSuprime{(\al \nabla)^j \tbetabar}^2 \duprime  \\\ &+ \intu \frac{O^4+1}{\upr^2} \duprime \lesssim \frac{R^2+O^4+1}{\lvert u \rvert} + \frac{16}{a^2} \hspace{.5mm}  \underline{\mathcal{R}}[\tbetabar]^2\lesssim 1.
		\end{align*}

			Here we have used Propositions \ref{alphaFproposition} and \ref{chibarhattrchibarelliptic} as well as the bootstrap bounds \eqref{bootstrapbounds}.
			\item The fourth term can be bounded by $1$ using the same procedures as above.
		\end{itemize}
		\par \noindent Consequently,

		\[ H_2 \leq  1	 + (\mathcal{R}^2 + \underline{\mathcal{R}}^2+1)\cdot (\mathcal{F}^2[\alpha_F] + \underline{\mathcal{F}}^2[\rho_F,\sigma_F]+1) \]and also

		\[ M_2 \leq H_2^{\frac{1}{2}} \scaletwoHbaru{(\al)^{i-1} \nabla^i \Y_2}     \leq H_2 + \frac{1}{4} \scaletwoHbaru{(\al)^{i-1} \nabla^i \Y_2}^2.  \]We also know that $N_2 \leq 1$. Putting everything together, we have

		\begin{align*}
		&\scaletwoHu{(\al)^{i-1} \nabla^i \Y_1}^2 + \scaletwoHbaru{(\al)^{i-1} \nabla^i \Y_2}^2 \\ 
		\leq& \scaletwoHzero{(\al)^{i-1} \nabla^i \Y_1}^2 + \scaletwoHbarzero{(\al)^{i-1} \nabla^i \Y_2}^2 + N_2 +M_2 \\ 
		\leq& \scaletwoHzero{(\al)^{i-1} \nabla^i \Y_1}^2 + \scaletwoHbarzero{(\al)^{i-1} \nabla^i \Y_2}^2 + N_2 +H_2 + \frac{1}{4} \scaletwoHbaru{(\al)^{i-1} \nabla^i \Y_2}^2.
		\end{align*}	The last term can be absorbed by the left-hand side. Thus,

		\begin{align*}
		&\scaletwoHu{(\al)^{i-1} \nabla^i \Y_1}^2 + \scaletwoHbaru{(\al)^{i-1} \nabla^i \Y_2}^2\\
		 \lesssim& \scaletwoHzero{(\al)^{i-1} \nabla^i \Y_1}^2 + \scaletwoHbarzero{(\al)^{i-1} \nabla^i \Y_2}^2
		\\&+ \left( (\mathcal{I}^{(0)})^2 + \mathcal{R}^2 +\underline{\mathcal{R}}^2 +1 \right) + (1+ \mathcal{R}^2+\underline{\mathcal{R}}^2) (\underline{\mathcal{F}}^2[\rho_F,\sigma_F]+\mathcal{F}^2[\alpha_F]+1).
		\end{align*}Thus, we arrive at the energy inequality
		
		\begin{equation}
\begin{split}
		&\mathcal{F}^2[\rho_F,\sigma_F] + \underline{\mathcal{F}}^2[\alphabar_F]  \\
		\leq& \mathcal{F}_0^2[\rho_F,\sigma_F] + \underline{\mathcal{F}}_0^2[\alphabar_F] + (1+ \mathcal{R}^2+\underline{\mathcal{R}}^2)(\mathcal{F}_0^2[\alpha_F]+\underline{\mathcal{F}}^2[\rho_F,\sigma_F]+a^{-\frac{1}{4}}) \\
		\leq& (\mathcal{I}^{(0)})^2 + (1+ \mathcal{R}^2+\underline{\mathcal{R}}^2)((\mathcal{I}^{(0)})^2  + \frac{1}{a^{\frac{1}{4}}}) \lesssim (\mathcal{I}^{(0)})^2  + (\mathcal{R}^4+\underline{\mathcal{R}}^4+1) + ((\mathcal{I}^{(0)})^4 + \frac{1}{\al})\\
		\lesssim& \mathcal{R}^4 + \underline{\mathcal{R}}^4+ (\mathcal{I}^{(0)})^4 + (\mathcal{I}^{(0)})^2  +1. \label{energymaxwell2}
		\end{split}
\end{equation}Combining \eqref{energymaxwell1} and \eqref{energymaxwell2} we get the following

		\begin{theorem}
			Under the assumptions of Theorem \ref{main1} and the bootstrap bounds \eqref{bootstrapbounds}, there holds
			
			\[ \mathcal{F}+ \underline{\mathcal{F}} \lesssim \mathcal{R}^2 + \underline{\mathcal{R}}^2+ (\mathcal{I}^{(0)})^2  + (\mathcal{I}^{(0)}) +1.     \]
		\end{theorem}
	\end{proof}	
	\subsection{Energy estimates for curvature}
	
	Again, for $(\Psi_1, \Psi_2) \in \begin{Bmatrix}
	(\alpha, \tbeta), (\tbeta, (\rho,\sigma)), ((\rho,\sigma),\tbetabar), (\tbetabar,\alphabar) 
	\end{Bmatrix}	$	we have the following:
	
	\begin{proposition}
		Under the assumptions of Theorem \ref{main1} and the bootstrap assumptions \eqref{bootstrapbounds}, assuming we have a pair $(\Psi_1,\Psi_2)$ satisfying 
		\begin{gather}
		\nabla_3 \nabla^i \Psi_1 + \left( \frac{1+i}{2}+s_2(\Psi_1)\right) \tr\chibar \nabla^i \Psi_1 - \mathcal{D}\nabla^i \Psi_2 = P, \\
		\nabla_4 \nabla^i \Psi_2 - \Hodge{\mathcal{D}} \nabla^i \Psi_1 = Q,
		\end{gather}with $(\mathcal{D}, \Hodge{\mathcal{D}})$ forming a Hodge dual, it follows

		\begin{align*}
		&\int_{H_u^{(0,\ubar)}} \scaletwoSuubarprime{\nabla^i \Psi_1}^2 \hspace{.5mm} \text{d}\ubar^\prime + \int_{\Hbar_{\ubar}^{(u_\infty,u)}} \frac{a}{\lvert u^\prime \rvert^2}\hspace{.5mm} \scaletwoSuprime{\nabla^i \Psi_2}^2 \hspace{.5mm} \text{d}u^\prime \\
		\lesssim& \int_{H_{u_\infty}^{(0,\ubar)}} \lVert{\nabla^i \Psi_1} \rVert^2_{\mathcal{L}^{2}_{(sc)}(S_{u_{\infty},\ubar^\prime})} \hspace{.5mm} \text{d}\ubar^\prime + \int_{\Hbar_{0}^{(u_\infty,u)}} \frac{a}{\lvert u^\prime \rvert^2}\hspace{.5mm} \lVert{\nabla^i \Psi_2}\rVert^2_{\mathcal{L}^2_{(sc)}(S_{u^\prime,0})} \hspace{.5mm} \text{d}u^\prime \\
		&+ \iint_{\mathcal{D}_{u,\ubar}} \frac{a}{\lvert u^\prime \rvert} \lVert \nabla^i \Psi_1 \cdot P \rVert_{\mathcal{L}^1_{(sc)}(S_{u^\prime, \ubar^\prime})} \hspace{.5mm} \text{d}u^\prime \text{d}\ubar^\prime + \iint_{\mathcal{D}_{u,\ubar}} \frac{a}{\lvert u^\prime \rvert} \lVert \nabla^i \Psi_2 \cdot Q \rVert_{\mathcal{L}^1_{(sc)}(S_{u^\prime, \ubar^\prime})} \hspace{.5mm} \text{d}u^\prime \text{d}\ubar^\prime.
		\end{align*}
	\end{proposition}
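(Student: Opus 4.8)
\emph{Proof proposal.} The plan is to run, essentially verbatim, the same argument that was used earlier in this section to prove the energy estimate for abstract Hodge pairs $(\mathfrak{Y}_1,\mathfrak{Y}_2)$, now applied to the four renormalized curvature pairs $(\Psi_1,\Psi_2)\in\{(\alpha,\tbeta),(\tbeta,(\rho,\sigma)),((\rho,\sigma),\tbetabar),(\tbetabar,\alphabar)\}$. The only quantity that changes between the abstract statement and the present one is the numerical value of $s_2(\Psi_1)$, and the whole derivation is already parametrized by $s_2(\Psi_1)$, so no new ideas are required. First I would apply Proposition~\ref{energyprop3} to the tensor $\nabla^i\Psi_1$ with $\lambda_0=\tfrac{1+i}{2}+s_2(\Psi_1)$, so that $\lambda_1=2\lambda_0-1=i+2s_2(\Psi_1)$; by hypothesis $(\nabla_3+\lambda_0\tr\chibar)\nabla^i\Psi_1=\mathcal D\nabla^i\Psi_2+P$, and the proposition produces the flux identity on $H_u^{(0,\ubar)}$ together with an error term $\int_{\mathcal D_{u,\ubar}}|u'|^{2\lambda_1}f|\nabla^i\Psi_1|^2$ with $|f|\lesssim O/|u'|^2$. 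Next I would apply the $\nabla_4$ integration-by-parts formula of Proposition~\ref{byparts} to $\phi_1=\phi_2=|u|^{i+2s_2(\Psi_1)}\nabla^i\Psi_2$, using $\nabla_4\nabla^i\Psi_2=\Hodge{\mathcal D}\nabla^i\Psi_1+Q$; this yields the flux on $\Hbar_{\ubar}^{(u_\infty,u)}$ plus an error $\int_{\mathcal D_{u,\ubar}}|u'|^{2\lambda_1}(2\omega-\tr\chi)|\nabla^i\Psi_2|^2$ with $|2\omega-\tr\chi|\lesssim O/|u'|$.

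Adding the two identities, the crucial cancellation comes from Proposition~\ref{energyprop2}: applied to the pair $(\nabla^i\Psi_1,\nabla^i\Psi_2)$ it gives $\int_{\S}\nabla^i\Psi_1\cdot\mathcal D\nabla^i\Psi_2+\nabla^i\Psi_2\cdot\Hodge{\mathcal D}\nabla^i\Psi_1=-\int_{\S}(\eta+\etabar)\nabla^i\Psi_1\nabla^i\Psi_2$, so that the two principal right-hand-side contributions combine into a term bounded by $|\eta+\etabar|\lesssim\al O/|u'|^2$ times $|\nabla^i\Psi_1|^2+|\nabla^i\Psi_2|^2$, and what remains is genuinely inhomogeneous, namely $\int_{\mathcal D_{u,\ubar}}|u'|^{2\lambda_1}\nabla^i\Psi_1\cdot P$ and $\int_{\mathcal D_{u,\ubar}}|u'|^{2\lambda_1}\nabla^i\Psi_2\cdot Q$. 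Since all three coefficients $f$, $2\omega-\tr\chi$, $\eta+\etabar$ are $u$-integrable with a small constant of size $O/a\ll1$, a double application of Gr\"onwall's inequality (once in $u$, once in $\ubar$) absorbs the quadratic error terms, leaving the claimed bound in standard $L^2$ norms weighted by $|u'|^{2i+4s_2(\Psi_1)}$. The final bookkeeping step is to multiply through by $a^{-i-2s_2(\Psi_1)}$ and invoke the conservation-of-signature identities $s_2(\nabla^i\Psi_1)=\tfrac{i}{2}+s_2(\Psi_1)$, $s_2(\nabla^i\Psi_2)=\tfrac{i+1}{2}+s_2(\Psi_1)$, $s_2(P)=s_2(\nabla_3\nabla^i\Psi_1)=\tfrac{i+2}{2}+s_2(\Psi_1)$, $s_2(Q)=s_2(\Hodge{\mathcal D}\nabla^i\Psi_1)=\tfrac{i+1}{2}+s_2(\Psi_1)$; together with the definitions of the scale-invariant norms $\mathcal L^2_{(sc)}(S_{u,\ubar})$, $\mathcal L^1_{(sc)}(S_{u,\ubar})$, $\mathcal L^2_{(sc)}(H_u^{(0,\ubar)})$ and $\mathcal L^2_{(sc)}(\Hbar_{\ubar}^{(u_\infty,u)})$, this converts every integral into the form stated in the proposition.

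The only point requiring genuine care — the main obstacle, such as it is — is to verify that for each of the four pairs the renormalized null Bianchi equations really do assemble into a Hodge-dual system $(\nabla_3+\lambda_0\tr\chibar)\Psi_1=\mathcal D\Psi_2+P_0$, $\nabla_4\Psi_2=\Hodge{\mathcal D}\Psi_1+Q_0$ in which $\mathcal D$ and $\Hodge{\mathcal D}$ are genuine $L^2$-adjoints on the spheres $S_{u,\ubar}$, so that the cross terms truly cancel under Proposition~\ref{energyprop2}. This is the exact analogue of the verification already performed for the Maxwell pairs, and it follows from the standard identifications of $\mathcal D_1$ with $(\div,\curl)$, of $\mathcal D_2$ with $\div$ on symmetric traceless tensors, and of their adjoints $\Hodge{\mathcal D}_1$, $\Hodge{\mathcal D}_2$ introduced before Proposition~\ref{omegaellipticprop}; the lower-order discrepancies produced by commuting these operators with $\nabla^i$ and by passing between the Weyl and renormalized curvature components are all of the schematic type already encountered and are harmlessly swept into $P$ and $Q$.
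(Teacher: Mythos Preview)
Your proposal is correct and follows exactly the paper's approach: the paper does not give a separate proof of this proposition, since it is a verbatim instance of the general energy identity already established for abstract Hodge pairs $(\mathfrak{Y}_1,\mathfrak{Y}_2)$, with the only change being the value of $s_2(\Psi_1)$. Your outline (Proposition~\ref{energyprop3} with $\lambda_0=\tfrac{1+i}{2}+s_2(\Psi_1)$, Proposition~\ref{byparts}, the Hodge cancellation via Proposition~\ref{energyprop2}, the double Gr\"onwall, and the passage to scale-invariant norms via the signature identities) reproduces that argument step by step.
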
With this in mind, we begin by considering the pair $(\alpha,\tbeta)$:
	
	\begin{proposition}
		Under the assumptions of Theorem \ref{main1} and the bootstrap assumptions \eqref{bootstrapbounds}, we have for $i\leq 10$ the following:
		\begin{equation}\begin{split}
		&\frac{1}{\al} \scaletwoHu{\aln \alpha} + \frac{1}{\al}\scaletwoHbaru{\aln \tbeta} \\
		\leq &\frac{1}{\al} \lVert \aln \alpha \rVert_{\mathcal{L}^{2}_{(sc)}(H_{u_\infty}^{(0,\ubar)})}+ \frac{1}{\al} \lVert \aln \tbeta \rVert_{\mathcal{L}^2_{(sc)}(\underline{H}_{0}^{(u_{\infty},u)})} + \frac{1}{a^{\frac{1}{3}}}.
		\end{split}\end{equation}

	\end{proposition}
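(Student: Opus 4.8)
The plan is to apply the master energy estimate of the preceding proposition to the Hodge pair $(\Psi_1,\Psi_2) = (\alpha,\tbeta)$. Reading off the schematic Bianchi equations \eqref{renalphaeq} and the equation for $\nabla_4\tbeta$ from the schematic Bianchi system, and commuting with $i\leq 10$ angular derivatives via Proposition \ref{commutationformulaeprop}, one obtains the system
\begin{gather*}
\nabla_3 \nabla^i \alpha + \left( \frac{i+1}{2}+s_2(\alpha)\right)\tr\chibar \nabla^i \alpha - \mathcal{D}_2^* \nabla^i \tbeta = P, \\
\nabla_4 \nabla^i \tbeta - \mathcal{D}_2 \nabla^i \alpha = Q,
\end{gather*}
where, after using the Codazzi relations to replace $\tbeta$-terms where needed, $P$ and $Q$ are sums of the schematic terms appearing on the right-hand sides of the $\nabla_3\alpha$ and $\nabla_4\tbeta$ equations: products of the form $\nabla^{i_1}\psi^{i_2}\nabla^{i_3}\Psi$, $\nabla^{i_1}\psi^{i_2}\nabla^{i_3}(\alpha_F,\Y)\nabla^{i_4+1}(\alpha_F,\Y)$, $\nabla^{i_1}\psi^{i_2}\nabla^{i_3}(\psi,\chibarhat,\tr\chibar,\chihat)\nabla^{i_4}(\alpha_F,\Y)\nabla^{i_5}(\alpha_F,\Y)$, plus the $\mu$-type lower order terms $\nabla^{i_1}\psi^{i_2}\nabla^{i_3}(\psi,\chibarhat,\tildetr)\nabla^{i_4}\alpha$ and $\nabla^{i_1}\psi^{i_2+1}\nabla^{i_3}\tr\chibar\nabla^{i_4}\alpha$. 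The energy inequality then reduces the problem to bounding the two spacetime integrals
\[ N = \iint_{\mathcal{D}_{u,\ubar}} \frac{a}{\lvert u^\prime\rvert} \scaleoneSuprimeubarprime{\aln\alpha \cdot P}\,\text{d}u^\prime\text{d}\ubar^\prime, \quad M = \iint_{\mathcal{D}_{u,\ubar}} \frac{a}{\lvert u^\prime\rvert} \scaleoneSuprimeubarprime{\aln\tbeta\cdot Q}\,\text{d}u^\prime\text{d}\ubar^\prime. \]

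The strategy for both is identical to the Maxwell energy estimates just carried out: apply the scale-invariant H\"older inequality to peel off one factor, say $\scaletwoSuprimeubarprime{\aln\alpha}$ for $N$ and $\scaletwoSuprimeubarprime{\aln\tbeta}$ for $M$, and bound this peeled factor by $\al\cdot\mathcal{R}[\alpha]$ or $\al\cdot\underline{\mathcal{R}}[\tbeta]$ respectively after taking $\sup$ over the appropriate variable; then estimate the $L^2$-in-$\mathcal{D}$ norm of the remaining $\aln P$ (resp. $\aln Q$) using the $L^2(S)$ estimates for Ricci coefficients (Propositions \ref{omegaprop}–\ref{trchibarboundRicci} and those that follow), the $L^2(S)$ estimates for Maxwell and curvature (Propositions \ref{alphaFproposition}, \ref{rhoFsphereestimates}, \ref{othermaxwellprop}, \ref{alphaproposition}, \ref{L2curvature2}), the elliptic estimates for $11$ derivatives (Section 6) wherever a top-order $\nabla^{11}$ falls on a Ricci coefficient, and the product estimates of Proposition \ref{usefulstatements}. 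Each resulting term carries at least one factor of $a^{-1/2}|u'|^{-1}$ or a favorable power of $a$, so that after the $\duprime\,\dubarprime$ integration and distributing the $\sup$ factor, one collects a bound of the form $a^{-1/2}\cdot(\text{polynomial in }O,R,F)\cdot\mathcal{R}$, which by the bootstrap bound $(O+R+F)^{20}\leq a^{1/16}$ is dominated by $a^{-1/3}$ times the energy being estimated; the cross terms get absorbed with a Cauchy–Schwarz trick $M \leq H^{1/2}\|\cdots\|_{\underline{H}} \leq H + \tfrac14\|\cdots\|_{\underline{H}}^2$ exactly as in the $M_2$ computation.

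Putting this together, the energy inequality becomes
\[ \frac{1}{a}\scaletwoHu{\aln\alpha}^2 + \frac{1}{a}\scaletwoHbaru{\aln\tbeta}^2 \leq \frac{1}{a}\scaletwoHzero{\aln\alpha}^2 + \frac{1}{a}\scaletwoHbarzero{\aln\tbeta}^2 + a^{-1/3}, \]
and taking square roots (absorbing the constant into the $a^{-1/3}$, which is permissible after renaming, and using $(x+y)^{1/2}\leq x^{1/2}+y^{1/2}$) yields the claimed bound. I expect the main obstacle to be the borderline terms where a single top-order derivative $\nabla^{11}$ lands on a Ricci coefficient controlled only along one null direction in \eqref{bootstrapelliptic} — for instance $\nabla^{11}\chibarhat$ or $\nabla^{11}\tr\chibar$ paired against $\tbeta$ or $\alpha$: here one cannot freely choose the direction of H\"older's inequality, so one must carefully match the $\mathcal{L}^2_{(sc)}(H)$ versus $\mathcal{L}^2_{(sc)}(\Hbar)$ structure, invoke Proposition \ref{chibarhattrchibarelliptic} and especially the traditional-form estimate \eqref{chibarhattrchibartraditionalformestimates}, and check that the weight bookkeeping still produces an $a^{-1/3}$ gain rather than merely an $O(1)$ bound. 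The second delicate point is the triple-product anomaly terms $(\psi,\chibarhat,\tr\chibar,\chihat)\cdot(\alpha_F,\Y)\cdot(\alpha_F,\Y)$, which require the improved estimates $O[\tr\chibar]\lesssim 1$, $O_\infty[\chibarhat]\lesssim 1$ and $O_{i}[\alpha_F/\al]$ better-than-top for at least one index, precisely as handled in the $T_7$ and $J_6$ computations of Section 5.
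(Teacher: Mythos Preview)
Your approach is essentially the same as the paper's: apply the master energy inequality to the pair $(\alpha,\tbeta)$, split the error into $N$ (paired with $\nabla^i\alpha$) and $M$ (paired with $\nabla^i\tbeta$), peel off the energy factor via H\"older, and estimate the remaining $\mathcal{L}^2_{(sc)}$ norms of $P,Q$ term by term.

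One simplification you are missing: for this particular pair the obstacles you flag do not arise. Since $i\leq 10$ and the schematic right-hand sides of the $\nabla_3\alpha$, $\nabla_4\tbeta$ equations contain Ricci coefficients only \emph{undifferentiated} (the derivative in $\mathcal{D}\tbeta$, $\mathcal{D}^*\alpha$ is absorbed into the Hodge structure), after commutation the Ricci factors carry at most $10$ angular derivatives, never $11$. So the elliptic estimates of Section~6 and the delicate matching against \eqref{bootstrapelliptic} or \eqref{chibarhattrchibartraditionalformestimates} are not needed here; the $\mathcal{O}_{j,2}$ bounds for $j\leq 10$ suffice. The only place $11$ derivatives appear is on Maxwell components $\nabla^{i_4+1}(\rho_F,\sigma_F,\alpha_F)$, and these are controlled directly by $\mathcal{F}[\alpha_F]$, $\mathcal{F}[\rho_F,\sigma_F]$ along $H_u$, which the paper then bounds by initial data via the already-proved Maxwell energy estimate \eqref{energymaxwell1}. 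The triple-anomaly term is handled exactly as you say, using the improved $L^\infty$ bounds on $\chibarhat$, $\tr\chibar$ and $\alpha_F$ from Section~4, but again without needing top-order elliptic input.
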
		
	
	\begin{proof}
		We have the schematic equations
		
		\begin{equation}
		\nabla_4 \tbeta - \Hodge{\mathcal{D}}\alpha = \psi(\tbeta,\alpha) + \alpha_F \cdot \nabla \alpha_F + (\psi,\chihat)\cdot(\Y,\alpha_F) + \psi \cdot (\Y,\alpha_F), \alpha_F\end{equation}\begin{align*}
		\nabla_3 \alpha + \frac{1}{2}\tr\chibar \alpha - \mathcal{D} \tbeta =& (\psi,\chihat)\cdot(\Psi,\beta,\alpha) + (\rho_F,\sigma_F) \cdot \nabla(\rho_F,\sigma_F, \alpha_F) \\ &+ \alpha_F \cdot \nabla (\rho_F,\sigma_F) +(\psi,\chibarhat,\tr\chibar) \cdot (\Y,\alpha_F) \cdot (\Y,\alpha_F).
		\end{align*}Commuting the above equations $i$ times with $\nabla$ we arrive at 
		
		\begin{align*}
		&\nabla_3 \nabla^i \alpha + \frac{1+i}{2}\tr\chibar \nabla^i \alpha - \mathcal{D} \nabla^i \tbeta \\
		=& \sum_{i_1+i_2+i_3+i_4+1 =i} \nabla^{i_1} \psi^{i_2+1} \nabla^{i_3}(\chibarhat,\tr\chibar) \nabla^{i_4}\alpha + \sum_{i_1+i_2+i_3+i_4=i} \nabla^{i_1} \psi^{i_2} \nabla^{i_3} (\psi,\chihat,\chibarhat,\widetilde{\tr\chibar}) \nabla^{i_4} (\Psi,\beta,\alpha) \\ 
		&+ \sum_{i_1+i_2+i_3=i} \nabla^{i_1}\psi^{i_2} \nabla^{i_3}( \Y \cdot \nabla(\Y, \alpha_F) + \alpha_F \cdot \nabla \Y +(\psi,\chibarhat,\tr\chibar) \cdot (\Y,\alpha_F) \cdot (\Y,\alpha_F) ) \\
		=& \sum_{i_1+i_2+i_3+i_4+1 =i} \nabla^{i_1} \psi^{i_2+1} \nabla^{i_3}(\chibarhat,\tr\chibar) \nabla^{i_4}\alpha + \sum_{i_1+i_2+i_3+i_4=i} \nabla^{i_1} \psi^{i_2} \nabla^{i_3} (\psi,\chihat,\chibarhat,\widetilde{\tr\chibar}) \nabla^{i_4} (\Psi,\beta,\alpha)  \\ &
		+ \sum_{i_1+i_2+i_3+i_4=i} \nabla^{i_1}\psi^{i_2} \nabla^{i_3}(\rho_F,\sigma_F, \alpha_F) \nabla^{i_4+1} (\rho_F,\sigma_F)	+ \sum_{i_1+i_2+i_3+i_4=i} \nabla^{i_1}\psi^{i_2} \nabla^{i_3}(\rho_F,\sigma_F) \nabla^{i_4+1} (\alpha_F)\\ &
		+ \sum_{i_1+i_2+i_3+i_4+i_5=i} \nabla^{i_1}\psi^{i_2} \nabla^{i_3}(\psi,\chibarhat,\tr\chibar)\nabla^{i_4}(\Y,\alpha_F)\nabla^{i_5}(\Y,\alpha_F) \\ :=& F_1,
		\end{align*}
		
		\begin{align*}
		&\nabla_4 \nabla^i \tbeta -\Hodge{\mathcal{D}}\nabla^i \alpha \\
		=& \sum_{i_1+i_2+i_3+i_4=i} \nabla^{i_1}\psi^{i_2} \nabla^{i_3} (\psi,\chihat) \nabla^{i_4} (\beta,\alpha)\\
		&+ \sum_{i_1+i_2+i_3+i_4=i} \nabla^{i_1}\psi^{i_2} \nabla^{i_3} \alpha_F \nabla^{i_4+1} \alpha_F + \sum_{i_1+i_2+i_3+i_4=i} \nabla^{i_1}\psi^{i_2} \nabla^{i_3} (\psi,\chihat) \nabla^{i_4} (\Y,\alpha_F)\\
		&+ \sum_{i_1+i_2+i_3+i_4+i_5=i} \nabla^{i_1} \psi^{i_2} \nabla^{i_3}\psi \nabla^{i_4}\alpha_F \nabla^{i_5}\alpha_F \\ :=&G_1.
		\end{align*}This gives us  
		
		\begin{align*}
		&\scaletwoHu{\aln \alpha}^2 + \scaletwoHbaru{\aln \tbeta}^2 \\
		\leq&  \lVert \aln \alpha \rVert^2_{\mathcal{L}^{2}_{(sc)}(H_{u_\infty}^{(0,\ubar)})}  +  \lVert \aln \tbeta \rVert^2_{\mathcal{L}^2_{(sc)}(\underline{H}_{0}^{(u_{\infty},u)})} +N_1+M_1,
		\end{align*}where \begin{gather}
		N_1 = \int_{0}^{\ubar} \int_{u_{\infty}}^u \frac{a}{\lvert u^\prime \rvert} \scaleoneSuprimeubarprime{(\al)^i F_1 \cdot \aln \alpha}\hspace{.5mm} \text{d}u^\prime \hspace{.5mm} \text{d}\ubar^{\prime},\\
		M_1 = \int_{0}^{\ubar} \int_{u_{\infty}}^u \frac{a}{\lvert u^\prime \rvert} \scaleoneSuprimeubarprime{(\al)^i G_1 \cdot \aln \tbeta}\hspace{.5mm} \text{d}u^\prime \hspace{.5mm} \text{d}\ubar^{\prime}.
		\end{gather}By H\"{o}lder's inequality, as per the previous subsection, we can obtain
		
		\begin{align*}
		N_1 =& \int_{0}^{\ubar} \int_{u_{\infty}}^u \frac{a}{\lvert u^\prime \rvert} \scaleoneSuprimeubarprime{(\al)^i F_1 \cdot \aln \alpha}\hspace{.5mm} \text{d}u^\prime \hspace{.5mm} \text{d}\ubar^{\prime} \\
		\leq& \int_{u_\infty}^{u} \frac{a}{\lvert u^\prime \rvert^2} \left(\int_{0}^{\ubar} \scaletwoSuprimeubarprime{(\al)^i F_1}^2 \hspace{.5mm} \text{d}\ubar^\prime \right)^{\frac{1}{2}} \hspace{.5mm} \text{d} u^\prime \cdot \sup_{u^\prime} \lVert \aln \alpha \rVert_{\mathcal{L}^2_{(sc)}(H_{u^\prime}^{(0,\underline{u})})} ,
		\end{align*}where we recall that 
		
		\begin{align*}
		F_1 =&\sum_{i_1+i_2+i_3+i_4+1 =i} \nabla^{i_1} \psi^{i_2+1} \nabla^{i_3}(\chibarhat,\tr\chibar) \nabla^{i_4}\alpha + \sum_{i_1+i_2+i_3+i_4=i} \nabla^{i_1} \psi^{i_2} \nabla^{i_3} (\psi,\chihat,\chibarhat,\widetilde{\tr\chibar}) \nabla^{i_4} (\Psi,\beta,\alpha)  \\
		&+ \sum_{i_1+i_2+i_3+i_4=i} \nabla^{i_1}\psi^{i_2} \nabla^{i_3}(\rho_F,\sigma_F, \alpha_F) \nabla^{i_4+1} (\rho_F,\sigma_F)	+ \sum_{i_1+i_2+i_3+i_4=i} \nabla^{i_1}\psi^{i_2} \nabla^{i_3}(\rho_F,\sigma_F) \nabla^{i_4+1} (\alpha_F)\\
		&+ \sum_{i_1+i_2+i_3+i_4+i_5=i} \nabla^{i_1}\psi^{i_2} \nabla^{i_3}(\psi,\chibarhat,\tr\chibar)\nabla^{i_4}(\Y,\alpha_F)\nabla^{i_5}(\Y,\alpha_F). 
		\end{align*}Denote \[  H_1 = \int_0^{\ubar} \scaletwoSuprimeubarprime{(\al)^i F_1}^2 \hspace{.5mm} \text{d}\ubar^\prime.    \]We work on the term $H_1$.
		
		\begin{align*}
		H_1 =& \int_0^{\ubar} \scaletwoSuprimeubarprime{(\al)^i F_1}^2 \hspace{.5mm} \text{d}\ubar^\prime \\
		&\leq \int_0^{\ubar} a^{-1} \scaleinfinitySuprimeubarprime{\frac{\al}{\lvert u^\prime \rvert}\psi,\frac{\al}{\lvert u^\prime \rvert} \chihat, \frac{\al}{\lvert u^\prime \rvert} \chibarhat, \frac{\al}{\lvert u^\prime \rvert} \widetilde{\tr\chibar}}^2 \scaletwoSuprimeubarprime{\aln (\Psi,\beta,\alpha)}^2 \hspace{.5mm} \text{d} \ubar^\prime \\ &+ \sum_{ i_1+i_2+i_3+i_4+1=i} \int_0^{\ubar} \frac{\lvert u^\prime \rvert^4}{a^2} \scaletwoSuprimeubarprime{(\al)^{i+1} \nabla^{i_1} \psi^{i_2+1} \nabla^{i_3} (\frac{a}{\lvert u^\prime \rvert^2}\chibarhat, \frac{a}{\lvert u^\prime \rvert^2}\tr\chibar )\nabla^{i_4}(\frac{\alpha}{\al})}^2 \hspace{.5mm} \text{d}\ubar^\prime \\ 
		&+ \sum_{\substack{i_1+i_2+i_3+i_4=i, \\ 
				i_4 \leq i-1}} \int_0^{\ubar} \frac{\lvert u^\prime \rvert^2}{a} \ScaletwoSuprimeubarprime{ (\al)^{i+1} \nabla^{i_1} \psi^{i_2} \nabla^{i_3} \left(\frac{\al}{\lvert u^\prime \rvert} \psi,\frac{\al}{\lvert u^\prime \rvert}\chihat, \frac{\al}{\lvert u^\prime \rvert}\chibarhat, \frac{\al}{\lvert u^\prime \rvert}\widetilde{\tr\chibar} \right) \nabla^{i_4}\left(\frac{\Psi}{\al},\frac{\beta}{\al},\frac{\alpha}{\al} \right)   }^2 \hspace{.5mm} \text{d}\ubar^\prime \\
		&+ \sum_{i_1+i_2+i_3+i_4=i} \int_0^{\ubar} a \scaletwoSuprimeubarprime{(\al)^i \nabla^{i_1}\psi^{i_2}\nabla^{i_3}(\frac{\Y}{\al},\frac{\alpha_F}{\al})\nabla^{i_4+1}(\rho_F,\sigma_F)}^2 \hspace{.5mm} \text{d}\ubar^\prime \\
		&+ \sum_{i_1+i_2+i_3+i_4=i} \int_0^{\ubar} a \scaletwoSuprimeubarprime{(\al)^i \nabla^{i_1}\psi^{i_2}\nabla^{i_3}(\rho_F,\sigma_F)\nabla^{i_4+1}\left( \frac{\alpha_F}{\al}     \right)}^2 \hspace{.5mm} \text{d}\ubar^\prime  \\ 
		&+ \sum_{i_1+ i_2 + i_3 +i_4 +i_5 =i} \int_0^{\ubar} {\lvert u^\prime \rvert^4} \scaletwoSuprimeubarprime{(\al)^{i} \nabla^{i_1} \psi^{i_2} \nabla^{i_3}( \frac{a}{\lvert u^\prime \rvert^2}\psi, \frac{a}{\lvert u^\prime \rvert^2}\chibarhat, \frac{a}{\lvert u^\prime \rvert^2} \tr\chibar) \nabla^{i_4}(\frac{\Y}{\al}, \frac{\alpha_F}{\al}) \nabla^{i_5} (\frac{\Y}{\al}, \frac{\alpha_F}{\al})}^2 \hspace{.5mm} \text{d}\ubar^\prime \\ &:= J_1+J_2+J_3+J_4+J_5+J_6. 
		\end{align*}
		\begin{itemize}
			\item The sum of the terms $J_1+J_2+J_3$ can be controlled just as in the vacuum case by  \[\mathcal{R}^2[\alpha]\cdot\left( O^2[\chihat]+O^2[\chibarhat] \right) +O^6+O^4. \]
			
			\item For the terms $J_4$ and $J_5$ we have
			
			\begin{align*}
			J_4 + J_5 =& \sum_{i_1+i_2+i_3+i_4=i} \int_0^{\ubar} a \scaletwoSuprimeubarprime{(\al)^i \nabla^{i_1}\psi^{i_2}\nabla^{i_3}(\frac{\Y}{\al},\frac{\alpha_F}{\al})\nabla^{i_4+1}(\rho_F,\sigma_F)}^2 \hspace{.5mm} \text{d}\ubar^\prime \\
			&+ \sum_{i_1+i_2+i_3+i_4=i} \int_0^{\ubar} a \scaletwoSuprimeubarprime{(\al)^i \nabla^{i_1}\psi^{i_2}\nabla^{i_3}(\rho_F,\sigma_F)\nabla^{i_4+1}\left( \frac{\alpha_F}{\al}     \right)}^2 \hspace{.5mm} \text{d}\ubar^\prime \\ \lesssim& \frac{O^4}{\upr^2}+ \frac{a\cdot O^2}{\upr^2} \cdot \left( \mathcal{F}[\alpha_F]^2+\mathcal{F}[\rho_F,\sigma_F]^2 \right) \lesssim \frac{O^4}{\upr^2} + \frac{a\cdot O^2}{\upr^2}\cdot (R^4  + 1).
			\end{align*}where we have used Theorem 6.1 in the last inequality.
			\item The final term $J_6$ can be bounded by $O^6$.
		\end{itemize}Therefore, putting everything together, we have 
		
		\[ H_1 \leq   \mathcal{R}^2[\alpha]\cdot\left( O^2[\chihat]+O^2[\chibarhat] \right)+ \frac{a \cdot O^2}{\lvert u^\prime \rvert^2}\cdot(R^4 + (\mathcal{I}^{(0)})^4 + (\mathcal{I}^{(0)})^2 +1) +O^6+O^4,  \]which translates to \begin{equation}
		N_1 \leq \left(\mathcal{R}[\alpha] \cdot\left(O[\chihat]+O[\chibarhat] \right) +\frac{\al\cdot O}{\lvert u^\prime \rvert} \cdot(R^2+(\mathcal{I}^{(0)})^2 + \mathcal{I}^{(0)} +1)+O^3+O^2\right)\cdot \sup_{u^\prime}\lVert \aln \alpha \rVert_{\mathcal{L}^2_{(sc)}(H_{u^\prime}^{(0,\ubar)}}. \label{N1curvaturealphabound}
		\end{equation}We continue with the term $M_1$ in the same way. We have
		
		\begin{align*}
		M_1 =& \int_{0}^{\ubar} \int_{u_{\infty}}^u \frac{a}{\lvert u^\prime \rvert} \scaleoneSuprimeubarprime{(\al)^i G_1 \cdot \aln \tbeta}\hspace{.5mm} \text{d}u^\prime \hspace{.5mm} \text{d}\ubar^{\prime} \\ 
		\leq& \left( \int_0^{\ubar} \int_{u_{\infty}}^u \frac{a}{\lvert u^\prime \rvert^2} \scaletwoSuprimeubarprime{(\al)^i G_1}^2 \hspace{.5mm} \text{d}u^\prime \hspace{.5mm} \text{d}\ubar^{\prime} \right)^{\frac{1}{2}}\cdot \sup_{\ubar^\prime} \lVert \aln \tbeta \rVert_{\mathcal{L}^2_{(sc)}(\Hbar_{\ubar^\prime}^{(u_{\infty},u)})}.
		\end{align*}At this point we recall that
		\begin{align*}
		G_1 =& \sum_{i_1+i_2+i_3+i_4=i} \nabla^{i_1}\psi^{i_2} \nabla^{i_3} (\psi,\chihat) \nabla^{i_4} (\tbeta,\alpha)\\ 
		&+ \sum_{i_1+i_2+i_3+i_4=i} \nabla^{i_1}\psi^{i_2} \nabla^{i_3} \alpha_F \nabla^{i_4+1} \alpha_F + \sum_{i_1+i_2+i_3+i_4=i} \nabla^{i_1}\psi^{i_2} \nabla^{i_3} (\psi,\chihat) \nabla^{i_4} (\Y,\alpha_F)\\
		&+ \sum_{i_1+i_2+i_3+i_4+i_5=i} \nabla^{i_1} \psi^{i_2} \nabla^{i_3}(\psi,\chihat) \nabla^{i_4}(\Y,\alpha_F) \nabla^{i_5}\alpha_F.
		\end{align*}Define \[ K_1 :=  \int_0^{\ubar} \int_{u_{\infty}}^u \frac{a}{\lvert u^\prime \rvert^2} \scaletwoSuprimeubarprime{(\al)^i G_1}^2 \hspace{.5mm} \text{d}u^\prime \hspace{.5mm} \text{d}\ubar^{\prime}.    \]We then have
		
		\begin{align*}
		K_1 \leq& \int_0^{\ubar} \int_{u_{\infty}}^u  \frac{a^2}{\lvert u^\prime \rvert^2} \ScaletwoSuprimeubarprime{(\al)^i \left(\frac\psi\al,\frac\chihat\al\right)\nabla^{i}\left(\frac\tbeta\al,\f\alpha\al\right)}^2 \hspace{.5mm} \text{d}u^\prime \hspace{.5mm} \text{d}\ubar^\prime \\
		&+ \int_0^{\ubar} \int_{u_{\infty}}^u  \frac{a}{\lvert u^\prime \rvert^2 }  \ScaletwoSuprimeubarprime{\sum_{\substack{i_1 + i_2 + i_3+i_4 =i \\ i_4 \leq i-1}}(\al)^i \nabla^{i_1} \psi^{i_2} \nabla^{i_3}\left(\frac\psi\al,\frac\chihat\al\right)\nabla^{i_4}\left(\frac\tbeta\al,\f\alpha\al\right)}^2 \hspace{.5mm} \text{d}u^\prime \hspace{.5mm} \text{d}\ubar^\prime \\
		&+  \int_0^{\ubar} \int_{u_{\infty}}^u  \frac{a}{\lvert u^\prime \rvert^2 } \scaletwoSuprimeubarprime{(\al)^i \alpha_F \cdot \nabla^{i+1}\alpha_F}^2 \hspace{.5mm} \text{d}u^\prime \hspace{.5mm} \text{d}\ubar^\prime \\ 
		&+ \int_0^{\ubar} \int_{u_{\infty}}^u  \frac{a}{\lvert u^\prime \rvert^2 } \scaletwoSuprimeubarprime{ \sum_{\substack{i_1+ i_2 + i_3 +i_4 =i\\ i_4 \leq i-1}}(\al)^i \nabla^{i_1} \psi^{i_2} \nabla^{i_3}\alpha_F \nabla^{i_4+1}\alpha_F     }^2 \hspace{.5mm} \text{d}u^\prime \hspace{.5mm} \text{d}\ubar^\prime \\ 
		&+ \int_0^{\ubar} \int_{u_{\infty}}^u  \frac{a}{\lvert u^\prime \rvert^2 } \scaletwoSuprimeubarprime{\sum_{i_1+i_2+i_3+i_4=i}(\al)^i \nabla^{i_1} \psi^{i_2} \nabla^{i_3}(\psi,\chihat)\nabla^{i_4}(\Y,\alpha_F)   }^2  \hspace{.5mm} \text{d}u^\prime \hspace{.5mm} \text{d}\ubar^\prime \\ 
		&+ \int_0^{\ubar} \int_{u_{\infty}}^u  \frac{a}{\lvert u^\prime \rvert^2 } \scaletwoSuprimeubarprime{\sum_{i_1+i_2+i_3+i_4+i_5=i}(\al)^i \nabla^{i_1} \psi^{i_2} \nabla^{i_3}(\psi,\chihat)\nabla^{i_4}(\Y,\alpha_F) \nabla^{i_5}\alpha_F   }^2  \hspace{.5mm} \text{d}u^\prime \hspace{.5mm} \text{d}\ubar^\prime  \\
		:=& I_1+I_2+I_3+I_4+I_5+I_6.
		\end{align*}
		
		\begin{itemize}
			\item The sum $I_1+I_2$ can be bounded as in the vacuum case by $R^2\cdot O^2 + O^4$.

			\item We have \begin{equation}\begin{split}
			I_3+I_4  &= \int_0^{\ubar} \int_{u_{\infty}}^u  \frac{a}{\lvert u^\prime \rvert^2 } \ScaletwoSuprimeubarprime{(\al)^i \left(\f{\alpha_F}{\al}\right) \cdot \nabla^{i+1}\alpha_F}^2 \hspace{.5mm} \text{d}u^\prime \hspace{.5mm} \text{d}\ubar^\prime \\ 
		&\,\,+ \int_0^{\ubar} \int_{u_{\infty}}^u  \frac{a}{\lvert u^\prime \rvert^2 } \scaletwoSuprimeubarprime{ \sum_{\substack{i_1+ i_2 + i_3 +i_4 =i\\ i_4 \leq i-1}}(\al)^i \nabla^{i_1} \psi^{i_2} \nabla^{i_3}\left(\f{\alpha_F}{\al}\right)  \nabla^{i_4+1}\left(\f{\alpha_F}{\al}\right)      }^2 \hspace{.5mm} \text{d}u^\prime \hspace{.5mm} \text{d}\ubar^\prime\\
			&\leq  \intu \frac{a}{\upr^2}\cdot \frac{a\cdot O^2}{\upr^2}\left( \intubar \scaletwoSuprimeubarprime{(\al)^i \nabla^{i+1}\alpha_F}^2 \dubarprime \right) \duprime + \intu \intubar \frac{a}{\upr^2}\cdot \frac{a\cdot O^4}{\upr^2}\dubarprime \duprime \\ &\lesssim \frac{a^3\cdot O^2}{\upr^3}\cdot \mathcal{F}[\alpha_F]^2 + 1 \lesssim O^2 \cdot (\mathcal{I}^{(0)})^2 +1.
			\end{split}\end{equation}	
			\item We have 
			
			\begin{equation}\begin{split}
			I_5 =& \int_0^{\ubar} \int_{u_{\infty}}^u  \frac{a}{\lvert u^\prime \rvert^2 } \scaletwoSuprimeubarprime{\sum_{i_1+i_2+i_3+i_4=i}(\al)^i \nabla^{i_1} \psi^{i_2} \nabla^{i_3}(\psi,\chihat)\nabla^{i_4}(\Y,\alpha_F)   }^2  \hspace{.5mm} \text{d}u^\prime \hspace{.5mm} \text{d}\ubar^\prime \\
			=& \int_0^{\ubar} \int_{u_{\infty}}^u  \frac{a^3}{\lvert u^\prime \rvert^2 } \scaletwoSuprimeubarprime{\sum_{i_1+i_2+i_3+i_4=i} (\al)^i \nabla^{i_1}\psi^{i_2} \nabla^{i_3}(\frac{\psi}{\al}, \frac{\chihat}{\al})\nabla^{i_4}(\frac{\Y}{\al}, \frac{\alpha_F}{\al})    }^2 \hspace{.5mm} \text{d}u^\prime \hspace{.5mm} \text{d}\ubar^{\prime}\\
			\leq& \frac{a^3\cdot O^4}{\lvert u \rvert^3} \leq O^4.
			\end{split}\end{equation}
			\item For the last term $I_6$ we have
			
			\begin{equation}\begin{split}
			I_6 &= \int_0^{\ubar} \int_{u_{\infty}}^u  \frac{a}{\lvert u^\prime \rvert^2 } \scaletwoSuprimeubarprime{\sum_{i_1+i_2+i_3+i_4+i_5=i}(\al)^i \nabla^{i_1} \psi^{i_2} \nabla^{i_3}(\psi,\chihat)\nabla^{i_4}(\Y,\alpha_F) \nabla^{i_5}\alpha_F   }^2  \hspace{.5mm} \text{d}u^\prime \hspace{.5mm} \text{d}\ubar^\prime \\ &=
			\int_0^{\ubar} \int_{u_{\infty}}^u  \frac{a^4}{\lvert u^\prime \rvert^2 } \scaletwoSuprimeubarprime{\sum_{i_1+i_2+i_3+i_4+i_5=i}(\al)^i \nabla^{i_1} \psi^{i_2} \nabla^{i_3}\left(\frac{\psi}{\al},\frac{\chihat}{\al}\right)\nabla^{i_4}\left(\frac{\Y}{\al},\frac{\alpha_F}{\al}\right) \nabla^{i_5}\left(\frac{\alpha_F}{\al}\right)   }^2  \hspace{.5mm} \text{d}u^\prime \hspace{.5mm} \text{d}\ubar^\prime \\ 
			&\leq \int_0^{\ubar} \int_{u_{\infty}}^u  \frac{a^4}{\lvert u^\prime \rvert^2}\cdot \frac{O^6}{\lvert u^\prime \rvert^4} \duprime \hspace{.5mm} \text{d}\ubar^\prime = \frac{a^4 \cdot O^6}{\lvert u \rvert^5} \leq \frac{O^6}{a}.
			\end{split}\end{equation}\end{itemize}Putting everything together, we have
		
		\begin{equation}
		K_1 \leq  R^2\cdot O^2 + O^4 + O^6 + \frac{O^2}{\lvert u \rvert^3}(R^4 +  (\mathcal{I}^{(0)})^4 + (\mathcal{I}^{(0)})^2  +1),  
		\end{equation}so that

		\be M_1 \leq \left( R\cdot O + O^2 + O^3 + \frac{O}{\lvert u \rvert^{\frac{3}{2}}}(R^2 +  (\mathcal{I}^{(0)})^2 +\mathcal{I}^{(0)}  +1) \right)\cdot \sup_{\ubar^{\prime}}   \lVert \aln \tbeta \rVert_{\mathcal{L}^2_{(sc)}(\Hbar_{\ubar^\prime}^{(u_{\infty},u)})}. \label{M1curvaturealphabound}  \ee
		Putting \eqref{N1curvaturealphabound} and \eqref{M1curvaturealphabound} together, we finally get
		
		\begin{align*}
		&\frac{1}{\al} \scaletwoHu{\aln \alpha} + \frac{1}{\al}\scaletwoHbaru{\aln \tbeta} \\
		\leq& \frac{1}{\al} \lVert \aln \alpha \rVert_{\mathcal{L}^{2}_{(sc)}(H_{u_\infty}^{(0,\ubar)})}  + \frac{1}{\al} \lVert \aln \tbeta \rVert_{\mathcal{L}^2_{(sc)}(\underline{H}_{0}^{(u_{\infty},u)})} +\frac{1}{\al}N_1+ \frac{1}{\al}M_1\\
		\leq&  \frac{1}{\al} \lVert \aln \alpha \rVert_{\mathcal{L}^{2}_{(sc)}(H_{u_\infty}^{(0,\ubar)})}  + \frac{1}{\al} \lVert \aln \tbeta \rVert_{\mathcal{L}^2_{(sc)}(\underline{H}_{0}^{(u_{\infty},u)})} \\
		&+ \frac{1}{\al} \left(\mathcal{R}[\alpha] \cdot\left(O[\chihat]+O[\chibarhat] \right) +\frac{\al\cdot O}{\lvert u^\prime \rvert} \cdot(R^2+(\mathcal{I}^{(0)})^2 + \mathcal{I}^{(0)} +1)+O^3+O^2\right)\cdot \sup_{u^\prime}\lVert \aln \alpha \rVert_{\mathcal{L}^2_{(sc)}(H_{u^\prime}^{(0,\ubar)}}\\
		&+  \frac{1}{\al}\cdot \left( R\cdot O + O^2 + O^3 + \frac{O}{\lvert u \rvert^{\frac{3}{2}}}(R^2 +  (\mathcal{I}^{(0)})^2 +\mathcal{I}^{(0)}  +1) \right)\cdot \sup_{\ubar^{\prime}}   \lVert \aln \tbeta \rVert_{\mathcal{L}^2_{(sc)}(\Hbar_{\ubar^\prime}^{(u_{\infty},u)})} 
		\end{align*}from which our result follows.
		
	\end{proof}		\subsubsection{Estimates on the remaining components}
	
	We proceed to show the estimates for the pair $(\Psi_1, \Psi_2)= (\tbeta, (\rho,\sigma))$. The other two pairs are similar.
	
	\begin{proof}
		We have the schematic equation  
		
		\begin{gather}
		\nabla_3 \tbeta + \tr\chibar \tbeta - \mathcal{D}(-\rho,\sigma) = (\psi,\chihat) \Psi + \alpha_F \nabla(\alphabar_F,\rho_F,\sigma_F)+ (\rho_F,\sigma_F)\nabla(\rho_F,\sigma_F,\alpha_F)+  (\psi,\chibarhat, \tr\chibar,\chihat)\cdot (\alpha_F,\Y) \cdot \Y,\\
		\nabla_4 ((-\rho,\sigma))- \Hodge{\mathcal{D}}\tbeta = (\psi,\chihat)(\Psi,\alpha) +\alpha_F \nabla \Y + \Y \nabla(\Y,\alpha_F) + (\psi,\chibarhat,\tr\chibar,\chihat)(\alpha_F,\Y)(\alpha_F,\Y).
		\end{gather}Commuting with $i$ angular derivatives we get
		\begin{align*}
		&\nabla_3 \nabla^i \tbeta + \frac{i+2}{2}\tr\chibar \nabla^i \tbeta - \mathcal{D} \nabla^i ((-\rho,\sigma)) \\
		=& \sum_{ i_1+i_2+i_3+i_4+1=i} \nabla^{i_1}\psi^{i_2+1} \nabla^{i_3}(\chibarhat,\tr\chibar)\nabla^{i_4} \Psi + \sum_{ i_1+i_2+i_3+i_4=i} \nabla^{i_1} \psi^{i_2} \nabla^{i_3}(\psi,\chihat,\chibarhat,\widetilde{\tr \chibar})\nabla^{i_4}\Psi \\ 
		&+ \sum_{ i_1+i_2+i_3+i_4=i} \nabla^{i_1}\psi^{i_2} \nabla^{i_3}(\alpha_F,\Y) \nabla^{i_4+1} \Y + \sum_{ i_1+i_2+i_3+i_4=i} \nabla^{i_1}\psi^{i_2} \nabla^{i_3} \Y \nabla^{i_4+1}(\Y,\alpha_F)\\ 
		&+ \sum_{ i_1+i_2+i_3+i_4+i_5=i} \nabla^{i_1}\psi^{i_2} \nabla^{i_3}(\psi,\chibarhat,\tr\chibar)\nabla^{i_4}(\alpha_F,\Y) \nabla^{i_5} \Y \\ :=& F_2,
		\end{align*}while
		
		\begin{align*}
		&\nabla_4 \nabla^i ((-\rho,\sigma)) - \Hodge{\mathcal{D}} \nabla^i \tbeta\\
		 =& \sum_{ i_1+i_2+i_3+i_4=i} \nabla^{i_1}\psi^{i_2} \nabla^{i_3}(\psi,\chibarhat,\chihat) \nabla^{i_4}(\Psi,\alpha) \\
		&+ \sum_{ i_1+i_2+i_3+i_4=i} \nabla^{i_1}\psi^{i_2} \nabla^{i_3}(\alpha_F,\Y) \nabla^{i_4+1} \Y + \sum_{ i_1+i_2+i_3+i_4=i} \nabla^{i_1}\psi^{i_2} \nabla^{i_3} \Y \nabla^{i_4+1}(\Y,\alpha_F)\\ 
		&+ \sum_{ i_1+i_2+i_3+i_4+i_5=i} \nabla^{i_1}\psi^{i_2} \nabla^{i_3}(\psi,\chibarhat,\tr\chibar)\nabla^{i_4}(\alpha_F,\Y) \nabla^{i_5} (\alpha_F,\Y) \\ :=& G_2.
		\end{align*}We have
		
		\begin{align*}
		&\scaletwoHu{\aln \tbeta}^2 + \scaletwoHbaru{\aln (\rho,\sigma)}^2 \\
		\leq&  \lVert \aln \tbeta \rVert^2_{\mathcal{L}^{2}_{(sc)}(H_{u_\infty}^{(0,\ubar)})}  +  \lVert \aln (\rho,\sigma) \rVert^2_{\mathcal{L}^2_{(sc)}(\underline{H}_{0}^{(u_{\infty},u)})} +N_2+M_2,
		\end{align*}where \begin{gather}
		N_2 = \int_{0}^{\ubar} \int_{u_{\infty}}^u \frac{a}{\lvert u^\prime \rvert} \scaleoneSuprimeubarprime{(\al)^i F_2 \cdot \aln \tbeta}\hspace{.5mm} \text{d}u^\prime \hspace{.5mm} \text{d}\ubar^{\prime},\\
		M_2 = \int_{0}^{\ubar} \int_{u_{\infty}}^u \frac{a}{\lvert u^\prime \rvert} \scaleoneSuprimeubarprime{(\al)^i G_2 \cdot \aln (\rho,\sigma)}\hspace{.5mm} \text{d}u^\prime \hspace{.5mm} \text{d}\ubar^{\prime}.
		\end{gather}We have 
		
		\begin{align*}
		N_2 =&  \int_{0}^{\ubar} \int_{u_{\infty}}^u \frac{a}{\lvert u^\prime \rvert} \scaleoneSuprimeubarprime{(\al)^i F_2 \cdot \aln \tbeta}\hspace{.5mm} \text{d}u^\prime \hspace{.5mm} \text{d}\ubar^{\prime} \\
		\leq& \int_{u_{\infty}}^u  \frac{a}{\lvert u^\prime \rvert^2} \left( \int_0^{\ubar} \scaletwoSuprimeubarprime{(\al)^i F_2}^2 \hspace{.5mm} \text{d}\ubar^\prime \right)^{\frac{1}{2}} \hspace{.5mm} \text{d}u^\prime \cdot R,
		\end{align*}where we recall 
		
		\begin{align*}
		F_2 =& \sum_{ i_1+i_2+i_3+i_4+1=i} \nabla^{i_1}\psi^{i_2+1} \nabla^{i_3}(\chibarhat,\tr\chibar)\nabla^{i_4} \Psi + \sum_{ i_1+i_2+i_3+i_4=i} \nabla^{i_1} \psi^{i_2} \nabla^{i_3}(\psi,\chihat,\chibarhat,\widetilde{\tr \chibar})\nabla^{i_4}\Psi \\
		&+ \sum_{ i_1+i_2+i_3+i_4=i} \nabla^{i_1}\psi^{i_2} \nabla^{i_3}(\alpha_F,\Y) \nabla^{i_4+1} \Y + \sum_{ i_1+i_2+i_3+i_4=i} \nabla^{i_1}\psi^{i_2} \nabla^{i_3} \Y \nabla^{i_4+1}(\Y,\alpha_F)\\
		&+ \sum_{ i_1+i_2+i_3+i_4+i_5=i} \nabla^{i_1}\psi^{i_2} \nabla^{i_3}(\psi,\chibarhat,\tr\chibar)\nabla^{i_4}(\alpha_F,\Y) \nabla^{i_5} \Y.
		\end{align*}Define $H_2= \int_0^{\ubar} \scaletwoSuprimeubarprime{(\al)^i F_2}^2 \hspace{.5mm} \text{d}\ubar^\prime$. The first two terms are handled like in the vacuum case. For the last three terms, we have
		
		\begin{itemize}
			\item For the first of the last three terms, we control \begin{equation}\begin{split}
		&	\int_0^{\ubar} \ScaletwoSuprimeubarprime{(\al)^{i+1} \sum_{ i_1+i_2+i_3+i_4=i} \nabla^{i_1} \psi^{i_2} \nabla^{i_3}\left(\f{\alpha_F}{\al},\f{\Y}{\al}\right) \nabla^{i_4+1} \Y}^2 \hspace{.5mm} \text{d}\ubar^\prime \\
			\leq& \int_0^{\ubar} \ScaletwoSuprimeubarprime{ (\al)^{i+1} \left(\f{\alpha_F}{\al},\f{\Y}{\al}\right) \nabla^{i+1} \Y}^2 \hspace{.5mm} \text{d}\ubar^\prime \\
			&+ \int_0^{\ubar} \ScaletwoSuprimeubarprime{(\al)^{i+1} \sum_{\substack{ i_1+i_2+i_3+i_4=i\\ i_4 \leq i-1}} \nabla^{i_1} \psi^{i_2} \nabla^{i_3}\left(\f{\alpha_F}{\al},\f{\Y}{\al}\right) \nabla^{i_4+1} \Y}^2 \hspace{.5mm} \text{d}\ubar^\prime\\
			\leq& \frac{O^2}{\lvert u^\prime \rvert^2}\int_0^{\ubar} \scaletwoSuprimeubarprime{(\al \nabla)^{i+1}\Y}^2 + \int_0^{\ubar} \frac{O^4}{\lvert u^\prime \rvert^2} \text{d}\ubar^\prime  
			\leq \frac{a \cdot O^2}{\lvert u^\prime \rvert^2} (\mathcal{F}[\Y])^2 + \frac{O^4}{\lvert u^\prime \rvert^2}.
			\end{split}\end{equation}
			
			\item For the middle term we have 
			
			\begin{equation}\begin{split}
		& \hspace{5mm}	\int_0^{\ubar} \ScaletwoSuprimeubarprime{(\al)^{i+1} \sum_{ i_1+i_2+i_3+i_4=i} \nabla^{i_1} \psi^{i_2} \nabla^{i_3}\Y \nabla^{i_4+1} \left(\f{\alpha_F}{\al},\f\Y\al \right)}^2 \hspace{.5mm} \text{d}\ubar^\prime \\ 
			&\leq \int_0^{\ubar} \ScaletwoSuprimeubarprime{ (\al)^{i+1}  \Y \nabla^{i+1} \left(\f{\alpha_F}{\al},\f\Y\al \right)}^2 \hspace{.5mm} \text{d}\ubar^\prime  \\&\,\,+ \int_0^{\ubar} \scaletwoSuprimeubarprime{(\al)^{i+1} \sum_{\substack{ i_1+i_2+i_3+i_4=i\\ i_4 \leq i-1}} \nabla^{i_1} \psi^{i_2} \nabla^{i_3}\Y \nabla^{i_4+1} \left(\f{\alpha_F}{\al},\f\Y\al \right)}^2 \hspace{.5mm} \text{d}\ubar^\prime \\ &\leq \frac{a \cdot O^2}{\lvert u^\prime \rvert^2} (\mathcal{F}[\alpha_F])^2 + \frac{O^4}{\lvert u^\prime \rvert^2}.
			\end{split}\end{equation}
			\item For the last term we have
			
			\begin{align*}
			&\int_0^{\ubar} \scaletwoSuprimeubarprime{(\al)^i \sum_{ i_1+i_2+i_3+i_4+i_5=i} \nabla^{i_1}\psi^{i_2} \nabla^{i_3}(\psi,\chibarhat,\tr\chibar)\nabla^{i_4}(\alpha_F,\Y)\nabla^{i_5}\Y  }^2 \hspace{.5mm} \text{d}\ubar^{\prime} \\
			\leq& \int_0^{\ubar} \frac{\lvert u^\prime \rvert^4}{a} \scaletwoSuprimeubarprime{(\al)^i \sum_{ i_1+i_2+i_3+i_4+i_5=i} \nabla^{i_1} \psi^{i_2} \nabla^{i_3}(\frac{a}{\lvert u^\prime \rvert^2}\psi,\frac{a}{\lvert u^\prime \rvert^2}\chibarhat,\frac{a}{\lvert u^\prime \rvert^2}\tr\chibar)\nabla^{i_4}(\frac{\alpha_F}{\al}, \frac{\Y}{\al})\nabla^{i_5}\Y   }^2 \hspace{.5mm}\text{d}\ubar^\prime \\
			\leq& \int_0^{\ubar} \frac{\lvert u^\prime \rvert^4}{a}\cdot \frac{O^6}{\lvert u^\prime \rvert^4} = \frac{O^6}{a}.
			\end{align*}
		\end{itemize}This completes the bounds on $N_2$. For $M_2$, we have 
		
		\begin{align*}
		M_2 =& \int_0^{\ubar}\int_{u_\infty}^u  \frac{a}{\lvert u^\prime \rvert} \scaleoneSuprimeubarprime{(\al)^i G_2 \cdot \aln \Psi}\hspace{.5mm} \text{d}u^\prime \text{d}\ubar^{\prime} \\ 
		\leq& \int_0^{\ubar} \int_{u_{\infty}}^u \frac{a}{\lvert u^\prime \rvert^2} \scaletwoSuprimeubarprime{(\al)^i G_2} \scaletwoSuprimeubarprime{\aln \Psi} \hspace{.5mm} \text{d}u^\prime \hspace{.5mm} \text{d}\ubar^\prime \\
		\leq& \int_0^{\ubar} \left( \int_{u_{\infty}}^u \frac{a}{\lvert u^\prime \rvert^2}\scaletwoSuprimeubarprime{(\al)^i G_2}^2 \hspace{.5mm} \text{d}u^\prime \right)^{\frac{1}{2}} \left( \int_{u_{\infty}}^u \frac{a}{\lvert u^\prime \rvert^2}\scaletwoSuprimeubarprime{\aln \Psi}^2 \hspace{.5mm} \text{d}u^\prime \right)^{\frac{1}{2}} \hspace{.5mm} \text{d}\ubar^{\prime}  \\
		=&  \int_0^{\ubar} \left( \int_{u_{\infty}}^u \frac{a}{\lvert u^\prime \rvert^2}\scaletwoSuprimeubarprime{(\al)^i G_2}^2 \hspace{.5mm} \text{d}u^\prime \right)^{\frac{1}{2}} \lVert \aln \Psi \rVert_{\mathcal{L}^2_{(sc)}(\Hbar_{\ubar^\prime}^{(u_{\infty},u)})} \hspace{.5mm} \text{d}\ubar^{\prime} \\
		\leq& \left( \int_{0}^{\ubar} \int_{u_{\infty}}^u \frac{a}{\lvert u^\prime \rvert^2} \scaletwoSuprimeubarprime{(\al)^i G_2}^2 \hspace{.5mm} \text{d}u^\prime \hspace{.5mm} \text{d}\ubar^{\prime}\right)^{\frac{1}{2}} \left( \int_0^{\ubar} \lVert \aln \Psi \rVert^2_{\mathcal{L}^2_{(sc)}(\Hbar_{\ubar^\prime}^{(u_{\infty},u)})} \hspace{.5mm} \text{d}\ubar^\prime \right)^\frac{1}{2} \\
		\leq& \int_{0}^{\ubar} \int_{u_{\infty}}^u \frac{a}{\lvert u^\prime \rvert^2} \scaletwoSuprimeubarprime{(\al)^i G_2}^2 \hspace{.5mm} \text{d}u^\prime \hspace{.5mm} \text{d}\ubar^{\prime} + \frac{1}{4} \int_0^{\ubar} \lVert \aln \Psi \rVert^2_{\mathcal{L}^2_{(sc)}(\Hbar_{\ubar^\prime}^{(u_{\infty},u)})} \hspace{.5mm} \text{d}\ubar^\prime.  \end{align*}It is already clear that the last term above will eventually be handled by Gr\"onwall's inequality. The following lemma will allow this:
		
		\begin{lemma}[Lemma 14.8 in \cite{KR:Trapped}]
			Let $f(x,y), g(x,y)$ be positive functions defined on the rectangle $U := \begin{Bmatrix} (x,y) \hspace{.5mm} \mid \hspace{.5mm} 0\leq x\leq x_0, \hspace{.5mm} 0\leq y \leq y_0 \end{Bmatrix}$. Suppose there exist nonnegative constants $J,c_1,c_2$ such that $f$ and $g$ verify the inequality
			
			\[ f(x,y)+g(x,y) \lesssim J + c_1\int_0^x f(x',y) \hspace{.5mm}\text{d}x^\prime + c_2\int_0^y g(x,y^\prime)\hspace{.5mm} \text{d}y^\prime     \]for all $(x,y) \in U$. Then there holds
			
			\[f(x,y) + g(x,y) \lesssim J \e^{c_1x+c_2y}, \hspace{5mm} \forall (x,y)\in U.      \]
		\end{lemma}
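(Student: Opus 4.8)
The plan is to symmetrize the inequality, run a Volterra-type iteration, and sum the resulting series. First I would set $P(x,y) := f(x,y)+g(x,y)$ and observe that, since $0 \le f \le P$ and $0 \le g \le P$ pointwise on $U$, the hypothesis implies the single symmetric inequality
\[ P(x,y) \lesssim J + c_1\int_0^x P(x',y)\,\text{d}x' + c_2\int_0^y P(x,y')\,\text{d}y' =: J + (\mathcal{T}P)(x,y), \]
where $\mathcal{T}$ is the positive, linear, monotone operator just displayed. Iterating this $n$ times and using linearity and monotonicity of $\mathcal{T}$ gives $P \lesssim J\sum_{k=0}^{n}\mathcal{T}^{k}\mathbf{1} + \mathcal{T}^{n+1}P$, where $\mathbf{1}$ denotes the constant function.

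The core step is the bound $\mathcal{T}^{k}\mathbf{1}(x,y) \le 2^{k-1}(c_1 x + c_2 y)^{k}/k!$ for $k\ge 1$, which I would prove by induction on $k$: applying $\mathcal{T}$ to $(c_1 x + c_2 y)^{k}/k!$ and computing each of the two one-dimensional integrals explicitly produces $(c_1 x + c_2 y)^{k+1}/(k+1)!$ twice, minus two nonnegative boundary terms, and this doubling is exactly the source of the combinatorial factor $2^{k-1}$. Summing the resulting series then yields $\sum_{k\ge 0}\mathcal{T}^{k}\mathbf{1} \le e^{2(c_1 x + c_2 y)}$. Since $f,g$ — and hence $P$ — are bounded on the compact rectangle $U$, and since $\mathcal{T}^{n+1}\mathbf{1} \le 2^{n}(c_1 x_0 + c_2 y_0)^{n+1}/(n+1)! \to 0$ uniformly on $U$, the remainder term $\mathcal{T}^{n+1}P$ vanishes as $n\to\infty$. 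We conclude $P(x,y) \lesssim J\, e^{2(c_1 x + c_2 y)}$; finally, because $c_1 x + c_2 y \le c_1 x_0 + c_2 y_0$ on $U$, the doubled exponent is absorbed into the implied constant via $J e^{2(c_1x+c_2y)} \le e^{c_1x_0+c_2y_0}\, J e^{c_1x+c_2y}$, which gives the stated bound $f(x,y)+g(x,y)\lesssim J\,e^{c_1 x + c_2 y}$.

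The step I expect to require the most care is the inductive identity for $\mathcal{T}^{k}\mathbf{1}$: one must verify that the two integrals recombine into the correct power of $(c_1 x + c_2 y)$ with the right factorial weight, that the discarded boundary terms are genuinely nonnegative so that the clean upper bound survives, and that the constant grows only geometrically in $k$ so that the series still sums to an exponential. The remaining ingredients — the symmetrization via $f,g\le P$, the linearity and monotonicity bookkeeping in the iteration, and the uniform decay of the remainder over the compact rectangle — are routine and can be dispatched quickly. (An alternative route, reducing to two coupled one-dimensional Gr\"onwall inequalities for $\sup_y f$ and $\sup_x g$, also works but requires an extra smallness check on $c_1x_0,c_2y_0$, so the iteration argument above is cleaner.)
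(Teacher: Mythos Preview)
Your proof is correct. The paper itself does not prove this lemma; it merely quotes it as Lemma 14.8 from Klainerman--Rodnianski \cite{KR:Trapped} and uses it as a black box in the energy estimates, so there is no paper-internal argument to compare against.

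Two minor remarks on your write-up. First, the boundedness of $P$ on $U$ that you invoke to kill the remainder $\mathcal{T}^{n+1}P$ is not part of the stated hypotheses (only ``positive functions'' is assumed); in the applications the functions are continuous norms on a compact rectangle, so this is harmless, but if you want the lemma to stand on its own you should either add boundedness as a hypothesis or note that one iteration of the inequality already gives $P \lesssim J + \mathcal{T}P$ with $\mathcal{T}P$ bounded once $P$ is, say, locally integrable in each variable uniformly in the other. Second, your inductive computation is right: $\mathcal{T}$ applied to $(c_1x+c_2y)^k/k!$ gives exactly $\bigl(2(c_1x+c_2y)^{k+1}-(c_1x)^{k+1}-(c_2y)^{k+1}\bigr)/(k+1)!$, and the discarded boundary terms are indeed nonnegative, so the geometric factor $2^{k-1}$ is sharp for this estimate and the series sums to at most $e^{2(c_1x+c_2y)}$. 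Absorbing the extra factor $e^{c_1x_0+c_2y_0}$ into the implied constant of $\lesssim$ is legitimate here since the conclusion is stated with $\lesssim$ and in the paper's application $c_1x_0+c_2y_0$ is a fixed quantity.
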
Before applying Gr\"onwall, we first define \[ K_2 = \int_0^{\ubar}\int_{u_{\infty}}^u \frac{a}{\lvert u^\prime \rvert^2} \scaletwoSuprimeubarprime{(\al)^i G_2}^2 \hspace{.5mm} \text{d}u^\prime \hspace{.5mm} \text{d}\ubar^{\prime} .   \]We then have

		\begin{align*}
		K_2 \leq& \int_0^{\ubar}\int_{u_{\infty}}^u \frac{a}{\lvert u^\prime \rvert^2} \scaletwoSuprimeubarprime{(\al)^i \sum_{i_1+i_2+i_3+i_4=i} \nabla^{i_1} \psi^{i_2} \nabla^{i_3}(\psi,\chihat,\chibarhat)\nabla^{i_4}(\Psi,\alpha)}^2 \hspace{.5mm} \text{d}u^\prime \hspace{.5mm} \text{d}\ubar^{\prime}\\
		&+ \int_0^{\ubar}\int_{u_{\infty}}^u \frac{a}{\lvert u^\prime \rvert^2} \scaletwoSuprimeubarprime{(\al)^i \sum_{i_1+i_2+i_3 +i_4=i}\nabla^{i_1}\psi^{i_2}\nabla^{i_3}(\alpha_F,\Y) \nabla^{i_4+1}\Y}^2 \hspace{.5mm} \text{d}u^\prime \hspace{.5mm} \text{d}\ubar^{\prime} \\
		&+ \int_0^{\ubar}\int_{u_{\infty}}^u \frac{a}{\lvert u^\prime \rvert^2} \scaletwoSuprimeubarprime{(\al)^i \sum_{i_1+i_2+i_3 +i_4=i}\nabla^{i_1}\psi^{i_2}\nabla^{i_3}\Y \nabla^{i_4+1}\alpha_F}^2 \hspace{.5mm} \text{d}u^\prime \hspace{.5mm} \text{d}\ubar^{\prime} \\
		&+ \int_0^{\ubar}\int_{u_{\infty}}^u \frac{a}{\lvert u^\prime \rvert^2} \scaletwoSuprimeubarprime{(\al)^i \sum_{i_1+i_2+i_3 +i_4+i_5=i}\nabla^{i_1}\psi^{i_2}\nabla^{i_3}(\psi,\chibarhat,\tr\chibar) \nabla^{i_4}(\alpha_F,\Y) \nabla^{i_5}(\alpha_F,\Y)}^2 \hspace{.5mm} \text{d}u^\prime \hspace{.5mm} \text{d}\ubar^{\prime} \\
		:=& K_{21}+K_{22}+K_{23}+K_{24}.
		\end{align*}	
		
		\begin{itemize}
			\item The term $K_{21}$ can be controlled as below
			
\begin{equation*}
\begin{split}
K_{21}\leq& \int_0^{\ub}\int_{u_{\infty}}^u \frac{a}{|u'|^2}\|(\at)^i (\p,\chih, \chibh)\nab^i(\Psi, \a)\|^2_{L^2_{sc}(S_{u',\ub})}du'd\ub'\\
&+\int_0^{\ub}\int_{u_{\infty}}^u \f{a}{|u'|^2} \|(\at)^i\sum_{\substack{i_1+i_2+i_3+i_4=i\\i_4\leq i-1}}\nabla^{i_1}\p^{i_2}\nabla^{i_3}(\p, \chih, \chibh)\nabla^{i_4} (\Psi, \a)\|^2_{L^2_{sc}(S_{u',\ub})}  du'd\ub'\\
\leq& \int_0^{\ub}\int_{u_{\infty}}^u \frac{a}{|u'|^4}\|\p, \chih, \chibh\|^2_{L^{\infty}_{sc}(S_{u',\ub})} \|(\at)^i\nab^i (\Psi, \a)\|^2_{L^2_{sc}(S_{u',\ub})}du'd\ub'\\
&+\int_0^{\ub}\int_{u_{\infty}}^u \|(\at)^{i+1}\sum_{\substack{i_1+i_2+i_3+i_4=i\\i_4\leq i-1}}\nabla^{i_1}\p^{i_2}\nabla^{i_3}(\f{\at}{|u'|}\p,  \f{\at}{|u'|}\chih,\f{\at}{|u'|}\chibh)\nabla^{i_4} (a^{-\f12}\Psi, a^{-\f12}\a)\|^2_{L^2_{sc}(S_{u',\ub})}  du'd\ub'
\end{split}
\end{equation*}

\begin{equation*}
\begin{split}
\leq& \int_0^{\ub}\int_{u_{\infty}}^u a^{-1}\|\f{\at}{|u'|}\p,  \f{\at}{|u'|}\chih, \f{\at}{|u'|}\chibh\|^2_{L^{\infty}_{sc}(S_{u',\ub})} \f{a}{|u'|^2} \|(\at)^i\nab^i\Psi\|^2_{L^2_{sc}(S_{u',\ub})}du'd\ub'\\
&+ \int_0^{\ub}\int_{u_{\infty}}^u \|\f{\at}{|u'|} \p,  \f{\at}{|u'|}\chih, \f{\at}{|u'|} \chibh\|^2_{L^{\infty}_{sc}(S_{u',\ub})} \f{a}{|u'|^2} \|(\at)^i\nab^i (a^{-\f12}\a)\|^2_{L^2_{sc}(S_{u',\ub})}du'd\ub'\\
&+ \int_0^{\ub}\int_{u_{\infty}}^u \bigg(\f{a}{|u'|^2}O^2[\chibh]O^2[\a]+\f{a^{-\f12}\cdot a\cdot O^4}{|u'|^2}\bigg) du' d\ub' \\
\leq& \int_0^{\ub} a^{-1}\sup_{u'}\bigg(\|\f{\at}{|u'|}\p,  \f{\at}{|u'|}\chih, \f{\at}{|u'|}\chibh\|^2_{L^{\infty}_{sc}(S_{u',\ub})}\bigg) \bigg(\int_{u_{\infty}}^u  \f{a}{|u'|^2} \|(\at)^i\nab^i\Psi\|^2_{L^2_{sc}(S_{u',\ub})}du'\bigg)d\ub'\\
&+\int_{u_{\infty}}^u \sup_{u'}\bigg(\|\f{\at}{|u'|} \p,  \f{\at}{|u'|}\chih, \f{\at}{|u'|} \chibh\|^2_{L^{\infty}_{sc}(S_{u',\ub})}\bigg) \f{a}{|u'|^2} \bigg(\int_0^{\ub}\|(\at)^i\nab^i (a^{-\f12}\a)\|^2_{L^2_{sc}(S_{u',\ub})}d\ub'\bigg) du'\\
&+\f{a}{|u|}\bigg(O^2[\chibh]+O^2[\chih]+1\bigg)O^2[\a]+\f{\at}{|u|}\cdot O^4\\
\leq&a^{-1}\cdot O^2 \cdot R^2+\bigg(O^2[\chibh]+O^2[\chih]+1\bigg) \cdot \big(\Rb^2[\b]+\f{1}{a}\big)\\
&+\f{a}{|u|}\bigg(O^2[\chibh]+O^2[\chih]+1\bigg) \cdot \bigg(\M R^2[\a]+O^2[\a]\bigg)+\f{\at}{|u|}\cdot O^4\\
\ls& \f{1}{a^{\f13}}+(1+\M R^2[\a])\cdot(\Rb^2[\b]+\M R^2[\a]+1)\leq \left( \mathcal{I}^{(0)} \right)^4 +1.
\end{split}
\end{equation*}

			\item The sum $K_{22} +K_{23}$ can be controlled by

			\begin{equation}\begin{split} &K_{22}+K_{23} \\ =& \int_0^{\ubar}\int_{u_{\infty}}^u \frac{a}{\lvert u^\prime \rvert^2} \scaletwoSuprimeubarprime{(\al)^i \sum_{i_1+i_2+i_3 +i_4=i}\nabla^{i_1}\psi^{i_2}\nabla^{i_3}(\alpha_F,\Y) \nabla^{i_4+1}\Y}^2 \hspace{.5mm} \text{d}u^\prime \hspace{.5mm} \text{d}\ubar^{\prime} \\
		&+ \int_0^{\ubar}\int_{u_{\infty}}^u \frac{a}{\lvert u^\prime \rvert^2} \scaletwoSuprimeubarprime{(\al)^i \sum_{i_1+i_2+i_3 +i_4=i}\nabla^{i_1}\psi^{i_2}\nabla^{i_3}\Y \nabla^{i_4+1}\alpha_F}^2 \hspace{.5mm} \text{d}u^\prime \hspace{.5mm} \text{d}\ubar^{\prime}     \\ \lesssim& \int_0^{\ubar}\int_{u_{\infty}}^u \frac{a^2}{\lvert u^\prime \rvert^2} \ScaletwoSuprimeubarprime{a^5 \left(\f{\alpha_F}{\al},\f\Y\al\right)\nabla^{11}\Y}^2  \\&+ \int_0^{\ubar}\int_{u_{\infty}}^u \frac{a}{\lvert u^\prime \rvert^2} \ScaletwoSuprimeubarprime{(\al)^{i+1} \sum_{\substack{i_1+i_2+i_3 +i_4=i,\\ i_4<10}}\nabla^{i_1}\psi^{i_2}\nabla^{i_3}\left(\f{\alpha_F}{\al},\f\Y\al \right) \nabla^{i_4+1}\Y}^2 \hspace{.5mm} \text{d}u^\prime \hspace{.5mm} \text{d}\ubar^{\prime}   \\ &+ \int_0^{\ubar}\int_{u_{\infty}}^u \frac{a^2}{\lvert u^\prime \rvert^2} \ScaletwoSuprimeubarprime{a^5 \Y \nabla^{11}\alpha_F}^2  \\&+ \int_0^{\ubar}\int_{u_{\infty}}^u \frac{a}{\lvert u^\prime \rvert^2} \ScaletwoSuprimeubarprime{(\al)^{i+1} \sum_{\substack{i_1+i_2+i_3 +i_4=i,\\ i_4<10}}\nabla^{i_1}\psi^{i_2}\nabla^{i_3} \Y \nabla^{i_4+1}\alpha_F}^2 \hspace{.5mm} \text{d}u^\prime \hspace{.5mm} \text{d}\ubar^{\prime}         \\ \leq& \int_{0}^{\ubar} \int_{u_{\infty}}^u \frac{a}{\lvert u^\prime \rvert^2}\cdot \frac{a(O^4+O^2\cdot F^2)}{\lvert u^\prime \rvert^2}\duprime \hspace{.5mm} \text{d}\ubar^\prime \lesssim 1.   \end{split}\end{equation}
			
			\item The final term $K_{24}$ can be controlled in the same way
		    \begin{equation}\begin{split}
		        &\int_0^{\ubar}\int_{u_{\infty}}^u \frac{a}{\lvert u^\prime \rvert^2} \scaletwoSuprimeubarprime{(\al)^i \sum_{i_1+i_2+i_3 +i_4+i_5=i}\nabla^{i_1}\psi^{i_2}\nabla^{i_3}(\psi,\chibarhat,\tr\chibar) \nabla^{i_4}(\alpha_F,\Y) \nabla^{i_5}(\alpha_F,\Y)}^2 \hspace{.5mm} \text{d}u^\prime \hspace{.5mm} \text{d}\ubar^{\prime} \\ = &\int_0^{\ubar}\int_{u_{\infty}}^u a\cdot \lvert u^{\prime} \rvert^2 \cdot \ScaletwoSuprimeubarprime{(\al)^i \sum_{i}\nabla^{i_1}\psi^{i_2}\nabla^{i_3}\left (\frac{a(\psi,\chibarhat,\tr\chibar)}{\upr^2} \right) \nabla^{i_4}\left(\frac{(\alpha_F,\Y)}{\al}\right) \nabla^{i_5}\left(\frac{(\alpha_F,\Y)}{\al}\right)}^2 \\ &\text{d}u^\prime \hspace{.5mm} \text{d}\ubar^{\prime}. 
		    \end{split}\end{equation}Clearly, in the above, the worst bound that can be obtained is when there is a triple anomaly. This only holds when the Ricci coefficient is $\tr\chibar$ and both Maxwell components are $\alpha_F$. Also, importantly, the number $i$ of angular derivatives is at most $10$ so that no elliptic estimates are required. In this case,
			
			\begin{equation}
			    \begin{split}
			        &\int_0^{\ubar}\int_{u_{\infty}}^u a\cdot \lvert u^{\prime} \rvert^2 \cdot \ScaletwoSuprimeubarprime{(\al)^i \sum_{i}\nabla^{i_1}\psi^{i_2}\nabla^{i_3}\left (\frac{a \hspace{.5mm} \tr\chibar}{\upr^2} \right) \nabla^{i_4}\left(\frac{\alpha_F}{\al}\right) \nabla^{i_5}\left(\frac{\alpha_F}{\al}\right)}^2 \duprime \dubarprime \\ &\lesssim \sup_{\ubar} \intu  a\cdot \upr^2 \cdot \frac{O^2[\tr\chibar] O^4[\alpha_F]}{\upr^4} \duprime \lesssim O^2[\tr\chibar] O^4[\alpha_F].
			    \end{split}
			\end{equation} Now $O[\tr\chibar]\lesssim 1$ by Proposition \ref{trchibarboundRicci} and $O[\alpha_F] \lesssim \underline{\mathcal{F}}[\rho_F,\sigma_F]+1 \lesssim 2\underline{\mathcal{F}}_0[\rho_F,\sigma_F]+ \frac{1}{a^{\frac{1}{8}}}$, where in the first inequality we use the scale--invariant $L^2$--estimates on the Maxwell components and in the second inequality the energy estimates.  In particular, the inequality can be traced back to the initial data and this is a key point.
		\end{itemize}
		\vspace{3mm}
		
		\par \noindent Finally, we use Gr\"onwall for $M_2$ and conclude. In a similar way, we obtain
		
		\begin{equation*}
		{\color{black} \scaletwoHu{\aln(\rho,\sigma, \tbetabar)} + \scaletwoHbaru{\aln (\tbetabar, \alphabar)} \lesssim (\mathcal{I}^{(0)})^2+ \mathcal{I}^{(0)}+1. }
		\end{equation*}The result follows.
	\end{proof}

	\section{The formation of trapped surfaces}\label{TSF}

In this section, we prove

\begin{minipage}[!t]{0.27\textwidth}
\begin{tikzpicture}[scale=0.70]
\draw [white](3,-1)-- node[midway, sloped, below,black]{$H_{u_{\infty}}(u=u_{\infty})$}(4,0);
\draw [white](1,1)-- node[midway,sloped,above,black]{$H_{-a/4}$}(2,2);
\draw [white](2,2)--node [midway,sloped,above,black] {$\Hb_{1}(\ub=1)$}(4,0);
\draw [white](1,1)--node [midway,sloped, below,black] {$\Hb_{0}(\ub=0)$}(3,-1);
\draw [dashed] (0, 4)--(0, -4);
\draw [dashed] (0, -4)--(4,0)--(0,4);
\draw [dashed] (0,0)--(2,2);
\draw [dashed] (0,-4)--(4,0);
\draw [dashed] (0,2)--(3,-1);
\draw [very thick] (1,1)--(3,-1)--(4,0)--(2,2)--(1,1);
\fill[black!35!white] (1,1)--(3,-1)--(4,0)--(2,2)--(1,1);
\draw [->] (3.3,-0.6)-- node[midway, sloped, above,black]{$e_4$}(3.6,-0.3);
\draw [->] (1.4,1.3)-- node[midway, sloped, below,black]{$e_4$}(1.7,1.6);
\draw [->] (3.3,0.6)-- node[midway, sloped, below,black]{$e_3$}(2.7,1.2);
\draw [->] (2.4,-0.3)-- node[midway, sloped, above,black]{$e_3$}(1.7,0.4);
\end{tikzpicture}
\end{minipage}
\hspace{0.01\textwidth} 
\begin{minipage}[!t]{0.63\textwidth}

{\bf Theorem \ref{main2} }
Given $\mathcal{I}^{(0)}$, there exists a sufficiently large $a_0=a_0(\mathcal{I}^{(0)})$ such that the following holds. For any $0< a_0<a$, the unique smooth solution $(\mathcal{M}, g)$ of the Einstein-Maxwell equations from Theorem \ref{main1} with initial data satisfying
		\begin{itemize}
			\item $\sum_{ i \leq 10, k\leq 3}a^{-\frac{1}{2}} \lVert \nabla_4^k \left(\lvert u_\infty\rvert \nabla\right)^i (\chihat,\alpha_F)\rVert_{L^\infty(S_{u_\infty,\ubar})}\leq \mathcal{I}^{(0)}$ along $u=u_\infty$,
			\item Minkowskian initial data along $\ubar=0$,
			\item $\int_0^1 \lvert u_\infty \rvert^2 \left( \lvert \chihat_0 \rvert^2 + \lvert \alpha_{F0}\rvert^2 \right) (u_\infty, \ubar^\prime)\hspace{.5mm} \text{d}\ubar^\prime \geq a$ uniformly for every direction along $u=u_\infty$
		\end{itemize}has a trapped surface at $S_{-\alpha/4,1}$.
\end{minipage}

Proof. We first derive pointwise estimates for $|\chih|^2_{\gamma}$. 
Fix $(\theta^1, \theta^2)\in S^2$. We consider the following null structure equation 
\begin{equation*}
 \nab_3\chih+\frac 12 \tr\chib \chih-2\omegab \chih=\nab\widehat{\otimes} \eta-\frac 12 \tr\chi \chibh +\eta\widehat{\otimes} \eta.
\end{equation*}
We contract this $2$-tensor with another $2$-tensor $\chih$ and get
\begin{equation}\label{chih square}
\f12 \nab_3|\chih|^2_{\gamma}+\f12\tr\chib |\chih|^2_{\gamma}-2\omb|\chih|^2_{\gamma}=\chih(\nab\widehat{\otimes}\eta-\f12\tr\chi\chibh+\eta\widehat{\otimes}\eta).
\end{equation}
Employing the fact $\omb=-\f12\nab_3(\log \Omega)=-\f12\O^{-1}\nab_3\O$, we rewrite (\ref{chih square}) as 
\begin{equation*}
\begin{split} 
\nab_3(\O^2|\chih|^2_{\gamma})+\O^2\tr\chib|\chih|^2_{\gamma}=2\O^2\chih(\nab\widehat{\otimes}\eta-\f12\tr\chi\chibh+\eta\widehat{\otimes}\eta).
\end{split}
\end{equation*}
Using $\nab_3=\f{1}{\O}(\f{\partial}{\partial u}+b^A\f{\partial}{\partial \theta^A})$, we rewrite the above equation as
\begin{equation*}
\begin{split} 
\f{\partial}{\partial u}(\O^2|\chih|^2_{\gamma})+\O\tr\chib\cdot \O^2|\chih|^2_{\gamma}=&2\O^3\chih(\nab\widehat{\otimes}\eta-\f12\tr\chi\chibh+\eta\widehat{\otimes}\eta)-b^A\f{\partial}{\partial \theta^A}(\O^2|\chih|^2_{\gamma}).
\end{split}
\end{equation*}
Substitute $\O\tr\chib$ with
$$\Omega \tr\chib=\Omega(\tr\chib+\f{2}{|u|})-\Omega\f{2}{|u|}=\Omega(\tr\chib+\f{2}{|u|})-(\Omega-1)\f{2}{|u|}-\f{2}{|u|}$$
we have
\begin{equation*}
\begin{split} 
\f{\partial}{\partial u}(\O^2|\chih|^2_{\gamma})-\f{2}{|u|}\O^2|\chih|^2_{\gamma}=&2\O^3\chih(\nab\widehat{\otimes}\eta-\f12\tr\chi\chibh+\eta\widehat{\otimes}\eta)-b^A\f{\partial}{\partial \theta^A}(\O^2|\chih|^2_{\gamma})\\
&-\O(\tr\chib+\f{2}{|u|})(\O^2|\chih|^2_{\gamma})+(\O-1)\cdot\f{2}{|u|}\cdot(\O^2|\chih|^2_{\gamma}).
\end{split}
\end{equation*}
This gives
\begin{equation}\label{chih square 2}
\begin{split} 
\f{\partial}{\partial u}\bigg(u^2\O^2|\chih|^2_{\gamma}\bigg)=&2\cdot|u|^2\cdot\O^3\chih(\nab\widehat{\otimes}\eta-\f12\tr\chi\chibh+\eta\widehat{\otimes}\eta)-|u|^2\cdot b^A\f{\partial}{\partial \theta^A}(\O^2|\chih|^2_{\gamma})\\
&-|u|^2\cdot\O(\tr\chib+\f{2}{|u|})(\O^2|\chih|^2_{\gamma})+|u|^2\cdot(\O-1)\cdot\f{2}{|u|}\cdot(\O^2|\chih|^2_{\gamma}).
\end{split}
\end{equation}
For $b^{A}$, we have equation
\begin{equation*}
\f{\partial b^{A}}{\partial \ub}=-4\Omega^2\zeta^{A},
\end{equation*}
which is from 
\begin{equation*}
[L,\Lb]=\f{\partial b^{A}}{\partial \ub}\f{\partial}{\partial \theta^{A}}.
\end{equation*}
Applying the identity $\zeta_A=\f12\eta_A-\f12\etb_A$, Proposition \ref{Omega} and the derived estimates on $\eta, \etb$, we conclude that there holds in $D_{u,\ub}$
\begin{equation*}
\|b^{A}\|_{L^{\infty}(\S)}\leq \f{\at}{|u|^2}.
\end{equation*}
For the right hand side of (\ref{chih square 2}), we have
$$\|2\cdot|u|^2\cdot\O^3\chih(\nab\widehat{\otimes}\eta-\f12\tr\chi\chibh+\eta\widehat{\otimes}\eta)\|_{L^{\infty}(\S)}\leq |u|^2\cdot\f{\at}{|u|}\cdot(\f{\at}{|u|^3}+\f{a}{|u|^4})\leq\f{a}{|u|^2},$$
$$\||u|^2\cdot b^A\f{\partial}{\partial \theta^A}(\O^2|\chih|^2_{\gamma})\|_{L^{\infty}(\S)}\leq
|u|^2\cdot\f{\at}{|u|^2}\cdot\f{a}{|u|^2}\leq\f{a^{\f32}}{|u|^2},$$
$$\|-|u|^2\cdot\O(\tr\chib+\f{2}{|u|})(\O^2|\chih|^2_{\gamma})\|_{L^{\infty}(\S)}\leq
|u|^2\cdot\f{1}{|u|^2}\cdot\f{a}{|u|^2}\leq\f{a}{|u|^2},$$
$$\||u|^2\cdot(\O-1)\cdot\f{2}{|u|}\cdot(\O^2|\chih|^2_{\gamma}) \|_{L^{\infty}(\S)}\leq
|u|^2\cdot\f{1}{|u|}\cdot\f{2}{|u|}\cdot\f{a}{|u|^2}\leq\f{a}{|u|^2}.$$
In summary, we have
$$\f{\partial}{\partial u}\bigg(u^2\O^2|\chih|^2_{\gamma}\bigg)=M, \mbox{ and } |M|\ls \f{a^{\f32}}{|u|^2}\ll \f{a^{\f74}}{|u|^2},$$
which implies 
\begin{equation*}
-\f{a^{\f74}}{|u|}+\f{a^{\f74}}{|\ui|}  \leq |u|^2\O^2|\chih|^2_{\gamma}(u,\ub,\theta^1,\theta^2)-|u_{\infty}|^2\O^2|\chih|^2_{\gamma}(u_{\infty},\ub,\theta^1,\theta^2). 
\end{equation*}
Recall $\O(u_{\infty},\ub,\theta^1,\theta^2)=1$. We hence have
\begin{equation*}
|u|^2\O^2|\chih|^2_{\gamma}(u,\ub,\theta^1,\theta^2)\geq|u_{\infty}|^2|\chih|^2_{\gamma}(u_{\infty},\ub,\theta^1,\theta^2)-\f{a^{\f74}}{|u|}.
\end{equation*}
Integrating with respect to $\ub$, we further have for $\ui\leq u\leq -a/4$
\begin{equation}\label{ts chih}
\begin{split}
\int_0^1 |u|^2\O^2|\chih|^2_{\gamma}(u,\ub',\theta^1,\theta^2)d\ub'\geq \int_0^1 |u_{\infty}|^2|\chih|^2_{\gamma}(u_{\infty},\ub',\theta^1,\theta^2)d\ub'-\f{a^{\f74}}{|u|}.  
\end{split}
\end{equation}

\par \noindent In the same fashion, we derive pointwise estimates for $|\a_F|^2_{\gamma}$. Consider the null Maxwell equation
$$\nabla_3 \alpha_F+ \frac{1}{2} \tr\chibar \alpha_F-2\omb \a_F = - \nabla \rho_F+ \Hodge{\nabla}\sigma_F -2\Hodge{\etabar} \cdot \sigma_F + 2 \etabar \cdot \rho_F -\chihat \cdot \alphabar_F.$$

\par \noindent We contract this $1$-form with another $1$-form $\a_F$ and get
\begin{equation}\label{aF square}
\f12 \nab_3|\a_F|^2_{\gamma}+\f12\tr\chib |\a_F|^2_{\gamma}-2\omb|\a_F|^2_{\gamma}=\a_F(- \nabla \rho_F+ \Hodge{\nabla}\sigma_F -2\Hodge{\etabar} \cdot \sigma_F + 2 \etabar \cdot \rho_F -\chihat \cdot \alphabar_F).
\end{equation}
Employing the fact $\omb=-\f12\nab_3(\log \Omega)=-\f12\O^{-1}\nab_3\O$, we rewrite (\ref{aF square}) as 
\begin{equation*}
\begin{split} 
\nab_3(\O^2|\a_F|^2_{\gamma})+\O^2\tr\chib|\a_F|^2_{\gamma}=2\O^2\a_F(- \nabla \rho_F+ \Hodge{\nabla}\sigma_F -2\Hodge{\etabar} \cdot \sigma_F + 2 \etabar \cdot \rho_F -\chihat \cdot \alphabar_F).
\end{split}
\end{equation*}
Using $\nab_3=\f{1}{\O}(\f{\partial}{\partial u}+b^A\f{\partial}{\partial \theta^A})$, we rewrite the above equation as
\begin{equation*}
\begin{split} 
\f{\partial}{\partial u}(\O^2|\a_F|^2_{\gamma})+\O\tr\chib\cdot \O^2|\a_F|^2_{\gamma}=&2\O^3\a_F(- \nabla \rho_F+ \Hodge{\nabla}\sigma_F -2\Hodge{\etabar} \cdot \sigma_F + 2 \etabar \cdot \rho_F -\chihat \cdot \alphabar_F)-b^A\f{\partial}{\partial \theta^A}(\O^2|\a_F|^2_{\gamma}).
\end{split}
\end{equation*}
Substitute $\O\tr\chib$ with
$$\Omega \tr\chib=\Omega(\tr\chib+\f{2}{|u|})-\Omega\f{2}{|u|}=\Omega(\tr\chib+\f{2}{|u|})-(\Omega-1)\f{2}{|u|}-\f{2}{|u|}$$
we have
\begin{equation*}
\begin{split} 
\f{\partial}{\partial u}(\O^2|\a_F|^2_{\gamma})-\f{2}{|u|}\O^2|\a_F|^2_{\gamma}=&2\O^3\a_F(- \nabla \rho_F+ \Hodge{\nabla}\sigma_F -2\Hodge{\etabar} \cdot \sigma_F + 2 \etabar \cdot \rho_F -\chihat \cdot \alphabar_F)-b^A\f{\partial}{\partial \theta^A}(\O^2|\a_F|^2_{\gamma})\\
&-\O(\tr\chib+\f{2}{|u|})(\O^2|\a_F|^2_{\gamma})+(\O-1)\cdot\f{2}{|u|}\cdot(\O^2|\a_F|^2_{\gamma}).
\end{split}
\end{equation*}
This gives
\begin{equation}\label{chih square 2}
\begin{split} 
\f{\partial}{\partial u}\bigg(u^2\O^2|\a_F|^2_{\gamma}\bigg)=&2\cdot|u|^2\cdot\O^3\a_F(- \nabla \rho_F+ \Hodge{\nabla}\sigma_F -2\Hodge{\etabar} \cdot \sigma_F + 2 \etabar \cdot \rho_F -\chihat \cdot \alphabar_F)-|u|^2\cdot b^A\f{\partial}{\partial \theta^A}(\O^2|\a_F|^2_{\gamma})\\
&-|u|^2\cdot\O(\tr\chib+\f{2}{|u|})(\O^2|\a_F|^2_{\gamma})+|u|^2\cdot(\O-1)\cdot\f{2}{|u|}\cdot(\O^2|\a_F|^2_{\gamma}).
\end{split}
\end{equation}
For $b^{A}$, we have the equation
\begin{equation*}
\f{\partial b^{A}}{\partial \ub}=-4\Omega^2\zeta^{A},
\end{equation*}
which is from 
\begin{equation*}
[L,\Lb]=\f{\partial b^{A}}{\partial \ub}\f{\partial}{\partial \theta^{A}}.
\end{equation*}
Applying the identity $\zeta_A=\f12\eta_A-\f12\etb_A$, Propositions \ref{Omega}, derived estimates of $\eta, \etb$, it holds in $D_{u,\ub}$
\begin{equation*}
\|b^{A}\|_{L^{\infty}(\S)}\leq \f{\at}{|u|^2}.
\end{equation*}
For the right hand side of (\ref{chih square 2}), we have
$$\|2\cdot|u|^2\cdot\O^3\a_F(- \nabla \rho_F+ \Hodge{\nabla}\sigma_F -2\Hodge{\etabar} \cdot \sigma_F + 2 \etabar \cdot \rho_F -\chihat \cdot \alphabar_F)\|_{L^{\infty}(\S)}\leq |u|^2\cdot\f{\at}{|u|}\cdot(\f{\at}{|u|^3}+\f{a}{|u|^4}+\f{a^{\f32}}{|u|^4})\leq\f{a}{|u|^2}+\f{a^2}{|u|^3},$$
$$\||u|^2\cdot b^A\f{\partial}{\partial \theta^A}(\O^2|\a_F|^2_{\gamma})\|_{L^{\infty}(\S)}\leq
|u|^2\cdot\f{\at}{|u|^2}\cdot\f{a}{|u|^2}\leq\f{a^{\f32}}{|u|^2},$$
$$\|-|u|^2\cdot\O(\tr\chib+\f{2}{|u|})(\O^2|\a_F|^2_{\gamma})\|_{L^{\infty}(\S)}\leq
|u|^2\cdot\f{1}{|u|^2}\cdot\f{a}{|u|^2}\leq\f{a}{|u|^2},$$
$$\||u|^2\cdot(\O-1)\cdot\f{2}{|u|}\cdot(\O^2|\a_F|^2_{\gamma}) \|_{L^{\infty}(\S)}\leq
|u|^2\cdot\f{1}{|u|}\cdot\f{2}{|u|}\cdot\f{a}{|u|^2}\leq\f{a}{|u|^2}.$$
In summary, we have
$$\f{\partial}{\partial u}\bigg(u^2\O^2|\a_F|^2_{\gamma}\bigg)=M, \mbox{ and } |M|\ls \f{a^{\f32}}{|u|^2}\ll \f{a^{\f74}}{|u|^2},$$
which implies
\begin{equation*}
-\f{a^{\f74}}{|u|}+\f{a^{\f74}}{|\ui|}  \leq |u|^2\O^2|\a_F|^2_{\gamma}(u,\ub,\theta^1,\theta^2)-|u_{\infty}|^2\O^2|\a_F|^2_{\gamma}(u_{\infty},\ub,\theta^1,\theta^2). 
\end{equation*}
Recall $\O(u_{\infty},\ub,\theta^1,\theta^2)=1$. We hence have
\begin{equation*}
|u|^2\O^2|\a_F|^2_{\gamma}(u,\ub,\theta^1,\theta^2)\geq|u_{\infty}|^2|\a_F|^2_{\gamma}(u_{\infty},\ub,\theta^1,\theta^2)-\f{a^{\f74}}{|u|}.
\end{equation*}
Integrating with respect to $\ub$, we further have for $\ui\leq u\leq -a/4$
\begin{equation*}
\begin{split}
\int_0^1 |u|^2\O^2|\a_F|^2_{\gamma}(u,\ub',\theta^1,\theta^2)d\ub'\geq \int_0^1 |u_{\infty}|^2|\a_F|^2_{\gamma}(u_{\infty},\ub',\theta^1,\theta^2)d\ub'-\f{a^{\f74}}{|u|}.
\end{split}
\end{equation*}
Together with (\ref{ts chih})
\begin{equation*}
\begin{split}
\int_0^1 |u|^2\O^2|\chih|^2_{\gamma}(u,\ub',\theta^1,\theta^2)d\ub'\geq \int_0^1 |u_{\infty}|^2|\chih|^2_{\gamma}(u_{\infty},\ub',\theta^1,\theta^2)d\ub'-\f{a^{\f74}}{|u|}.  
\end{split}
\end{equation*}
We conclude that
\begin{equation*}
\begin{split}
\int_0^1 |u|^2\O^2(|\chih|^2_{\gamma}+|\a_F|^2_{\gamma})(u,\ub',\theta^1,\theta^2)d\ub'\geq& \int_0^1 |u_{\infty}|^2(|\chih_F|^2_{\gamma}+|\a_F|^2_{\gamma})(u_{\infty},\ub',\theta^1,\theta^2)d\ub'-\f{2a^{\f74}}{|u|}\\
\geq& a-\f{2a^{\f74}}{|u|}\geq a-\f{8a^{\f74}}{a} \geq \f{7a}{8}.  
\end{split}
\end{equation*}
Pick $u=-a/4$. With the fact $\|\O-1\|_{L^{\infty}(\S)}\ls {1}/{a}$, for sufficiently large $a$, we hence have
\begin{equation*}
\begin{split}
(-\f{a}{4})^2\int_0^1 (|\chih|^2_{\gamma}+|\a_F|^2_{\gamma})(-\f{a}{4},\ub',\theta^1,\theta^2)d\ub'\geq& \f67\cdot
\int_0^1 (-\f{a}{4})^2\O^2(|\chih|^2_{\gamma}+|\a_F|^2_{\gamma})(-\f{a}{4},\ub',\theta^1,\theta^2)d\ub'\\
\geq&\f67\cdot \f{7a}{8}=\f{3a}{4}.   
\end{split}
\end{equation*}
This implies
\begin{equation}\label{int chih square}
\begin{split}
\int_0^1 (|\chih|^2_{\gamma}+|\a_F|^2_{\gamma})(-\f{a}{4},\ub',\theta^1,\theta^2)d\ub'\geq&\f{3a}{4}\cdot \f{16}{a^2}= \f{12}{a}
\end{split}
\end{equation}

\par \noindent We now consider the outgoing null structure equation for $\tr\chi$,
$$\nab_4 \tr\chi+\f12(\tr\chi)^2=-|\chih|^2_{\gamma}-2\o\tr\chi-|\a_F|_{\gamma}^2.$$
Using $\o=-\f12\nab_4(\log \Omega)$, we have
\begin{equation*}
\begin{split}
\nab_4 \tr\chi+\f12(\tr\chi)^2=&-|\chih|^2-2\o\tr\chi-|\a_F|_{\gamma}^2\\
=&-|\chih|^2_{\gamma}+\nab_4(\log\Omega)\tr\chi-|\a_F|_{\gamma}^2=-|\chih|^2_{\gamma}+\f{1}{\Omega}\nab_4\Omega\cdot \tr\chi-|\a_F|_{\gamma}^2.
\end{split}
\end{equation*}
Hence,
\begin{equation*}
\begin{split}
\nab_4(\Omega^{-1} \tr\chi)=&-\O^{-2}\nab_4\O\cdot\tr\chi+\O^{-1}\nab_4\tr\chi\\
=&\O^{-1}(\nab_4\tr\chi-\O^{-1}\cdot\nab_4\O\cdot\tr\chi)=\O^{-1}\bigg(-\f12(\tr\chi)^2-|\chih|^2_{\gamma}-|\a_F|_{\gamma}^2\bigg).
\end{split}
\end{equation*}
With the fact $e_4=\Omega^{-1}\f{\partial}{\partial \ub}$, we have
\begin{equation}\label{O trchi}
\begin{split}
\f{\partial}{\partial \ub}(\Omega^{-1} \tr\chi)=&-\f12(\tr\chi)^2-|\chih|^2_{\gamma}-|\a_F|_{\gamma}^2.
\end{split}
\end{equation}
For every $(\theta^1,\theta^2)\in \mathbb{S}^2$, along $\Hb_0$ we have 
$$(\Omega^{-1}\tr\chi)(-\f{a}{4}, 0, \theta^1, \theta^2)=1^{-1}\cdot \f{2}{a/4}=\f{8}{a}.$$
We then integrate (\ref{O trchi}). Using (\ref{int chih square}) we obtain
\begin{equation*}
\begin{split}
&(\Omega^{-1}\tr\chi)(-\f{a}{4},1, \theta^1, \theta^2)\\
\leq & (\Omega^{-1}\tr\chi)(-\f{a}{4}, 0, \theta^1, \theta^2)-\int_0^{1}(|\chih|^2_{\gamma}+|\a_F|_{\gamma}^2)(-\f{a}{4},\ub',\theta^1,\theta^2)d\ub'\\
\leq & \f{8}{a}-\f{12}{a}<0.
\end{split}
\end{equation*}
Recall, finally, that in $D_{u,\ub}$ the following estimate holds
\begin{equation*}
\|\tr\chib+\f{2}{|u|}\|_{L^{\infty}(S_{u,\ub})}\leq \f{1}{|u|^2}.
\end{equation*}
In particular, this implies
$$\tr\chib(-\f{a}{4},1, \theta^1, \theta^2)<0 \mbox{ for every} (\theta^1, \theta^2)\in \mathbb{S}^2.$$
Therefore, we conclude that $S_{-\f{a}{4},1}$ is a trapped surface.

{\color{black}

\section{A Scaling Argument}\label{sec rescale} 
In this article, we use coordinate system $(u, \ub, \theta^1, \theta^2)$ based on double null foliations, where $(\theta^1, \theta^2)$ are stereographic coordinates on $\mathbb{S}^2$. In these coordinates, we study spacetime region
$$u_{\infty} \leq u \leq -\f{a}{4}, \, \, \quad 0\leq \ub \leq 1. $$
The Lorentzian metric $g$ satisfies ansatz
$$g=-2\O^2(du\otimes d\ub+d\ub\otimes du)+\gamma_{AB}(d\theta^A-d^A du)\otimes(d\theta^B-d^B du).$$

\subsection{A Spacetime Rescaling}  
Following \cite{An:2019}, we use a new coordinate system $(u', \ub', \theta^{1'}, \theta^{2'})$, where
\begin{equation}\label{rescale}
u'=\d u, \, \, \, \ub'=\d\ub, \, \, \, \theta^{1'}=\d \theta^1, \, \, \,\theta^{2'}=\d \theta^2. 
\end{equation}
Note that coordinates $(\theta^1, \theta^2)$ on $\S$ are set up through stereographic projection. Assume $(x_1, x_2, x_3)$  satisfying $x_1^2+x_2^2+x_3^2=a^2$ and lying on the upper hemisphere of $S_{-a,0}$ (with radius $a$). It then has stereographic projection $(\zeta_1, \zeta_2)=(\f{ax_1}{a+x_3}, \f{ax_2}{a+x_3})$. Scale down the length by a factor $\d$, we then have $x_1'=\d x_1, \, x_2'=\d x_2, \, x_3'=\d x_3, \,\, (x_1')^2+(x_2')^2+(x_2')^2=\d^2 a^2$ and $(x_1', x_2', x_3')$ has stereographic projection 
$$(\zeta_1', \zeta_2')=(\f{\d a x_1'}{\d a+x_3'}, \f{\d ax_2'}{\d a+x_3'})=(\f{\d a\cdot \d x_1}{\d a+\d x_3}, \f{\d a \cdot \d x_2}{\d a+\d x_3})=(\f{\d ax_1}{a+x_3}, \f{\d ax_2}{a+x_3})=(\d\zeta_1, \d\zeta_2).$$ Therefore, the rescaled coordinates $(\theta^{1'}, \theta^{2'})=(\d \theta^1, \d \theta^2)$ on $S_{u', \ub'}$ make perfect sense  since $2$-sphere $S_{u',\ub'}=S_{\d u, \d \ub}$ is scaled down from $\S$ by a factor $\delta$.

Under the rescaling (\ref{rescale}), it follows
$$g'(u',\ub', \theta^{1'}, \theta^{2'})=\d^2\cdot g(u,\ub, \theta^1, \theta^2).$$ 
In $(u',\ub',\theta^{1'}, \theta^{2'})$ coordinates, we let
$$g'(u', \ub', \theta^1, \theta^2)=-2{\O'}^2(du'\otimes d\ub'+d\ub'\otimes du')+\gamma'_{A'B'}(d\theta^{A'}-d^{A'} du')\otimes(d\theta^{B'}-d^{B' }du').$$
Compare with
$$g(u, \ub, \theta^1, \theta^2)=-2\O^2(du\otimes d\ub+d\ub\otimes du)+\gamma_{AB}(d\theta^A-d^A du)\otimes(d\theta^B-d^B du).$$
Here we have
$$du'=\d\cdot du, \quad d\ub'=\d\cdot d\ub, \quad d\theta^{A'}=\d \cdot d \theta^A \,\mbox{  for } A=1, 2 , $$
$${\O'}^2(u',\ub', \theta^{1'}, \theta^{2'})=\O^2(u,\ub, \theta^1, \theta^2), \quad \gamma'_{A'B'}(u',\ub', \theta^{1'}, \theta^{2'})=\gamma_{AB}(u,\ub, \theta^1, \theta^2),$$ 
$$d^{A'}(u',\ub', \theta^{1'}, \theta^{2'})=d^A(u,\ub, \theta^1, \theta^2),$$
$$e'_3(u',\ub', \theta^{1'}, \theta^{2'})={\O'}^{-1}(\f{\partial}{\partial u'}+d^{A'}\f{\partial}{\partial \ub'})=\d^{-1}{\O}^{-1}(\f{\partial}{\partial u}+d^{A}\f{\partial}{\partial \ub})=\d^{-1}\cdot e_3(u, \ub, \theta^1, \theta^2),$$
\begin{equation}\label{e4 e4'}
e'_4(u',\ub', \theta^{1'}, \theta^{2'})={\O'}^{-1}\f{\partial}{\partial\ub'}=\d^{-1}{\O}^{-1}\f{\partial}{\partial\ub}=\d^{-1}\cdot e_4(u,\ub, \theta^1, \theta^2),
\end{equation}
\begin{equation}\label{eA eA'}
e'_A(u',\ub', \theta^{1'}, \theta^{2'})=\d^{-1}\cdot e_A(u,\ub, \theta^1, \theta^2), \mbox{ for } A=1,2. 
\end{equation}

\subsection{Rescaled Geometric Quantities}
As usual, with frame $\{e'_3, e'_4, e'_A,  e'_B\}$, we define 
\begin{equation*}
\begin{split}
&\chi'_{A'B'}=g'(D'_{A'} e'_4,e'_B),\, \,\, \quad \chib'_{A'B'}=g'(D'_{A'} e'_3,e'_B),\\
&\eta'_{A'}=-\frac 12 g'(D'_{3'} e'_A,e'_4),\quad \etab'_{A'}=-\frac 12 g'(D'_{4'} e'_A,e'_3),\\
&\omega'=-\frac 14 g'(D'_{4'} e'_3,e'_4),\quad\,\,\, \omegab'=-\frac 14 g'(D'_{3'} e'_4,e'_3),\\
&\zeta'_{A'}=\frac 1 2 g'(D'_{A'} e'_4,e'_3).
\end{split}
\end{equation*}
With $\gamma'_{A'B'}$ being the induced metric on $\S'$, we further decompose $\chi', \chib'$ into
$$\chi'_{A'B'}=\f12\tr\chi'\cdot \gamma'_{A'B'}+\chih'_{A'B'}, \quad \chib'_{A'B'}=\f12\tr\chib'\cdot \gamma'_{A'B'}+\chibh'_{A'B'}.$$
$$\mbox{Here} \quad D'_{e'_{\mu}}e'_{\nu}:=\Gamma'^{\lambda}_{\mu'\nu'}e'_{\lambda} \quad \mbox{ and } \quad \Gamma'^{\lambda}_{\mu'\nu'}:=\f12g'^{\lambda'\kappa'}(\f{\partial g'_{\kappa'\mu'}}{\partial x'_{\nu}}+\f{\partial g'_{\kappa' \nu'}}{\partial x'_{\mu}}-\f{\partial g'_{\mu'\nu'}}{\partial x'_{\kappa}}).$$
In \cite{An:2019}, we have
\begin{proposition}\label{Prop rescale 1}
For $\Gamma\in \{\chih, \tr\chi, \chibh, \tr\chib, \eta, \etb, \zeta, \o,\omb\}$ written in two different coordinates $(u',\ub',\theta^{1'}, \theta^{2'})$ and $(u, \ub, \theta^1, \theta^2)$, it holds that
$$\Gamma'(u',\ub',\theta^{1'}, \theta^{2'})=\d^{-1}\cdot \Gamma (u,\ub,\theta^{1}, \theta^{2}).$$
\end{proposition}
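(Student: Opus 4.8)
The plan is to verify the scaling law $\Gamma'(u',\ub',\theta^{1'},\theta^{2'}) = \delta^{-1}\Gamma(u,\ub,\theta^1,\theta^2)$ for each Ricci coefficient $\Gamma \in \{\chih,\tr\chi,\chibh,\tr\chib,\eta,\etb,\zeta,\omega,\omegab\}$ by working directly from the defining formulas in terms of the null frame $\{e'_3,e'_4,e'_A,e'_B\}$ and the rescaled metric $g'$, and tracking how each ingredient transforms. The key observations already recorded in the excerpt are that $g' = \delta^2 g$ (so $g'^{\lambda'\kappa'} = \delta^{-2} g^{\lambda\kappa}$), that the coordinate differentials rescale by $\delta$ (so $\partial/\partial x'_\mu = \delta^{-1}\,\partial/\partial x_\mu$), and that the frame vectors satisfy $e'_\mu = \delta^{-1} e_\mu$ for $\mu \in \{3,4,A,B\}$. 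From these, I would first compute that the Christoffel symbols are scale-invariant: $\Gamma'^{\lambda}_{\mu'\nu'} = \frac12 g'^{\lambda'\kappa'}\big(\partial_{\nu'} g'_{\kappa'\mu'} + \partial_{\mu'}g'_{\kappa'\nu'} - \partial_{\kappa'}g'_{\mu'\nu'}\big) = \frac12 \delta^{-2} g^{\lambda\kappa} \cdot \delta^{-1}\delta^2 \big(\partial_\nu g_{\kappa\mu}+\dots\big) = \delta^{-1}\Gamma^{\lambda}_{\mu\nu}$. Combining this with the frame rescalings gives $D'_{e'_\mu} e'_\nu = \Gamma'^{\lambda}_{\mu'\nu'} e'_\lambda = \delta^{-1}\Gamma^{\lambda}_{\mu\nu}\cdot \delta^{-1} e_\lambda = \delta^{-2}(D_{e_\mu}e_\nu)$.

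Next I would substitute these into each Ricci coefficient definition. For instance, $\chi'_{A'B'} = g'(D'_{A'}e'_4, e'_B) = \delta^2 g\big(\delta^{-2}D_{A}e_4,\ \delta^{-1}e_B\big) = \delta^{-1} g(D_A e_4, e_B) = \delta^{-1}\chi_{AB}$, and the same computation with the appropriate indices handles $\chib'$, $\eta'$, $\etb'$, $\omega'$, $\omegab'$, $\zeta'$ — in every case one factor of $\delta^2$ from $g'$ meets a factor $\delta^{-2}$ from $D'$ composed with $\delta^{-1}$ from the remaining frame vector, leaving $\delta^{-1}$. Since $\gamma'_{A'B'} = \gamma_{AB}$, the trace/traceless decomposition $\chi'_{A'B'} = \tfrac12\tr\chi'\gamma'_{A'B'} + \chih'_{A'B'}$ is compatible with $\chi'_{A'B'}=\delta^{-1}\chi_{AB}$ only if $\tr\chi' = \delta^{-1}\tr\chi$ and $\chih'_{A'B'} = \delta^{-1}\chih_{AB}$; concretely $\tr\chi' = (\gamma'^{-1})^{A'B'}\chi'_{A'B'} = (\gamma^{-1})^{AB}\cdot\delta^{-1}\chi_{AB} = \delta^{-1}\tr\chi$, and then the traceless part inherits the same factor. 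The argument for $\chibh'$ and $\tr\chib'$ is identical.

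The computations are all of the same flavor and none is individually hard; the only mild subtlety — and the step I would treat most carefully — is bookkeeping the number of $\delta^{-1}$ factors coming from differentiation versus from the frame vectors, making sure that $\omega' = -\tfrac14 g'(D'_{4'}e'_3, e'_4)$ really lands on $\delta^{-1}\omega$ and not $\delta^{-2}\omega$: here $D'_{e'_4}e'_3$ involves a derivative along $e'_4 = \delta^{-1}e_4$ of $e'_3 = \delta^{-1}e_3$, which one must expand as $\delta^{-1}e_4(\delta^{-1}e_3) + (\text{Christoffel terms})$; since $\delta$ is constant the product rule gives $\delta^{-2}(e_4 e_3)$-type terms, consistent with $D'_{e'_4}e'_3 = \delta^{-2}D_{e_4}e_3$ from the Christoffel computation above, so the bookkeeping is self-consistent. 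Once all nine coefficients are checked the proposition follows. I would present the Christoffel scaling as a short preliminary lemma, then list the nine verifications compactly since they differ only in index placement, and finally note that the analogous scaling for the Weyl curvature and Maxwell components (needed for Theorem \ref{main3}) follows by the same mechanism with the correspondingly different number of frame factors.
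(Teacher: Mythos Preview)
Your approach is correct and is essentially the standard one; the paper itself does not give a proof of this proposition but cites \cite{An:2019}, and the argument there (mirrored in the paper's own proof of the Maxwell analogue, Proposition~\ref{Prop rescale 3}) is exactly the frame-factor counting you carry out.

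One small clean-up: your intermediate Christoffel bookkeeping conflates abstract-tensor rescaling with coordinate-component rescaling. In the primed coordinates the component functions satisfy $g'_{\mu'\nu'}=g_{\mu\nu}$ and $g'^{\mu'\nu'}=g^{\mu\nu}$ (the paper records $\Omega'=\Omega$, $\gamma'_{A'B'}=\gamma_{AB}$), so the individual factors $\delta^{-2}$ and $\delta^{2}$ you insert are not both right, even though their product happens to cancel correctly. More seriously, the formula $D'_{e'_\mu}e'_\nu=\Gamma'^{\lambda}_{\mu'\nu'}e'_\lambda$ with the \emph{coordinate} Christoffel symbols only holds for coordinate vector fields, not for the null frame. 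The argument you give in your final paragraph is the right one and makes the Christoffel detour unnecessary: since $\delta$ is constant, the Levi-Civita connection of $\delta^2 g$ coincides with that of $g$, i.e.\ $D'=D$; hence $D'_{e'_\mu}e'_\nu=D_{\delta^{-1}e_\mu}(\delta^{-1}e_\nu)=\delta^{-2}D_{e_\mu}e_\nu$, and then $g'=\delta^2 g$ together with the remaining $\delta^{-1}$ from the third frame vector yields $\delta^{-1}$ for every Ricci coefficient. I would lead with that observation and drop the coordinate Christoffel computation.
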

And
\begin{proposition}\label{Prop rescale 2}
For $\Psi\in \{\a, \b, \rho, \sigma, \beb, \ab\}$ written in coordinates $(u',\ub',\theta^{1'}, \theta^{2'})$ and $(u, \ub, \theta^1, \theta^2)$, the following identity is true
$$\Psi'(u',\ub',\theta^{1'}, \theta^{2'})=\d^{-2}\cdot\Psi(u,\ub,\theta^{1}, \theta^{2}).$$
\end{proposition}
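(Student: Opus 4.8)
The plan is to exploit the fact that the change of variables \eqref{rescale} is a pure dilation of all four coordinates, performed together with the conformal rescaling $g' = \d^2 g$, and that these two operations are matched precisely so that in the new chart the \emph{components} of the metric coincide with the original ones at the corresponding point: since $x'^{\mu} = \d x^{\mu}$ and $g' = \d^2 g$, one has $g'_{\mu\nu}(x') = g_{\mu\nu}(x)$ whenever $x' = \d x$, exactly as recorded for ${\O'}^2$ and $\gamma'_{A'B'}$ in Section \ref{sec rescale}. Each null Weyl component $\Psi \in \{\a,\b,\rho,\sigma,\beb,\ab\}$ is a fourfold contraction of the Weyl tensor $W$ against the frame $\{e_3, e_4, e_A, e_B\}$, so it suffices to (i) find the scaling of $W$ as a $(0,4)$--tensor under \eqref{rescale}, and (ii) record the scaling of the coordinate components of the frame vectors.

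For step (i), I would first note that $\partial/\partial x'^{\mu} = \d^{-1}\,\partial/\partial x^{\mu}$, so that, together with $g'_{\mu\nu}(x') = g_{\mu\nu}(x)$, the Levi--Civita connection coefficients satisfy $\Gamma'^{\lambda}_{\mu\nu}(x') = \d^{-1}\Gamma^{\lambda}_{\mu\nu}(x)$ --- this is the identity underlying Proposition \ref{Prop rescale 1}. Differentiating once more, $R'^{\rho}{}_{\sigma\mu\nu}(x') = \d^{-2}R^{\rho}{}_{\sigma\mu\nu}(x)$; lowering the first index with $g'_{\rho\tau}(x') = g_{\rho\tau}(x)$ gives $R'_{\rho\sigma\mu\nu}(x') = \d^{-2}R_{\rho\sigma\mu\nu}(x)$, and contracting, $R'_{\mu\nu}(x') = \d^{-2}R_{\mu\nu}(x)$ and $R'(x') = \d^{-2}R(x)$. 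The Weyl tensor is an algebraic combination of $R_{\rho\sigma\mu\nu}$, the Ricci tensor, the scalar curvature and the metric (and for the Einstein--Maxwell system $R = 0$, so the Schouten tensor is merely the Ricci tensor); every term in it therefore carries the common weight $\d^{-2}$, whence $W'_{\rho\sigma\mu\nu}(x') = \d^{-2}W_{\rho\sigma\mu\nu}(x)$.

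For step (ii), from \eqref{e4 e4'}, \eqref{eA eA'}, the analogous relation $e'_3(x') = \d^{-1}e_3(x)$, and $\partial/\partial x'^{\mu} = \d^{-1}\,\partial/\partial x^{\mu}$, the coordinate components of the rescaled frame vectors relative to the $\partial/\partial x'$--basis coincide with those of the original frame vectors relative to the $\partial/\partial x$--basis (for instance $e'_4 = {\O'}^{-1}\partial_{\ub'}$ with $\O' = \O$). Combining (i) and (ii), each contraction defining $\Psi'$ --- say $\a'_{a'b'} = W'(e'_a, e'_4, e'_b, e'_4)$ --- equals $\d^{-2}$ times the corresponding contraction and nothing more, which gives $\Psi'(u',\ub',\theta^{1'},\theta^{2'}) = \d^{-2}\Psi(u,\ub,\theta^1,\theta^2)$ for every $\Psi \in \{\a,\b,\rho,\sigma,\beb,\ab\}$.

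The only slightly delicate point is the conformal--weight bookkeeping in step (i): one must verify that the Ricci/Schouten correction terms in $W$ scale with the same power $\d^{-2}$ as the Riemann curvature, so that no residual factor of $\d$ is left over. Since $R \equiv 0$ for this system, this reduces to the scaling of the Ricci tensor, already in hand, so the difficulty is mild. As an alternative route, more in keeping with the methods of this paper, one could avoid the tensor calculus altogether and propagate the scaling through the null structure and Bianchi equations: Proposition \ref{Prop rescale 1} yields the $\d^{-1}$ scaling of the Ricci coefficients, the same dilation argument applied to the electromagnetic $2$--tensor $F$ gives the $\d^{-1}$ scaling of the Maxwell components, and then an equation such as $\nabla_4\chih + \tr\chi\,\chih = -2\omega\,\chih - \a$ forces $\a$, and inductively all of $\Psi$, to scale as $\d^{-2}$ because every other term there has weight $\d^{-2}$.
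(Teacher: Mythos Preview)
Your argument is correct. The paper itself does not prove this proposition but cites it from \cite{An:2019}; however, your direct computation---showing that the primed metric components coincide with the unprimed ones, hence $\Gamma'=\d^{-1}\Gamma$, $R'_{\rho\sigma\mu\nu}=\d^{-2}R_{\rho\sigma\mu\nu}$, $W'_{\rho\sigma\mu\nu}=\d^{-2}W_{\rho\sigma\mu\nu}$, and then contracting against frame vectors whose coordinate components are unchanged---is exactly the natural route and mirrors precisely what the paper does for the adjacent Proposition~\ref{Prop rescale 3}, where it evaluates $F'$ on $e'_A,e'_4$ using \eqref{Maxwell rescaling} together with $e'_\mu=\d^{-1}e_\mu$. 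Your alternative suggestion of reading off the $\d^{-2}$ weight from a null structure equation such as $\nabla_4\chih+\tr\chi\,\chih=-2\omega\chih-\a$ is also sound and is a nice consistency check.
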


Besides above geometric quantities, under the rescaling (\ref{rescale}), for the Maxwell field we have 
\begin{equation}\label{Maxwell rescaling}
F'_{\mu\nu}(u',\ub', \theta^{1'}, \theta^{2'})=\d \cdot F_{\mu\nu}(u,\ub, \theta^1, \theta^2).
\end{equation}
And we define 
$$(\a_{F'}')_{A'}={F'}_{e_{A}' e_4'}, \quad (\ab_{F'}')_{A'}={F'}_{e_{A}' e_3'}, \quad \rho_{F'}'=\f12 {F'}_{e'_3 e'_4}, \quad \sigma_{F'}'={F'}_{e'_1 e'_2}'$$
It holds that
\begin{proposition}\label{Prop rescale 3}
For $\Gamma_F\in \{\a_F, \ab_F, \rho_F, \sigma_F\}$ written in two different coordinates $(u',\ub',\theta^{1'}, \theta^{2'})$ and $(u, \ub, \theta^1, \theta^2)$, it holds that
$$\Gamma'_F(u',\ub',\theta^{1'}, \theta^{2'})=\d^{-1}\cdot \Gamma_F (u,\ub,\theta^{1}, \theta^{2}).$$
\end{proposition}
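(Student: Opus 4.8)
The plan is to handle the null Maxwell components exactly as Propositions \ref{Prop rescale 1} and \ref{Prop rescale 2} handle the null Ricci coefficients and the null curvature components: each of $\a_F$, $\ab_F$, $\rho_F$, $\sigma_F$ is, by definition, a contraction of the electromagnetic $2$--tensor $F$ against two vectors of the null frame $\{e_1,e_2,e_3,e_4\}$, namely $(\a_F)_a=F_{a4}=F(e_a,e_4)$, $(\ab_F)_a=F_{a3}=F(e_a,e_3)$, $\rho_F=\frac{1}{2}F(e_3,e_4)$ and $\sigma_F=F(e_1,e_2)$. So I would substitute into these four identities the rescaling law for the field tensor, equation \eqref{Maxwell rescaling}, together with the rescaling laws for the frame, equations \eqref{e4 e4'}, \eqref{eA eA'} and the analogous relation $e'_3=\d^{-1}e_3$ recorded just above \eqref{e4 e4'}, and then collect the resulting powers of $\d$.

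Carrying this out for $\a_F$ as the model case: writing
\[ (\a'_{F'})_{A'}(u',\ub',\theta^{1'},\theta^{2'}) \;=\; F'\big(e'_A,e'_4\big), \]
I would use bilinearity of $F'$ together with $e'_A=\d^{-1}e_A$ and $e'_4=\d^{-1}e_4$ to extract two factors $\d^{-1}$, and then invoke \eqref{Maxwell rescaling} for the remaining contraction of $F'$ against the unrescaled frame vectors, which contributes one factor $\d$; the net weight is $\d\cdot\d^{-1}\cdot\d^{-1}=\d^{-1}$, giving $(\a'_{F'})_{A'}=\d^{-1}(\a_F)_A$. The three remaining components follow from the identical computation, using $e'_3=\d^{-1}e_3$ in place of (or alongside) $e'_4=\d^{-1}e_4$ for $\ab_F$ and $\rho_F$, and using $e'_1=\d^{-1}e_1$, $e'_2=\d^{-1}e_2$ for $\sigma_F$. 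Since these are pointwise tensorial identities at each $(u,\ub,\theta^1,\theta^2)$, nothing has to be integrated and no bootstrap estimates enter.

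I do not expect a genuine obstacle here; the only point that needs care is the bookkeeping of the $\d$--weights, i.e. making sure that the scaling \eqref{Maxwell rescaling} of $F$ combines with the $\d^{-1}$--scaling of the rescaled coordinate frame to produce exactly $\d^{-1}$. This power is fixed by the requirement that the rescaled triple $(\mathcal{M}',g',F')$ again solve the Einstein--Maxwell system \eqref{em} under the metric rescaling $g'=\d^2 g$, precisely as the analogous weights were fixed in \cite{An:2019}. As a consistency check, the outcome $\Gamma'_F=\d^{-1}\Gamma_F$ coincides with the scaling of the Ricci coefficients in Proposition \ref{Prop rescale 1}, and it is exactly what the signature assignment $s_2(\a_F)=s_2(\chih)=0$ predicts: in scale--invariant norms $\|\a_F\|$ and $\|\chih\|$ both behave like $a^{1/2}/|u|$, so replacing $|u|$ by $|u'|=\d|u|$ manufactures the factor $\d^{-1}$. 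This matching is really the content of the statement; everything else is routine.
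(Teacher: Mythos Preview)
Your proposal is correct and follows exactly the paper's approach: use the definition of each null Maxwell component as a contraction of $F$ against two frame vectors, apply the tensor rescaling \eqref{Maxwell rescaling} for $F'=\d\cdot F$, and then the frame rescalings $e'_\mu=\d^{-1}e_\mu$ to collect a net factor $\d\cdot\d^{-1}\cdot\d^{-1}=\d^{-1}$. The paper carries this out only for $\a_F$ and declares the rest identical; your bookkeeping of two $\d^{-1}$ factors from the bilinear contraction is in fact cleaner than the paper's displayed line, which contains a typographical extra $\d^{-1}$.
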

\begin{proof}
We first calculate $(\a'_{F'})_{A'}$.  Using definition of $(\a'_{F'})_{A'}$ and (\ref{Maxwell rescaling}), we have
\begin{equation*}
\begin{split}
(\a'_{F'})_{A'}(u',\ub', \theta^{1'}, \theta^{2'})=&{F'}_{e_{A}' e_4'}=\d\cdot {F}_{e_{A}' e_4'}\\
=&\d\cdot \d^{-1}\cdot \d^{-1}\cdot \d^{-1}  {F}_{e_{A} e_4}=\d^{-1}\cdot (\a_{F})_{A}(u, \ub, \theta^1, \theta^2).
\end{split}
\end{equation*}
The rest Maxwell components are treated in the same way. 
\end{proof}

\subsection{Rescaled Uniform Bounds}\label{rescale bounds}

Applying Proposition \ref{Prop rescale 1} and Proposition \ref{Prop rescale 2}, next we establish the connection to \cite{A-L}.  Take $\chih$ as an example. With Proposition \ref{Prop rescale 1}, estimates derived for $\mathcal{O}_{i,\infty}[\chih]$ and $u'=\d u$, we have 
\begin{equation*}
\begin{split}
&|\chih'_{A'B'}(u',\ub', \theta^{1'}, \theta^{2'})|=\d^{-1}\cdot |\chih_{AB}(u, \ub, \theta^1, \theta^2)|\leq \d^{-1}\cdot \f{\at}{|u|}=\f{\at}{\d|u|}=\f{\at}{|u'|}. 
\end{split}
\end{equation*}
In the same fashion, we have
\begin{equation*}
|\chibh'_{A'B'}(u',\ub', \theta^{1'}, \theta^{2'})|=\d^{-1}\cdot |\chibh_{AB}(u, \ub, \theta^1, \theta^2)|\leq \d^{-1}\cdot \f{\at}{|u|^2}=\f{\d\at}{\d^2|u|^2}=\f{\d\at}{|u'|^2}, 
\end{equation*}
\begin{equation*}
|\tr\chi'(u',\ub', \theta^{1'}, \theta^{2'})|=\d^{-1}\cdot |\tr\chi(u, \ub, \theta^1, \theta^2)|\leq \d^{-1}\cdot \f{1}{|u|}=\f{1}{\d|u|}=\f{1}{|u'|}, 
\end{equation*}
\begin{equation*}
|\eta'_{A'}(u',\ub', \theta^{1'}, \theta^{2'})|=\d^{-1}\cdot |\eta_{A}(u, \ub, \theta^1, \theta^2)|\leq \d^{-1}\cdot \f{\at}{|u|^2}=\f{\d\at}{\d^2|u|^2}=\f{\d\at}{|u'|^2}, 
\end{equation*}
\begin{equation*}
|\etb'_{A'}(u',\ub', \theta^{1'}, \theta^{2'})|=\d^{-1}\cdot |\etb_{A}(u, \ub, \theta^1, \theta^2)|\leq \d^{-1}\cdot \f{\at}{|u|^2}=\f{\d\at}{\d^2|u|^2}=\f{\d\at}{|u'|^2}, 
\end{equation*}
\begin{equation*}
|\o'(u',\ub', \theta^{1'}, \theta^{2'})|=\d^{-1}\cdot |\o(u, \ub, \theta^1, \theta^2)|\leq \d^{-1}\cdot \f{1}{|u|}=\f{1}{\d|u|}=\f{1}{|u'|}, 
\end{equation*}
\begin{equation*}
|\omb'(u',\ub', \theta^{1'}, \theta^{2'})|=\d^{-1}\cdot |\omb(u, \ub, \theta^1, \theta^2)|\leq \d^{-1}\cdot \f{a}{|u|^3}=\f{\d^2 a}{\d^3|u|^3}=\f{\d^2 a}{|u'|^3}\boxed{\leq \f{\d\at}{|u'|^2}} \, ,
\end{equation*}
\begin{equation*}
|\tr\chib'(u',\ub', \theta^{1'}, \theta^{2'})+\f{2}{|u'|}|=\d^{-1}\cdot |\tr\chib(u, \ub, \theta^1, \theta^2)+\f{2}{|u|}|\leq \d^{-1}\cdot \f{a}{|u|^3}=\f{\d^2 a}{\d^3|u|^3}=\f{\d^2 a}{|u'|^3}\boxed{\leq \f{\d\at}{|u'|^2}} \, .
\end{equation*}
For the estimates of $\omb'$ and $\tr\chib'$, we use $|u'|\geq \d\at$. In the same manner, by Proposition \ref{Prop rescale 2} and with the help that $|u'|\geq  \d a/4$ we have
\begin{equation*}
\begin{split}
&|\b'_{A'}(u',\ub', \theta^{1'}, \theta^{2'})|=\d^{-2}\cdot |\b_{A}(u, \ub, \theta^1, \theta^2)|\leq \d^{-2}\cdot \f{\at}{|u|^2}=\f{\at}{\d^2|u|^2}=\f{\at}{|u'|^2}, 
\end{split}
\end{equation*}
\begin{equation*}
\begin{split}
&|\rho'(u',\ub', \theta^{1'}, \theta^{2'})|=\d^{-2}\cdot |\rho(u, \ub, \theta^1, \theta^2)|\leq \d^{-2}\cdot \f{a}{|u|^3}=\f{\d a}{\d^3|u|^3}=\f{\d a}{|u'|^3}, 
\end{split}
\end{equation*}
\begin{equation*}
\begin{split}
&|\sigma'(u',\ub', \theta^{1'}, \theta^{2'})|=\d^{-2}\cdot |\sigma(u, \ub, \theta^1, \theta^2)|\leq \d^{-2}\cdot \f{a}{|u|^3}=\f{\d a}{\d^3|u|^3}=\f{\d a}{|u'|^3}, 
\end{split}
\end{equation*}
\begin{equation*}
\begin{split}
&|\beb'_{A'}(u',\ub', \theta^{1'}, \theta^{2'})|=\d^{-2}\cdot |\beb_{A}(u, \ub, \theta^1, \theta^2)|\leq \d^{-2}\cdot \f{a^{\f32}}{|u|^4}=\f{\d^2 a^{\f32}}{\d^4|u|^4}=\f{\d^2 a^{\f32}}{|u'|^4}\boxed{\leq \f{\d\at}{|u'|^3}} \, , 
\end{split}
\end{equation*}
\begin{equation}\label{rescale ab}
\begin{split}
&|\ab'_{A'B'}(u',\ub', \theta^{1'}, \theta^{2'})|=\d^{-2}\cdot |\ab_{AB}(u, \ub, \theta^1, \theta^2)|\leq \d^{-2}\cdot \f{a^2}{|u|^5}=\f{\d^3 a^2}{\d^5|u|^5}=\f{\d^3 a^2}{|u'|^5}, 
\end{split}
\end{equation}
\begin{equation}\label{rescale alpha}
\begin{split}
&|\a'_{A'B'}(u',\ub', \theta^{1'}, \theta^{2'})|=\d^{-2}\cdot |\a_{AB}(u, \ub, \theta^1, \theta^2)|\leq \d^{-2}\cdot \f{\at}{|u|}=\f{\d^{-1}\at}{\d |u|}=\f{\d^{-1}\at}{|u'|}. 
\end{split}
\end{equation}

\noindent Similarly by Proposition \ref{Prop rescale 2} and with the help that $|u'|\geq  \d a/4$ we have
\begin{equation*}
\begin{split}
&|({\a'}_{F'})_{A'}(u',\ub', \theta^{1'}, \theta^{2'})|=\d^{-1}\cdot |(\a_{F})_{A}(u, \ub, \theta^1, \theta^2)|\leq \d^{-1}\cdot \f{\at}{|u|}=\f{\at}{\d|u|}=\f{\at}{|u'|}, 
\end{split}
\end{equation*}

\begin{equation*}
\begin{split}
&|({\ab'}_{F'})_{A'}(u',\ub', \theta^{1'}, \theta^{2'})|=\d^{-1}\cdot |(\ab_{F})_{A}(u, \ub, \theta^1, \theta^2)|\leq \d^{-1}\cdot \f{a}{|u|^3}=\f{\d^2 a}{\d^3|u|^3}=\f{\d^2 a}{|u'|^3}\leq \f{\d \at}{|u'|^2}, 
\end{split}
\end{equation*}

\begin{equation*}
\begin{split}
&|({\rho'}_{F'})_{A'}(u',\ub', \theta^{1'}, \theta^{2'})|=\d^{-1}\cdot |(\rho_{F})_{A}(u, \ub, \theta^1, \theta^2)|\leq \d^{-1}\cdot \f{\at}{|u|^2}=\f{\d\at}{\d^2|u|^2}=\f{\d\at}{|u'|^2}, 
\end{split}
\end{equation*}

\begin{equation*}
\begin{split}
&|({\sigma'}_{F'})_{A'}(u',\ub', \theta^{1'}, \theta^{2'})|=\d^{-1}\cdot |(\sigma_{F})_{A}(u, \ub, \theta^1, \theta^2)|\leq \d^{-1}\cdot \f{\at}{|u|^2}=\f{\d\at}{\d^2|u|^2}=\f{\d\at}{|u'|^2}.
\end{split}
\end{equation*}

\noindent By repeating the arguments as in Section \ref{TSF}, we therefore obtain Theorem \ref{main3}. 

}

\end{document}